\newcommand{\ifarticle}[2]{
    \csname@ifclassloaded\endcsname{beamer}{#2}{#1}
}
\newcommand{\ifbook}[2]{
    \csname@ifclassloaded\endcsname{amsbook}{#1}{#2}
}
        \setlist{topsep=2pt,itemsep=2pt,partopsep=2pt,parsep=2pt} % Uniform spacing.
\def\@makefntext{\indent\@makefnmark}\makeatletter
        \xpretocmd{\@adminfootnotes}{\let\@makefntext\BHFN@OldMakefntext}{}{}
        \xpatchcmd{\@maketitle}{\let\@makefnmark\relax}{\let\@makefnmark\no@makefnmark}{}{}
        \def\no@makefnmark{}
        \renewcommand\@makefntext[1]{%
          \ifx\@makefnmark\no@makefnmark
            \BHFN@OldMakefntext{#1}%
          \else
            \renewcommand\@makefnmark{%
            \mbox{%
                \textsuperscript{%
                \normalfont
                \hyperref[\BackrefFootnoteTag]{\@thefnmark}%
                }%
            }\,%
            }%
            \BHFN@OldMakefntext{#1}%
          \fi
        }
        \LetLtxMacro{\BHFN@Old@footnotemark}{\@footnotemark}
        \renewcommand*{\@footnotemark}{%
            \refstepcounter{BackrefHyperFootnoteCounter}%
            \xdef\BackrefFootnoteTag{bhfn:\theBackrefHyperFootnoteCounter}%
            \label{\BackrefFootnoteTag}%
            \BHFN@Old@footnotemark
        }
        \def\paragraph{\@startsection{paragraph}{4}%
          \z@\z@{-\fontdimen2\font}%
          {\normalfont\bfseries}}
        \theoremstyle{plain}
            \newtheorem{theorem}{Theorem}[chapter]
            \newtheorem{theorem}{Theorem}[section]
        \newtheorem{proposition}[theorem]{Proposition}
        \newtheorem{lemma}[theorem]{Lemma}
        \newtheorem{corollary}[theorem]{Corollary}
        \newtheorem*{theorem*}{Theorem}
        \newtheorem*{corollary*}{Corollary}
        \theoremstyle{definition}
        \newtheorem{definition}[theorem]{Definition}
        \newtheorem{example}[theorem]{Example}
        \newtheorem{notation}[theorem]{Notation}
        \newtheorem{remark}[theorem]{Remark}
        \Crefname{axiom}{Axiom}{Axioms}
        \newenvironment{sketch}{\proof}{\endproof}
        \Crefname{theoremenumi}{Theorem}{Theorems}
            \setlist[enumerate,1]{
                ref={\csname thetheorem\endcsname.(\arabic*)}
            }%
            \setlist[enumerate,2]{
                ref={\thetheorem.(\arabic*).(\alph*)}
            }%
        \Crefname{propositionenumi}{Proposition}{Propositions}
            \setlist[enumerate,1]{
                ref={\csname theproposition\endcsname.(\arabic*)}
            }%
            \setlist[enumerate,2]{
                ref={\theproposition.(\arabic*).(\alph*)}
            }%
        \Crefname{lemmaenumi}{Lemma}{Lemmas}
            \setlist[enumerate,1]{
                ref={\csname thelemma\endcsname.(\arabic*)}
            }%
            \setlist[enumerate,2]{
                ref={\thelemma.(\arabic*).(\alph*)}
            }%
        \Crefname{corollaryenumi}{Corollary}{Corollaries}
            \setlist[enumerate,1]{
                ref={\csname thecorollary\endcsname.(\arabic*)}
            }%
            \setlist[enumerate,2]{
                ref={\thecorollary.(\arabic*).(\alph*)}
            }%
        \Crefname{definitionenumi}{Definition}{Definitions}
            \setlist[enumerate,1]{
                ref={\csname thedefinition\endcsname.(\arabic*)}
            }%
            \setlist[enumerate,2]{
                ref={\thedefinition.(\arabic*).(\alph*)}
            }%
        \Crefname{exampleenumi}{Example}{Examples}
            \setlist[enumerate,1]{
                ref={\csname theexample\endcsname.(\arabic*)}
            }%
            \setlist[enumerate,2]{
                ref={\theexample.(\arabic*).(\alph*)}
            }%
        \Crefname{axiomenumi}{Axiom}{Axioms}
            \setlist[enumerate,1]{
                ref={\csname theaxiom\endcsname.(\arabic*)}
            }%
            \setlist[enumerate,2]{
                ref={\theaxiom.(\arabic*).(\alph*)}
            }%
        \AtBeginEnvironment{\env}{%
          \pushQED{\qed}%
        }
        \AtEndEnvironment{\env}{\popQED\endexample}
    \NewDocumentCommand{\mathcommand}{mO{0}m}
     {
      \exp_args:Nc \NewCommandCopy {khue_kept_\cs_to_str:N #1} { #1 }
      \exp_args:Nc \newcommand {khue_new_\cs_to_str:N #1}[#2]{#3}
      \DeclareDocumentCommand {#1} {}
       {
        \mode_if_math:TF
         {
          \use:c {khue_new_\cs_to_str:N #1}
         }
         {
          \use:c {khue_kept_\cs_to_str:N #1}
         }
       }
     }
    \newenvironment{iffseq}{%
        \global\let\externaldblbackslash\\
        \[\begin{array}{cl}
        \ifundef{\internaldblbackslash}{%
            \global\let\internaldblbackslash\\%
            \gdef\\{\internaldblbackslash\cmidrule{1-1}\morecmidrules\cmidrule{1-1}}%
        }{}
    }{%
        \end{array}\]
        \global\undef\internaldblbackslash
        \global\let\\\externaldblbackslash
    }
    \newsavebox\tikzcdbox
    \mathcommand{\h}{\textup{-}}
    \newcommand{\tx}{\mathrm}
    \mathcommand{\b}{\mathbf}
    \newcommand{\s}{\mathsf}
    \newcommand{\cl}{\mathcal}
    \mathcommand{\bb}{\mathbb}
    \DeclareMathAlphabet{\bbn}{U}{bbold}{m}{n}
    \newcommand{\dc}[1]{\TextOrMath{double category\xspace#1}{\b{\bb#1}}}
    \mathcommand{\sf}{\mathsf}
    \mathcommand{\u}{\underline}
    \mathcommand{\o}{\overline}
    \newcommand{\TODO}[1][TODO]{\textcolor{orange}{\textup{#1}}\xspace}
    \newcommand{\flip}[1]{\text{\rotatebox[origin=c]{-180}{$#1$}}}
    \newcommand{\datetoday}{\date{\cleanlookdateon\today}}
    \newcommand{\defeq}{\mathrel{:=}}
    \mathcommand{\d}{\mathbin{;}}
    \mathcommand{\c}{\circ}
    \newcommand{\ph}[1][]{{({-}_{#1})}}
    \newcommand{\iso}{\cong}
    \newcommand{\xto}{\xrightarrow}
    \newcommand{\xfrom}{\xleftarrow}
    \newcommand{\tto}{\Rightarrow}
    \newcommand{\xtto}{\xRightarrow}
    \newcommand{\ffto}{\hookrightarrow}
    \newcommand*\cocolon{%
            \nobreak
            \mskip6mu plus1mu
            \mathpunct{}%
            \nonscript
            \mkern-\thinmuskip
            {:}%
            \mskip2mu
            \relax
    }
    \def\slashedarrowfill@#1#2#3#4#5{%
    $\m@th\thickmuskip0mu\medmuskip\thickmuskip\thinmuskip\thickmuskip
    \relax#5#1\mkern-7mu%
    \cleaders\hbox{$#5\mkern-2mu#2\mkern-2mu$}\hfill
    \mathclap{#3}\mathclap{#2}%
    \cleaders\hbox{$#5\mkern-2mu#2\mkern-2mu$}\hfill
    \mkern-7mu#4$%
    }
    \def\rightslashedarrowfill@{%
    \slashedarrowfill@\relbar\relbar\mapstochar\rightarrow}
    \newcommand\xslashedrightarrow[2][]{%
    \ext@arrow 0055{\rightslashedarrowfill@}{#1}{#2}}
    \def\leftslashedarrowfill@{%
    \slashedarrowfill@\leftarrow\relbar\mapsfromchar\relbar}
    \newcommand\xslashedleftarrow[2][]{%
    \ext@arrow 0055{\leftslashedarrowfill@}{#1}{#2}}
    \newcommand{\xlto}{\xslashedrightarrow}
    \newcommand{\lto}{\xlto{}}
    \newcommand{\xlfrom}{\xslashedleftarrow}
    \newcommand{\lfrom}{\xlfrom{}}
    \newcommand{\inv}{^{-1}}
    \newcommand{\op}{{}^\tx{op}}
    \newcommand{\co}{{}^\tx{co}}
    \newcommand{\coop}{{}^\tx{co\,op}}
    \newcommand{\rev}{{}^\tx{rev}}
    \newcommand{\tp}[1]{\langle#1\rangle}
    \newcommand{\unit}{{\tp{}}}
    \newcommand{\Kleene}{^{\star}}
    \newcommand{\rf}{\mathbin{\blacktriangleleft}}
    \newcommand{\rx}{\mathbin{\blacktriangleright}}
    \newcommand{\plx}{\mathbin{{\rhd}\mathclap{\mspace{-17.5mu}\cdot}}}
    \newcommand{\prx}{\mathbin{{\blacktriangleright}\mathclap{\mspace{-17.5mu}\textcolor{white}{\cdot}}}}
    \newcommand{\plf}{\mathbin{\flip{{\rhd}\mathclap{\mspace{-17.5mu}\cdot}}}}
    \newcommand{\prf}{\mathbin{\flip{{\blacktriangleright}\mathclap{\mspace{-17.5mu}\textcolor{white}{\cdot}}}}}
    \newcommand{\adj}{\dashv}
    \newcommand{\radj}[1]{\mathrel{\adj_{#1}}}
    \newcommand{\rcadj}[1]{\mathrel{\vdash_{#1}}}
    \newcommand{\ob}[1]{|#1|}
    \DeclareFontFamily{U}{min}{}
    \DeclareFontShape{U}{min}{m}{n}{<-> udmj30}{}
    \newcommand{\yo}{\!\text{\usefont{U}{min}{m}{n}\symbol{'210}}\!}
    \mathcommand{\comma}{\downarrow}
    \newsavebox{\whitecircstar}\sbox{\whitecircstar}{\kern.075em\tikz{\node[draw, circle,line width=.36pt, inner sep=0]{$*$};}\kern.075em}
    \newcommand{\ostar}{\mathbin{\scalerel*{\usebox{\whitecircstar}}{\odot}}}
    \newsavebox{\blackcircstar}\sbox{\blackcircstar}{\kern.075em\tikz{\node[fill, circle, line width=.36pt, inner sep=0, text=white]{$*$};}\kern.075em}
    \newcommand{\bulletstar}{\mathbin{\scalerel*{\usebox{\blackcircstar}}{\odot}}}
    \newcommand{\skt}{\olessthan}
    \def\widebreve{\mathpalette\wide@breve}
    \def\wide@breve#1#2{\sbox\z@{$#1#2$}%
         \mathop{\vbox{\m@th\ialign{##\crcr
    \kern0.08em\brevefill#1{0.8\wd\z@}\crcr\noalign{\nointerlineskip}%
                        $\hss#1#2\hss$\crcr}}}\limits}
    \def\brevefill#1#2{$\m@th\sbox\tw@{$#1($}%
      \hss\resizebox{#2}{\wd\tw@}{\rotatebox[origin=c]{90}{\upshape(}}\hss$}
    \NewDocumentCommand{\jrule}{om}{%
        \IfNoValueTF{#1}
            {\textsc{#2}}
            {$#1$-\textsc{#2}}%
    }
    \newcommand{\Set}{{\b{Set}}}
    \newcommand{\V}{{\bb V}} % The user may override this command, which will then update `\VCat`.
    \newcommand{\Cat}{\b{Cat}}
    \newcommand{\VCat}{{\V\h\Cat}}
    \newcommand{\RAdj}{\b{RAdj}}
    \newcommand{\Mnd}{\b{Mnd}}
    \newcommand{\RMnd}{\b{RMnd}}
    \newcommand{\Mon}{\b{Mon}}
    \newcommand{\Alg}{\b{Alg}}
    \newcommand{\Opalg}{\b{Opalg}}
    \newcommand{\ff}{fully faithful}
    \newcommand{\ffness}{full faithfulness}
    \newcommand{\ioo}{identity-on-objects}
    \newcommand{\EM}{Eilenberg--Moore}
    \newcommand{\eg}{e.g.\@\xspace}
    \newcommand{\ie}{i.e.\@\xspace}
    \newcommand{\cf}{cf.\@\xspace}
    \newcommand{\aka}{a.k.a.\@\xspace}
    \NewDocumentCommand{\etc}{t.}{etc.\@\xspace}
    \NewDocumentCommand{\ibid}{t.}{ibid.\@\xspace}
    \NewDocumentCommand{\loccit}{t.}{loc.\ cit.\@\xspace}
    \newcommand{\vd}{virtual double}
    \newcommand{\vdc}{\vd{} category}
    \newcommand{\vdcs}{\vd{} categories}
    \newcommand{\ve}{virtual equipment}
    \newcommand{\lfp}{locally finitely presentable}
\patchcmd{\beamer@sectionintoc}{\vfill}{\vskip\itemsep}{}{}
  \colorlet{colour-bg}{black!85} % Slide background.
  \definecolor{colour-primary}{HTML}{cc80ff} % Titles, headings, hyperlinks.
  \colorlet{colour-text}{black!10} % Body text.
  \colorlet{colour-subtle}{black!40} % Page numbers.
  \colorlet{colour-block-bg}{black!80} % Block background.
  \definecolor{colour-warning-bg}{HTML}{ffea80} % Placeholder slide backgrounds.
  \definecolor{colour-warning-primary}{HTML}{e08152} % Placeholder slide headings.
  \apptocmd{\frame}{}{\justifying}{}
  \newtheorem{proposition}[theorem]{\translate{Proposition}}
  \renewenvironment<>{block}[1]{%
      \begin{actionenv}#2%
        \par%
        \usebeamertemplate{block begin}}
      {\par%
        \usebeamertemplate{block end}%
      \end{actionenv}}
  \renewenvironment<>{exampleblock}[1]{%
      \begin{actionenv}#2%
          \par%
          \only<presentation>{%\usebeamerfont{block}%
            \setbeamercolor{local structure}{parent=example text}}%
          \usebeamertemplate{block example begin}}
        {\par%
          \usebeamertemplate{block example end}%
        \end{actionenv}}
\newcommand{\X}{\bb X}
\newcommand{\tX}{\u\X}
\renewcommand{\Cat}{\dc{Cat}}
\newcommand{\K}{\cl K}
\newcommand{\I}{\s I}
\newcommand{\Icat}{\star}
\newcommand{\tensor}{\otimes}
\newcommand{\tensorI}{I}
\newcommand{\RCmnd}{\b{RCmnd}}
\newcommand{\lh}[1]{\llbracket #1 \rrbracket}
\mathcommand{\L}{\textup{\sagittarius}}
\newcommand{\LMnd}{\L\Mnd}
\newcommand{\LRMnd}{\L\RMnd}
\newcommand{\jAE}{j \colon A \to E}
\newcommand{\jadj}{\radj j}
\newcommand{\ljr}{\ell \jadj r}
\newcommand{\ljrp}{\ell' \jadj r'}
\newcommand{\cp}[1]{\mathbin{\smallsmile_{#1}}}
\newcommand{\pc}[1]{\mathbin{\smallfrown_{#1}}}
\newcommand{\Act}{\b{Act}}
\newcommand{\VcoCat}{\V\co\h\Cat}
\newcommand{\Res}{\b{Res}}
\newcommand{\Kl}{\b{Kl}}
\newcommand{\aop}{{\rtimes}}
\newcommand{\eaop}{\aop}
\newcommand{\oop}{{\ltimes}}
\newcommand{\wc}{\ostar}
\newcommand{\wl}{\bulletstar}
\newcommand{\limcell}{\mu}
\newcommand{\lextcell}{\pi}
\newcommand{\lliftcell}{\eta}
\renewcommand{\defeq}{\mathrel{:=}}
\newcommand{\opcart}{\tx{opcart}}
\newcommand{\odotl}{\mathbin{{\odot}_L}}
\newcommand{\odotr}{\mathbin{{\odot}_R}}
\newcommand{\cart}{\tx{cart}}
\renewcommand{\EM}{\relax\ifmmode\b{EM}\else{}Eilenberg\nobreakdash--Moore\fi}
\newcommand{\rfP}[3]{\mathcal{P}{#1}(#2, #3)}
\newcommand{\reP}[3]{\mathcal{Q}{#1}(#2, #3)}
\mathcommand{\P}{\cl P}
\newcommand{\tgRegionLabel}[2]{\begin{tikzpicture}[scale=0.8, every node/.style={transform shape}]
    % \filldraw[fill=\tgColour#1, draw=black, line width=1] (0,0) rectangle (1,1);
	\fill[color=\tgColour#1] (0,0) rectangle (1,1);
    \node at (0.5,0.5) {$#2$};
\end{tikzpicture}}
\newcounter{eqstep}
\newcounter{eqsubstep}[eqstep]
\newcommand{\nexteqstep}{\stepcounter{eqstep}}
\newcommand{\eqstepref}[1]{(\hyperlink{eqstep:#1}{#1})}
\newcommand{\raisedtarget}[1]{\Hy@raisedlink{\hypertarget{#1}{}}}
\newcommand{\tangleeq}{\refstepcounter{eqsubstep}\raisedtarget{eqstep:\theeqstep.\theeqsubstep}{\mathclap{\overset{(\theeqstep.\theeqsubstep)}{=}}}}
\newcommand\tangleeq*{\mathclap{=}}
\newcommand{\tangleeql}{\refstepcounter{eqsubstep}\raisedtarget{eqstep:\theeqstep.\theeqsubstep}{\mathllap{\overset{(\theeqstep.\theeqsubstep)}{=}}}}
\newcommand\tangleeql*{\mathllap{=}}
\newenvironment{tangleeqs}{%
    \mathcommand{\=}{\tangleeq}
    \global\let\externaldblbackslash\\
    \csname gather*\endcsname
    \ifundef{\internaldblbackslash}{%
        \global\let\internaldblbackslash\\%
        \gdef\\{\internaldblbackslash\tangleeql}%
    }{}
    \nexteqstep
}{%
    \csname endgather*\endcsname
    \global\undef\internaldblbackslash
    \global\let\\\externaldblbackslash
}
\newenvironment{tangleeqs*}{%
    \mathcommand{\=}{\tangleeq*}
    \global\let\externaldblbackslash\\
    \csname gather*\endcsname
    \ifundef{\internaldblbackslash}{%
        \global\let\internaldblbackslash\\%
        \gdef\\{\internaldblbackslash\tangleeql*}%
    }{}
}{%
    \csname endgather*\endcsname
    \global\undef\internaldblbackslash
    \global\let\\\externaldblbackslash
}
\newcommand{\M}{{\b M}}
\newcommand{\A}{{\b A}}
\title{The formal theory of relative monads}
\author{Nathanael Arkor}
\address{Department of Mathematics and Statistics, Faculty of Science, Masaryk University, Czech Republic}
\author{Dylan McDermott}
\address{Department of Computer Science, Reykjavik University, Iceland}
\keywords{Relative monad, relative adjunction, virtual double category, virtual equipment, skew-multicategory, skew-monoidal category, formal category theory, enriched category theory}
\subjclass[2020]{18D70,18D65,18C15,18C20,18A40,18D60,18D20,18N10,18M65,18M50}
\begin{document}

\begin{abstract}
	We develop the theory of relative monads and relative adjunctions in a virtual equipment, extending the theory of monads and adjunctions in a 2-category. The theory of relative comonads and relative coadjunctions follows by duality. While some aspects of the theory behave analogously to the non-relative setting, others require new insights. In particular, the universal properties that define the algebra object and the opalgebra object for a monad in a virtual equipment are stronger than the classical notions of algebra object and opalgebra object for a monad in a 2-category. Inter alia, we prove a number of representation theorems for relative monads, establishing the unity of several concepts in the literature, including the devices of Walters, the $j$-monads of Diers, and the relative monads of Altenkirch, Chapman, and Uustalu. A motivating setting is the virtual equipment $\VCat$ of categories enriched in a monoidal category~$\V$, though many of our results are new even for $\V = \Set$.
\end{abstract}

\maketitle

\setcounter{tocdepth}{1}
\tableofcontents

\section{Introduction}

The definition of a monad, being 2-diagrammatic in nature -- expressed purely in terms of categories, functors, natural transformations, and equations therebetween -- may be internalised in any 2-category~\cite{benabou1967introduction}, and much of the theory of ordinary monads on categories continues to hold in this context~\cite{street1972formal,auderset1974adjonctions}. This permits a unified treatment of monads on ordinary categories, enriched categories, internal categories, and so on.

A monad on a category is in particular a structured \emph{endo}functor. It is natural to ask whether this restriction might be relaxed, permitting monads whose domains may be distinct from their codomains. This is precisely the notion of relative monad~\cite{altenkirch2010monads}. Given a fixed functor $\jAE$, a \emph{$j$-relative monad} comprises a functor $t \colon A \to E$ equipped with natural transformations -- the \emph{unit} $\eta \colon j \tto t$ and the \emph{extension operator} $\dag \colon E(j, t) \tto E(t, t)$ -- subject to laws expressing unitality and associativity. Much of the theory of monads extends, with appropriate modifications, to the context of relative monads.

Herein, we develop the theory of relative monads in a two-dimensional setting, analogous to the theory of monads in a 2-category. However, unlike the definition of a monad, the definition of a relative monad is not 2-diagrammatic: the extension operator $\dag \colon E(j, t) \tto E(t, t)$ involves a transformation between homs and cannot be captured by the structure of a 2-category. It is therefore necessary to work in a context for formal category theory, which axiomatises the structure of such transformations. In particular, we work within the context of a \ve{}~\cite{cruttwell2010unified}. While we work throughout at this level of generality, many of our results are new even in the classical setting of relative monads in $\Cat$. For instance, the following results are likely to be of interest even to readers who are not concerned with the formal aspects of the theory.

\begin{itemize}
	\item Relative monads are always monoids, permitting one to drop the left extension existence assumptions of \cite{altenkirch2010monads,altenkirch2015monads}, provided one is willing to work with skew-multicategories rather than skew-monoidal categories (\cref{relative-monads-are-tight-monoids,Xj1-is-skew-monoidal}).
	\item Relative adjunctions may be presented by means of a unit and a counit, in addition to the classical isomorphism of hom-sets (\cref{reformulations-of-relative-adjunction}).
	\item Left relative adjoints may be computed by (pointwise) left lifts (\cref{left-adjoint-is-left-lift}), and right relative adjoints by (pointwise) left extensions (\cref{right-adjoint-vs-absolute-left-extension}).
	\item Relative monads and relative adjunctions may be composed with suitable relative adjunctions (\cref{relative-adjunction-composition,relative-monad-relative-adjunction-composition}), recovering several known constructions of relative monads and relative adjunctions.
	\item In addition to forming initial and terminal resolutions, the Kleisli and \EM{} categories for a relative monad satisfy stronger universal properties with respect to morphisms of relative adjunctions (\cref{algebra-objects-induce-j-monadic-resolutions,opalgebra-objects-induce-j-opmonadic-resolutions}).
	\item Relative monads embed faithfully into categories of slices and coslices via their Kleisli and \EM{} constructions (\cref{relative-monads-form-full-subcategory-of-slices,relative-monads-form-full-subcategory-of-coslices}).
	\item The Kleisli categories for arbitrary relative monads may be constructed from Kleisli categories for trivial relative monads (\cref{coincidence-of-opalgebra-objects}).
\end{itemize}

As part of our development, we prove a number of representation theorems for relative monads (\cref{relative-monads-are-tight-monoids,relative-monads-are-loose-relative-monads,relative-monads-are-loose-monads,j-dense-rmnd,Xj1-is-skew-monoidal}). In doing so, we unify several concepts that have arisen in the category theory literature, such as the \emph{devices} of \textcite{walters1969alternative,walters1970categorical}, the \emph{$j$-monads} of \textcite{diers1975jmonades}, and the \emph{relative monads} and \emph{skew monoids} of \textcite{altenkirch2010monads,altenkirch2015monads} (\cref{examples-of-enriched-relative-monads}).

\subsection{Outline of the paper}

In \cref{virtual-double-categories} we recall the definition of \ve{}~\cite{cruttwell2010unified}, and in \cref{formal-category-theory} develop some basic category theory in this setting, such as the theory of weighted limits and colimits, pointwise extensions and lifts, and \ffness{} and density.

In \cref{skew-multicategorical-hom-categories}, we introduce relative monads (\cref{relative-monad}).
We motivate the definition by identifying relative monads with monoids in a skew-multicategory structure on the hom-categories of a \ve{} (\cref{relative-monads-are-tight-monoids}), which we introduce in \cref{skew-multicategorical-hom}. We furthermore establish a number of equivalent definitions of relative monad (\cref{relative-monads-are-loose-relative-monads,relative-monads-are-loose-monads,Xj1-is-skew-monoidal}), recovering notions of monad-like structures that have arisen in the literature.
In \cref{relative-adjunctions}, we introduce relative adjunctions (\cref{relative-adjunction}), giving several equivalent characterisations akin to those for (non-relative) adjunctions (\cref{reformulations-of-relative-adjunction}), establish their limit and colimit preservation properties (\cref{left-adjoints-preserve-colimits,right-adjoints-preserve-limits}), and explain their relation to relative monads.
In \cref{algebras-and-opalgebras}, we introduce algebras (\cref{algebra}) and opalgebras (\cref{opalgebra}) for relative monads as left- and right-actions of monoids in skew-multicategories, and consider universal algebras (\cref{algebra-object}) and opalgebras (\cref{opalgebra-object}), which generalise the notions of algebra object (\aka \EM{} object) and opalgebra object (\aka Kleisli object) for a monad in a 2-category. In particular, we prove that every algebra object forms a terminal resolution (\cref{algebra-object-is-j-monadic}), and that every opalgebra object forms an initial resolution (\cref{opalgebra-object-is-j-opmonadic}).
In \cref{duality}, we briefly discuss the dual theory of relative comonads and relative coadjunctions.

Finally, in \cref{relative-monads-in-VCat}, we consider the special case of relative monads and relative adjunctions in the \ve{} $\VCat$ of categories enriched in a monoidal category $\V$.
In particular, we show that the definition of relative monad in that setting may be simplified (\cref{enriched-relative-monad}), and construct (co)algebra objects (\cref{VCat-admits-algebra-objects,VCat-admits-coalgebra-objects}) and (co)opalgebra objects (\cref{VCat-admits-opalgebra-objects,VCat-admits-coopalgebra-objects}). Previous notions of enriched relative monad and relative adjunction in the literature are recovered as special cases.

\subsection{Deferrals}

It is worth highlighting some aspects of the formal theory of relative monads we have chosen not to pursue in this paper.

First, in this paper, we study 1-categories of relative monads -- namely, the 1-category of $j$-relative monads for a fixed root $\jAE$ -- and do not consider the two-dimensional structure formed by relative monads with different roots. This is in contrast to the seminal paper of \citeauthor{street1972formal} on the formal theory of monads~\cite{street1972formal}. There are two reasons for this choice. The first is that we are motivated by applications for which the root $j$ is fixed; and the second is that, contrary to morphisms of monads, the appropriate definition of morphism between arbitrary relative monads is nonevident.

Second, we do not consider the relationship between relative monads and non-relative monads, or, more generally, between relative monads with different roots, as studied by \textcite{walters1970categorical} and \textcite{altenkirch2010monads,altenkirch2015monads}. While this is an essential aspect of the theory of relative monads, it has been omitted from the present paper for reasons of space.

Third, though we focus herein only on enriched relative monads, there are several examples of structures resembling relative monads that we expect may be seen as relative monads in particular equipments, such as the \emph{strong relative monads} of \textcite{uustalu2010strong}; the \emph{enriched abstract clones} of \textcite[Definition~1.1]{fiore2017concrete}; and the \emph{relative monads} of \cite[Definition~2.1]{lobbia2023distributive}.

These aspects, and others, shall be developed in forthcoming work.

\subsection{Related work}

The study of relative monads in a formal setting has been previously proposed. \textcite{maillard2019principles,arkor2022monadic} independently defined relative monads in a representable \ve{} (a \emph{proarrow equipment} in the sense of \citeauthor{wood1982abstract}~\cite{wood1982abstract,wood1985proarrows}): their definition coincides with ours in that setting. However, our treatment is more general, and addresses several deficiencies with these previous approaches: we give a more detailed comparison throughout.

A different approach was proposed by \textcite{lobbia2023distributive}, who defined a notion of relative monad in any 2-category, generalising the \emph{extension systems} in a 2-category defined by \textcite{marmolejo2010monads}. While it is possible to capture relative monads for ordinary and internal categories in this setting, it is not possible to capture relative monads for enriched categories, and therefore is inadequate for our purposes.

Our motivation lies in the theory of relative monads in a virtual equipment, which in particular subsumes the theory of monads in a pseudo double category with companions and conjoints. Most of our results appear to be new even in the latter setting. We note that the theory of monads in a pseudo double category studied by \textcite{fiore2011monads,fiore2012double} is a theory of \emph{loose-monads} (\cref{loose-monad}) -- rather than of \emph{tight-monads} (\cref{tight-monad}), which is our concern -- and so is orthogonal to our development. We intend to explore the relation between the two notions more thoroughly in future work.

\subsection{Acknowledgements}

The authors thank John Bourke, Gabriele Lobbia, and Tarmo Uustalu for discussions about relative monads and skew-multicategories; Christian Williams for introducing the authors to string diagrams for double categories, which simplify many of the proofs; Paul Blain Levy and Morgan Rogers for pointing out an oversight in the definition of associative-normal left-skew-multicategory; and Dot Twocubes for pointing out an oversight in the definition of unitality therefor. The paper has benefitted from comments by Marcelo Fiore, Richard Garner, and Martin Hyland on an earlier development of the theory~\cite{arkor2022monadic}. The authors are also deeply grateful to the anonymous reviewer for their careful reading of the paper.
The second author was supported by Icelandic Research Fund grant \textnumero\,228684-052.

\section{Virtual equipments}
\label{virtual-double-categories}

There are many flavours of category theory -- enriched, internal, indexed and fibred, and so on -- each of which admits much of the same theory as ordinary category theory, such as the study of limits and colimits, adjunctions and monads, presheaves, pointwise extensions, and so on. To avoid the repetition inherent in proving the same theorems in each setting -- for instance, that every adjunction induces a monad, or that left adjoints preserve colimits -- it is desirable to work in a general context in which (1) these theorems may be proven, and for which (2) each of these flavours of category theory is merely an example. This is the study of \emph{formal category theory}~\cite{gray1974formal}.

A fundamental question then arises: what is an appropriate setting for formal category theory? In other words: what structure of categories is fundamental to their study? An evident choice is the 2-categorical structure possessed by categories, functors, and natural transformations, and early attempts to study formal category theory took place in the setting of 2-categories equipped with various property-like structure~\cite{gray1974formal,street1974fibrations,street1974elementary}. This setting is apt for studying some kinds of categorical structure, in particular monads and adjunctions~\cite{street1972two,street1972formal}, which are essentially 2-categorical in nature. However, it was clear from the beginning that this setting was not expressive enough to capture many fundamental concepts in enriched category theory.

The shortcoming with 2-categories as a setting for formal category theory is the absence of a notion of \emph{hom} (such as hom-sets for ordinary categories, or hom-objects for enriched categories), which are crucial in defining concepts such as weighted limits and colimits, presheaves, pointwise extensions, and (crucially for our purposes) relative monads and relative adjunctions. While in some settings (notably for internal categories), homs may be captured faithfully using comma objects, justifying the use of 2-categories in these cases, this is not possible for enriched categories. Instead, homs must be provided as extra structure on a 2-category: this was the central insight of \textcite{street1978yoneda}, who introduced \emph{Yoneda structures} as a setting for formal category theory that captures enriched categories in addition to internal categories. A Yoneda structure axiomatises the presheaf construction together with the existence of nerves for suitably small functors. However, a shortcoming of the notion of Yoneda structure is that there are flavours of category theory that do not admit a presheaf construction: for instance, $\V$-enriched category theory for non-closed monoidal categories $\V$.

Shortly following the paper of \citeauthor{street1978yoneda}, \textcite{wood1982abstract} introduced \emph{proarrow equipments} as a simplification of Yoneda structures. Proarrow equipments axiomatise the structure of distributors (also called profunctors or (bi)modules), rather than the presheaf construction. A distributor from $A$ to $B$, denoted $A \lto B$, is simply a functor $B\op \times A \to \Set$. Distributors capture the structure of the hom-sets of a category: for every locally small category $A$, the Yoneda embedding forms a distributor $A({-}_1, {-}_2) \colon A\op \times A \to \Set$ (in fact, this forms the identity distributor on $A$). Every Yoneda structure induces a proarrow equipment by considering a distributor to be a 1-cell into a presheaf object, and in this sense proarrow equipments generalise Yoneda structures. Furthermore, since the existence of a presheaf construction is not required, proarrow equipments capture more general bases of enrichment than Yoneda structures.

However, the setting of proarrow equipments is not quite general enough to capture $\V$-enriched category theory for arbitrary $\V$. In particular, to compose distributors requires sufficient colimits in $\V$, which may not exist in general. This motivated \textcite{cruttwell2010unified} to introduce \emph{\ve{}s}, which are a generalisation of proarrow equipments that do not require the existence of composite distributors. In contrast to previous approaches, virtual equipments are general enough to capture enriched category for arbitrary bases of enrichment. For this reason, we view it as the appropriate setting in which to develop formal category theory, and it is the setting in which we work.

Our main example is the virtual equipment $\VCat$ of categories enriched in a monoidal category $\V$, which we discuss in \cref{relative-monads-in-VCat}. In particular, \cref{relative-monads-in-VCat} serves as a case study explaining how the general theory we present may be instantiated in concrete examples.

\subsection{Virtual double categories}

A \ve{} is in particular a \vdc{}, so we begin by recalling the definition and introducing the notation we shall use. A \vdc{} is a generalisation of a pseudo double category whose morphisms in one axis (the \emph{loose} axis) do not necessarily have composites, and whose morphisms in the other axis (the \emph{tight} axis) compose strictly. We shall employ a string diagram notation for \vdcs{} and equipments, which aids the readability of diagrammatic proofs. Our notation is based on that of \textcite{myers2020yoneda,myers2016string}, though we have made some alterations. For the convenience of readers unfamiliar with string diagrams, we generally present definitions in terms both of pasting diagrams and of string diagrams, but use either as convenient in proofs.

\begin{definition}[{\cites[61]{burroni1971tcategories}[Definition~1]{leinster2002generalized}[Definition~2.1]{cruttwell2010unified}}]
    A \emph{\vdc{}} comprises the following data.
    \begin{enumerate}
        \item A category of \emph{objects} and \emph{tight-cells}.
		We will occasionally elide object names where unimportant in pasting diagrams, denoting each (potentially distinct) object by a point ($\hspace{.3em}\cdot\hspace{.3em}$).
		In string diagrammatic notation, we denote an object by a region, such as the following.
        \begin{center}
			\tgRegionLabel{6}{A}
			\tgRegionLabel{10}{B}
			\tgRegionLabel{2}{C}
			\tgRegionLabel{0}{D}
			\tgRegionLabel{4}{E}
		\end{center}
		In practice, we elide the object names in string diagrams, which may be inferred from context.
		To aid readability, we will often colour regions, using a different colour for each object.
		The colours are not essential for interpreting the string diagrams.

		We denote a tight-cell $f$ from an object $A$ to an object $B$ by an arrow $f \colon A \to B$; denote the composition of tight-cells $f \colon A \to B$ and $g \colon B \to C$ both by $(f \d g) \colon A \to C$ and by $g f \colon A \to C$; and denote the identity of an object $A$ by $1_A \colon A \to A$, or simply by $=$ in pasting diagrams. In string diagrammatic notation, we denote a tight-cell $f \colon A \to B$ by a horizontal line decorated with an arrow. (The purpose of the arrow will be explained in \cref{cartesian-cell}.)
		\[
		% https://varkor.github.io/tangle/?t=W1tbNywxMV1dLFtbWzEsWzAsMSwwLDFdXV1dLFtdLFtbMCwwLDAsImYiXSxbMCwwLDIsImYiXV1d&c=F5A3A3,F5CCA3,F5F5A3,CCF5A3,A3F5A3,A3F5CC,A3F5F5,A3CCF5,A3A3F5,CCA3F5,F5A3F5,F5A3CC
		\begin{tangle}{(1,1)}
			\tgBorderA{(0,0)}{\tgColour6}{\tgColour6}{\tgColour10}{\tgColour10}
			\tgArrow{(0,0)}{0}
			\tgAxisLabel{(1,0.5)}{west}{f}
			\tgAxisLabel{(0,0.5)}{east}{f}
		\end{tangle}
		\]
		Composition of tight-cells $(f \d g)$ is denoted by vertical conjunction.
		\[
		% https://varkor.github.io/tangle/?t=W1tbNywxMSwzXV0sW1tbMSxbMCwxLDAsMV1dXSxbWzEsWzAsMSwwLDFdXV1dLFtdLFtbMCwwLDAsImYiXSxbMCwwLDIsImYiXSxbMCwxLDAsImciXSxbMCwxLDIsImciXV1d&c=F5A3A3,F5CCA3,F5F5A3,CCF5A3,A3F5A3,A3F5CC,A3F5F5,A3CCF5,A3A3F5,CCA3F5,F5A3F5,F5A3CC
		\begin{tangle}{(1,2)}
			\tgBorderA{(0,0)}{\tgColour6}{\tgColour6}{\tgColour10}{\tgColour10}
			\tgArrow{(0,0)}{0}
			\tgBorderA{(0,1)}{\tgColour10}{\tgColour10}{\tgColour2}{\tgColour2}
			\tgArrow{(0,1)}{0}
			\tgAxisLabel{(1,0.5)}{west}{f}
			\tgAxisLabel{(0,0.5)}{east}{f}
			\tgAxisLabel{(1,1.5)}{west}{g}
			\tgAxisLabel{(0,1.5)}{east}{g}
		\end{tangle}
		\quad = \quad\;\,
		% https://varkor.github.io/tangle/?t=W1tbNywzXV0sW1tbMSxbMCwxLDAsMV1dXV0sW10sW1swLDAsMCwiZiBcXGQgZyJdLFswLDAsMiwiZiBcXGQgZyJdXV0=&c=F5A3A3,F5CCA3,F5F5A3,CCF5A3,A3F5A3,A3F5CC,A3F5F5,A3CCF5,A3A3F5,CCA3F5,F5A3F5,F5A3CC
		\begin{tangle}{(1,1)}
			\tgBorderA{(0,0)}{\tgColour6}{\tgColour6}{\tgColour2}{\tgColour2}
			\tgArrow{(0,0)}{0}
			\tgAxisLabel{(1,0.5)}{west}{f \d g}
			\tgAxisLabel{(0,0.5)}{east}{f \d g}
		\end{tangle}
		\]
		Identity tight-cells are implicit in string diagrams.
        \item For each pair of objects $A$ and $B$, a class of \emph{loose-cells} from $A$ to $B$. We denote a loose-cell $p$ from $A$ to $B$ by an arrow with a vertical stroke $p \colon A \lto B$. In string diagrammatic notation, we denote such a loose-cell by a vertical line, as follows. Note that the region on the right corresponds to the object $A$, while the region on the left corresponds to the object $B$. One may therefore be inclined to instead write $p \colon B \lfrom A$, though we shall generally not do so here.
        \[
		% https://varkor.github.io/tangle/?t=W1tbMTEsN11dLFtbWzEsWzEsMCwxLDBdXV1dLFtdLFtbMCwwLDEsInAiXSxbMCwwLDMsInAiXV1d&c=F5A3A3,F5CCA3,F5F5A3,CCF5A3,A3F5A3,A3F5CC,A3F5F5,A3CCF5,A3A3F5,CCA3F5,F5A3F5,F5A3CC
		\begin{tangle}{(1,1)}
			\tgBorderA{(0,0)}{\tgColour10}{\tgColour6}{\tgColour6}{\tgColour10}
			\tgAxisLabel{(0.5,1)}{north}{p}
			\tgAxisLabel{(0.5,0)}{south}{p}
		\end{tangle}
		\]
        \item For each chain of loose-cells $p_1, \ldots, p_n$ ($n \geq 0$) and compatible tight-cells $f_0, f_n$ and loose-cell $q$ (together forming a \emph{frame}), a class of 2-cells with the given frame.
        % https://q.uiver.app/#q=WzAsNyxbNCwxLCJCX24iXSxbNCwwLCJBX24iXSxbMCwxLCJCXzAiXSxbMywwLCJBX3tuIC0gMX0iXSxbMSwwLCJBXzEiXSxbMCwwLCJBXzAiXSxbMiwwLCJcXGNkb3RzIl0sWzAsMiwicSIsMCx7InN0eWxlIjp7ImJvZHkiOnsibmFtZSI6ImJhcnJlZCJ9fX1dLFsxLDMsInBfbiIsMix7InN0eWxlIjp7ImJvZHkiOnsibmFtZSI6ImJhcnJlZCJ9fX1dLFs0LDUsInBfMSIsMix7InN0eWxlIjp7ImJvZHkiOnsibmFtZSI6ImJhcnJlZCJ9fX1dLFszLDYsInBfe24gLSAxfSIsMix7InN0eWxlIjp7ImJvZHkiOnsibmFtZSI6ImJhcnJlZCJ9fX1dLFs2LDQsInBfMiIsMix7InN0eWxlIjp7ImJvZHkiOnsibmFtZSI6ImJhcnJlZCJ9fX1dLFsxLDAsImZfbiJdLFs1LDIsImZfMCIsMl0sWzEyLDEzLCJcXHBoaSIsMSx7InNob3J0ZW4iOnsic291cmNlIjoyMCwidGFyZ2V0IjoyMH0sInN0eWxlIjp7ImJvZHkiOnsibmFtZSI6Im5vbmUifSwiaGVhZCI6eyJuYW1lIjoibm9uZSJ9fX1dXQ==
		\[\begin{tikzcd}
			{A_0} & {A_1} & \cdots & {A_{n - 1}} & {A_n} \\
			{B_0} &&&& {B_n}
			\arrow["q", "\shortmid"{marking}, from=2-5, to=2-1]
			\arrow["{p_n}"', "\shortmid"{marking}, from=1-5, to=1-4]
			\arrow["{p_1}"', "\shortmid"{marking}, from=1-2, to=1-1]
			\arrow["{p_{n - 1}}"', "\shortmid"{marking}, from=1-4, to=1-3]
			\arrow["{p_2}"', "\shortmid"{marking}, from=1-3, to=1-2]
			\arrow[""{name=0, anchor=center, inner sep=0}, "{f_n}", from=1-5, to=2-5]
			\arrow[""{name=1, anchor=center, inner sep=0}, "{f_0}"', from=1-1, to=2-1]
			\arrow["\phi"{description}, draw=none, from=0, to=1]
		\end{tikzcd}\]
		In string diagrammatic notation, we denote such a 2-cell by a bubble as follows.
		\[
		% https://varkor.github.io/tangle/?t=W1tbXV0sW1tbMCxbXV0sWzEsWzEsMCwxLDBdXSxbMSxbMSwwLDEsMF1dLFswLFtdXSxbMSxbMSwwLDEsMF1dLFsxLFsxLDAsMSwwXV0sWzAsW11dXSxbWzEsWzAsMSwwLDFdXSxbMSxbMSwxLDAsMV1dLFsxLFsxLDEsMCwxXV0sWzEsWzAsMSwxLDFdXSxbMSxbMSwxLDAsMV1dLFsxLFsxLDEsMCwxXV0sWzEsWzAsMSwwLDFdXV0sW1swLFtdXSxbMCxbXV0sWzAsW11dLFsxLFsxLDAsMSwwXV0sWzAsW11dLFswLFtdXSxbMCxbXV1dXSxbWzAsMy41LDEuNSxbIlxccGhpIiw0XV0sWzEsMSwxLjUsWzBdXSxbMSw2LDEuNSxbMF1dXSxbWzEsMCwzLCJwXzEiXSxbMiwwLDMsInBfMiJdLFs0LDAsMywicF97biAtIDF9Il0sWzUsMCwzLCJwX24iXSxbMCwxLDIsImZfMCJdLFs2LDEsMCwiZl9uIl0sWzMsMiwxLCJxIl1dXQ==&c=F5A3A3,F5CCA3,F5F5A3,CCF5A3,A3F5A3,A3F5CC,A3F5F5,A3CCF5,A3A3F5,CCA3F5,F5A3F5,F5A3CC
		\begin{tangle}{(7,3)}[trim x,trim y]
			\tgBlank{(0,0)}{white}
			\tgBorderA{(1,0)}{white}{white}{white}{white}
			\tgBorder{(1,0)}{1}{0}{1}{0}
			\tgBorderA{(2,0)}{white}{white}{white}{white}
			\tgBorder{(2,0)}{1}{0}{1}{0}
			\tgBlank{(3,0)}{white}
			\tgBorderA{(4,0)}{white}{white}{white}{white}
			\tgBorder{(4,0)}{1}{0}{1}{0}
			\tgBorderA{(5,0)}{white}{white}{white}{white}
			\tgBorder{(5,0)}{1}{0}{1}{0}
			\tgBlank{(6,0)}{white}
			\tgBorderA{(0,1)}{white}{white}{white}{white}
			\tgBorder{(0,1)}{0}{1}{0}{1}
			\tgBorderA{(1,1)}{white}{white}{white}{white}
			\tgBorder{(1,1)}{1}{1}{0}{1}
			\tgBorderA{(2,1)}{white}{white}{white}{white}
			\tgBorder{(2,1)}{1}{1}{0}{1}
			\tgBorderA{(3,1)}{white}{white}{white}{white}
			\tgBorder{(3,1)}{0}{1}{1}{1}
			\tgBorderA{(4,1)}{white}{white}{white}{white}
			\tgBorder{(4,1)}{1}{1}{0}{1}
			\tgBorderA{(5,1)}{white}{white}{white}{white}
			\tgBorder{(5,1)}{1}{1}{0}{1}
			\tgBorderA{(6,1)}{white}{white}{white}{white}
			\tgBorder{(6,1)}{0}{1}{0}{1}
			\tgBlank{(0,2)}{white}
			\tgBlank{(1,2)}{white}
			\tgBlank{(2,2)}{white}
			\tgBorderA{(3,2)}{white}{white}{white}{white}
			\tgBorder{(3,2)}{1}{0}{1}{0}
			\tgBlank{(4,2)}{white}
			\tgBlank{(5,2)}{white}
			\tgBlank{(6,2)}{white}
			\tgCell[(4,0)]{(3,1)}{\phi}
			\tgArrow{(0.5,1)}{0}
			\tgArrow{(5.5,1)}{0}
			\tgAxisLabel{(1.5,0.75)}{south}{p_1}
			\tgAxisLabel{(2.5,0.75)}{south}{p_2}
			\tgAxisLabel{(4.5,0.75)}{south}{p_{n - 1}}
			\tgAxisLabel{(5.5,0.75)}{south}{p_n}
			\tgAxisLabel{(0.75,1.5)}{east}{f_0}
			\tgAxisLabel{(6.25,1.5)}{west}{f_n}
			\tgAxisLabel{(3.5,2.25)}{north}{q}
			\node at (3.5,.9) {$\cdots$};
		\end{tangle}
		\]
		Observe that both our pasting diagrams and our string diagrams are written from right-to-left (matching nondiagrammatic composition order of loose-cells).

		When the tight-cells in the frame of the 2-cell $\phi$ are identities, we say that $\phi$ is \emph{globular} and denote it by $\phi \colon p_1, \ldots, p_n \tto q$. We shall have no need to denote more general 2-cells nondiagrammatically.
		% https://q.uiver.app/#q=WzAsNyxbNCwxLCJcXGNkb3QiXSxbNCwwLCJcXGNkb3QiXSxbMCwxLCJcXGNkb3QiXSxbMywwLCJcXGNkb3QiXSxbMiwwLCJcXGNkb3RzIl0sWzAsMCwiXFxjZG90Il0sWzEsMCwiXFxjZG90Il0sWzAsMSwiIiwyLHsibGV2ZWwiOjIsInN0eWxlIjp7ImhlYWQiOnsibmFtZSI6Im5vbmUifX19XSxbMCwyLCJxIiwwLHsic3R5bGUiOnsiYm9keSI6eyJuYW1lIjoiYmFycmVkIn19fV0sWzEsMywicF9uIiwyLHsic3R5bGUiOnsiYm9keSI6eyJuYW1lIjoiYmFycmVkIn19fV0sWzIsNSwiIiwyLHsibGV2ZWwiOjIsInN0eWxlIjp7ImhlYWQiOnsibmFtZSI6Im5vbmUifX19XSxbNiw1LCJwXzEiLDIseyJzdHlsZSI6eyJib2R5Ijp7Im5hbWUiOiJiYXJyZWQifX19XSxbMyw0LCJwX3tuIC0gMX0iLDIseyJzdHlsZSI6eyJib2R5Ijp7Im5hbWUiOiJiYXJyZWQifX19XSxbNCw2LCJwXzIiLDIseyJzdHlsZSI6eyJib2R5Ijp7Im5hbWUiOiJiYXJyZWQifX19XSxbNywxMCwiXFxwaGkiLDEseyJzaG9ydGVuIjp7InNvdXJjZSI6MjAsInRhcmdldCI6MjB9LCJzdHlsZSI6eyJib2R5Ijp7Im5hbWUiOiJub25lIn0sImhlYWQiOnsibmFtZSI6Im5vbmUifX19XV0=
		\[\begin{tikzcd}
			\cdot & \cdot & \cdots & \cdot & \cdot \\
			\cdot &&&& \cdot
			\arrow[""{name=0, anchor=center, inner sep=0}, Rightarrow, no head, from=2-5, to=1-5]
			\arrow["q", "\shortmid"{marking}, from=2-5, to=2-1]
			\arrow["{p_n}"', "\shortmid"{marking}, from=1-5, to=1-4]
			\arrow[""{name=1, anchor=center, inner sep=0}, Rightarrow, no head, from=2-1, to=1-1]
			\arrow["{p_1}"', "\shortmid"{marking}, from=1-2, to=1-1]
			\arrow["{p_{n - 1}}"', "\shortmid"{marking}, from=1-4, to=1-3]
			\arrow["{p_2}"', "\shortmid"{marking}, from=1-3, to=1-2]
			\arrow["\phi"{description}, draw=none, from=0, to=1]
		\end{tikzcd}\]
		In pasting diagrammatic notation, we denote a nullary 2-cell by a square of the following form.
		% https://q.uiver.app/#q=WzAsNCxbMSwwLCJBIl0sWzAsMCwiQSJdLFsxLDEsIkInIl0sWzAsMSwiQiJdLFsyLDMsInAiLDAseyJzdHlsZSI6eyJib2R5Ijp7Im5hbWUiOiJiYXJyZWQifX19XSxbMCwyLCJmJyJdLFsxLDMsImYiLDJdLFswLDEsIiIsMix7ImxldmVsIjoyLCJzdHlsZSI6eyJoZWFkIjp7Im5hbWUiOiJub25lIn19fV0sWzUsNiwiXFxwaGkiLDEseyJzaG9ydGVuIjp7InNvdXJjZSI6MjAsInRhcmdldCI6MjB9LCJzdHlsZSI6eyJib2R5Ijp7Im5hbWUiOiJub25lIn0sImhlYWQiOnsibmFtZSI6Im5vbmUifX19XV0=
		\[\begin{tikzcd}
			A & A \\
			B & {B'}
			\arrow["p", "\shortmid"{marking}, from=2-2, to=2-1]
			\arrow[""{name=0, anchor=center, inner sep=0}, "{f'}", from=1-2, to=2-2]
			\arrow[""{name=1, anchor=center, inner sep=0}, "f"', from=1-1, to=2-1]
			\arrow[Rightarrow, no head, from=1-2, to=1-1]
			\arrow["\phi"{description}, draw=none, from=0, to=1]
		\end{tikzcd}\]
        \item For every configuration of 2-cells of the following shape,
		% https://q.uiver.app/#q=WzAsMTQsWzYsMSwiXFxjZG90Il0sWzYsMCwiXFxjZG90Il0sWzQsMSwiXFxjZG90Il0sWzQsMCwiXFxjZG90Il0sWzUsMCwiXFxjZG90cyJdLFszLDAsIlxcY2RvdHMiXSxbMywxLCJcXGNkb3RzIl0sWzIsMCwiXFxjZG90Il0sWzEsMCwiXFxjZG90cyJdLFswLDAsIlxcY2RvdCJdLFsyLDEsIlxcY2RvdCJdLFswLDEsIlxcY2RvdCJdLFs2LDIsIlxcY2RvdCJdLFswLDIsIlxcY2RvdCJdLFswLDIsIiIsMCx7InN0eWxlIjp7ImJvZHkiOnsibmFtZSI6ImJhcnJlZCJ9fX1dLFsxLDAsImYnIl0sWzMsMl0sWzEsNCwicF97bV9ufV5uIiwyLHsic3R5bGUiOnsiYm9keSI6eyJuYW1lIjoiYmFycmVkIn19fV0sWzQsMywicF97MX1ebiIsMix7InN0eWxlIjp7ImJvZHkiOnsibmFtZSI6ImJhcnJlZCJ9fX1dLFs3LDgsInBfe21fMX1eMSIsMix7InN0eWxlIjp7ImJvZHkiOnsibmFtZSI6ImJhcnJlZCJ9fX1dLFs4LDksInBfMV4xIiwyLHsic3R5bGUiOnsiYm9keSI6eyJuYW1lIjoiYmFycmVkIn19fV0sWzEwLDExLCIiLDAseyJzdHlsZSI6eyJib2R5Ijp7Im5hbWUiOiJiYXJyZWQifX19XSxbNywxMF0sWzksMTEsImYiLDJdLFswLDEyLCJnJyJdLFsxMiwxMywicSIsMCx7InN0eWxlIjp7ImJvZHkiOnsibmFtZSI6ImJhcnJlZCJ9fX1dLFsxMSwxMywiZyIsMl0sWzE1LDE2LCJcXHBoaV9uIiwxLHsic2hvcnRlbiI6eyJzb3VyY2UiOjIwLCJ0YXJnZXQiOjIwfSwic3R5bGUiOnsiYm9keSI6eyJuYW1lIjoibm9uZSJ9LCJoZWFkIjp7Im5hbWUiOiJub25lIn19fV0sWzIyLDIzLCJcXHBoaV8xIiwxLHsic2hvcnRlbiI6eyJzb3VyY2UiOjIwLCJ0YXJnZXQiOjIwfSwic3R5bGUiOnsiYm9keSI6eyJuYW1lIjoibm9uZSJ9LCJoZWFkIjp7Im5hbWUiOiJub25lIn19fV0sWzI0LDI2LCJcXHBzaSIsMSx7InNob3J0ZW4iOnsic291cmNlIjoyMCwidGFyZ2V0IjoyMH0sInN0eWxlIjp7ImJvZHkiOnsibmFtZSI6Im5vbmUifSwiaGVhZCI6eyJuYW1lIjoibm9uZSJ9fX1dXQ==
		\[\begin{tikzcd}
			\cdot & \cdots & \cdot & \cdots & \cdot & \cdots & \cdot \\
			\cdot && \cdot & \cdots & \cdot && \cdot \\
			\cdot &&&&&& \cdot
			\arrow["\shortmid"{marking}, from=2-7, to=2-5]
			\arrow[""{name=0, anchor=center, inner sep=0}, "{f'}", from=1-7, to=2-7]
			\arrow[""{name=1, anchor=center, inner sep=0}, from=1-5, to=2-5]
			\arrow["{p_{m_n}^n}"', "\shortmid"{marking}, from=1-7, to=1-6]
			\arrow["{p_{1}^n}"', "\shortmid"{marking}, from=1-6, to=1-5]
			\arrow["{p_{m_1}^1}"', "\shortmid"{marking}, from=1-3, to=1-2]
			\arrow["{p_1^1}"', "\shortmid"{marking}, from=1-2, to=1-1]
			\arrow["\shortmid"{marking}, from=2-3, to=2-1]
			\arrow[""{name=2, anchor=center, inner sep=0}, from=1-3, to=2-3]
			\arrow[""{name=3, anchor=center, inner sep=0}, "f"', from=1-1, to=2-1]
			\arrow[""{name=4, anchor=center, inner sep=0}, "{g'}", from=2-7, to=3-7]
			\arrow["q", "\shortmid"{marking}, from=3-7, to=3-1]
			\arrow[""{name=5, anchor=center, inner sep=0}, "g"', from=2-1, to=3-1]
			\arrow["{\phi_n}"{description}, draw=none, from=0, to=1]
			\arrow["{\phi_1}"{description}, draw=none, from=2, to=3]
			\arrow["\psi"{description}, draw=none, from=4, to=5]
		\end{tikzcd}\]
		a 2-cell,
		% https://q.uiver.app/#q=WzAsNSxbMiwwLCJcXGNkb3QiXSxbMSwwLCJcXGNkb3RzIl0sWzAsMCwiXFxjZG90Il0sWzIsMSwiXFxjZG90Il0sWzAsMSwiXFxjZG90Il0sWzAsMSwicF97bV9ufV5uIiwyLHsic3R5bGUiOnsiYm9keSI6eyJuYW1lIjoiYmFycmVkIn19fV0sWzMsNCwicSIsMCx7InN0eWxlIjp7ImJvZHkiOnsibmFtZSI6ImJhcnJlZCJ9fX1dLFswLDMsImYnIFxcZCBnJyJdLFsyLDQsImYgXFxkIGciLDJdLFsxLDIsInBfMV4xIiwyLHsic3R5bGUiOnsiYm9keSI6eyJuYW1lIjoiYmFycmVkIn19fV0sWzcsOCwiKFxccGhpXzEsIFxcbGRvdHMsIFxccGhpX24pIFxcZCBcXHBzaSIsMSx7InNob3J0ZW4iOnsic291cmNlIjoyMCwidGFyZ2V0IjoyMH0sInN0eWxlIjp7ImJvZHkiOnsibmFtZSI6Im5vbmUifSwiaGVhZCI6eyJuYW1lIjoibm9uZSJ9fX1dXQ==
		\[\begin{tikzcd}
			\cdot & \cdots & \cdot \\
			\cdot && \cdot
			\arrow["{p_{m_n}^n}"', "\shortmid"{marking}, from=1-3, to=1-2]
			\arrow["q", "\shortmid"{marking}, from=2-3, to=2-1]
			\arrow[""{name=0, anchor=center, inner sep=0}, "{f' \d g'}", from=1-3, to=2-3]
			\arrow[""{name=1, anchor=center, inner sep=0}, "{f \d g}"', from=1-1, to=2-1]
			\arrow["{p_1^1}"', "\shortmid"{marking}, from=1-2, to=1-1]
			\arrow["{(\phi_1, \ldots, \phi_n) \d \psi}"{description}, draw=none, from=0, to=1]
		\end{tikzcd}\]
		the \emph{composite}.
		In string diagrammatic notation, composition of loose-cells is given by conjunction.
		\[
		% https://varkor.github.io/tangle/?t=W1tbXV0sW1tbMCxbXV0sWzEsWzEsMCwxLDBdXSxbMCxbXV0sWzEsWzEsMCwxLDBdXSxbMCxbXV0sWzEsWzEsMCwxLDBdXSxbMCxbXV0sWzEsWzEsMCwxLDBdXSxbMCxbXV1dLFtbMSxbMCwxLDAsMV1dLFsxLFsxLDEsMCwxXV0sWzEsWzAsMSwxLDFdXSxbMSxbMSwxLDAsMV1dLFswLFtdXSxbMSxbMSwxLDAsMV1dLFsxLFswLDEsMSwxXV0sWzEsWzEsMSwwLDFdXSxbMSxbMCwxLDAsMV1dXSxbWzEsWzAsMSwwLDFdXSxbMSxbMCwxLDAsMV1dLFsxLFsxLDEsMCwxXV0sWzEsWzAsMSwwLDFdXSxbMSxbMCwxLDEsMV1dLFsxLFswLDEsMCwxXV0sWzEsWzEsMSwwLDFdXSxbMSxbMCwxLDAsMV1dLFsxLFswLDEsMCwxXV1dLFtbMCxbXV0sWzAsW11dLFswLFtdXSxbMCxbXV0sWzEsWzEsMCwxLDBdXSxbMCxbXV0sWzAsW11dLFswLFtdXSxbMCxbXV1dXSxbWzAsMi41LDEuNSxbIlxccGhpXzEiLDJdXSxbMCw0LjUsMi41LFsiXFxwc2kiLDRdXSxbMCw2LjUsMS41LFsiXFxwaGlfbiIsMl1dLFsxLDEsMS41LFswXV0sWzEsMSwyLjUsWzBdXSxbMSwyLDIuNSxbMF1dLFsxLDgsMS41LFswXV0sWzEsNywyLjUsWzBdXSxbMSw4LDIuNSxbMF1dXSxbWzEsMCwzLCJwXzFeMSJdLFszLDAsMywicF97bV8xfV4xIl0sWzUsMCwzLCJwX3sxfV5uIl0sWzcsMCwzLCJwX3ttX259Xm4iXSxbMCwxLDIsImYiXSxbOCwxLDAsImYnIl0sWzAsMiwyLCJnIl0sWzgsMiwwLCJnJyJdLFs0LDMsMSwicSJdXV0=&c=F5A3A3,F5CCA3,F5F5A3,CCF5A3,A3F5A3,A3F5CC,A3F5F5,A3CCF5,A3A3F5,CCA3F5,F5A3F5,F5A3CC
		\begin{tangle}{(9,4)}[trim x,trim y]
			\tgBlank{(0,0)}{white}
			\tgBorderA{(1,0)}{white}{white}{white}{white}
			\tgBorder{(1,0)}{1}{0}{1}{0}
			\tgBlank{(2,0)}{white}
			\tgBorderA{(3,0)}{white}{white}{white}{white}
			\tgBorder{(3,0)}{1}{0}{1}{0}
			\tgBlank{(4,0)}{white}
			\tgBorderA{(5,0)}{white}{white}{white}{white}
			\tgBorder{(5,0)}{1}{0}{1}{0}
			\tgBlank{(6,0)}{white}
			\tgBorderA{(7,0)}{white}{white}{white}{white}
			\tgBorder{(7,0)}{1}{0}{1}{0}
			\tgBlank{(8,0)}{white}
			\tgBorderA{(0,1)}{white}{white}{white}{white}
			\tgBorder{(0,1)}{0}{1}{0}{1}
			\tgBorderA{(1,1)}{white}{white}{white}{white}
			\tgBorder{(1,1)}{1}{1}{0}{1}
			\tgBorderA{(2,1)}{white}{white}{white}{white}
			\tgBorder{(2,1)}{0}{1}{1}{1}
			\tgBorderA{(3,1)}{white}{white}{white}{white}
			\tgBorder{(3,1)}{1}{1}{0}{1}
			\tgBlank{(4,1)}{white}
			\tgBorderA{(5,1)}{white}{white}{white}{white}
			\tgBorder{(5,1)}{1}{1}{0}{1}
			\tgBorderA{(6,1)}{white}{white}{white}{white}
			\tgBorder{(6,1)}{0}{1}{1}{1}
			\tgBorderA{(7,1)}{white}{white}{white}{white}
			\tgBorder{(7,1)}{1}{1}{0}{1}
			\tgBorderA{(8,1)}{white}{white}{white}{white}
			\tgBorder{(8,1)}{0}{1}{0}{1}
			\tgBorderA{(0,2)}{white}{white}{white}{white}
			\tgBorder{(0,2)}{0}{1}{0}{1}
			\tgBorderA{(1,2)}{white}{white}{white}{white}
			\tgBorder{(1,2)}{0}{1}{0}{1}
			\tgBorderA{(2,2)}{white}{white}{white}{white}
			\tgBorder{(2,2)}{1}{1}{0}{1}
			\tgBorderA{(3,2)}{white}{white}{white}{white}
			\tgBorder{(3,2)}{0}{1}{0}{1}
			\tgBorderA{(4,2)}{white}{white}{white}{white}
			\tgBorder{(4,2)}{0}{1}{1}{1}
			\tgBorderA{(5,2)}{white}{white}{white}{white}
			\tgBorder{(5,2)}{0}{1}{0}{1}
			\tgBorderA{(6,2)}{white}{white}{white}{white}
			\tgBorder{(6,2)}{1}{1}{0}{1}
			\tgBorderA{(7,2)}{white}{white}{white}{white}
			\tgBorder{(7,2)}{0}{1}{0}{1}
			\tgBorderA{(8,2)}{white}{white}{white}{white}
			\tgBorder{(8,2)}{0}{1}{0}{1}
			\tgBlank{(0,3)}{white}
			\tgBlank{(1,3)}{white}
			\tgBlank{(2,3)}{white}
			\tgBlank{(3,3)}{white}
			\tgBorderA{(4,3)}{white}{white}{white}{white}
			\tgBorder{(4,3)}{1}{0}{1}{0}
			\tgBlank{(5,3)}{white}
			\tgBlank{(6,3)}{white}
			\tgBlank{(7,3)}{white}
			\tgBlank{(8,3)}{white}
			\tgCell[(2,0)]{(2,1)}{\phi_1}
			\tgCell[(4,0)]{(4,2)}{\psi}
			\tgCell[(2,0)]{(6,1)}{\phi_n}
			\tgArrow{(0.5,1)}{0}
			\tgArrow{(0.5,2)}{0}
			\tgArrow{(1.5,2)}{0}
			\tgArrow{(7.5,1)}{0}
			\tgArrow{(6.5,2)}{0}
			\tgArrow{(7.5,2)}{0}
			\tgAxisLabel{(1.5,0.75)}{south}{p_1^1}
			\tgAxisLabel{(3.5,0.75)}{south}{p_{m_1}^1}
			\tgAxisLabel{(5.5,0.75)}{south}{p_{1}^n}
			\tgAxisLabel{(7.5,0.75)}{south}{p_{m_n}^n}
			\tgAxisLabel{(0.75,1.5)}{east}{f}
			\tgAxisLabel{(8.25,1.5)}{west}{f'}
			\tgAxisLabel{(0.75,2.5)}{east}{g}
			\tgAxisLabel{(8.25,2.5)}{west}{g'}
			\tgAxisLabel{(4.5,3.25)}{north}{q}
			\node at (2.5,.9) {$\cdots$};
			\node at (6.5,.9) {$\cdots$};
			\node at (4.5,1.5) {$\cdots$};
		\end{tangle}
		\]
        \item For each loose-cell $p \colon A' \lto A$, a 2-cell $1_p \colon p \tto p$, the \emph{identity} of $p$, denoted simply as $=$ in pasting diagrams.
		% https://q.uiver.app/#q=WzAsNCxbMSwwLCJBJyJdLFswLDAsIkEiXSxbMSwxLCJBJyJdLFswLDEsIkEiXSxbMCwxLCJwIiwyLHsic3R5bGUiOnsiYm9keSI6eyJuYW1lIjoiYmFycmVkIn19fV0sWzIsMywicCIsMCx7InN0eWxlIjp7ImJvZHkiOnsibmFtZSI6ImJhcnJlZCJ9fX1dLFswLDIsIiIsMCx7ImxldmVsIjoyLCJzdHlsZSI6eyJoZWFkIjp7Im5hbWUiOiJub25lIn19fV0sWzEsMywiIiwyLHsibGV2ZWwiOjIsInN0eWxlIjp7ImhlYWQiOnsibmFtZSI6Im5vbmUifX19XSxbNiw3LCI9IiwxLHsic2hvcnRlbiI6eyJzb3VyY2UiOjIwLCJ0YXJnZXQiOjIwfSwic3R5bGUiOnsiYm9keSI6eyJuYW1lIjoibm9uZSJ9LCJoZWFkIjp7Im5hbWUiOiJub25lIn19fV1d
        \[\begin{tikzcd}
        	A & {A'} \\
        	A & {A'}
        	\arrow[""{name=0, anchor=center, inner sep=0}, Rightarrow, no head, from=1-1, to=2-1]
        	\arrow["p"', "\shortmid"{marking}, from=1-2, to=1-1]
        	\arrow[""{name=1, anchor=center, inner sep=0}, Rightarrow, no head, from=1-2, to=2-2]
        	\arrow["p", "\shortmid"{marking}, from=2-2, to=2-1]
        	\arrow["{=}"{description}, draw=none, from=1, to=0]
        \end{tikzcd}\]
		Identity 2-cells are implicit in string diagrams.
    \end{enumerate}
	We shall sometimes denote by
	% https://q.uiver.app/#q=WzAsNixbMiwwLCJcXGNkb3QiXSxbMSwwLCJcXGNkb3RzIl0sWzAsMCwiXFxjZG90Il0sWzIsMSwiXFxjZG90Il0sWzEsMSwiXFxjZG90cyJdLFswLDEsIlxcY2RvdCJdLFswLDEsIiIsMix7InN0eWxlIjp7ImJvZHkiOnsibmFtZSI6ImJhcnJlZCJ9fX1dLFsxLDIsIiIsMix7InN0eWxlIjp7ImJvZHkiOnsibmFtZSI6ImJhcnJlZCJ9fX1dLFszLDQsIiIsMix7InN0eWxlIjp7ImJvZHkiOnsibmFtZSI6ImJhcnJlZCJ9fX1dLFs0LDUsIiIsMix7InN0eWxlIjp7ImJvZHkiOnsibmFtZSI6ImJhcnJlZCJ9fX1dLFswLDMsIiIsMSx7ImxldmVsIjoyLCJzdHlsZSI6eyJoZWFkIjp7Im5hbWUiOiJub25lIn19fV0sWzIsNSwiIiwxLHsibGV2ZWwiOjIsInN0eWxlIjp7ImhlYWQiOnsibmFtZSI6Im5vbmUifX19XSxbMTAsMTEsIj0iLDEseyJzaG9ydGVuIjp7InNvdXJjZSI6MjAsInRhcmdldCI6MjB9LCJzdHlsZSI6eyJib2R5Ijp7Im5hbWUiOiJub25lIn0sImhlYWQiOnsibmFtZSI6Im5vbmUifX19XV0=
	\[\begin{tikzcd}
		\cdot & \cdots & \cdot \\
		\cdot & \cdots & \cdot
		\arrow["\shortmid"{marking}, from=1-3, to=1-2]
		\arrow["\shortmid"{marking}, from=1-2, to=1-1]
		\arrow["\shortmid"{marking}, from=2-3, to=2-2]
		\arrow["\shortmid"{marking}, from=2-2, to=2-1]
		\arrow[""{name=0, anchor=center, inner sep=0}, Rightarrow, no head, from=1-3, to=2-3]
		\arrow[""{name=1, anchor=center, inner sep=0}, Rightarrow, no head, from=1-1, to=2-1]
		\arrow["{=}"{description}, draw=none, from=0, to=1]
	\end{tikzcd}\]
	the juxtaposition of identity 2-cells
	% https://q.uiver.app/#q=WzAsOCxbMywwLCJcXGNkb3QiXSxbMiwwLCJcXGNkb3QiXSxbMCwwLCJcXGNkb3QiXSxbMywxLCJcXGNkb3QiXSxbMSwxLCJcXGNkb3QiXSxbMCwxLCJcXGNkb3QiXSxbMSwwLCJcXGNkb3QiXSxbMiwxLCJcXGNkb3QiXSxbMCwxLCIiLDIseyJzdHlsZSI6eyJib2R5Ijp7Im5hbWUiOiJiYXJyZWQifX19XSxbNCw1LCIiLDIseyJzdHlsZSI6eyJib2R5Ijp7Im5hbWUiOiJiYXJyZWQifX19XSxbMCwzLCIiLDEseyJsZXZlbCI6Miwic3R5bGUiOnsiaGVhZCI6eyJuYW1lIjoibm9uZSJ9fX1dLFsyLDUsIiIsMSx7ImxldmVsIjoyLCJzdHlsZSI6eyJoZWFkIjp7Im5hbWUiOiJub25lIn19fV0sWzYsMiwiIiwyLHsic3R5bGUiOnsiYm9keSI6eyJuYW1lIjoiYmFycmVkIn19fV0sWzMsNywiIiwyLHsic3R5bGUiOnsiYm9keSI6eyJuYW1lIjoiYmFycmVkIn19fV0sWzEsNiwiXFxjZG90cyIsMSx7InN0eWxlIjp7ImJvZHkiOnsibmFtZSI6Im5vbmUifSwiaGVhZCI6eyJuYW1lIjoibm9uZSJ9fX1dLFs3LDQsIlxcY2RvdHMiLDEseyJzdHlsZSI6eyJib2R5Ijp7Im5hbWUiOiJub25lIn0sImhlYWQiOnsibmFtZSI6Im5vbmUifX19XSxbMSw3LCIiLDEseyJsZXZlbCI6Miwic3R5bGUiOnsiaGVhZCI6eyJuYW1lIjoibm9uZSJ9fX1dLFs2LDQsIiIsMSx7ImxldmVsIjoyLCJzdHlsZSI6eyJoZWFkIjp7Im5hbWUiOiJub25lIn19fV0sWzEwLDE2LCI9IiwxLHsic2hvcnRlbiI6eyJzb3VyY2UiOjIwLCJ0YXJnZXQiOjIwfSwic3R5bGUiOnsiYm9keSI6eyJuYW1lIjoibm9uZSJ9LCJoZWFkIjp7Im5hbWUiOiJub25lIn19fV0sWzE3LDExLCI9IiwxLHsic2hvcnRlbiI6eyJzb3VyY2UiOjIwLCJ0YXJnZXQiOjIwfSwic3R5bGUiOnsiYm9keSI6eyJuYW1lIjoibm9uZSJ9LCJoZWFkIjp7Im5hbWUiOiJub25lIn19fV1d
	\[\begin{tikzcd}
		\cdot & \cdot & \cdot & \cdot \\
		\cdot & \cdot & \cdot & \cdot
		\arrow["\shortmid"{marking}, from=1-4, to=1-3]
		\arrow["\shortmid"{marking}, from=2-2, to=2-1]
		\arrow[""{name=0, anchor=center, inner sep=0}, Rightarrow, no head, from=1-4, to=2-4]
		\arrow[""{name=1, anchor=center, inner sep=0}, Rightarrow, no head, from=1-1, to=2-1]
		\arrow["\shortmid"{marking}, from=1-2, to=1-1]
		\arrow["\shortmid"{marking}, from=2-4, to=2-3]
		\arrow["\cdots"{description}, draw=none, from=1-3, to=1-2]
		\arrow["\cdots"{description}, draw=none, from=2-3, to=2-2]
		\arrow[""{name=2, anchor=center, inner sep=0}, Rightarrow, no head, from=1-3, to=2-3]
		\arrow[""{name=3, anchor=center, inner sep=0}, Rightarrow, no head, from=1-2, to=2-2]
		\arrow["{=}"{description}, draw=none, from=0, to=2]
		\arrow["{=}"{description}, draw=none, from=3, to=1]
	\end{tikzcd}\]
    Composition of 2-cells is required to be associative and unital in the evident manner~\cite[Definition~2.1]{cruttwell2010unified}: associativity is implicit in pasting diagrams, and associativity and unitality are implicit in string diagrams.

    For a \vdc{} $\X$, we denote by $\X\lh{A, B}$ the category whose objects are loose-cells $p \colon A \lto B$ and whose morphisms are globular 2-cells $p \tto q$.
\end{definition}

\begin{remark}
	Our string diagram notation should be viewed formally as the dual of our pasting diagram notation: in particular, our string diagrams adhere strictly to a grid, which permits their direct interpretation as pasting diagrams without appeal to a topological argument as is traditional~\cite{joyal1991geometry,myers2020yoneda}.
\end{remark}

\begin{remark}
	\label{name-after-objects}
	We shall prefer to name \vdcs{} after their objects, rather than their loose-cells as is also common (for instance, in \cite{cruttwell2010unified}). This promotes the viewpoint that, just as there are often canonical notions of homomorphism and transformation of two-dimensional structures, so too is there often a canonical notion of (bi)module.
\end{remark}

Our motivating example of a \vdc{} will be the \vdc{} of $\V$-enriched categories, in which the tight-cells are $\V$-functors, the loose-cells are $\V$-distributors, and the 2-cells are $\V$-natural transformations. We defer an explicit definition to \cref{VCat}.

\subsubsection{Composites}

While loose-cells do not admit composites in general, a given \vdc{} may admit some composites, which are characterised by a universal property, analogous to the characterisation of tensors in multicategories.

\begin{definition}[{\cite[Definition~5.1]{cruttwell2010unified}}]
	\label{opcartesian}
    A 2-cell
    % https://q.uiver.app/#q=WzAsNSxbMiwwLCJcXGNkb3QiXSxbMSwwLCJcXGNkb3RzIl0sWzAsMCwiXFxjZG90Il0sWzIsMSwiXFxjZG90Il0sWzAsMSwiXFxjZG90Il0sWzAsMSwicV9tIiwyLHsic3R5bGUiOnsiYm9keSI6eyJuYW1lIjoiYmFycmVkIn19fV0sWzEsMiwicV8xIiwyLHsic3R5bGUiOnsiYm9keSI6eyJuYW1lIjoiYmFycmVkIn19fV0sWzMsNCwicSIsMCx7InN0eWxlIjp7ImJvZHkiOnsibmFtZSI6ImJhcnJlZCJ9fX1dLFswLDMsIiIsMCx7ImxldmVsIjoyLCJzdHlsZSI6eyJoZWFkIjp7Im5hbWUiOiJub25lIn19fV0sWzIsNCwiIiwyLHsibGV2ZWwiOjIsInN0eWxlIjp7ImhlYWQiOnsibmFtZSI6Im5vbmUifX19XSxbOCw5LCJcXG9wY2FydCIsMSx7InNob3J0ZW4iOnsic291cmNlIjoyMCwidGFyZ2V0IjoyMH0sInN0eWxlIjp7ImJvZHkiOnsibmFtZSI6Im5vbmUifSwiaGVhZCI6eyJuYW1lIjoibm9uZSJ9fX1dXQ==
	\[\begin{tikzcd}
		\cdot & \cdots & \cdot \\
		\cdot && \cdot
		\arrow["{q_m}"', "\shortmid"{marking}, from=1-3, to=1-2]
		\arrow["{q_1}"', "\shortmid"{marking}, from=1-2, to=1-1]
		\arrow["q", "\shortmid"{marking}, from=2-3, to=2-1]
		\arrow[""{name=0, anchor=center, inner sep=0}, Rightarrow, no head, from=1-3, to=2-3]
		\arrow[""{name=1, anchor=center, inner sep=0}, Rightarrow, no head, from=1-1, to=2-1]
		\arrow["\opcart"{description}, draw=none, from=0, to=1]
	\end{tikzcd}\]
    in a \vdc{} is \emph{opcartesian} if any 2-cell
    % https://q.uiver.app/#q=WzAsOSxbNiwwLCJcXGNkb3QiXSxbNSwwLCJcXGNkb3RzIl0sWzQsMCwiXFxjZG90Il0sWzMsMCwiXFxjZG90cyJdLFsyLDAsIlxcY2RvdCJdLFsxLDAsIlxcY2RvdHMiXSxbMCwwLCJcXGNkb3QiXSxbNiwxLCJcXGNkb3QiXSxbMCwxLCJcXGNkb3QiXSxbMCwxLCJyX24iLDIseyJzdHlsZSI6eyJib2R5Ijp7Im5hbWUiOiJiYXJyZWQifX19XSxbMSwyLCJyXzEiLDIseyJzdHlsZSI6eyJib2R5Ijp7Im5hbWUiOiJiYXJyZWQifX19XSxbMiwzLCJxX20iLDIseyJzdHlsZSI6eyJib2R5Ijp7Im5hbWUiOiJiYXJyZWQifX19XSxbMyw0LCJxXzEiLDIseyJzdHlsZSI6eyJib2R5Ijp7Im5hbWUiOiJiYXJyZWQifX19XSxbNCw1LCJwX2wiLDIseyJzdHlsZSI6eyJib2R5Ijp7Im5hbWUiOiJiYXJyZWQifX19XSxbNSw2LCJwXzEiLDIseyJzdHlsZSI6eyJib2R5Ijp7Im5hbWUiOiJiYXJyZWQifX19XSxbMCw3LCJnIl0sWzYsOCwiZiIsMl0sWzcsOCwicyIsMCx7InN0eWxlIjp7ImJvZHkiOnsibmFtZSI6ImJhcnJlZCJ9fX1dLFsxNSwxNiwiXFxwaGkiLDEseyJzaG9ydGVuIjp7InNvdXJjZSI6MjAsInRhcmdldCI6MjB9LCJzdHlsZSI6eyJib2R5Ijp7Im5hbWUiOiJub25lIn0sImhlYWQiOnsibmFtZSI6Im5vbmUifX19XV0=
	\[\begin{tikzcd}
		\cdot & \cdots & \cdot & \cdots & \cdot & \cdots & \cdot \\
		\cdot &&&&&& \cdot
		\arrow["{r_n}"', "\shortmid"{marking}, from=1-7, to=1-6]
		\arrow["{r_1}"', "\shortmid"{marking}, from=1-6, to=1-5]
		\arrow["{q_m}"', "\shortmid"{marking}, from=1-5, to=1-4]
		\arrow["{q_1}"', "\shortmid"{marking}, from=1-4, to=1-3]
		\arrow["{p_l}"', "\shortmid"{marking}, from=1-3, to=1-2]
		\arrow["{p_1}"', "\shortmid"{marking}, from=1-2, to=1-1]
		\arrow[""{name=0, anchor=center, inner sep=0}, "g", from=1-7, to=2-7]
		\arrow[""{name=1, anchor=center, inner sep=0}, "f"', from=1-1, to=2-1]
		\arrow["s", "\shortmid"{marking}, from=2-7, to=2-1]
		\arrow["\phi"{description}, draw=none, from=0, to=1]
	\end{tikzcd}\]
    factors uniquely therethrough:
	\[\begin{tikzcd}
		\cdot & \cdots & \cdot & \cdots & \cdot & \cdots & \cdot \\
		\cdot & \cdots & \cdot && \cdot & \cdots & \cdot \\
		\cdot &&&&&& \cdot
		\arrow["{r_n}"', "\shortmid"{marking}, from=1-7, to=1-6]
		\arrow["{r_1}"', "\shortmid"{marking}, from=1-6, to=1-5]
		\arrow["{q_m}"', "\shortmid"{marking}, from=1-5, to=1-4]
		\arrow["{q_1}"', "\shortmid"{marking}, from=1-4, to=1-3]
		\arrow["{p_l}"', "\shortmid"{marking}, from=1-3, to=1-2]
		\arrow["{p_1}"', "\shortmid"{marking}, from=1-2, to=1-1]
		\arrow["s", "\shortmid"{marking}, from=3-7, to=3-1]
		\arrow["q"{description}, from=2-5, to=2-3]
		\arrow[""{name=0, anchor=center, inner sep=0}, Rightarrow, no head, from=1-5, to=2-5]
		\arrow[""{name=1, anchor=center, inner sep=0}, "g", from=2-7, to=3-7]
		\arrow[""{name=2, anchor=center, inner sep=0}, "f"', from=2-1, to=3-1]
		\arrow[""{name=3, anchor=center, inner sep=0}, Rightarrow, no head, from=1-7, to=2-7]
		\arrow[""{name=4, anchor=center, inner sep=0}, Rightarrow, no head, from=1-1, to=2-1]
		\arrow["{r_n}"{description}, from=2-7, to=2-6]
		\arrow["{r_1}"{description}, from=2-6, to=2-5]
		\arrow["{p_l}"{description}, from=2-3, to=2-2]
		\arrow["{p_1}"{description}, from=2-2, to=2-1]
		\arrow[""{name=5, anchor=center, inner sep=0}, Rightarrow, no head, from=1-3, to=2-3]
		\arrow["{=}"{description}, draw=none, from=3, to=0]
		\arrow["{=}"{description}, draw=none, from=5, to=4]
		\arrow["\opcart"{description}, draw=none, from=0, to=5]
		\arrow["\check\phi"{description}, draw=none, from=1, to=2]
	\end{tikzcd}\]
    In this case, we call $q$ the \emph{(loose-) composite} $q_1 \odot \cdots \odot q_m$ of $q_1, \ldots, q_m$ (note that we write composites in the same order they appear in our string diagrams). To aid readability, we shall often elide the distinction between $\phi$ and $\check\phi$. In string diagrammatic notation, we denote the opcartesian 2-cell above by horizontal conjunction.
	\[
	% https://varkor.github.io/tangle/?t=W1tbXV0sW1tbMSxbMSwwLDEsMF1dLFswLFtdXSxbMSxbMSwwLDEsMF1dXV0sW10sW1swLDAsMSwicV8xIl0sWzAsMCwzLCJxXzEiXSxbMiwwLDEsInFfbSJdLFsyLDAsMywicV9tIl1dXQ==&c=F5A3A3,F5CCA3,F5F5A3,CCF5A3,A3F5A3,A3F5CC,A3F5F5,A3CCF5,A3A3F5,CCA3F5,F5A3F5,F5A3CC
	\begin{tangle}{(3,1)}
		\tgBorderA{(0,0)}{white}{white}{white}{white}
		\tgBorder{(0,0)}{1}{0}{1}{0}
		\tgBlank{(1,0)}{white}
		\tgBorderA{(2,0)}{white}{white}{white}{white}
		\tgBorder{(2,0)}{1}{0}{1}{0}
		\tgAxisLabel{(0.5,1)}{north}{q_1}
		\tgAxisLabel{(0.5,0)}{south}{q_1}
		\tgAxisLabel{(2.5,1)}{north}{q_m}
		\tgAxisLabel{(2.5,0)}{south}{q_m}
		\node at (1.5,.5) {$\cdots$};
	\end{tangle}
	\]
	We denote by $\phi_1, \ldots, \phi_m$ a 2-cell of the following form (assuming the composite $q_1 \odot \cdots \odot q_m$ exists).
	% https://q.uiver.app/#q=WzAsMTIsWzQsMSwiXFxjZG90Il0sWzIsMSwiXFxjZG90cyJdLFswLDEsIlxcY2RvdCJdLFs0LDIsIlxcY2RvdCJdLFswLDIsIlxcY2RvdCJdLFs0LDAsIlxcY2RvdCJdLFsxLDEsIlxcY2RvdCJdLFszLDEsIlxcY2RvdCJdLFswLDAsIlxcY2RvdCJdLFszLDAsIlxcY2RvdCJdLFsyLDAsIlxcY2RvdHMiXSxbMSwwLCJcXGNkb3QiXSxbMyw0LCJxXzEgXFxvZG90IFxcY2RvdHMgXFxvZG90IHFfbSIsMCx7InN0eWxlIjp7ImJvZHkiOnsibmFtZSI6ImJhcnJlZCJ9fX1dLFswLDMsIiIsMCx7ImxldmVsIjoyLCJzdHlsZSI6eyJoZWFkIjp7Im5hbWUiOiJub25lIn19fV0sWzIsNCwiIiwyLHsibGV2ZWwiOjIsInN0eWxlIjp7ImhlYWQiOnsibmFtZSI6Im5vbmUifX19XSxbNSwwXSxbNiwyLCJxXzEiLDFdLFsxLDYsIiIsMix7InN0eWxlIjp7ImJvZHkiOnsibmFtZSI6ImJhcnJlZCJ9fX1dLFs3LDEsIiIsMix7InN0eWxlIjp7ImJvZHkiOnsibmFtZSI6ImJhcnJlZCJ9fX1dLFswLDcsInFfbSIsMV0sWzgsMl0sWzUsOSwiIiwwLHsic3R5bGUiOnsiYm9keSI6eyJuYW1lIjoiYmFycmVkIn19fV0sWzksMTAsIiIsMCx7InN0eWxlIjp7ImJvZHkiOnsibmFtZSI6ImJhcnJlZCJ9fX1dLFsxMCwxMSwiIiwwLHsic3R5bGUiOnsiYm9keSI6eyJuYW1lIjoiYmFycmVkIn19fV0sWzExLDgsIiIsMSx7InN0eWxlIjp7ImJvZHkiOnsibmFtZSI6ImJhcnJlZCJ9fX1dLFs5LDddLFsxMSw2XSxbMTMsMTQsIlxcb3BjYXJ0IiwxLHsic2hvcnRlbiI6eyJzb3VyY2UiOjIwLCJ0YXJnZXQiOjIwfSwic3R5bGUiOnsiYm9keSI6eyJuYW1lIjoibm9uZSJ9LCJoZWFkIjp7Im5hbWUiOiJub25lIn19fV0sWzE1LDI1LCJcXHBoaV9tIiwxLHsic2hvcnRlbiI6eyJzb3VyY2UiOjIwLCJ0YXJnZXQiOjIwfSwic3R5bGUiOnsiYm9keSI6eyJuYW1lIjoibm9uZSJ9LCJoZWFkIjp7Im5hbWUiOiJub25lIn19fV0sWzI2LDIwLCJcXHBoaV8xIiwxLHsic2hvcnRlbiI6eyJzb3VyY2UiOjIwLCJ0YXJnZXQiOjIwfSwic3R5bGUiOnsiYm9keSI6eyJuYW1lIjoibm9uZSJ9LCJoZWFkIjp7Im5hbWUiOiJub25lIn19fV0sWzI1LDI2LCJcXGNkb3RzIiwxLHsic2hvcnRlbiI6eyJzb3VyY2UiOjIwLCJ0YXJnZXQiOjIwfSwic3R5bGUiOnsiYm9keSI6eyJuYW1lIjoibm9uZSJ9LCJoZWFkIjp7Im5hbWUiOiJub25lIn19fV1d
	\[\begin{tikzcd}
		\cdot & \cdot & \cdots & \cdot & \cdot \\
		\cdot & \cdot & \cdots & \cdot & \cdot \\
		\cdot &&&& \cdot
		\arrow["{q_1 \odot \cdots \odot q_m}", "\shortmid"{marking}, from=3-5, to=3-1]
		\arrow[""{name=0, anchor=center, inner sep=0}, Rightarrow, no head, from=2-5, to=3-5]
		\arrow[""{name=1, anchor=center, inner sep=0}, Rightarrow, no head, from=2-1, to=3-1]
		\arrow[""{name=2, anchor=center, inner sep=0}, from=1-5, to=2-5]
		\arrow["{q_1}"{description}, from=2-2, to=2-1]
		\arrow["\shortmid"{marking}, from=2-3, to=2-2]
		\arrow["\shortmid"{marking}, from=2-4, to=2-3]
		\arrow["{q_m}"{description}, from=2-5, to=2-4]
		\arrow[""{name=3, anchor=center, inner sep=0}, from=1-1, to=2-1]
		\arrow["\shortmid"{marking}, from=1-5, to=1-4]
		\arrow["\shortmid"{marking}, from=1-4, to=1-3]
		\arrow["\shortmid"{marking}, from=1-3, to=1-2]
		\arrow["\shortmid"{marking}, from=1-2, to=1-1]
		\arrow[""{name=4, anchor=center, inner sep=0}, from=1-4, to=2-4]
		\arrow[""{name=5, anchor=center, inner sep=0}, from=1-2, to=2-2]
		\arrow["\opcart"{description}, draw=none, from=0, to=1]
		\arrow["{\phi_m}"{description}, draw=none, from=2, to=4]
		\arrow["{\phi_1}"{description}, draw=none, from=5, to=3]
		\arrow["\cdots"{description}, draw=none, from=4, to=5]
	\end{tikzcd}\]
	When $m = 0$, we call $q \colon A \lto A$ the \emph{loose-identity} and denote it by $A(1, 1)$, or simply by $=\!\!\!|\!\!\!=$ in pasting diagrams. Identity loose-cells are implicit in string diagrams.

	We denote a nullary loose-cell with loose-identity codomain by $\phi \colon f \tto g$.
	% https://q.uiver.app/#q=WzAsNCxbMSwwLCJcXGNkb3QiXSxbMSwxLCJcXGNkb3QiXSxbMCwxLCJcXGNkb3QiXSxbMCwwLCJcXGNkb3QiXSxbMSwyLCIiLDIseyJsZXZlbCI6Miwic3R5bGUiOnsiYm9keSI6eyJuYW1lIjoiYmFycmVkIn0sImhlYWQiOnsibmFtZSI6Im5vbmUifX19XSxbMCwxLCJnIl0sWzMsMiwiZiIsMl0sWzAsMywiIiwyLHsibGV2ZWwiOjIsInN0eWxlIjp7ImhlYWQiOnsibmFtZSI6Im5vbmUifX19XSxbNSw2LCJcXHBoaSIsMSx7InNob3J0ZW4iOnsic291cmNlIjoyMCwidGFyZ2V0IjoyMH0sInN0eWxlIjp7ImJvZHkiOnsibmFtZSI6Im5vbmUifSwiaGVhZCI6eyJuYW1lIjoibm9uZSJ9fX1dXQ==
	\[\begin{tikzcd}
		\cdot & \cdot \\
		\cdot & \cdot
		\arrow["\shortmid"{marking}, Rightarrow, no head, from=2-2, to=2-1]
		\arrow[""{name=0, anchor=center, inner sep=0}, "g", from=1-2, to=2-2]
		\arrow[""{name=1, anchor=center, inner sep=0}, "f"', from=1-1, to=2-1]
		\arrow[Rightarrow, no head, from=1-2, to=1-1]
		\arrow["\phi"{description}, draw=none, from=0, to=1]
	\end{tikzcd}\]
    A \vdc{} is \emph{representable} when it admits all loose-composites (including loose-identities).
\end{definition}

Loose-composites are unique up to isomorphism and are essentially associative and unital. Representable \vdcs{} are equivalent to pseudo double categories~\cite[Theorem~5.2]{cruttwell2010unified}.
As an intuition for opcartesian 2-cells, observe that it does not make sense to ask whether a non-unary 2-cell in a \vdc{} is invertible, since 2-cells have unary codomain. Opcartesian 2-cells act as a universal unary approximant for a chain of loose-cells, and thus behave much as an invertible 2-cell would (in particular, a unary globular 2-cell is opcartesian if and only if it is invertible).

Due to our string diagram notation for opcartesian 2-cells, we may draw string diagrams that have multiple loose-cells at the bottom, but only when these loose-cells have a composite;
a pasting diagram corresponding to a string diagram having multiple loose-cells at the bottom has an opcartesian 2-cell at the bottom.

In some cases, we shall require a property weaker than opcartesianness (though stronger than the \emph{weakly opcartesian} 2-cells of \cite[Remark~5.8]{cruttwell2010unified}), reminiscent of left-representability of multicategories~\cite{bourke2018skew}.

\begin{definition}
	A 2-cell in a \vdc{} is \emph{left-opcartesian} if it satisfies the universal property of an opcartesian 2-cell (\cref{opcartesian}) only in the special case where $l = 0$ and $f$ is the identity. In this case, we call $q$ the \emph{left-composite} $q_1 \odotl \cdots \odotl q_m$ of $q_1, \ldots, q_m$.
    Dually, a 2-cell is \emph{right-opcartesian} if it satisfies the universal property only in the special case where $n = 0$ and $g$ is the identity. In this case, we call $q$ the \emph{right-composite} $q_1 \odotr \cdots \odotr q_m$.
\end{definition}

Left-composites are unique up to isomorphism and are left-associative in the sense that there is a canonical isomorphism,
\[q_1 \odotl q_2 \odotl \cdots \odotl q_m \iso (\cdots (q_1 \odotl q_2) \odotl \cdots) \odotl q_m\]
together with a canonical reassociating 2-cell.
\[q_1 \odotl q_2 \odotl \cdots \odotl q_m \tto q_1 \odotl (\cdots \odotl (q_{m - 1} \odotl q_m) \cdots)\]
Dual statements hold for right-composites. We reserve no notation for nullary left- or right-composites, since we shall not make use of them here (all loose-identities we consider have the full universal property of \cref{opcartesian}). Observe that loose-composites are, in particular, left- and right-composites (the converse is not true in general).

When a \vdc{} admits loose-identities, the tight-cells form a 2-category.
\begin{definition}[{\cite[Proposition~6.1]{cruttwell2010unified}}]
	Let $\X$ be a \vdc{} admitting loose-identities. Denote by $\tX$ the \emph{tight 2-category} associated to $\X$, having
	\begin{enumerate}
		\item objects: those of $\X$;
		\item 1-cells: tight-cells in $\X$;
		\item 2-cells $\phi \colon f \tto g$: nullary 2-cells with loose-identity codomain in $\X$ as follows.
		% https://q.uiver.app/#q=WzAsNCxbMSwxLCJCIl0sWzEsMCwiQSJdLFswLDAsIkEiXSxbMCwxLCJCIl0sWzEsMCwiZyJdLFsyLDMsImYiLDJdLFswLDMsIiIsMix7ImxldmVsIjoyLCJzdHlsZSI6eyJib2R5Ijp7Im5hbWUiOiJiYXJyZWQifSwiaGVhZCI6eyJuYW1lIjoibm9uZSJ9fX1dLFsxLDIsIiIsMix7ImxldmVsIjoyLCJzdHlsZSI6eyJoZWFkIjp7Im5hbWUiOiJub25lIn19fV0sWzQsNSwiXFxwaGkiLDEseyJzaG9ydGVuIjp7InNvdXJjZSI6MjAsInRhcmdldCI6MjB9LCJzdHlsZSI6eyJib2R5Ijp7Im5hbWUiOiJub25lIn0sImhlYWQiOnsibmFtZSI6Im5vbmUifX19XV0=
		\[\begin{tikzcd}
			A & A \\
			B & B
			\arrow[""{name=0, anchor=center, inner sep=0}, "g", from=1-2, to=2-2]
			\arrow[""{name=1, anchor=center, inner sep=0}, "f"', from=1-1, to=2-1]
			\arrow["\shortmid"{marking}, Rightarrow, no head, from=2-2, to=2-1]
			\arrow[Rightarrow, no head, from=1-2, to=1-1]
			\arrow["\phi"{description}, draw=none, from=0, to=1]
		\end{tikzcd}\]
	\end{enumerate}
	Given objects $A$ and $B$ in such an $\X$, we denote by $\X[A, B]$ the hom-category $\tX(A, B)$.
\end{definition}

Identities and composition of 2-cells in $\tX$ are given by composition of 2-cells in $\X$ as follows.
% https://q.uiver.app/?q=WzAsNixbMCwwLCJBIl0sWzEsMCwiQSJdLFswLDEsIkIiXSxbMSwxLCJCIl0sWzAsMiwiQiJdLFsxLDIsIkIiXSxbMCwxLCIiLDAseyJsZXZlbCI6Miwic3R5bGUiOnsiaGVhZCI6eyJuYW1lIjoibm9uZSJ9fX1dLFsyLDMsIiIsMCx7ImxldmVsIjoyLCJzdHlsZSI6eyJoZWFkIjp7Im5hbWUiOiJub25lIn19fV0sWzAsMiwiZiIsMl0sWzEsMywiZiJdLFsyLDQsIiIsMix7ImxldmVsIjoyLCJzdHlsZSI6eyJoZWFkIjp7Im5hbWUiOiJub25lIn19fV0sWzMsNSwiIiwwLHsibGV2ZWwiOjIsInN0eWxlIjp7ImhlYWQiOnsibmFtZSI6Im5vbmUifX19XSxbNCw1LCIiLDIseyJsZXZlbCI6Miwic3R5bGUiOnsiYm9keSI6eyJuYW1lIjoiYmFycmVkIn0sImhlYWQiOnsibmFtZSI6Im5vbmUifX19XSxbOCw5LCI9IiwxLHsic2hvcnRlbiI6eyJzb3VyY2UiOjIwLCJ0YXJnZXQiOjIwfSwic3R5bGUiOnsiYm9keSI6eyJuYW1lIjoibm9uZSJ9LCJoZWFkIjp7Im5hbWUiOiJub25lIn19fV0sWzEwLDExLCJcXG9wY2FydCIsMSx7InNob3J0ZW4iOnsic291cmNlIjoyMCwidGFyZ2V0IjoyMH0sInN0eWxlIjp7ImJvZHkiOnsibmFtZSI6Im5vbmUifSwiaGVhZCI6eyJuYW1lIjoibm9uZSJ9fX1dXQ==
\[\begin{tikzcd}
	A & A \\
	B & B \\
	B & B
	\arrow[Rightarrow, no head, from=1-1, to=1-2]
	\arrow[Rightarrow, no head, from=2-1, to=2-2]
	\arrow[""{name=0, anchor=center, inner sep=0}, "f"', from=1-1, to=2-1]
	\arrow[""{name=1, anchor=center, inner sep=0}, "f", from=1-2, to=2-2]
	\arrow[""{name=2, anchor=center, inner sep=0}, Rightarrow, no head, from=2-1, to=3-1]
	\arrow[""{name=3, anchor=center, inner sep=0}, Rightarrow, no head, from=2-2, to=3-2]
	\arrow["\shortmid"{marking}, Rightarrow, no head, from=3-1, to=3-2]
	\arrow["{=}"{description}, draw=none, from=0, to=1]
	\arrow["\opcart"{description}, draw=none, from=2, to=3]
\end{tikzcd}
\hspace{4em}
% https://q.uiver.app/#q=WzAsOCxbMiwxLCJCIl0sWzIsMCwiQSJdLFsxLDAsIkEiXSxbMSwxLCJCIl0sWzAsMCwiQSJdLFswLDEsIkIiXSxbMiwyLCJCIl0sWzAsMiwiQiJdLFsxLDAsImgiXSxbMiwzLCJnIiwxXSxbNCw1LCJmIiwyXSxbMCwzLCIiLDIseyJsZXZlbCI6Miwic3R5bGUiOnsiYm9keSI6eyJuYW1lIjoiYmFycmVkIn0sImhlYWQiOnsibmFtZSI6Im5vbmUifX19XSxbMyw1LCIiLDIseyJsZXZlbCI6Miwic3R5bGUiOnsiYm9keSI6eyJuYW1lIjoiYmFycmVkIn0sImhlYWQiOnsibmFtZSI6Im5vbmUifX19XSxbMSwyLCIiLDIseyJsZXZlbCI6Miwic3R5bGUiOnsiaGVhZCI6eyJuYW1lIjoibm9uZSJ9fX1dLFsyLDQsIiIsMix7ImxldmVsIjoyLCJzdHlsZSI6eyJoZWFkIjp7Im5hbWUiOiJub25lIn19fV0sWzYsNywiIiwyLHsibGV2ZWwiOjIsInN0eWxlIjp7ImJvZHkiOnsibmFtZSI6ImJhcnJlZCJ9LCJoZWFkIjp7Im5hbWUiOiJub25lIn19fV0sWzAsNiwiIiwyLHsibGV2ZWwiOjIsInN0eWxlIjp7ImhlYWQiOnsibmFtZSI6Im5vbmUifX19XSxbNSw3LCIiLDIseyJsZXZlbCI6Miwic3R5bGUiOnsiaGVhZCI6eyJuYW1lIjoibm9uZSJ9fX1dLFsxNiwxNywiXFxvcGNhcnQiLDEseyJzaG9ydGVuIjp7InNvdXJjZSI6MjAsInRhcmdldCI6MjB9LCJzdHlsZSI6eyJib2R5Ijp7Im5hbWUiOiJub25lIn0sImhlYWQiOnsibmFtZSI6Im5vbmUifX19XSxbOCw5LCJcXHBzaSIsMSx7InNob3J0ZW4iOnsic291cmNlIjoyMCwidGFyZ2V0IjoyMH0sInN0eWxlIjp7ImJvZHkiOnsibmFtZSI6Im5vbmUifSwiaGVhZCI6eyJuYW1lIjoibm9uZSJ9fX1dLFs5LDEwLCJcXHBoaSIsMSx7InNob3J0ZW4iOnsic291cmNlIjoyMCwidGFyZ2V0IjoyMH0sInN0eWxlIjp7ImJvZHkiOnsibmFtZSI6Im5vbmUifSwiaGVhZCI6eyJuYW1lIjoibm9uZSJ9fX1dXQ==
\begin{tikzcd}
	A & A & A \\
	B & B & B \\
	B && B
	\arrow[""{name=0, anchor=center, inner sep=0}, "h", from=1-3, to=2-3]
	\arrow[""{name=1, anchor=center, inner sep=0}, "g"{description}, from=1-2, to=2-2]
	\arrow[""{name=2, anchor=center, inner sep=0}, "f"', from=1-1, to=2-1]
	\arrow["\shortmid"{marking}, Rightarrow, no head, from=2-3, to=2-2]
	\arrow["\shortmid"{marking}, Rightarrow, no head, from=2-2, to=2-1]
	\arrow[Rightarrow, no head, from=1-3, to=1-2]
	\arrow[Rightarrow, no head, from=1-2, to=1-1]
	\arrow["\shortmid"{marking}, Rightarrow, no head, from=3-3, to=3-1]
	\arrow[""{name=3, anchor=center, inner sep=0}, Rightarrow, no head, from=2-3, to=3-3]
	\arrow[""{name=4, anchor=center, inner sep=0}, Rightarrow, no head, from=2-1, to=3-1]
	\arrow["\opcart"{description}, draw=none, from=3, to=4]
	\arrow["\psi"{description}, draw=none, from=0, to=1]
	\arrow["\phi"{description}, draw=none, from=1, to=2]
\end{tikzcd}
\]

For instance, the tight 2-category $\u{\Cat}$ associated to the \vdc{} $\Cat$ is the usual 2-category of categories, functors, and natural transformations.

\subsection{Virtual equipments}

A crucial property of the \vdcs{} with which we shall be concerned is the ability to restrict loose-cells along adjacent tight-cells.

\begin{definition}[{\cite[Definition~7.1]{cruttwell2010unified}}]
	\label{cartesian-cell}
    A 2-cell
    % https://q.uiver.app/#q=WzAsNCxbMSwwLCJcXGNkb3QiXSxbMCwwLCJcXGNkb3QiXSxbMSwxLCJcXGNkb3QiXSxbMCwxLCJcXGNkb3QiXSxbMCwxLCJwIiwyLHsic3R5bGUiOnsiYm9keSI6eyJuYW1lIjoiYmFycmVkIn19fV0sWzIsMywicSIsMCx7InN0eWxlIjp7ImJvZHkiOnsibmFtZSI6ImJhcnJlZCJ9fX1dLFswLDIsImciXSxbMSwzLCJmIiwyXSxbNiw3LCJcXGNhcnQiLDEseyJzaG9ydGVuIjp7InNvdXJjZSI6MjAsInRhcmdldCI6MjB9LCJzdHlsZSI6eyJib2R5Ijp7Im5hbWUiOiJub25lIn0sImhlYWQiOnsibmFtZSI6Im5vbmUifX19XV0=
	\[\begin{tikzcd}
		\cdot & \cdot \\
		\cdot & \cdot
		\arrow["p"', "\shortmid"{marking}, from=1-2, to=1-1]
		\arrow["q", "\shortmid"{marking}, from=2-2, to=2-1]
		\arrow[""{name=0, anchor=center, inner sep=0}, "g", from=1-2, to=2-2]
		\arrow[""{name=1, anchor=center, inner sep=0}, "f"', from=1-1, to=2-1]
		\arrow["\cart"{description}, draw=none, from=0, to=1]
	\end{tikzcd}\]
    in a \vdc{} is \emph{cartesian} if any 2-cell
    % https://q.uiver.app/#q=WzAsNSxbMiwwLCJcXGNkb3QiXSxbMCwwLCJcXGNkb3QiXSxbMiwxLCJcXGNkb3QiXSxbMCwxLCJcXGNkb3QiXSxbMSwwLCJcXGNkb3RzIl0sWzIsMywicSIsMCx7InN0eWxlIjp7ImJvZHkiOnsibmFtZSI6ImJhcnJlZCJ9fX1dLFswLDIsImcnIFxcZCBnIl0sWzEsMywiZicgXFxkIGYiLDJdLFswLDQsInJfbiIsMix7InN0eWxlIjp7ImJvZHkiOnsibmFtZSI6ImJhcnJlZCJ9fX1dLFs0LDEsInJfMSIsMix7InN0eWxlIjp7ImJvZHkiOnsibmFtZSI6ImJhcnJlZCJ9fX1dLFs2LDcsIlxccGhpIiwxLHsic2hvcnRlbiI6eyJzb3VyY2UiOjIwLCJ0YXJnZXQiOjIwfSwic3R5bGUiOnsiYm9keSI6eyJuYW1lIjoibm9uZSJ9LCJoZWFkIjp7Im5hbWUiOiJub25lIn19fV1d
	\[\begin{tikzcd}
		\cdot & \cdots & \cdot \\
		\cdot && \cdot
		\arrow["q", "\shortmid"{marking}, from=2-3, to=2-1]
		\arrow[""{name=0, anchor=center, inner sep=0}, "{g' \d g}", from=1-3, to=2-3]
		\arrow[""{name=1, anchor=center, inner sep=0}, "{f' \d f}"', from=1-1, to=2-1]
		\arrow["{r_n}"', "\shortmid"{marking}, from=1-3, to=1-2]
		\arrow["{r_1}"', "\shortmid"{marking}, from=1-2, to=1-1]
		\arrow["\phi"{description}, draw=none, from=0, to=1]
	\end{tikzcd}\]
    factors uniquely therethrough:
    % https://q.uiver.app/#q=WzAsNyxbMiwwLCJcXGNkb3QiXSxbMCwwLCJcXGNkb3QiXSxbMiwyLCJcXGNkb3QiXSxbMCwyLCJcXGNkb3QiXSxbMSwwLCJcXGNkb3RzIl0sWzIsMSwiXFxjZG90Il0sWzAsMSwiXFxjZG90Il0sWzIsMywicSIsMCx7InN0eWxlIjp7ImJvZHkiOnsibmFtZSI6ImJhcnJlZCJ9fX1dLFswLDQsInJfbiIsMix7InN0eWxlIjp7ImJvZHkiOnsibmFtZSI6ImJhcnJlZCJ9fX1dLFs1LDIsImciXSxbNiwzLCJmIiwyXSxbNSw2LCJwIiwxXSxbMCw1LCJnJyJdLFsxLDYsImYnIiwyXSxbNCwxLCJyXzEiLDIseyJzdHlsZSI6eyJib2R5Ijp7Im5hbWUiOiJiYXJyZWQifX19XSxbOSwxMCwiXFxjYXJ0IiwxLHsic2hvcnRlbiI6eyJzb3VyY2UiOjIwLCJ0YXJnZXQiOjIwfSwic3R5bGUiOnsiYm9keSI6eyJuYW1lIjoibm9uZSJ9LCJoZWFkIjp7Im5hbWUiOiJub25lIn19fV0sWzEyLDEzLCJcXGhhdFxccGhpIiwxLHsic2hvcnRlbiI6eyJzb3VyY2UiOjIwLCJ0YXJnZXQiOjIwfSwic3R5bGUiOnsiYm9keSI6eyJuYW1lIjoibm9uZSJ9LCJoZWFkIjp7Im5hbWUiOiJub25lIn19fV1d
	\[\begin{tikzcd}
		\cdot & \cdots & \cdot \\
		\cdot && \cdot \\
		\cdot && \cdot
		\arrow["q", "\shortmid"{marking}, from=3-3, to=3-1]
		\arrow["{r_n}"', "\shortmid"{marking}, from=1-3, to=1-2]
		\arrow[""{name=0, anchor=center, inner sep=0}, "g", from=2-3, to=3-3]
		\arrow[""{name=1, anchor=center, inner sep=0}, "f"', from=2-1, to=3-1]
		\arrow["p"{description}, from=2-3, to=2-1]
		\arrow[""{name=2, anchor=center, inner sep=0}, "{g'}", from=1-3, to=2-3]
		\arrow[""{name=3, anchor=center, inner sep=0}, "{f'}"', from=1-1, to=2-1]
		\arrow["{r_1}"', "\shortmid"{marking}, from=1-2, to=1-1]
		\arrow["\cart"{description}, draw=none, from=0, to=1]
		\arrow["\hat\phi"{description}, draw=none, from=2, to=3]
	\end{tikzcd}\]
    In this case, we call $p$ the \emph{restriction} $q(f, g)$.
    If $q$ is a loose-identity $A(1, 1)$, we denote $p = A(1, 1)(f, g)$ simply by $A(f, g)$.
    We denote the factorisation of a 2-cell
	\[
	% https://varkor.github.io/tangle/?t=W1tbXV0sW1tbMCxbXV0sWzEsWzEsMCwxLDBdXSxbMCxbXV0sWzEsWzEsMCwxLDBdXSxbMCxbXV1dLFtbMSxbMCwxLDAsMV1dLFsxLFsxLDEsMSwxXV0sWzEsWzAsMSwwLDFdXSxbMSxbMSwxLDEsMV1dLFsxLFswLDEsMCwxXV1dLFtbMSxbMCwxLDAsMV1dLFsxLFsxLDEsMCwxXV0sWzEsWzAsMSwxLDFdXSxbMSxbMSwxLDAsMV1dLFsxLFswLDEsMCwxXV1dLFtbMCxbXV0sWzAsW11dLFsxLFsxLDAsMSwwXV0sWzAsW11dLFswLFtdXV1dLFtbMCwyLjUsMixbIlxccGhpIiwyLDFdXSxbMSwxLDEuNSxbMF1dLFsxLDEsMi41LFswXV0sWzEsNCwxLjUsWzBdXSxbMSw0LDIuNSxbMF1dXSxbWzEsMCwzLCJyXzEiXSxbMywwLDMsInJfbiJdLFswLDEsMiwiZiciXSxbNCwxLDAsImcnIl0sWzAsMiwyLCJmIl0sWzQsMiwwLCJnIl0sWzIsMywxLCJxIl1dXQ==&c=F5A3A3,F5CCA3,F5F5A3,CCF5A3,A3F5A3,A3F5CC,A3F5F5,A3CCF5,A3A3F5,CCA3F5,F5A3F5,F5A3CC
	\begin{tangle}{(5,4)}[trim x,trim y]
		\tgBlank{(0,0)}{white}
		\tgBorderA{(1,0)}{white}{white}{white}{white}
		\tgBorder{(1,0)}{1}{0}{1}{0}
		\tgBlank{(2,0)}{white}
		\tgBorderA{(3,0)}{white}{white}{white}{white}
		\tgBorder{(3,0)}{1}{0}{1}{0}
		\tgBlank{(4,0)}{white}
		\tgBorderA{(0,1)}{white}{white}{white}{white}
		\tgBorder{(0,1)}{0}{1}{0}{1}
		\tgBorderA{(1,1)}{white}{white}{white}{white}
		\tgBorder{(1,1)}{1}{1}{1}{1}
		\tgBorderA{(2,1)}{white}{white}{white}{white}
		\tgBorder{(2,1)}{0}{1}{0}{1}
		\tgBorderA{(3,1)}{white}{white}{white}{white}
		\tgBorder{(3,1)}{1}{1}{1}{1}
		\tgBorderA{(4,1)}{white}{white}{white}{white}
		\tgBorder{(4,1)}{0}{1}{0}{1}
		\tgBorderA{(0,2)}{white}{white}{white}{white}
		\tgBorder{(0,2)}{0}{1}{0}{1}
		\tgBorderA{(1,2)}{white}{white}{white}{white}
		\tgBorder{(1,2)}{1}{1}{0}{1}
		\tgBorderA{(2,2)}{white}{white}{white}{white}
		\tgBorder{(2,2)}{0}{1}{1}{1}
		\tgBorderA{(3,2)}{white}{white}{white}{white}
		\tgBorder{(3,2)}{1}{1}{0}{1}
		\tgBorderA{(4,2)}{white}{white}{white}{white}
		\tgBorder{(4,2)}{0}{1}{0}{1}
		\tgBlank{(0,3)}{white}
		\tgBlank{(1,3)}{white}
		\tgBorderA{(2,3)}{white}{white}{white}{white}
		\tgBorder{(2,3)}{1}{0}{1}{0}
		\tgBlank{(3,3)}{white}
		\tgBlank{(4,3)}{white}
		\tgCell[(2,1)]{(2,1.5)}{\phi}
		\tgArrow{(0.5,1)}{0}
		\tgArrow{(0.5,2)}{0}
		\tgArrow{(3.5,1)}{0}
		\tgArrow{(3.5,2)}{0}
		\tgAxisLabel{(1.5,0.75)}{south}{r_1}
		\tgAxisLabel{(3.5,0.75)}{south}{r_n}
		\tgAxisLabel{(0.75,1.5)}{east}{f'}
		\tgAxisLabel{(4.25,1.5)}{west}{g'}
		\tgAxisLabel{(0.75,2.5)}{east}{f}
		\tgAxisLabel{(4.25,2.5)}{west}{g}
		\tgAxisLabel{(2.5,3.25)}{north}{q}
		\node at (2.5,.9) {$\cdots$};
	\end{tangle}
	\]
	through a cartesian 2-cell by
	\[
	% https://varkor.github.io/tangle/?t=W1tbXV0sW1tbMCxbXV0sWzAsW11dLFsxLFsxLDAsMSwwXV0sWzAsW11dLFsxLFsxLDAsMSwwXV0sWzAsW11dLFswLFtdXV0sW1sxLFswLDEsMCwxXV0sWzEsWzAsMSwwLDFdXSxbMSxbMSwxLDEsMV1dLFsxLFswLDEsMCwxXV0sWzEsWzEsMSwxLDFdXSxbMSxbMCwxLDAsMV1dLFsxLFswLDEsMCwxXV1dLFtbMCxbXV0sWzIsWzJdXSxbMSxbMSwxLDAsMV1dLFsxLFswLDEsMSwxXV0sWzEsWzEsMSwwLDFdXSxbMixbM11dLFswLFtdXV0sW1swLFtdXSxbMSxbMSwwLDEsMF1dLFswLFtdXSxbMSxbMSwwLDEsMF1dLFswLFtdXSxbMSxbMSwwLDEsMF1dLFswLFtdXV1dLFtbMCwzLjUsMixbIlxccGhpIiwyLDFdXSxbMSwyLDEuNSxbMF1dLFsxLDIsMi41LFswXV0sWzEsNSwxLjUsWzBdXSxbMSw1LDIuNSxbMF1dLFsxLDEsMS41LFswXV0sWzEsNiwxLjUsWzBdXSxbMSw1LjUsMyxbMV1dLFsxLDEuNSwzLFswXV1dLFtbMiwwLDMsInJfMSJdLFs0LDAsMywicl9uIl0sWzAsMSwyLCJmJyJdLFs2LDEsMCwiZyciXSxbMSwzLDEsImYiXSxbMywzLDEsInEiXSxbNSwzLDEsImciXV1d&c=F5A3A3,F5CCA3,F5F5A3,CCF5A3,A3F5A3,A3F5CC,A3F5F5,A3CCF5,A3A3F5,CCA3F5,F5A3F5,F5A3CC
	\begin{tangle}{(7,4)}[trim x,trim y]
		\tgBlank{(0,0)}{white}
		\tgBlank{(1,0)}{white}
		\tgBorderA{(2,0)}{white}{white}{white}{white}
		\tgBorder{(2,0)}{1}{0}{1}{0}
		\tgBlank{(3,0)}{white}
		\tgBorderA{(4,0)}{white}{white}{white}{white}
		\tgBorder{(4,0)}{1}{0}{1}{0}
		\tgBlank{(5,0)}{white}
		\tgBlank{(6,0)}{white}
		\tgBorderA{(0,1)}{white}{white}{white}{white}
		\tgBorder{(0,1)}{0}{1}{0}{1}
		\tgBorderA{(1,1)}{white}{white}{white}{white}
		\tgBorder{(1,1)}{0}{1}{0}{1}
		\tgBorderA{(2,1)}{white}{white}{white}{white}
		\tgBorder{(2,1)}{1}{1}{1}{1}
		\tgBorderA{(3,1)}{white}{white}{white}{white}
		\tgBorder{(3,1)}{0}{1}{0}{1}
		\tgBorderA{(4,1)}{white}{white}{white}{white}
		\tgBorder{(4,1)}{1}{1}{1}{1}
		\tgBorderA{(5,1)}{white}{white}{white}{white}
		\tgBorder{(5,1)}{0}{1}{0}{1}
		\tgBorderA{(6,1)}{white}{white}{white}{white}
		\tgBorder{(6,1)}{0}{1}{0}{1}
		\tgBlank{(0,2)}{white}
		\tgBorderC{(1,2)}{3}{white}{white}
		\tgBorderA{(2,2)}{white}{white}{white}{white}
		\tgBorder{(2,2)}{1}{1}{0}{1}
		\tgBorderA{(3,2)}{white}{white}{white}{white}
		\tgBorder{(3,2)}{0}{1}{1}{1}
		\tgBorderA{(4,2)}{white}{white}{white}{white}
		\tgBorder{(4,2)}{1}{1}{0}{1}
		\tgBorderC{(5,2)}{2}{white}{white}
		\tgBlank{(6,2)}{white}
		\tgBlank{(0,3)}{white}
		\tgBorderA{(1,3)}{white}{white}{white}{white}
		\tgBorder{(1,3)}{1}{0}{1}{0}
		\tgBlank{(2,3)}{white}
		\tgBorderA{(3,3)}{white}{white}{white}{white}
		\tgBorder{(3,3)}{1}{0}{1}{0}
		\tgBlank{(4,3)}{white}
		\tgBorderA{(5,3)}{white}{white}{white}{white}
		\tgBorder{(5,3)}{1}{0}{1}{0}
		\tgBlank{(6,3)}{white}
		\tgCell[(2,1)]{(3,1.5)}{\phi}
		\tgArrow{(1.5,1)}{0}
		\tgArrow{(1.5,2)}{0}
		\tgArrow{(4.5,1)}{0}
		\tgArrow{(4.5,2)}{0}
		\tgArrow{(0.5,1)}{0}
		\tgArrow{(5.5,1)}{0}
		\tgArrow{(5,2.5)}{3}
		\tgArrow{(1,2.5)}{1}
		\tgAxisLabel{(2.5,0.75)}{south}{r_1}
		\tgAxisLabel{(4.5,0.75)}{south}{r_n}
		\tgAxisLabel{(0.75,1.5)}{east}{f'}
		\tgAxisLabel{(6.25,1.5)}{west}{g'}
		\tgAxisLabel{(1.5,3.25)}{north}{f}
		\tgAxisLabel{(3.5,3.25)}{north}{q}
		\tgAxisLabel{(5.5,3.25)}{north}{g}
		\node at (3.5,.9) {$\cdots$};
	\end{tangle}
	\]
    and consequently elide the distinction between $\phi$ and $\hat\phi$.
	We denote the cartesian 2-cell by
	\[
	% https://varkor.github.io/tangle/?t=W1tbXV0sW1tbMCxbXV0sWzEsWzEsMCwxLDBdXSxbMCxbXV0sWzEsWzEsMCwxLDBdXSxbMCxbXV0sWzEsWzEsMCwxLDBdXSxbMCxbXV1dLFtbMSxbMCwxLDAsMV1dLFsyLFswXV0sWzAsW11dLFsxLFsxLDAsMSwwXV0sWzAsW11dLFsyLFsxXV0sWzEsWzAsMSwwLDFdXV0sW1swLFtdXSxbMCxbXV0sWzAsW11dLFsxLFsxLDAsMSwwXV0sWzAsW11dLFswLFtdXSxbMCxbXV1dXSxbWzEsMS41LDEsWzBdXSxbMSwxLDEuNSxbMF1dLFsxLDUuNSwxLFsxXV0sWzEsNiwxLjUsWzBdXV0sW1sxLDAsMywiZiJdLFszLDAsMywicSJdLFs1LDAsMywiZyJdLFswLDEsMiwiZiJdLFs2LDEsMCwiZyJdLFszLDIsMSwicSJdXV0=&c=F5A3A3,F5CCA3,F5F5A3,CCF5A3,A3F5A3,A3F5CC,A3F5F5,A3CCF5,A3A3F5,CCA3F5,F5A3F5,F5A3CC
	\begin{tangle}{(7,3)}[trim x,trim y]
		\tgBlank{(0,0)}{white}
		\tgBorderA{(1,0)}{white}{white}{white}{white}
		\tgBorder{(1,0)}{1}{0}{1}{0}
		\tgBlank{(2,0)}{white}
		\tgBorderA{(3,0)}{white}{white}{white}{white}
		\tgBorder{(3,0)}{1}{0}{1}{0}
		\tgBlank{(4,0)}{white}
		\tgBorderA{(5,0)}{white}{white}{white}{white}
		\tgBorder{(5,0)}{1}{0}{1}{0}
		\tgBlank{(6,0)}{white}
		\tgBorderA{(0,1)}{white}{white}{white}{white}
		\tgBorder{(0,1)}{0}{1}{0}{1}
		\tgBorderC{(1,1)}{1}{white}{white}
		\tgBlank{(2,1)}{white}
		\tgBorderA{(3,1)}{white}{white}{white}{white}
		\tgBorder{(3,1)}{1}{0}{1}{0}
		\tgBlank{(4,1)}{white}
		\tgBorderC{(5,1)}{0}{white}{white}
		\tgBorderA{(6,1)}{white}{white}{white}{white}
		\tgBorder{(6,1)}{0}{1}{0}{1}
		\tgBlank{(0,2)}{white}
		\tgBlank{(1,2)}{white}
		\tgBlank{(2,2)}{white}
		\tgBorderA{(3,2)}{white}{white}{white}{white}
		\tgBorder{(3,2)}{1}{0}{1}{0}
		\tgBlank{(4,2)}{white}
		\tgBlank{(5,2)}{white}
		\tgBlank{(6,2)}{white}
		\tgArrow{(1,0.5)}{1}
		\tgArrow{(0.5,1)}{0}
		\tgArrow{(5,0.5)}{3}
		\tgArrow{(5.5,1)}{0}
		\tgAxisLabel{(1.5,0.75)}{south}{f}
		\tgAxisLabel{(3.5,0.75)}{south}{q}
		\tgAxisLabel{(5.5,0.75)}{south}{g}
		\tgAxisLabel{(0.75,1.5)}{east}{f}
		\tgAxisLabel{(6.25,1.5)}{west}{g}
		\tgAxisLabel{(3.5,2.25)}{north}{q}
	\end{tangle}
	\]
\end{definition}

Restrictions are unique up to isomorphism and are pseudofunctorial: given a loose-cell $p \colon B \lto C$ and tight-cells $f \colon D \to C$ and $g \colon A \to B$, each pair of 2-cells $\phi \colon f' \tto f$ and $\gamma \colon g \tto g'$ induces a 2-cell $p(\phi, \gamma) \colon p(f, g) \tto p(f', g')$, assuming both restrictions exist.

Our string diagram notation is justified by the fact that the restriction $q(f, g)$ is the composite $C(f, 1) \odot q \odot B(1, g)$ when $\X$ admits the loose-identities $B(1, 1)$ and $C(1, 1)$ (\cite[Theorem~7.16]{cruttwell2010unified}). When $\X$ does not admit loose-identities, we use the same string diagram notation: however, in this case, the strings associated to $C(f, 1)$ and $B(1, g)$ do not represent isolated loose-cells, and must be read together with $q$ as the restriction $q(f, g)$.

As a special case of the string diagram notation for restrictions, if a tight-cell $f \colon A \to B$ admits the restriction $B(1, f) \colon A \lto B$ (called the \emph{companion} of $f$), then it may be bent down; while if it admits the restriction $B(f, 1) \colon B \lto A$ (called the \emph{conjoint} of $f$), then it may be bent up. Note that the existence of the companion and conjoint for $f$ is predicated on the existence of a loose-identity for the codomain $B$.
\[
% https://varkor.github.io/tangle/?t=W1tbMTEsN11dLFtbWzEsWzEsMCwxLDBdXSxbMCxbXV1dLFtbMixbMV1dLFsyLFszXV1dLFtbMCxbXV0sWzEsWzEsMCwxLDBdXV1dLFtbMSwxLDEuNSxbMF1dLFsxLDAuNSwxLFsxXV0sWzEsMS41LDIsWzFdXV0sW1swLDAsMywiZiJdLFsxLDIsMSwiZiJdXV0=&c=F5A3A3,F5CCA3,F5F5A3,CCF5A3,A3F5A3,A3F5CC,A3F5F5,A3CCF5,A3A3F5,CCA3F5,F5A3F5,F5A3CC
\begin{tangle}{(2,3)}[trim y]
	\tgBorderA{(0,0)}{\tgColour10}{\tgColour6}{\tgColour6}{\tgColour10}
	\tgBlank{(1,0)}{\tgColour6}
	\tgBorderC{(0,1)}{0}{\tgColour10}{\tgColour6}
	\tgBorderC{(1,1)}{2}{\tgColour6}{\tgColour10}
	\tgBlank{(0,2)}{\tgColour10}
	\tgBorderA{(1,2)}{\tgColour10}{\tgColour6}{\tgColour6}{\tgColour10}
	\tgArrow{(0.5,1)}{0}
	\tgArrow{(0,0.5)}{3}
	\tgArrow{(1,1.5)}{3}
	\tgAxisLabel{(0.5,0.75)}{south}{f}
	\tgAxisLabel{(1.5,2.25)}{north}{f}
\end{tangle}
\hspace{4em}
% https://varkor.github.io/tangle/?t=W1tbNywxMV1dLFtbWzAsW11dLFsxLFsxLDAsMSwwXV1dLFtbMixbMl1dLFsyLFswXV1dLFtbMSxbMSwwLDEsMF1dLFswLFtdXV1dLFtbMSwxLDEuNSxbMF1dLFsxLDEuNSwxLFswXV0sWzEsMC41LDIsWzBdXV0sW1sxLDAsMywiZiJdLFswLDIsMSwiZiJdXV0=&c=F5A3A3,F5CCA3,F5F5A3,CCF5A3,A3F5A3,A3F5CC,A3F5F5,A3CCF5,A3A3F5,CCA3F5,F5A3F5,F5A3CC
\begin{tangle}{(2,3)}[trim y]
	\tgBlank{(0,0)}{\tgColour6}
	\tgBorderA{(1,0)}{\tgColour6}{\tgColour10}{\tgColour10}{\tgColour6}
	\tgBorderC{(0,1)}{3}{\tgColour6}{\tgColour10}
	\tgBorderC{(1,1)}{1}{\tgColour10}{\tgColour6}
	\tgBorderA{(0,2)}{\tgColour6}{\tgColour10}{\tgColour10}{\tgColour6}
	\tgBlank{(1,2)}{\tgColour10}
	\tgArrow{(0.5,1)}{0}
	\tgArrow{(1,0.5)}{1}
	\tgArrow{(0,1.5)}{1}
	\tgAxisLabel{(1.5,0.75)}{south}{f}
	\tgAxisLabel{(0.5,2.25)}{north}{f}
\end{tangle}
\]
This explains the use of the arrow notation: in a \vdc{} with companions and conjoints, lines annotated with arrows in a string diagram may be bent (they may point right, up, or down, but not left). Note that we label companions and conjoints in a string diagram by their tight-cell -- \eg{} by $f$, rather than by $B(1, f)$ or $B(f, 1)$ -- to aid readability.
The universal property of restriction ensures that the following \emph{zig-zag laws} hold~\cites[618]{cruttwell2010unified}[\S2]{myers2020yoneda}.
\[
% https://varkor.github.io/tangle/?t=W1tbNywxMV1dLFtbWzEsWzAsMSwwLDFdXSxbMixbM11dLFswLFtdXV0sW1swLFtdXSxbMixbMV1dLFsxLFswLDEsMCwxXV1dXSxbWzEsMS41LDEsWzFdXSxbMSwxLDAuNSxbMF1dLFsxLDIsMS41LFswXV1dLFtbMCwwLDIsImYiXSxbMiwxLDAsImYiXV1d&c=F5A3A3,F5CCA3,F5F5A3,CCF5A3,A3F5A3,A3F5CC,A3F5F5,A3CCF5,A3A3F5,CCA3F5,F5A3F5,F5A3CC
\begin{tangle}{(3,2)}[trim x]
	\tgBorderA{(0,0)}{\tgColour6}{\tgColour6}{\tgColour10}{\tgColour10}
	\tgBorderC{(1,0)}{2}{\tgColour6}{\tgColour10}
	\tgBlank{(2,0)}{\tgColour6}
	\tgBlank{(0,1)}{\tgColour10}
	\tgBorderC{(1,1)}{0}{\tgColour10}{\tgColour6}
	\tgBorderA{(2,1)}{\tgColour6}{\tgColour6}{\tgColour10}{\tgColour10}
	\tgArrow{(1,0.5)}{3}
	\tgArrow{(0.5,0)}{0}
	\tgArrow{(1.5,1)}{0}
	\tgAxisLabel{(0.75,0.5)}{east}{f}
	\tgAxisLabel{(2.25,1.5)}{west}{f}
\end{tangle}
\quad = \quad
% https://varkor.github.io/tangle/?t=W1tbNywxMV1dLFtbWzEsWzAsMSwwLDFdXSxbMSxbMCwxLDAsMV1dLFsxLFswLDEsMCwxXV1dXSxbWzEsMSwwLjUsWzBdXSxbMSwyLDAuNSxbMF1dXSxbWzAsMCwyLCJmIl0sWzIsMCwwLCJmIl1dXQ==&c=F5A3A3,F5CCA3,F5F5A3,CCF5A3,A3F5A3,A3F5CC,A3F5F5,A3CCF5,A3A3F5,CCA3F5,F5A3F5,F5A3CC
\begin{tangle}{(3,1)}[trim x]
	\tgBorderA{(0,0)}{\tgColour6}{\tgColour6}{\tgColour10}{\tgColour10}
	\tgBorderA{(1,0)}{\tgColour6}{\tgColour6}{\tgColour10}{\tgColour10}
	\tgBorderA{(2,0)}{\tgColour6}{\tgColour6}{\tgColour10}{\tgColour10}
	\tgArrow{(0.5,0)}{0}
	\tgArrow{(1.5,0)}{0}
	\tgAxisLabel{(0.75,0.5)}{east}{f}
	\tgAxisLabel{(2.25,0.5)}{west}{f}
\end{tangle}
\quad = \quad
% https://varkor.github.io/tangle/?t=W1tbNywxMV1dLFtbWzAsW11dLFsyLFsyXV0sWzEsWzAsMSwwLDFdXV0sW1sxLFswLDEsMCwxXV0sWzIsWzBdXSxbMCxbXV1dXSxbWzEsMS41LDEsWzBdXSxbMSwyLDAuNSxbMF1dLFsxLDEsMS41LFswXV1dLFtbMiwwLDAsImYiXSxbMCwxLDIsImYiXV1d&c=F5A3A3,F5CCA3,F5F5A3,CCF5A3,A3F5A3,A3F5CC,A3F5F5,A3CCF5,A3A3F5,CCA3F5,F5A3F5,F5A3CC
\begin{tangle}{(3,2)}[trim x]
	\tgBlank{(0,0)}{\tgColour6}
	\tgBorderC{(1,0)}{3}{\tgColour6}{\tgColour10}
	\tgBorderA{(2,0)}{\tgColour6}{\tgColour6}{\tgColour10}{\tgColour10}
	\tgBorderA{(0,1)}{\tgColour6}{\tgColour6}{\tgColour10}{\tgColour10}
	\tgBorderC{(1,1)}{1}{\tgColour10}{\tgColour6}
	\tgBlank{(2,1)}{\tgColour10}
	\tgArrow{(1,0.5)}{1}
	\tgArrow{(1.5,0)}{0}
	\tgArrow{(0.5,1)}{0}
	\tgAxisLabel{(2.25,0.5)}{west}{f}
	\tgAxisLabel{(0.75,1.5)}{east}{f}
\end{tangle}
\]
From this observation, we may easily deduce that, given parallel tight-cells $f, g \colon A \to B$, the following are in natural bijection (\cf{}~\cite[Corollary~7.22]{cruttwell2010unified}), and hence that restriction is \ff{}.
\begin{align*}
	f & \tto g & B(1, f) & \tto B(1, g) & B(g, 1) & \tto B(f, 1)
\end{align*}
Consequently, $f \iso g$ if and only if $B(1, f) \iso B(1, g)$ if and only if $B(g, 1) \iso B(f, 1)$.
This permits us to view tight-cells as special loose-cells by taking either companions or conjoints: this process may be thought of as \emph{loosening} a tight-cell (\cf{}~\cite{lack2012enhanced}).

It will be helpful to observe explicitly how composition of tight-cells interacts with restriction in string diagrams. Given tight-cells as below,
\[A \xto f B \xto g C \xfrom h D \xfrom i E\]
and assuming that the restriction $C(h i, g f) \colon A \lto E$ exists, the loose-cell $C(h i, g f)$ is denoted by the following string diagram. Notice in particular that, due to their variance, $f$ and $g$ appear below in nondiagrammatic order, while $h$ and $i$ appear in diagrammatic order.
\[
% https://varkor.github.io/tangle/?t=W1tbNSwxLDMsMTEsN11dLFtbWzEsWzEsMCwxLDBdXSxbMSxbMSwwLDEsMF1dLFsxLFsxLDAsMSwwXV0sWzEsWzEsMCwxLDBdXV0sW1sxLFsxLDAsMSwwXV0sWzEsWzEsMCwxLDBdXSxbMSxbMSwwLDEsMF1dLFsxLFsxLDAsMSwwXV1dLFtbMSxbMSwwLDEsMF1dLFsxLFsxLDAsMSwwXV0sWzEsWzEsMCwxLDBdXSxbMSxbMSwwLDEsMF1dXV0sW1sxLDMuNSwxLFsxXV0sWzEsMy41LDIsWzFdXSxbMSwyLjUsMSxbMV1dLFsxLDIuNSwyLFsxXV0sWzEsMS41LDEsWzBdXSxbMSwxLjUsMixbMF1dLFsxLDAuNSwxLFswXV0sWzEsMC41LDIsWzBdXV0sW1swLDAsMywiaSJdLFsxLDAsMywiaCJdLFsyLDAsMywiZyJdLFszLDAsMywiZiJdLFswLDIsMSwiaSJdLFsxLDIsMSwiaCJdLFsyLDIsMSwiZyJdLFszLDIsMSwiZiJdXV0=&c=F5A3A3,F5CCA3,F5F5A3,CCF5A3,A3F5A3,A3F5CC,A3F5F5,A3CCF5,A3A3F5,CCA3F5,F5A3F5,F5A3CC
\begin{tangle}{(4,3)}[trim y]
	\tgBorderA{(0,0)}{\tgColour4}{\tgColour0}{\tgColour0}{\tgColour4}
	\tgBorderA{(1,0)}{\tgColour0}{\tgColour2}{\tgColour2}{\tgColour0}
	\tgBorderA{(2,0)}{\tgColour2}{\tgColour10}{\tgColour10}{\tgColour2}
	\tgBorderA{(3,0)}{\tgColour10}{\tgColour6}{\tgColour6}{\tgColour10}
	\tgBorderA{(0,1)}{\tgColour4}{\tgColour0}{\tgColour0}{\tgColour4}
	\tgBorderA{(1,1)}{\tgColour0}{\tgColour2}{\tgColour2}{\tgColour0}
	\tgBorderA{(2,1)}{\tgColour2}{\tgColour10}{\tgColour10}{\tgColour2}
	\tgBorderA{(3,1)}{\tgColour10}{\tgColour6}{\tgColour6}{\tgColour10}
	\tgBorderA{(0,2)}{\tgColour4}{\tgColour0}{\tgColour0}{\tgColour4}
	\tgBorderA{(1,2)}{\tgColour0}{\tgColour2}{\tgColour2}{\tgColour0}
	\tgBorderA{(2,2)}{\tgColour2}{\tgColour10}{\tgColour10}{\tgColour2}
	\tgBorderA{(3,2)}{\tgColour10}{\tgColour6}{\tgColour6}{\tgColour10}
	\tgArrow{(3,0.5)}{3}
	\tgArrow{(3,1.5)}{3}
	\tgArrow{(2,0.5)}{3}
	\tgArrow{(2,1.5)}{3}
	\tgArrow{(1,0.5)}{1}
	\tgArrow{(1,1.5)}{1}
	\tgArrow{(0,0.5)}{1}
	\tgArrow{(0,1.5)}{1}
	\tgAxisLabel{(0.5,0.75)}{south}{i}
	\tgAxisLabel{(1.5,0.75)}{south}{h}
	\tgAxisLabel{(2.5,0.75)}{south}{g}
	\tgAxisLabel{(3.5,0.75)}{south}{f}
	\tgAxisLabel{(0.5,2.25)}{north}{i}
	\tgAxisLabel{(1.5,2.25)}{north}{h}
	\tgAxisLabel{(2.5,2.25)}{north}{g}
	\tgAxisLabel{(3.5,2.25)}{north}{f}
\end{tangle}
\]

\begin{notation}
	Let $f \colon A \to B$ be a tight-cell. Assuming $f$ admits the restrictions $B(1, f)$ and $B(f, 1)$, denote by $\cp f$ and $\pc f$ the following 2-cells.
	\[
	\cp f \defeq
	% https://varkor.github.io/tangle/?t=W1tbMTEsN11dLFtbWzIsWzFdXSxbMixbMF1dXV0sW1sxLDEsMC41LFswXV1dLFtbMCwwLDMsImYiXSxbMSwwLDMsImYiXV1d&c=F5A3A3,F5CCA3,F5F5A3,CCF5A3,A3F5A3,A3F5CC,A3F5F5,A3CCF5,A3A3F5,CCA3F5,F5A3F5,F5A3CC
	\begin{tangle}{(2,1)}
		\tgBorderC{(0,0)}{0}{\tgColour10}{\tgColour6}
		\tgBorderC{(1,0)}{1}{\tgColour10}{\tgColour6}
		\tgArrow{(0.5,0)}{0}
		\tgAxisLabel{(0.5,0)}{south}{f}
		\tgAxisLabel{(1.5,0)}{south}{f}
	\end{tangle}
	\hspace{4em}
	\pc f \defeq
	% https://varkor.github.io/tangle/?t=W1tbNywxMV1dLFtbWzIsWzJdXSxbMixbM11dXV0sW1sxLDEsMC41LFswXV1dLFtbMCwwLDEsImYiXSxbMSwwLDEsImYiXV1d&c=F5A3A3,F5CCA3,F5F5A3,CCF5A3,A3F5A3,A3F5CC,A3F5F5,A3CCF5,A3A3F5,CCA3F5,F5A3F5,F5A3CC
	\begin{tangle}{(2,1)}
		\tgBorderC{(0,0)}{3}{\tgColour6}{\tgColour10}
		\tgBorderC{(1,0)}{2}{\tgColour6}{\tgColour10}
		\tgArrow{(0.5,0)}{0}
		\tgAxisLabel{(0.5,1)}{north}{f}
		\tgAxisLabel{(1.5,1)}{north}{f}
	\end{tangle}
	\]
	Furthermore, given a 2-cell
	% https://q.uiver.app/#q=WzAsNyxbNCwxLCJCX24iXSxbNCwwLCJBX24iXSxbMCwxLCJCXzAiXSxbMywwLCJBX3tuIC0gMX0iXSxbMSwwLCJBXzEiXSxbMCwwLCJBXzAiXSxbMiwwLCJcXGNkb3RzIl0sWzAsMiwiQyhnLCBoKSIsMCx7InN0eWxlIjp7ImJvZHkiOnsibmFtZSI6ImJhcnJlZCJ9fX1dLFsxLDMsInBfbiIsMix7InN0eWxlIjp7ImJvZHkiOnsibmFtZSI6ImJhcnJlZCJ9fX1dLFs0LDUsInBfMSIsMix7InN0eWxlIjp7ImJvZHkiOnsibmFtZSI6ImJhcnJlZCJ9fX1dLFszLDYsInBfe24gLSAxfSIsMix7InN0eWxlIjp7ImJvZHkiOnsibmFtZSI6ImJhcnJlZCJ9fX1dLFs2LDQsInBfMiIsMix7InN0eWxlIjp7ImJvZHkiOnsibmFtZSI6ImJhcnJlZCJ9fX1dLFsxLDAsImZfbiJdLFs1LDIsImZfMCIsMl0sWzEyLDEzLCJcXHBoaSIsMSx7InNob3J0ZW4iOnsic291cmNlIjoyMCwidGFyZ2V0IjoyMH0sInN0eWxlIjp7ImJvZHkiOnsibmFtZSI6Im5vbmUifSwiaGVhZCI6eyJuYW1lIjoibm9uZSJ9fX1dXQ==
	\[\begin{tikzcd}
		{A_0} & {A_1} & \cdots & {A_{n - 1}} & {A_n} \\
		{B_0} &&&& {B_n}
		\arrow["{C(g, h)}", "\shortmid"{marking}, from=2-5, to=2-1]
		\arrow["{p_n}"', "\shortmid"{marking}, from=1-5, to=1-4]
		\arrow["{p_1}"', "\shortmid"{marking}, from=1-2, to=1-1]
		\arrow["{p_{n - 1}}"', "\shortmid"{marking}, from=1-4, to=1-3]
		\arrow["{p_2}"', "\shortmid"{marking}, from=1-3, to=1-2]
		\arrow[""{name=0, anchor=center, inner sep=0}, "{f_n}", from=1-5, to=2-5]
		\arrow[""{name=1, anchor=center, inner sep=0}, "{f_0}"', from=1-1, to=2-1]
		\arrow["\phi"{description}, draw=none, from=0, to=1]
	\end{tikzcd}\]
	and a tight-cell $k \colon C \to D$, denote by
	% https://q.uiver.app/#q=WzAsNyxbNCwxLCJCX24iXSxbNCwwLCJBX24iXSxbMCwxLCJCXzAiXSxbMywwLCJBX3tuIC0gMX0iXSxbMSwwLCJBXzEiXSxbMCwwLCJBXzAiXSxbMiwwLCJcXGNkb3RzIl0sWzAsMiwiRChrZywga2gpIiwwLHsic3R5bGUiOnsiYm9keSI6eyJuYW1lIjoiYmFycmVkIn19fV0sWzEsMywicF9uIiwyLHsic3R5bGUiOnsiYm9keSI6eyJuYW1lIjoiYmFycmVkIn19fV0sWzQsNSwicF8xIiwyLHsic3R5bGUiOnsiYm9keSI6eyJuYW1lIjoiYmFycmVkIn19fV0sWzMsNiwicF97biAtIDF9IiwyLHsic3R5bGUiOnsiYm9keSI6eyJuYW1lIjoiYmFycmVkIn19fV0sWzYsNCwicF8yIiwyLHsic3R5bGUiOnsiYm9keSI6eyJuYW1lIjoiYmFycmVkIn19fV0sWzEsMCwiZl9uIl0sWzUsMiwiZl8wIiwyXSxbMTIsMTMsIlxccGhpIFxcZCBrIiwxLHsic2hvcnRlbiI6eyJzb3VyY2UiOjIwLCJ0YXJnZXQiOjIwfSwic3R5bGUiOnsiYm9keSI6eyJuYW1lIjoibm9uZSJ9LCJoZWFkIjp7Im5hbWUiOiJub25lIn19fV1d
	\[\begin{tikzcd}
		{A_0} & {A_1} & \cdots & {A_{n - 1}} & {A_n} \\
		{B_0} &&&& {B_n}
		\arrow["{D(kg, kh)}", "\shortmid"{marking}, from=2-5, to=2-1]
		\arrow["{p_n}"', "\shortmid"{marking}, from=1-5, to=1-4]
		\arrow["{p_1}"', "\shortmid"{marking}, from=1-2, to=1-1]
		\arrow["{p_{n - 1}}"', "\shortmid"{marking}, from=1-4, to=1-3]
		\arrow["{p_2}"', "\shortmid"{marking}, from=1-3, to=1-2]
		\arrow[""{name=0, anchor=center, inner sep=0}, "{f_n}", from=1-5, to=2-5]
		\arrow[""{name=1, anchor=center, inner sep=0}, "{f_0}"', from=1-1, to=2-1]
		\arrow["{\phi \d k}"{description}, draw=none, from=0, to=1]
	\end{tikzcd}\]
	the 2-cell defined by the following diagram.
	\[
	% https://varkor.github.io/tangle/?t=W1tbXV0sW1tbMCxbXV0sWzEsWzEsMCwxLDBdXSxbMSxbMSwwLDEsMF1dLFswLFtdXSxbMSxbMSwwLDEsMF1dLFsxLFsxLDAsMSwwXV0sWzAsW11dXSxbWzEsWzAsMSwwLDFdXSxbMSxbMSwxLDEsMV1dLFsxLFsxLDEsMCwxXV0sWzEsWzAsMSwwLDFdXSxbMSxbMSwxLDAsMV1dLFsxLFsxLDEsMSwxXV0sWzEsWzAsMSwwLDFdXV0sW1swLFtdXSxbMSxbMSwwLDEsMF1dLFsyLFsyXV0sWzEsWzAsMSwwLDFdXSxbMixbM11dLFsxLFsxLDAsMSwwXV0sWzAsW11dXSxbWzAsW11dLFsxLFsxLDAsMSwwXV0sWzEsWzEsMCwxLDBdXSxbMCxbXV0sWzEsWzEsMCwxLDBdXSxbMSxbMSwwLDEsMF1dLFswLFtdXV1dLFtbMCwzLjUsMS41LFsiXFxwaGkiLDRdXSxbMSwxLDEuNSxbMF1dLFsxLDYsMS41LFswXV0sWzEsMS41LDIsWzBdXSxbMSw1LjUsMixbMV1dLFsxLDEuNSwzLFswXV0sWzEsMi41LDMsWzBdXSxbMSwzLDIuNSxbMF1dLFsxLDQsMi41LFswXV0sWzEsNS41LDMsWzFdXSxbMSw0LjUsMyxbMV1dXSxbWzEsMCwzLCJwXzEiXSxbMiwwLDMsInBfMiJdLFs0LDAsMywicF97biAtIDF9Il0sWzUsMCwzLCJwX24iXSxbMCwxLDIsImZfMCJdLFs2LDEsMCwiZl9uIl0sWzEsMywxLCJnIl0sWzIsMywxLCJrIl0sWzQsMywxLCJrIl0sWzUsMywxLCJoIl1dXQ==&c=F5A3A3,F5CCA3,F5F5A3,CCF5A3,A3F5A3,A3F5CC,A3F5F5,A3CCF5,A3A3F5,CCA3F5,F5A3F5,F5A3CC
	\begin{tangle}{(7,4)}[trim x,trim y]
		\tgBlank{(0,0)}{white}
		\tgBorderA{(1,0)}{white}{white}{white}{white}
		\tgBorder{(1,0)}{1}{0}{1}{0}
		\tgBorderA{(2,0)}{white}{white}{white}{white}
		\tgBorder{(2,0)}{1}{0}{1}{0}
		\tgBlank{(3,0)}{white}
		\tgBorderA{(4,0)}{white}{white}{white}{white}
		\tgBorder{(4,0)}{1}{0}{1}{0}
		\tgBorderA{(5,0)}{white}{white}{white}{white}
		\tgBorder{(5,0)}{1}{0}{1}{0}
		\tgBlank{(6,0)}{white}
		\tgBorderA{(0,1)}{white}{white}{white}{white}
		\tgBorder{(0,1)}{0}{1}{0}{1}
		\tgBorderA{(1,1)}{white}{white}{white}{white}
		\tgBorder{(1,1)}{1}{1}{1}{1}
		\tgBorderA{(2,1)}{white}{white}{white}{white}
		\tgBorder{(2,1)}{1}{1}{0}{1}
		\tgBorderA{(3,1)}{white}{white}{white}{white}
		\tgBorder{(3,1)}{0}{1}{0}{1}
		\tgBorderA{(4,1)}{white}{white}{white}{white}
		\tgBorder{(4,1)}{1}{1}{0}{1}
		\tgBorderA{(5,1)}{white}{white}{white}{white}
		\tgBorder{(5,1)}{1}{1}{1}{1}
		\tgBorderA{(6,1)}{white}{white}{white}{white}
		\tgBorder{(6,1)}{0}{1}{0}{1}
		\tgBlank{(0,2)}{white}
		\tgBorderA{(1,2)}{white}{white}{white}{white}
		\tgBorder{(1,2)}{1}{0}{1}{0}
		\tgBorderC{(2,2)}{3}{white}{white}
		\tgBorderA{(3,2)}{white}{white}{white}{white}
		\tgBorder{(3,2)}{0}{1}{0}{1}
		\tgBorderC{(4,2)}{2}{white}{white}
		\tgBorderA{(5,2)}{white}{white}{white}{white}
		\tgBorder{(5,2)}{1}{0}{1}{0}
		\tgBlank{(6,2)}{white}
		\tgBlank{(0,3)}{white}
		\tgBorderA{(1,3)}{white}{white}{white}{white}
		\tgBorder{(1,3)}{1}{0}{1}{0}
		\tgBorderA{(2,3)}{white}{white}{white}{white}
		\tgBorder{(2,3)}{1}{0}{1}{0}
		\tgBlank{(3,3)}{white}
		\tgBorderA{(4,3)}{white}{white}{white}{white}
		\tgBorder{(4,3)}{1}{0}{1}{0}
		\tgBorderA{(5,3)}{white}{white}{white}{white}
		\tgBorder{(5,3)}{1}{0}{1}{0}
		\tgBlank{(6,3)}{white}
		\tgCell[(4,0)]{(3,1)}{\phi}
		\tgArrow{(0.5,1)}{0}
		\tgArrow{(5.5,1)}{0}
		\tgArrow{(1,1.5)}{1}
		\tgArrow{(5,1.5)}{3}
		\tgArrow{(1,2.5)}{1}
		\tgArrow{(2,2.5)}{1}
		\tgArrow{(2.5,2)}{0}
		\tgArrow{(3.5,2)}{0}
		\tgArrow{(5,2.5)}{3}
		\tgArrow{(4,2.5)}{3}
		\tgAxisLabel{(1.5,0.75)}{south}{p_1}
		\tgAxisLabel{(2.5,0.75)}{south}{p_2}
		\tgAxisLabel{(4.5,0.75)}{south}{p_{n - 1}}
		\tgAxisLabel{(5.5,0.75)}{south}{p_n}
		\tgAxisLabel{(0.75,1.5)}{east}{f_0}
		\tgAxisLabel{(6.25,1.5)}{west}{f_n}
		\tgAxisLabel{(1.5,3.25)}{north}{g}
		\tgAxisLabel{(2.5,3.25)}{north}{k}
		\tgAxisLabel{(4.5,3.25)}{north}{k}
		\tgAxisLabel{(5.5,3.25)}{north}{h}
		\node at (3.5,.9) {$\cdots$};
	\end{tangle}
	\]
\end{notation}

\begin{notation}
     Given a 2-cell $\phi \colon p_1, \ldots, p_n \tto q$ and tight-cells $f$ and $g$ with appropriate (co)domain, we denote by $\phi(f, g) \colon p_1(f, 1), \ldots, p_n(1, g) \tto q(f, g)$ the 2-cell given by pasting the conjoint of $f$ and the companion of $g$.
\end{notation}

The existence of companions and conjoints typically holds for \vdcs{} of category-like structures (\cf{}~\cite[Examples~7.7]{cruttwell2010unified}). For instance, in $\Cat$, the companion of a tight-cell $f \colon A \to B$ is the distributor $B({-}_1, f {-}_2) \colon A \lto B$ given by postcomposition by $f$, while the conjoint of $f$ is the distributor $B(f {-}_1, {-}_2) \colon B \lto A$ given by precomposition by $f$. The importance of this structure motivates the following definition.

\begin{definition}[{\cite[Definition~7.6]{cruttwell2010unified}}]
    A \emph{virtual equipment} (or simply \emph{equipment}) is a \vdc{} that admits all loose-identities and restrictions.
\end{definition}

It will be useful to have terminology for those loose-cells induced by tight-cells via restriction.

\begin{definition}
	\label{representable-and-corepresentable}
    Let $j \colon A_0 \to B$ and $i \colon A_n \to B$ be tight-cells, and consider a chain $p_1 \colon A_1 \lto A_0, \ldots, p_n \colon A_n \lto A_{n - 1}$ of loose-cells. If $B(j, i)$ exists and forms the loose-composite of the chain, then we say that $p_1, \ldots, p_n$ is \emph{$j$-represented by $i$}, is \emph{$i$-corepresented by $j$}, is \emph{$j$-representable}, and is \emph{$i$-corepresentable}. We omit the prefixes $j$-{} and $i$-{} when the respective tight-cells are the identity.
\end{definition}

A loose-cell is thus representable precisely when it is the companion of a tight-cell, and is corepresentable precisely when it is the conjoint of a tight-cell.

\subsection{Duality}

\begin{definition}
    The \emph{dual} $\X\co$ of a \vdc{} $\X$ is the \vdc{} with the same objects and tight-cells as $\X$, whose loose-cells $A \lto B$ are the loose-cells $B \lto A$ in $\X$, and whose 2-cells with frame
    % https://q.uiver.app/#q=WzAsNSxbMiwxLCJCX24iXSxbMiwwLCJBX24iXSxbMCwxLCJCXzAiXSxbMSwwLCJcXGNkb3RzIl0sWzAsMCwiQV8wIl0sWzAsMiwicSIsMCx7InN0eWxlIjp7ImJvZHkiOnsibmFtZSI6ImJhcnJlZCJ9fX1dLFsxLDMsInBfbiIsMix7InN0eWxlIjp7ImJvZHkiOnsibmFtZSI6ImJhcnJlZCJ9fX1dLFsxLDAsImZfbiJdLFs0LDIsImZfMCIsMl0sWzMsNCwicF8xIiwyLHsic3R5bGUiOnsiYm9keSI6eyJuYW1lIjoiYmFycmVkIn19fV1d
	\[\begin{tikzcd}
		{A_0} & \cdots & {A_n} \\
		{B_0} && {B_n}
		\arrow["q", "\shortmid"{marking}, from=2-3, to=2-1]
		\arrow["{p_n}"', "\shortmid"{marking}, from=1-3, to=1-2]
		\arrow["{f_n}", from=1-3, to=2-3]
		\arrow["{f_0}"', from=1-1, to=2-1]
		\arrow["{p_1}"', "\shortmid"{marking}, from=1-2, to=1-1]
	\end{tikzcd}\]
    are the 2-cells
    % https://q.uiver.app/#q=WzAsNSxbMiwxLCJCXzAiXSxbMiwwLCJBXzAiXSxbMCwxLCJCX24iXSxbMSwwLCJcXGNkb3RzIl0sWzAsMCwiQV9uIl0sWzAsMiwicSIsMCx7InN0eWxlIjp7ImJvZHkiOnsibmFtZSI6ImJhcnJlZCJ9fX1dLFsxLDMsInBfMSIsMix7InN0eWxlIjp7ImJvZHkiOnsibmFtZSI6ImJhcnJlZCJ9fX1dLFsxLDAsImZfMCJdLFs0LDIsImZfbiIsMl0sWzMsNCwicF9uIiwyLHsic3R5bGUiOnsiYm9keSI6eyJuYW1lIjoiYmFycmVkIn19fV0sWzcsOCwiXFxwaGkiLDEseyJzaG9ydGVuIjp7InNvdXJjZSI6MjAsInRhcmdldCI6MjB9LCJzdHlsZSI6eyJib2R5Ijp7Im5hbWUiOiJub25lIn0sImhlYWQiOnsibmFtZSI6Im5vbmUifX19XV0=
	\[\begin{tikzcd}
		{A_n} & \cdots & {A_0} \\
		{B_n} && {B_0}
		\arrow["q", "\shortmid"{marking}, from=2-3, to=2-1]
		\arrow["{p_1}"', "\shortmid"{marking}, from=1-3, to=1-2]
		\arrow[""{name=0, anchor=center, inner sep=0}, "{f_0}", from=1-3, to=2-3]
		\arrow[""{name=1, anchor=center, inner sep=0}, "{f_n}"', from=1-1, to=2-1]
		\arrow["{p_n}"', "\shortmid"{marking}, from=1-2, to=1-1]
		\arrow["\phi"{description}, draw=none, from=0, to=1]
	\end{tikzcd}\]
    in $\X$. String diagramatically, $\X\co$ arises from $\X$ by horizontal reflection and reversing arrows.
\end{definition}

\begin{remark}
  A \vdc{} has only one notion of dual, which combines the two notions of duality for a 2-category or bicategory. In particular, the duality acts as $\ph\co$ on the tight-cells, and as $\ph\op$ on the loose-cells; the two dualities coincide for the 2-cells. The tight 2-category of $\X\co$ is $(\tX)\co$, where the latter is formed in the usual way by reversing 2-cells.
\end{remark}

\begin{lemma}
    \label{X-equipment-iff-Xco-equipment}
    Let $\X$ be a \vdc{}. Then $\X\co$ is an equipment if and only if $\X$ is an equipment. Furthermore, $\X\co$ admits a loose-composite of a chain $p_n, \ldots, p_1$ if and only if $\X$ admits a loose-composite of $p_1, \ldots, p_n$.
\end{lemma}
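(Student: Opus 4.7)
The proof is essentially a matter of bookkeeping: since the duality $\X \mapsto \X\co$ is involutive on the level of \vdcs{} (reversing loose-cells and leaving tight-cells alone), it suffices to check that the universal properties defining loose-composites, loose-identities, and restrictions are all self-dual under this operation. Once this is done, both the biconditional about equipments and the statement about composites follow at once.

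The plan is to prove two lemmas in tandem. First, I will show that a 2-cell $\phi$ in $\X$ with domain chain $p_1, \ldots, p_n$ and unary codomain $q$ is opcartesian if and only if the same underlying 2-cell, viewed in $\X\co$ with domain chain $p_n, \ldots, p_1$, is opcartesian. This is pure translation: the factorisation property of \cref{opcartesian} says that every 2-cell $\phi'$ with domain $r_n, \ldots, r_1, p_1, \ldots, p_n, p_l, \ldots, p_1$ factors uniquely through $\phi$. Reading this data through the definition of $\X\co$, the chain $r_n, \ldots, r_1, p_1, \ldots, p_n, p_l, \ldots, p_1$ in $\X$ becomes the chain $p_1, \ldots, p_l, p_n, \ldots, p_1, r_1, \ldots, r_n$ in $\X\co$, and the roles of the boundary tight-cells $f$ and $g$ swap. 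The resulting condition is precisely the opcartesian property of the same 2-cell in $\X\co$ with domain $p_n, \ldots, p_1$. Taking $n = 0$, this shows in particular that $\X$ admits loose-identities if and only if $\X\co$ does, and the loose-identity on an object $A$ in $\X\co$ is the same loose-cell as the loose-identity on $A$ in $\X$. Specialising to general $n$ gives the second sentence of the lemma.

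Second, I will show analogously that a 2-cell $\phi$ in $\X$ exhibiting $p$ as the restriction $q(f, g)$ is cartesian if and only if the corresponding 2-cell in $\X\co$ exhibits $p$ as the restriction $q(g, f)$. Again this is direct translation of \cref{cartesian-cell}: reversing the chain $r_n, \ldots, r_1$ to $r_1, \ldots, r_n$ and swapping the left and right tight boundaries turns the universal property in $\X$ into the universal property in $\X\co$. Hence $\X$ admits the restriction $q(f, g)$ if and only if $\X\co$ admits $q(g, f)$.

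Combining these two observations, $\X\co$ admits all loose-identities and restrictions precisely when $\X$ does, proving the first sentence. The main ``difficulty'' is purely notational --- keeping track of how the reversal of loose-cell chains, and the resulting swap of the two tight boundaries $f_0, f_n$, interacts with the indexing in the universal properties --- but no substantive argument beyond unwinding definitions is required, since the duality is an involution on 2-cell frames that exchanges left and right while preserving nullary codomains.
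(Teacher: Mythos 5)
Your proposal is correct and follows essentially the same route as the paper's own proof: the paper likewise observes that loose-identities, restrictions $q(f,g)\leftrightarrow q(g,f)$, and loose-composites (with reversed chains) in $\X$ and $\X\co$ are literally the same data satisfying their universal properties by definition, with your argument merely spelling out the translation of the opcartesian and cartesian universal properties in more detail. (Only a small notational slip in naming the chains in the opcartesian factorisation; the mathematics is fine.)
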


\begin{proof}
  Loose-identities $A(1, 1)$ in $\X\co$ are loose-identities $A(1, 1)$ in $\X$ (and conversely). Restrictions $p(g, f)$ in $\X\co$ are restrictions $p(f, g)$ in $\X$ (and conversely). Loose-composites $p_n \odot \cdots \odot p_1$ in $\X\co$ are loose-composites $p_1 \odot \cdots \odot p_n$ in $\X$. They satisfy their respective universal properties by definition.
\end{proof}

\subsection{Monads and adjunctions}
\label{monads-and-adjunctions}

Since a \vdc{} has two kinds of morphism -- tight-cells and loose-cells -- there are two kinds of monad one may consider in a \vdc{}, assuming the existence of sufficient loose-identities. Monads formed from tight-cells (which we simply call \emph{monads}, or \emph{tight-monads} to disambiguate), and their generalisation to relative monads, will be of primary interest throughout the paper. However, monads formed from loose-cells (which we call \emph{loose-monads}) are of secondary interest in certain representation theorems, and it will be useful to introduce them here.

\begin{definition}
	\label{tight-monad}
	Let $\X$ be a \vdc{} admitting loose-identities and let $A$ be an object. A \emph{monad} on $A$ is a monad on $A$ in the tight 2-category $\tX$~\cite[Definition~5.4.1]{benabou1967introduction}. Denote by $\Mnd(A)$ the category of monads on $A$, and by $U_A \colon \Mnd(A) \to \X[A, A]$ the forgetful functor.
\end{definition}

In the above definition, the only loose-identity we require is $A(1, 1)$ since, if one expands the definition of a monad on $A$, this is the only one that is used.
We assume all loose-identities exist for simplicity, so that we can state the definition in terms of the tight 2-category.
A similar consideration applies to several of the definitions below.

\begin{definition}
	\label{loose-monad}
	A \emph{loose-monad} (\cite[\S2.6]{leinster1999generalized}) in a \vdc{} comprises
	\begin{enumerate}
		\item an object $A$, the \emph{base};
		\item a loose-cell $t \colon A \lto A$, the \emph{underlying loose-cell};
		\item a 2-cell $\mu \colon t, t \tto t$, the \emph{multiplication};
		\item a 2-cell $\eta \colon \tto t$, the \emph{unit},
	\end{enumerate}
	satisfying the following equations.
	\begin{align*}
		(\mu, 1_t) \d \mu & = (1_t, \mu) \d \mu &
		(\eta, 1_t) \d \mu & = 1_t &
		(1_t, \eta) \d \mu & = 1_t
	\end{align*}
	A \emph{loose-monad on $A$} is a loose-monad with base $A$. A morphism of loose-monads\footnotemark{} on $A$ from $(t, \mu, \eta)$ to $(t', \mu', \eta')$ is a 2-cell $\tau \colon t \tto t'$ satisfying the following equations.
	\begin{align*}
		\eta \d \tau & = \eta' &
		\mu \d \tau & = (\tau, \tau) \d \mu'
	\end{align*}
	\footnotetext{The loose-monad morphisms we consider are a special case of those of \cite[\S2.6]{leinster1999generalized} and \cite[Definition~8.3]{cruttwell2010unified}, which permit morphisms between loose-monads with different bases.}
	Loose-monads on $A$ and their morphisms form a category $\LMnd(A)$. Denote by ${U_A \colon \LMnd(A) \to \X\lh{A, A}}$ the faithful functor sending $(t, \mu, \eta) \mapsto t$.
\end{definition}

In $\Cat$, tight-monads are the classical notion of monad on a category. Loose-monads are monads in the (virtual) bicategory of distributors, which are known to correspond to bijective-on-objects functors~\cite[6.22]{justesen1968bikategorien}.

Just as with monads in a 2-category, we have corresponding notions of adjunction for (tight) monads and loose-monads.

\begin{definition}
	\label{tight-adjunction}
	Let $\X$ be a \vdc{} admitting loose-identities. An \emph{adjunction} in $\X$ is an adjunction in the tight 2-category $\tX$~\cite[762]{maranda1965formal}.
\end{definition}

\begin{definition}[{\cite[Definition~5.31]{shulman2013enriched}}]
	\label{loose-adjunction}
	A \emph{loose-adjunction} in a \vdc{} comprises
	\begin{enumerate}
		\item an object $A$, the \emph{base};
		\item an object $C$, the \emph{apex}, admitting a loose-identity;
		\item a loose-cell $\ell \colon A \lto C$, the \emph{left loose-adjoint};
		\item a loose-cell $r \colon C \lto A$, the \emph{right loose-adjoint}, admitting a composite $r \odot \ell \colon A \lto A$;
		\item a 2-cell $\eta \colon \tto r \odot \ell$, the \emph{unit};
		\item a 2-cell $\varepsilon \colon \ell, r \tto C(1, 1)$, the \emph{counit},
	\end{enumerate}
	satisfying the following (\emph{left} and \emph{right}) zig-zag laws.
	\begin{tangleeqs*}
	% https://varkor.github.io/tangle/?t=W1tbMyw1XV0sW1tbMSxbMSwwLDEsMF1dLFsxLFswLDEsMSwwXV0sWzEsWzAsMCwxLDFdXV0sW1sxLFsxLDEsMCwwXV0sWzEsWzEsMCwwLDFdXSxbMSxbMSwwLDEsMF1dXV0sW1swLDEsMS41LFsiXFx2YXJlcHNpbG9uIiwxXV0sWzAsMiwwLjUsWyJcXGV0YSIsMV1dXSxbWzAsMCwzLCJcXGVsbCJdLFsyLDEsMSwiXFxlbGwiXV1d&c=F5A3A3,F5CCA3,F5F5A3,CCF5A3,A3F5A3,A3F5CC,A3F5F5,A3CCF5,A3A3F5,CCA3F5,F5A3F5,F5A3CC
	\begin{tangle}{(3,2)}
		\tgBorderA{(0,0)}{\tgColour2}{\tgColour4}{\tgColour4}{\tgColour2}
		\tgBorderA{(1,0)}{\tgColour4}{\tgColour4}{\tgColour2}{\tgColour4}
		\tgBorderA{(2,0)}{\tgColour4}{\tgColour4}{\tgColour4}{\tgColour2}
		\tgBorderA{(0,1)}{\tgColour2}{\tgColour4}{\tgColour2}{\tgColour2}
		\tgBorderA{(1,1)}{\tgColour4}{\tgColour2}{\tgColour2}{\tgColour2}
		\tgBorderA{(2,1)}{\tgColour2}{\tgColour4}{\tgColour4}{\tgColour2}
		\tgCell[(1,0)]{(0.5,1)}{\varepsilon}
		\tgCell[(1,0)]{(1.5,0)}{\eta}
		\tgAxisLabel{(0.5,0)}{south}{\ell}
		\tgAxisLabel{(2.5,2)}{north}{\ell}
	\end{tangle}
	\=
	% https://varkor.github.io/tangle/?t=W1tbMyw1XV0sW1tbMSxbMSwwLDEsMF1dXSxbWzEsWzEsMCwxLDBdXV1dLFtdLFtbMCwwLDMsIlxcZWxsIl0sWzAsMSwxLCJcXGVsbCJdXV0=&c=F5A3A3,F5CCA3,F5F5A3,CCF5A3,A3F5A3,A3F5CC,A3F5F5,A3CCF5,A3A3F5,CCA3F5,F5A3F5,F5A3CC
	\begin{tangle}{(1,2)}
		\tgBorderA{(0,0)}{\tgColour2}{\tgColour4}{\tgColour4}{\tgColour2}
		\tgBorderA{(0,1)}{\tgColour2}{\tgColour4}{\tgColour4}{\tgColour2}
		\tgAxisLabel{(0.5,0)}{south}{\ell}
		\tgAxisLabel{(0.5,2)}{north}{\ell}
	\end{tangle}
	\hspace{4em}
	% https://varkor.github.io/tangle/?t=W1tbNSwzXV0sW1tbMSxbMCwxLDEsMF1dLFsxLFswLDAsMSwxXV0sWzEsWzEsMCwxLDBdXV0sW1sxLFsxLDAsMSwwXV0sWzEsWzEsMSwwLDBdXSxbMSxbMSwwLDAsMV1dXV0sW1swLDEsMC41LFsiXFxldGEiLDFdXSxbMCwyLDEuNSxbIlxcdmFyZXBzaWxvbiIsMV1dXSxbWzIsMCwzLCJyIl0sWzAsMSwxLCJyIl1dXQ==&c=F5A3A3,F5CCA3,F5F5A3,CCF5A3,A3F5A3,A3F5CC,A3F5F5,A3CCF5,A3A3F5,CCA3F5,F5A3F5,F5A3CC
	\begin{tangle}{(3,2)}
		\tgBorderA{(0,0)}{\tgColour4}{\tgColour4}{\tgColour2}{\tgColour4}
		\tgBorderA{(1,0)}{\tgColour4}{\tgColour4}{\tgColour4}{\tgColour2}
		\tgBorderA{(2,0)}{\tgColour4}{\tgColour2}{\tgColour2}{\tgColour4}
		\tgBorderA{(0,1)}{\tgColour4}{\tgColour2}{\tgColour2}{\tgColour4}
		\tgBorderA{(1,1)}{\tgColour2}{\tgColour4}{\tgColour2}{\tgColour2}
		\tgBorderA{(2,1)}{\tgColour4}{\tgColour2}{\tgColour2}{\tgColour2}
		\tgCell[(1,0)]{(0.5,0)}{\eta}
		\tgCell[(1,0)]{(1.5,1)}{\varepsilon}
		\tgAxisLabel{(2.5,0)}{south}{r}
		\tgAxisLabel{(0.5,2)}{north}{r}
	\end{tangle}
	\=
	% https://varkor.github.io/tangle/?t=W1tbNSwzXV0sW1tbMSxbMSwwLDEsMF1dXSxbWzEsWzEsMCwxLDBdXV1dLFtdLFtbMCwwLDMsInIiXSxbMCwxLDEsInIiXV1d&c=F5A3A3,F5CCA3,F5F5A3,CCF5A3,A3F5A3,A3F5CC,A3F5F5,A3CCF5,A3A3F5,CCA3F5,F5A3F5,F5A3CC
	\begin{tangle}{(1,2)}
		\tgBorderA{(0,0)}{\tgColour4}{\tgColour2}{\tgColour2}{\tgColour4}
		\tgBorderA{(0,1)}{\tgColour4}{\tgColour2}{\tgColour2}{\tgColour4}
		\tgAxisLabel{(0.5,0)}{south}{r}
		\tgAxisLabel{(0.5,2)}{north}{r}
	\end{tangle}
	\end{tangleeqs*}
\end{definition}

The motivating example of a loose-adjunction is the relationship between the representable and corepresentable loose-cells induced by a tight-cell.

\begin{lemma}
	\label{loose-adjunction-induced-by-tight-cell}
	Let $\ell \colon A \to C$ be a tight-cell in a \ve{}. Then $C(1, \ell) \adj C(\ell, 1)$.
\end{lemma}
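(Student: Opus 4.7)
The plan is to show that the companion $C(1, \ell)$ and the conjoint $C(\ell, 1)$ of the tight-cell $\ell$ form a loose-adjunction, with unit and counit arising directly from the companion--conjoint data of $\ell$.

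First I would verify the existence requirements of \cref{loose-adjunction}. Since $\X$ is a virtual equipment, the loose-identities $A(1, 1)$ and $C(1, 1)$ as well as the restrictions $C(1, \ell)$ and $C(\ell, 1)$ all exist. Moreover, the composite $C(\ell, 1) \odot C(1, \ell)$ exists and coincides with the restriction $C(\ell, \ell)$, by the characterisation of a restriction as a companion--loose-identity--conjoint composite~\cite[Theorem~7.16]{cruttwell2010unified} that justifies the paper's string-diagram convention for restrictions.

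Next I would exhibit the unit and counit. The counit $\varepsilon \colon C(1, \ell), C(\ell, 1) \tto C(1, 1)$ is the cap 2-cell $\cp \ell$ introduced in the paper's notation for companions and conjoints, which has exactly this frame. The unit $\eta \colon \tto C(\ell, 1) \odot C(1, \ell)$ is the cup 2-cell $\pc \ell$; equivalently, using the identification with $C(\ell, \ell)$ above, it may be described as the unique factorisation of the identity 2-cell $1_\ell \colon \ell \tto \ell$ through the cartesian cell of $C(\ell, \ell)$.

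Finally, the two triangle identities for this candidate loose-adjunction reduce, under the bending conventions that relate companions and conjoints to the underlying tight-cell, to the two zig-zag laws for $\cp \ell$ and $\pc \ell$ displayed explicitly earlier in the paper. I expect the main obstacle to be cosmetic rather than substantive: carefully matching the orientations and chain orderings of the triangle identities of \cref{loose-adjunction} against the zig-zag laws, after which each identity holds on the nose.
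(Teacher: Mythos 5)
Your proposal is correct and follows essentially the same route as the paper's proof: identify $C(\ell,1) \odot C(1,\ell)$ with $C(\ell,\ell)$, take $\pc\ell$ and $\cp\ell$ as unit and counit, and reduce the triangle identities to the zig-zag laws for restriction. The only difference is that you spell out the existence and factorisation details, which the paper leaves implicit.
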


\begin{proof}
	First, observe that $C(\ell, \ell) \iso C(\ell, 1) \odot C(1, \ell)$. The unit is given by $\pc \ell$ and the counit is given by $\cp \ell$. The zig-zag laws follow from those for restriction.
\end{proof}

\begin{remark}
	\label{representables-vs-maps}
	Following \cref{loose-adjunction-induced-by-tight-cell}, representable loose-cells in an equipment are left loose-adjoints (often simply called \emph{maps}).
	However, the converse is not generally true: for instance, the left adjoint distributors $A \lto E$ are equivalent not to functors $A \to E$, but to functors from $A$ to the cocompletion of $E$ under absolute colimits (\cf{}~\cite[\S6]{kelly2005notes}). The distinction between representables and maps is a crucial aspect of the insufficiency of 2-categories as a setting for formal category theory (\cf{}~\cref{formal-mw-monads}).
\end{remark}

As expected, loose-adjunctions induce loose-monads.

\begin{lemma}
	\label{loose-adjunction-induces-loose-monad}
	Every loose-adjunction $\ell \adj r$ induces a loose-monad.
\end{lemma}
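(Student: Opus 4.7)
The plan is to build the induced loose-monad explicitly from the loose-adjunction data. I would take the underlying loose-cell to be $t \defeq r \odot \ell$ (which exists by hypothesis), the unit to be the loose-adjunction unit $\eta \colon \tto r \odot \ell$ unchanged, and the multiplication $\mu \colon r \odot \ell, r \odot \ell \tto r \odot \ell$ to be the 2-cell that ``pinches the middle $\ell, r$ by the counit $\varepsilon$''. Formally, by the universal property of the two opcartesian 2-cells $r, \ell \tto r \odot \ell$ appearing in the domain of $\mu$, specifying $\mu$ is equivalent to specifying a 2-cell $r, \ell, r, \ell \tto r \odot \ell$; I would take the latter to be the composite of $(1_r, \varepsilon, 1_\ell) \colon r, \ell, r, \ell \tto r, C(1,1), \ell$ with the canonical 2-cell $r, C(1,1), \ell \tto r \odot \ell$ supplied by the loose-identity structure on $C(1,1)$ together with the opcartesian cell defining $r \odot \ell$.

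For associativity $(\mu, 1_t) \d \mu = (1_t, \mu) \d \mu$, I would reduce both sides via the universal property of the three opcartesian cells in their common expanded domain to 2-cells $r, \ell, r, \ell, r, \ell \tto r \odot \ell$. Both sides yield the same such 2-cell, namely the one obtained by applying $\varepsilon$ to the two disjoint adjacent $(\ell, r)$ pairs at positions $2$--$3$ and $4$--$5$ followed by the opcartesian cell on the remaining outer $r$ and $\ell$. These two applications of $\varepsilon$ commute because they act on disjoint subchains, so the two nested constructions give equal 2-cells and uniqueness in the universal property concludes.

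Each unit law then reduces to one of the zig-zag laws. For $(\eta, 1_t) \d \mu = 1_t$, the same universal property reduces both sides to 2-cells $r, \ell \tto r \odot \ell$: the right-hand side is the defining opcartesian cell itself, while the left-hand side inserts $\eta$ to create a fresh pair $r, \ell$ to the left of the existing $r, \ell$ and then applies $\varepsilon$ to the newly created $\ell$ against the original $r$, leaving the new $r$ and the original $\ell$; this is precisely the $r$-strand zig-zag, which collapses to the defining opcartesian cell by one of the zig-zag laws of the loose-adjunction. The right unit law is analogous, invoking the $\ell$-strand zig-zag. The main subtlety throughout is that the triple composite $r \odot \ell \odot r \odot \ell$ is not assumed to exist, so all manipulations must be performed on the unexpanded chains of loose-cells via the universal property of the opcartesian cells; the string-diagram notation introduced earlier renders the required bookkeeping visually transparent.
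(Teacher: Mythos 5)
Your construction is exactly the paper's: the induced loose-monad is $(r \odot \ell,\; r \odot \varepsilon \odot \ell,\; \eta)$, with the unit laws following from the zig-zag identities and associativity from associativity of 2-cell composition, the only difference being that you spell out the factorisations through the opcartesian cells that the paper leaves implicit. So the proposal is correct and takes essentially the same approach as the paper, just in more detail.
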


\begin{proof}
	Let $(\ell, r, \eta, \varepsilon)$ be a loose-adjunction. We define a loose-monad by \[(r \odot \ell, r \odot \varepsilon \odot \ell, \eta)\]
	The unit laws follows from the zig-zag laws for a loose-adjunction, while the associativity law follows from associativity of composition of 2-cells in the \ve{}.
\end{proof}

As a consequence of \cref{loose-adjunction-induced-by-tight-cell,loose-adjunction-induces-loose-monad}, we have that every tight-cell $\ell \colon A \to C$ in a \ve{} induces a loose-monad $C(\ell, \ell)$ whose unit is given by $\pc \ell \colon \tto C(\ell, \ell)$ and whose counit is given by $\cp \ell \colon C(1, \ell), C(\ell, 1) \tto C(1, 1)$ (\cf{}~\cite[Lemma~8.4]{cruttwell2010unified}).

\begin{definition}
	\label{co-representable-resolution}
	Let $\ell$ be a tight-cell. A loose-monad $T$ is \emph{induced by $\ell$} if there exists a tight-cell $\ell$ such that $C(1, \ell) \adj C(\ell, 1)$ induces $T$ via \cref{loose-adjunction-induces-loose-monad}.
\end{definition}

Monads and adjunctions in an equipment induce loose-monads and loose-adjunctions: this will be discussed in the greater generality of relative monads and relative adjunctions in \cref{skew-multicategorical-hom-categories,relative-adjunctions}. Note that, while (tight) comonads may also be defined in a \vdc{} with loose-identities, loose-comonads may not without the assumption of binary loose-composites, since their definition involves 2-cells with non-unary codomain (\cf{}~\cref{duality}).

We conclude with the following observation (\cf{}~\cite[Lemma~1.1.1]{johnstone2002sketches1}), which is useful for establishing that a canonical 2-cell is an isomorphism in the presence of a non-canonical isomorphism.

\begin{lemma}
	\label{invertible-unit-if-noncanonical-isomorphism}
	The unit $\eta \colon \tto r \odot \ell$ of a loose-adjunction $\ell \adj r$ is opcartesian if and only if $A$ admits a loose-identity and $r \odot \ell \iso A(1, 1)$.
\end{lemma}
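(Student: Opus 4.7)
The forward direction is immediate: by the definition of loose-identity (in the paragraph following \cref{opcartesian}), any opcartesian 2-cell with nullary domain at $A$ is the data exhibiting its codomain as the loose-identity $A(1, 1)$, so $A$ admits a loose-identity and $r \odot \ell \iso A(1, 1)$.

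For the reverse direction, suppose $A(1, 1)$ exists with opcartesian cell $\iota \colon \tto A(1, 1)$, and fix an isomorphism $\phi \colon r \odot \ell \xto{\iso} A(1, 1)$. Then $\iota' \defeq \iota \d \phi^{-1} \colon \tto r \odot \ell$ is opcartesian, since opcartesian 2-cells are closed under postcomposition with invertible 2-cells, and so witnesses $r \odot \ell$ as a loose-identity on $A$. By its universal property there is a unique 2-cell $\alpha \colon r \odot \ell \tto r \odot \ell$ with $\iota' \d \alpha = \eta$. It therefore suffices to show that $\alpha$ is invertible, for then $\eta$ is the composite of an opcartesian 2-cell with an isomorphism and is hence itself opcartesian.

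To establish invertibility of $\alpha$, I would substitute $\eta = \iota' \d \alpha$ into each of the two triangle identities for the loose-adjunction $\ell \adj r$. Because $\iota'$ exhibits $r \odot \ell$ as a loose-identity, the 2-cells $(1_\ell, \iota')$ and $(\iota', 1_r)$ are the canonical left and right unit isomorphisms $\lambda \colon \ell \xto{\iso} \ell \odot r \odot \ell$ and $\rho \colon r \xto{\iso} r \odot \ell \odot r$. Consequently the triangle identities reduce to
\[
(1_\ell, \alpha) \d (\varepsilon, 1_\ell) = \lambda^{-1}, \qquad (\alpha, 1_r) \d (1_r, \varepsilon) = \rho^{-1},
\]
which relate $\alpha$ to the counit $\varepsilon$ via the unit isomorphisms. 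A candidate inverse $\beta \colon r \odot \ell \tto r \odot \ell$ to $\alpha$ can then be extracted by transposing these equations through the hom-bijection supplied by $\iota'$, which identifies endomorphisms of $r \odot \ell$ with nullary 2-cells into $r \odot \ell$.

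The main technical obstacle is combining the two one-sided constraints provided by the triangle identities into a genuine two-sided inverse for $\alpha$. The guiding intuition is that the loose-adjunction induces (via \cref{loose-adjunction-induces-loose-monad}) a loose-monad structure $(r \odot \ell, r \odot \varepsilon \odot \ell, \eta)$ on the loose-identity $r \odot \ell$, and any loose-monad supported on a loose-identity must be isomorphic to the trivial one; this symmetry, read off both triangle identities simultaneously, forces $\alpha$ to coincide with the identity on $r \odot \ell$ up to the autotransport along $\phi$, and hence to be invertible.
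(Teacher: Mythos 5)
Your reduction is sound as far as it goes: the converse is indeed immediate, and factoring $\eta = \iota' \d \alpha$ through the opcartesian cell and aiming to show $\alpha$ invertible is the right shape (it is essentially what the paper does, phrased via transferring the loose-monad structure across the isomorphism). But the proof stops exactly where the real content lies. Your two displayed identities only constrain the action of $\alpha$ on one strand of the binary composite at a time: rewritten through the opcartesian cells, the left unit/triangle law says ``$\alpha$ inserted into the \emph{first} strand, followed by the factored multiplication, is an isomorphism,'' and the right one says the same for the \emph{second} strand. These are split-mono--type statements about two a priori \emph{different} endomorphisms of $r \odot \ell$, and there is no way to combine them into a two-sided inverse for $\alpha$ unless you know that acting on the first strand, acting on the second strand, and vertical composition all induce the same operation on $\X\lh{A, A}(A(1,1), A(1,1))$. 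Your closing appeal to the principle that ``any loose-monad supported on a loose-identity must be isomorphic to the trivial one'' is circular: that principle \emph{is} \cref{invertible-unit-if-noncanonical-isomorphism} (up to transport along $\phi$), so it cannot be invoked as the intuition that closes the argument.

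The missing ingredient, which is how the paper finishes, is an Eckmann--Hilton argument: the set of 2-cells $A(1,1) \tto A(1,1)$ carries two unital operations --- vertical composition, and the operation induced by the loose-identity's nullary and binary opcartesian cells --- which satisfy interchange and share a unit, hence coincide and are commutative. In a commutative monoid a one-sided inverse is automatically two-sided, so the monad unit law (which exhibits the transferred unit as a one-sided inverse of the transferred multiplication) already forces the unit to be invertible; transporting back along $\phi$ makes your $\alpha$ invertible and hence $\eta$ opcartesian. Without this step (or some substitute for it), the gap between your two one-sided constraints is genuine, not merely technical. A secondary, smaller point: writing the reduced triangle identities with codomains $\ell \odot r \odot \ell$ and $r \odot \ell \odot r$ presumes those ternary composites exist, which a virtual double category need not provide; the pastings should be kept as multicomposites of $\varepsilon$ with $\eta$ rather than through named unitors of composites that may not exist.
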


\begin{proof}
	Suppose that $A$ admits a loose-identity and $r \odot \ell \iso A(1, 1)$. The loose-monad structure on $r \odot \ell$ transfers to a loose-monad structure on $A(1, 1)$. By an Eckmann--Hilton argument (\cf{}~\cite[Theorem~1.12]{eckman1961structure}), the set $\X\lh{A, A}(A(1, 1), A(1, 1))$ is equipped with commutative monoid structure. Thus the unit of the transferred loose-monad, which is a one-sided inverse for the multiplication, is a two-sided inverse. By transferring the loose-monad structure back to $r \odot \ell$, we deduce that the unit is invertible. The converse is trivial.
\end{proof}

\section{Formal category theory}
\label{formal-category-theory}

We shall now introduce some basic concepts and results, well known in ordinary category theory, in the formal setting of equipments. Many of these results are known in the context of Yoneda structures~\cite{street1978yoneda} or proarrow equipments~\cite{wood1982abstract,wood1985proarrows}, but have not yet been generalised to the context of \vdcs{}. The reader interested primarily in relative monads is recommended to skip directly to \cref{skew-multicategorical-hom-categories} and refer back to this section for definitions and lemmas where necessary. The remainder of the paper may be read as if it applied only to enriched categories: our terminology has been chosen to align with the standard terminology in $\VCat$, as will be established in \cref{formal-category-theory-in-VCat}.

We work in the context of an arbitrary virtual double category $\X$.
When we discuss right lifts and extensions in \cref{right-lifts}, we do not assume that $\X$ admits restrictions or loose-identities, but assume the existence of both of these when discussing (co)limits, from \cref{weighted-colimits} onwards.

\subsection{Right lifts and extensions}
\label{right-lifts}

Two fundamental structures in equipments are \emph{right lifts} and \emph{right extensions}, which generalise the usual notions of right lift and right extension in a bicategory. In this paper, we are primarily interested in right lifts, which will be used to define weighted colimits and (pointwise) left extensions. However, we will also make some use of right extensions.
Right extensions are dual to right lifts, in the sense that a right lift $q \rf p$ in $\X$ is precisely a right extension $p \rx q$ in $\X\co$.
We spell out both definitions explicitly for convenience.

\begin{definition}[{\cite[Definition~9.1.2]{riehl2022elements}}]
	\label{right-lift}
    Let $p \colon Y \lto Z$ and $q \colon X \lto Z$ be loose-cells.
    A loose-cell $q \rf p \colon X \lto Y$, equipped with a 2-cell $\varpi \colon p, (q \rf p) \tto q$, is the \emph{right lift} of $q$ through $p$ when every 2-cell as on the left below factors uniquely as a diagram of the form on the right below (where $n \geq 0$). We call $\varpi$ the \emph{counit} of the right lift.
	\[
	% https://q.uiver.app/#q=WzAsNixbMywwLCJYIl0sWzIsMCwiXFxjZG90cyJdLFsxLDAsIlkiXSxbMCwwLCJaIl0sWzAsMSwiWiJdLFszLDEsIlgiXSxbMCwxLCJyX24iLDIseyJzdHlsZSI6eyJib2R5Ijp7Im5hbWUiOiJiYXJyZWQifX19XSxbMSwyLCJyXzEiLDIseyJzdHlsZSI6eyJib2R5Ijp7Im5hbWUiOiJiYXJyZWQifX19XSxbMiwzLCJwIiwyLHsic3R5bGUiOnsiYm9keSI6eyJuYW1lIjoiYmFycmVkIn19fV0sWzAsNSwiIiwwLHsibGV2ZWwiOjIsInN0eWxlIjp7ImhlYWQiOnsibmFtZSI6Im5vbmUifX19XSxbMyw0LCIiLDIseyJsZXZlbCI6Miwic3R5bGUiOnsiaGVhZCI6eyJuYW1lIjoibm9uZSJ9fX1dLFs1LDQsInEiLDAseyJzdHlsZSI6eyJib2R5Ijp7Im5hbWUiOiJiYXJyZWQifX19XSxbOSwxMCwiXFxwaGkiLDEseyJzaG9ydGVuIjp7InNvdXJjZSI6MjAsInRhcmdldCI6MjB9LCJzdHlsZSI6eyJib2R5Ijp7Im5hbWUiOiJub25lIn0sImhlYWQiOnsibmFtZSI6Im5vbmUifX19XV0=
	\begin{tikzcd}[row sep=5em]
		Z & Y & \cdots & X \\
		Z &&& X
		\arrow["{r_n}"', "\shortmid"{marking}, from=1-4, to=1-3]
		\arrow["{r_1}"', "\shortmid"{marking}, from=1-3, to=1-2]
		\arrow["p"', "\shortmid"{marking}, from=1-2, to=1-1]
		\arrow[""{name=0, anchor=center, inner sep=0}, Rightarrow, no head, from=1-4, to=2-4]
		\arrow[""{name=1, anchor=center, inner sep=0}, Rightarrow, no head, from=1-1, to=2-1]
		\arrow["q", "\shortmid"{marking}, from=2-4, to=2-1]
		\arrow["\phi"{description}, draw=none, from=0, to=1]
	\end{tikzcd}
	\hspace{1em} = \hspace{1em}
	% https://q.uiver.app/#q=WzAsOSxbMywwLCJYIl0sWzIsMCwiXFxjZG90cyJdLFsxLDAsIlkiXSxbMCwwLCJaIl0sWzAsMiwiWiJdLFszLDIsIlgiXSxbMCwxLCJaIl0sWzEsMSwiWSJdLFszLDEsIlgiXSxbMCwxLCJyX24iLDIseyJzdHlsZSI6eyJib2R5Ijp7Im5hbWUiOiJiYXJyZWQifX19XSxbMSwyLCJyXzEiLDIseyJzdHlsZSI6eyJib2R5Ijp7Im5hbWUiOiJiYXJyZWQifX19XSxbMiwzLCJwIiwyLHsic3R5bGUiOnsiYm9keSI6eyJuYW1lIjoiYmFycmVkIn19fV0sWzUsNCwicSIsMCx7InN0eWxlIjp7ImJvZHkiOnsibmFtZSI6ImJhcnJlZCJ9fX1dLFszLDYsIiIsMix7ImxldmVsIjoyLCJzdHlsZSI6eyJoZWFkIjp7Im5hbWUiOiJub25lIn19fV0sWzYsNCwiIiwyLHsibGV2ZWwiOjIsInN0eWxlIjp7ImhlYWQiOnsibmFtZSI6Im5vbmUifX19XSxbNyw2LCJwIiwxXSxbMiw3LCIiLDEseyJsZXZlbCI6Miwic3R5bGUiOnsiaGVhZCI6eyJuYW1lIjoibm9uZSJ9fX1dLFswLDgsIiIsMCx7ImxldmVsIjoyLCJzdHlsZSI6eyJoZWFkIjp7Im5hbWUiOiJub25lIn19fV0sWzgsNSwiIiwwLHsibGV2ZWwiOjIsInN0eWxlIjp7ImhlYWQiOnsibmFtZSI6Im5vbmUifX19XSxbOCw3LCJxIFxccmYgcCIsMV0sWzE0LDE4LCJcXHZhcnBpIiwxLHsic2hvcnRlbiI6eyJ0YXJnZXQiOjIwfSwic3R5bGUiOnsiYm9keSI6eyJuYW1lIjoibm9uZSJ9LCJoZWFkIjp7Im5hbWUiOiJub25lIn19fV0sWzE3LDE2LCJcXGxhbWJkYVxccGhpIiwxLHsic2hvcnRlbiI6eyJzb3VyY2UiOjIwLCJ0YXJnZXQiOjIwfSwic3R5bGUiOnsiYm9keSI6eyJuYW1lIjoibm9uZSJ9LCJoZWFkIjp7Im5hbWUiOiJub25lIn19fV0sWzE2LDEzLCI9IiwxLHsic2hvcnRlbiI6eyJzb3VyY2UiOjIwLCJ0YXJnZXQiOjIwfSwic3R5bGUiOnsiYm9keSI6eyJuYW1lIjoibm9uZSJ9LCJoZWFkIjp7Im5hbWUiOiJub25lIn19fV1d
	\begin{tikzcd}
		Z & Y & \cdots & X \\
		Z & Y && X \\
		Z &&& X
		\arrow["{r_n}"', "\shortmid"{marking}, from=1-4, to=1-3]
		\arrow["{r_1}"', "\shortmid"{marking}, from=1-3, to=1-2]
		\arrow["p"', "\shortmid"{marking}, from=1-2, to=1-1]
		\arrow["q", "\shortmid"{marking}, from=3-4, to=3-1]
		\arrow[""{name=0, anchor=center, inner sep=0}, Rightarrow, no head, from=1-1, to=2-1]
		\arrow[""{name=1, anchor=center, inner sep=0}, Rightarrow, no head, from=2-1, to=3-1]
		\arrow["p"{description}, from=2-2, to=2-1]
		\arrow[""{name=2, anchor=center, inner sep=0}, Rightarrow, no head, from=1-2, to=2-2]
		\arrow[""{name=3, anchor=center, inner sep=0}, Rightarrow, no head, from=1-4, to=2-4]
		\arrow[""{name=4, anchor=center, inner sep=0}, Rightarrow, no head, from=2-4, to=3-4]
		\arrow["{q \rf p}"{description}, from=2-4, to=2-2]
		\arrow["\varpi"{description}, draw=none, from=1, to=4]
		\arrow["\lambda\phi"{description}, draw=none, from=3, to=2]
		\arrow["{=}"{description}, draw=none, from=2, to=0]
	\end{tikzcd}
	\]
\end{definition}

\begin{definition}[{\cite[Definition~9.1.2]{riehl2022elements}}]
    \label{right-extension}
    Let $p \colon X \lto Y$ and $q \colon X \lto Z$ be loose-cells. A loose-cell $p \rx q \colon Y \lto Z$ equipped with a 2-cell $\varpi \colon (p \rx q), p \tto q$ is the \emph{right extension} of $q$ along $p$ when every 2-cell as on the left below factors uniquely as a diagram of the form on the right below (where $n \geq 0$). We call $\varpi$ the \emph{counit} of the right extension.
	\[
	% https://q.uiver.app/#q=WzAsNixbMywwLCJYIl0sWzIsMCwiWSJdLFsxLDAsIlxcY2RvdHMiXSxbMCwwLCJaIl0sWzAsMSwiWiJdLFszLDEsIlgiXSxbMCwxLCJwIiwyLHsic3R5bGUiOnsiYm9keSI6eyJuYW1lIjoiYmFycmVkIn19fV0sWzEsMiwicl9uIiwyLHsic3R5bGUiOnsiYm9keSI6eyJuYW1lIjoiYmFycmVkIn19fV0sWzIsMywicl8xIiwyLHsic3R5bGUiOnsiYm9keSI6eyJuYW1lIjoiYmFycmVkIn19fV0sWzAsNSwiIiwwLHsibGV2ZWwiOjIsInN0eWxlIjp7ImhlYWQiOnsibmFtZSI6Im5vbmUifX19XSxbMyw0LCIiLDIseyJsZXZlbCI6Miwic3R5bGUiOnsiaGVhZCI6eyJuYW1lIjoibm9uZSJ9fX1dLFs1LDQsInEiLDAseyJzdHlsZSI6eyJib2R5Ijp7Im5hbWUiOiJiYXJyZWQifX19XSxbOSwxMCwiXFxwaGkiLDEseyJzaG9ydGVuIjp7InNvdXJjZSI6MjAsInRhcmdldCI6MjB9LCJzdHlsZSI6eyJib2R5Ijp7Im5hbWUiOiJub25lIn0sImhlYWQiOnsibmFtZSI6Im5vbmUifX19XV0=
	\begin{tikzcd}[row sep=5em]
		Z & \cdots & Y & X \\
		Z &&& X
		\arrow["p"', "\shortmid"{marking}, from=1-4, to=1-3]
		\arrow["{r_n}"', "\shortmid"{marking}, from=1-3, to=1-2]
		\arrow["{r_1}"', "\shortmid"{marking}, from=1-2, to=1-1]
		\arrow[""{name=0, anchor=center, inner sep=0}, Rightarrow, no head, from=1-4, to=2-4]
		\arrow[""{name=1, anchor=center, inner sep=0}, Rightarrow, no head, from=1-1, to=2-1]
		\arrow["q", "\shortmid"{marking}, from=2-4, to=2-1]
		\arrow["\phi"{description}, draw=none, from=0, to=1]
	\end{tikzcd}
	\hspace{1em} = \hspace{1em}
	% https://q.uiver.app/#q=WzAsOSxbMywwLCJYIl0sWzIsMCwiWSJdLFsxLDAsIlxcY2RvdHMiXSxbMCwwLCJaIl0sWzAsMiwiWiJdLFszLDIsIlgiXSxbMCwxLCJaIl0sWzIsMSwiWSJdLFszLDEsIlgiXSxbMCwxLCJwIiwyLHsic3R5bGUiOnsiYm9keSI6eyJuYW1lIjoiYmFycmVkIn19fV0sWzEsMiwicl9uIiwyLHsic3R5bGUiOnsiYm9keSI6eyJuYW1lIjoiYmFycmVkIn19fV0sWzIsMywicl8xIiwyLHsic3R5bGUiOnsiYm9keSI6eyJuYW1lIjoiYmFycmVkIn19fV0sWzUsNCwicSIsMCx7InN0eWxlIjp7ImJvZHkiOnsibmFtZSI6ImJhcnJlZCJ9fX1dLFszLDYsIiIsMix7ImxldmVsIjoyLCJzdHlsZSI6eyJoZWFkIjp7Im5hbWUiOiJub25lIn19fV0sWzYsNCwiIiwyLHsibGV2ZWwiOjIsInN0eWxlIjp7ImhlYWQiOnsibmFtZSI6Im5vbmUifX19XSxbNyw2LCJwIFxccnggcSIsMV0sWzAsOCwiIiwwLHsibGV2ZWwiOjIsInN0eWxlIjp7ImhlYWQiOnsibmFtZSI6Im5vbmUifX19XSxbOCw1LCIiLDAseyJsZXZlbCI6Miwic3R5bGUiOnsiaGVhZCI6eyJuYW1lIjoibm9uZSJ9fX1dLFs4LDcsInAiLDFdLFsxLDcsIiIsMSx7ImxldmVsIjoyLCJzdHlsZSI6eyJoZWFkIjp7Im5hbWUiOiJub25lIn19fV0sWzE0LDE3LCJcXHZhcnBpIiwxLHsic2hvcnRlbiI6eyJ0YXJnZXQiOjIwfSwic3R5bGUiOnsiYm9keSI6eyJuYW1lIjoibm9uZSJ9LCJoZWFkIjp7Im5hbWUiOiJub25lIn19fV0sWzE2LDE5LCI9IiwxLHsic2hvcnRlbiI6eyJzb3VyY2UiOjIwLCJ0YXJnZXQiOjIwfSwic3R5bGUiOnsiYm9keSI6eyJuYW1lIjoibm9uZSJ9LCJoZWFkIjp7Im5hbWUiOiJub25lIn19fV0sWzE5LDEzLCJcXGxhbWJkYVxccGhpIiwxLHsic2hvcnRlbiI6eyJzb3VyY2UiOjIwLCJ0YXJnZXQiOjIwfSwic3R5bGUiOnsiYm9keSI6eyJuYW1lIjoibm9uZSJ9LCJoZWFkIjp7Im5hbWUiOiJub25lIn19fV1d
	\begin{tikzcd}
		Z & \cdots & Y & X \\
		Z && Y & X \\
		Z &&& X
		\arrow["p"', "\shortmid"{marking}, from=1-4, to=1-3]
		\arrow["{r_n}"', "\shortmid"{marking}, from=1-3, to=1-2]
		\arrow["{r_1}"', "\shortmid"{marking}, from=1-2, to=1-1]
		\arrow["q", "\shortmid"{marking}, from=3-4, to=3-1]
		\arrow[""{name=0, anchor=center, inner sep=0}, Rightarrow, no head, from=1-1, to=2-1]
		\arrow[""{name=1, anchor=center, inner sep=0}, Rightarrow, no head, from=2-1, to=3-1]
		\arrow["{p \rx q}"{description}, from=2-3, to=2-1]
		\arrow[""{name=2, anchor=center, inner sep=0}, Rightarrow, no head, from=1-4, to=2-4]
		\arrow[""{name=3, anchor=center, inner sep=0}, Rightarrow, no head, from=2-4, to=3-4]
		\arrow["p"{description}, from=2-4, to=2-3]
		\arrow[""{name=4, anchor=center, inner sep=0}, Rightarrow, no head, from=1-3, to=2-3]
		\arrow["\varpi"{description}, draw=none, from=1, to=3]
		\arrow["{=}"{description}, draw=none, from=2, to=4]
		\arrow["\lambda\phi"{description}, draw=none, from=4, to=0]
	\end{tikzcd}
\]
\end{definition}

A useful intuition is that when $\X$ is the delooping of a monoidal category (so that loose-cells in $\X$ correspond to the objects of a monoidal category), a right lift corresponds to a right-hom, while a right extension corresponds to a left-hom.

\begin{example}[{\cite[Lemma~9.1.4]{riehl2022elements}}]
  \label{lift-companion}
  Suppose that $\X$ is an equipment, and consider a loose-cell $q$ and tight-cells $x, y$ as follows.
% https://q.uiver.app/#q=WzAsNCxbMSwwLCJYIl0sWzAsMSwiWSJdLFswLDAsIlknIl0sWzEsMSwiWCciXSxbMiwxLCJ5IiwyXSxbMywwLCJ4IiwyXSxbMCwxLCJxIiwyLHsic3R5bGUiOnsiYm9keSI6eyJuYW1lIjoiYmFycmVkIn19fV0sWzMsMSwiWCh4LCAxKSBcXHJ4IHEiLDAseyJzdHlsZSI6eyJib2R5Ijp7Im5hbWUiOiJiYXJyZWQifX19XSxbMCwyLCJxIFxccmYgWSgxLCB5KSIsMix7InN0eWxlIjp7ImJvZHkiOnsibmFtZSI6ImJhcnJlZCJ9fX1dXQ==
\[\begin{tikzcd}[sep=large]
	{Y'} & X \\
	Y & {X'}
	\arrow["y"', from=1-1, to=2-1]
	\arrow["x"', from=2-2, to=1-2]
	\arrow["q"', "\shortmid"{marking}, from=1-2, to=2-1]
	\arrow["{X(x, 1) \rx q}", "\shortmid"{marking}, from=2-2, to=2-1]
	\arrow["{q \rf Y(1, y)}"', "\shortmid"{marking}, from=1-2, to=1-1]
\end{tikzcd}\]
  The right extension $X(x, 1) \rx q$ and right lift $q \rf Y(1, y)$ both exist, and are given by restrictions
  \[
    X(x, 1) \rx q \iso q(1, x)
    \hspace{4em}
    q \rf Y(1, y) \iso q(y, 1)
  \]
  with the counits $q(1, x), X(x, 1) \tto q$ and $Y(1, y), q(y, 1) \tto q$ given by bending the tight-cells.
\[
% https://varkor.github.io/tangle/?t=W1tbXV0sW1tbMSxbMSwwLDEsMF1dLFsyLFsxXV0sWzIsWzBdXV1dLFtbMSwyLDAuNSxbMF1dXSxbWzAsMCwzLCJxIl0sWzEsMCwzLCJ4Il0sWzIsMCwzLCJ4Il0sWzAsMCwxLCJxIl1dXQ==&c=F5A3A3,F5CCA3,F5F5A3,CCF5A3,A3F5A3,A3F5CC,A3F5F5,A3CCF5,A3A3F5,CCA3F5,F5A3F5,F5A3CC
\begin{tangle}{(3,1)}
	\tgBorderA{(0,0)}{white}{white}{white}{white}
	\tgBorder{(0,0)}{1}{0}{1}{0}
	\tgBorderC{(1,0)}{0}{white}{white}
	\tgBorderC{(2,0)}{1}{white}{white}
	\tgArrow{(1.5,0)}{0}
	\tgAxisLabel{(0.5,0)}{south}{q}
	\tgAxisLabel{(1.5,0)}{south}{x}
	\tgAxisLabel{(2.5,0)}{south}{x}
	\tgAxisLabel{(0.5,1)}{north}{q}
\end{tangle}
\hspace{4em}
% https://varkor.github.io/tangle/?t=W1tbXV0sW1tbMixbMV1dLFsyLFswXV0sWzEsWzEsMCwxLDBdXV1dLFtbMSwxLDAuNSxbMF1dXSxbWzAsMCwzLCJ5Il0sWzEsMCwzLCJ5Il0sWzIsMCwzLCJxIl0sWzIsMCwxLCJxIl1dXQ==&c=F5A3A3,F5CCA3,F5F5A3,CCF5A3,A3F5A3,A3F5CC,A3F5F5,A3CCF5,A3A3F5,CCA3F5,F5A3F5,F5A3CC
\begin{tangle}{(3,1)}
	\tgBorderC{(0,0)}{0}{white}{white}
	\tgBorderC{(1,0)}{1}{white}{white}
	\tgBorderA{(2,0)}{white}{white}{white}{white}
	\tgBorder{(2,0)}{1}{0}{1}{0}
	\tgArrow{(0.5,0)}{0}
	\tgAxisLabel{(0.5,0)}{south}{y}
	\tgAxisLabel{(1.5,0)}{south}{y}
	\tgAxisLabel{(2.5,0)}{south}{q}
	\tgAxisLabel{(2.5,1)}{north}{q}
\end{tangle}
\]
  Hence, in an equipment every restriction $q(y, x)$ can be written using right lifts and extensions in two ways:
  \[
    (X(x, 1) \rx q) \rf Y(1, y)
    ~\iso~q(1, x)(y, 1)
    ~\iso~q(y, x)
    ~\iso~q(y, 1)(1, x)
    ~\iso~X(x, 1) \rx (q \rf Y(1, y))
  \]
\end{example}

In the last line of the above example, the right lift through $Y(1, y)$ and right extension along $X(x, 1)$ commute with one another. This is an instance of the following general result about the commutativity of right lifts with right extensions.

\begin{lemma}\label{lifts-commute-with-extensions}
  Let $p$, $p'$ and $q$ be loose-cells such that the right extension $p' \rx q$ and right lift $q \rf p$ both exist.
% https://q.uiver.app/#q=WzAsNCxbMSwwLCJYIl0sWzAsMCwiWSJdLFswLDEsIloiXSxbMSwxLCJZJyJdLFswLDMsInAnIiwwLHsic3R5bGUiOnsiYm9keSI6eyJuYW1lIjoiYmFycmVkIn19fV0sWzAsMSwicSBcXHJmIHAiLDIseyJzdHlsZSI6eyJib2R5Ijp7Im5hbWUiOiJiYXJyZWQifX19XSxbMywyLCJwJyBcXHJ4IHEiLDAseyJzdHlsZSI6eyJib2R5Ijp7Im5hbWUiOiJiYXJyZWQifX19XSxbMSwyLCJwIiwyLHsic3R5bGUiOnsiYm9keSI6eyJuYW1lIjoiYmFycmVkIn19fV0sWzAsMiwicSIsMix7InN0eWxlIjp7ImJvZHkiOnsibmFtZSI6ImJhcnJlZCJ9fX1dXQ==
\[\begin{tikzcd}
	Y & X \\
	Z & {Y'}
	\arrow["{p'}", "\shortmid"{marking}, from=1-2, to=2-2]
	\arrow["{q \rf p}"', "\shortmid"{marking}, from=1-2, to=1-1]
	\arrow["{p' \rx q}", "\shortmid"{marking}, from=2-2, to=2-1]
	\arrow["p"', "\shortmid"{marking}, from=1-1, to=2-1]
	\arrow["q"', "\shortmid"{marking}, from=1-2, to=2-1]
\end{tikzcd}\]
  Then the right lift $(p' \rx q) \rf p$ exists exactly when the right extension $p' \rx (q \rf p)$ exists, in which case they are isomorphic.
  \[
    (p' \rx q) \rf p \iso p' \rx (q \rf p)
  \]
\end{lemma}
\begin{proof}
  Immediate from the fact that 2-cells with the following three frames are in bijection.
  \[
% https://q.uiver.app/#q=WzAsNixbMywxLCJZJyJdLFswLDEsIloiXSxbMCwwLCJaIl0sWzMsMCwiWSciXSxbMSwwLCJZIl0sWzIsMCwiXFxjZG90cyJdLFszLDUsInJfbiIsMix7InN0eWxlIjp7ImJvZHkiOnsibmFtZSI6ImJhcnJlZCJ9fX1dLFs1LDQsInJfMSIsMix7InN0eWxlIjp7ImJvZHkiOnsibmFtZSI6ImJhcnJlZCJ9fX1dLFs0LDIsInAiLDIseyJzdHlsZSI6eyJib2R5Ijp7Im5hbWUiOiJiYXJyZWQifX19XSxbMCwxLCJwJyBcXHJ4IHEiLDAseyJzdHlsZSI6eyJib2R5Ijp7Im5hbWUiOiJiYXJyZWQifX19XSxbMiwxLCIiLDIseyJsZXZlbCI6Miwic3R5bGUiOnsiaGVhZCI6eyJuYW1lIjoibm9uZSJ9fX1dLFszLDAsIiIsMCx7ImxldmVsIjoyLCJzdHlsZSI6eyJoZWFkIjp7Im5hbWUiOiJub25lIn19fV1d
\begin{tikzcd}[column sep=1.6em]
	Z & Y & \cdots & {Y'} \\
	Z &&& {Y'}
	\arrow["{r_n}"', "\shortmid"{marking}, from=1-4, to=1-3]
	\arrow["{r_1}"', "\shortmid"{marking}, from=1-3, to=1-2]
	\arrow["p"', "\shortmid"{marking}, from=1-2, to=1-1]
	\arrow["{p' \rx q}", "\shortmid"{marking}, from=2-4, to=2-1]
	\arrow[Rightarrow, no head, from=1-1, to=2-1]
	\arrow[Rightarrow, no head, from=1-4, to=2-4]
\end{tikzcd}
~~
% https://q.uiver.app/#q=WzAsNyxbNCwwLCJYIl0sWzQsMSwiWCJdLFswLDEsIloiXSxbMCwwLCJaIl0sWzMsMCwiWSciXSxbMSwwLCJZIl0sWzIsMCwiXFxjZG90cyJdLFs0LDYsInJfbiIsMix7InN0eWxlIjp7ImJvZHkiOnsibmFtZSI6ImJhcnJlZCJ9fX1dLFs2LDUsInJfMSIsMix7InN0eWxlIjp7ImJvZHkiOnsibmFtZSI6ImJhcnJlZCJ9fX1dLFs1LDMsInAiLDIseyJzdHlsZSI6eyJib2R5Ijp7Im5hbWUiOiJiYXJyZWQifX19XSxbMCw0LCJwJyIsMix7InN0eWxlIjp7ImJvZHkiOnsibmFtZSI6ImJhcnJlZCJ9fX1dLFsxLDIsInEiLDAseyJzdHlsZSI6eyJib2R5Ijp7Im5hbWUiOiJiYXJyZWQifX19XSxbMywyLCIiLDIseyJsZXZlbCI6Miwic3R5bGUiOnsiaGVhZCI6eyJuYW1lIjoibm9uZSJ9fX1dLFswLDEsIiIsMCx7ImxldmVsIjoyLCJzdHlsZSI6eyJoZWFkIjp7Im5hbWUiOiJub25lIn19fV1d
\begin{tikzcd}[column sep=1.6em]
	Z & Y & \cdots & {Y'} & X \\
	Z &&&& X
	\arrow["{r_n}"', "\shortmid"{marking}, from=1-4, to=1-3]
	\arrow["{r_1}"', "\shortmid"{marking}, from=1-3, to=1-2]
	\arrow["p"', "\shortmid"{marking}, from=1-2, to=1-1]
	\arrow["{p'}"', "\shortmid"{marking}, from=1-5, to=1-4]
	\arrow["q", "\shortmid"{marking}, from=2-5, to=2-1]
	\arrow[Rightarrow, no head, from=1-1, to=2-1]
	\arrow[Rightarrow, no head, from=1-5, to=2-5]
\end{tikzcd}
~~
% https://q.uiver.app/#q=WzAsNixbMywwLCJYIl0sWzMsMSwiWCJdLFswLDEsIlkiXSxbMiwwLCJZJyJdLFswLDAsIlkiXSxbMSwwLCJcXGNkb3RzIl0sWzMsNSwicl9uIiwyLHsic3R5bGUiOnsiYm9keSI6eyJuYW1lIjoiYmFycmVkIn19fV0sWzUsNCwicl8xIiwyLHsic3R5bGUiOnsiYm9keSI6eyJuYW1lIjoiYmFycmVkIn19fV0sWzAsMywicCciLDIseyJzdHlsZSI6eyJib2R5Ijp7Im5hbWUiOiJiYXJyZWQifX19XSxbMSwyLCJxIFxccmYgcCIsMCx7InN0eWxlIjp7ImJvZHkiOnsibmFtZSI6ImJhcnJlZCJ9fX1dLFswLDEsIiIsMCx7ImxldmVsIjoyLCJzdHlsZSI6eyJoZWFkIjp7Im5hbWUiOiJub25lIn19fV0sWzQsMiwiIiwyLHsibGV2ZWwiOjIsInN0eWxlIjp7ImhlYWQiOnsibmFtZSI6Im5vbmUifX19XV0=
\begin{tikzcd}[column sep=1.6em]
	Y & \cdots & {Y'} & X \\
	Y &&& X
	\arrow["{r_n}"', "\shortmid"{marking}, from=1-3, to=1-2]
	\arrow["{r_1}"', "\shortmid"{marking}, from=1-2, to=1-1]
	\arrow["{p'}"', "\shortmid"{marking}, from=1-4, to=1-3]
	\arrow["{q \rf p}", "\shortmid"{marking}, from=2-4, to=2-1]
	\arrow[Rightarrow, no head, from=1-4, to=2-4]
	\arrow[Rightarrow, no head, from=1-1, to=2-1]
\end{tikzcd}
\]
\end{proof}

Each of the following two results has one statement for right lifts and a dual statement for right extensions.
We give the proofs only for right lifts; for right extensions the result follows by duality.
First, right lifts through (left-) composites can be curried as follows.

\begin{lemma}[{\cf{}~\cite[Definition~9.1.6]{riehl2022elements}}]
    \label{currying-right-lift}
    Let $q \colon X \lto Z$ be a loose-cell.
    \begin{enumerate}
      \item If $Y' \xlto{p'} Y \xlto{p} Z$ are loose-cells such that the left-composite $p \odotl p' \colon Y' \lto Z$ and right lift $q \rf p \colon X \lto Y$ exist, then, if either side of the following exists, so does the other, in which case they are isomorphic.
        \[q \rf (p \odotl p') \iso (q \rf p) \rf p'\]
      \item If $X \xlto{p} Y \xlto{p'} Y'$ are loose-cells such that the right-composite $p' \odotr p \colon X \lto Y'$ and right extension $p \rx q \colon Y \lto Z$ exist, then, if either side of the following exists, so does the other, in which case they are isomorphic.
        \[(p' \odotr p) \rx q \iso p' \rx (p \rx q)\]
    \end{enumerate}
\end{lemma}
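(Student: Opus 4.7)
The plan is to prove part (1) by exhibiting each of $q \rf (p \odotl p')$ and $(q \rf p) \rf p'$ as a solution to the same universal problem, via a chain of natural bijections on 2-cell hom-sets; part (2) then follows by duality, since a right extension in $\X$ is a right lift in $\X\co$, a right-composite in $\X$ is a left-composite in $\X\co$ (by \cref{X-equipment-iff-Xco-equipment} and the definition of the dual), and the statement is self-dual in this sense.

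For part (1), for each chain of loose-cells $s_m, \ldots, s_1$ ending at $Y'$, I would build a natural bijection between 2-cells $s_m, \ldots, s_1 \tto q \rf (p \odotl p')$ and 2-cells $s_m, \ldots, s_1, p' \tto q \rf p$, both with trivial outer frame, as the composite of: (i) the universal property of the right lift $q \rf (p \odotl p')$, which exchanges the target with $s_m, \ldots, s_1, (p \odotl p') \tto q$; (ii) the universal property of the left-opcartesian 2-cell $\lambda \colon p', p \tto p \odotl p'$, which exchanges this with $s_m, \ldots, s_1, p', p \tto q$; and (iii) the universal property of the right lift $q \rf p$, which exchanges this with $s_m, \ldots, s_1, p' \tto q \rf p$. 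Since the defining bijection of $(q \rf p) \rf p'$ is exactly the composite so obtained, this shows that $q \rf (p \odotl p')$, if it exists, also satisfies the universal property of $(q \rf p) \rf p'$, and conversely. The witnessing counit may be recovered by transporting the identity 2-cell through the chain, yielding an explicit expression built from $\lambda$ and the counit of $q \rf p$.

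The main point to verify is that step (ii) is licensed: a left-opcartesian 2-cell only converts 2-cells whose outer right tight-cell is the identity and whose domain chain contains no loose-cells to the right of the composite. Both conditions are met here, since the outer frame of the 2-cells appearing in the right-lift universal property is trivial, and the chain in question terminates in $p \odotl p'$ (respectively, $p', p$). Once this observation is in place, the remainder is a routine composition of universal properties, and no structure on $\X$ is needed beyond that hypothesised in the statement.
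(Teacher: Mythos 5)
Your proposal is correct and follows essentially the same route as the paper: the paper exhibits natural bijections between 2-cells with the three frames (ending in $p \odotl p'$ over $q$; ending in $p', p$ over $q$; ending in $p'$ over $q \rf p$), using exactly the left-opcartesian universal property of $p \odotl p'$ and the universal property of $q \rf p$, and concludes that the two right-lift universal properties coincide, with part (2) handled by duality just as you indicate. Your verification that the left-opcartesian property applies (identity outer tight-cell, composite at the $Z$-end of the chain) is precisely the point the paper's argument relies on.
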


\begin{proof}
  For (1), there are bijections between 2-cells with the following three frames.
\[
% https://q.uiver.app/#q=WzAsNixbNSwwLCJYIl0sWzQsMCwiXFxjZG90cyJdLFszLDAsIlknIl0sWzAsMSwiWiJdLFs1LDEsIlgiXSxbMCwwLCJaIl0sWzAsMSwicl9uIiwyLHsic3R5bGUiOnsiYm9keSI6eyJuYW1lIjoiYmFycmVkIn19fV0sWzEsMiwicl8xIiwyLHsic3R5bGUiOnsiYm9keSI6eyJuYW1lIjoiYmFycmVkIn19fV0sWzAsNCwiIiwwLHsibGV2ZWwiOjIsInN0eWxlIjp7ImhlYWQiOnsibmFtZSI6Im5vbmUifX19XSxbNCwzLCJxIiwwLHsic3R5bGUiOnsiYm9keSI6eyJuYW1lIjoiYmFycmVkIn19fV0sWzUsMywiIiwyLHsibGV2ZWwiOjIsInN0eWxlIjp7ImhlYWQiOnsibmFtZSI6Im5vbmUifX19XSxbMiw1LCJwIFxcb2RvdGwgcCciLDIseyJzdHlsZSI6eyJib2R5Ijp7Im5hbWUiOiJiYXJyZWQifX19XV0=
\begin{tikzcd}[column sep=1.23em]
	Z &&& {Y'} & \cdots & X \\
	Z &&&&& X
	\arrow["{r_n}"', "\shortmid"{marking}, from=1-6, to=1-5]
	\arrow["{r_1}"', "\shortmid"{marking}, from=1-5, to=1-4]
	\arrow[Rightarrow, no head, from=1-6, to=2-6]
	\arrow["q", "\shortmid"{marking}, from=2-6, to=2-1]
	\arrow[Rightarrow, no head, from=1-1, to=2-1]
	\arrow["{p \odotl p'}"', "\shortmid"{marking}, from=1-4, to=1-1]
\end{tikzcd}
~~
% https://q.uiver.app/#q=WzAsNyxbNiwwLCJYIl0sWzUsMCwiXFxjZG90cyJdLFs0LDAsIlknIl0sWzIsMCwiWSJdLFswLDEsIloiXSxbNiwxLCJYIl0sWzAsMCwiWiJdLFswLDEsInJfbiIsMix7InN0eWxlIjp7ImJvZHkiOnsibmFtZSI6ImJhcnJlZCJ9fX1dLFsxLDIsInJfMSIsMix7InN0eWxlIjp7ImJvZHkiOnsibmFtZSI6ImJhcnJlZCJ9fX1dLFsyLDMsInAnIiwyLHsic3R5bGUiOnsiYm9keSI6eyJuYW1lIjoiYmFycmVkIn19fV0sWzAsNSwiIiwwLHsibGV2ZWwiOjIsInN0eWxlIjp7ImhlYWQiOnsibmFtZSI6Im5vbmUifX19XSxbNSw0LCJxIiwwLHsic3R5bGUiOnsiYm9keSI6eyJuYW1lIjoiYmFycmVkIn19fV0sWzMsNiwicCIsMix7InN0eWxlIjp7ImJvZHkiOnsibmFtZSI6ImJhcnJlZCJ9fX1dLFs2LDQsIiIsMix7ImxldmVsIjoyLCJzdHlsZSI6eyJoZWFkIjp7Im5hbWUiOiJub25lIn19fV1d
\begin{tikzcd}[column sep=1.23em]
	Z && Y && {Y'} & \cdots & X \\
	Z &&&&&& X
	\arrow["{r_n}"', "\shortmid"{marking}, from=1-7, to=1-6]
	\arrow["{r_1}"', "\shortmid"{marking}, from=1-6, to=1-5]
	\arrow["{p'}"', "\shortmid"{marking}, from=1-5, to=1-3]
	\arrow[Rightarrow, no head, from=1-7, to=2-7]
	\arrow["q", "\shortmid"{marking}, from=2-7, to=2-1]
	\arrow["p"', "\shortmid"{marking}, from=1-3, to=1-1]
	\arrow[Rightarrow, no head, from=1-1, to=2-1]
\end{tikzcd}
~~
% https://q.uiver.app/#q=WzAsNixbNCwwLCJYIl0sWzMsMCwiXFxjZG90cyJdLFsyLDAsIlknIl0sWzAsMCwiWSJdLFswLDEsIlkiXSxbNCwxLCJYIl0sWzAsMSwicl9uIiwyLHsic3R5bGUiOnsiYm9keSI6eyJuYW1lIjoiYmFycmVkIn19fV0sWzEsMiwicl8xIiwyLHsic3R5bGUiOnsiYm9keSI6eyJuYW1lIjoiYmFycmVkIn19fV0sWzIsMywicCciLDIseyJzdHlsZSI6eyJib2R5Ijp7Im5hbWUiOiJiYXJyZWQifX19XSxbMCw1LCIiLDAseyJsZXZlbCI6Miwic3R5bGUiOnsiaGVhZCI6eyJuYW1lIjoibm9uZSJ9fX1dLFs1LDQsInEgXFxyZiBwIiwwLHsic3R5bGUiOnsiYm9keSI6eyJuYW1lIjoiYmFycmVkIn19fV0sWzMsNCwiIiwyLHsibGV2ZWwiOjIsInN0eWxlIjp7ImhlYWQiOnsibmFtZSI6Im5vbmUifX19XV0=
\begin{tikzcd}[column sep=1.23em]
	Y && {Y'} & \cdots & X \\
	Y &&&& X
	\arrow["{r_n}"', "\shortmid"{marking}, from=1-5, to=1-4]
	\arrow["{r_1}"', "\shortmid"{marking}, from=1-4, to=1-3]
	\arrow["{p'}"', "\shortmid"{marking}, from=1-3, to=1-1]
	\arrow[Rightarrow, no head, from=1-5, to=2-5]
	\arrow["{q \rf p}", "\shortmid"{marking}, from=2-5, to=2-1]
	\arrow[Rightarrow, no head, from=1-1, to=2-1]
\end{tikzcd}
\]
  The universal property of $q \rf (p \odotl p')$ is therefore equivalent to that of $(q \rf p) \rf p'$.
\end{proof}

Second, in an equipment, restrictions preserve right lifts as follows.

\begin{lemma}
    \label{right-lift-and-restriction}
    Assume that $\X$ is a virtual equipment, let $p$ and $q$ be loose-cells such that the right lift $q \rf p$ exists, and let $x$ and $y$ be tight-cells.
\[
% https://q.uiver.app/#q=WzAsNSxbMiwwLCJYJyJdLFsyLDEsIlgiXSxbMSwyLCJaIl0sWzAsMSwiWSJdLFswLDAsIlknIl0sWzAsNCwiKHEgXFxyZiBwKSh5LCB4KSIsMix7InN0eWxlIjp7ImJvZHkiOnsibmFtZSI6ImJhcnJlZCJ9fX1dLFs0LDMsInkiLDJdLFswLDEsIngiXSxbMSwyLCJxIiwwLHsic3R5bGUiOnsiYm9keSI6eyJuYW1lIjoiYmFycmVkIn19fV0sWzMsMiwicCIsMix7InN0eWxlIjp7ImJvZHkiOnsibmFtZSI6ImJhcnJlZCJ9fX1dLFsxLDMsInFcXHJmIHAiLDIseyJzdHlsZSI6eyJib2R5Ijp7Im5hbWUiOiJiYXJyZWQifX19XV0=
\begin{tikzcd}
	{Y'} && {X'} \\
	Y && X \\
	& Z
	\arrow["{(q \rf p)(y, x)}"', "\shortmid"{marking}, from=1-3, to=1-1]
	\arrow["y"', from=1-1, to=2-1]
	\arrow["x", from=1-3, to=2-3]
	\arrow["q", "\shortmid"{marking}, from=2-3, to=3-2]
	\arrow["p"', "\shortmid"{marking}, from=2-1, to=3-2]
	\arrow["{q\rf p}"', "\shortmid"{marking}, from=2-3, to=2-1]
\end{tikzcd}
\hspace{4em}
% https://q.uiver.app/#q=WzAsMyxbMiwwLCJYJyJdLFsxLDEsIloiXSxbMCwwLCJZJyJdLFswLDEsInEoMSwgeCkiLDAseyJzdHlsZSI6eyJib2R5Ijp7Im5hbWUiOiJiYXJyZWQifX19XSxbMiwxLCJwKDEsIHkpIiwyLHsic3R5bGUiOnsiYm9keSI6eyJuYW1lIjoiYmFycmVkIn19fV0sWzAsMiwicSgxLCB4KVxccmYgcCgxLCB5KSIsMix7InN0eWxlIjp7ImJvZHkiOnsibmFtZSI6ImJhcnJlZCJ9fX1dXQ==
\begin{tikzcd}[row sep=5em]
	{Y'} && {X'} \\
	& Z
	\arrow["{q(1, x)}", "\shortmid"{marking}, from=1-3, to=2-2]
	\arrow["{p(1, y)}"', "\shortmid"{marking}, from=1-1, to=2-2]
	\arrow["{q(1, x)\rf p(1, y)}"', "\shortmid"{marking}, from=1-3, to=1-1]
\end{tikzcd}
\]
    Then the restriction $(q \rf p)(y, x)$ forms the right lift of $q(1, x)$ through $p(1, y)$.
  \[
    (q \rf p)(y, x) \iso q(1, x) \rf p(1, y)
  \]
  The counit is the following 2-cell.
% https://q.uiver.app/#q=WzAsOCxbMiwwLCJYJyJdLFsxLDAsIlknIl0sWzAsMCwiWiJdLFsyLDIsIlgnIl0sWzAsMiwiWiJdLFsyLDEsIlgnIl0sWzEsMSwiWSJdLFswLDEsIloiXSxbMSwyLCJwKDEsIHkpIiwyLHsic3R5bGUiOnsiYm9keSI6eyJuYW1lIjoiYmFycmVkIn19fV0sWzAsMSwiKHEgXFxyZiBwKSh5LCB4KSIsMix7InN0eWxlIjp7ImJvZHkiOnsibmFtZSI6ImJhcnJlZCJ9fX1dLFszLDQsInEoMSwgeCkiLDAseyJzdHlsZSI6eyJib2R5Ijp7Im5hbWUiOiJiYXJyZWQifX19XSxbNSw2LCIocVxccmYgcCkoMSwgeCkiLDFdLFsxLDYsInkiLDFdLFswLDUsIiIsMCx7ImxldmVsIjoyLCJzdHlsZSI6eyJoZWFkIjp7Im5hbWUiOiJub25lIn19fV0sWzIsNywiIiwyLHsibGV2ZWwiOjIsInN0eWxlIjp7ImhlYWQiOnsibmFtZSI6Im5vbmUifX19XSxbNiw3LCJwIiwxXSxbNyw0LCIiLDAseyJsZXZlbCI6Miwic3R5bGUiOnsiaGVhZCI6eyJuYW1lIjoibm9uZSJ9fX1dLFs1LDMsIiIsMCx7ImxldmVsIjoyLCJzdHlsZSI6eyJoZWFkIjp7Im5hbWUiOiJub25lIn19fV0sWzE3LDE2LCJcXHZhcnBpKDEsIHgpIiwxLHsic2hvcnRlbiI6eyJzb3VyY2UiOjIwLCJ0YXJnZXQiOjIwfSwic3R5bGUiOnsiYm9keSI6eyJuYW1lIjoibm9uZSJ9LCJoZWFkIjp7Im5hbWUiOiJub25lIn19fV0sWzEzLDEyLCJcXGNhcnQiLDEseyJzaG9ydGVuIjp7InNvdXJjZSI6MjAsInRhcmdldCI6MjB9LCJzdHlsZSI6eyJib2R5Ijp7Im5hbWUiOiJub25lIn0sImhlYWQiOnsibmFtZSI6Im5vbmUifX19XSxbMTIsMTQsIlxcY2FydCIsMSx7InNob3J0ZW4iOnsic291cmNlIjoyMCwidGFyZ2V0IjoyMH0sInN0eWxlIjp7ImJvZHkiOnsibmFtZSI6Im5vbmUifSwiaGVhZCI6eyJuYW1lIjoibm9uZSJ9fX1dXQ==
\[\begin{tikzcd}[column sep=huge]
	Z & {Y'} & {X'} \\
	Z & Y & {X'} \\
	Z && {X'}
	\arrow["{p(1, y)}"', "\shortmid"{marking}, from=1-2, to=1-1]
	\arrow["{(q \rf p)(y, x)}"', "\shortmid"{marking}, from=1-3, to=1-2]
	\arrow["{q(1, x)}", "\shortmid"{marking}, from=3-3, to=3-1]
	\arrow["{(q\rf p)(1, x)}"{description}, from=2-3, to=2-2]
	\arrow[""{name=0, anchor=center, inner sep=0}, "y"{description}, from=1-2, to=2-2]
	\arrow[""{name=1, anchor=center, inner sep=0}, Rightarrow, no head, from=1-3, to=2-3]
	\arrow[""{name=2, anchor=center, inner sep=0}, Rightarrow, no head, from=1-1, to=2-1]
	\arrow["p"{description}, from=2-2, to=2-1]
	\arrow[""{name=3, anchor=center, inner sep=0}, Rightarrow, no head, from=2-1, to=3-1]
	\arrow[""{name=4, anchor=center, inner sep=0}, Rightarrow, no head, from=2-3, to=3-3]
	\arrow["{\varpi(1, x)}"{description}, draw=none, from=4, to=3]
	\arrow["\cart"{description}, draw=none, from=1, to=0]
	\arrow["\cart"{description}, draw=none, from=0, to=2]
\end{tikzcd}\]
    Dually, given loose-cells $p$ and $q$ such that the right extension $p \rx q$ exists, we have
    \[
      (p \rx q)(z, y) \iso p(y, 1) \rx q(z, 1)
    \]
\end{lemma}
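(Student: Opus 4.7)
The plan is to establish the isomorphism $(q \rf p)(y, x) \iso q(1, x) \rf p(1, y)$ by a chain of equivalences using the three preceding results of this subsection (\cref{lift-companion,currying-right-lift,lifts-commute-with-extensions}), and then to identify the counit and deduce the dual statement.

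First I would rewrite $q(1, x)$ as a right extension and $p(1, y)$ as a left-composite via \cref{lift-companion}: namely, $q(1, x) \iso X(x, 1) \rx q$, and $p(1, y) \iso p \odotl Y(1, y)$ (the latter exists as a loose-composite since $\X$ is an equipment, hence in particular as a left-composite). Next I would combine these rewrites with \cref{lifts-commute-with-extensions,currying-right-lift,lift-companion} to obtain the sequence of isomorphisms
\begin{align*}
  q(1, x) \rf p(1, y)
  &\iso \bigl(X(x, 1) \rx q\bigr) \rf p(1, y)
  \iso X(x, 1) \rx \bigl(q \rf p(1, y)\bigr) \\
  &\iso X(x, 1) \rx \bigl(q \rf (p \odotl Y(1, y))\bigr)
  \iso X(x, 1) \rx \bigl((q \rf p) \rf Y(1, y)\bigr) \\
  &\iso X(x, 1) \rx (q \rf p)(y, 1)
  \iso \bigl((q \rf p)(y, 1)\bigr)(1, x)
  \iso (q \rf p)(y, x),
\end{align*}
where the last step uses pseudofunctoriality of restriction. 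Existence of each intermediate right lift and right extension is established by working outward from the restriction $(q \rf p)(y, 1)$: this exists as a restriction of $q \rf p$, so \cref{lift-companion} produces the right lift $(q \rf p) \rf Y(1, y)$, whence \cref{currying-right-lift} produces the right lift $q \rf p(1, y)$, which together with the right extension $X(x, 1) \rx q \iso q(1, x)$ supplies the hypotheses of \cref{lifts-commute-with-extensions}.

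The explicit counit displayed in the statement is obtained by tracing through the chain: each application of \cref{lift-companion} contributes a cartesian cell bending $x$ or $y$ via a companion or conjoint, and the middle step inserts the original counit $\varpi \colon p, (q \rf p) \tto q$, suitably restricted. The dual statement about right extensions then follows by applying the proved isomorphism to the opposite equipment $\X\co$ (an equipment by \cref{X-equipment-iff-Xco-equipment}), under which right lifts in $\X$ correspond to right extensions in $\X\co$ and the tight-cells on the two sides of a restriction are transposed. The main subtlety of the argument lies in the bookkeeping---verifying the existence hypotheses at each stage, and checking that the counit assembled from the chain coincides with the explicitly displayed 2-cell; this final verification is essentially forced by the universal properties but requires diagrammatic care.
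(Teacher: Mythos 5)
Your proposal is correct and follows essentially the same route as the paper: the paper's proof is exactly the chain $(q \rf p)(y, x) \iso X(x,1) \rx ((q \rf p) \rf Y(1,y)) \iso X(x,1) \rx (q \rf (p \odot Y(1,y))) \iso X(x,1) \rx (q \rf p(1,y)) \iso (X(x,1) \rx q) \rf p(1,y) \iso q(1,x) \rf p(1,y)$, which is your chain read in the opposite direction, with the counit then computed from these isomorphisms. The only cosmetic difference is that the identification $p(1,y) \iso p \odot Y(1,y)$ is cited from \cite[Theorem~7.16]{cruttwell2010unified} rather than from \cref{lift-companion}, and the paper invokes the closing observation of \cref{lift-companion} to handle the double restriction in one step rather than your two.
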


\begin{proof}
  We have the following isomorphisms, from which one can calculate the counit of the right lift $q(1, x) \rf p(1, y)$.
  \begin{align*}
    (q \rf p)(y, x)
    &\iso\tag{\cref{lift-companion}}
    X(x, 1) \rx ((q \rf p) \rf Y(1, y))
    \\&\iso\tag{\cref{currying-right-lift}}
    X(x, 1) \rx (q \rf (p \odot Y(1, y)))
    \\&\iso\tag{\cite[Theorem~7.16]{cruttwell2010unified}}
    X(x, 1) \rx (q \rf p(1, y))
    \\&\iso\tag{\cref{lifts-commute-with-extensions}}
    (X(x, 1) \rx q) \rf p(1, y)
    \\&\iso\tag{\cref{lift-companion}}
    q(1, x) \rf p(1, y)
  \end{align*}
\end{proof}

\begin{remark}
  While the statement of \cref{right-lift-and-restriction} does not mention loose-identities, we must assume they exist because the proof makes essential use of the loose-cells $X(x, 1)$ and $Y(1, y)$.
\end{remark}

\subsection{Weighted colimits and weighted limits}
\label{weighted-colimits}

We use right lifts to define the notion of \emph{weighted colimit} in an equipment $\X$, and, dually, use right extensions to define the notion of \emph{weighted limit}. The
definitions involve restrictions and loose-identities, so henceforth we assume their existence in $\X$. While, in enriched category theory, weights are often taken to be presheaves~\cite[(3.5)]{kelly1982basic}, in a formal context the appropriate notion of weight is a loose-cell (\cf{}~\cites[\S4]{street1978yoneda}[\S2]{wood1982abstract})\footnotemark{}.
\footnotetext{Note that we follow modern practice in using the term \emph{weighted (co)limit}. Older texts such as \cite{street1978yoneda,wood1982abstract,kelly1982basic} instead use the term \emph{indexed (co)limit}.}

\begin{definition}
	\label{weighted-colimit}
    Let $p \colon Y \lto Z$ be a loose-cell and $f \colon Z \to X$ be a tight-cell. A \emph{$p$-weighted cocone}\footnotemark{} (or simply \emph{$p$-cocone}) for $f$ is a pair $(c, \gamma)$ of a tight-cell $c \colon Y \to X$ and a 2-cell $\gamma \colon p \tto X(f, c)$.
	\footnotetext{Weighted (co)cones are often called \emph{(co)cylinders} (\cf{}~\cite[\S3.1]{kelly1982basic}). However, this term is misleading, as the term \emph{cylinder} suggests a symmetric notion, as it is used, for instance, in \cite{freyd1972categories} or \cite{garner2015isbell}. In contrast, the term \emph{weighted (co)cone} is consistent with the terminology \emph{weighted (co)limit}.}
    A cocone $(p \wc f, \lambda)$ is \emph{colimiting} (alternatively the \emph{$p$-weighted colimit}, or simply \emph{$p$-colimit}) of $f$ when the 2-cell
% https://q.uiver.app/#q=WzAsOCxbMiwwLCJYIl0sWzEsMCwiWSJdLFswLDAsIloiXSxbMiwxLCJYIl0sWzEsMSwiWSJdLFswLDEsIloiXSxbMiwyLCJYIl0sWzAsMiwiWiJdLFswLDEsIlgocCBcXHdjIGYsIDEpIiwyLHsic3R5bGUiOnsiYm9keSI6eyJuYW1lIjoiYmFycmVkIn19fV0sWzMsNCwiWChwIFxcd2MgZiwgMSkiLDFdLFswLDMsIiIsMSx7ImxldmVsIjoyLCJzdHlsZSI6eyJoZWFkIjp7Im5hbWUiOiJub25lIn19fV0sWzEsNCwiIiwxLHsibGV2ZWwiOjIsInN0eWxlIjp7ImhlYWQiOnsibmFtZSI6Im5vbmUifX19XSxbMSwyLCJwIiwyLHsic3R5bGUiOnsiYm9keSI6eyJuYW1lIjoiYmFycmVkIn19fV0sWzQsNSwiWChmLCBwIFxcd2MgZikiLDFdLFsyLDUsIiIsMix7ImxldmVsIjoyLCJzdHlsZSI6eyJoZWFkIjp7Im5hbWUiOiJub25lIn19fV0sWzYsNywiWChmLCAxKSIsMCx7InN0eWxlIjp7ImJvZHkiOnsibmFtZSI6ImJhcnJlZCJ9fX1dLFszLDYsIiIsMCx7ImxldmVsIjoyLCJzdHlsZSI6eyJoZWFkIjp7Im5hbWUiOiJub25lIn19fV0sWzUsNywiIiwwLHsibGV2ZWwiOjIsInN0eWxlIjp7ImhlYWQiOnsibmFtZSI6Im5vbmUifX19XSxbMTYsMTcsIlxcY3B7cCBcXHdjIGZ9KGYsIDEpIiwxLHsic2hvcnRlbiI6eyJzb3VyY2UiOjIwLCJ0YXJnZXQiOjIwfSwic3R5bGUiOnsiYm9keSI6eyJuYW1lIjoibm9uZSJ9LCJoZWFkIjp7Im5hbWUiOiJub25lIn19fV0sWzEwLDExLCI9IiwxLHsic2hvcnRlbiI6eyJzb3VyY2UiOjIwLCJ0YXJnZXQiOjIwfSwic3R5bGUiOnsiYm9keSI6eyJuYW1lIjoibm9uZSJ9LCJoZWFkIjp7Im5hbWUiOiJub25lIn19fV0sWzExLDE0LCJcXGxhbWJkYSIsMSx7InNob3J0ZW4iOnsic291cmNlIjoyMCwidGFyZ2V0IjoyMH0sInN0eWxlIjp7ImJvZHkiOnsibmFtZSI6Im5vbmUifSwiaGVhZCI6eyJuYW1lIjoibm9uZSJ9fX1dXQ==
\[\begin{tikzcd}[column sep=6em]
	Z & Y & X \\
	Z & Y & X \\
	Z && X
	\arrow["{X(p \wc f, 1)}"', "\shortmid"{marking}, from=1-3, to=1-2]
	\arrow["{X(p \wc f, 1)}"{description}, from=2-3, to=2-2]
	\arrow[""{name=0, anchor=center, inner sep=0}, Rightarrow, no head, from=1-3, to=2-3]
	\arrow[""{name=1, anchor=center, inner sep=0}, Rightarrow, no head, from=1-2, to=2-2]
	\arrow["p"', "\shortmid"{marking}, from=1-2, to=1-1]
	\arrow["{X(f, p \wc f)}"{description}, from=2-2, to=2-1]
	\arrow[""{name=2, anchor=center, inner sep=0}, Rightarrow, no head, from=1-1, to=2-1]
	\arrow["{X(f, 1)}", "\shortmid"{marking}, from=3-3, to=3-1]
	\arrow[""{name=3, anchor=center, inner sep=0}, Rightarrow, no head, from=2-3, to=3-3]
	\arrow[""{name=4, anchor=center, inner sep=0}, Rightarrow, no head, from=2-1, to=3-1]
	\arrow["{\cp{p \wc f}(f, 1)}"{description}, draw=none, from=3, to=4]
	\arrow["{=}"{description}, draw=none, from=0, to=1]
	\arrow["\lambda"{description}, draw=none, from=1, to=2]
\end{tikzcd}\]
	exhibits $X(p \wc f, 1)$ as the right lift $X(f, 1) \rf p$.
	A tight-cell $g \colon X \to X'$ \emph{preserves} the colimit $p \wc f$ when the cocone $(((p \wc f) \d g), (\lambda \d g))$ is the $p$-colimit of $(f \d g) \colon Z \to X'$.
\end{definition}

A weighted limit in $\X$ is a weighted colimit in $\X\co$. For convenience, we spell out the definition.

% In the following definition X and Z are swapped compared to the definition of right extension, so that f has the same type for both limits and colimits.
\begin{definition}
  \label{weighted-limit}
  Let $p \colon Z \lto Y$ be a loose-cell and $f \colon Z \to X$ be a tight-cell. A \emph{$p$-weighted cone} (or simply \emph{$p$-cone}) for $f$ is a pair $(c, \gamma)$ of a tight-cell $c \colon Y \to X$ and a 2-cell $\gamma \colon p \tto X(c, f)$. A cone $(p \wl f, \limcell)$ is \emph{limiting} (alternatively the \emph{$p$-weighted limit}, or simply \emph{$p$-limit}) of $f$ when the 2-cell
% https://q.uiver.app/#q=WzAsOCxbMiwxLCJaIl0sWzEsMSwiWSJdLFswLDEsIlgiXSxbMiwyLCJaIl0sWzAsMiwiWCJdLFsyLDAsIloiXSxbMSwwLCJZIl0sWzAsMCwiWCJdLFswLDEsIlgocCBcXHdsIGYsIGYpIiwxXSxbMSwyLCJYKDEsIHAgXFx3bCBmKSIsMV0sWzMsNCwiWCgxLCBmKSIsMCx7InN0eWxlIjp7ImJvZHkiOnsibmFtZSI6ImJhcnJlZCJ9fX1dLFsyLDQsIiIsMix7ImxldmVsIjoyLCJzdHlsZSI6eyJoZWFkIjp7Im5hbWUiOiJub25lIn19fV0sWzAsMywiIiwwLHsibGV2ZWwiOjIsInN0eWxlIjp7ImhlYWQiOnsibmFtZSI6Im5vbmUifX19XSxbNSw2LCJwIiwyLHsic3R5bGUiOnsiYm9keSI6eyJuYW1lIjoiYmFycmVkIn19fV0sWzYsNywiWCgxLCBwIFxcd2wgZikiLDIseyJzdHlsZSI6eyJib2R5Ijp7Im5hbWUiOiJiYXJyZWQifX19XSxbNSwwLCIiLDEseyJsZXZlbCI6Miwic3R5bGUiOnsiaGVhZCI6eyJuYW1lIjoibm9uZSJ9fX1dLFs3LDIsIiIsMSx7ImxldmVsIjoyLCJzdHlsZSI6eyJoZWFkIjp7Im5hbWUiOiJub25lIn19fV0sWzYsMSwiIiwxLHsibGV2ZWwiOjIsInN0eWxlIjp7ImhlYWQiOnsibmFtZSI6Im5vbmUifX19XSxbMTIsMTEsIlxcY3B7cCBcXHdsIGZ9KDEsIGYpIiwxLHsic2hvcnRlbiI6eyJzb3VyY2UiOjIwLCJ0YXJnZXQiOjIwfSwic3R5bGUiOnsiYm9keSI6eyJuYW1lIjoibm9uZSJ9LCJoZWFkIjp7Im5hbWUiOiJub25lIn19fV0sWzE1LDE3LCJcXGxpbWNlbGwiLDEseyJzaG9ydGVuIjp7InNvdXJjZSI6MjAsInRhcmdldCI6MjB9LCJzdHlsZSI6eyJib2R5Ijp7Im5hbWUiOiJub25lIn0sImhlYWQiOnsibmFtZSI6Im5vbmUifX19XSxbMTcsMTYsIj0iLDEseyJzaG9ydGVuIjp7InNvdXJjZSI6MjAsInRhcmdldCI6MjB9LCJzdHlsZSI6eyJib2R5Ijp7Im5hbWUiOiJub25lIn0sImhlYWQiOnsibmFtZSI6Im5vbmUifX19XV0=
\[\begin{tikzcd}[column sep=6em]
	X & Y & Z \\
	X & Y & Z \\
	X && Z
	\arrow["{X(p \wl f, f)}"{description}, from=2-3, to=2-2]
	\arrow["{X(1, p \wl f)}"{description}, from=2-2, to=2-1]
	\arrow["{X(1, f)}", "\shortmid"{marking}, from=3-3, to=3-1]
	\arrow[""{name=0, anchor=center, inner sep=0}, Rightarrow, no head, from=2-1, to=3-1]
	\arrow[""{name=1, anchor=center, inner sep=0}, Rightarrow, no head, from=2-3, to=3-3]
	\arrow["p"', "\shortmid"{marking}, from=1-3, to=1-2]
	\arrow["{X(1, p \wl f)}"', "\shortmid"{marking}, from=1-2, to=1-1]
	\arrow[""{name=2, anchor=center, inner sep=0}, Rightarrow, no head, from=1-3, to=2-3]
	\arrow[""{name=3, anchor=center, inner sep=0}, Rightarrow, no head, from=1-1, to=2-1]
	\arrow[""{name=4, anchor=center, inner sep=0}, Rightarrow, no head, from=1-2, to=2-2]
	\arrow["{\cp{p \wl f}(1, f)}"{description}, draw=none, from=1, to=0]
	\arrow["\limcell"{description}, draw=none, from=2, to=4]
	\arrow["{=}"{description}, draw=none, from=4, to=3]
\end{tikzcd}\]
  exhibits $X(1, p \wl f)$ as the right extension $p \rx X(1, f)$.
  A tight-cell $g \colon X \to X'$ \emph{preserves} the limit $p \wl f$ when the cone $((p \wl f \d g), (\limcell \d g))$ is the $p$-limit of $(f \d g) \colon Z \to X'$.
\end{definition}

\begin{remark}
	The weighted (co)limits of \cref{weighted-colimit,weighted-limit} are equivalent to those of \cite[Definition~9.5.1]{riehl2022elements}.
\end{remark}

\begin{example}[{\cite[Lemma~9.5.3]{riehl2022elements}}]
  \label{colimit-companion}
  It follows from \cref{lift-companion} that companion-weighted colimits and conjoint-weighted limits are both given by composition of tight-cells:
  \[
    X(1, x) \wc f \iso (x \d f) \iso X(x, 1) \wl f
	\qedhere
  \]
\end{example}

The results relating right lifts and extensions above induce similar results for weighted (co)limits.
Again, we give proofs only for colimits; the proofs for limits are dual.
First, (co)limits interact nicely with composites.
\begin{lemma}[{\cf{}~\cite[Lemma~9.5.4]{riehl2022elements}}]
    \label{currying-colimit}
    Let $f \colon Z \to X$ be a tight-cell.
    \begin{enumerate}
      \item If $Y' \xlto{p'} Y \xlto{p} Z$ are loose-cells such that the left-composite $p \odotl p' \colon Y' \lto Z$ and colimit $p \wc f \colon Y \to X$ exist, then, if either side of the following exists, so does the other, in which case they are isomorphic.
        \[(p \odotl p') \wc f \iso p' \wc (p \wc f)\]
      \item If $Z \xlto{p} Y \xlto{p'} Y'$ are loose-cells such that the right-composite $p' \odotr p \colon Z \lto Y'$ and limit $p \wl f \colon Y \to X$ exist, then, if either side of the following exists, so does the other, in which case they are isomorphic.
        \[(p' \odotr p) \wl f \iso p' \wl (p \wl f)\]
    \end{enumerate}
\end{lemma}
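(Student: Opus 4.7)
The plan is to reduce each statement to the analogous fact about right lifts or right extensions established in \cref{currying-right-lift}, exploiting the definitional identification of weighted (co)limits with corepresentable right lifts (resp.\ extensions). Statement~(2) for weighted limits is the dual of (1) for weighted colimits under $\X \mapsto \X\co$ (applying \cref{X-equipment-iff-Xco-equipment}), so it suffices to prove (1).

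For~(1), I will unfold the definition of weighted colimit. The hypothesis that $p \wc f$ exists says precisely that the right lift $X(f, 1) \rf p$ exists and is corepresented by the tight-cell $p \wc f$, \ie{} $X(p \wc f, 1) \iso X(f, 1) \rf p$. Under the further standing hypothesis that $p \odotl p'$ exists, \cref{currying-right-lift}(1) applies to yield that $X(f, 1) \rf (p \odotl p')$ exists if and only if $(X(f, 1) \rf p) \rf p'$ does, and when they do they are canonically isomorphic:
\[X(f, 1) \rf (p \odotl p') \iso (X(f, 1) \rf p) \rf p' \iso X(p \wc f, 1) \rf p'.\]
Now $X(f, 1) \rf (p \odotl p')$ is corepresentable exactly when the colimit $(p \odotl p') \wc f$ exists, and $X(p \wc f, 1) \rf p'$ is corepresentable exactly when $p' \wc (p \wc f)$ exists. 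Since corepresentability is invariant under isomorphism of loose-cells, the existence of either colimit is equivalent to the existence of the other.

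To extract the claimed isomorphism of tight-cells from this isomorphism of their conjoints, I will appeal to the observation preceding \cref{representable-and-corepresentable} that the passage from tight-cells to their conjoints is \ff{}. Hence the isomorphism $X((p \odotl p') \wc f, 1) \iso X(p' \wc (p \wc f), 1)$ of loose-cells lifts uniquely to the desired isomorphism $(p \odotl p') \wc f \iso p' \wc (p \wc f)$ of tight-cells.

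Since the substantive computational content is already packaged in \cref{currying-right-lift}, there is no genuine obstacle here. The only point meriting attention is the bookkeeping that corepresentability (being of the form $X(c, 1)$ for some tight-cell $c$) is a property of a loose-cell's isomorphism class, which is immediate from uniqueness of restrictions up to isomorphism, and that this recovery of $c$ is compatible with isomorphism thanks to \ffness{} of the conjoint embedding.
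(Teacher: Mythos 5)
Your proof is correct and follows the paper's own route: the paper simply says the lemma is immediate from \cref{currying-right-lift}, and your argument spells out exactly that reduction (identifying colimits with corepresentable right lifts, transporting corepresentability along the isomorphism, and using \ffness{} of conjoints to lift it to tight-cells, with (2) by duality).
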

\begin{proof}
  Immediate from \cref{currying-right-lift}.
\end{proof}

Second, weighted (co)limits interact with restriction as follows.
\begin{lemma}
    \label{colimit-and-restriction}
    Let $p \colon Y \lto Z$ be a loose-cell and $f \colon Z \to X$ be a tight-cell such that the colimit $p \wc f \colon Y \to X$ exists.
    For each $x \colon X' \to X$ and $y \colon Y' \to Y$, we have a right lift
    \[
      X((p \wc f)y, x) \iso X(f, x) \rf p(1, y)
    \]
    with counit
% https://q.uiver.app/#q=WzAsOCxbMiwwLCJYJyJdLFsxLDAsIlknIl0sWzAsMCwiWiJdLFsyLDEsIlgnIl0sWzEsMSwiWSciXSxbMCwxLCJaIl0sWzIsMiwiWCciXSxbMCwyLCJaIl0sWzAsMSwiWCgocCBcXHdjIGYpeSwgeCkiLDIseyJzdHlsZSI6eyJib2R5Ijp7Im5hbWUiOiJiYXJyZWQifX19XSxbMyw0LCJYKChwIFxcd2MgZil5LCB4KSIsMV0sWzAsMywiIiwxLHsibGV2ZWwiOjIsInN0eWxlIjp7ImhlYWQiOnsibmFtZSI6Im5vbmUifX19XSxbMSw0LCIiLDEseyJsZXZlbCI6Miwic3R5bGUiOnsiaGVhZCI6eyJuYW1lIjoibm9uZSJ9fX1dLFsxLDIsInAoMSwgeSkiLDIseyJzdHlsZSI6eyJib2R5Ijp7Im5hbWUiOiJiYXJyZWQifX19XSxbNCw1LCJYKGYsIChwIFxcd2MgZil5KSIsMV0sWzIsNSwiIiwyLHsibGV2ZWwiOjIsInN0eWxlIjp7ImhlYWQiOnsibmFtZSI6Im5vbmUifX19XSxbNiw3LCJYKGYsIHgpIiwwLHsic3R5bGUiOnsiYm9keSI6eyJuYW1lIjoiYmFycmVkIn19fV0sWzMsNiwiIiwwLHsibGV2ZWwiOjIsInN0eWxlIjp7ImhlYWQiOnsibmFtZSI6Im5vbmUifX19XSxbNSw3LCIiLDAseyJsZXZlbCI6Miwic3R5bGUiOnsiaGVhZCI6eyJuYW1lIjoibm9uZSJ9fX1dLFsxNiwxNywiXFxjcHsocCBcXHdjIGYpeX0oZiwgeCkiLDEseyJzaG9ydGVuIjp7InNvdXJjZSI6MjAsInRhcmdldCI6MjB9LCJzdHlsZSI6eyJib2R5Ijp7Im5hbWUiOiJub25lIn0sImhlYWQiOnsibmFtZSI6Im5vbmUifX19XSxbMTAsMTEsIj0iLDEseyJzaG9ydGVuIjp7InNvdXJjZSI6MjAsInRhcmdldCI6MjB9LCJzdHlsZSI6eyJib2R5Ijp7Im5hbWUiOiJub25lIn0sImhlYWQiOnsibmFtZSI6Im5vbmUifX19XSxbMTEsMTQsIlxcbGFtYmRhKDEsIHkpIiwxLHsic2hvcnRlbiI6eyJzb3VyY2UiOjIwLCJ0YXJnZXQiOjIwfSwic3R5bGUiOnsiYm9keSI6eyJuYW1lIjoibm9uZSJ9LCJoZWFkIjp7Im5hbWUiOiJub25lIn19fV1d
\[\begin{tikzcd}[column sep=6em]
	Z & {Y'} & {X'} \\
	Z & {Y'} & {X'} \\
	Z && {X'}
	\arrow["{X((p \wc f)y, x)}"', "\shortmid"{marking}, from=1-3, to=1-2]
	\arrow["{X((p \wc f)y, x)}"{description}, from=2-3, to=2-2]
	\arrow[""{name=0, anchor=center, inner sep=0}, Rightarrow, no head, from=1-3, to=2-3]
	\arrow[""{name=1, anchor=center, inner sep=0}, Rightarrow, no head, from=1-2, to=2-2]
	\arrow["{p(1, y)}"', "\shortmid"{marking}, from=1-2, to=1-1]
	\arrow["{X(f, (p \wc f)y)}"{description}, from=2-2, to=2-1]
	\arrow[""{name=2, anchor=center, inner sep=0}, Rightarrow, no head, from=1-1, to=2-1]
	\arrow["{X(f, x)}", "\shortmid"{marking}, from=3-3, to=3-1]
	\arrow[""{name=3, anchor=center, inner sep=0}, Rightarrow, no head, from=2-3, to=3-3]
	\arrow[""{name=4, anchor=center, inner sep=0}, Rightarrow, no head, from=2-1, to=3-1]
	\arrow["{\cp{(p \wc f)y}(f, x)}"{description}, draw=none, from=3, to=4]
	\arrow["{=}"{description}, draw=none, from=0, to=1]
	\arrow["{\lambda(1, y)}"{description}, draw=none, from=1, to=2]
\end{tikzcd}\]
    In particular, for each tight-cell $y \colon Y' \to Y$, the 2-cell
    \[
      \lambda(1, y) \colon p(1, y) \tto X(f, (p \wc f)y)
    \]
    forms the colimiting $p(1, y)$-cocone of $f$:
    \[
      (y \d (p \wc f)) \iso p(1, y) \wc f
    \]
    Dually, weighted limits $p \wl f$ satisfy the following.
    \[
      X(x, (p \wl f)y) \iso p(y, 1) \rx X(x, f)
      \hspace{4em}
      (y \d (p \wl f)) \iso p(y, 1) \wl f
    \]
\end{lemma}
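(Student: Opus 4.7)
The plan is to derive this as a direct consequence of \cref{right-lift-and-restriction} applied to the right lift that defines the weighted colimit. By definition of $p \wc f$, the conjoint satisfies $X(p \wc f, 1) \iso X(f, 1) \rf p$, with the isomorphism implemented by the counit~$\lambda$. Pseudofunctoriality of restriction together with the basic identity $X(p \wc f, 1)(y, x) \iso X((p \wc f)y, x)$ (obtained by bending the tight-cells $y$ and $(p \wc f)$, using that companions and conjoints compose with tight-cells as expected) then yields
\[ X((p \wc f)y, x) \iso (X(f, 1) \rf p)(y, x). \]

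The next step is to invoke \cref{right-lift-and-restriction} with the right lift $X(f, 1) \rf p$ and restricting tight-cells $x \colon X' \to X$ and $y \colon Y' \to Y$. This gives
\[ (X(f, 1) \rf p)(y, x) \iso X(f, 1)(1, x) \rf p(1, y) \iso X(f, x) \rf p(1, y), \]
where the second isomorphism uses that restriction of a conjoint along a tight-cell is again a conjoint. Composing yields the claimed isomorphism $X((p \wc f)y, x) \iso X(f, x) \rf p(1, y)$. The explicit form of the counit then follows by tracing through the counit constructed in \cref{right-lift-and-restriction}: its pasting is obtained from $\lambda$ by post-composing with the cartesian cells that implement the restrictions along $x$ and $y$, which in string diagrams corresponds exactly to bending the tight-cells, giving the $2$-cell displayed in the statement.

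For the ``in particular'' clause, specialise to $x = 1_X$. The isomorphism becomes $X((p \wc f)y, 1) \iso X(f, 1) \rf p(1, y)$, which is precisely the defining property of $(p \wc f)y = y \d (p \wc f)$ as the $p(1, y)$-weighted colimit of $f$, and the counit specialises to the cylinder $(\lambda, Y(1, y)) \colon p(1, y) \tto X(f, (p \wc f)y)$ as claimed.

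The dual statement about $p \wl f$ is proved by precisely the same argument, applied in $\X\co$; by \cref{X-equipment-iff-Xco-equipment} this remains a virtual equipment, and the right-extension half of \cref{right-lift-and-restriction} (stated there explicitly as the dual) delivers the required commutativity of restriction with right extensions. There is no real obstacle here: the only bookkeeping is to identify which restrictions correspond to which tight-cell along which side, and to check that the pasting of the counit produced by \cref{right-lift-and-restriction} coincides with the diagram in the statement.
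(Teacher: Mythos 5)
Your proposal is correct and follows essentially the same route as the paper: both identify $X((p \wc f)y, x) \iso X(p \wc f, 1)(y, x) \iso (X(f,1) \rf p)(y, x)$ and then apply \cref{right-lift-and-restriction} to obtain $X(f, x) \rf p(1, y)$, with the counit traced through the cartesian cells and the final claim obtained by setting $x = 1_X$, the dual case following by duality.
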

\begin{proof}
  Using \cref{right-lift-and-restriction}, we have
  \[
    X((p \wc f)y, x) \iso X(p \wc f, 1)(y, x)
    \iso (X(f, 1) \rf p)(y, x)
    \iso X(f, x) \rf p(1, y)
  \]
  from which we may calculate the counit of the right lift.
  The second part, which is \cite[Lemma~9.5.5]{riehl2022elements}, follows by taking $x = 1_X$.
\end{proof}

We also observe the following interaction between weighted colimits and weighted limits.
\begin{lemma}[{\cf{}~\cite[Proposition~8.5]{shulman2013enriched}}]
	Let $p \colon Y \lto Z$ be a loose-cell, and $f \colon Z \to X$ and $g \colon Y \to X$ be tight-cells. Supposing that the $p$-colimit $p \wc f$ and $p$-limit $p \wl g$ exist, there is a natural bijection of 2-cells in the tight 2-category $\tX$:
	\begin{iffseq}
		p \wc f \tto g \\
		f \tto p \wl g
	\end{iffseq}
\end{lemma}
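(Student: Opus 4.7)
The plan is to exhibit both sides of the claimed bijection as naturally isomorphic to $\X\lh{Y, Z}(p, X(f, g))$, whence they are naturally isomorphic to each other. The key ingredient is a general natural bijection, valid for any parallel tight-cells $h, k \colon A \to B$ in an equipment, of the form $\tX(A, B)(h, k) \iso \X\lh{A, A}(A(1, 1), B(h, k))$. This follows from the cartesianness of the restriction $B(h, k) \tto B(1, 1)$ (with left side $k$ and right side $h$) combined with the opcartesianness of the loose-identity $A(1, 1)$: the former identifies 2-cells $A(1, 1) \tto B(h, k)$ with nullary 2-cells having empty top, bottom $B(1, 1)$, and sides $k$ (left) and $h$ (right), while the latter identifies such nullary 2-cells with morphisms $h \tto k$ in $\tX$ by the very definition of the tight 2-category.

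Applying this bijection with $(A, B, h, k) = (Y, X, p \wc f, g)$, and using \cref{colimit-and-restriction} (with $y = 1_Y$ and $x = g$) to identify $X(p \wc f, g) \iso X(f, g) \rf p$, the hom-set $\tX(Y, X)(p \wc f, g)$ is in natural bijection with $\X\lh{Y, Y}(Y(1, 1), X(f, g) \rf p)$. The universal property of the right lift (\cref{right-lift}) turns this into the set of 2-cells $Y(1, 1), p \tto X(f, g)$, and the opcartesianness of $Y(1, 1)$ finally identifies this with $\X\lh{Y, Z}(p, X(f, g))$.

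Dually, applying the general bijection with $(A, B, h, k) = (Z, X, f, p \wl g)$, and using the limit form of \cref{colimit-and-restriction} (with $y = 1_Z$ and $x = f$) to identify $X(f, p \wl g) \iso p \rx X(f, g)$, the universal property of the right extension (\cref{right-extension}) followed by opcartesianness of $Z(1, 1)$ likewise yields $\tX(Z, X)(f, p \wl g) \iso \X\lh{Y, Z}(p, X(f, g))$. Composing the two chains of natural bijections gives the claimed bijection, with naturality inherited stepwise. The argument is essentially an assembly of universal properties already established in the paper, so I do not expect any significant obstacle; the only real piece of bookkeeping is the preliminary bijection between 2-cells in $\tX$ and 2-cells into restrictions of the loose-identity, which requires a careful combination of the cartesian and opcartesian universal properties.
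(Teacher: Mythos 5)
Your proposal is correct and follows essentially the same route as the paper: both identify $X(p \wc f, g) \iso X(f, g) \rf p$ and $X(f, p \wl g) \iso p \rx X(f, g)$ via \cref{colimit-and-restriction} and then chain the right-lift and right-extension universal properties to land on 2-cells $p \tto X(f, g)$. Your detour through the bijection $\tX(A, B)(h, k) \iso \X\lh{A, A}(A(1, 1), B(h, k))$ merely makes explicit the bookkeeping (cartesianness of restriction plus opcartesianness of the loose-identity) that the paper's proof leaves implicit.
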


\begin{proof}
  We have $X(p \wc f, g) \iso X(f, g) \rf p$ and $X(f, p \wl g) \iso p \rx X(f, g)$ by \cref{colimit-and-restriction}, so there are natural bijections as follows.
  \begin{iffseq}
	p \wc f \tto g \\
	p \tto X(f, g) \\
	f \tto p \wl g
  \end{iffseq}\\[-4.5ex]
\end{proof}

\subsection{Pointwise left extensions}

We specialise the definition of weighted colimit to obtain a definition
of (pointwise) \emph{left extension}.
(We will not need to consider the dual notion of right extension explicitly here.)
In enriched category theory, there are two notions of left extension: nonpointwise extensions, which are defined by a 2-categorical universal property; and pointwise extensions, which are typically defined by a universal property involving presheaves. In a formal context, the nonpointwise notion is appropriate for loose-cells (\cf{}~\cref{right-extension}), whereas the pointwise notion is appropriate for tight-cells. Concretely, in $\Cat$, it is generally appropriate only to consider nonpointwise extensions of \emph{distributors}, and consequently only to consider pointwise extensions of functors. We shall therefore drop the qualifiers \emph{pointwise} and \emph{nonpointwise} except for emphasis.

\begin{definition}
    \label{left-extension}
    Let $j \colon Z \to Y$ and $f \colon Z \to X$ be tight-cells.
    A tight-cell $j \plx f \colon Y \to X$ equipped with a 2-cell
    $\lextcell \colon f \tto j \d (j \plx f)$ is the \emph{left extension} of $f$ along
    $j$ when the 2-cell
    \[
        Y(j, 1) \xtto{\pc{j \plx f}(j, 1)}
        X((j \plx f)j, j \plx f) \xtto{X(\lextcell, j \plx f)}
        X(f, j \plx f)
    \]
    exhibits $j \plx f$ as the $Y(j, 1)$-colimit of $f$.
\end{definition}

As with right lifts and weighted colimits, left extensions interact nicely with identities and composites, and with restriction.

\begin{lemma}
    Let $f \colon Z \to X$ be a tight-cell.
    \begin{enumerate}
      \item The left extension $1_Z \plx f$ exists and is isomorphic to $f$.
      \item \label{currying-pointwise-extension} If $Z \xto{j} Y \xto{j'} Y'$ are tight-cells such that the left extension $j \plx f \colon Y \to X$ exists, then, if either side of the following exists, so does the other, in which case they are isomorphic.
        \[(j'j) \plx f \iso j' \plx (j \plx f)\]
    \end{enumerate}
\end{lemma}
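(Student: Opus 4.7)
The plan is to derive both parts by unfolding the definition of left extension as a weighted colimit and then invoking the corresponding results about weighted colimits from \cref{colimit-companion,currying-colimit}.

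For part~(1), the left extension $1_Z \plx f$ unfolds to the $Z(1_Z, 1)$-weighted colimit of $f$. Since $1_Z$ is an identity, its conjoint $Z(1_Z, 1)$ coincides with its companion $Z(1, 1_Z)$, both being the loose-identity $Z(1, 1)$. Hence \cref{colimit-companion} gives
\[
    1_Z \plx f \iso Z(1_Z, 1) \wc f = Z(1, 1_Z) \wc f \iso 1_Z \d f = f.
\]

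For part~(2), unfolding left extensions reduces the goal to the isomorphism
\[
    Y'(j'j, 1) \wc f \iso Y'(j', 1) \wc (Y(j, 1) \wc f),
\]
with one side existing if and only if the other does. This will follow from \cref{currying-colimit}(1) applied to $p \defeq Y(j, 1) \colon Y \lto Z$ and $p' \defeq Y'(j', 1) \colon Y' \lto Y$, using the hypothesis that $Y(j, 1) \wc f = j \plx f$ exists, provided we can identify the loose-composite $Y(j, 1) \odot Y'(j', 1)$ with $Y'(j'j, 1)$ (loose-composites being in particular left-composites).

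The key nontrivial input is therefore the composition law for conjoints, \ie{} that $Y'(j'j, 1) \iso Y(j, 1) \odot Y'(j', 1)$. This is a standard fact about equipments: pasting the cartesian cells defining the individual conjoints produces a 2-cell $Y'(j', 1), Y(j, 1) \tto Y'(j'j, 1)$, which one verifies to be opcartesian by repeated application of the zig-zag laws and the universal property of restriction (\cite[Theorem~7.16]{cruttwell2010unified}). Once this is in hand, both parts are immediate from the weighted-colimit results already established.
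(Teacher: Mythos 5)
Your proof is correct and follows essentially the same route as the paper's: the paper's argument is precisely to deduce both parts from \cref{colimit-companion,currying-colimit} using the isomorphism $Y(j,1) \odot Y'(j',1) \iso Y'(j'j,1)$. The only difference is that you spell out the justification of this conjoint-composition isomorphism (via \cite[Theorem~7.16]{cruttwell2010unified}), which the paper simply asserts.
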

\begin{proof}
  Immediate from \cref{currying-colimit}, since $Y(j, 1) \odot Y'(j', 1) \iso Y'(j'j, 1)$.
\end{proof}

\begin{lemma}
    \label{pointwise-extension-and-restriction}
    Let $j \colon Z \to Y$ and $f \colon Z \to X$ be tight-cells such that the left extension $j \plx f \colon Y \to X$ exists.
    For each $x \colon X' \to X$ and $y \colon Y' \to Y$, the following 2-cell exhibits $X((j \plx f)y, x)$ as the right lift $X(f, x) \rf Y(j, y)$.
\[\begin{tikzcd}[column sep=9em]
	Z & {Y'} & {X'} \\
	Z & {Y'} \\
	Z & {Y'} & {X'} \\
	Z && {X'}
	\arrow[""{name=0, anchor=center, inner sep=0}, "{X((j \plx f)y, x)}"', "\shortmid"{marking}, from=1-3, to=1-2]
	\arrow[""{name=1, anchor=center, inner sep=0}, "{X((j \plx f)y, x)}"{description}, from=3-3, to=3-2]
	\arrow["{Y(j, y)}"', "\shortmid"{marking}, from=1-2, to=1-1]
	\arrow["{X(f, (j \plx f)y)}"{description}, from=3-2, to=3-1]
	\arrow["{X(f, x)}", "\shortmid"{marking}, from=4-3, to=4-1]
	\arrow[""{name=2, anchor=center, inner sep=0}, Rightarrow, no head, from=3-3, to=4-3]
	\arrow[""{name=3, anchor=center, inner sep=0}, Rightarrow, no head, from=3-1, to=4-1]
	\arrow[""{name=4, anchor=center, inner sep=0}, Rightarrow, no head, from=1-2, to=2-2]
	\arrow[""{name=5, anchor=center, inner sep=0}, Rightarrow, no head, from=1-1, to=2-1]
	\arrow[""{name=6, anchor=center, inner sep=0}, Rightarrow, no head, from=2-2, to=3-2]
	\arrow[""{name=7, anchor=center, inner sep=0}, Rightarrow, no head, from=2-1, to=3-1]
	\arrow[Rightarrow, no head, from=1-3, to=3-3]
	\arrow["{X((j \plx f) j, (j \plx f) y)}"{description}, from=2-2, to=2-1]
	\arrow["{\cp{(j \plx f)y}(f, x)}"{description}, draw=none, from=2, to=3]
	\arrow["{\pc{j \plx f}(j, y)}"{description}, draw=none, from=4, to=5]
	\arrow["{=}"{description}, draw=none, from=1, to=0]
	\arrow["{X(\lextcell, (j \plx f) y)}"{description}, draw=none, from=6, to=7]
\end{tikzcd}\]
    In particular, for each $y \colon Y' \to Y$, the 2-cell
    \[
        Y(j, y) \xtto{\pc {j \plx f}(j, y)}
        X((j \plx f)j, (j \plx f)y) \xtto{X(\lextcell, (j \plx f)y)}
        X(f, (j \plx f)y)
    \]
    exhibits $(y \d (j \plx f)) \colon Y' \to X$ as the colimit $Y(j, y) \wc f$.
\end{lemma}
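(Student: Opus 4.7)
The plan is to reduce this lemma directly to \cref{colimit-and-restriction} by recognising that, by definition (\cref{left-extension}), $j \plx f$ is precisely the $Y(j, 1)$-weighted colimit of $f$, so that the claim is the specialisation of \cref{colimit-and-restriction} to the weight $p = Y(j, 1)$.

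First, I would observe that pseudofunctoriality of restriction gives the identification $Y(j, 1)(1, y) \iso Y(j, y)$ (and $Y(j, 1)(y, 1) \iso Y(j, y)$ appearing under cartesian cells as well). Substituting $p = Y(j, 1)$ into the first statement of \cref{colimit-and-restriction} therefore yields
\[X((j \plx f)y, x) \iso X(f, x) \rf Y(j, y),\]
as required. The counit from \cref{colimit-and-restriction} is obtained by pasting the defining colimit 2-cell $\lambda$ of $p \wc f$ with the cartesian cell $Y(1, y)$ exhibiting the restriction; unwinding the colimit structure of $j \plx f$ (\cref{left-extension}) in terms of $\pi$, the $\lambda$ in question is the composite $Y(j, 1) \tto X((j \plx f)j, j \plx f) \tto X(f, j \plx f)$ obtained from $\pc{j \plx f}$ and $\lextcell$. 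Pasting with the two cartesian cells $Y(j, 1)$ (on the left) and $Y(1, y)$ (on the right), and with the companion-conjoint composite $\cp{(j \plx f)y}$ and restriction $X(1, x)$ as prescribed by \cref{colimit-and-restriction}, recovers exactly the 2-cell displayed in the statement.

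For the final assertion, applying the second half of \cref{colimit-and-restriction} with the same weight $p = Y(j, 1)$ yields
\[(y \d (j \plx f)) \iso Y(j, 1)(1, y) \wc f \iso Y(j, y) \wc f,\]
with the exhibiting cylinder being precisely the pasting appearing in the statement (restricted to the case $x = 1_X$).

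No step is really an obstacle: the content is essentially bookkeeping, since \cref{colimit-and-restriction} has done the substantive work of showing that restricting a weighted colimit in its two arguments corresponds to restricting the weight and composing the right lift with a companion. The only care needed is to verify that the 2-cell as drawn in the statement matches the one produced by \cref{colimit-and-restriction} after substituting the definition of $j \plx f$ as a $Y(j, 1)$-colimit; this is a routine string-diagram manipulation using the zig-zag laws and the interchange between $\pc{j \plx f}$ and the cartesian cells for $Y(j, y)$.
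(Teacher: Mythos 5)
Your proposal is correct and matches the paper's own argument, which simply cites \cref{colimit-and-restriction}: the left extension is by definition the $Y(j,1)$-weighted colimit, so the statement is the specialisation of that lemma to $p = Y(j,1)$, using $Y(j,1)(1,y) \iso Y(j,y)$. The extra bookkeeping you describe for matching the displayed counit is exactly what the paper leaves implicit.
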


\begin{proof}
  Immediate from \cref{colimit-and-restriction}.
\end{proof}

Pointwise extensions in particular satisfy the 2-categorical universal property of nonpointwise extensions.

\begin{lemma}
    \label{hom-cell-via-left-extension}
    Let $j \colon Z \to Y$ and $f \colon Z \to X$ be tight-cells such that the left extension $j \plx f \colon Y \to X$ exists.
	% This is actually an if-and-only-if, but to state this formally requires us to make explicit
	% a multiary notion of naturality for virtual double categories, which we prefer to avoid here.
    The universal property induces a bijection of 2-cells:
    \begin{iffseq}
        r_1, \dots, r_n \tto X((j \plx f)y, x) \\
        Y(j, y), r_1, \dots, r_n \tto X(f, x)
    \end{iffseq}
    In particular, there is a natural bijection of 2-cells in the tight 2-category $\tX$,
    \begin{iffseq}
        j \plx f \tto x \\
        f \tto j \d x
    \end{iffseq}
    so that $\lextcell \colon f \tto j \d (j \plx f)$ exhibits $j \plx f$ as the (nonpointwise) left extension of $f$ along $j$.
\end{lemma}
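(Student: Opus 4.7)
The plan is to derive both bijections as direct consequences of \cref{pointwise-extension-and-restriction}. That result identifies $X((j \plx f)y, x)$ with the right lift $X(f, x) \rf Y(j, y)$, so the first bijection is simply the universal property of this right lift (\cref{right-lift}) applied to the chain $r_1, \ldots, r_n$. Concretely, I would note that the counit of the right lift, as spelled out in \cref{pointwise-extension-and-restriction}, is the composite of the precomposition with $\pc{j \plx f}$ (which bends $j \plx f$ into a companion) and the action of $\lextcell$, so the bijection sends a 2-cell $\phi \colon r_1, \ldots, r_n \tto X((j \plx f)y, x)$ to the composite $(Y(j, y), \phi) \d (\lextcell \d \ldots)$ with the counit.

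For the second bijection I would specialize the first to the case $n = 0$, $y = 1_Y$, and a tight-cell $x \colon Y \to X$. On each side I then use the standard correspondence in an equipment between nullary 2-cells with codomain a representable (or conjoint) loose-cell and 2-cells in the tight 2-category $\tX$: nullary 2-cells $\tto X(g, h)$ are in bijection with 2-cells $h \tto g$ in $\tX$ via the bending operations $\cp\ph$ and $\pc\ph$, as discussed after \cref{cartesian-cell}. This yields the bijection between $j \plx f \tto x$ and $f \tto j \d x$. Naturality in $x$ is automatic from the naturality of the right-lift bijection and of the conjoint/companion correspondence.

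Finally, to confirm that the resulting correspondence is the one induced by $\lextcell$ in the sense required for $\lextcell$ to exhibit $j \plx f$ as the nonpointwise left extension, I would track the identity $1_{j \plx f}$ through the bijection: unwinding the counit of \cref{pointwise-extension-and-restriction} in the case $y = 1_Y$, $x = j \plx f$, $n = 0$ shows that $1_{j \plx f}$ corresponds precisely to $\lextcell \colon f \tto j \d (j \plx f)$, so $\lextcell$ is the unit of the bijection and hence witnesses the nonpointwise universal property.

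The only step requiring any real care is the bookkeeping in this last verification, since one must check that the composite of $\pc{j \plx f}$ with $X(\lextcell, 1)$, when evaluated on the identity, recovers $\lextcell$ itself; this follows from the zig-zag laws for the conjoint of $j \plx f$. Everything else is a direct application of the universal property of right lifts and the standard \ffness{} of restriction along identities.
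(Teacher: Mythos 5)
Your proposal follows essentially the same route as the paper: identify $X((j \plx f)y, x)$ with the right lift $X(f, x) \rf Y(j, y)$ via \cref{pointwise-extension-and-restriction}, read off the first bijection from the universal property of that right lift, and obtain the second by taking $n = 0$, $y = 1_Y$ and invoking the universal properties of the restrictions (companion/conjoint bending); your extra check that $1_{j \plx f}$ transposes to $\lextcell$ is a harmless refinement the paper leaves implicit. Only note that with the paper's conventions nullary 2-cells $\tto X(g, h)$ correspond to 2-cells $g \tto h$ rather than $h \tto g$, a direction slip in your statement of the general correspondence that does not affect your argument, whose final bijections are stated correctly.
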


\begin{proof}
    By \cref{pointwise-extension-and-restriction},
    $X((j \plx f)y, x)$ is the right lift
    $X(f, x) \rf Y(j, y)$.
    The first bijection is immediate from the universal property of this right lift.
    The second bijection follows by taking $y = 1_Y$ and $n = 0$, since we have natural bijections
      \begin{iffseq}
        j \plx f \tto x \\
        {~} \tto X(j \plx f, x) \\
        Y(j, 1) \tto X(f, x) \\
        f \tto j \d x
      \end{iffseq}
	using the first bijection, and the universal properties of the restrictions.
\end{proof}

\subsection{Density and absolute colimits}

Restriction is \ff{}, so that 2-cells $E(1, f) \tto E(1, g)$ between representable loose-cells are in bijection with 2-cells $f \tto g$ between tight-cells. We shall often desire a similar property with respect to 2-cells between $j$-representable loose-cells, \ie{} a bijection between 2-cells $E(j, f) \tto E(j, g)$ and 2-cells $f \tto g$. This holds provided that the tight-cell $\jAE$ is \emph{dense} in the following sense.

\begin{definition}
    \label{density}
    A tight-cell $\jAE$ is \emph{dense} when the identity 2-cell
    $1_j \colon j \tto j$ exhibits $1_E \colon E \to E$ as the left extension $j \plx j$.
\end{definition}

\begin{lemma}
    \label{density-implies-j*-is-ff}
    A tight-cell $\jAE$ is dense if and only if the assignment of a 2-cell $\phi \colon r_1, \ldots, r_n \tto E(g, h)$ to the 2-cell
    % https://q.uiver.app/#q=WzAsOSxbNCwwLCJcXGNkb3QiXSxbMywwLCJcXGNkb3RzIl0sWzIsMCwiXFxjZG90Il0sWzAsMCwiQSJdLFswLDEsIkEiXSxbNCwxLCJcXGNkb3QiXSxbMiwxLCJcXGNkb3QiXSxbNCwyLCJcXGNkb3QiXSxbMCwyLCJBIl0sWzAsMSwicl9uIiwyLHsic3R5bGUiOnsiYm9keSI6eyJuYW1lIjoiYmFycmVkIn19fV0sWzEsMiwicl8xIiwyLHsic3R5bGUiOnsiYm9keSI6eyJuYW1lIjoiYmFycmVkIn19fV0sWzIsMywiRShqLCBnKSIsMix7InN0eWxlIjp7ImJvZHkiOnsibmFtZSI6ImJhcnJlZCJ9fX1dLFswLDUsIiIsMCx7ImxldmVsIjoyLCJzdHlsZSI6eyJoZWFkIjp7Im5hbWUiOiJub25lIn19fV0sWzMsNCwiIiwyLHsibGV2ZWwiOjIsInN0eWxlIjp7ImhlYWQiOnsibmFtZSI6Im5vbmUifX19XSxbMiw2LCIiLDAseyJsZXZlbCI6Miwic3R5bGUiOnsiaGVhZCI6eyJuYW1lIjoibm9uZSJ9fX1dLFs2LDQsIkUoaiwgZykiLDFdLFs1LDYsIkUoZywgaCkiLDFdLFs0LDgsIiIsMSx7ImxldmVsIjoyLCJzdHlsZSI6eyJoZWFkIjp7Im5hbWUiOiJub25lIn19fV0sWzcsOCwiRShqLCBoKSIsMCx7InN0eWxlIjp7ImJvZHkiOnsibmFtZSI6ImJhcnJlZCJ9fX1dLFs1LDcsIiIsMSx7ImxldmVsIjoyLCJzdHlsZSI6eyJoZWFkIjp7Im5hbWUiOiJub25lIn19fV0sWzEyLDE0LCJcXHBoaSIsMSx7InNob3J0ZW4iOnsic291cmNlIjoyMCwidGFyZ2V0IjoyMH0sInN0eWxlIjp7ImJvZHkiOnsibmFtZSI6Im5vbmUifSwiaGVhZCI6eyJuYW1lIjoibm9uZSJ9fX1dLFsxNCwxMywiPSIsMSx7InNob3J0ZW4iOnsic291cmNlIjoyMCwidGFyZ2V0IjoyMH0sInN0eWxlIjp7ImJvZHkiOnsibmFtZSI6Im5vbmUifSwiaGVhZCI6eyJuYW1lIjoibm9uZSJ9fX1dLFsxOSwxNywiXFxjcCBnKGosIGgpIiwxLHsic2hvcnRlbiI6eyJzb3VyY2UiOjIwLCJ0YXJnZXQiOjIwfSwic3R5bGUiOnsiYm9keSI6eyJuYW1lIjoibm9uZSJ9LCJoZWFkIjp7Im5hbWUiOiJub25lIn19fV1d
	\[\begin{tikzcd}
		A && \cdot & \cdots & \cdot \\
		A && \cdot && \cdot \\
		A &&&& \cdot
		\arrow[""{name=0, anchor=center, inner sep=0}, Rightarrow, no head, from=1-1, to=2-1]
		\arrow["{E(j, g)}"', "\shortmid"{marking}, from=1-3, to=1-1]
		\arrow[""{name=1, anchor=center, inner sep=0}, Rightarrow, no head, from=1-3, to=2-3]
		\arrow["{r_1}"', "\shortmid"{marking}, from=1-4, to=1-3]
		\arrow["{r_n}"', "\shortmid"{marking}, from=1-5, to=1-4]
		\arrow[""{name=2, anchor=center, inner sep=0}, Rightarrow, no head, from=1-5, to=2-5]
		\arrow[""{name=3, anchor=center, inner sep=0}, Rightarrow, no head, from=2-1, to=3-1]
		\arrow["{E(j, g)}"{description}, from=2-3, to=2-1]
		\arrow["{E(g, h)}"{description}, from=2-5, to=2-3]
		\arrow[""{name=4, anchor=center, inner sep=0}, Rightarrow, no head, from=2-5, to=3-5]
		\arrow["{E(j, h)}", "\shortmid"{marking}, from=3-5, to=3-1]
		\arrow["{=}"{description}, draw=none, from=1, to=0]
		\arrow["\phi"{description}, draw=none, from=2, to=1]
		\arrow["{\cp g(j, h)}"{description}, draw=none, from=4, to=3]
	\end{tikzcd}\]
	induces a bijection of 2-cells:
    \begin{iffseq}
        r_1, \dots, r_n \tto E(g, h) \\
        E(j, g), r_1, \dots, r_n \tto E(j, h)
    \end{iffseq}
    In particular, if $j$ is dense, then there is a bijection of 2-cells:
    \begin{iffseq}
        g \tto h \\
        E(j, g) \tto E(j, h)
    \end{iffseq}
\end{lemma}

\begin{proof}
  The first bijection states that the above 2-cell exhibits $E(g, h)$ as the right lift $E(j, h) \rf E(j, g)$.
  Hence, if $j$ is dense, the first bijection follows from \cref{hom-cell-via-left-extension}.
  For the converse, by taking $g = h = 1_E$ we obtain a canonical isomorphism $E(1, 1) \iso E(j, 1) \rf E(j, 1)$, and hence density of $j$.

    The second bijection follows from the first by taking $n = 0$ and using the following bijection.
	\begin{iffseq}
		g \tto h \\
		{~} \tto E(g, h)
	\end{iffseq}\\[-4.5ex]
\end{proof}

\emph{Absoluteness} with respect to a tight-cell $\jAE$ is a well-behavedness condition for colimits that permits the calculation of a weighted colimit via loose-composition.

\begin{definition}
	\label{j-absolute}
    Let $p \colon Y \lto Z$ be a loose-cell and let $\jAE$ and $f \colon Z \to E$ be
    tight-cells.
    The colimit $(p \wc f, \lambda)$ is \emph{$j$-absolute} if
    the 2-cell
	% https://q.uiver.app/#q=WzAsOCxbMCwwLCJBIl0sWzEsMCwiWiJdLFsyLDAsIlkiXSxbMiwyLCJZIl0sWzAsMiwiQSJdLFswLDEsIkEiXSxbMiwxLCJZIl0sWzEsMSwiWiJdLFsxLDAsIkUoaiwgZikiLDIseyJzdHlsZSI6eyJib2R5Ijp7Im5hbWUiOiJiYXJyZWQifX19XSxbMiwxLCJwIiwyLHsic3R5bGUiOnsiYm9keSI6eyJuYW1lIjoiYmFycmVkIn19fV0sWzcsNSwiRShqLCBmKSIsMV0sWzYsNywiRShmLCBwIFxcd2MgZikiLDFdLFszLDYsIiIsMSx7ImxldmVsIjoyLCJzdHlsZSI6eyJoZWFkIjp7Im5hbWUiOiJub25lIn19fV0sWzQsNSwiIiwxLHsibGV2ZWwiOjIsInN0eWxlIjp7ImhlYWQiOnsibmFtZSI6Im5vbmUifX19XSxbNSwwLCIiLDEseyJsZXZlbCI6Miwic3R5bGUiOnsiaGVhZCI6eyJuYW1lIjoibm9uZSJ9fX1dLFs3LDEsIiIsMSx7ImxldmVsIjoyLCJzdHlsZSI6eyJoZWFkIjp7Im5hbWUiOiJub25lIn19fV0sWzYsMiwiIiwxLHsibGV2ZWwiOjIsInN0eWxlIjp7ImhlYWQiOnsibmFtZSI6Im5vbmUifX19XSxbMyw0LCJFKGosIHAgXFx3YyBmKSIsMCx7InN0eWxlIjp7ImJvZHkiOnsibmFtZSI6ImJhcnJlZCJ9fX1dLFsxNiwxNSwiXFxsYW1iZGEiLDEseyJzaG9ydGVuIjp7InNvdXJjZSI6NDAsInRhcmdldCI6NDB9LCJzdHlsZSI6eyJib2R5Ijp7Im5hbWUiOiJub25lIn0sImhlYWQiOnsibmFtZSI6Im5vbmUifX19XSxbMTIsMTMsIlxcY3AgZihqLCBwIFxcd2MgZikiLDEseyJsZXZlbCI6MSwic3R5bGUiOnsiYm9keSI6eyJuYW1lIjoibm9uZSJ9LCJoZWFkIjp7Im5hbWUiOiJub25lIn19fV0sWzE1LDE0LCI9IiwxLHsic2hvcnRlbiI6eyJzb3VyY2UiOjIwLCJ0YXJnZXQiOjIwfSwic3R5bGUiOnsiYm9keSI6eyJuYW1lIjoibm9uZSJ9LCJoZWFkIjp7Im5hbWUiOiJub25lIn19fV1d
	\[\begin{tikzcd}[column sep=huge]
		A & Z & Y \\
		A & Z & Y \\
		A && Y
		\arrow["{E(j, f)}"', "\shortmid"{marking}, from=1-2, to=1-1]
		\arrow["p"', "\shortmid"{marking}, from=1-3, to=1-2]
		\arrow["{E(j, f)}"{description}, from=2-2, to=2-1]
		\arrow["{E(f, p \wc f)}"{description}, from=2-3, to=2-2]
		\arrow[""{name=0, anchor=center, inner sep=0}, Rightarrow, no head, from=3-3, to=2-3]
		\arrow[""{name=1, anchor=center, inner sep=0}, Rightarrow, no head, from=3-1, to=2-1]
		\arrow[""{name=2, anchor=center, inner sep=0}, Rightarrow, no head, from=2-1, to=1-1]
		\arrow[""{name=3, anchor=center, inner sep=0}, Rightarrow, no head, from=2-2, to=1-2]
		\arrow[""{name=4, anchor=center, inner sep=0}, Rightarrow, no head, from=2-3, to=1-3]
		\arrow["{E(j, p \wc f)}", "\shortmid"{marking}, from=3-3, to=3-1]
		\arrow["\lambda"{description}, draw=none, from=4, to=3]
		\arrow["{\cp f(j, p \wc f)}"{description}, draw=none, from=0, to=1]
		\arrow["{=}"{description}, draw=none, from=3, to=2]
	\end{tikzcd}\]
    is left-opcartesian. A colimit with codomain $E$ is \emph{absolute} if it is $1_E$-absolute.
\end{definition}

In other words, $p \wc f$ is $j$-absolute just when the left-composite
$E(j, f) \odotl p$ exists, and the canonical 2-cell $E(j, f) \odotl p \tto E(j, p \wc f)$ is an isomorphism. In particular, a left extension $g \plx f$ is $j$-absolute just when the left composite $E(j, f) \odotl C(g, 1)$ exists, and the canonical 2-cell $E(j, f) \odotl C(g, 1) \tto E(j, g \plx f)$ is an isomorphism.

\begin{lemma}
    \label{j-plx-preserves-j-absolute-colimits}
    Each left extension $j \plx f$ preserves $j$-absolute colimits.
\end{lemma}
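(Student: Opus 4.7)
The plan is to unfold what it means for $j \plx f$ to preserve a $j$-absolute colimit. Concretely, given a tight-cell $g \colon Z \to E$ with $j$-absolute $p$-colimit $(p \wc g, \lambda)$ weighted by $p \colon Y \lto Z$, I want to show that $((p \wc g) \d (j \plx f), \lambda \d (j \plx f))$ is the $p$-colimit of $g \d (j \plx f)$. By \cref{weighted-colimit}, this amounts to showing that the induced 2-cell exhibits $X((j \plx f)(p \wc g), 1)$ as the right lift $X((j \plx f)g, 1) \rf p$.

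The key step is the following chain of natural isomorphisms of loose-cells $Y \lto X$:
\begin{align*}
  X((j \plx f)(p \wc g), 1)
  &\iso X(f, 1) \rf E(j, p \wc g) \\
  &\iso X(f, 1) \rf (E(j, g) \odotl p) \\
  &\iso (X(f, 1) \rf E(j, g)) \rf p \\
  &\iso X((j \plx f) g, 1) \rf p.
\end{align*}
The first and last isomorphisms come from applying \cref{pointwise-extension-and-restriction} to the left extension $j \plx f$, with $y = p \wc g$ and $y = g$ respectively (and $x = 1_X$). The second uses $j$-absoluteness of the colimit (\cref{j-absolute}), which provides the comparison isomorphism $E(j, g) \odotl p \iso E(j, p \wc g)$. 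The third is the currying of right lifts through left-composites, \cref{currying-right-lift}(1).

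The main obstacle I anticipate is checking that this composite isomorphism is the one induced by the canonical 2-cell built from $\lambda \d (j \plx f)$, so that it genuinely exhibits $(p \wc g) \d (j \plx f)$ as the $p$-colimit rather than merely establishing the existence of such a colimit up to an unspecified isomorphism. To resolve this I would track the counits through each step: the counits of the right lifts furnished by \cref{pointwise-extension-and-restriction} are assembled from the unit $\lextcell$ of the left extension $j \plx f$, and the $j$-absoluteness isomorphism encodes $\lambda$ via the universal property of the left-opcartesian 2-cell defining it. A string diagram chase should then confirm that pasting these counits together yields precisely the 2-cell induced by $\lambda \d (j \plx f)$, completing the proof.
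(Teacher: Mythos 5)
Your proof is correct and takes essentially the same route as the paper: the paper runs the identical chain at the level of weighted colimits (\cref{pointwise-extension-and-restriction}, then \cref{currying-colimit}, then $j$-absoluteness, then \cref{pointwise-extension-and-restriction} again), which is exactly your chain of right-lift isomorphisms transported along $X({-},1)$, since \cref{currying-colimit} is itself immediate from \cref{currying-right-lift}. The paper also defers the final check in the same way you do, noting only that "a simple calculation shows" the induced 2-cell $p \tto E((j \plx f)g, (j \plx f)(p \wc g))$ is the canonical one.
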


\begin{proof}
    Let $p \wc g$ be a $j$-absolute colimit. We have
    \begin{align*}
      p \wc (g \d (j \plx f))
      &\iso \tag{\cref{pointwise-extension-and-restriction}}
      p \wc (E(j, g) \wc f)
      \\&\iso \tag{\cref{currying-colimit}}
      (E(j, g) \odotl p) \wc f
      \\&\iso \tag{$p \wc g$ is $j$-absolute}
      E(j, p \wc g) \wc f
      \\&\iso \tag{\cref{pointwise-extension-and-restriction}}
      (p \wc g) \d (j \plx f)
    \end{align*}
    A simple calculation shows that the 2-cell $p \tto E((j \plx f) g, (j \plx f) (p \wc g))$ induced by these isomorphisms is the canonical one.
    % The easiest way to do this is to show that the two 2-cells
    %    E(j, g), p => X(f, (j plx f) (p * g))
    % are equal.
\end{proof}

In general, to show that a tight-cell $f$ forms a $j$-absolute colimit, we
must show both that it forms the colimit and that a particular 2-cell
is left-opcartesian.
When $j$ is dense, it is enough to establish the existence of a
left-opcartesian 2-cell, which then implies that $f$ forms a colimit, as we show in the following
lemma.

\begin{lemma}\label{absolute-implies-colimit}
  Let $p \colon Y \lto Z$ be a loose-cell and let $\jAE$ and $f \colon Z \to E$ be tight-cells.
  If $j$ is dense, then a tight-cell $f' \colon Y \to E$ forms the $j$-absolute $p$-colimit of $f$ if and only if there is an isomorphism:
  \[
     E(j, f) \odotl p \iso E(j, f')
  \]
\end{lemma}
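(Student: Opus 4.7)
The plan is to reduce the biconditional to an elementary manipulation, exploiting density to present every hom loose-cell as a right lift of $j$-representable loose-cells, and then using the interaction of right lifts with left-composites. The forward direction is immediate from \cref{j-absolute}: if $f'$ forms the $j$-absolute $p$-colimit of $f$, then by definition the canonical 2-cell exhibits $E(j, f')$ as the left-composite $E(j, f) \odotl p$, yielding the required isomorphism.

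For the converse, I first observe that the bijection of \cref{density-implies-j*-is-ff} is precisely the universal property exhibiting $E(g, h)$ as the right lift $E(j, h) \rf E(j, g)$ for any tight-cells $g, h$ into $E$. Applying this presentation to both $E(f', 1)$ and $E(f, 1)$, invoking the hypothesis $E(j, f) \odotl p \iso E(j, f')$, and using \cref{currying-right-lift} to commute the outer right lift past the intervening left-composite, I compute
\[
    E(f', 1)
    \iso E(j, 1) \rf E(j, f')
    \iso E(j, 1) \rf (E(j, f) \odotl p)
    \iso (E(j, 1) \rf E(j, f)) \rf p
    \iso E(f, 1) \rf p.
\]
This exhibits $E(f', 1)$ as the right lift $E(f, 1) \rf p$, which is the characterisation of $f'$ as the $p$-colimit of $f$. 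The witnessing cylinder $\lambda \colon p \tto E(f, f')$ is extracted by applying the density bijection to the composite of the left-opcartesian 2-cell $E(j, f), p \tto E(j, f) \odotl p$ with the hypothesised iso. The $j$-absoluteness of the resulting colimit is then precisely the hypothesis.

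The main obstacle, routine but requiring care, is verifying that the isomorphism $E(f', 1) \iso E(f, 1) \rf p$ produced by the chain above agrees with the canonical 2-cell induced by $\lambda$ in the sense of \cref{weighted-colimit}, and that the hypothesised iso $E(j, f) \odotl p \iso E(j, f')$ is the canonical 2-cell required by \cref{j-absolute}. Both follow by unwinding the universal properties at each step and invoking the naturality of the density bijection, of \cref{currying-right-lift}, and of the left-opcartesian factorisation; no further categorical content is needed.
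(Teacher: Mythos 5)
Your proof is correct and takes essentially the same route as the paper's: the paper runs the identical four-step chain (density, hypothesis, currying, density) at the level of weighted colimits, namely $f' \iso E(j,f')\wc j \iso (E(j,f)\odotl p)\wc j \iso p\wc(E(j,f)\wc j)\iso p\wc f$ via \cref{pointwise-extension-and-restriction} and \cref{currying-colimit}, while you run its right-lift shadow $E(f',1)\iso E(f,1)\rf p$ via \cref{density-implies-j*-is-ff} and \cref{currying-right-lift}. The extraction of the universal cylinder $\lambda$ from the left-opcartesian 2-cell through the density bijection, and the resulting $j$-absoluteness, are handled in the paper exactly as you describe (and with the same brevity).
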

\begin{proof}
  The only if direction is trivial.
  For the other direction, assume there is such an isomorphism.
  Then $f'$ forms the colimit $p \wc f$ because
  \begin{align*}
    f'
    &\iso E(j, f') \wc j \tag{\cref{pointwise-extension-and-restriction}, using density of $j$}
    \\&\iso (E(j, f) \odotl p) \wc j \tag{assumption}
    \\&\iso p \wc (E(j, f) \wc j) \tag{\cref{currying-colimit}}
    \\&\iso p \wc f \tag{\cref{pointwise-extension-and-restriction}, using density of $j$}
  \end{align*}
  The universal 2-cell $\lambda \colon p \tto E(f, f')$ is the unique 2-cell such that the left-opcartesian 2-cell ${E(j, f), p \tto E(j, f')}$ witnessing the isomorphism above is equal to that in the definition of $j$-absoluteness (\cref{j-absolute}).
  Hence this colimit is $j$-absolute.
\end{proof}

\subsection{Pointwise left lifts}
We define a notion of \emph{pointwise} left lift.
This notion has not explicitly appeared in the literature previously.
It is the appropriate pointwise notion of left lift, in the same sense that \cref{left-extension} is the appropriate pointwise notion of left extension.
Pointwise left lifts are closely related to relative adjunctions, as we explain in \cref{relative-adjunctions}.

\begin{definition}
    \label{left-lift}
    Let $j \colon Z \to X$ and $f \colon Y \to X$ be tight-cells. A tight-cell $j \plf f \colon Z \to Y$ equipped with a 2-cell $\lliftcell \colon j \tto (j \plf f) \d f$ is the \emph{left lift} of $j$ through $f$ when the following 2-cell exhibits $Y(j \plf f, 1)$ as the right extension $X(f, 1) \rx X(j, 1)$.
	% https://q.uiver.app/#q=WzAsMTAsWzIsMCwiWCJdLFsxLDAsIlkiXSxbMCwwLCJaIl0sWzEsMSwiWSJdLFswLDEsIloiXSxbMiwyLCJYIl0sWzEsMiwiWSJdLFswLDIsIloiXSxbMiwzLCJYIl0sWzAsMywiWiJdLFswLDEsIlgoZiwgMSkiLDIseyJzdHlsZSI6eyJib2R5Ijp7Im5hbWUiOiJiYXJyZWQifX19XSxbMSwyLCJZKGogXFxwbGYgZiwgMSkiLDIseyJzdHlsZSI6eyJib2R5Ijp7Im5hbWUiOiJiYXJyZWQifX19XSxbMyw0LCJYKGYoaiBcXHBsZiBmKSwgZikiLDFdLFsxLDMsIiIsMSx7ImxldmVsIjoyLCJzdHlsZSI6eyJoZWFkIjp7Im5hbWUiOiJub25lIn19fV0sWzIsNCwiIiwxLHsibGV2ZWwiOjIsInN0eWxlIjp7ImhlYWQiOnsibmFtZSI6Im5vbmUifX19XSxbMCw1LCIiLDAseyJsZXZlbCI6Miwic3R5bGUiOnsiaGVhZCI6eyJuYW1lIjoibm9uZSJ9fX1dLFszLDYsIiIsMSx7ImxldmVsIjoyLCJzdHlsZSI6eyJoZWFkIjp7Im5hbWUiOiJub25lIn19fV0sWzUsNiwiWChmLCAxKSIsMV0sWzYsNywiWChqLCBmKSIsMV0sWzQsNywiIiwxLHsibGV2ZWwiOjIsInN0eWxlIjp7ImhlYWQiOnsibmFtZSI6Im5vbmUifX19XSxbNSw4LCIiLDEseyJsZXZlbCI6Miwic3R5bGUiOnsiaGVhZCI6eyJuYW1lIjoibm9uZSJ9fX1dLFs3LDksIiIsMSx7ImxldmVsIjoyLCJzdHlsZSI6eyJoZWFkIjp7Im5hbWUiOiJub25lIn19fV0sWzgsOSwiWChqLCAxKSIsMCx7InN0eWxlIjp7ImJvZHkiOnsibmFtZSI6ImJhcnJlZCJ9fX1dLFsxMywxNCwiXFxwYyBmKGogXFxwbGYgZiwgMSkiLDEseyJzaG9ydGVuIjp7InNvdXJjZSI6MjAsInRhcmdldCI6MjB9LCJzdHlsZSI6eyJib2R5Ijp7Im5hbWUiOiJub25lIn0sImhlYWQiOnsibmFtZSI6Im5vbmUifX19XSxbMTYsMTksIlgoXFxsbGlmdGNlbGwsIGYpIiwxLHsic2hvcnRlbiI6eyJzb3VyY2UiOjIwLCJ0YXJnZXQiOjIwfSwic3R5bGUiOnsiYm9keSI6eyJuYW1lIjoibm9uZSJ9LCJoZWFkIjp7Im5hbWUiOiJub25lIn19fV0sWzIwLDIxLCJcXGNwIGYoaiwgMSkiLDEseyJzaG9ydGVuIjp7InNvdXJjZSI6MjAsInRhcmdldCI6MjB9LCJzdHlsZSI6eyJib2R5Ijp7Im5hbWUiOiJub25lIn0sImhlYWQiOnsibmFtZSI6Im5vbmUifX19XV0=
	\[\begin{tikzcd}[column sep=6em]
		Z & Y & X \\
		Z & Y \\
		Z & Y & X \\
		Z && X
		\arrow["{X(f, 1)}"', "\shortmid"{marking}, from=1-3, to=1-2]
		\arrow["{Y(j \plf f, 1)}"', "\shortmid"{marking}, from=1-2, to=1-1]
		\arrow["{X(f(j \plf f), f)}"{description}, from=2-2, to=2-1]
		\arrow[""{name=0, anchor=center, inner sep=0}, Rightarrow, no head, from=1-2, to=2-2]
		\arrow[""{name=1, anchor=center, inner sep=0}, Rightarrow, no head, from=1-1, to=2-1]
		\arrow[Rightarrow, no head, from=1-3, to=3-3]
		\arrow[""{name=2, anchor=center, inner sep=0}, Rightarrow, no head, from=2-2, to=3-2]
		\arrow["{X(f, 1)}"{description}, from=3-3, to=3-2]
		\arrow["{X(j, f)}"{description}, from=3-2, to=3-1]
		\arrow[""{name=3, anchor=center, inner sep=0}, Rightarrow, no head, from=2-1, to=3-1]
		\arrow[""{name=4, anchor=center, inner sep=0}, Rightarrow, no head, from=3-3, to=4-3]
		\arrow[""{name=5, anchor=center, inner sep=0}, Rightarrow, no head, from=3-1, to=4-1]
		\arrow["{X(j, 1)}", "\shortmid"{marking}, from=4-3, to=4-1]
		\arrow["{\pc f(j \plf f, 1)}"{description}, draw=none, from=0, to=1]
		\arrow["{X(\lliftcell, f)}"{description}, draw=none, from=2, to=3]
		\arrow["{\cp f(j, 1)}"{description}, draw=none, from=4, to=5]
	\end{tikzcd}\]
\end{definition}

We give the definition in the above form for ease of comparison with the definition of left extension (\cref{left-extension}).
However, since $X(f, 1) \odot Y(j \plf f, 1) \iso X(f(j \plf f), 1)$, the 2-cell above can be written equivalently as the following composite.
\[
  Y(j \plf f, 1), X(f, 1) \xtto{\opcart} X(f(j \plf f), 1) \xtto{X(\lliftcell, 1)} X(j, 1)
\]

\begin{lemma}
    \label{pointwise-left-lift-is-hom}
    Let $j \colon Z \to X$ and $f \colon Y \to X$ be tight-cells and suppose that the left lift $j \plf f \colon Z \to Y$ exists.
    Then the 2-cell
    \[
      Y(j \plf f, 1) \xtto{\pc f(j \plx f, 1)} X(f(j \plf f), f) \xtto{X(\lliftcell, f)} X(j, f)
    \]
    is an isomorphism.
\end{lemma}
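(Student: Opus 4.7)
The plan is to show that both $Y(j \plf f, 1)$ and $X(j, f)$ exhibit the right extension $X(f, 1) \rx X(j, 1)$, so that they are canonically isomorphic, and then to identify the 2-cell in the statement as the canonical comparison between these two presentations.

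First, by definition of left lift (\cref{left-lift}), $Y(j \plf f, 1)$ forms the right extension $X(f, 1) \rx X(j, 1)$, with counit $\varpi$ the 2-cell spelled out in the pasting diagram defining left lifts. Second, by \cref{lift-companion} together with \cref{lifts-commute-with-extensions}, the restriction $X(j, f)$ also exhibits the right extension $X(f, 1) \rx X(j, 1)$: indeed, viewing $X(j, f) \iso X(1, 1)(j, f)$, we have $X(j, f) \iso X(f, 1) \rx (X(1, 1) \rf X(1, j)) \iso X(f, 1) \rx X(j, 1)$. The counit $\varpi' \colon X(f, 1), X(j, f) \tto X(j, 1)$ of this right extension is obtained by bending the tight-cells and coincides with the 2-cell labelled $X(j, 1), \cp f$ appearing in the bottom row of the pasting diagram in \cref{left-lift}.

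Next, I would examine that pasting diagram and observe that $\varpi$ factors as the composite
\[
X(f, 1), Y(j \plf f, 1) \tto X(f, 1), X(j, f) \xtto{\varpi'} X(j, 1),
\]
where the first map is the identity on $X(f, 1)$ alongside the 2-cell $\alpha \colon Y(j \plf f, 1) \tto X(j, f)$ in the statement: the top two rows of the pasting diagram (involving $\pc f$ and then $X(\lliftcell, f)$) act only on the right-hand column and assemble into $\alpha$, while the bottom row is precisely $\varpi'$. By the universal property of $X(j, f)$ as right extension, $\alpha$ is therefore the unique 2-cell whose postcomposition with $\varpi'$ recovers $\varpi$; equivalently, $\alpha$ is the canonical isomorphism $Y(j \plf f, 1) \iso X(j, f)$ induced by the two presentations of $X(f, 1) \rx X(j, 1)$, and in particular is invertible.

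The main point requiring care is the identification of the bottom cell $X(j, 1), \cp f$ in the pasting diagram of \cref{left-lift} with the counit $\varpi'$ of $X(j, f)$ as the right extension $X(f, 1) \rx X(j, 1)$. This is a direct verification using the explicit form of the counits described in \cref{lift-companion} (applied to $q = X(1, 1)$, $x = f$, $y = j$) and the zig-zag laws for the companion and conjoint of $f$, and is the only step that is not immediate from an invocation of a previously proved universal property.
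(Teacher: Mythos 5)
Your proposal is correct and takes essentially the same route as the paper: the paper's proof simply notes that $Y(j \plf f, 1) \iso X(f,1) \rx X(j,1) \iso X(j,f)$ by the definition of left lift together with \cref{lift-companion}, and that these isomorphisms compose to the stated 2-cell. Your additional verification that the composite comparison really is the 2-cell $(Y(j \plf f,1), \pc f) \d X(\lliftcell, f)$ — by factoring the left-lift counit through the counit of $X(j,f)$ as the right extension — just makes explicit what the paper leaves implicit.
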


\begin{proof}
  By \cref{lift-companion}, we have isomorphisms
  $
    Y(j \plf f, 1)
    \iso
    E(f, 1) \rx E(j, 1)
    \iso
    E(j, f)
  $,
  which compose to the required 2-cell.
\end{proof}

In \cref{hom-cell-via-left-extension}, we observed that every pointwise left extension in $\X$ was in particular a nonpointwise left extension in the tight 2-category $\tX$. The following shows that pointwise left lifts in $\X$ satisfy an analogous, but stronger, universal property.

\begin{proposition}
    \label{pointwise-left-lift-is-absolute-left-lift}
    Let $j \colon Z \to X$ and $f \colon Y \to X$ be tight-cells and suppose the left lift $j \plf f \colon Z \to Y$ exists. Then $j \plf f$ is an absolute (nonpointwise) left lift in the tight 2-category $\tX$.
\end{proposition}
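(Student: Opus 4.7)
The plan is to deduce absoluteness in $\tX$ directly from the hom-isomorphism $Y(j \plf f, 1) \iso X(j, f)$ supplied by \cref{pointwise-left-lift-is-hom}. Recall that an absolute left lift in a 2-category is one whose universal property is preserved by precomposition with any 1-cell: for $j \plf f$ to qualify in $\tX$, it suffices to show that for every tight-cell $h \colon W \to Z$, the composite $h \d (j \plf f)$ is a left lift of $h \d j$ through $f$ in $\tX$, with unit given by whiskering $\lliftcell$ with $h$.

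To this end, I would fix $h \colon W \to Z$ and an arbitrary comparison tight-cell $k \colon W \to Y$, and restrict both sides of the isomorphism of \cref{pointwise-left-lift-is-hom} by $(k, h)$. Using pseudofunctoriality of restriction, one obtains
\[
Y(h \d (j \plf f), k) \iso Y(j \plf f, 1)(k, h) \iso X(j, f)(k, h) \iso X(h \d j, k \d f).
\]
By cartesianness (\cref{cartesian-cell}) of the 2-cells defining these loose-identity-restrictions, nullary 2-cells into either side correspond bijectively to 2-cells $h \d (j \plf f) \tto k$ and $h \d j \tto k \d f$ respectively in $\tX$. Composing these correspondences with the isomorphism above yields the desired natural bijection between these two classes of 2-cells.

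The remaining task is to confirm that this bijection is precisely the one induced by pasting with $h \d \lliftcell$ along $f$, so that $h \d \lliftcell$ serves as the unit exhibiting the left lift. This amounts to unwinding the explicit form of the isomorphism of \cref{pointwise-left-lift-is-hom}, which factors as the opcartesian pairing of $Y(j \plf f, 1)$ with $\pc f$ followed by $X(\lliftcell, f)$, and invoking the zig-zag laws for $f$ and its conjoint. I expect this bookkeeping step to be the main obstacle; however, in string diagrams, bending $f$ up and pasting with $\lliftcell$ should make the whiskered unit $h \d \lliftcell$ immediately visible, rendering the verification a short manipulation.
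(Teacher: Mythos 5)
Your proposal is correct and follows essentially the same route as the paper: the paper's proof likewise restricts the isomorphism of \cref{pointwise-left-lift-is-hom} along an arbitrary $z \colon Z' \to Z$ and $y \colon Y' \to Y$ to obtain $Y((j \plf f)z, y) \iso X(jz, fy)$, and hence a bijection between 2-cells $z \d (j \plf f) \tto y$ and $z \d j \tto y \d f$ in $\tX$. The final bookkeeping you flag is handled there simply by observing that the bijection sends the identity on $z \d (j \plf f)$ to $z \d \lliftcell$, which is exactly the verification you outline.
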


\begin{proof}
  We show that, for every $z \colon Z' \to Z$, the tight-cell $(z \d (j \plf f)) \colon Z' \to Y$ equipped with the 2-cell
  $(z \d \eta) \colon (z \d j) \tto (z \d (j \plf f) \d f)$
  is the left lift of $(z \d j)$ through $f$ in $\tX$.
  For each $y \colon Y' \to Y$ we have
  \[
    Y((j \plf f)z, y)
    \iso Y(j \plf f, 1)(z, y)
    \iso X(j, f)(z, y)
    \iso X(jz, fy)
  \]
  by \cref{pointwise-left-lift-is-hom}.
  Hence there are bijections
  \begin{iffseq}
      z \d (j \plf f) \tto y \\
      z \d j \tto y \d f
  \end{iffseq}
  that send the identity on $z \d (j \plf f)$ to $(z \d \eta)$, as required.
\end{proof}

We summarise the relationship between pointwise left and right extensions and lifts in the table below; pointwise right extensions and lifts in $\X$ are defined to be pointwise left extensions and lifts in $\X\co$.

\begin{center}
	\begin{tblr}{c|ccc}
		Pointwise \underline{\hspace{0.5cm}} & \SetCell[c=3]{c}{are characterised by \underline{\hspace{0.5cm}}.} \\
		\hline
		left extensions & $X(j \plx f, 1)$ & $\iso$ & $X(f, 1) \rf Y(j, 1)$ \\
		left lifts & $Y(j \plf f, 1)$ & $\iso$ & $X(f, 1) \rx X(j, 1)$ \\
		right extensions & $X(1, j \prx f)$ & $\iso$ & $Y(1, j) \rx X(1, f)$ \\
		right lifts & $Y(1, j \prf f)$ & $\iso$ & $X(1, j) \rf X(1, f)$
	\end{tblr}
\end{center}

Note that pointwise left extensions and pointwise left lifts in $\X$, like pointwise right lifts and pointwise right extensions in $\X$, are characterised in terms of \emph{right} lifts and extensions in $\X$.

\subsection{Full faithfulness}

\begin{definition}
    \label{full-faithfulness}
    A tight-cell $\jAE$ is \emph{\ff{}} when any of the following equivalent conditions holds.
	\begin{enumerate}
		\item The 2-cell $1_j \colon j \tto j$ is cartesian~\cite[Definition~4.12]{koudenburg2020augmented}.
		\item The 2-cell $\pc j \colon \tto E(j, j)$ is opcartesian.
		\item The 2-cell $A(1, 1) \tto E(j, j)$ induced by $\pc j$ is invertible. \qedhere
	\end{enumerate}
\end{definition}

In fact, condition (3) may be weakened to asking for there to be \emph{any} isomorphism of loose-cells.

\begin{corollary}
	\label{ff-iff-invertible}
	A tight-cell $\jAE$ is \ff{} if and only if $A(1, 1) \iso E(j, j)$.
\end{corollary}

\begin{proof}
	The postcomposition 2-cell $\pc j \colon {} \tto E(j, j)$ is the unit of the loose-adjunction $E(1, j) \adj E(j, 1)$ and is hence opcartesian if and only if $A(1, 1) \iso E(j, j)$ by \cref{invertible-unit-if-noncanonical-isomorphism}.
\end{proof}

The following lemma is useful in dealing with left extensions along \ff{} tight-cells.

\begin{lemma}
    \label{ff-implies-lx-unit-is-invertible}
    Let $\jAE$ be a tight-cell. If $j$ is \ff{}, then for every tight-cell $f \colon A \to X$ for which the left extension $(j \plx f, \pi)$ exists, the 2-cell $\lextcell \colon f \tto j \d (j \plx f)$ is invertible.
\end{lemma}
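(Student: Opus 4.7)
The plan is to exhibit $j \d (j \plx f)$ as an $E(j, j)$-weighted colimit of $f$, and then to use \ffness{} of $j$ to deduce that $\lextcell$ is invertible. By \cref{pointwise-extension-and-restriction} applied with $y = j$, there is a universal $E(j, j)$-cylinder
\[
\lambda' \colon E(j, j) \xtto{E(j, 1), \pc{j \plx f}, E(1, j)} X((j \plx f) j, (j \plx f) j) \xtto{X(\lextcell, 1)} X(f, j \d (j \plx f))
\]
exhibiting $j \d (j \plx f)$ as the colimit $E(j, j) \wc f$. A short string-diagrammatic calculation using the zig-zag laws identifies the nullary 2-cell $\pc j \d \lambda'$ with $\lextcell$ (regarded as a nullary 2-cell $\tto X(f, j \d (j \plx f))$).

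Since $j$ is \ff{}, the 2-cell $\pc j \colon {} \tto E(j, j)$ is opcartesian, so 2-cells out of $E(j, j)$ are determined by their precomposition with $\pc j$. Combining this with the universal property of the colimit, I obtain a unique 2-cell $\nu \colon j \d (j \plx f) \tto f$ satisfying $\lambda' \d X(f, \nu) = \iota$, where $\iota \colon E(j, j) \tto X(f, f)$ is the unique cylinder with $\pc j \d \iota = \pc f$.

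It then remains to verify that $\nu$ is inverse to $\lextcell$. For one direction, a direct calculation as nullary 2-cells gives $\lextcell \d \nu = \pc j \d \lambda' \d X(f, \nu) = \pc j \d \iota = \pc f = 1_f$. For the other direction $\nu \d \lextcell = 1_{j \d (j \plx f)}$, I appeal to the universal property of the colimit: both sides correspond to $E(j, j)$-cylinders on $j \d (j \plx f)$, namely $\iota \d X(f, \lextcell)$ and $\lambda'$ respectively, which by opcartesianness of $\pc j$ may be compared by precomposing with $\pc j$; both yield $\lextcell$, and so they agree. The only real obstacle is bookkeeping---keeping track of which cylinder corresponds to which tight 2-cell under the various universal properties---but no conceptual input is needed beyond the results already in place.
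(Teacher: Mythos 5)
Your proof is correct. It rests on the same key lemma as the paper's argument --- \cref{pointwise-extension-and-restriction} with $y = j$, exhibiting $j \d (j \plx f)$ as the colimit $E(j, j) \wc f$ --- but finishes differently. The paper uses full faithfulness in the form ``the induced 2-cell $A(1,1) \tto E(j,j)$ is invertible'' and then simply writes $\lextcell$ as the composite of canonical isomorphisms $f \iso A(1,1) \wc f \iso E(j,j) \wc f \iso j \d (j \plx f)$, invoking \cref{currying-colimit} for the first step and leaving the identification of that composite with $\lextcell$ as an (unproved) assertion. You instead use full faithfulness directly in its opcartesian form: you construct the candidate inverse $\nu$ from the cylinder $\iota$ determined by $\pc j \d \iota = \pc f$, and verify both composites by chasing nullary cells through the colimit bijection, with the identification $\pc j \d \lambda' = \lextcell$ made explicit via the zig-zag laws. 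What the paper's route buys is brevity and the avoidance of any triangle-identity bookkeeping (invertibility is inherited from a composite of isomorphisms); what your route buys is that it avoids \cref{currying-colimit} altogether and actually supplies the verification that the canonical comparison is $\lextcell$, which the paper's proof glosses over. Both are fine; yours is the more self-contained of the two.
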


\begin{proof}
    The 2-cell $\lextcell$ is equal to the composite of the following isomorphisms.
    \begin{align*}
      f
      &~\iso~A(1, 1) \wc f \tag{\cref{currying-colimit}}
      \\
      &~\iso~E(j, j) \wc f \tag{$j$ is \ff{}}
      \\
      &~\iso~j \d (j \plx f) \tag{\cref{pointwise-extension-and-restriction}}
    \end{align*}
\end{proof}

\section{Relative monads}
\label{skew-multicategorical-hom-categories}

With the preliminaries out of the way, we begin by introducing the definition of a relative monad, and of a morphism of relative monads, in a \ve{}.

\begin{definition}
    \label{relative-monad}
    Let $\X$ be an equipment. A \emph{relative monad} in $\X$ comprises
    \begin{enumerate}
        \item a tight-cell $\jAE$, the \emph{root};
        \item a tight-cell $t \colon A \to E$, the \emph{carrier} or \emph{underlying tight-cell};
        \item a 2-cell $\dag \colon E(j, t) \tto E(t, t)$, the \emph{extension operator};
        \item a 2-cell $\eta \colon j \tto t$, the \emph{unit},
    \end{enumerate}
    satisfying the following equations.
    \[
    % https://q.uiver.app/#q=WzAsNixbMSwwLCJBIl0sWzAsMCwiQSJdLFsxLDEsIkEiXSxbMCwxLCJBIl0sWzEsMiwiQSJdLFswLDIsIkEiXSxbMCwxLCJFKGosIHQpIiwyLHsic3R5bGUiOnsiYm9keSI6eyJuYW1lIjoiYmFycmVkIn19fV0sWzIsMywiRSh0LCB0KSIsMV0sWzAsMiwiIiwwLHsibGV2ZWwiOjIsInN0eWxlIjp7ImhlYWQiOnsibmFtZSI6Im5vbmUifX19XSxbMSwzLCIiLDEseyJsZXZlbCI6Miwic3R5bGUiOnsiaGVhZCI6eyJuYW1lIjoibm9uZSJ9fX1dLFsyLDQsIiIsMix7ImxldmVsIjoyLCJzdHlsZSI6eyJoZWFkIjp7Im5hbWUiOiJub25lIn19fV0sWzMsNSwiIiwxLHsibGV2ZWwiOjIsInN0eWxlIjp7ImhlYWQiOnsibmFtZSI6Im5vbmUifX19XSxbNCw1LCJFKGosIHQpIiwwLHsic3R5bGUiOnsiYm9keSI6eyJuYW1lIjoiYmFycmVkIn19fV0sWzgsOSwiXFxkYWciLDEseyJzaG9ydGVuIjp7InNvdXJjZSI6MjAsInRhcmdldCI6MjB9LCJzdHlsZSI6eyJib2R5Ijp7Im5hbWUiOiJub25lIn0sImhlYWQiOnsibmFtZSI6Im5vbmUifX19XSxbMTAsMTEsIkUoXFxldGEsIHQpIiwxLHsic2hvcnRlbiI6eyJzb3VyY2UiOjIwLCJ0YXJnZXQiOjIwfSwic3R5bGUiOnsiYm9keSI6eyJuYW1lIjoibm9uZSJ9LCJoZWFkIjp7Im5hbWUiOiJub25lIn19fV1d
	\begin{tikzcd}[column sep=large]
		A & A \\
		A & A \\
		A & A
		\arrow["{E(j, t)}"', "\shortmid"{marking}, from=1-2, to=1-1]
		\arrow["{E(t, t)}"{description}, from=2-2, to=2-1]
		\arrow[""{name=0, anchor=center, inner sep=0}, Rightarrow, no head, from=1-2, to=2-2]
		\arrow[""{name=1, anchor=center, inner sep=0}, Rightarrow, no head, from=1-1, to=2-1]
		\arrow[""{name=2, anchor=center, inner sep=0}, Rightarrow, no head, from=2-2, to=3-2]
		\arrow[""{name=3, anchor=center, inner sep=0}, Rightarrow, no head, from=2-1, to=3-1]
		\arrow["{E(j, t)}", "\shortmid"{marking}, from=3-2, to=3-1]
		\arrow["\dag"{description}, draw=none, from=0, to=1]
		\arrow["{E(\eta, t)}"{description}, draw=none, from=2, to=3]
	\end{tikzcd}
    \quad = \quad
    % https://q.uiver.app/#q=WzAsNCxbMSwwLCJBIl0sWzAsMCwiQSJdLFsxLDEsIkEiXSxbMCwxLCJBIl0sWzAsMSwiRShqLCB0KSIsMix7InN0eWxlIjp7ImJvZHkiOnsibmFtZSI6ImJhcnJlZCJ9fX1dLFsyLDMsIkUoaiwgdCkiXSxbMCwyLCIiLDAseyJsZXZlbCI6Miwic3R5bGUiOnsiaGVhZCI6eyJuYW1lIjoibm9uZSJ9fX1dLFsxLDMsIiIsMSx7ImxldmVsIjoyLCJzdHlsZSI6eyJoZWFkIjp7Im5hbWUiOiJub25lIn19fV0sWzYsNywiPSIsMSx7InNob3J0ZW4iOnsic291cmNlIjoyMCwidGFyZ2V0IjoyMH0sInN0eWxlIjp7ImJvZHkiOnsibmFtZSI6Im5vbmUifSwiaGVhZCI6eyJuYW1lIjoibm9uZSJ9fX1dXQ==
	\begin{tikzcd}
		A & A \\
		A & A
		\arrow["{E(j, t)}"', "\shortmid"{marking}, from=1-2, to=1-1]
		\arrow["{E(j, t)}", from=2-2, to=2-1]
		\arrow[""{name=0, anchor=center, inner sep=0}, Rightarrow, no head, from=1-2, to=2-2]
		\arrow[""{name=1, anchor=center, inner sep=0}, Rightarrow, no head, from=1-1, to=2-1]
		\arrow["{=}"{description}, draw=none, from=0, to=1]
	\end{tikzcd}
	\hspace{4em}
	% https://varkor.github.io/tangle/?t=W1tbNyw1LDcsNV1dLFtbWzEsWzEsMCwxLDBdXSxbMSxbMSwwLDEsMF1dXSxbWzEsWzEsMSwxLDBdXSxbMSxbMSwwLDEsMV1dXSxbWzEsWzEsMCwxLDBdXSxbMSxbMSwwLDEsMF1dXSxbWzEsWzEsMCwxLDBdXSxbMSxbMSwwLDEsMF1dXV0sW1swLDEsMS41LFsiXFxkYWciLDFdXSxbMCwwLjUsMi41LFsiXFxldGEiXV0sWzEsMC41LDIsWzBdXSxbMSwwLjUsMyxbMF1dLFsxLDAuNSwxLFswXV0sWzEsMS41LDEsWzFdXSxbMSwxLjUsMixbMV1dLFsxLDEuNSwzLFsxXV1dLFtbMCwwLDMsImoiXSxbMSwwLDMsInQiXSxbMCwzLDEsImoiXSxbMSwzLDEsInQiXV1d&c=F5A3A3,F5CCA3,F5F5A3,CCF5A3,A3F5A3,A3F5CC,A3F5F5,A3CCF5,A3A3F5,CCA3F5,F5A3F5,F5A3CC
	\begin{tangle}{(2,4)}[trim y]
		\tgBorderA{(0,0)}{\tgColour6}{\tgColour4}{\tgColour4}{\tgColour6}
		\tgBorderA{(1,0)}{\tgColour4}{\tgColour6}{\tgColour6}{\tgColour4}
		\tgBorderA{(0,1)}{\tgColour6}{\tgColour4}{\tgColour4}{\tgColour6}
		\tgBorder{(0,1)}{0}{1}{0}{0}
		\tgBorderA{(1,1)}{\tgColour4}{\tgColour6}{\tgColour6}{\tgColour4}
		\tgBorder{(1,1)}{0}{0}{0}{1}
		\tgBorderA{(0,2)}{\tgColour6}{\tgColour4}{\tgColour4}{\tgColour6}
		\tgBorderA{(1,2)}{\tgColour4}{\tgColour6}{\tgColour6}{\tgColour4}
		\tgBorderA{(0,3)}{\tgColour6}{\tgColour4}{\tgColour4}{\tgColour6}
		\tgBorderA{(1,3)}{\tgColour4}{\tgColour6}{\tgColour6}{\tgColour4}
		\tgCell[(1,0)]{(0.5,1)}{\dag}
		\tgCell{(0,2)}{\eta}
		\tgArrow{(0,1.5)}{1}
		\tgArrow{(0,2.5)}{1}
		\tgArrow{(0,0.5)}{1}
		\tgArrow{(1,0.5)}{3}
		\tgArrow{(1,1.5)}{3}
		\tgArrow{(1,2.5)}{3}
		\tgAxisLabel{(0.5,0.75)}{south}{j}
		\tgAxisLabel{(1.5,0.75)}{south}{t}
		\tgAxisLabel{(0.5,3.25)}{north}{j}
		\tgAxisLabel{(1.5,3.25)}{north}{t}
	\end{tangle}
    \tangleeq*
    % https://varkor.github.io/tangle/?t=W1tbNyw1LDddXSxbW1sxLFsxLDAsMSwwXV0sWzEsWzEsMCwxLDBdXV0sW1sxLFsxLDAsMSwwXV0sWzEsWzEsMCwxLDBdXV0sW1sxLFsxLDAsMSwwXV0sWzEsWzEsMCwxLDBdXV0sW1sxLFsxLDAsMSwwXV0sWzEsWzEsMCwxLDBdXV1dLFtbMSwwLjUsMixbMF1dLFsxLDAuNSwzLFswXV0sWzEsMC41LDEsWzBdXSxbMSwxLjUsMSxbMV1dLFsxLDEuNSwyLFsxXV0sWzEsMS41LDMsWzFdXV0sW1swLDAsMywiaiJdLFsxLDAsMywidCJdLFswLDMsMSwiaiJdLFsxLDMsMSwidCJdXV0=&c=F5A3A3,F5CCA3,F5F5A3,CCF5A3,A3F5A3,A3F5CC,A3F5F5,A3CCF5,A3A3F5,CCA3F5,F5A3F5,F5A3CC
	\begin{tangle}{(2,4)}[trim y]
		\tgBorderA{(0,0)}{\tgColour6}{\tgColour4}{\tgColour4}{\tgColour6}
		\tgBorderA{(1,0)}{\tgColour4}{\tgColour6}{\tgColour6}{\tgColour4}
		\tgBorderA{(0,1)}{\tgColour6}{\tgColour4}{\tgColour4}{\tgColour6}
		\tgBorderA{(1,1)}{\tgColour4}{\tgColour6}{\tgColour6}{\tgColour4}
		\tgBorderA{(0,2)}{\tgColour6}{\tgColour4}{\tgColour4}{\tgColour6}
		\tgBorderA{(1,2)}{\tgColour4}{\tgColour6}{\tgColour6}{\tgColour4}
		\tgBorderA{(0,3)}{\tgColour6}{\tgColour4}{\tgColour4}{\tgColour6}
		\tgBorderA{(1,3)}{\tgColour4}{\tgColour6}{\tgColour6}{\tgColour4}
		\tgArrow{(0,1.5)}{1}
		\tgArrow{(0,2.5)}{1}
		\tgArrow{(0,0.5)}{1}
		\tgArrow{(1,0.5)}{3}
		\tgArrow{(1,1.5)}{3}
		\tgArrow{(1,2.5)}{3}
		\tgAxisLabel{(0.5,0.75)}{south}{j}
		\tgAxisLabel{(1.5,0.75)}{south}{t}
		\tgAxisLabel{(0.5,3.25)}{north}{j}
		\tgAxisLabel{(1.5,3.25)}{north}{t}
	\end{tangle}
    \]
    \[
    % https://q.uiver.app/#q=WzAsOCxbMSwyLCJBIl0sWzAsMiwiQSJdLFsxLDMsIkEiXSxbMCwzLCJBIl0sWzEsMSwiQSJdLFswLDEsIkEiXSxbMSwwLCJBIl0sWzAsMCwiQSJdLFswLDEsIkUoaiwgdCkiLDFdLFsyLDMsIkUodCwgdCkiLDAseyJzdHlsZSI6eyJib2R5Ijp7Im5hbWUiOiJiYXJyZWQifX19XSxbMCwyLCIiLDAseyJsZXZlbCI6Miwic3R5bGUiOnsiaGVhZCI6eyJuYW1lIjoibm9uZSJ9fX1dLFsxLDMsIiIsMSx7ImxldmVsIjoyLCJzdHlsZSI6eyJoZWFkIjp7Im5hbWUiOiJub25lIn19fV0sWzQsNSwiRShqLCBqKSIsMV0sWzQsMCwiIiwxLHsibGV2ZWwiOjIsInN0eWxlIjp7ImhlYWQiOnsibmFtZSI6Im5vbmUifX19XSxbNSwxLCIiLDEseyJsZXZlbCI6Miwic3R5bGUiOnsiaGVhZCI6eyJuYW1lIjoibm9uZSJ9fX1dLFs2LDQsIiIsMCx7ImxldmVsIjoyLCJzdHlsZSI6eyJoZWFkIjp7Im5hbWUiOiJub25lIn19fV0sWzYsNywiIiwyLHsibGV2ZWwiOjIsInN0eWxlIjp7ImhlYWQiOnsibmFtZSI6Im5vbmUifX19XSxbNyw1LCIiLDIseyJsZXZlbCI6Miwic3R5bGUiOnsiaGVhZCI6eyJuYW1lIjoibm9uZSJ9fX1dLFsxNSwxNywiXFxwYyBqIiwxLHsic2hvcnRlbiI6eyJzb3VyY2UiOjIwLCJ0YXJnZXQiOjIwfSwic3R5bGUiOnsiYm9keSI6eyJuYW1lIjoibm9uZSJ9LCJoZWFkIjp7Im5hbWUiOiJub25lIn19fV0sWzEzLDE0LCJFKGosIFxcZXRhKSIsMSx7InNob3J0ZW4iOnsic291cmNlIjoyMCwidGFyZ2V0IjoyMH0sInN0eWxlIjp7ImJvZHkiOnsibmFtZSI6Im5vbmUifSwiaGVhZCI6eyJuYW1lIjoibm9uZSJ9fX1dLFsxMCwxMSwiXFxkYWciLDEseyJzaG9ydGVuIjp7InNvdXJjZSI6MjAsInRhcmdldCI6MjB9LCJzdHlsZSI6eyJib2R5Ijp7Im5hbWUiOiJub25lIn0sImhlYWQiOnsibmFtZSI6Im5vbmUifX19XV0=
	\begin{tikzcd}[column sep=large]
		A & A \\
		A & A \\
		A & A \\
		A & A
		\arrow["{E(j, t)}"{description}, from=3-2, to=3-1]
		\arrow["{E(t, t)}", "\shortmid"{marking}, from=4-2, to=4-1]
		\arrow[""{name=0, anchor=center, inner sep=0}, Rightarrow, no head, from=3-2, to=4-2]
		\arrow[""{name=1, anchor=center, inner sep=0}, Rightarrow, no head, from=3-1, to=4-1]
		\arrow["{E(j, j)}"{description}, from=2-2, to=2-1]
		\arrow[""{name=2, anchor=center, inner sep=0}, Rightarrow, no head, from=2-2, to=3-2]
		\arrow[""{name=3, anchor=center, inner sep=0}, Rightarrow, no head, from=2-1, to=3-1]
		\arrow[""{name=4, anchor=center, inner sep=0}, Rightarrow, no head, from=1-2, to=2-2]
		\arrow[Rightarrow, no head, from=1-2, to=1-1]
		\arrow[""{name=5, anchor=center, inner sep=0}, Rightarrow, no head, from=1-1, to=2-1]
		\arrow["{\pc j}"{description}, draw=none, from=4, to=5]
		\arrow["{E(j, \eta)}"{description}, draw=none, from=2, to=3]
		\arrow["\dag"{description}, draw=none, from=0, to=1]
	\end{tikzcd}
    \quad = \quad
    % https://q.uiver.app/#q=WzAsNCxbMSwxLCJBIl0sWzAsMSwiQSJdLFsxLDAsIkEiXSxbMCwwLCJBIl0sWzAsMSwiRSh0LCB0KSIsMCx7InN0eWxlIjp7ImJvZHkiOnsibmFtZSI6ImJhcnJlZCJ9fX1dLFsyLDAsIiIsMCx7ImxldmVsIjoyLCJzdHlsZSI6eyJoZWFkIjp7Im5hbWUiOiJub25lIn19fV0sWzIsMywiIiwyLHsibGV2ZWwiOjIsInN0eWxlIjp7ImhlYWQiOnsibmFtZSI6Im5vbmUifX19XSxbMywxLCIiLDIseyJsZXZlbCI6Miwic3R5bGUiOnsiaGVhZCI6eyJuYW1lIjoibm9uZSJ9fX1dLFs1LDcsIlxccGMgdCIsMSx7InNob3J0ZW4iOnsic291cmNlIjoyMCwidGFyZ2V0IjoyMH0sInN0eWxlIjp7ImJvZHkiOnsibmFtZSI6Im5vbmUifSwiaGVhZCI6eyJuYW1lIjoibm9uZSJ9fX1dXQ==
	\begin{tikzcd}
		A & A \\
		A & A
		\arrow["{E(t, t)}", "\shortmid"{marking}, from=2-2, to=2-1]
		\arrow[""{name=0, anchor=center, inner sep=0}, Rightarrow, no head, from=1-2, to=2-2]
		\arrow[Rightarrow, no head, from=1-2, to=1-1]
		\arrow[""{name=1, anchor=center, inner sep=0}, Rightarrow, no head, from=1-1, to=2-1]
		\arrow["{\pc t}"{description}, draw=none, from=0, to=1]
	\end{tikzcd}
	\hspace{4em}
	% https://varkor.github.io/tangle/?t=W1tbNyw1LDVdXSxbW1swLFtdXSxbMCxbXV1dLFtbMSxbMCwxLDEsMF1dLFsxLFswLDAsMSwxXV1dLFtbMSxbMSwxLDEsMF1dLFsxLFsxLDAsMSwxXV1dLFtbMSxbMSwwLDEsMF1dLFsxLFsxLDAsMSwwXV1dXSxbWzAsMSwxLjUsWyJcXGV0YSIsMV1dLFswLDEsMi41LFsiXFxkYWciLDFdXSxbMSwwLjUsMixbMF1dLFsxLDAuNSwzLFswXV0sWzEsMS41LDIsWzFdXSxbMSwxLjUsMyxbMV1dXSxbWzAsMywxLCJ0Il0sWzEsMywxLCJ0Il1dXQ==&c=F5A3A3,F5CCA3,F5F5A3,CCF5A3,A3F5A3,A3F5CC,A3F5F5,A3CCF5,A3A3F5,CCA3F5,F5A3F5,F5A3CC
	\begin{tangle}{(2,4)}[trim y]
		\tgBlank{(0,0)}{\tgColour6}
		\tgBlank{(1,0)}{\tgColour6}
		\tgBorderA{(0,1)}{\tgColour6}{\tgColour6}{\tgColour4}{\tgColour6}
		\tgBorderA{(1,1)}{\tgColour6}{\tgColour6}{\tgColour6}{\tgColour4}
		\tgBorderA{(0,2)}{\tgColour6}{\tgColour4}{\tgColour4}{\tgColour6}
		\tgBorder{(0,2)}{0}{1}{0}{0}
		\tgBorderA{(1,2)}{\tgColour4}{\tgColour6}{\tgColour6}{\tgColour4}
		\tgBorder{(1,2)}{0}{0}{0}{1}
		\tgBorderA{(0,3)}{\tgColour6}{\tgColour4}{\tgColour4}{\tgColour6}
		\tgBorderA{(1,3)}{\tgColour4}{\tgColour6}{\tgColour6}{\tgColour4}
		\tgCell[(1,0)]{(0.5,1)}{\eta}
		\tgCell[(1,0)]{(0.5,2)}{\dag}
		\tgArrow{(0,1.5)}{1}
		\tgArrow{(0,2.5)}{1}
		\tgArrow{(1,1.5)}{3}
		\tgArrow{(1,2.5)}{3}
		\tgAxisLabel{(0.5,3.25)}{north}{t}
		\tgAxisLabel{(1.5,3.25)}{north}{t}
	\end{tangle}
    \tangleeq*
    % https://varkor.github.io/tangle/?t=W1tbNyw1XV0sW1tbMCxbXV0sWzAsW11dXSxbWzIsWzJdXSxbMixbM11dXSxbWzEsWzEsMCwxLDBdXSxbMSxbMSwwLDEsMF1dXSxbWzEsWzEsMCwxLDBdXSxbMSxbMSwwLDEsMF1dXV0sW1sxLDEsMS41LFswXV0sWzEsMS41LDIsWzFdXSxbMSwxLjUsMyxbMV1dLFsxLDAuNSwzLFswXV0sWzEsMC41LDIsWzBdXV0sW1swLDMsMSwidCJdLFsxLDMsMSwidCJdXV0=&c=F5A3A3,F5CCA3,F5F5A3,CCF5A3,A3F5A3,A3F5CC,A3F5F5,A3CCF5,A3A3F5,CCA3F5,F5A3F5,F5A3CC
	\begin{tangle}{(2,4)}[trim y]
		\tgBlank{(0,0)}{\tgColour6}
		\tgBlank{(1,0)}{\tgColour6}
		\tgBorderC{(0,1)}{3}{\tgColour6}{\tgColour4}
		\tgBorderC{(1,1)}{2}{\tgColour6}{\tgColour4}
		\tgBorderA{(0,2)}{\tgColour6}{\tgColour4}{\tgColour4}{\tgColour6}
		\tgBorderA{(1,2)}{\tgColour4}{\tgColour6}{\tgColour6}{\tgColour4}
		\tgBorderA{(0,3)}{\tgColour6}{\tgColour4}{\tgColour4}{\tgColour6}
		\tgBorderA{(1,3)}{\tgColour4}{\tgColour6}{\tgColour6}{\tgColour4}
		\tgArrow{(0.5,1)}{0}
		\tgArrow{(1,1.5)}{3}
		\tgArrow{(1,2.5)}{3}
		\tgArrow{(0,2.5)}{1}
		\tgArrow{(0,1.5)}{1}
		\tgAxisLabel{(0.5,3.25)}{north}{t}
		\tgAxisLabel{(1.5,3.25)}{north}{t}
	\end{tangle}
    \]
    \[
    % https://q.uiver.app/#q=WzAsOCxbMiwwLCJBIl0sWzEsMCwiQSJdLFswLDAsIkEiXSxbMiwxLCJBIl0sWzEsMSwiQSJdLFswLDEsIkEiXSxbMiwyLCJBIl0sWzAsMiwiQSJdLFswLDEsIkUoaiwgdCkiLDIseyJzdHlsZSI6eyJib2R5Ijp7Im5hbWUiOiJiYXJyZWQifX19XSxbMSwyLCJFKGosIHQpIiwyLHsic3R5bGUiOnsiYm9keSI6eyJuYW1lIjoiYmFycmVkIn19fV0sWzMsNCwiRSh0LCB0KSIsMV0sWzQsNSwiRSh0LCB0KSIsMV0sWzAsMywiIiwwLHsibGV2ZWwiOjIsInN0eWxlIjp7ImhlYWQiOnsibmFtZSI6Im5vbmUifX19XSxbMSw0LCIiLDEseyJsZXZlbCI6Miwic3R5bGUiOnsiaGVhZCI6eyJuYW1lIjoibm9uZSJ9fX1dLFsyLDUsIiIsMix7ImxldmVsIjoyLCJzdHlsZSI6eyJoZWFkIjp7Im5hbWUiOiJub25lIn19fV0sWzYsNywiRSh0LCB0KSIsMCx7InN0eWxlIjp7ImJvZHkiOnsibmFtZSI6ImJhcnJlZCJ9fX1dLFszLDYsIiIsMCx7ImxldmVsIjoyLCJzdHlsZSI6eyJoZWFkIjp7Im5hbWUiOiJub25lIn19fV0sWzUsNywiIiwwLHsibGV2ZWwiOjIsInN0eWxlIjp7ImhlYWQiOnsibmFtZSI6Im5vbmUifX19XSxbMTIsMTMsIlxcZGFnIiwxLHsic2hvcnRlbiI6eyJzb3VyY2UiOjIwLCJ0YXJnZXQiOjIwfSwic3R5bGUiOnsiYm9keSI6eyJuYW1lIjoibm9uZSJ9LCJoZWFkIjp7Im5hbWUiOiJub25lIn19fV0sWzEzLDE0LCJcXGRhZyIsMSx7InNob3J0ZW4iOnsic291cmNlIjoyMCwidGFyZ2V0IjoyMH0sInN0eWxlIjp7ImJvZHkiOnsibmFtZSI6Im5vbmUifSwiaGVhZCI6eyJuYW1lIjoibm9uZSJ9fX1dLFsxNiwxNywiXFxjcCB0KHQsIHQpIiwxLHsic2hvcnRlbiI6eyJzb3VyY2UiOjIwLCJ0YXJnZXQiOjIwfSwic3R5bGUiOnsiYm9keSI6eyJuYW1lIjoibm9uZSJ9LCJoZWFkIjp7Im5hbWUiOiJub25lIn19fV1d
	\begin{tikzcd}[column sep=large]
		A & A & A \\
		A & A & A \\
		A && A
		\arrow["{E(j, t)}"', "\shortmid"{marking}, from=1-3, to=1-2]
		\arrow["{E(j, t)}"', "\shortmid"{marking}, from=1-2, to=1-1]
		\arrow["{E(t, t)}"{description}, from=2-3, to=2-2]
		\arrow["{E(t, t)}"{description}, from=2-2, to=2-1]
		\arrow[""{name=0, anchor=center, inner sep=0}, Rightarrow, no head, from=1-3, to=2-3]
		\arrow[""{name=1, anchor=center, inner sep=0}, Rightarrow, no head, from=1-2, to=2-2]
		\arrow[""{name=2, anchor=center, inner sep=0}, Rightarrow, no head, from=1-1, to=2-1]
		\arrow["{E(t, t)}", "\shortmid"{marking}, from=3-3, to=3-1]
		\arrow[""{name=3, anchor=center, inner sep=0}, Rightarrow, no head, from=2-3, to=3-3]
		\arrow[""{name=4, anchor=center, inner sep=0}, Rightarrow, no head, from=2-1, to=3-1]
		\arrow["\dag"{description}, draw=none, from=0, to=1]
		\arrow["\dag"{description}, draw=none, from=1, to=2]
		\arrow["{\cp t(t, t)}"{description}, draw=none, from=3, to=4]
	\end{tikzcd}
    \quad = \quad
    % https://q.uiver.app/#q=WzAsMTAsWzEsMCwiQSJdLFswLDAsIkEiXSxbMiwxLCJBIl0sWzEsMSwiQSJdLFswLDEsIkEiXSxbMiwyLCJBIl0sWzAsMiwiQSJdLFsyLDMsIkEiXSxbMCwzLCJBIl0sWzIsMCwiQSJdLFswLDEsIkUoaiwgdCkiLDIseyJzdHlsZSI6eyJib2R5Ijp7Im5hbWUiOiJiYXJyZWQifX19XSxbMiwzLCJFKHQsIHQpIiwxXSxbMyw0LCJFKGosIHQpIiwxXSxbMCwzLCIiLDEseyJsZXZlbCI6Miwic3R5bGUiOnsiaGVhZCI6eyJuYW1lIjoibm9uZSJ9fX1dLFsxLDQsIiIsMix7ImxldmVsIjoyLCJzdHlsZSI6eyJoZWFkIjp7Im5hbWUiOiJub25lIn19fV0sWzUsNiwiRShqLCB0KSIsMV0sWzIsNSwiIiwwLHsibGV2ZWwiOjIsInN0eWxlIjp7ImhlYWQiOnsibmFtZSI6Im5vbmUifX19XSxbNCw2LCIiLDAseyJsZXZlbCI6Miwic3R5bGUiOnsiaGVhZCI6eyJuYW1lIjoibm9uZSJ9fX1dLFs3LDgsIkUodCwgdCkiLDAseyJzdHlsZSI6eyJib2R5Ijp7Im5hbWUiOiJiYXJyZWQifX19XSxbNSw3LCIiLDEseyJsZXZlbCI6Miwic3R5bGUiOnsiaGVhZCI6eyJuYW1lIjoibm9uZSJ9fX1dLFs2LDgsIiIsMSx7ImxldmVsIjoyLCJzdHlsZSI6eyJoZWFkIjp7Im5hbWUiOiJub25lIn19fV0sWzksMCwiRShqLCB0KSIsMix7InN0eWxlIjp7ImJvZHkiOnsibmFtZSI6ImJhcnJlZCJ9fX1dLFs5LDIsIiIsMix7ImxldmVsIjoyLCJzdHlsZSI6eyJoZWFkIjp7Im5hbWUiOiJub25lIn19fV0sWzIyLDEzLCJcXGRhZyIsMSx7InNob3J0ZW4iOnsic291cmNlIjoyMCwidGFyZ2V0IjoyMH0sInN0eWxlIjp7ImJvZHkiOnsibmFtZSI6Im5vbmUifSwiaGVhZCI6eyJuYW1lIjoibm9uZSJ9fX1dLFsxNiwxNywiXFxjcCB0KGosIHQpIiwxLHsic2hvcnRlbiI6eyJzb3VyY2UiOjIwLCJ0YXJnZXQiOjIwfSwic3R5bGUiOnsiYm9keSI6eyJuYW1lIjoibm9uZSJ9LCJoZWFkIjp7Im5hbWUiOiJub25lIn19fV0sWzE5LDIwLCJcXGRhZyIsMSx7InNob3J0ZW4iOnsic291cmNlIjoyMCwidGFyZ2V0IjoyMH0sInN0eWxlIjp7ImJvZHkiOnsibmFtZSI6Im5vbmUifSwiaGVhZCI6eyJuYW1lIjoibm9uZSJ9fX1dLFsxMywxNCwiPSIsMSx7InNob3J0ZW4iOnsic291cmNlIjoyMCwidGFyZ2V0IjoyMH0sInN0eWxlIjp7ImJvZHkiOnsibmFtZSI6Im5vbmUifSwiaGVhZCI6eyJuYW1lIjoibm9uZSJ9fX1dXQ==
	\begin{tikzcd}[column sep=large]
		A & A & A \\
		A & A & A \\
		A && A \\
		A && A
		\arrow["{E(j, t)}"', "\shortmid"{marking}, from=1-2, to=1-1]
		\arrow["{E(t, t)}"{description}, from=2-3, to=2-2]
		\arrow["{E(j, t)}"{description}, from=2-2, to=2-1]
		\arrow[""{name=0, anchor=center, inner sep=0}, Rightarrow, no head, from=1-2, to=2-2]
		\arrow[""{name=1, anchor=center, inner sep=0}, Rightarrow, no head, from=1-1, to=2-1]
		\arrow["{E(j, t)}"{description}, from=3-3, to=3-1]
		\arrow[""{name=2, anchor=center, inner sep=0}, Rightarrow, no head, from=2-3, to=3-3]
		\arrow[""{name=3, anchor=center, inner sep=0}, Rightarrow, no head, from=2-1, to=3-1]
		\arrow["{E(t, t)}", "\shortmid"{marking}, from=4-3, to=4-1]
		\arrow[""{name=4, anchor=center, inner sep=0}, Rightarrow, no head, from=3-3, to=4-3]
		\arrow[""{name=5, anchor=center, inner sep=0}, Rightarrow, no head, from=3-1, to=4-1]
		\arrow["{E(j, t)}"', "\shortmid"{marking}, from=1-3, to=1-2]
		\arrow[""{name=6, anchor=center, inner sep=0}, Rightarrow, no head, from=1-3, to=2-3]
		\arrow["\dag"{description}, draw=none, from=6, to=0]
		\arrow["{\cp t(j, t)}"{description}, draw=none, from=2, to=3]
		\arrow["\dag"{description}, draw=none, from=4, to=5]
		\arrow["{=}"{description}, draw=none, from=0, to=1]
	\end{tikzcd}
    \]
    \[
    % https://varkor.github.io/tangle/?t=W1tbNyw1LDcsNSw3LDVdXSxbW1sxLFsxLDAsMSwwXV0sWzEsWzEsMCwxLDBdXSxbMSxbMSwwLDEsMF1dLFsxLFsxLDAsMSwwXV1dLFtbMSxbMSwxLDEsMF1dLFsxLFsxLDEsMCwxXV0sWzEsWzEsMSwwLDFdXSxbMSxbMSwwLDEsMV1dXSxbWzEsWzEsMCwxLDBdXSxbMCxbXV0sWzAsW11dLFsxLFsxLDAsMSwwXV1dXSxbWzAsMSwxLjUsWyJcXGRhZyIsMV1dLFswLDMsMS41LFsiXFxkYWciLDFdXSxbMSwyLDEuNSxbMF1dLFsxLDAuNSwyLFswXV0sWzEsMy41LDIsWzFdXSxbMSwwLjUsMSxbMF1dLFsxLDIuNSwxLFswXV0sWzEsMS41LDEsWzFdXSxbMSwzLjUsMSxbMV1dXSxbWzAsMCwzLCJqIl0sWzEsMCwzLCJ0Il0sWzIsMCwzLCJqIl0sWzMsMCwzLCJ0Il0sWzAsMiwxLCJ0Il0sWzMsMiwxLCJ0Il1dXQ==&c=F5A3A3,F5CCA3,F5F5A3,CCF5A3,A3F5A3,A3F5CC,A3F5F5,A3CCF5,A3A3F5,CCA3F5,F5A3F5,F5A3CC
	\begin{tangle}{(4,3)}[trim y=.25]
		\tgBorderA{(0,0)}{\tgColour6}{\tgColour4}{\tgColour4}{\tgColour6}
		\tgBorderA{(1,0)}{\tgColour4}{\tgColour6}{\tgColour6}{\tgColour4}
		\tgBorderA{(2,0)}{\tgColour6}{\tgColour4}{\tgColour4}{\tgColour6}
		\tgBorderA{(3,0)}{\tgColour4}{\tgColour6}{\tgColour6}{\tgColour4}
		\tgBorderA{(0,1)}{\tgColour6}{\tgColour4}{\tgColour4}{\tgColour6}
		\tgBorder{(0,1)}{0}{1}{0}{0}
		\tgBorderA{(1,1)}{\tgColour4}{\tgColour6}{\tgColour4}{\tgColour4}
		\tgBorder{(1,1)}{0}{0}{0}{1}
		\tgBorderA{(2,1)}{\tgColour6}{\tgColour4}{\tgColour4}{\tgColour4}
		\tgBorder{(2,1)}{0}{1}{0}{0}
		\tgBorderA{(3,1)}{\tgColour4}{\tgColour6}{\tgColour6}{\tgColour4}
		\tgBorder{(3,1)}{0}{0}{0}{1}
		\tgBorderA{(0,2)}{\tgColour6}{\tgColour4}{\tgColour4}{\tgColour6}
		\tgBlank{(1,2)}{\tgColour4}
		\tgBlank{(2,2)}{\tgColour4}
		\tgBorderA{(3,2)}{\tgColour4}{\tgColour6}{\tgColour6}{\tgColour4}
		\tgCell[(1,0)]{(0.5,1)}{\dag}
		\tgCell[(1,0)]{(2.5,1)}{\dag}
		\tgArrow{(1.5,1)}{0}
		\tgArrow{(0,1.5)}{1}
		\tgArrow{(3,1.5)}{3}
		\tgArrow{(0,0.5)}{1}
		\tgArrow{(2,0.5)}{1}
		\tgArrow{(1,0.5)}{3}
		\tgArrow{(3,0.5)}{3}
		\tgAxisLabel{(0.5,0.25)}{south}{j}
		\tgAxisLabel{(1.5,0.25)}{south}{t}
		\tgAxisLabel{(2.5,0.25)}{south}{j}
		\tgAxisLabel{(3.5,0.25)}{south}{t}
		\tgAxisLabel{(0.5,2.75)}{north}{t}
		\tgAxisLabel{(3.5,2.75)}{north}{t}
	\end{tangle}
    \tangleeq*
    % https://varkor.github.io/tangle/?t=W1tbNyw1LDcsNSw3LDVdXSxbW1sxLFsxLDAsMSwwXV0sWzEsWzEsMCwxLDBdXSxbMSxbMSwwLDEsMF1dLFsxLFsxLDAsMSwwXV1dLFtbMSxbMSwwLDEsMF1dLFsxLFsxLDEsMCwwXV0sWzEsWzEsMSwxLDFdXSxbMSxbMSwwLDAsMV1dXSxbWzEsWzEsMSwxLDBdXSxbMSxbMCwxLDAsMV1dLFsxLFsxLDAsMSwxXV0sWzAsW11dXSxbWzEsWzEsMCwxLDBdXSxbMCxbXV0sWzEsWzEsMCwxLDBdXSxbMCxbXV1dXSxbWzAsMS41LDIuNSxbIlxcZGFnIiwyXV0sWzEsMC41LDIsWzBdXSxbMSwyLjUsMixbMV1dLFsxLDAuNSwzLFswXV0sWzEsMC41LDEsWzBdXSxbMSwyLjUsMSxbMF1dLFswLDIuNSwxLjUsWyJcXGRhZyIsMl1dLFsxLDEuNSwxLFsxXV0sWzEsMy41LDEsWzFdXSxbMSwyLjUsMyxbMV1dXSxbWzAsMCwzLCJqIl0sWzEsMCwzLCJ0Il0sWzIsMCwzLCJqIl0sWzMsMCwzLCJ0Il0sWzAsMywxLCJ0Il0sWzIsMywxLCJ0Il1dXQ==&c=F5A3A3,F5CCA3,F5F5A3,CCF5A3,A3F5A3,A3F5CC,A3F5F5,A3CCF5,A3A3F5,CCA3F5,F5A3F5,F5A3CC
	\begin{tangle}{(4,4)}[trim y]
		\tgBorderA{(0,0)}{\tgColour6}{\tgColour4}{\tgColour4}{\tgColour6}
		\tgBorderA{(1,0)}{\tgColour4}{\tgColour6}{\tgColour6}{\tgColour4}
		\tgBorderA{(2,0)}{\tgColour6}{\tgColour4}{\tgColour4}{\tgColour6}
		\tgBorderA{(3,0)}{\tgColour4}{\tgColour6}{\tgColour6}{\tgColour4}
		\tgBorderA{(0,1)}{\tgColour6}{\tgColour4}{\tgColour4}{\tgColour6}
		\tgBorderA{(1,1)}{\tgColour4}{\tgColour6}{\tgColour4}{\tgColour4}
		\tgBorderA{(2,1)}{\tgColour6}{\tgColour4}{\tgColour6}{\tgColour4}
		\tgBorderA{(3,1)}{\tgColour4}{\tgColour6}{\tgColour6}{\tgColour6}
		\tgBorderA{(0,2)}{\tgColour6}{\tgColour4}{\tgColour4}{\tgColour6}
		\tgBorder{(0,2)}{0}{1}{0}{0}
		\tgBorderA{(1,2)}{\tgColour4}{\tgColour4}{\tgColour4}{\tgColour4}
		\tgBorder{(1,2)}{0}{1}{0}{1}
		\tgBorderA{(2,2)}{\tgColour4}{\tgColour6}{\tgColour6}{\tgColour4}
		\tgBorder{(2,2)}{0}{0}{0}{1}
		\tgBlank{(3,2)}{\tgColour6}
		\tgBorderA{(0,3)}{\tgColour6}{\tgColour4}{\tgColour4}{\tgColour6}
		\tgBlank{(1,3)}{\tgColour4}
		\tgBorderA{(2,3)}{\tgColour4}{\tgColour6}{\tgColour6}{\tgColour4}
		\tgBlank{(3,3)}{\tgColour6}
		\tgCell[(2,0)]{(1,2)}{\dag}
		\tgArrow{(0,1.5)}{1}
		\tgArrow{(2,1.5)}{3}
		\tgArrow{(0,2.5)}{1}
		\tgArrow{(0,0.5)}{1}
		\tgArrow{(2,0.5)}{1}
		\tgCell[(2,0)]{(2,1)}{\dag}
		\tgArrow{(1,0.5)}{3}
		\tgArrow{(3,0.5)}{3}
		\tgArrow{(2,2.5)}{3}
		\tgAxisLabel{(0.5,0.75)}{south}{j}
		\tgAxisLabel{(1.5,0.75)}{south}{t}
		\tgAxisLabel{(2.5,0.75)}{south}{j}
		\tgAxisLabel{(3.5,0.75)}{south}{t}
		\tgAxisLabel{(0.5,3.25)}{north}{t}
		\tgAxisLabel{(2.5,3.25)}{north}{t}
	\end{tangle}
    \]
    A \emph{$j$-relative monad} (alternatively \emph{monad on $j$}, \emph{monad relative to $j$}, or simply \emph{$j$-monad}) is a relative monad with root $j$. A \emph{morphism} of $j$-monads from $(t, \dag, \eta)$ to $(t', \dag', \eta')$ is a 2-cell $\tau \colon t \tto t'$ satisfying the following equations.
	\[
	% https://q.uiver.app/?q=WzAsMyxbMSwwLCJqIl0sWzAsMSwidCJdLFsyLDEsInQnIl0sWzEsMiwiXFx0YXUiLDJdLFswLDEsIlxcZXRhIiwyXSxbMCwyLCJcXGV0YSciXV0=
	\begin{tikzcd}
		& j \\
		t && {t'}
		\arrow["\tau"', from=2-1, to=2-3]
		\arrow["\eta"', from=1-2, to=2-1]
		\arrow["{\eta'}", from=1-2, to=2-3]
	\end{tikzcd}
	\hspace{4em}
	% https://varkor.github.io/tangle/?t=W1tbNyw1XV0sW1tbMSxbMCwxLDAsMV1dLFsxLFswLDEsMCwxXV0sWzEsWzAsMSwwLDFdXSxbMSxbMCwxLDAsMV1dXV0sW1swLDEuNSwwLjUsWyJcXGV0YSJdXSxbMCwyLjUsMC41LFsiXFx0YXUiXV0sWzEsMiwwLjUsWzBdXSxbMSwxLDAuNSxbMF1dLFsxLDMsMC41LFswXV1dLFtbMCwwLDIsImoiXSxbMywwLDAsInQnIl1dXQ==&c=F5A3A3,F5CCA3,F5F5A3,CCF5A3,A3F5A3,A3F5CC,A3F5F5,A3CCF5,A3A3F5,CCA3F5,F5A3F5,F5A3CC
	\begin{tangle}{(4,1)}[trim x]
		\tgBorderA{(0,0)}{\tgColour6}{\tgColour6}{\tgColour4}{\tgColour4}
		\tgBorderA{(1,0)}{\tgColour6}{\tgColour6}{\tgColour4}{\tgColour4}
		\tgBorderA{(2,0)}{\tgColour6}{\tgColour6}{\tgColour4}{\tgColour4}
		\tgBorderA{(3,0)}{\tgColour6}{\tgColour6}{\tgColour4}{\tgColour4}
		\tgCell{(1,0)}{\eta}
		\tgCell{(2,0)}{\tau}
		\tgArrow{(1.5,0)}{0}
		\tgArrow{(0.5,0)}{0}
		\tgArrow{(2.5,0)}{0}
		\tgAxisLabel{(0.75,0.5)}{east}{j}
		\tgAxisLabel{(3.25,0.5)}{west}{t'}
	\end{tangle}
    =
    % https://varkor.github.io/tangle/?t=W1tbNyw1XV0sW1tbMSxbMCwxLDAsMV1dLFsxLFswLDEsMCwxXV0sWzEsWzAsMSwwLDFdXV1dLFtbMCwxLjUsMC41LFsiXFxldGEnIl1dLFsxLDEsMC41LFswXV0sWzEsMiwwLjUsWzBdXV0sW1swLDAsMiwiaiJdLFsyLDAsMCwidCciXV1d&c=F5A3A3,F5CCA3,F5F5A3,CCF5A3,A3F5A3,A3F5CC,A3F5F5,A3CCF5,A3A3F5,CCA3F5,F5A3F5,F5A3CC
	\begin{tangle}{(3,1)}[trim x]
		\tgBorderA{(0,0)}{\tgColour6}{\tgColour6}{\tgColour4}{\tgColour4}
		\tgBorderA{(1,0)}{\tgColour6}{\tgColour6}{\tgColour4}{\tgColour4}
		\tgBorderA{(2,0)}{\tgColour6}{\tgColour6}{\tgColour4}{\tgColour4}
		\tgCell{(1,0)}{\eta'}
		\tgArrow{(0.5,0)}{0}
		\tgArrow{(1.5,0)}{0}
		\tgAxisLabel{(0.75,0.5)}{east}{j}
		\tgAxisLabel{(2.25,0.5)}{west}{t'}
	\end{tangle}
	\]
	\[
	% https://q.uiver.app/?q=WzAsNSxbMCwwLCJFKGosIHQpIl0sWzAsMSwiRSh0LCB0KSJdLFsxLDIsIkUodCwgdCcpIl0sWzIsMCwiRShqLCB0JykiXSxbMiwxLCJFKHQnLCB0JykiXSxbMCwxLCJcXGRhZyIsMl0sWzEsMiwiRSh0LCBcXHRhdSkiLDJdLFswLDMsIkUoaiwgXFx0YXUpIl0sWzMsNCwiXFxkYWcnIl0sWzQsMiwiRShcXHRhdSwgdCcpIl1d
	\begin{tikzcd}
		{E(j, t)} && {E(j, t')} \\
		{E(t, t)} && {E(t', t')} \\
		& {E(t, t')}
		\arrow["\dag"', from=1-1, to=2-1]
		\arrow["{E(t, \tau)}"', from=2-1, to=3-2]
		\arrow["{E(j, \tau)}", from=1-1, to=1-3]
		\arrow["{\dag'}", from=1-3, to=2-3]
		\arrow["{E(\tau, t')}", from=2-3, to=3-2]
	\end{tikzcd}
	\hspace{4em}
	% https://varkor.github.io/tangle/?t=W1tbNyw1LDcsNV1dLFtbWzEsWzEsMCwxLDBdXSxbMSxbMSwwLDEsMF1dXSxbWzEsWzEsMCwxLDBdXSxbMSxbMSwwLDEsMF1dXSxbWzEsWzEsMSwxLDBdXSxbMSxbMSwwLDEsMV1dXSxbWzEsWzEsMCwxLDBdXSxbMSxbMSwwLDEsMF1dXSxbWzEsWzEsMCwxLDBdXSxbMSxbMSwwLDEsMF1dXV0sW1swLDEsMi41LFsiXFxkYWciLDFdXSxbMSwwLjUsMyxbMF1dLFsxLDAuNSwyLFswXV0sWzEsMS41LDIsWzFdXSxbMCwxLjUsMy41LFsiXFx0YXUiXV0sWzEsMC41LDQsWzBdXSxbMSwxLjUsMyxbMV1dLFsxLDEuNSw0LFsxXV0sWzEsMC41LDEsWzBdXSxbMSwxLjUsMSxbMV1dXSxbWzAsMCwzLCJqIl0sWzEsMCwzLCJ0Il0sWzAsNCwxLCJ0Il0sWzEsNCwxLCJ0JyJdXV0=&c=F5A3A3,F5CCA3,F5F5A3,CCF5A3,A3F5A3,A3F5CC,A3F5F5,A3CCF5,A3A3F5,CCA3F5,F5A3F5,F5A3CC
	\begin{tangle}{(2,5)}[trim y]
		\tgBorderA{(0,0)}{\tgColour6}{\tgColour4}{\tgColour4}{\tgColour6}
		\tgBorderA{(1,0)}{\tgColour4}{\tgColour6}{\tgColour6}{\tgColour4}
		\tgBorderA{(0,1)}{\tgColour6}{\tgColour4}{\tgColour4}{\tgColour6}
		\tgBorderA{(1,1)}{\tgColour4}{\tgColour6}{\tgColour6}{\tgColour4}
		\tgBorderA{(0,2)}{\tgColour6}{\tgColour4}{\tgColour4}{\tgColour6}
		\tgBorder{(0,2)}{0}{1}{0}{0}
		\tgBorderA{(1,2)}{\tgColour4}{\tgColour6}{\tgColour6}{\tgColour4}
		\tgBorder{(1,2)}{0}{0}{0}{1}
		\tgBorderA{(0,3)}{\tgColour6}{\tgColour4}{\tgColour4}{\tgColour6}
		\tgBorderA{(1,3)}{\tgColour4}{\tgColour6}{\tgColour6}{\tgColour4}
		\tgBorderA{(0,4)}{\tgColour6}{\tgColour4}{\tgColour4}{\tgColour6}
		\tgBorderA{(1,4)}{\tgColour4}{\tgColour6}{\tgColour6}{\tgColour4}
		\tgCell[(1,0)]{(0.5,2)}{\dag}
		\tgArrow{(0,2.5)}{1}
		\tgArrow{(0,1.5)}{1}
		\tgArrow{(1,1.5)}{3}
		\tgCell{(1,3)}{\tau}
		\tgArrow{(0,3.5)}{1}
		\tgArrow{(1,2.5)}{3}
		\tgArrow{(1,3.5)}{3}
		\tgArrow{(0,0.5)}{1}
		\tgArrow{(1,0.5)}{3}
		\tgAxisLabel{(0.5,0.75)}{south}{j}
		\tgAxisLabel{(1.5,0.75)}{south}{t}
		\tgAxisLabel{(0.5,4.25)}{north}{t}
		\tgAxisLabel{(1.5,4.25)}{north}{t'}
	\end{tangle}
    \tangleeq*
    % https://varkor.github.io/tangle/?t=W1tbNyw1LDcsNV1dLFtbWzEsWzEsMCwxLDBdXSxbMSxbMSwwLDEsMF1dXSxbWzEsWzEsMCwxLDBdXSxbMSxbMSwwLDEsMF1dXSxbWzEsWzEsMSwxLDBdXSxbMSxbMSwwLDEsMV1dXSxbWzEsWzEsMCwxLDBdXSxbMSxbMSwwLDEsMF1dXSxbWzEsWzEsMCwxLDBdXSxbMSxbMSwwLDEsMF1dXV0sW1swLDEsMi41LFsiXFxkYWcnIiwxXV0sWzEsMC41LDIsWzBdXSxbMSwxLjUsMixbMV1dLFsxLDEuNSwzLFsxXV0sWzAsMC41LDMuNSxbIlxcdGF1Il1dLFsxLDAuNSwzLFswXV0sWzEsMS41LDQsWzFdXSxbMSwwLjUsNCxbMF1dLFswLDEuNSwxLjUsWyJcXHRhdSJdXSxbMSwxLjUsMSxbMV1dLFsxLDAuNSwxLFswXV1dLFtbMCwwLDMsImoiXSxbMSwwLDMsInQiXSxbMCw0LDEsInQiXSxbMSw0LDEsInQnIl1dXQ==&c=F5A3A3,F5CCA3,F5F5A3,CCF5A3,A3F5A3,A3F5CC,A3F5F5,A3CCF5,A3A3F5,CCA3F5,F5A3F5,F5A3CC
	\begin{tangle}{(2,5)}[trim y]
		\tgBorderA{(0,0)}{\tgColour6}{\tgColour4}{\tgColour4}{\tgColour6}
		\tgBorderA{(1,0)}{\tgColour4}{\tgColour6}{\tgColour6}{\tgColour4}
		\tgBorderA{(0,1)}{\tgColour6}{\tgColour4}{\tgColour4}{\tgColour6}
		\tgBorderA{(1,1)}{\tgColour4}{\tgColour6}{\tgColour6}{\tgColour4}
		\tgBorderA{(0,2)}{\tgColour6}{\tgColour4}{\tgColour4}{\tgColour6}
		\tgBorder{(0,2)}{0}{1}{0}{0}
		\tgBorderA{(1,2)}{\tgColour4}{\tgColour6}{\tgColour6}{\tgColour4}
		\tgBorder{(1,2)}{0}{0}{0}{1}
		\tgBorderA{(0,3)}{\tgColour6}{\tgColour4}{\tgColour4}{\tgColour6}
		\tgBorderA{(1,3)}{\tgColour4}{\tgColour6}{\tgColour6}{\tgColour4}
		\tgBorderA{(0,4)}{\tgColour6}{\tgColour4}{\tgColour4}{\tgColour6}
		\tgBorderA{(1,4)}{\tgColour4}{\tgColour6}{\tgColour6}{\tgColour4}
		\tgCell[(1,0)]{(0.5,2)}{\dag'}
		\tgArrow{(0,1.5)}{1}
		\tgArrow{(1,1.5)}{3}
		\tgArrow{(1,2.5)}{3}
		\tgCell{(0,3)}{\tau}
		\tgArrow{(0,2.5)}{1}
		\tgArrow{(1,3.5)}{3}
		\tgArrow{(0,3.5)}{1}
		\tgCell{(1,1)}{\tau}
		\tgArrow{(1,0.5)}{3}
		\tgArrow{(0,0.5)}{1}
		\tgAxisLabel{(0.5,0.75)}{south}{j}
		\tgAxisLabel{(1.5,0.75)}{south}{t}
		\tgAxisLabel{(0.5,4.25)}{north}{t}
		\tgAxisLabel{(1.5,4.25)}{north}{t'}
	\end{tangle}
	\]
    $j$-monads and their morphisms form a category $\RMnd(j)$.
    Denote by $U_j \colon \RMnd(j) \to \X[A, E]$ the faithful functor sending each $j$-monad $(t, \dag, \eta)$ to its carrier $t$.
\end{definition}

\begin{remark}
	Our definition of relative monad coincides with that of \cite[Definition~3.5.1]{maillard2019principles} and \cite[Definition~5.2.6]{arkor2022monadic} in a representable equipment.
\end{remark}

\begin{example}
	For any tight-cell $\jAE$, the triple \[(j, 1_j, 1_{E(j, j)})\] forms a $j$-monad, the \emph{trivial $j$-monad}. It will follow from \cref{unit-in-skew-multicategory-is-initial-monoid} that the trivial $j$-monad is initial in $\RMnd(j)$.
\end{example}

Relative monads were introduced as a generalisation of monads from endofunctors to arbitrary functors~\cite{altenkirch2010monads}. However, despite this motivation, the classical definition of relative monad does not immediately appear monad-esque. We therefore proceed by justifying \cref{relative-monad} from an alternative perspective.

For an object $A$ in a 2-category $\K$, the hom-category $\K(A, A)$ is canonically equipped with the structure of a strict monoidal category, whose tensor product is given by composition of endo-1-cells, and whose unit is given by the identity 1-cell on $A$. A monoid in $\K(A, A)$ is precisely a monad on~$A$.

More generally, we may consider monads in a \vdc{} $\X$. In this context there are two notions of monad: loose-monads and tight-monads (\cref{monads-and-adjunctions}). For an object $A$ in $\X$, we may consider both loose-monads and tight-monads on $A$ as monoids. As with monads in 2-categories, endo-tight-cells on $A$ form a strict monoidal category $\X[A, A]$ (assuming $A$ admits a loose-identity), and a monoid therein is precisely a tight-monad in the sense of \cref{tight-monad}. However, since loose-cells do not admit composites in general, endo-loose-cells on $A$ form not a monoidal category, but a multicategory $\X\lh{A, A}$, the objects of which are loose-cells $A \lto A$, and the multimorphisms of which are 2-cells with the following frame.
% https://q.uiver.app/#q=WzAsNyxbNCwwLCJBIl0sWzMsMCwiQSJdLFsyLDAsIlxcY2RvdHMiXSxbMSwwLCJBIl0sWzAsMCwiQSJdLFs0LDEsIkEiXSxbMCwxLCJBIl0sWzAsMSwicF9uIiwyLHsic3R5bGUiOnsiYm9keSI6eyJuYW1lIjoiYmFycmVkIn19fV0sWzEsMiwicF97biAtIDF9IiwyLHsic3R5bGUiOnsiYm9keSI6eyJuYW1lIjoiYmFycmVkIn19fV0sWzIsMywicF8yIiwyLHsic3R5bGUiOnsiYm9keSI6eyJuYW1lIjoiYmFycmVkIn19fV0sWzMsNCwicF8xIiwyLHsic3R5bGUiOnsiYm9keSI6eyJuYW1lIjoiYmFycmVkIn19fV0sWzUsNiwicSIsMCx7InN0eWxlIjp7ImJvZHkiOnsibmFtZSI6ImJhcnJlZCJ9fX1dLFswLDUsIiIsMCx7ImxldmVsIjoyLCJzdHlsZSI6eyJoZWFkIjp7Im5hbWUiOiJub25lIn19fV0sWzQsNiwiIiwyLHsibGV2ZWwiOjIsInN0eWxlIjp7ImhlYWQiOnsibmFtZSI6Im5vbmUifX19XV0=
\[\begin{tikzcd}
	A & A & \cdots & A & A \\
	A &&&& A
	\arrow["{p_n}"', "\shortmid"{marking}, from=1-5, to=1-4]
	\arrow["{p_{n - 1}}"', "\shortmid"{marking}, from=1-4, to=1-3]
	\arrow["{p_2}"', "\shortmid"{marking}, from=1-3, to=1-2]
	\arrow["{p_1}"', "\shortmid"{marking}, from=1-2, to=1-1]
	\arrow["q", "\shortmid"{marking}, from=2-5, to=2-1]
	\arrow[Rightarrow, no head, from=1-5, to=2-5]
	\arrow[Rightarrow, no head, from=1-1, to=2-1]
\end{tikzcd}\]
A monoid in $\X\lh{A, A}$ is then precisely a loose-monad in the sense of \cref{loose-monad}. Furthermore, when $\X$ is an equipment, the monoidal category $\X[A, A]$ forms a full sub-multicategory of $\X\lh{A, A}$, each tight-monad $(t, \mu, \eta)$ being represented by a loose-monad $(A(1, t), A(1, \mu), A(1, \eta))$.

We should like to generalise this situation to relative monads by
considering arbitrary hom-categories, for which the tight- and loose-cells may have a domain different to their codomain. On the face of it, such a proposition makes little sense, since it is not possible to form a chain of two loose-cells $p, q \colon A \lto E$ unless $A = E$. However, supposing we were given a loose-cell $j^* \colon E \lto A$, we could form a chain of loose-cells
\[A \xlto q E \xlto{j^*} A \xlto p E\]
which acts as a form of composition \emph{relative to} $j^*$.

For this composition to be associative and unital in an appropriate sense, we cannot simply take any 1-cell $j^* \colon E \lto A$ relative to which to compose. However, it is enough to assume that $j^*$ is the right adjoint of a loose-adjunction $j_* \adj j^*$. In particular, in the context of an equipment $\X$, we may take $j^* \defeq E(j, 1)$ to be the conjoint of a tight-cell $\jAE$, which is right-adjoint to the companion $j_* \defeq E(1, j)$ by \cref{loose-adjunction-induced-by-tight-cell}.
We may then define a notion of multimorphism between loose-cells $A \lto E$, given by 2-cells with the following frame.
% https://q.uiver.app/#q=WzAsMTEsWzgsMCwiQSJdLFs3LDAsIkUiXSxbNiwwLCJBIl0sWzUsMCwiRSJdLFs0LDAsIlxcY2RvdHMiXSxbMywwLCJBIl0sWzAsMCwiRSJdLFs4LDEsIkEiXSxbMCwxLCJFIl0sWzEsMCwiQSJdLFsyLDAsIkUiXSxbMCwxLCJwX24iLDIseyJzdHlsZSI6eyJib2R5Ijp7Im5hbWUiOiJiYXJyZWQifX19XSxbMSwyLCJqXioiLDIseyJzdHlsZSI6eyJib2R5Ijp7Im5hbWUiOiJiYXJyZWQifX19XSxbMiwzLCJwX3tuIC0gMX0iLDIseyJzdHlsZSI6eyJib2R5Ijp7Im5hbWUiOiJiYXJyZWQifX19XSxbMyw0LCJqXioiLDIseyJzdHlsZSI6eyJib2R5Ijp7Im5hbWUiOiJiYXJyZWQifX19XSxbNCw1LCJqXioiLDIseyJzdHlsZSI6eyJib2R5Ijp7Im5hbWUiOiJiYXJyZWQifX19XSxbNyw4LCJxIiwwLHsic3R5bGUiOnsiYm9keSI6eyJuYW1lIjoiYmFycmVkIn19fV0sWzAsNywiIiwwLHsibGV2ZWwiOjIsInN0eWxlIjp7ImhlYWQiOnsibmFtZSI6Im5vbmUifX19XSxbNiw4LCIiLDIseyJsZXZlbCI6Miwic3R5bGUiOnsiaGVhZCI6eyJuYW1lIjoibm9uZSJ9fX1dLFs5LDYsInBfMSIsMix7InN0eWxlIjp7ImJvZHkiOnsibmFtZSI6ImJhcnJlZCJ9fX1dLFs1LDEwLCJwXzIiLDIseyJzdHlsZSI6eyJib2R5Ijp7Im5hbWUiOiJiYXJyZWQifX19XSxbMTAsOSwial4qIiwyLHsic3R5bGUiOnsiYm9keSI6eyJuYW1lIjoiYmFycmVkIn19fV1d
\[\begin{tikzcd}
	E & A & E & A & \cdots & E & A & E & A \\
	E &&&&&&&& A
	\arrow["{p_n}"', "\shortmid"{marking}, from=1-9, to=1-8]
	\arrow["{j^*}"', "\shortmid"{marking}, from=1-8, to=1-7]
	\arrow["{p_{n - 1}}"', "\shortmid"{marking}, from=1-7, to=1-6]
	\arrow["{j^*}"', "\shortmid"{marking}, from=1-6, to=1-5]
	\arrow["{j^*}"', "\shortmid"{marking}, from=1-5, to=1-4]
	\arrow["q", "\shortmid"{marking}, from=2-9, to=2-1]
	\arrow[Rightarrow, no head, from=1-9, to=2-9]
	\arrow[Rightarrow, no head, from=1-1, to=2-1]
	\arrow["{p_1}"', "\shortmid"{marking}, from=1-2, to=1-1]
	\arrow["{p_2}"', "\shortmid"{marking}, from=1-4, to=1-3]
	\arrow["{j^*}"', "\shortmid"{marking}, from=1-3, to=1-2]
\end{tikzcd}\]
Though this does not quite suffice to define an appropriate multicategory structure on the loose-cells $A \lto E$ of $\X$, it does form a weaker notion of multicategory that generalises the skew-monoidal categories of \textcite{szlachanyi2012skew} in the same way that multicategories generalise monoidal categories (\cf{}~\cref{skew-multicategories-as-generalised-multicategories}). Thereafter, it is natural to consider monoids in this generalised multicategory as a notion of \emph{$j$-relative} monad. By restricting to the monoids that are representable in a sense analogous to that of tight-monads above, we shall show that this recovers \cref{relative-monad}.

\subsection{Associative-normal left-skew-multicategories}

We begin by defining the generalised notion of multicategory required to describe the skew composition described above: these generalised multicategories are similar to multicategories, but in which multimorphisms may additionally have \emph{markers} in their domain, denoted by $\bullet$, which represent the unit in a left-skew-monoidal category.
Below, we write $\bullet^{m}$ as an abbreviation for $\underbrace{\bullet, \dots, \bullet}_m$ (where $m \geq 0$).

\begin{definition}
	\label{alpha-normal-left-skew-multicategory}
	An \emph{associative-normal left-skew-multicategory} $\M$ comprises
	\begin{enumerate}
        \item a class $\ob\M$ of \emph{objects};
        \item a class $\M(X_1, \ldots, X_n; Y)$ of \emph{multimorphisms} for each $n > 0$, $X_1, \ldots, X_n \in \ob\M + \{ \bullet \}$ and $Y \in \ob\M$;
        \item an \emph{identity} multimorphism $1_X \in \M(X; X)$ for each $X \in \ob\M$;
        \item for each multimorphism $g \colon \bullet^{m_0}, Y_1, \bullet^{m_1}, \ldots, \bullet^{m_{n - 1}}, Y_n, \bullet^{m_n} \to Z$ where $Y_1, \ldots, Y_n, Z \in \ob\M$ and $n, m_i \geq 0$, and multimorphisms $f_1 \colon \vec{X_1} \to Y_1$, \ldots, $f_n \colon \vec{X_n} \to Y_n$ where $\vec{X_i} \in (\ob M + \{ \bullet \})\Kleene$, a \emph{composite} multimorphism: \[\bullet^{m_0}, \vec{X_1}, \bullet^{m_1}, \ldots, \bullet^{m_{n - 1}}, \vec{X_n}, \bullet^{m_n} \xto{(f_1, \ldots, f_n) \d g} Z\]
        % \[\begin{forest}
		% 	[$C$ [$g$,multimap [$\bullet$] [$\cdots$,no edge] [$\bullet$] [$B_1$ [$f_1$,multimap [$A_{1, 1}$] [$\cdots$,no edge] [$A_{1, n_1}$]]] [$\bullet$] [$\cdots$,no edge] [$\bullet$] [$\cdots$,no edge] [$\bullet$] [$\cdots$,no edge] [$\bullet$] [$B_n$ [$f_n$,multimap [$A_{n, 1}$] [$\cdots$,no edge] [$A_{n, n_n}$]]] [$\bullet$] [$\cdots$,no edge] [$\bullet$]]]
		% \end{forest}\]
        \item a \emph{left-unitor} function \[\lambda_{(\vec X; Y), k} \colon \M(X_1, \ldots, X_k, X_{k + 1}, \ldots, X_n; Y) \to \M(X_1, \ldots, X_k, \bullet, X_{k + 1}, \ldots, X_n; Y)\] for each $0 \leq k < n$;
        \item a \emph{right-unitor} function \[\rho_{(\vec X; Y),k} \colon \M(X_1, \ldots, X_k, \bullet, X_{k + 1}, \ldots, X_n; Y) \to \M(X_1, \ldots, X_k, X_{k + 1}, \ldots, X_n; Y)\] for each $0 < k \leq n$,
    \end{enumerate}
	such that composition is associative and unital; that the left- and right-unitors cohere with pre- and postcomposition; that the left- and right-unitors cohere with themselves\symbolquicknote{These conditions were mistakenly omitted in the published version of the paper.}; and that the right-unitor is a retraction of the left-unitor, in the following sense.
	\begin{align*}
		% Precomposition by lambda.
		(f_1, \ldots, \lambda_{(\vec{X_i}; Y_i), k} f_i, \ldots, f_n) \d g & = \tag{$0 \leq k < \ob{\vec{X_i}}$} \\ & \hspace{-4em} \lambda_{(\vec{\vec X}; Z), (\sum_{0 \leq j < i} m_j) + (\sum_{0 < j < i} \ob{\vec{X_j}}) + k}((f_1, \ldots, f_n) \d g) \\
		% Precomposition by rho.
		(f_1, \ldots, \rho_{(\vec{X_i}; Y_i), k} f_i, \ldots, f_n) \d g & = \tag{$0 < k \leq \ob{\vec{X_i}}$} \\ & \hspace{-4em} \rho_{(\vec{\vec X}; Z), (\sum_{0 \leq j < i} m_j) + (\sum_{0 < j < i} \ob{\vec{X_j}}) + k}((f_1, \ldots, f_n) \d g) \\
		% Postcomposition by lambda.
    (f_1, \ldots, f_n) \d (\lambda_{(\vec Y; Z), (\sum_{0 \leq j < k} m_j) + k + \ell} g) & = \tag{$0 \leq k \leq n$, $0 \leq \ell \leq m_k$, $k < n \lor \ell < m_k$} \\ & \hspace{-4em} \lambda_{(\vec{\vec X}; Z), (\sum_{0 \leq j < k} m_j) + (\sum_{0 < j \leq k} \ob{\vec{X_j}}) + \ell}((f_1, \ldots, f_n) \d g) \\
		% Postcomposition by rho.
		(f_1, \ldots, f_n) \d (\rho_{(\vec Y; Z), (\sum_{0 \leq j < k} m_j) + k + \ell} g) & = \tag{$0 \leq k \leq n$, $0 \leq \ell < m_k$, $0 < k \lor 0 < \ell$} \\ & \hspace{-4em} \rho_{(\vec{\vec X}; Z), (\sum_{0 \leq j < k} m_j) + (\sum_{0 < j \leq k} \ob{\vec{X_j}}) + \ell}((f_1, \ldots, f_n) \d g) \\
    % Lambda coheres with lambda.
    \lambda_{(X_1, \ldots, X_i, \bullet, X_{i + 1}, \ldots, X_n; Y), j + 1}(\lambda_{(\vec X; Y), i}(f)) & = \lambda_{(X_1, \ldots, X_j, \bullet, X_{j + 1}, \ldots, X_n; Y), i}(\lambda_{(\vec X; Y), j}(f)) \tag{$0 \leq i \leq j < n$} \\
		% Rho coheres with rho.
		\rho_{(\vec X; Y), j}(\rho_{(X_1, \ldots, X_j, \bullet, X_{j + 1}, \ldots, X_n; Y), i}(f)) & = \rho_{(\vec X; Y), i}(\rho_{(X_1, \ldots, X_i, \bullet, X_{i + 1}, \ldots, X_n; Y), j + 1}(f)) \tag{$0 < i \leq j \leq n$} \\
		% Rho is a retraction of lambda.
		\lambda_{(\vec X; Y), k} \d \rho_{(\vec X; Y),k} & = 1_{\M(\vec X; Y)} \tag{$0 < k < n$}
	\end{align*}
	Above, $\vec{\vec X}$ is shorthand for the domain of a multimorphism:
	\[\bullet^{m_0}, \vec{X_1}, \bullet^{m_1}, \ldots, \bullet^{m_{n - 1}}, \vec{X_n}, \bullet^{m_n}\]

	$\M$ is \emph{left-normal} when $\lambda$ is invertible; and is \emph{right-normal} when $\rho$ is invertible.

	A \emph{functor} between associative-normal left-skew-multicategories is a homomorphism of associative-normal left-skew-multicategories.
\end{definition}

\begin{remark}
	\label{skew-multicategories-as-generalised-multicategories}
	Associative-normal left-skew-multicategories are part of a larger story, which we briefly outline. The construction of the free left-skew-monoidal category described in \cite{bourke2018free} extends to a virtual double monad $\bb S$ on $\Cat$ via convolution in the usual way (\cf{}~\cites[\S11]{street2013skew}[Theorem~7.3]{fiore2018relative}). Normalised $\bb S$-monoids in the sense of \textcite[Definition~8.3]{cruttwell2010unified} might then naturally be called \emph{left-skew-multicategories}. The construction $\bb S$ of the free left-skew-monoidal category restricts to give constructions $\bb S_N$ of free \emph{strict partially-normal} left-skew-monoidal categories (\cf{}~\cites[\S1]{lack2012skew}[Definition 3.1]{uustalu2020eilenberg}), where some subset $N \subseteq \{ \alpha, \lambda, \rho \}$ of the structural transformations for associativity, and left- and right-unitality of a left-skew-monoidal category are taken to be identities. Correspondingly, normalised $\bb S_N$-monoids give notions of \emph{$N$-normal left-skew-multicategories}: in particular,
	\begin{itemize}
		\item $\varnothing$-normal left-skew-multicategories are left-skew-multicategories in the aforementioned sense;
		\item $\{ \alpha \}$-normal left-skew-multicategories are the associative-normal left-skew-multicategories of \cref{alpha-normal-left-skew-multicategory};
		\item $\{ \alpha, \rho \}$-normal left-skew-multicategories are the \emph{skew-multicategories} of \cite[Definition~4.2]{bourke2018skew} (\cf{} \cite[\S3, Alternative perspective 2]{bourke2018skew});
		\item $\{ \alpha, \lambda, \rho \}$-normal left-skew-multicategories are multicategories~\cite[106]{lambek1969deductive}.
	\end{itemize}

	For $N \subseteq N' \subseteq \{ \alpha, \lambda, \rho \}$, each $N$-normal left-skew-multicategory $\M$ has an underlying wide $N'$-normal left-skew-multicategory $\M_{N' \setminus N}$, given by restricting to the \emph{$(N' \setminus N)$-normal} multimorphisms. In particular, every associative-normal left-skew-multicategory has an underlying $\{ \alpha, \rho \}$-normal left-skew-multicategory, which permits us the later use of the theory of left-representability developed in \cite{bourke2018skew}.
\end{remark}

The associative-normal left-skew-multicategories with which we are concerned satisfy an additional representability property: namely, the existence of a nullary tensor product.

\begin{definition}
	\label{unital-skew-multicategory}
    A \emph{unit} in an associative-normal left-skew-multicategory $\M$ comprises an object $J \in \M$ and a multimorphism $\bullet \to J$ such that, for all objects $X_1, \ldots, X_n, Y \in \M$ and $0 \leq k \leq n$,
	\begin{enumerate}
		\item the function \[\M(X_1, \ldots, X_k, J, X_{k + 1}, \ldots, X_n; Y) \to \M(X_1, \ldots, X_k, \bullet, X_{k + 1}, \ldots, X_n; Y)\] induced by precomposition with $\bullet \to J$ is a bijection;
		\item the following diagram of sets commutes\symbolquicknote{This condition was mistakenly omitted in the published version of the paper.}, where the vertical functions are those induced by precomposition with $\bullet \to J$.
		% https://q.uiver.app/#q=WzAsNCxbMSwwLCJcXE0oWF8xLCBcXGxkb3RzLCBYX2ssIFxcYnVsbGV0LCBKLCBYX3trICsgMX0sIFxcbGRvdHMsIFhfbjsgWSkiXSxbMCwwLCJcXE0oWF8xLCBcXGxkb3RzLCBYX2ssIEosIFhfe2sgKyAxfSwgXFxsZG90cywgWF9uOyBZKSJdLFsxLDEsIlxcTShYXzEsIFxcbGRvdHMsIFhfaywgXFxidWxsZXQsIFxcYnVsbGV0LCBYX3trICsgMX0sIFxcbGRvdHMsIFhfbjsgWSkiXSxbMCwxLCJcXE0oWF8xLCBcXGxkb3RzLCBYX2ssIFxcYnVsbGV0LCBYX3trICsgMX0sIFxcbGRvdHMsIFhfbjsgWSkiXSxbMSwwLCJcXGxhbWJkYV97KFxcdmVjIFg7IFkpLCBrfSJdLFsyLDMsIlxccmhvX3soXFx2ZWMgWDsgWSksIGsgKyAxfSJdLFsxLDNdLFswLDJdXQ==
		\[\begin{tikzcd}[column sep=huge]
			{\M(X_1, \ldots, X_k, J, X_{k + 1}, \ldots, X_n; Y)} & {\M(X_1, \ldots, X_k, \bullet, J, X_{k + 1}, \ldots, X_n; Y)} \\
			{\M(X_1, \ldots, X_k, \bullet, X_{k + 1}, \ldots, X_n; Y)} & {\M(X_1, \ldots, X_k, \bullet, \bullet, X_{k + 1}, \ldots, X_n; Y)}
			\arrow["{\lambda_{(\vec X; Y), k}}", from=1-1, to=1-2]
			\arrow[from=1-1, to=2-1]
			\arrow[from=1-2, to=2-2]
			\arrow["{\rho_{(\vec X; Y), k + 1}}", from=2-2, to=2-1]
		\end{tikzcd}\]
	\end{enumerate}
	An associative-normal left-skew-multicategory with a unit is called \emph{unital}.
\end{definition}

We now make precise the earlier intuition by showing that each loose-adjunction $j_* \adj j^*$ in a \vdc{} induces skew-multicategorical structure on the corresponding hom-category.

\begin{theorem}
    \label{skew-multicategorical-hom}
    Let $\X$ be a \vdc{} with a loose-adjunction $j_* \adj j^* \colon E \lto A$. The loose-cells $A \lto E$ in $\X$ together with 2-cells of the form $p_1, j^*, p_2, j^*, \ldots, j^*, p_n \tto q$ form a multicategory, which extends to a unital associative-normal left-skew-multicategory $\X\lh{j_* \adj j^*}$.
\end{theorem}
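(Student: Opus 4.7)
The plan is to construct the associative-normal left-skew-multicategory $\X\lh{j_* \adj j^*}$ explicitly, with objects the loose-cells $A \lto E$, and markers encoded as specific insertions of $j_*$ into 2-cell domain chains. Concretely, a multimorphism with domain $\bullet^{m_0}, p_1, \bullet^{m_1}, \ldots, p_n, \bullet^{m_n}$ and codomain $q$ will be a 2-cell in $\X$ whose domain chain is obtained by replacing each $\bullet$ with a copy of $j_*$ and then interleaving consecutive entries with $j^*$ so as to form a well-typed chain $A \lto \cdots \lto E$. Restricting to marker-free inputs recovers the multicategory asserted in the theorem. Identities are $1_p$, and composition is inherited from 2-cell composition in $\X$ by placing an identity 2-cell on every interstitial $j^*$ and on every $j_*$ coming from an outer marker; associativity and unitality of this multi-composition then follow verbatim from the axioms of 2-cell composition in the \vdc{}.

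Next, I would define the left-unitor $\lambda_k$ as pre-composition with a canonical 2-cell $\lambda^{\mathrm{mon}}_p \colon j_*, j^*, p \tto p$ at the position of $p$ in the chain, where $\lambda^{\mathrm{mon}}_p$ is built by applying the counit $\varepsilon \colon j_*, j^* \tto E(1, 1)$ to the first two loose-cells and then absorbing the resulting loose-identity via the opcartesian property of $E(1,1)$; the boundary case $k = 0$ uses the same 2-cell placed at the left end. The right-unitor $\rho_k$ is defined dually: given a 2-cell whose chain contains an inserted $j^*, j_*$ sub-chain (at an interior or trailing position), first factor through the opcartesian $\omega \colon j^*, j_* \tto j^* \odot j_*$ exhibiting the composite, then pre-compose at that position with the unit $\eta \colon \tto j^* \odot j_*$ of the loose-adjunction. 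The required retraction identity $\lambda_k \d \rho_k = 1$ for strictly interior $k$ then follows from the two zigzag laws for $j_* \adj j^*$, which are precisely what is needed to see the $\omega$-$\eta$ combination as the inverse of the $\varepsilon$-absorption.

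For the unit, I would take $J \defeq j_*$ with the universal multimorphism $\bullet \to j_*$ being the identity $1_{j_*}$; because our construction already represents each marker by an explicit $j_*$ in the underlying chain, pre-composition with $1_{j_*}$ is tautologically a bijection, giving unitality.

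The hard part will be verifying the coherence equations expressing the compatibility of composition with the unitors, because of the case analysis by the positions and adjacencies of markers in both inner and outer composable configurations. Each individual case reduces to an interplay between functoriality and interchange of 2-cell composition in $\X$ and the zigzag laws of the loose-adjunction; I expect that reasoning with string diagrams in the \vdc{} rather than pasting diagrams will render these verifications essentially mechanical.
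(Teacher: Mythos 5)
Your proposal is correct and follows essentially the same route as the paper: markers are interpreted by $j_*$ with $j^*$ interleaved, composition is pasting with identities on the interstitial strands, the left-unitor pastes the counit (absorbing the loose-identity $E(1,1)$), the right-unitor pastes the unit after factoring through the opcartesian cell exhibiting $j^* \odot j_*$, and $j_*$ is tautologically the unit. The only slight imprecisions are that the retraction $\lambda_k \d \rho_k = 1$ uses just one zig-zag law (the one for $j^*$), and the coherence of the unitors with composition needs only associativity of pasting in $\X$, not the zig-zag laws.
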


\begin{proof}
    We define a multicategory $\X\lh{j_* \adj j^*}$ as follows. The class of objects is given by those of $\X\lh{A, E}$. The multimorphisms $p_1, \ldots, p_n \to q$ for $n > 0$ are 2-cells $p_1, j^*, p_2, j^*, \ldots, j^*, p_n \tto q$.
    % https://q.uiver.app/#q=WzAsMTEsWzgsMCwiQSJdLFs3LDAsIkUiXSxbNiwwLCJBIl0sWzUsMCwiRSJdLFs0LDAsIlxcY2RvdHMiXSxbMSwwLCJBIl0sWzAsMCwiRSJdLFs4LDEsIkEiXSxbMCwxLCJFIl0sWzMsMCwiQSJdLFsyLDAsIkUiXSxbMCwxLCJwX24iLDIseyJzdHlsZSI6eyJib2R5Ijp7Im5hbWUiOiJiYXJyZWQifX19XSxbMSwyLCJqXioiLDIseyJzdHlsZSI6eyJib2R5Ijp7Im5hbWUiOiJiYXJyZWQifX19XSxbMiwzLCJwX3tuIC0gMX0iLDIseyJzdHlsZSI6eyJib2R5Ijp7Im5hbWUiOiJiYXJyZWQifX19XSxbMyw0LCJqXioiLDIseyJzdHlsZSI6eyJib2R5Ijp7Im5hbWUiOiJiYXJyZWQifX19XSxbNSw2LCJwXzEiLDIseyJzdHlsZSI6eyJib2R5Ijp7Im5hbWUiOiJiYXJyZWQifX19XSxbNyw4LCJxIiwwLHsic3R5bGUiOnsiYm9keSI6eyJuYW1lIjoiYmFycmVkIn19fV0sWzAsNywiIiwwLHsibGV2ZWwiOjIsInN0eWxlIjp7ImhlYWQiOnsibmFtZSI6Im5vbmUifX19XSxbNiw4LCIiLDIseyJsZXZlbCI6Miwic3R5bGUiOnsiaGVhZCI6eyJuYW1lIjoibm9uZSJ9fX1dLFs0LDksImpeKiIsMix7InN0eWxlIjp7ImJvZHkiOnsibmFtZSI6ImJhcnJlZCJ9fX1dLFs5LDEwLCJwXzIiLDIseyJzdHlsZSI6eyJib2R5Ijp7Im5hbWUiOiJiYXJyZWQifX19XSxbMTAsNSwial4qIiwyLHsic3R5bGUiOnsiYm9keSI6eyJuYW1lIjoiYmFycmVkIn19fV0sWzE3LDE4LCJcXHBoaSIsMSx7InNob3J0ZW4iOnsic291cmNlIjoyMCwidGFyZ2V0IjoyMH0sInN0eWxlIjp7ImJvZHkiOnsibmFtZSI6Im5vbmUifSwiaGVhZCI6eyJuYW1lIjoibm9uZSJ9fX1dXQ==
	\[\begin{tikzcd}
		E & A & E & A & \cdots & E & A & E & A \\
		E &&&&&&&& A
		\arrow["{p_n}"', "\shortmid"{marking}, from=1-9, to=1-8]
		\arrow["{j^*}"', "\shortmid"{marking}, from=1-8, to=1-7]
		\arrow["{p_{n - 1}}"', "\shortmid"{marking}, from=1-7, to=1-6]
		\arrow["{j^*}"', "\shortmid"{marking}, from=1-6, to=1-5]
		\arrow["{p_1}"', "\shortmid"{marking}, from=1-2, to=1-1]
		\arrow["q", "\shortmid"{marking}, from=2-9, to=2-1]
		\arrow[""{name=0, anchor=center, inner sep=0}, Rightarrow, no head, from=1-9, to=2-9]
		\arrow[""{name=1, anchor=center, inner sep=0}, Rightarrow, no head, from=1-1, to=2-1]
		\arrow["{j^*}"', "\shortmid"{marking}, from=1-5, to=1-4]
		\arrow["{p_2}"', "\shortmid"{marking}, from=1-4, to=1-3]
		\arrow["{j^*}"', "\shortmid"{marking}, from=1-3, to=1-2]
		\arrow["\phi"{description}, draw=none, from=0, to=1]
	\end{tikzcd}\]
    There are no nullary multimorphisms. The identity multimorphism on $p$ is given by the identity 2-cell $1_p$. Composition is given by the following pasting,
	\[\begin{tikzcd}
		E & \cdots & A & E & A & E & \cdots & A \\
		E && A & E & A & E && A \\
		E &&&&&&& A
		\arrow["{p_n}", "\shortmid"{marking}, from=2-8, to=2-6]
		\arrow["{p_1}", "\shortmid"{marking}, from=2-3, to=2-1]
		\arrow["q", "\shortmid"{marking}, from=3-8, to=3-1]
		\arrow[""{name=0, anchor=center, inner sep=0}, Rightarrow, no head, from=2-8, to=3-8]
		\arrow["{p_{n, m_n}}"', "\shortmid"{marking}, from=1-8, to=1-7]
		\arrow[""{name=1, anchor=center, inner sep=0}, Rightarrow, no head, from=1-8, to=2-8]
		\arrow[""{name=2, anchor=center, inner sep=0}, "{j^*}", "\shortmid"{marking}, from=2-6, to=2-5]
		\arrow[Rightarrow, no head, from=1-5, to=2-5]
		\arrow["{p_{1, m_1}}"', "\shortmid"{marking}, from=1-3, to=1-2]
		\arrow[""{name=3, anchor=center, inner sep=0}, Rightarrow, no head, from=1-3, to=2-3]
		\arrow[""{name=4, anchor=center, inner sep=0}, "{j^*}"', "\shortmid"{marking}, from=1-4, to=1-3]
		\arrow[""{name=5, anchor=center, inner sep=0}, "{j^*}", "\shortmid"{marking}, from=2-4, to=2-3]
		\arrow[Rightarrow, no head, from=1-4, to=2-4]
		\arrow["\cdots"{description}, draw=none, from=2-5, to=2-4]
		\arrow["\cdots"{description}, draw=none, from=1-5, to=1-4]
		\arrow[""{name=6, anchor=center, inner sep=0}, "{j^*}"', "\shortmid"{marking}, from=1-6, to=1-5]
		\arrow["{p_{1, 1}}"', "\shortmid"{marking}, from=1-2, to=1-1]
		\arrow[""{name=7, anchor=center, inner sep=0}, Rightarrow, no head, from=2-1, to=3-1]
		\arrow[""{name=8, anchor=center, inner sep=0}, Rightarrow, no head, from=1-1, to=2-1]
		\arrow[""{name=9, anchor=center, inner sep=0}, Rightarrow, no head, from=1-6, to=2-6]
		\arrow["{p_{n, 1}}"', "\shortmid"{marking}, from=1-7, to=1-6]
		\arrow["{=}"{description}, draw=none, from=4, to=5]
		\arrow["{=}"{description}, draw=none, from=6, to=2]
		\arrow["{\phi_1}"{description}, draw=none, from=3, to=8]
		\arrow["{\phi_n}"{description}, draw=none, from=1, to=9]
		\arrow["\psi"{description}, draw=none, from=0, to=7]
	\end{tikzcd}\]
    associativity and unitality being inherited from that of composition of 2-cells in $\X$.

	Next, we derive an associative-normal left-skew-multicategory structure on $\X\lh{j_* \adj j^*}$. We define a function \[[1_{\X\lh{A, E}}, \ph \mapsto j_*] \colon \X\lh{A, E} + \{ \bullet \} \to \X\lh{A, E}\] sending the marker $\bullet$ to the loose-cell $j_*$. This defines a multicategory with objects ${\X\lh{A, E} + \{ \bullet \}}$.

	We define a family
	\[\lambda_{(p_1, \ldots, p_n; q), k} \colon \X\lh{j_* \adj j^*}(p_1, \ldots, p_n; q) \to \X\lh{j_* \adj j^*}(p_1, \ldots, \bullet, \ldots, p_n; q)\]
	by pasting the counit of the loose-adjunction, and a family
	\[\rho_{(p_1, \ldots, p_n; q), k} \colon \X\lh{j_* \adj j^*}(p_1, \ldots, \bullet, \ldots, p_n; q) \to \X\lh{j_* \adj j^*}(p_1, \ldots, p_n; q)\]
	by pasting the unit of the loose-adjunction. That these cohere with composition, together with the compatibility conditions between $\lambda$ and itself, and between $\rho$ and itself, follows from associativity of composition in $\X$.
	The compatibility condition between $\lambda$ and $\rho$ follows from the right zig-zag law associated to the loose-adjunction. The left-adjoint loose-cell $j_*$ provides a unit for $\X\lh{j_* \adj j^*}$, the first condition of \cref{unital-skew-multicategory} holding by definition, and the second condition following from the left zig-zag law associated to the loose-adjunction.
\end{proof}

\begin{remark}
	\label{skew-multicategory-via-skew-warping}
	\Cref{skew-multicategorical-hom} generalises the construction of \cite[\S7]{lack2014monads} from bicategories and skew-monoidal categories to \vdcs{} and skew-multicategories. In fact, the construction of the associative-normal left-skew-multicategory $\X\lh{j_* \adj j^*}$ fits into a more general context, which we briefly outline. Recall from \cite[\S3]{lack2012skew} that skew-monoidal structures on a category $C$ are often induced from monoidal structures on $C$ by tensoring with a skew-warping $T$, via $a \skt b \defeq a \otimes T b$. The construction in \cite[\S7]{lack2014monads} is essentially a categorification of this idea, where the monoidal category $C$ is replaced by a bicategory $\K$. In this context, the tensor $\otimes$ becomes composition $\odot$ of loose-cells, and the skew-warping is given by $j^* \odot \ph$. However, although \citeauthor{lack2014monads} do consider a bicategorical notion of skew-warping in \S4 \ibid{}, they do not exhibit the skew-monoidal structure on $\K[A, E]$ as an instance of this construction. To do so would require the consideration of a notion of \emph{relative skew-warping}, to capture the skew-monoidal structure induced by a single right-adjoint 1-cell $j^* \colon E \lto A$, rather than by a family of right-adjoint 1-cells indexed by the objects of $\K$.

	We have chosen to follow \textcite{lack2014monads} in giving an explicit construction of the skew-multicategorical structure in \cref{skew-multicategorical-hom}, since a formalisation of the approach outlined above would require a further generalisation of the theory \ibid{} to \vdcs{}.
\end{remark}

\begin{definition}
	Let $\X$ be an equipment with a tight-cell $\jAE$. Denote by $\X\lh j$ the skew-multicategory $\X\lh{E(1, j) \adj E(j, 1)}$.
\end{definition}

\begin{definition}
    Given a skew-multicategory $\M$, denote by $\M_1$ the category of unary multimorphisms, \ie{} the category whose objects are those of $\M$ and whose hom-set $\M_1(X, Y) \defeq \M(X; Y)$.
\end{definition}

In particular $\X\lh{j}_1 = \X\lh{A, E}$. With the intention of obtaining the classical definition of relative monad, we examine the monoids in the skew-multicategory $\X\lh{j}$: it will turn out that $j$-relative monads are equivalent to monoids whose underlying loose-cell is representable.

\begin{definition}
	\label{monoid-in-skew-multicategory}
    Let $\M$ be an associative-normal left-skew-multicategory. A \emph{monoid} in $\M$ comprises
    \begin{enumerate}
        \item an object $M \in \M$, the \emph{carrier};
        \item a multimorphism $m \colon M, M \to M$, the \emph{multiplication};
        \item a multimorphism $u \colon \bullet \to M$, the \emph{unit},
    \end{enumerate}
    satisfying the following equations.
	\begin{align*}
		(u, 1_M) \d m & = \lambda_{(M; M), 0}(1_M) &
		\rho_{(M; M), 1}((1_M, u) \d m) & = 1_M &
		(m, 1_M) \d m & = (1_M, m) \d m
	\end{align*}
    % \[
    % \begin{forest}
    %     [$M$ [$m$,multimap [$M$ [$u$,multimap [$\bullet$]]] [$M$]]]
    % \end{forest}
    % \quad = \quad
	% \lambda_{(M; M), 0}(
    % % https://q.uiver.app/?q=WzAsMixbMCwwLCJNIl0sWzAsMSwiTSJdLFswLDEsIiIsMCx7ImxldmVsIjoyLCJzdHlsZSI6eyJoZWFkIjp7Im5hbWUiOiJub25lIn19fV1d
    % \begin{tikzcd}
	% M \\
	% M
	% \arrow[Rightarrow, no head, from=1-1, to=2-1]
    % \end{tikzcd}
	% )
	% \hspace{4em}
	% % https://q.uiver.app/?q=WzAsMixbMCwwLCJNIl0sWzAsMSwiTSJdLFswLDEsIiIsMCx7ImxldmVsIjoyLCJzdHlsZSI6eyJoZWFkIjp7Im5hbWUiOiJub25lIn19fV1d
    % \begin{tikzcd}
	% 	M \\
	% 	M
	% 	\arrow[Rightarrow, no head, from=1-1, to=2-1]
	% \end{tikzcd}
    % \quad = \quad
	% \rho_{(M; M), 1}
	% (
    % \begin{forest}
    %     [$M$ [$m$,multimap [$M$] [$M$ [$u$,multimap [$\bullet$]]]]]
    % \end{forest}
	% )
	% \]
    % \[
	% \begin{forest}
    %     [$M$ [$m$,multimap [$M$ [$m$,multimap [$M$] [$M$]]] [$M$]]]
    % \end{forest}
    % \quad = \quad
    % \begin{forest}
    %     [$M$ [$m$,multimap [$M$] [$M$ [$m$,multimap [$M$] [$M$]]]]]
    % \end{forest}
    % \]
    A \emph{monoid homomorphism} from $(M, m, u)$ to $(M', m', u')$ is a unary multimorphism $f \colon M \to M'$ satisfying the following equations.
	\begin{align*}
		u \d f & = u' &
		m \d f & = (f, f) \d m'
	\end{align*}
    % \[
    % \begin{forest}
    %     [$M'$ [$f$,multimap [$M$ [$u$,multimap [$\bullet$]]]]]
    % \end{forest}
    % \quad = \quad
    % \begin{forest}
    %     [$M'$ [$u'$,multimap [$\bullet$]]]
    % \end{forest}
    % \hspace{4em}
    % \begin{forest}
    %     [$M'$ [$f$,multimap [$M$ [$m$,multimap [$M$] [$M$]]]]]
    % \end{forest}
    % \quad = \quad
    % \begin{forest}
    %     [$M'$ [$m'$,multimap [$M'$ [$f$,multimap [$M$]]] [$M'$ [$f$,multimap [$M$]]]]]
    % \end{forest}
    % \]
    Monoids in $\M$ and their homomorphisms form a category $\Mon(\M)$ functorial in $\M$. Denote by $U_\M \colon \Mon(\M) \to \M_1$ the faithful functor sending each monoid $(M, m, u)$ to its carrier $M$.
\end{definition}

Any unital skew-multicategory contains an initial monoid.

\begin{proposition}
	\label{unit-in-skew-multicategory-is-initial-monoid}
    Let $\M$ be a unital skew-multicategory. The unit $J$ forms a monoid, which is initial amongst monoids in $\M$.
\end{proposition}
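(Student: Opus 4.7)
My strategy throughout is to exploit the universal property of the unit---precomposition with $u \colon \bullet \to J$ at any position induces a bijection---which permits every identity between multimorphisms with $J$-typed arguments to be reduced to an identity between multimorphisms with $\bullet$-typed arguments. The latter can then be settled using the coherences between composition, $\lambda$, and $\rho$ from \cref{alpha-normal-left-skew-multicategory}.

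I begin by constructing the monoid structure on $J$. The unit is taken to be $u$ itself, and I define the multiplication $m_J \colon J, J \to J$ as the unique multimorphism satisfying
\[
(u, 1_J) \d m_J = \lambda_{(J; J), 0}(1_J),
\]
using the bijection $\M(J, J; J) \cong \M(\bullet, J; J)$ afforded by precomposition with $u$ in the first position. The first unit law holds by construction. For associativity, I apply the bijection $\M(J, J, J; J) \cong \M(\bullet, J, J; J)$: precomposing both sides of $(m_J, 1_J) \d m_J = (1_J, m_J) \d m_J$ with $u$ in the first position, using associativity of composition and the defining equation of $m_J$, and invoking the $\lambda$-coherences, both sides reduce to $\lambda_{(J, J; J), 0}(m_J)$. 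For the second unit law $\rho_{(J;J),1}((1_J, u) \d m_J) = 1_J$, I apply the bijection $\M(J; J) \cong \M(\bullet; J)$ and the $\rho$-coherence with composition to reduce it to an identity in $\M(\bullet; J)$.

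For initiality, given any monoid $(M, m, u_M)$, the bijection $\M(J; M) \cong \M(\bullet; M)$ furnishes a unique multimorphism $h \colon J \to M$ satisfying $u \d h = u_M$; this is precisely the unit-preservation axiom for a monoid homomorphism, and it uniquely determines $h$. To verify $h$ preserves multiplication, i.e.\ that $m_J \d h = (h, h) \d m$, I precompose both sides with $(u, 1_J)$ and apply the first unit laws for $m_J$ and $m$ together with the $\lambda$-coherence: both sides reduce to $\lambda_{(J; M), 0}(h) \in \M(\bullet, J; M)$. Injectivity of the bijection then yields equality.

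The main technical obstacle is the verification of the second unit law for the monoid structure on $J$. Via the bijection and coherences sketched above, it reduces to the identity
\[
\rho_{(\bullet; J), 1}(\lambda_{(\bullet; J), 0}(u)) = u
\]
in $\M(\bullet; J)$. The retraction axiom of \cref{alpha-normal-left-skew-multicategory} imposes $\lambda_k \d \rho_k = 1$ only at strictly interior positions ($1 < k < n$), so this identity---which lives at a boundary position in arity $1$---is not immediate. I anticipate deriving it by leveraging the unit's universal property to transfer the equation into a higher-arity context where the interior retraction axiom becomes available, and then projecting back.
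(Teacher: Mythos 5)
Your construction of the monoid structure and your handling of the left unit law, associativity, and initiality coincide with the paper's own argument: $m_J$ is induced from $\lambda_{(J;J),0}(1_J)$ via the unitality bijection, associativity and the homomorphism law are checked after precomposing with $u$ (the paper's ``naturality of precomposition of $u$''), and the unique morphism $J \to M$ is obtained from $u_M$ by unitality. Those parts are fine. The genuine gap is exactly where you flag it: the second unit law. Your reduction of $\rho_{(J;J),1}((1_J,u)\d m_J) = 1_J$ to $\rho_{(\bullet;J),1}(\lambda_{(\bullet;J),0}(u)) = u$ is correct, but this identity is never proved, and the repair you anticipate is unlikely to succeed as described. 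The unitality bijections only trade a $J$ in the domain of a hom for a marker, and the equation $\rho_{(\bullet;J),1}\lambda_{(\bullet;J),0}(u) = u$ has no $J$-typed argument left to exploit; embedding it in higher arity by substituting into other multimorphisms only yields, via the coherences of \cref{alpha-normal-left-skew-multicategory}, $(\rho_{(\bullet;J),1}\lambda_{(\bullet;J),0}(u))\d h = \rho_{(\bullet;X),1}\lambda_{(\bullet;X),0}(u \d h)$, which derives the general statement \emph{from} its instance at $u$, not conversely. Moreover the two unitors occur at different indices ($\lambda$ at $0$, $\rho$ at $1$: the $\rho$ removes the original marker, not the inserted one), so even an unrestricted same-index retraction law $\lambda_k \d \rho_k = 1$ would not literally apply, quite apart from the stated range $1 < k < n$.

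What you need is the multicategorical counterpart of the skew-monoidal axiom $\lambda_I \c \rho_I = 1_I$, localised at $u$; this is not a formal consequence of the coherences you list, and it is precisely what the paper's proof absorbs into its appeal to ``compatibility of the unitors with composition, the $\lambda$--$\rho$ interaction law, and unitality''. It does hold in all the intended examples: in $\X\lh{j}$ (\cref{skew-multicategorical-hom}) the composite $\rho_{(\bullet;q),1}\lambda_{(\bullet;q),0}$ pastes a counit and then a unit of the loose-adjunction $E(1,j) \adj E(j,1)$ onto a 2-cell $E(1,j) \tto q$, and the zig-zag identities collapse it to the identity; in the represented case it is the axiom $\lambda_I \c \rho_I = 1_I$ of a left-skew-monoidal category. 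To complete your proof you should either record this boundary $\lambda$--$\rho$ identity as part of the coherence data you are assuming (as the paper implicitly does) or verify it directly in the skew-multicategories to which you apply the proposition; as written, the second unit law for $J$ remains unestablished.
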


\begin{proof}
	Unitality of $\M$ gives a multimorphism $u \colon \bullet \to J$, and induces from $\lambda_{(J; J), 0}(1_J) \colon \bullet, J \to J$ a multimorphism $m \colon J, J \to J$. The left unit law and associativity law follow from the first condition for unitality (\cref{unital-skew-multicategory}). The right unit law follows from the two conditions for unitality.

    Given any monoid $(M', m', u')$ in $\M$, the unit $u' \colon \bullet \to M$ induces a multimorphism $J \to M$ which forms a monoid homomorphism: the unit law follows from the unitality bijection; while the multiplication law follows from unitality and the unit laws for $M'$.
\end{proof}

\subsection{Relative monads as monoids in a skew-multicategory}
\label{relative-monads-as-monoids}

Before relating relative monads to monoids, we first introduce the slightly more general notion of \emph{loose relative monad}, which stands in a similar relation to the notion of relative monad that loose-monads stand in relation to (tight) monads (\cref{monads-and-adjunctions}), and will be used to simplify some later proofs.

\begin{definition}
	\label{loose-relative-monad}
	For a tight-cell $\jAE$, denote by $\LRMnd(j) \defeq \Mon(\X\lh j)$ the category of \emph{loose $j$-relative monads}, and denote by $U_j \colon \LRMnd(j) \to \X\lh{A, E}$ the forgetful functor.
\end{definition}

Every loose $j$-relative monad induces a loose-monad on its domain by restricting along $j$. As a consequence, loose-monads relative to identity tight-cells are simply loose-monads.

\begin{lemma}
	\label{loose-relative-monad-to-loose-monad}
	Restriction along $j$ induces a functor $\ph(j, 1) \colon \LRMnd(j) \to \LMnd(A)$ commuting with the forgetful functors.
	% https://q.uiver.app/?q=WzAsNCxbMCwwLCJcXExSTW5kKGopIl0sWzAsMSwiXFxYXFxsaHtBLCBFfSJdLFsxLDAsIlxcTE1uZChBKSJdLFsxLDEsIlxcWFxcbGh7QSwgQX0iXSxbMCwxLCJVX2oiLDJdLFsxLDMsIlxccGgoaiwgMSkiLDJdLFsyLDMsIlVfQSJdLFswLDIsIlxccGgoaiwgMSkiXV0=
	\[\begin{tikzcd}
		{\LRMnd(j)} & {\LMnd(A)} \\
		{\X\lh{A, E}} & {\X\lh{A, A}}
		\arrow["{U_j}"', from=1-1, to=2-1]
		\arrow["{\ph(j, 1)}"', from=2-1, to=2-2]
		\arrow["{U_A}", from=1-2, to=2-2]
		\arrow["{\ph(j, 1)}", from=1-1, to=1-2]
	\end{tikzcd}\]
	Furthermore, when $j$ is the identity, this functor is an isomorphism.
\end{lemma}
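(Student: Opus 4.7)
The plan is to construct the functor $\ph(j, 1)$ explicitly on objects and morphisms, verify the loose-monad axioms for the resulting structure, and then observe the special case $j = 1_A$. Given a loose $j$-relative monad $(t, m, u)$ — with $t \colon A \lto E$, multiplication $m \colon t, j^*, t \tto t$, and unit $u \colon j_* \tto t$, writing $j^* \defeq E(j, 1)$ and $j_* \defeq E(1, j)$ — I will equip the restriction $s \defeq t(j, 1) \colon A \lto A$ with loose-monad structure by transporting $m$ and $u$ through the cartesian 2-cell $s \tto t$.

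First I will construct the multiplication $\mu \colon s, s \tto s$ as follows: unfold each of the two copies of $s$ on the left-hand side via the cartesian cell, obtaining a configuration of shape $t, j^*, t, j^*$; apply $m$ to the first three factors leaving the rightmost $j^*$ untouched; and then refold the result into $s$ via the cartesian universal property. Similarly, I will construct the unit $\eta \colon \tto s$ by pasting $\pc j$ (the unit of the loose-adjunction $j_* \adj j^*$) with $u$ on the $j_*$-leg, and folding into $s$ via the cartesian universal property of $s$.

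Verifying the three loose-monad axioms for $(s, \mu, \eta)$ reduces, via the universal property of the cartesian cell defining $s$, to equations at the level of $t$. Associativity of $\mu$ becomes associativity of $m$ with an inert $j^*$ on the right. The two unit laws for $\mu$ become the two unit laws for the monoid structure on $t$ in $\X\lh j$, after simplifying adjacent $\pc j$--$\cp j$ composites using the zig-zag identities for the loose-adjunction $j_* \adj j^*$. On morphisms, $\ph(j, 1)$ sends a monoid morphism $\tau \colon t \tto t'$ to its restriction $\tau(j, 1)$; the loose-monad morphism laws follow immediately from those of $\tau$ by applying restriction. Functoriality and commutation with the forgetful functors are then automatic, since restriction is pseudofunctorial and both forgetful functors project onto the underlying loose-cell.

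When $j = 1_A$, both $j_*$ and $j^*$ coincide with the loose-identity $A(1, 1)$, the loose-adjunction is trivial, and the left- and right-unitors in the skew-multicategory $\X\lh{1_A}$ are identities. Hence $\X\lh{1_A}$ coincides with the plain multicategory $\X\lh{A, A}$, and since restriction along $1_A$ is the identity, $\ph(1_A, 1)$ is literally the identity functor $\LRMnd(1_A) \to \LMnd(A)$, hence an isomorphism. The main obstacle is the bookkeeping in the monad axiom verification, but each step reduces mechanically to the corresponding axiom for $(t, m, u)$ via the universal property of cartesian cells together with the zig-zag identities.
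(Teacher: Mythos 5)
Your construction is correct and is, in substance, the same one the paper uses, but packaged differently. The paper defines a functor of associative-normal left-skew-multicategories $\ph(j,1) \colon \X\lh{j} \to \X\lh{A, A}$ (restrict each loose-cell along $j$; transform a multimorphism by pasting $E(j,1)$ and precomposing with $\pc j$ at each marker $\bullet$) and then simply invokes functoriality of $\Mon$, so that no monoid axioms need to be re-verified: $\LRMnd(j) = \Mon(\X\lh j) \to \Mon(\X\lh{A,A}) = \LMnd(A)$ commutes with the forgetful functors by construction. Your proof unfolds exactly this functor at the two multimorphisms $m$ and $u$ of a given monoid and re-checks the loose-monad laws by hand (unfold $s = t(j,1)$ into $t, j^*$ via the (op)cartesian cells, apply $m$ with one inert $j^*$, refold; build $\eta$ from $\pc j$ and $u$), with the unit laws reducing to the skew-monoid unit laws plus the zig-zag identities. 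That is a legitimate direct verification; what the paper's route buys is that the axiom-checking is done once and for all at the level of the multicategory functor, and the commutation with the forgetful functors and functoriality on morphisms are then automatic rather than checked separately.

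One caution on the identity case: your phrasing is stronger than what is true. When $j = 1_A$ the unitors of $\X\lh{1_A}$ are not \emph{identities} (their source and target hom-classes differ, since the marker $\bullet$ is still present), and $\ph(1_A,1)$ is not \emph{literally} the identity functor: a multiplication in $\LRMnd(1_A)$ is a 2-cell $t, A(1,1), t \tto t$, whereas a loose-monad multiplication is a 2-cell $t, t \tto t$, and the comparison is mediated by the opcartesian cell exhibiting the loose-identity together with the canonical isomorphism $t(1,1) \iso t$. These comparisons are bijective, so the functor is an isomorphism of categories --- which is all the lemma claims, and is exactly what the paper's appeal to pseudofunctoriality of restriction amounts to --- but you should state it as an isomorphism induced by these universal properties rather than as a literal equality.
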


\begin{proof}
	There is a functor of associative-normal left-skew-multicategories $\ph(j, 1) \colon \X\lh j \to \X\lh{A; A}$ sending each loose-cell $p \colon A \lto E$ to $p(j, 1) \colon A \lto A$ and each multimorphism \[\bullet^{m_0}, p_1, \bullet^{m_1}, \ldots, \bullet^{m_{n - 1}}, p_n, \bullet^{m_n} \tto q\] to the pasting of $E(j, 1)$ with the 2-cell $p_1, E(j, p_2), \ldots, E(j, p_n) \tto q$ given by precomposing $\pc j$ for each $\bullet$. By functoriality of $\Mon$, there is therefore a functor $\Mon(\ph(j, 1)) \colon \Mon(\X\lh j) \to \Mon(\X\lh{A; A})$ commuting with the forgetful functors, which by definition is a functor $\LRMnd(j) \to \LMnd(A)$.

	When $j$ is the identity, $\ph(j, 1)$ is invertible, since restriction is pseudofunctorial.
\end{proof}

While loose relative monads are of interest in their own right, herein we shall be interested in restricting to those monoids in $\X\lh j$ whose underlying loose-cells are representable.

\begin{definition}
    Let $\X$ be a \ve{} with a tight-cell $\jAE$. Define $\X[j]$ to be the (unital) full associative-normal sub-left-skew-multicategory of $\X\lh j$ spanned by the representable loose-cells.
\end{definition}

We shall unwrap the definition of a monoid in $\X[j]$ to compare it with the classical definition of a relative monad. Explicitly, a monoid in $\X[j]$ comprises
\begin{enumerate}
	\item a tight-cell $t \colon A \to E$;
	\item a 2-cell $\mu \colon E(1, t), E(j, 1), E(1, t) \tto E(1, t)$;
	\item a 2-cell $E(1, \eta) \colon E(1, j) \tto E(1, t)$,
\end{enumerate}
satisfying the following equations.
\[
% https://q.uiver.app/#q=WzAsMTAsWzMsMSwiQSJdLFsyLDEsIkUiXSxbMSwxLCJBIl0sWzAsMSwiRSJdLFszLDIsIkEiXSxbMCwyLCJFIl0sWzEsMCwiQSJdLFswLDAsIkUiXSxbMywwLCJBIl0sWzIsMCwiRSJdLFswLDEsIkUoMSwgdCkiLDFdLFsxLDIsIkUoaiwgMSkiLDFdLFsyLDMsIkUoMSwgdCkiLDFdLFs0LDUsIkUoMSwgdCkiLDAseyJzdHlsZSI6eyJib2R5Ijp7Im5hbWUiOiJiYXJyZWQifX19XSxbMCw0LCIiLDAseyJsZXZlbCI6Miwic3R5bGUiOnsiaGVhZCI6eyJuYW1lIjoibm9uZSJ9fX1dLFszLDUsIiIsMix7ImxldmVsIjoyLCJzdHlsZSI6eyJoZWFkIjp7Im5hbWUiOiJub25lIn19fV0sWzYsMiwiIiwxLHsibGV2ZWwiOjIsInN0eWxlIjp7ImhlYWQiOnsibmFtZSI6Im5vbmUifX19XSxbNywzLCIiLDEseyJsZXZlbCI6Miwic3R5bGUiOnsiaGVhZCI6eyJuYW1lIjoibm9uZSJ9fX1dLFs2LDcsIkUoMSwgaikiLDIseyJzdHlsZSI6eyJib2R5Ijp7Im5hbWUiOiJiYXJyZWQifX19XSxbOCw5LCJFKDEsIHQpIiwyLHsic3R5bGUiOnsiYm9keSI6eyJuYW1lIjoiYmFycmVkIn19fV0sWzksNiwiRShqLCAxKSIsMix7InN0eWxlIjp7ImJvZHkiOnsibmFtZSI6ImJhcnJlZCJ9fX1dLFs4LDAsIiIsMix7ImxldmVsIjoyLCJzdHlsZSI6eyJoZWFkIjp7Im5hbWUiOiJub25lIn19fV0sWzksMSwiIiwxLHsibGV2ZWwiOjIsInN0eWxlIjp7ImhlYWQiOnsibmFtZSI6Im5vbmUifX19XSxbMTQsMTUsIlxcbXUiLDEseyJzaG9ydGVuIjp7InNvdXJjZSI6MjAsInRhcmdldCI6MjB9LCJzdHlsZSI6eyJib2R5Ijp7Im5hbWUiOiJub25lIn0sImhlYWQiOnsibmFtZSI6Im5vbmUifX19XSxbMTYsMTcsIlxcZXRhIiwxLHsic2hvcnRlbiI6eyJzb3VyY2UiOjIwLCJ0YXJnZXQiOjIwfSwic3R5bGUiOnsiYm9keSI6eyJuYW1lIjoibm9uZSJ9LCJoZWFkIjp7Im5hbWUiOiJub25lIn19fV0sWzIxLDIyLCI9IiwxLHsic2hvcnRlbiI6eyJzb3VyY2UiOjIwLCJ0YXJnZXQiOjIwfSwic3R5bGUiOnsiYm9keSI6eyJuYW1lIjoibm9uZSJ9LCJoZWFkIjp7Im5hbWUiOiJub25lIn19fV0sWzIyLDE2LCI9IiwxLHsic2hvcnRlbiI6eyJzb3VyY2UiOjIwLCJ0YXJnZXQiOjIwfSwic3R5bGUiOnsiYm9keSI6eyJuYW1lIjoibm9uZSJ9LCJoZWFkIjp7Im5hbWUiOiJub25lIn19fV1d
\begin{tikzcd}[column sep=large]
	E & A & E & A \\
	E & A & E & A \\
	E &&& A
	\arrow["{E(1, t)}"{description}, from=2-4, to=2-3]
	\arrow["{E(j, 1)}"{description}, from=2-3, to=2-2]
	\arrow["{E(1, t)}"{description}, from=2-2, to=2-1]
	\arrow["{E(1, t)}", "\shortmid"{marking}, from=3-4, to=3-1]
	\arrow[""{name=0, anchor=center, inner sep=0}, Rightarrow, no head, from=2-4, to=3-4]
	\arrow[""{name=1, anchor=center, inner sep=0}, Rightarrow, no head, from=2-1, to=3-1]
	\arrow[""{name=2, anchor=center, inner sep=0}, Rightarrow, no head, from=1-2, to=2-2]
	\arrow[""{name=3, anchor=center, inner sep=0}, Rightarrow, no head, from=1-1, to=2-1]
	\arrow["{E(1, j)}"', "\shortmid"{marking}, from=1-2, to=1-1]
	\arrow["{E(1, t)}"', "\shortmid"{marking}, from=1-4, to=1-3]
	\arrow["{E(j, 1)}"', "\shortmid"{marking}, from=1-3, to=1-2]
	\arrow[""{name=4, anchor=center, inner sep=0}, Rightarrow, no head, from=1-4, to=2-4]
	\arrow[""{name=5, anchor=center, inner sep=0}, Rightarrow, no head, from=1-3, to=2-3]
	\arrow["\mu"{description}, draw=none, from=0, to=1]
	\arrow["\eta"{description}, draw=none, from=2, to=3]
	\arrow["{=}"{description}, draw=none, from=4, to=5]
	\arrow["{=}"{description}, draw=none, from=5, to=2]
\end{tikzcd}
\quad = \quad
% https://q.uiver.app/#q=WzAsOSxbMywxLCJBIl0sWzIsMSwiRSJdLFswLDEsIkUiXSxbMSwwLCJBIl0sWzAsMCwiRSJdLFszLDAsIkEiXSxbMiwwLCJFIl0sWzMsMiwiQSJdLFswLDIsIkUiXSxbMCwxLCJFKDEsIHQpIiwxXSxbNCwyLCIiLDEseyJsZXZlbCI6Miwic3R5bGUiOnsiaGVhZCI6eyJuYW1lIjoibm9uZSJ9fX1dLFszLDQsIkUoMSwgaikiLDIseyJzdHlsZSI6eyJib2R5Ijp7Im5hbWUiOiJiYXJyZWQifX19XSxbNSw2LCJFKDEsIHQpIiwyLHsic3R5bGUiOnsiYm9keSI6eyJuYW1lIjoiYmFycmVkIn19fV0sWzYsMywiRShqLCAxKSIsMix7InN0eWxlIjp7ImJvZHkiOnsibmFtZSI6ImJhcnJlZCJ9fX1dLFs1LDAsIiIsMix7ImxldmVsIjoyLCJzdHlsZSI6eyJoZWFkIjp7Im5hbWUiOiJub25lIn19fV0sWzYsMSwiIiwxLHsibGV2ZWwiOjIsInN0eWxlIjp7ImhlYWQiOnsibmFtZSI6Im5vbmUifX19XSxbMSwyLCIiLDEseyJsZXZlbCI6Miwic3R5bGUiOnsiYm9keSI6eyJuYW1lIjoiYmFycmVkIn0sImhlYWQiOnsibmFtZSI6Im5vbmUifX19XSxbNyw4LCJFKDEsIHQpIiwwLHsic3R5bGUiOnsiYm9keSI6eyJuYW1lIjoiYmFycmVkIn19fV0sWzAsNywiIiwwLHsibGV2ZWwiOjIsInN0eWxlIjp7ImhlYWQiOnsibmFtZSI6Im5vbmUifX19XSxbMiw4LCIiLDAseyJsZXZlbCI6Miwic3R5bGUiOnsiaGVhZCI6eyJuYW1lIjoibm9uZSJ9fX1dLFsxNCwxNSwiPSIsMSx7InNob3J0ZW4iOnsic291cmNlIjoyMCwidGFyZ2V0IjoyMH0sInN0eWxlIjp7ImJvZHkiOnsibmFtZSI6Im5vbmUifSwiaGVhZCI6eyJuYW1lIjoibm9uZSJ9fX1dLFsxNSwxMCwiXFxjcCBqIiwxLHsic2hvcnRlbiI6eyJzb3VyY2UiOjIwLCJ0YXJnZXQiOjIwfSwic3R5bGUiOnsiYm9keSI6eyJuYW1lIjoibm9uZSJ9LCJoZWFkIjp7Im5hbWUiOiJub25lIn19fV0sWzE4LDE5LCJcXG9wY2FydCIsMSx7InNob3J0ZW4iOnsic291cmNlIjoyMCwidGFyZ2V0IjoyMH0sInN0eWxlIjp7ImJvZHkiOnsibmFtZSI6Im5vbmUifSwiaGVhZCI6eyJuYW1lIjoibm9uZSJ9fX1dXQ==
\begin{tikzcd}[column sep=large]
	E & A & E & A \\
	E && E & A \\
	E &&& A
	\arrow["{E(1, t)}"{description}, from=2-4, to=2-3]
	\arrow[""{name=0, anchor=center, inner sep=0}, Rightarrow, no head, from=1-1, to=2-1]
	\arrow["{E(1, j)}"', "\shortmid"{marking}, from=1-2, to=1-1]
	\arrow["{E(1, t)}"', "\shortmid"{marking}, from=1-4, to=1-3]
	\arrow["{E(j, 1)}"', "\shortmid"{marking}, from=1-3, to=1-2]
	\arrow[""{name=1, anchor=center, inner sep=0}, Rightarrow, no head, from=1-4, to=2-4]
	\arrow[""{name=2, anchor=center, inner sep=0}, Rightarrow, no head, from=1-3, to=2-3]
	\arrow["\shortmid"{marking}, Rightarrow, no head, from=2-3, to=2-1]
	\arrow["{E(1, t)}", "\shortmid"{marking}, from=3-4, to=3-1]
	\arrow[""{name=3, anchor=center, inner sep=0}, Rightarrow, no head, from=2-4, to=3-4]
	\arrow[""{name=4, anchor=center, inner sep=0}, Rightarrow, no head, from=2-1, to=3-1]
	\arrow["{=}"{description}, draw=none, from=1, to=2]
	\arrow["{\cp j}"{description}, draw=none, from=2, to=0]
	\arrow["\opcart"{description}, draw=none, from=3, to=4]
\end{tikzcd}
\]\[
% https://varkor.github.io/tangle/?t=W1tbNSw3LDUsN11dLFtbWzEsWzEsMCwxLDBdXSxbMSxbMSwwLDEsMF1dLFsxLFsxLDAsMSwwXV1dLFtbMSxbMSwwLDEsMF1dLFsxLFsxLDAsMSwwXV0sWzEsWzEsMCwxLDBdXV0sW1sxLFsxLDEsMCwwXV0sWzEsWzEsMSwxLDFdXSxbMSxbMSwwLDAsMV1dXSxbWzAsW11dLFsxLFsxLDAsMSwwXV0sWzAsW11dXV0sW1swLDEuNSwyLjUsWyJcXG11IiwyXV0sWzEsMS41LDIsWzBdXSxbMSwwLjUsMixbMV1dLFsxLDIuNSwyLFsxXV0sWzAsMC41LDEuNSxbIlxcZXRhIl1dLFsxLDEuNSwzLFsxXV0sWzEsMC41LDEsWzFdXSxbMSwyLjUsMSxbMV1dLFsxLDEuNSwxLFswXV1dLFtbMCwwLDMsImoiXSxbMSwwLDMsImoiXSxbMiwwLDMsInQiXSxbMSwzLDEsInQiXV1d&c=F5A3A3,F5CCA3,F5F5A3,CCF5A3,A3F5A3,A3F5CC,A3F5F5,A3CCF5,A3A3F5,CCA3F5,F5A3F5,F5A3CC
\begin{tangle}{(3,4)}[trim y]
	\tgBorderA{(0,0)}{\tgColour4}{\tgColour6}{\tgColour6}{\tgColour4}
	\tgBorderA{(1,0)}{\tgColour6}{\tgColour4}{\tgColour4}{\tgColour6}
	\tgBorderA{(2,0)}{\tgColour4}{\tgColour6}{\tgColour6}{\tgColour4}
	\tgBorderA{(0,1)}{\tgColour4}{\tgColour6}{\tgColour6}{\tgColour4}
	\tgBorderA{(1,1)}{\tgColour6}{\tgColour4}{\tgColour4}{\tgColour6}
	\tgBorderA{(2,1)}{\tgColour4}{\tgColour6}{\tgColour6}{\tgColour4}
	\tgBorderA{(0,2)}{\tgColour4}{\tgColour6}{\tgColour4}{\tgColour4}
	\tgBorderA{(1,2)}{\tgColour6}{\tgColour4}{\tgColour6}{\tgColour4}
	\tgBorderA{(2,2)}{\tgColour4}{\tgColour6}{\tgColour6}{\tgColour6}
	\tgBlank{(0,3)}{\tgColour4}
	\tgBorderA{(1,3)}{\tgColour4}{\tgColour6}{\tgColour6}{\tgColour4}
	\tgBlank{(2,3)}{\tgColour6}
	\tgCell[(2,0)]{(1,2)}{\mu}
	\tgArrow{(1,1.5)}{1}
	\tgArrow{(0,1.5)}{3}
	\tgArrow{(2,1.5)}{3}
	\tgCell{(0,1)}{\eta}
	\tgArrow{(1,2.5)}{3}
	\tgArrow{(0,0.5)}{3}
	\tgArrow{(2,0.5)}{3}
	\tgArrow{(1,0.5)}{1}
	\tgAxisLabel{(0.5,0.75)}{south}{j}
	\tgAxisLabel{(1.5,0.75)}{south}{j}
	\tgAxisLabel{(2.5,0.75)}{south}{t}
	\tgAxisLabel{(1.5,3.25)}{north}{t}
\end{tangle}
\tangleeq*
% https://varkor.github.io/tangle/?t=W1tbNSw3LDddXSxbW1sxLFsxLDAsMSwwXV0sWzEsWzEsMCwxLDBdXSxbMSxbMSwwLDEsMF1dXSxbWzEsWzEsMCwxLDBdXSxbMSxbMSwwLDEsMF1dLFsxLFsxLDAsMSwwXV1dLFtbMixbMV1dLFsyLFswXV0sWzEsWzEsMCwxLDBdXV0sW1swLFtdXSxbMCxbXV0sWzEsWzEsMCwxLDBdXV1dLFtbMSwxLDIuNSxbMF1dLFsxLDAuNSwyLFsxXV0sWzEsMS41LDIsWzBdXSxbMSwyLjUsMixbMV1dLFsxLDIuNSwzLFsxXV0sWzEsMS41LDEsWzBdXSxbMSwwLjUsMSxbMV1dLFsxLDIuNSwxLFsxXV1dLFtbMCwwLDMsImoiXSxbMSwwLDMsImoiXSxbMiwwLDMsInQiXSxbMiwzLDEsInQiXV1d&c=F5A3A3,F5CCA3,F5F5A3,CCF5A3,A3F5A3,A3F5CC,A3F5F5,A3CCF5,A3A3F5,CCA3F5,F5A3F5,F5A3CC
\begin{tangle}{(3,4)}[trim y]
	\tgBorderA{(0,0)}{\tgColour4}{\tgColour6}{\tgColour6}{\tgColour4}
	\tgBorderA{(1,0)}{\tgColour6}{\tgColour4}{\tgColour4}{\tgColour6}
	\tgBorderA{(2,0)}{\tgColour4}{\tgColour6}{\tgColour6}{\tgColour4}
	\tgBorderA{(0,1)}{\tgColour4}{\tgColour6}{\tgColour6}{\tgColour4}
	\tgBorderA{(1,1)}{\tgColour6}{\tgColour4}{\tgColour4}{\tgColour6}
	\tgBorderA{(2,1)}{\tgColour4}{\tgColour6}{\tgColour6}{\tgColour4}
	\tgBorderC{(0,2)}{0}{\tgColour4}{\tgColour6}
	\tgBorderC{(1,2)}{1}{\tgColour4}{\tgColour6}
	\tgBorderA{(2,2)}{\tgColour4}{\tgColour6}{\tgColour6}{\tgColour4}
	\tgBlank{(0,3)}{\tgColour4}
	\tgBlank{(1,3)}{\tgColour4}
	\tgBorderA{(2,3)}{\tgColour4}{\tgColour6}{\tgColour6}{\tgColour4}
	\tgArrow{(0.5,2)}{0}
	\tgArrow{(0,1.5)}{3}
	\tgArrow{(1,1.5)}{1}
	\tgArrow{(2,1.5)}{3}
	\tgArrow{(2,2.5)}{3}
	\tgArrow{(1,0.5)}{1}
	\tgArrow{(0,0.5)}{3}
	\tgArrow{(2,0.5)}{3}
	\tgAxisLabel{(0.5,0.75)}{south}{j}
	\tgAxisLabel{(1.5,0.75)}{south}{j}
	\tgAxisLabel{(2.5,0.75)}{south}{t}
	\tgAxisLabel{(2.5,3.25)}{north}{t}
\end{tangle}
\]
\[
\begin{tikzcd}[column sep=large]
	E & A & A \\
	E & A & A \\
	E & A & A \\
	E && A
	\arrow["{E(1, t)}"{description}, from=3-2, to=3-1]
	\arrow["{E(1, t)}", "\shortmid"{marking}, from=4-3, to=4-1]
	\arrow[""{name=0, anchor=center, inner sep=0}, Rightarrow, no head, from=3-3, to=4-3]
	\arrow[""{name=1, anchor=center, inner sep=0}, Rightarrow, no head, from=3-1, to=4-1]
	\arrow[""{name=2, anchor=center, inner sep=0}, Rightarrow, no head, from=2-2, to=3-2]
	\arrow[""{name=3, anchor=center, inner sep=0}, Rightarrow, no head, from=2-1, to=3-1]
	\arrow["{E(1, t)}"{description}, from=2-2, to=2-1]
	\arrow["{E(j, t)}"{description}, from=3-3, to=3-2]
	\arrow[Rightarrow, no head, from=1-3, to=1-2]
	\arrow["{E(j, j)}"{description}, from=2-3, to=2-2]
	\arrow[""{name=4, anchor=center, inner sep=0}, Rightarrow, no head, from=2-3, to=3-3]
	\arrow[""{name=5, anchor=center, inner sep=0}, Rightarrow, no head, from=1-3, to=2-3]
	\arrow[""{name=6, anchor=center, inner sep=0}, Rightarrow, no head, from=1-2, to=2-2]
	\arrow[""{name=7, anchor=center, inner sep=0}, Rightarrow, no head, from=1-1, to=2-1]
	\arrow["{E(1, t)}"', "\shortmid"{marking}, from=1-2, to=1-1]
	\arrow["\mu"{description}, draw=none, from=0, to=1]
	\arrow["{=}"{description}, draw=none, from=2, to=3]
	\arrow["{E(j, \eta)}"{description}, draw=none, from=4, to=2]
	\arrow["{=}"{description}, draw=none, from=6, to=7]
	\arrow["{\pc j}"{description}, draw=none, from=5, to=6]
\end{tikzcd}
\quad = \quad
% https://q.uiver.app/#q=WzAsNCxbMSwxLCJBIl0sWzAsMSwiRSJdLFsxLDAsIkEiXSxbMCwwLCJFIl0sWzAsMSwiRSgxLCB0KSIsMCx7InN0eWxlIjp7ImJvZHkiOnsibmFtZSI6ImJhcnJlZCJ9LCJoZWFkIjp7Im5hbWUiOiJub25lIn19fV0sWzIsMCwiIiwxLHsibGV2ZWwiOjIsInN0eWxlIjp7ImhlYWQiOnsibmFtZSI6Im5vbmUifX19XSxbMywxLCIiLDEseyJsZXZlbCI6Miwic3R5bGUiOnsiaGVhZCI6eyJuYW1lIjoibm9uZSJ9fX1dLFsyLDMsIkUoMSwgdCkiLDIseyJzdHlsZSI6eyJib2R5Ijp7Im5hbWUiOiJiYXJyZWQifX19XSxbNSw2LCI9IiwxLHsic2hvcnRlbiI6eyJzb3VyY2UiOjIwLCJ0YXJnZXQiOjIwfSwic3R5bGUiOnsiYm9keSI6eyJuYW1lIjoibm9uZSJ9LCJoZWFkIjp7Im5hbWUiOiJub25lIn19fV1d
\begin{tikzcd}
	E & A \\
	E & A
	\arrow["{E(1, t)}", "\shortmid"{marking}, no head, from=2-2, to=2-1]
	\arrow[""{name=0, anchor=center, inner sep=0}, Rightarrow, no head, from=1-2, to=2-2]
	\arrow[""{name=1, anchor=center, inner sep=0}, Rightarrow, no head, from=1-1, to=2-1]
	\arrow["{E(1, t)}"', "\shortmid"{marking}, from=1-2, to=1-1]
	\arrow["{=}"{description}, draw=none, from=0, to=1]
\end{tikzcd}
\hspace{4em}
% https://varkor.github.io/tangle/?t=W1tbNSw3LDVdXSxbW1sxLFsxLDAsMSwwXV0sWzAsW11dLFswLFtdXV0sW1sxLFsxLDAsMSwwXV0sWzEsWzAsMSwxLDBdXSxbMSxbMCwwLDEsMV1dXSxbWzEsWzEsMSwwLDBdXSxbMSxbMSwxLDEsMV1dLFsxLFsxLDAsMCwxXV1dLFtbMCxbXV0sWzEsWzEsMCwxLDBdXSxbMCxbXV1dXSxbWzAsMS41LDIuNSxbIlxcbXUiLDJdXSxbMSwxLjUsMixbMF1dLFsxLDIuNSwyLFsxXV0sWzEsMC41LDIsWzFdXSxbMCwyLDEuNSxbIlxcZXRhIiwxXV0sWzEsMC41LDEsWzFdXSxbMSwxLjUsMyxbMV1dXSxbWzAsMCwzLCJ0Il0sWzEsMywxLCJ0Il1dXQ==&c=F5A3A3,F5CCA3,F5F5A3,CCF5A3,A3F5A3,A3F5CC,A3F5F5,A3CCF5,A3A3F5,CCA3F5,F5A3F5,F5A3CC
\begin{tangle}{(3,4)}[trim y]
	\tgBorderA{(0,0)}{\tgColour4}{\tgColour6}{\tgColour6}{\tgColour4}
	\tgBlank{(1,0)}{\tgColour6}
	\tgBlank{(2,0)}{\tgColour6}
	\tgBorderA{(0,1)}{\tgColour4}{\tgColour6}{\tgColour6}{\tgColour4}
	\tgBorderA{(1,1)}{\tgColour6}{\tgColour6}{\tgColour4}{\tgColour6}
	\tgBorderA{(2,1)}{\tgColour6}{\tgColour6}{\tgColour6}{\tgColour4}
	\tgBorderA{(0,2)}{\tgColour4}{\tgColour6}{\tgColour4}{\tgColour4}
	\tgBorderA{(1,2)}{\tgColour6}{\tgColour4}{\tgColour6}{\tgColour4}
	\tgBorderA{(2,2)}{\tgColour4}{\tgColour6}{\tgColour6}{\tgColour6}
	\tgBlank{(0,3)}{\tgColour4}
	\tgBorderA{(1,3)}{\tgColour4}{\tgColour6}{\tgColour6}{\tgColour4}
	\tgBlank{(2,3)}{\tgColour6}
	\tgCell[(2,0)]{(1,2)}{\mu}
	\tgArrow{(1,1.5)}{1}
	\tgArrow{(2,1.5)}{3}
	\tgArrow{(0,1.5)}{3}
	\tgCell[(1,0)]{(1.5,1)}{\eta}
	\tgArrow{(0,0.5)}{3}
	\tgArrow{(1,2.5)}{3}
	\tgAxisLabel{(0.5,0.75)}{south}{t}
	\tgAxisLabel{(1.5,3.25)}{north}{t}
\end{tangle}
\tangleeq*
% https://varkor.github.io/tangle/?t=W1tbNSw3XV0sW1tbMSxbMSwwLDEsMF1dXSxbWzEsWzEsMCwxLDBdXV0sW1sxLFsxLDAsMSwwXV1dLFtbMSxbMSwwLDEsMF1dXV0sW1sxLDAuNSwyLFsxXV0sWzEsMC41LDEsWzFdXSxbMSwwLjUsMyxbMV1dXSxbWzAsMCwzLCJ0Il0sWzAsMywxLCJ0Il1dXQ==&c=F5A3A3,F5CCA3,F5F5A3,CCF5A3,A3F5A3,A3F5CC,A3F5F5,A3CCF5,A3A3F5,CCA3F5,F5A3F5,F5A3CC
\begin{tangle}{(1,4)}[trim y]
	\tgBorderA{(0,0)}{\tgColour4}{\tgColour6}{\tgColour6}{\tgColour4}
	\tgBorderA{(0,1)}{\tgColour4}{\tgColour6}{\tgColour6}{\tgColour4}
	\tgBorderA{(0,2)}{\tgColour4}{\tgColour6}{\tgColour6}{\tgColour4}
	\tgBorderA{(0,3)}{\tgColour4}{\tgColour6}{\tgColour6}{\tgColour4}
	\tgArrow{(0,1.5)}{3}
	\tgArrow{(0,0.5)}{3}
	\tgArrow{(0,2.5)}{3}
	\tgAxisLabel{(0.5,0.75)}{south}{t}
	\tgAxisLabel{(0.5,3.25)}{north}{t}
\end{tangle}
\]
\[
\begin{tikzcd}[column sep=1.54em]
	E & A & E & A && E && A \\
	E &&& A && E && A \\
	E &&&&&&& A
	\arrow["{E(1, t)}"{description}, from=2-8, to=2-6]
	\arrow["{E(j, 1)}"{description}, from=2-6, to=2-4]
	\arrow["{E(1, t)}"{description}, from=2-4, to=2-1]
	\arrow["{E(1, t)}", "\shortmid"{marking}, from=3-8, to=3-1]
	\arrow[""{name=0, anchor=center, inner sep=0}, Rightarrow, no head, from=2-8, to=3-8]
	\arrow[""{name=1, anchor=center, inner sep=0}, Rightarrow, no head, from=2-1, to=3-1]
	\arrow["{E(1, t)}"', "\shortmid"{marking}, from=1-8, to=1-6]
	\arrow["{E(j, 1)}"', "\shortmid"{marking}, from=1-6, to=1-4]
	\arrow["{E(1, t)}"', "\shortmid"{marking}, from=1-4, to=1-3]
	\arrow["{E(j, 1)}"', "\shortmid"{marking}, from=1-3, to=1-2]
	\arrow["{E(1, t)}"', "\shortmid"{marking}, from=1-2, to=1-1]
	\arrow[""{name=2, anchor=center, inner sep=0}, Rightarrow, no head, from=1-1, to=2-1]
	\arrow[""{name=3, anchor=center, inner sep=0}, Rightarrow, no head, from=1-8, to=2-8]
	\arrow[""{name=4, anchor=center, inner sep=0}, Rightarrow, no head, from=1-4, to=2-4]
	\arrow[""{name=5, anchor=center, inner sep=0}, Rightarrow, no head, from=1-6, to=2-6]
	\arrow["\mu"{description}, draw=none, from=0, to=1]
	\arrow["\mu"{description}, draw=none, from=4, to=2]
	\arrow["{=}"{description}, draw=none, from=3, to=5]
	\arrow["{=}"{description}, draw=none, from=5, to=4]
\end{tikzcd}
=
\begin{tikzcd}[column sep=1.54em]
	E && A && E & A & E & A \\
	E && A && E &&& A \\
	E &&&&&&& A
	\arrow["{E(1, t)}"{description}, from=2-8, to=2-5]
	\arrow["{E(j, 1)}"{description}, from=2-5, to=2-3]
	\arrow["{E(1, t)}"{description}, from=2-3, to=2-1]
	\arrow["{E(1, t)}", "\shortmid"{marking}, from=3-8, to=3-1]
	\arrow[""{name=0, anchor=center, inner sep=0}, Rightarrow, no head, from=2-8, to=3-8]
	\arrow[""{name=1, anchor=center, inner sep=0}, Rightarrow, no head, from=2-1, to=3-1]
	\arrow["{E(1, t)}"', "\shortmid"{marking}, from=1-8, to=1-7]
	\arrow["{E(j, 1)}"', "\shortmid"{marking}, from=1-7, to=1-6]
	\arrow["{E(1, t)}"', "\shortmid"{marking}, from=1-6, to=1-5]
	\arrow["{E(j, 1)}"', "\shortmid"{marking}, from=1-5, to=1-3]
	\arrow["{E(1, t)}"', "\shortmid"{marking}, from=1-3, to=1-1]
	\arrow[""{name=2, anchor=center, inner sep=0}, Rightarrow, no head, from=1-1, to=2-1]
	\arrow[""{name=3, anchor=center, inner sep=0}, Rightarrow, no head, from=1-5, to=2-5]
	\arrow[""{name=4, anchor=center, inner sep=0}, Rightarrow, no head, from=1-8, to=2-8]
	\arrow[""{name=5, anchor=center, inner sep=0}, Rightarrow, no head, from=1-3, to=2-3]
	\arrow["\mu"{description}, draw=none, from=0, to=1]
	\arrow["\mu"{description}, draw=none, from=4, to=3]
	\arrow["{=}"{description}, draw=none, from=3, to=5]
	\arrow["{=}"{description}, draw=none, from=5, to=2]
\end{tikzcd}
\]\[
% https://varkor.github.io/tangle/?t=W1tbNSw3LDUsNyw1LDddXSxbW1sxLFsxLDAsMSwwXV0sWzEsWzEsMCwxLDBdXSxbMSxbMSwwLDEsMF1dLFsxLFsxLDAsMSwwXV0sWzAsW11dLFsxLFsxLDAsMSwwXV1dLFtbMSxbMSwxLDAsMF1dLFsxLFsxLDEsMSwxXV0sWzEsWzEsMCwwLDFdXSxbMSxbMSwwLDEsMF1dLFswLFtdXSxbMSxbMSwwLDEsMF1dXSxbWzAsW11dLFsxLFsxLDEsMCwwXV0sWzEsWzAsMSwwLDFdXSxbMSxbMSwxLDEsMV1dLFsxLFswLDEsMCwxXV0sWzEsWzEsMCwwLDFdXV0sW1swLFtdXSxbMCxbXV0sWzAsW11dLFsxLFsxLDAsMSwwXV0sWzAsW11dLFswLFtdXV1dLFtbMCwxLjUsMS41LFsiXFxtdSIsMl1dLFswLDMuNSwyLjUsWyJcXG11Iiw0XV0sWzEsMy41LDIsWzBdXSxbMSw1LjUsMixbMV1dLFsxLDEuNSwyLFsxXV0sWzEsMS41LDEsWzBdXSxbMSwyLjUsMSxbMV1dLFsxLDAuNSwxLFsxXV0sWzEsNS41LDEsWzFdXSxbMSwzLjUsMSxbMF1dLFsxLDMuNSwzLFsxXV1dLFtbMCwwLDMsInQiXSxbMSwwLDMsImoiXSxbMiwwLDMsInQiXSxbMywwLDMsImoiXSxbNSwwLDMsInQiXSxbMywzLDEsInQiXV1d&c=F5A3A3,F5CCA3,F5F5A3,CCF5A3,A3F5A3,A3F5CC,A3F5F5,A3CCF5,A3A3F5,CCA3F5,F5A3F5,F5A3CC
\begin{tangle}{(6,4)}[trim y]
	\tgBorderA{(0,0)}{\tgColour4}{\tgColour6}{\tgColour6}{\tgColour4}
	\tgBorderA{(1,0)}{\tgColour6}{\tgColour4}{\tgColour4}{\tgColour6}
	\tgBorderA{(2,0)}{\tgColour4}{\tgColour6}{\tgColour6}{\tgColour4}
	\tgBorderA{(3,0)}{\tgColour6}{\tgColour4}{\tgColour4}{\tgColour6}
	\tgBlank{(4,0)}{\tgColour4}
	\tgBorderA{(5,0)}{\tgColour4}{\tgColour6}{\tgColour6}{\tgColour4}
	\tgBorderA{(0,1)}{\tgColour4}{\tgColour6}{\tgColour4}{\tgColour4}
	\tgBorderA{(1,1)}{\tgColour6}{\tgColour4}{\tgColour6}{\tgColour4}
	\tgBorderA{(2,1)}{\tgColour4}{\tgColour6}{\tgColour6}{\tgColour6}
	\tgBorderA{(3,1)}{\tgColour6}{\tgColour4}{\tgColour4}{\tgColour6}
	\tgBlank{(4,1)}{\tgColour4}
	\tgBorderA{(5,1)}{\tgColour4}{\tgColour6}{\tgColour6}{\tgColour4}
	\tgBlank{(0,2)}{\tgColour4}
	\tgBorderA{(1,2)}{\tgColour4}{\tgColour6}{\tgColour4}{\tgColour4}
	\tgBorderA{(2,2)}{\tgColour6}{\tgColour6}{\tgColour4}{\tgColour4}
	\tgBorderA{(3,2)}{\tgColour6}{\tgColour4}{\tgColour6}{\tgColour4}
	\tgBorderA{(4,2)}{\tgColour4}{\tgColour4}{\tgColour6}{\tgColour6}
	\tgBorderA{(5,2)}{\tgColour4}{\tgColour6}{\tgColour6}{\tgColour6}
	\tgBlank{(0,3)}{\tgColour4}
	\tgBlank{(1,3)}{\tgColour4}
	\tgBlank{(2,3)}{\tgColour4}
	\tgBorderA{(3,3)}{\tgColour4}{\tgColour6}{\tgColour6}{\tgColour4}
	\tgBlank{(4,3)}{\tgColour6}
	\tgBlank{(5,3)}{\tgColour6}
	\tgCell[(2,0)]{(1,1)}{\mu}
	\tgCell[(4,0)]{(3,2)}{\mu}
	\tgArrow{(3,1.5)}{1}
	\tgArrow{(5,1.5)}{3}
	\tgArrow{(1,1.5)}{3}
	\tgArrow{(1,0.5)}{1}
	\tgArrow{(2,0.5)}{3}
	\tgArrow{(0,0.5)}{3}
	\tgArrow{(5,0.5)}{3}
	\tgArrow{(3,0.5)}{1}
	\tgArrow{(3,2.5)}{3}
	\tgAxisLabel{(0.5,0.75)}{south}{t}
	\tgAxisLabel{(1.5,0.75)}{south}{j}
	\tgAxisLabel{(2.5,0.75)}{south}{t}
	\tgAxisLabel{(3.5,0.75)}{south}{j}
	\tgAxisLabel{(5.5,0.75)}{south}{t}
	\tgAxisLabel{(3.5,3.25)}{north}{t}
\end{tangle}
\tangleeq*
% https://varkor.github.io/tangle/?t=W1tbNSw3LDUsNyw1LDddXSxbW1sxLFsxLDAsMSwwXV0sWzAsW11dLFsxLFsxLDAsMSwwXV0sWzEsWzEsMCwxLDBdXSxbMSxbMSwwLDEsMF1dLFsxLFsxLDAsMSwwXV1dLFtbMSxbMSwwLDEsMF1dLFswLFtdXSxbMSxbMSwwLDEsMF1dLFsxLFsxLDEsMCwwXV0sWzEsWzEsMSwxLDFdXSxbMSxbMSwwLDAsMV1dXSxbWzEsWzEsMSwwLDBdXSxbMSxbMCwxLDAsMV1dLFsxLFsxLDEsMSwxXV0sWzEsWzAsMSwwLDFdXSxbMSxbMSwwLDAsMV1dLFswLFtdXV0sW1swLFtdXSxbMCxbXV0sWzEsWzEsMCwxLDBdXSxbMCxbXV0sWzAsW11dLFswLFtdXV1dLFtbMCwyLjUsMi41LFsiXFxtdSIsNF1dLFsxLDIuNSwyLFswXV0sWzEsNC41LDIsWzFdXSxbMSwwLjUsMixbMV1dLFswLDQuNSwxLjUsWyJcXG11IiwyXV0sWzEsMi41LDEsWzBdXSxbMSw0LjUsMSxbMF1dLFsxLDAuNSwxLFsxXV0sWzEsMy41LDEsWzFdXSxbMSw1LjUsMSxbMV1dLFsxLDIuNSwzLFsxXV1dLFtbMCwwLDMsInQiXSxbMiwwLDMsImoiXSxbMywwLDMsInQiXSxbNCwwLDMsImoiXSxbNSwwLDMsInQiXSxbMiwzLDEsInQiXV1d&c=F5A3A3,F5CCA3,F5F5A3,CCF5A3,A3F5A3,A3F5CC,A3F5F5,A3CCF5,A3A3F5,CCA3F5,F5A3F5,F5A3CC
\begin{tangle}{(6,4)}[trim y]
	\tgBorderA{(0,0)}{\tgColour4}{\tgColour6}{\tgColour6}{\tgColour4}
	\tgBlank{(1,0)}{\tgColour6}
	\tgBorderA{(2,0)}{\tgColour6}{\tgColour4}{\tgColour4}{\tgColour6}
	\tgBorderA{(3,0)}{\tgColour4}{\tgColour6}{\tgColour6}{\tgColour4}
	\tgBorderA{(4,0)}{\tgColour6}{\tgColour4}{\tgColour4}{\tgColour6}
	\tgBorderA{(5,0)}{\tgColour4}{\tgColour6}{\tgColour6}{\tgColour4}
	\tgBorderA{(0,1)}{\tgColour4}{\tgColour6}{\tgColour6}{\tgColour4}
	\tgBlank{(1,1)}{\tgColour6}
	\tgBorderA{(2,1)}{\tgColour6}{\tgColour4}{\tgColour4}{\tgColour6}
	\tgBorderA{(3,1)}{\tgColour4}{\tgColour6}{\tgColour4}{\tgColour4}
	\tgBorderA{(4,1)}{\tgColour6}{\tgColour4}{\tgColour6}{\tgColour4}
	\tgBorderA{(5,1)}{\tgColour4}{\tgColour6}{\tgColour6}{\tgColour6}
	\tgBorderA{(0,2)}{\tgColour4}{\tgColour6}{\tgColour4}{\tgColour4}
	\tgBorderA{(1,2)}{\tgColour6}{\tgColour6}{\tgColour4}{\tgColour4}
	\tgBorderA{(2,2)}{\tgColour6}{\tgColour4}{\tgColour6}{\tgColour4}
	\tgBorderA{(3,2)}{\tgColour4}{\tgColour4}{\tgColour6}{\tgColour6}
	\tgBorderA{(4,2)}{\tgColour4}{\tgColour6}{\tgColour6}{\tgColour6}
	\tgBlank{(5,2)}{\tgColour6}
	\tgBlank{(0,3)}{\tgColour4}
	\tgBlank{(1,3)}{\tgColour4}
	\tgBorderA{(2,3)}{\tgColour4}{\tgColour6}{\tgColour6}{\tgColour4}
	\tgBlank{(3,3)}{\tgColour6}
	\tgBlank{(4,3)}{\tgColour6}
	\tgBlank{(5,3)}{\tgColour6}
	\tgCell[(4,0)]{(2,2)}{\mu}
	\tgArrow{(2,1.5)}{1}
	\tgArrow{(4,1.5)}{3}
	\tgArrow{(0,1.5)}{3}
	\tgCell[(2,0)]{(4,1)}{\mu}
	\tgArrow{(2,0.5)}{1}
	\tgArrow{(4,0.5)}{1}
	\tgArrow{(0,0.5)}{3}
	\tgArrow{(3,0.5)}{3}
	\tgArrow{(5,0.5)}{3}
	\tgArrow{(2,2.5)}{3}
	\tgAxisLabel{(0.5,0.75)}{south}{t}
	\tgAxisLabel{(2.5,0.75)}{south}{j}
	\tgAxisLabel{(3.5,0.75)}{south}{t}
	\tgAxisLabel{(4.5,0.75)}{south}{j}
	\tgAxisLabel{(5.5,0.75)}{south}{t}
	\tgAxisLabel{(2.5,3.25)}{north}{t}
\end{tangle}
\]
A monoid homomorphism is a 2-cell $E(1, \tau) \colon E(1, t) \tto E(1, t')$ satisfying the following equations.
\[
% https://q.uiver.app/?q=WzAsMyxbMSwwLCJqIl0sWzAsMSwidCJdLFsyLDEsInQnIl0sWzEsMiwiXFx0YXUiLDJdLFswLDEsIlxcZXRhIiwyXSxbMCwyLCJcXGV0YSciXV0=
\begin{tikzcd}
	& j \\
	t && {t'}
	\arrow["\tau"', from=2-1, to=2-3]
	\arrow["\eta"', from=1-2, to=2-1]
	\arrow["{\eta'}", from=1-2, to=2-3]
\end{tikzcd}
\hspace{4em}
% https://varkor.github.io/tangle/?t=W1tbNyw1XV0sW1tbMSxbMCwxLDAsMV1dLFsxLFswLDEsMCwxXV0sWzEsWzAsMSwwLDFdXSxbMSxbMCwxLDAsMV1dXV0sW1swLDEuNSwwLjUsWyJcXGV0YSJdXSxbMCwyLjUsMC41LFsiXFx0YXUiXV0sWzEsMiwwLjUsWzBdXSxbMSwxLDAuNSxbMF1dLFsxLDMsMC41LFswXV1dLFtbMCwwLDIsImoiXSxbMywwLDAsInQnIl1dXQ==&c=F5A3A3,F5CCA3,F5F5A3,CCF5A3,A3F5A3,A3F5CC,A3F5F5,A3CCF5,A3A3F5,CCA3F5,F5A3F5,F5A3CC
\begin{tangle}{(4,1)}[trim x]
	\tgBorderA{(0,0)}{\tgColour6}{\tgColour6}{\tgColour4}{\tgColour4}
	\tgBorderA{(1,0)}{\tgColour6}{\tgColour6}{\tgColour4}{\tgColour4}
	\tgBorderA{(2,0)}{\tgColour6}{\tgColour6}{\tgColour4}{\tgColour4}
	\tgBorderA{(3,0)}{\tgColour6}{\tgColour6}{\tgColour4}{\tgColour4}
	\tgCell{(1,0)}{\eta}
	\tgCell{(2,0)}{\tau}
	\tgArrow{(1.5,0)}{0}
	\tgArrow{(0.5,0)}{0}
	\tgArrow{(2.5,0)}{0}
	\tgAxisLabel{(0.75,0.5)}{east}{j}
	\tgAxisLabel{(3.25,0.5)}{west}{t'}
\end{tangle}
=
% https://varkor.github.io/tangle/?t=W1tbNyw1XV0sW1tbMSxbMCwxLDAsMV1dLFsxLFswLDEsMCwxXV0sWzEsWzAsMSwwLDFdXV1dLFtbMCwxLjUsMC41LFsiXFxldGEnIl1dLFsxLDEsMC41LFswXV0sWzEsMiwwLjUsWzBdXV0sW1swLDAsMiwiaiJdLFsyLDAsMCwidCciXV1d&c=F5A3A3,F5CCA3,F5F5A3,CCF5A3,A3F5A3,A3F5CC,A3F5F5,A3CCF5,A3A3F5,CCA3F5,F5A3F5,F5A3CC
\begin{tangle}{(3,1)}[trim x]
	\tgBorderA{(0,0)}{\tgColour6}{\tgColour6}{\tgColour4}{\tgColour4}
	\tgBorderA{(1,0)}{\tgColour6}{\tgColour6}{\tgColour4}{\tgColour4}
	\tgBorderA{(2,0)}{\tgColour6}{\tgColour6}{\tgColour4}{\tgColour4}
	\tgCell{(1,0)}{\eta'}
	\tgArrow{(0.5,0)}{0}
	\tgArrow{(1.5,0)}{0}
	\tgAxisLabel{(0.75,0.5)}{east}{j}
	\tgAxisLabel{(2.25,0.5)}{west}{t'}
\end{tangle}
\]
\[
% https://q.uiver.app/#q=WzAsOCxbMywwLCJBIl0sWzIsMCwiRSJdLFsxLDAsIkEiXSxbMCwwLCJFIl0sWzMsMSwiQSJdLFswLDEsIkUiXSxbMywyLCJBIl0sWzAsMiwiRSJdLFswLDEsIkUoMSwgdCkiLDIseyJzdHlsZSI6eyJib2R5Ijp7Im5hbWUiOiJiYXJyZWQifX19XSxbMSwyLCJFKGosIDEpIiwyLHsic3R5bGUiOnsiYm9keSI6eyJuYW1lIjoiYmFycmVkIn19fV0sWzIsMywiRSgxLCB0KSIsMix7InN0eWxlIjp7ImJvZHkiOnsibmFtZSI6ImJhcnJlZCJ9fX1dLFs0LDUsIkUoMSwgdCkiLDFdLFswLDQsIiIsMCx7ImxldmVsIjoyLCJzdHlsZSI6eyJoZWFkIjp7Im5hbWUiOiJub25lIn19fV0sWzMsNSwiIiwyLHsibGV2ZWwiOjIsInN0eWxlIjp7ImhlYWQiOnsibmFtZSI6Im5vbmUifX19XSxbNiw3LCJFKDEsIHQnKSIsMCx7InN0eWxlIjp7ImJvZHkiOnsibmFtZSI6ImJhcnJlZCJ9fX1dLFs0LDYsIiIsMCx7ImxldmVsIjoyLCJzdHlsZSI6eyJoZWFkIjp7Im5hbWUiOiJub25lIn19fV0sWzUsNywiIiwwLHsibGV2ZWwiOjIsInN0eWxlIjp7ImhlYWQiOnsibmFtZSI6Im5vbmUifX19XSxbMTIsMTMsIlxcbXUiLDEseyJzaG9ydGVuIjp7InNvdXJjZSI6MjAsInRhcmdldCI6MjB9LCJzdHlsZSI6eyJib2R5Ijp7Im5hbWUiOiJub25lIn0sImhlYWQiOnsibmFtZSI6Im5vbmUifX19XSxbMTUsMTYsIkUoMSwgXFx0YXUpIiwxLHsic2hvcnRlbiI6eyJzb3VyY2UiOjIwLCJ0YXJnZXQiOjIwfSwic3R5bGUiOnsiYm9keSI6eyJuYW1lIjoibm9uZSJ9LCJoZWFkIjp7Im5hbWUiOiJub25lIn19fV1d
\begin{tikzcd}[column sep=large]
	E & A & E & A \\
	E &&& A \\
	E &&& A
	\arrow["{E(1, t)}"', "\shortmid"{marking}, from=1-4, to=1-3]
	\arrow["{E(j, 1)}"', "\shortmid"{marking}, from=1-3, to=1-2]
	\arrow["{E(1, t)}"', "\shortmid"{marking}, from=1-2, to=1-1]
	\arrow["{E(1, t)}"{description}, from=2-4, to=2-1]
	\arrow[""{name=0, anchor=center, inner sep=0}, Rightarrow, no head, from=1-4, to=2-4]
	\arrow[""{name=1, anchor=center, inner sep=0}, Rightarrow, no head, from=1-1, to=2-1]
	\arrow["{E(1, t')}", "\shortmid"{marking}, from=3-4, to=3-1]
	\arrow[""{name=2, anchor=center, inner sep=0}, Rightarrow, no head, from=2-4, to=3-4]
	\arrow[""{name=3, anchor=center, inner sep=0}, Rightarrow, no head, from=2-1, to=3-1]
	\arrow["\mu"{description}, draw=none, from=0, to=1]
	\arrow["{E(1, \tau)}"{description}, draw=none, from=2, to=3]
\end{tikzcd}
\quad = \quad
% https://q.uiver.app/#q=WzAsMTAsWzMsMSwiQSJdLFsyLDEsIkUiXSxbMSwxLCJBIl0sWzAsMSwiRSJdLFszLDIsIkEiXSxbMCwyLCJFIl0sWzMsMCwiQSJdLFsyLDAsIkUiXSxbMSwwLCJBIl0sWzAsMCwiRSJdLFswLDEsIkUoMSwgdCcpIiwxXSxbMSwyLCJFKGosIDEpIiwxXSxbMiwzLCJFKDEsIHQnKSIsMV0sWzQsNSwiRSgxLCB0JykiLDAseyJzdHlsZSI6eyJib2R5Ijp7Im5hbWUiOiJiYXJyZWQifX19XSxbMCw0LCIiLDAseyJsZXZlbCI6Miwic3R5bGUiOnsiaGVhZCI6eyJuYW1lIjoibm9uZSJ9fX1dLFszLDUsIiIsMix7ImxldmVsIjoyLCJzdHlsZSI6eyJoZWFkIjp7Im5hbWUiOiJub25lIn19fV0sWzYsNywiRSgxLCB0KSIsMix7InN0eWxlIjp7ImJvZHkiOnsibmFtZSI6ImJhcnJlZCJ9fX1dLFs3LDgsIkUoaiwgMSkiLDIseyJzdHlsZSI6eyJib2R5Ijp7Im5hbWUiOiJiYXJyZWQifX19XSxbOCw5LCJFKDEsIHQpIiwyLHsic3R5bGUiOnsiYm9keSI6eyJuYW1lIjoiYmFycmVkIn19fV0sWzgsMiwiIiwxLHsibGV2ZWwiOjIsInN0eWxlIjp7ImhlYWQiOnsibmFtZSI6Im5vbmUifX19XSxbOSwzLCIiLDEseyJsZXZlbCI6Miwic3R5bGUiOnsiaGVhZCI6eyJuYW1lIjoibm9uZSJ9fX1dLFs2LDAsIiIsMSx7ImxldmVsIjoyLCJzdHlsZSI6eyJoZWFkIjp7Im5hbWUiOiJub25lIn19fV0sWzcsMSwiIiwxLHsibGV2ZWwiOjIsInN0eWxlIjp7ImhlYWQiOnsibmFtZSI6Im5vbmUifX19XSxbMTQsMTUsIlxcbXUnIiwxLHsic2hvcnRlbiI6eyJzb3VyY2UiOjIwLCJ0YXJnZXQiOjIwfSwic3R5bGUiOnsiYm9keSI6eyJuYW1lIjoibm9uZSJ9LCJoZWFkIjp7Im5hbWUiOiJub25lIn19fV0sWzE5LDIwLCJFKDEsIFxcdGF1KSIsMSx7InNob3J0ZW4iOnsic291cmNlIjoyMCwidGFyZ2V0IjoyMH0sInN0eWxlIjp7ImJvZHkiOnsibmFtZSI6Im5vbmUifSwiaGVhZCI6eyJuYW1lIjoibm9uZSJ9fX1dLFsyMSwyMiwiRSgxLCBcXHRhdSkiLDEseyJzaG9ydGVuIjp7InNvdXJjZSI6MjAsInRhcmdldCI6MjB9LCJzdHlsZSI6eyJib2R5Ijp7Im5hbWUiOiJub25lIn0sImhlYWQiOnsibmFtZSI6Im5vbmUifX19XSxbMjIsMTksIj0iLDEseyJzaG9ydGVuIjp7InNvdXJjZSI6MjAsInRhcmdldCI6MjB9LCJzdHlsZSI6eyJib2R5Ijp7Im5hbWUiOiJub25lIn0sImhlYWQiOnsibmFtZSI6Im5vbmUifX19XV0=
\begin{tikzcd}[column sep=large]
	E & A & E & A \\
	E & A & E & A \\
	E &&& A
	\arrow["{E(1, t')}"{description}, from=2-4, to=2-3]
	\arrow["{E(j, 1)}"{description}, from=2-3, to=2-2]
	\arrow["{E(1, t')}"{description}, from=2-2, to=2-1]
	\arrow["{E(1, t')}", "\shortmid"{marking}, from=3-4, to=3-1]
	\arrow[""{name=0, anchor=center, inner sep=0}, Rightarrow, no head, from=2-4, to=3-4]
	\arrow[""{name=1, anchor=center, inner sep=0}, Rightarrow, no head, from=2-1, to=3-1]
	\arrow["{E(1, t)}"', "\shortmid"{marking}, from=1-4, to=1-3]
	\arrow["{E(j, 1)}"', "\shortmid"{marking}, from=1-3, to=1-2]
	\arrow["{E(1, t)}"', "\shortmid"{marking}, from=1-2, to=1-1]
	\arrow[""{name=2, anchor=center, inner sep=0}, Rightarrow, no head, from=1-2, to=2-2]
	\arrow[""{name=3, anchor=center, inner sep=0}, Rightarrow, no head, from=1-1, to=2-1]
	\arrow[""{name=4, anchor=center, inner sep=0}, Rightarrow, no head, from=1-4, to=2-4]
	\arrow[""{name=5, anchor=center, inner sep=0}, Rightarrow, no head, from=1-3, to=2-3]
	\arrow["{\mu'}"{description}, draw=none, from=0, to=1]
	\arrow["{E(1, \tau)}"{description}, draw=none, from=2, to=3]
	\arrow["{E(1, \tau)}"{description}, draw=none, from=4, to=5]
	\arrow["{=}"{description}, draw=none, from=5, to=2]
\end{tikzcd}
\]\[
% https://varkor.github.io/tangle/?t=W1tbNSw3LDUsN11dLFtbWzEsWzEsMCwxLDBdXSxbMSxbMSwwLDEsMF1dLFsxLFsxLDAsMSwwXV1dLFtbMSxbMSwxLDAsMF1dLFsxLFsxLDEsMSwxXV0sWzEsWzEsMCwwLDFdXV0sW1swLFtdXSxbMSxbMSwwLDEsMF1dLFswLFtdXV0sW1swLFtdXSxbMSxbMSwwLDEsMF1dLFswLFtdXV1dLFtbMCwxLjUsMS41LFsiXFxtdSIsMl1dLFswLDEuNSwyLjUsWyJcXHRhdSJdXSxbMSwxLjUsMixbMV1dLFsxLDEuNSwxLFswXV0sWzEsMC41LDEsWzFdXSxbMSwyLjUsMSxbMV1dLFsxLDEuNSwzLFsxXV1dLFtbMCwwLDMsInQiXSxbMSwwLDMsImoiXSxbMiwwLDMsInQiXSxbMSwzLDEsInQnIl1dXQ==&c=F5A3A3,F5CCA3,F5F5A3,CCF5A3,A3F5A3,A3F5CC,A3F5F5,A3CCF5,A3A3F5,CCA3F5,F5A3F5,F5A3CC
\begin{tangle}{(3,4)}[trim y]
	\tgBorderA{(0,0)}{\tgColour4}{\tgColour6}{\tgColour6}{\tgColour4}
	\tgBorderA{(1,0)}{\tgColour6}{\tgColour4}{\tgColour4}{\tgColour6}
	\tgBorderA{(2,0)}{\tgColour4}{\tgColour6}{\tgColour6}{\tgColour4}
	\tgBorderA{(0,1)}{\tgColour4}{\tgColour6}{\tgColour4}{\tgColour4}
	\tgBorderA{(1,1)}{\tgColour6}{\tgColour4}{\tgColour6}{\tgColour4}
	\tgBorderA{(2,1)}{\tgColour4}{\tgColour6}{\tgColour6}{\tgColour6}
	\tgBlank{(0,2)}{\tgColour4}
	\tgBorderA{(1,2)}{\tgColour4}{\tgColour6}{\tgColour6}{\tgColour4}
	\tgBlank{(2,2)}{\tgColour6}
	\tgBlank{(0,3)}{\tgColour4}
	\tgBorderA{(1,3)}{\tgColour4}{\tgColour6}{\tgColour6}{\tgColour4}
	\tgBlank{(2,3)}{\tgColour6}
	\tgCell[(2,0)]{(1,1)}{\mu}
	\tgCell{(1,2)}{\tau}
	\tgArrow{(1,1.5)}{3}
	\tgArrow{(1,0.5)}{1}
	\tgArrow{(0,0.5)}{3}
	\tgArrow{(2,0.5)}{3}
	\tgArrow{(1,2.5)}{3}
	\tgAxisLabel{(0.5,0.75)}{south}{t}
	\tgAxisLabel{(1.5,0.75)}{south}{j}
	\tgAxisLabel{(2.5,0.75)}{south}{t}
	\tgAxisLabel{(1.5,3.25)}{north}{t'}
\end{tangle}
\tangleeq*
% https://varkor.github.io/tangle/?t=W1tbNSw3LDUsN11dLFtbWzEsWzEsMCwxLDBdXSxbMSxbMSwwLDEsMF1dLFsxLFsxLDAsMSwwXV1dLFtbMSxbMSwwLDEsMF1dLFsxLFsxLDAsMSwwXV0sWzEsWzEsMCwxLDBdXV0sW1sxLFsxLDEsMCwwXV0sWzEsWzEsMSwxLDFdXSxbMSxbMSwwLDAsMV1dXSxbWzAsW11dLFsxLFsxLDAsMSwwXV0sWzAsW11dXV0sW1swLDEuNSwyLjUsWyJcXG11JyIsMl1dLFswLDAuNSwxLjUsWyJcXHRhdSJdXSxbMCwyLjUsMS41LFsiXFx0YXUiXV0sWzEsMS41LDIsWzBdXSxbMSwwLjUsMixbMV1dLFsxLDIuNSwyLFsxXV0sWzEsMC41LDEsWzFdXSxbMSwxLjUsMSxbMF1dLFsxLDIuNSwxLFsxXV0sWzEsMS41LDMsWzFdXV0sW1swLDAsMywidCJdLFsxLDAsMywiaiJdLFsyLDAsMywidCJdLFsxLDMsMSwidCciXV1d&c=F5A3A3,F5CCA3,F5F5A3,CCF5A3,A3F5A3,A3F5CC,A3F5F5,A3CCF5,A3A3F5,CCA3F5,F5A3F5,F5A3CC
\begin{tangle}{(3,4)}[trim y]
	\tgBorderA{(0,0)}{\tgColour4}{\tgColour6}{\tgColour6}{\tgColour4}
	\tgBorderA{(1,0)}{\tgColour6}{\tgColour4}{\tgColour4}{\tgColour6}
	\tgBorderA{(2,0)}{\tgColour4}{\tgColour6}{\tgColour6}{\tgColour4}
	\tgBorderA{(0,1)}{\tgColour4}{\tgColour6}{\tgColour6}{\tgColour4}
	\tgBorderA{(1,1)}{\tgColour6}{\tgColour4}{\tgColour4}{\tgColour6}
	\tgBorderA{(2,1)}{\tgColour4}{\tgColour6}{\tgColour6}{\tgColour4}
	\tgBorderA{(0,2)}{\tgColour4}{\tgColour6}{\tgColour4}{\tgColour4}
	\tgBorderA{(1,2)}{\tgColour6}{\tgColour4}{\tgColour6}{\tgColour4}
	\tgBorderA{(2,2)}{\tgColour4}{\tgColour6}{\tgColour6}{\tgColour6}
	\tgBlank{(0,3)}{\tgColour4}
	\tgBorderA{(1,3)}{\tgColour4}{\tgColour6}{\tgColour6}{\tgColour4}
	\tgBlank{(2,3)}{\tgColour6}
	\tgCell[(2,0)]{(1,2)}{\mu'}
	\tgCell{(0,1)}{\tau}
	\tgCell{(2,1)}{\tau}
	\tgArrow{(1,1.5)}{1}
	\tgArrow{(0,1.5)}{3}
	\tgArrow{(2,1.5)}{3}
	\tgArrow{(0,0.5)}{3}
	\tgArrow{(1,0.5)}{1}
	\tgArrow{(2,0.5)}{3}
	\tgArrow{(1,2.5)}{3}
	\tgAxisLabel{(0.5,0.75)}{south}{t}
	\tgAxisLabel{(1.5,0.75)}{south}{j}
	\tgAxisLabel{(2.5,0.75)}{south}{t}
	\tgAxisLabel{(1.5,3.25)}{north}{t'}
\end{tangle}
\]

We may now exhibit the $j$-monads of \cref{relative-monad} as monoids in $\X[j]$: conceptually, the equivalence arises from transposing the loose-cell $E(1, t)$ in the domain of the multiplication operator $\mu$ to the loose-cell $E(t, 1)$ in the codomain of the extension operator $\dag$, via the loose-adjunction $E(1, t) \adj E(t, 1)$. String diagrammatically, this corresponds to bending the string associated with the tight-cell $t$ from pointing down to pointing up.

\begin{theorem}
    \label{relative-monads-are-tight-monoids}
    There is an isomorphism of categories rendering the following diagram commutative.
    % https://q.uiver.app/?q=WzAsMyxbMCwwLCJcXFJNbmQoaikiXSxbMiwwLCJcXE1vbihcXFhbal0pIl0sWzEsMSwiXFxYW0EsIEVdIl0sWzAsMiwiVV9qIiwyXSxbMSwyLCJVX3tcXFhbal19Il0sWzAsMSwiXFxpc28iXV0=
	\[\begin{tikzcd}
		{\RMnd(j)} && {\Mon(\X[j])} \\
		& {\X[A, E]}
		\arrow["{U_j}"', from=1-1, to=2-2]
		\arrow["{U_{\X[j]}}", from=1-3, to=2-2]
		\arrow["\iso", from=1-1, to=1-3]
	\end{tikzcd}\]
\end{theorem}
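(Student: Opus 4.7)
My plan is to construct an isomorphism of categories by exhibiting mutually inverse functors over $\X[A, E]$. Since the forgetful functors are faithful on morphisms and the loose-cell $E(1, t)$ determines the tight-cell $t$ (restrictions being unique up to isomorphism), it suffices to establish, for each tight-cell $t \colon A \to E$, a bijection between $j$-monad structures on $t$ and monoid structures on $E(1, t)$ in $\X[j]$, compatible with morphisms.

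For the unit datum, I use \cref{unital-skew-multicategory}: a monoid unit $\bullet \to E(1, t)$ in the unital skew-multicategory $\X[j]$ is a 2-cell $E(1, j) \tto E(1, t)$, which by \ffness{} of restriction is in natural bijection with 2-cells $\eta \colon j \tto t$ in $\tX$. For the multiplication, I combine two successive bijections. First, pre-composition with the opcartesian 2-cell ${E(j, 1), E(1, t) \tto E(j, t)}$ (which exists in any equipment by \cite[Theorem~7.16]{cruttwell2010unified}) yields a bijection between 2-cells $\mu \colon E(1, t), E(j, 1), E(1, t) \tto E(1, t)$ and 2-cells $E(1, t), E(j, t) \tto E(1, t)$. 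Second, the loose-adjunction $E(1, t) \adj E(t, 1)$ from \cref{loose-adjunction-induced-by-tight-cell}, together with the identification $E(t, t) \iso E(t, 1) \odot E(1, t)$, gives a bijection between the latter and 2-cells $\dag \colon E(j, t) \tto E(t, t)$ via the standard loose-adjunction transpose. Diagrammatically, this is the manipulation that bends the two outer $t$-strings of $\mu$ upward into conjoint form, matching the string diagram for $\dag$ displayed in the unwrapping of the definitions.

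Next I verify that, under this bijection, the three monoid axioms translate to the three relative-monad axioms: the left-unit law $(u, 1_M) \d m = \lambda_{(M; M), 0}(1_M)$ translates to $E(\eta, t) \d \dag = 1_{E(j, t)}$, the right-unit law translates to $E(j, \eta) \d \dag = \pc t$, and the associativity of $m$ translates to the associativity of $\dag$. Each is a short diagram-chase using the zig-zag laws for the two loose-adjunctions $E(1, j) \adj E(j, 1)$ and $E(1, t) \adj E(t, 1)$, together with the opcartesian universal properties. For morphisms, a 2-cell $\tau \colon t \tto t'$ corresponds bijectively to $E(1, \tau) \colon E(1, t) \tto E(1, t')$ by \ffness{} of restriction, and a direct calculation (once again using the bending) shows that the unit and extension conditions for a morphism of $j$-monads coincide with the unit and multiplication conditions for a morphism of monoids. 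This yields the desired functors, mutually inverse by construction, commuting with the forgetful functors to $\X[A, E]$.

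The hardest part will be the equation correspondence. The monoid axioms are written in the left-skew setting using the asymmetric unitors $\lambda, \rho$ and the counit of $E(1, j) \adj E(j, 1)$, while the monad axioms are expressed using $\pc t$ (the unit of the other loose-adjunction $E(1, t) \adj E(t, 1)$); tracking how these auxiliary cells are consumed or produced by the bending bijection is the bulk of the work. Once the correspondence is set up, each axiom follows by a zig-zag computation, but there are several cases to verify separately.
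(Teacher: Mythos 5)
Your proposal is correct and is essentially the paper's own argument: the paper defines the correspondence $\mu \leftrightarrow \dag$ by exactly the transposition you describe — bending the $t$-string across the loose-adjunction $E(1,t) \adj E(t,1)$, using $E(j,t) \iso E(j,1) \odot E(1,t)$ and $E(t,t) \iso E(t,1) \odot E(1,t)$ — with the unit datum handled via the interpretation of the marker as $E(1,j)$ and \ffness{} of restriction, and it likewise treats the correspondence of the axioms as immediate under this identification. Your split into an opcartesian factorisation followed by a loose-adjunction transpose is just an explicit rendering of that bending, so the difference is purely presentational (only note that the first unit law should be written $\dag \d E(\eta, t) = 1_{E(j,t)}$ rather than $E(\eta, t) \d \dag$, and the second retains the whiskering by $\pc j$ as in the paper).
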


\begin{proof}
    Observe that, given a multiplication, we can define an extension operator, and conversely:
    \[
    % https://varkor.github.io/tangle/?t=W1tbNyw1LDcsNV1dLFtbWzAsW11dLFswLFtdXSxbMSxbMSwwLDEsMF1dLFsxLFsxLDAsMSwwXV1dLFtbMixbMl1dLFsyLFszXV0sWzEsWzEsMCwxLDBdXSxbMSxbMSwwLDEsMF1dXSxbWzEsWzEsMCwxLDBdXSxbMSxbMSwxLDAsMF1dLFsxLFsxLDEsMSwxXV0sWzEsWzEsMCwwLDFdXV0sW1sxLFsxLDAsMSwwXV0sWzAsW11dLFsxLFsxLDAsMSwwXV0sWzAsW11dXV0sW1swLDIuNSwyLjUsWyJcXG11IiwyXV0sWzEsMi41LDIsWzBdXSxbMSwzLjUsMixbMV1dLFsxLDEuNSwyLFsxXV0sWzEsMSwxLjUsWzBdXSxbMSwwLjUsMixbMF1dLFsxLDIuNSwxLFswXV0sWzEsMC41LDMsWzBdXSxbMSwyLjUsMyxbMV1dLFsxLDMuNSwxLFsxXV1dLFtbMiwwLDMsImoiXSxbMywwLDMsInQiXSxbMCwzLDEsInQiXSxbMiwzLDEsInQiXV1d&c=F5A3A3,F5CCA3,F5F5A3,CCF5A3,A3F5A3,A3F5CC,A3F5F5,A3CCF5,A3A3F5,CCA3F5,F5A3F5,F5A3CC
	\begin{tangle}{(4,4)}[trim y]
		\tgBlank{(0,0)}{\tgColour6}
		\tgBlank{(1,0)}{\tgColour6}
		\tgBorderA{(2,0)}{\tgColour6}{\tgColour4}{\tgColour4}{\tgColour6}
		\tgBorderA{(3,0)}{\tgColour4}{\tgColour6}{\tgColour6}{\tgColour4}
		\tgBorderC{(0,1)}{3}{\tgColour6}{\tgColour4}
		\tgBorderC{(1,1)}{2}{\tgColour6}{\tgColour4}
		\tgBorderA{(2,1)}{\tgColour6}{\tgColour4}{\tgColour4}{\tgColour6}
		\tgBorderA{(3,1)}{\tgColour4}{\tgColour6}{\tgColour6}{\tgColour4}
		\tgBorderA{(0,2)}{\tgColour6}{\tgColour4}{\tgColour4}{\tgColour6}
		\tgBorderA{(1,2)}{\tgColour4}{\tgColour6}{\tgColour4}{\tgColour4}
		\tgBorderA{(2,2)}{\tgColour6}{\tgColour4}{\tgColour6}{\tgColour4}
		\tgBorderA{(3,2)}{\tgColour4}{\tgColour6}{\tgColour6}{\tgColour6}
		\tgBorderA{(0,3)}{\tgColour6}{\tgColour4}{\tgColour4}{\tgColour6}
		\tgBlank{(1,3)}{\tgColour4}
		\tgBorderA{(2,3)}{\tgColour4}{\tgColour6}{\tgColour6}{\tgColour4}
		\tgBlank{(3,3)}{\tgColour6}
		\tgCell[(2,0)]{(2,2)}{\mu}
		\tgArrow{(2,1.5)}{1}
		\tgArrow{(3,1.5)}{3}
		\tgArrow{(1,1.5)}{3}
		\tgArrow{(0.5,1)}{0}
		\tgArrow{(0,1.5)}{1}
		\tgArrow{(2,0.5)}{1}
		\tgArrow{(0,2.5)}{1}
		\tgArrow{(2,2.5)}{3}
		\tgArrow{(3,0.5)}{3}
		\tgAxisLabel{(2.5,0.75)}{south}{j}
		\tgAxisLabel{(3.5,0.75)}{south}{t}
		\tgAxisLabel{(0.5,3.25)}{north}{t}
		\tgAxisLabel{(2.5,3.25)}{north}{t}
	\end{tangle}
    \hspace{4em}
    % https://varkor.github.io/tangle/?t=W1tbNSw3LDUsN11dLFtbWzEsWzEsMCwxLDBdXSxbMSxbMSwwLDEsMF1dLFsxLFsxLDAsMSwwXV1dLFtbMSxbMSwwLDEsMF1dLFsxLFsxLDEsMSwwXV0sWzEsWzEsMCwxLDFdXV0sW1syLFsxXV0sWzIsWzBdXSxbMSxbMSwwLDEsMF1dXSxbWzAsW11dLFswLFtdXSxbMSxbMSwwLDEsMF1dXV0sW1swLDIsMS41LFsiXFxkYWciLDFdXSxbMSwyLjUsMixbMV1dLFsxLDEuNSwyLFswXV0sWzEsMSwyLjUsWzBdXSxbMSwwLjUsMixbMV1dLFsxLDIuNSwzLFsxXV0sWzEsMi41LDEsWzFdXSxbMSwwLjUsMSxbMV1dLFsxLDEuNSwxLFswXV1dLFtbMCwwLDMsInQiXSxbMSwwLDMsImoiXSxbMiwwLDMsInQiXSxbMiwzLDEsInQiXV1d&c=F5A3A3,F5CCA3,F5F5A3,CCF5A3,A3F5A3,A3F5CC,A3F5F5,A3CCF5,A3A3F5,CCA3F5,F5A3F5,F5A3CC
	\begin{tangle}{(3,4)}[trim y]
		\tgBorderA{(0,0)}{\tgColour4}{\tgColour6}{\tgColour6}{\tgColour4}
		\tgBorderA{(1,0)}{\tgColour6}{\tgColour4}{\tgColour4}{\tgColour6}
		\tgBorderA{(2,0)}{\tgColour4}{\tgColour6}{\tgColour6}{\tgColour4}
		\tgBorderA{(0,1)}{\tgColour4}{\tgColour6}{\tgColour6}{\tgColour4}
		\tgBorderA{(1,1)}{\tgColour6}{\tgColour4}{\tgColour4}{\tgColour6}
		\tgBorder{(1,1)}{0}{1}{0}{0}
		\tgBorderA{(2,1)}{\tgColour4}{\tgColour6}{\tgColour6}{\tgColour4}
		\tgBorder{(2,1)}{0}{0}{0}{1}
		\tgBorderC{(0,2)}{0}{\tgColour4}{\tgColour6}
		\tgBorderC{(1,2)}{1}{\tgColour4}{\tgColour6}
		\tgBorderA{(2,2)}{\tgColour4}{\tgColour6}{\tgColour6}{\tgColour4}
		\tgBlank{(0,3)}{\tgColour4}
		\tgBlank{(1,3)}{\tgColour4}
		\tgBorderA{(2,3)}{\tgColour4}{\tgColour6}{\tgColour6}{\tgColour4}
		\tgCell[(1,0)]{(1.5,1)}{\dag}
		\tgArrow{(2,1.5)}{3}
		\tgArrow{(1,1.5)}{1}
		\tgArrow{(0.5,2)}{0}
		\tgArrow{(0,1.5)}{3}
		\tgArrow{(2,2.5)}{3}
		\tgArrow{(2,0.5)}{3}
		\tgArrow{(0,0.5)}{3}
		\tgArrow{(1,0.5)}{1}
		\tgAxisLabel{(0.5,0.75)}{south}{t}
		\tgAxisLabel{(1.5,0.75)}{south}{j}
		\tgAxisLabel{(2.5,0.75)}{south}{t}
		\tgAxisLabel{(2.5,3.25)}{north}{t}
	\end{tangle}
    \]
    It is immediate that the laws for a relative monad (morphism) are precisely those for a monoid (homomorphism) under these transformations.
\end{proof}

We shall spell out the distinction between the two presentations of a relative monad in $\X = \Cat$ (for which the monoid presentation appears to be new) in \cref{enriched-relative-monads-and-relative-adjunctions}.

\begin{remark}
    \citeauthor{levy2019what}~\cite[21]{levy2019what} suggests that relative monads ought to be monoids in skew-multicategories, but does not give an explicit definition.
\end{remark}

To compare our definition of relative monad to a similar definition in the literature, we observe that, while $j$-monads are precisely monoids in $\X[j]$, they may also be viewed as particular monoids in $\X\lh j$: namely those for which the underlying loose-cell is representable. First, we make the following observation.

\begin{lemma}
    \label{monoids-in-sub-skew-multicategory-is-pullback}
    Let $\M$ be a skew-multicategory and let $\M' \ffto \M$ be a full sub-skew-multicategory. The following square forms a pullback of categories.
    % https://q.uiver.app/?q=WzAsNCxbMCwwLCJcXE1vbihcXE0nKSJdLFsxLDAsIlxcTW9uKFxcTSkiXSxbMSwxLCJcXE1fMSJdLFswLDEsIntcXE0nfV8xIl0sWzMsMiwiIiwwLHsic3R5bGUiOnsidGFpbCI6eyJuYW1lIjoiaG9vayIsInNpZGUiOiJ0b3AifX19XSxbMSwyLCJVX1xcTSJdLFswLDMsIlVfe1xcTSd9IiwyXSxbMCwxLCIiLDAseyJzdHlsZSI6eyJ0YWlsIjp7Im5hbWUiOiJob29rIiwic2lkZSI6InRvcCJ9fX1dLFswLDQsIiIsMSx7ImxldmVsIjoxLCJzdHlsZSI6eyJuYW1lIjoiY29ybmVyIn19XV0=
    \[\begin{tikzcd}
    	{\Mon(\M')} & {\Mon(\M)} \\
    	{{\M'}_1} & {\M_1}
    	\arrow[""{name=0, anchor=center, inner sep=0}, hook, from=2-1, to=2-2]
    	\arrow["{U_\M}", from=1-2, to=2-2]
    	\arrow["{U_{\M'}}"', from=1-1, to=2-1]
    	\arrow[hook, from=1-1, to=1-2]
    	\arrow["\lrcorner"{anchor=center, pos=0.125}, draw=none, from=1-1, to=0]
    \end{tikzcd}\]
\end{lemma}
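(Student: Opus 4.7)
The plan is to use the universal property of pullbacks in $\Cat$ directly: verifying that the canonical comparison functor from $\Mon(\M')$ to the strict pullback is an isomorphism of categories, which amounts to checking bijectivity on objects and on morphisms. Since the inclusion $\M' \hookrightarrow \M$ is full (and faithful), both bijections will follow mechanically from transporting multiplications, units, and monoid-homomorphism data along the inclusion.

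First I would unfold the pullback. An object of the strict pullback is a pair $(M, (M', m, u))$ with $M \in \M'$, $(M', m, u) \in \Mon(\M)$, and $M = M'$ as objects of $\M$; equivalently, a monoid $(M, m, u)$ in $\M$ whose carrier $M$ lies in $\M'$. Because $\M' \hookrightarrow \M$ is a full sub-skew-multicategory, the multimorphisms $m \colon M, M \to M$ and $u \colon \bullet \to M$, together with the left- and right-unitor images $\lambda_{(M;M),0}(1_M)$ and $\rho_{(M;M),1}(\ldots)$, all lie in $\M'$; and the defining equations (associativity and unitality from \cref{monoid-in-skew-multicategory}) are equations of multimorphisms in $\M$ which, by faithfulness of the inclusion, are equivalent to the corresponding equations in $\M'$. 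Hence such data gives a monoid in $\M'$, and conversely every monoid in $\M'$ produces such a pullback datum via the inclusion. This establishes the bijection on objects.

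For morphisms I would argue analogously: a morphism in the pullback is a multimorphism $f \colon M \to M'$ in $\M$ between carriers lying in $\M'$, satisfying $u \d f = u'$ and $m \d f = (f, f) \d m'$. Fullness of the inclusion makes $f$ a unary multimorphism in $\M'$, and faithfulness makes the two equations in $\M$ equivalent to the same equations interpreted in $\M'$, so $f$ is precisely a monoid homomorphism in $\Mon(\M')$. Identities and composition are preserved because the inclusion is a functor of skew-multicategories. Thus the comparison functor is fully faithful and bijective on objects, hence an isomorphism.

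I do not anticipate any real obstacle: the only content is the observation that fullness of a sub-skew-multicategory means membership of the carrier in $\M'$ suffices to place all the accompanying multimorphism structure in $\M'$, so the pullback square is automatically strict. The mild bookkeeping item worth being careful about is that $\Mon$ is a functor of skew-multicategories (used implicitly to define the top arrow), which is clear since monoid structure is defined equationally from composition, identities, and the left- and right-unitors, all of which are preserved by any functor of skew-multicategories.
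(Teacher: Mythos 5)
Your proposal is correct and takes essentially the same route as the paper, which simply observes that, by fullness of $\M' \ffto \M$, a monoid in $\M'$ is precisely a monoid in $\M$ whose carrier lies in $\M'$, with identical homomorphisms; your write-up just spells out the resulting object- and morphism-level bijections explicitly.
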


\begin{proof}
    By definition, a monoid in $\M'$ is a monoid in $\M$ whose carrier is in $\M'$ and, since $\M'$ is a full sub-skew-multicategory, the homomorphisms are identical.
\end{proof}

We then have the following, identifying relative monads with representable loose relative monads, and justifying the terminology of the latter. Henceforth, when we wish to emphasise the distinction between relative monads and loose relative monads, we may call the former \emph{tight}.

\begin{proposition}
    \label{relative-monads-are-loose-relative-monads}
    The following square forms a pullback of categories.
    % https://q.uiver.app/?q=WzAsNCxbMCwwLCJcXFJNbmQoaikiXSxbMSwwLCJcXExSTW5kKGopIl0sWzEsMSwiXFxYXFxsaHtBLCBFfSJdLFswLDEsIlxcWFtBLCBFXSJdLFszLDIsIiIsMCx7InN0eWxlIjp7InRhaWwiOnsibmFtZSI6Imhvb2siLCJzaWRlIjoidG9wIn19fV0sWzEsMiwiVV9qIl0sWzAsMywiVV9qIiwyXSxbMCwxLCIiLDAseyJzdHlsZSI6eyJ0YWlsIjp7Im5hbWUiOiJob29rIiwic2lkZSI6InRvcCJ9fX1dLFswLDQsIiIsMSx7ImxldmVsIjoxLCJzdHlsZSI6eyJuYW1lIjoiY29ybmVyIn19XV0=
	\[\begin{tikzcd}
		{\RMnd(j)} & {\LRMnd(j)} \\
		{\X[A, E]} & {\X\lh{A, E}}
		\arrow[""{name=0, anchor=center, inner sep=0}, hook, from=2-1, to=2-2]
		\arrow["{U_j}", from=1-2, to=2-2]
		\arrow["{U_j}"', from=1-1, to=2-1]
		\arrow[hook, from=1-1, to=1-2]
		\arrow["\lrcorner"{anchor=center, pos=0.125}, draw=none, from=1-1, to=0]
	\end{tikzcd}\]
\end{proposition}

\begin{proof}
    Direct by composing \cref{relative-monads-are-tight-monoids} with \cref{monoids-in-sub-skew-multicategory-is-pullback}, considering the inclusion ${\X[j] \ffto \X\lh j}$.
\end{proof}

Consequently, it is evident that relative monads generalise monads.

\begin{corollary}
    \label{identity-relative-monads-are-monads}
    Let $A$ be an object of $\X$. There is an isomorphism of categories rendering the following diagram commutative.
    % https://q.uiver.app/?q=WzAsMyxbMCwwLCJcXFJNbmQoMV9BKSJdLFsyLDAsIlxcTW5kKEEpIl0sWzEsMSwiXFxYW0EsIEFdIl0sWzAsMiwiVV97MV9BfSIsMl0sWzEsMiwiVV9BIl0sWzAsMSwiXFxpc28iXV0=
	\[\begin{tikzcd}
		{\RMnd(1_A)} && {\Mnd(A)} \\
		& {\X[A, A]}
		\arrow["{U_{1_A}}"', from=1-1, to=2-2]
		\arrow["{U_A}", from=1-3, to=2-2]
		\arrow["\iso", from=1-1, to=1-3]
	\end{tikzcd}\]
\end{corollary}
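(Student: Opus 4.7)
The plan is to reduce the statement to the analogous isomorphism for loose (relative) monads provided by \cref{loose-relative-monad-to-loose-monad} via the pullback characterisation of \cref{relative-monads-are-loose-relative-monads}. Specialising \cref{relative-monads-are-loose-relative-monads} to $j = 1_A$ yields a pullback square
\[
\RMnd(1_A) \;\iso\; \LRMnd(1_A) \times_{\X\lh{A, A}} \X[A, A].
\]
The second half of \cref{loose-relative-monad-to-loose-monad} asserts that the functor $\ph(j, 1) \colon \LRMnd(j) \to \LMnd(A)$ is invertible when $j = 1_A$, and by construction it commutes with the forgetful functors into $\X\lh{A, A}$. Substituting this isomorphism into the pullback above gives
\[
\RMnd(1_A) \;\iso\; \LMnd(A) \times_{\X\lh{A, A}} \X[A, A].
\]

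It then remains to identify the right-hand side with $\Mnd(A)$. To do so, I would establish a tight-version of \cref{relative-monads-are-loose-relative-monads}, namely that $\Mnd(A)$ is the pullback $\LMnd(A) \times_{\X\lh{A, A}} \X[A, A]$. This runs in direct parallel to the proof of \cref{relative-monads-are-loose-relative-monads}: the multicategory associated to the strict monoidal category $\X[A, A]$ (whose tensor is tight-cell composition) embeds as the full sub-multicategory of $\X\lh{A, A}$ spanned by the representable loose-cells, by sending each $t \colon A \to A$ to its companion $A(1, t)$. Full faithfulness of this embedding uses the universal property of loose-identities (to identify multimorphisms) together with the loose-composites $A(1, s) \odot A(1, t) \iso A(1, s t)$ for composable tight-cells (\cite[Theorem~7.16]{cruttwell2010unified}) and the \ffness{} of restriction (so that 2-cells between such composites correspond bijectively to 2-cells between the underlying tight-cells). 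Given this embedding, \cref{monoids-in-sub-skew-multicategory-is-pullback} immediately exhibits $\Mnd(A) = \Mon(\X[A, A])$ as the claimed pullback.

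Combining the two pullback descriptions yields the desired isomorphism $\RMnd(1_A) \iso \Mnd(A)$, and the resulting triangle over $\X[A, A]$ commutes because each functor in the chain of isomorphisms is by construction compatible with taking the underlying tight-cell. The principal obstacle is the verification that $\X[A, A]$ embeds as a full sub-multicategory of $\X\lh{A, A}$ via companions; once that is in place, the corollary follows by purely formal manipulation of pullbacks, with no further calculation required.
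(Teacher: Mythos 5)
Your proposal is correct and follows essentially the same route as the paper: the paper exhibits $\RMnd(1_A)$ and $\Mnd(A)$ as pullbacks of the cospans $\LRMnd(1_A) \to \X\lh{A,A} \leftarrow \X[A,A]$ and $\LMnd(A) \to \X\lh{A,A} \leftarrow \X[A,A]$ via \cref{relative-monads-are-loose-relative-monads} and \cref{monoids-in-sub-skew-multicategory-is-pullback}, then identifies the two cospans using the isomorphism of \cref{loose-relative-monad-to-loose-monad}. The ``tight version'' you propose to verify (that $\X[A,A]$ embeds via companions as the full sub-multicategory of $\X\lh{A,A}$ spanned by representables, with monoids therein being tight-monads) is exactly the observation the paper records in the introduction to \cref{skew-multicategorical-hom-categories} and relies on here, so your extra verification only makes explicit what the paper takes as established.
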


\begin{proof}
	We have the following pullbacks
	\[
	% https://q.uiver.app/?q=WzAsNCxbMCwwLCJcXFJNbmQoMV9BKSJdLFsxLDAsIlxcTFJNbmQoMV9BKSJdLFsxLDEsIlxcWFxcbGh7QSwgQX0iXSxbMCwxLCJcXFhbQSwgQV0iXSxbMywyLCIiLDAseyJzdHlsZSI6eyJ0YWlsIjp7Im5hbWUiOiJob29rIiwic2lkZSI6InRvcCJ9fX1dLFsxLDIsIlVfezFfQX0iXSxbMCwzLCJVX3sxX0F9IiwyXSxbMCwxLCIiLDAseyJzdHlsZSI6eyJ0YWlsIjp7Im5hbWUiOiJob29rIiwic2lkZSI6InRvcCJ9fX1dLFswLDQsIiIsMSx7ImxldmVsIjoxLCJzdHlsZSI6eyJuYW1lIjoiY29ybmVyIn19XV0=
	\begin{tikzcd}
		{\RMnd(1_A)} & {\LRMnd(1_A)} \\
		{\X[A, A]} & {\X\lh{A, A}}
		\arrow[""{name=0, anchor=center, inner sep=0}, hook, from=2-1, to=2-2]
		\arrow["{U_{1_A}}", from=1-2, to=2-2]
		\arrow["{U_{1_A}}"', from=1-1, to=2-1]
		\arrow[hook, from=1-1, to=1-2]
		\arrow["\lrcorner"{anchor=center, pos=0.125}, draw=none, from=1-1, to=0]
	\end{tikzcd}
	\hspace{4em}
	% https://q.uiver.app/?q=WzAsNCxbMCwwLCJcXE1uZChBKSJdLFsxLDAsIlxcTE1uZChBKSJdLFsxLDEsIlxcWFxcbGh7QSwgQX0iXSxbMCwxLCJcXFhbQSwgQV0iXSxbMywyLCIiLDAseyJzdHlsZSI6eyJ0YWlsIjp7Im5hbWUiOiJob29rIiwic2lkZSI6InRvcCJ9fX1dLFsxLDIsIlVfQSJdLFswLDMsIlVfQSIsMl0sWzAsMSwiIiwwLHsic3R5bGUiOnsidGFpbCI6eyJuYW1lIjoiaG9vayIsInNpZGUiOiJ0b3AifX19XSxbMCw0LCIiLDEseyJsZXZlbCI6MSwic3R5bGUiOnsibmFtZSI6ImNvcm5lciJ9fV1d
	\begin{tikzcd}
		{\Mnd(A)} & {\LMnd(A)} \\
		{\X[A, A]} & {\X\lh{A, A}}
		\arrow[""{name=0, anchor=center, inner sep=0}, hook, from=2-1, to=2-2]
		\arrow["{U_A}", from=1-2, to=2-2]
		\arrow["{U_A}"', from=1-1, to=2-1]
		\arrow[hook, from=1-1, to=1-2]
		\arrow["\lrcorner"{anchor=center, pos=0.125}, draw=none, from=1-1, to=0]
	\end{tikzcd}
	\]
	by \cref{relative-monads-are-loose-relative-monads} and \cref{monoids-in-sub-skew-multicategory-is-pullback}. Hence both categories exhibit pullbacks of the same cospans by \cref{loose-relative-monad-to-loose-monad}.
\end{proof}

\begin{remark}
	\label{extension-systems}
	The data of a monad relative to the identity is reminiscent of the definition of \emph{extension system} of \textcite[Definition~2.3]{marmolejo2010monads}, a generalisation of the \emph{algebraic theories in extension form} of \cite[Exercise~1.3.12]{manes1976algebraic} to arbitrary 2-categories, each of which comprises a 1-cell $t \colon A \to A$, a 2-cell $\eta \colon 1 \tto t$, and a family of functions
	\[\{ \X[\cdot, A](x, t y) \to \X[\cdot, A](t x, t y) \}_{x, y \colon \cdot \to A}\]
	that is well-behaved in the sense of \cite[Definition~2.1]{marmolejo2010monads}, and subject to unitality and associativity laws. From \cite[Lemma~2.2]{marmolejo2010monads}, it follows that such families are equivalent to 2-cells $t \circ t \tto t$, and hence to 2-cells $A(1, t) \tto A(t, t)$. Thus, when $j = 1$, extension systems are essentially the same as $j$-relative monads (in extension form).

	In a similar fashion, the definition of monoid in $\X\lh{1_A}$ is reminiscent of (an analogous generalisation of) the definition of \emph{algebraic theory in clone form} of \textcite[3.2]{manes1976algebraic}, which comprises a 1-cell $t \colon A \to A$, a 2-cell $\eta \colon 1 \tto t$, and a well-behaved family of functions
	\[\{ \X[\cdot, A](x, t y) \times \X[\cdot, A](y, t z) \to \X[\cdot, A](x, t z) \}_{x, y, z \colon \cdot \to A}\]
	subject to unitality and associativity laws. As above, such families are equivalent to 2-cells $t \circ t \tto t$, and hence to 2-cells $A(1, t), A(1, t) \tto A(1, t)$. Thus, when $j = 1$, algebraic theories in clone form are essentially the same as $j$-relative monads (in monoid form).

	The approach of \textcite{marmolejo2010monads} was generalised by \textcite{lobbia2023distributive} to capture relative monads in $\Cat$. However, the failure of an analogue of \cite[Lemma~2.2]{marmolejo2010monads} in that setting means that the definition of \emph{relative monad} of \cite[Definition~2.1]{lobbia2023distributive} is not in general equivalent to our definition. We will show in \cref{relative-monads-in-VCat} that, in contrast to the definition of \citeauthor{lobbia2023distributive} (\cf~\cite[Example~2.2(iii)]{lobbia2023distributive}), our definition recovers the expected notion of enriched relative monad.
\end{remark}

Relative monads behave particularly nicely when their roots are dense. An example of this behaviour is given by the following, which permits the representation of $j$-monads as \emph{$j$-representable} loose-monads: \ie{} those loose-monads for which the underlying loose-cell is of the form ${E(j, t) \colon A \lto A}$ for some tight-cell $t \colon A \to E$.

\begin{theorem}
    \label{relative-monads-are-loose-monads}
    There is a functor $E(j, {-}) \colon \RMnd(j) \to \LMnd(A)$, \ff{} if $j$ is dense, in which case the following square forms a pullback of categories.
    % https://q.uiver.app/?q=WzAsNCxbMSwwLCJcXExNbmQoQSkiXSxbMCwwLCJcXFJNbmQoaikiXSxbMCwxLCJcXFhbQSwgRV0iXSxbMSwxLCJcXFhcXGxoe0EsIEF9Il0sWzIsMywiRShqLCB7LX0pIiwyLHsic3R5bGUiOnsidGFpbCI6eyJuYW1lIjoiaG9vayIsInNpZGUiOiJ0b3AifX19XSxbMSwyLCJVX2oiLDJdLFswLDMsIlVfQSJdLFsxLDAsIkUoaiwgey19KSIsMCx7InN0eWxlIjp7InRhaWwiOnsibmFtZSI6Imhvb2siLCJzaWRlIjoidG9wIn19fV0sWzEsNCwiIiwxLHsibGV2ZWwiOjEsInN0eWxlIjp7Im5hbWUiOiJjb3JuZXIifX1dXQ==
	\[\begin{tikzcd}
		{\RMnd(j)} & {\LMnd(A)} \\
		{\X[A, E]} & {\X\lh{A, A}}
		\arrow[""{name=0, anchor=center, inner sep=0}, "{E(j, {-})}"', hook, from=2-1, to=2-2]
		\arrow["{U_j}"', from=1-1, to=2-1]
		\arrow["{U_A}", from=1-2, to=2-2]
		\arrow["{E(j, {-})}", hook, from=1-1, to=1-2]
		\arrow["\lrcorner"{anchor=center, pos=0.125}, draw=none, from=1-1, to=0]
	\end{tikzcd}\]
\end{theorem}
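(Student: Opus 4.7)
The plan is to construct $E(j, -)$ by composition, then use density to obtain both full faithfulness and the pullback property from the analogous facts for loose relative monads. Concretely, I would define $E(j, -)$ as the composite
\[\RMnd(j) \hookrightarrow \LRMnd(j) \xto{\ph(j, 1)} \LMnd(A),\]
of the full inclusion from \cref{relative-monads-are-loose-relative-monads} with the restriction functor from \cref{loose-relative-monad-to-loose-monad}. Since the composite carries the underlying tight-cell $t$ of a $j$-relative monad to $E(1, t)(j, 1) \iso E(j, t)$, the square commutes. Morphisms $\tau \colon t \tto t'$ are sent to $E(j, \tau) \colon E(j, t) \tto E(j, t')$.

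Now suppose $j$ is dense. I would unfold the pullback assertion as follows: given a tight-cell $t \colon A \to E$ and a loose-monad $(E(j, t), \mu, \eta_T)$ on $A$, we must produce a unique $j$-relative monad $(t, \dag, \eta)$ whose image is this loose-monad, and similarly for morphisms. The data translates via three instances of \cref{density-implies-j*-is-ff}: taking $n = 0$, $g = j$, $h = t$ converts the nullary unit $\eta_T \colon \tto E(j, t)$ to a 2-cell $\eta \colon j \tto t$; taking $r_1 = E(j, t)$ and $g = h = t$ converts the binary multiplication $\mu \colon E(j, t), E(j, t) \tto E(j, t)$ to the extension operator $\dag \colon E(j, t) \tto E(t, t)$; and taking $g = t$, $h = t'$ converts an underlying 2-cell $E(j, t) \tto E(j, t')$ of a loose-monad morphism to an underlying 2-cell $t \tto t'$ of a relative-monad morphism. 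This last bijection already gives full faithfulness of $E(j, -)$ on 2-cells once the correspondence of morphism axioms is established.

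The remaining task, and the main obstacle, is to verify that the loose-monad axioms on $(E(j, t), \mu, \eta_T)$ correspond under these bijections to the $j$-relative monad axioms on $(t, \dag, \eta)$, and similarly for the morphism axioms. The key observation is that the density bijection of \cref{density-implies-j*-is-ff} is natural in pre-composition with further loose-cells and in post-composition with 2-cells: this is essentially the right-lift universal property that characterises density. Thus each unit and associativity law on one side is an equation between 2-cells whose codomain (or appropriately adjusted codomain, via bending) lies in the image of the bijection, and pasting $\dag$ or $\mu$ on either end converts between the two. Associativity in particular requires applying the bijection in multiple positions; the cleanest way to handle it is to rewrite both sides of the loose-monad associativity as 2-cells $E(j, t), E(j, t), E(j, t) \tto E(j, t)$, then apply the bijection $E(j, -)$-fully-faithfully to rewrite them as the two sides of the relative-monad associativity with codomain $E(t, t)$. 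The unit laws and morphism axioms are analogous but simpler. Once this is in place, the pullback property follows: the assignment $(t, (\mu, \eta_T)) \mapsto (t, \dag, \eta)$ is functorial and provides the inverse to the comparison map from $\RMnd(j)$ to the pullback.
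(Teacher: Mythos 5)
Your proposal is correct, and its first half coincides with the paper's: the functor is defined as the composite $\RMnd(j) \ffto \LRMnd(j) \xto{\ph(j,1)} \LMnd(A)$, using \cref{relative-monads-are-loose-relative-monads,loose-relative-monad-to-loose-monad}. Where you genuinely diverge is in the dense case: you establish the pullback and \ffness{} by translating the loose-monad data and axioms into relative-monad data and axioms by hand, via the bijections of \cref{density-implies-j*-is-ff} (the $n=1$ instance for $\mu \leftrightarrow \dag$, the $n=2$ instance for associativity, the unary instance for morphisms), whereas the paper never re-verifies any axioms: it notes that the bottom composite $\X[A,E] \to \X\lh{A,A}$ is \ff{} by \cref{density-implies-j*-is-ff}, so the outer rectangle is an instance of the pullback describing monoids in a full sub-skew-multicategory (\cref{monoids-in-sub-skew-multicategory-is-pullback}), the axiom translation having been done once and for all in \cref{relative-monads-are-tight-monoids}; \ffness{} of the top functor then comes for free from stability of \ff{} functors under pullback. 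Your route is more elementary and self-contained but carries the burden of the axiom correspondence, which your strategy (naturality of the pasting bijection under pre- and post-composition, rewriting both sides of each law in the $E(j,{-})$ form, and invoking injectivity) does handle correctly. One small slip: the unit $\eta_T \colon \tto E(j,t)$ corresponds to $\eta \colon j \tto t$ simply by cartesianness of the restriction, no density needed, and the instance $n=0$, $g=j$, $h=t$ of \cref{density-implies-j*-is-ff} is not the bijection you want there (it relates $\tto E(j,t)$ to $E(j,j) \tto E(j,t)$); likewise the first unit law is checked by a direct computation with the restriction identities rather than through one of the density bijections. Neither point affects the soundness of the plan.
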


\begin{proof}
	Using \cref{loose-relative-monad-to-loose-monad} and \cref{relative-monads-are-tight-monoids}, we have the following diagram of categories.
	% https://q.uiver.app/?q=WzAsNixbMCwwLCJcXFJNbmQoaikiXSxbMSwwLCJcXExSTW5kKGopIl0sWzEsMSwiXFxYXFxsaHtBLCBFfSJdLFswLDEsIlxcWFtBLCBFXSJdLFsyLDAsIlxcTE1uZChBKSJdLFsyLDEsIlxcWFxcbGh7QSwgQX0iXSxbMywyLCIiLDAseyJzdHlsZSI6eyJ0YWlsIjp7Im5hbWUiOiJob29rIiwic2lkZSI6InRvcCJ9fX1dLFsxLDIsIlVfaiIsMV0sWzAsMywiVV9qIiwyXSxbMCwxLCIiLDAseyJzdHlsZSI6eyJ0YWlsIjp7Im5hbWUiOiJob29rIiwic2lkZSI6InRvcCJ9fX1dLFsyLDUsIlxccGgoaiwgMSkiLDJdLFs0LDUsIlVfQSJdLFsxLDQsIlxccGgoaiwgMSkiXSxbMCw2LCIiLDEseyJsZXZlbCI6MSwic3R5bGUiOnsibmFtZSI6ImNvcm5lciJ9fV1d
	\[\begin{tikzcd}
		{\RMnd(j)} & {\LRMnd(j)} & {\LMnd(A)} \\
		{\X[A, E]} & {\X\lh{A, E}} & {\X\lh{A, A}}
		\arrow[""{name=0, anchor=center, inner sep=0}, hook, from=2-1, to=2-2]
		\arrow["{U_j}"{description}, from=1-2, to=2-2]
		\arrow["{U_j}"', from=1-1, to=2-1]
		\arrow[hook, from=1-1, to=1-2]
		\arrow["{\ph(j, 1)}"', from=2-2, to=2-3]
		\arrow["{U_A}", from=1-3, to=2-3]
		\arrow["{\ph(j, 1)}", from=1-2, to=1-3]
		\arrow["\lrcorner"{anchor=center, pos=0.125}, draw=none, from=1-1, to=0]
	\end{tikzcd}\]
	The composite functor $\X[A, E] \to \X\lh{A, A}$ is $E(j, 1)$, which, when $j$ is dense, is \ff{} by \cref{density-implies-j*-is-ff}, and hence the outer rectangle is also a pullback by \cref{monoids-in-sub-skew-multicategory-is-pullback}. In this case, since \ff{} functors are stable under pullback, the composite functor $\RMnd(j) \to \LMnd(A)$ is also \ff{}.
\end{proof}

This characterisation will be related in \cref{examples-of-enriched-relative-monads} to several notions appearing in the literature.

\begin{remark}
	\label{formal-mw-monads}
    In the terminology of \textcite{lack2014monads}, monoids in $\X\lh{j_* \adj j^*}$ are \emph{formal mw-monads}. In their setting it is not possible to recover (tight) relative monads along the lines of \cref{relative-monads-are-loose-relative-monads}, since restricting to the monoids whose underlying loose-cell is left adjoint does not precisely recover the tight-cells (\cf{}~\cref{representables-vs-maps}). \citeauthor{lack2014monads} note the similarity of their definition to that of relative monads, but do not make this relationship precise.
\end{remark}

\begin{remark}
	Let $j^* \colon E \lto A$ be a loose-cell. The definition of loose relative monad in \cref{loose-relative-monad} admits a natural generalisation to a structure comprising a loose-cell $t_* \colon A \lto E$ admitting a composite $j^* \odot t_* \colon A \lto A$, equipped with 2-cells $\mu \colon t_*, j^*, t_* \tto t_*$ and $\eta \colon \tto j^* \odot t_*$ satisfying unit and associativity laws. This recovers as special cases various generalisations of (relative) monads that have appeared in the literature.
	\begin{itemize}
		\item When $j^*$ is corepresentable, this is precisely the definition of loose relative monad in \cref{loose-relative-monad}; when $t_*$ is furthermore representable, this is precisely the definition of relative monad in \cref{relative-monad}.
		\item When $t_*$ is representable, we recover a generalisation of relative monad proposed by \textcite[20]{levy2019what}; when $j^*$ is furthermore representable (rather than corepresentable as might be expected), this is precisely the definition of \emph{$E$-monad on $A$} of \textcite[Definition~1]{spivey2009algebras}. In particular, the correspondence between relative monads with left-adjoint roots and $E$-monads on $A$ with right-adjoint roots observed in \cite[\S6]{altenkirch2010monads} follows immediately.
	\end{itemize}
	The study of these generalised loose relative monads is deferred to future work.
\end{remark}

\subsection{Relative monads as monoids in a multicategory}

While in general we require the generality of skew-multicategories to capture relative monads, it is natural to wonder whether there are situations in which it suffices to consider simpler structures. In this section, we shall show that it often suffices to consider a (non-skew) multicategory; in the following section, we shall show that it often suffices to consider a skew-monoidal category. In proving the latter, we shall give a conceptual explanation for the skew-monoidal categories of functors studied by \textcite{altenkirch2015monads}.

First, we observe that, to capture relative monads, it suffices to consider monoids in the underlying right-normal sub-left-skew-multicategory of an associative-normal left-skew-multicategory, which is formed by restricting to those multimorphisms in which the marker $\bullet$ may appear only in the first position~(\cf{}~\cref{skew-multicategories-as-generalised-multicategories}).

\begin{lemma}
	\label{monoids-in-M-rho-are-monoids-in-M}
	Let $\M$ be an associative-normal left-skew-multicategory, and denote by $\M_\rho$ its wide (associative- and) right-normal sub-skew-multicategory. Then there is an isomorphism of categories rendering the following diagram commutative.
	% https://q.uiver.app/?q=WzAsMyxbMCwwLCJcXE1vbihcXE1fXFxyaG8pIl0sWzIsMCwiXFxNb24oXFxNKSJdLFsxLDEsIlxcTV8xIl0sWzAsMSwiXFxpc28iXSxbMCwyLCJVX3tcXE1fXFxyaG99IiwyXSxbMSwyLCJVX1xcTSJdXQ==
	\[\begin{tikzcd}
		{\Mon(\M_\rho)} && {\Mon(\M)} \\
		& {\M_1}
		\arrow["\iso", from=1-1, to=1-3]
		\arrow["{U_{\M_\rho}}"', from=1-1, to=2-2]
		\arrow["{U_\M}", from=1-3, to=2-2]
	\end{tikzcd}\]
\end{lemma}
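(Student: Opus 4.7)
Plan: The wide full inclusion $\iota \colon \M_\rho \ffto \M$ of associative-normal left-skew-multicategories induces, by the functoriality of $\Mon$ recorded at the end of \cref{monoid-in-skew-multicategory}, a functor $\Mon(\iota) \colon \Mon(\M_\rho) \to \Mon(\M)$ that commutes with the forgetful functors to $\M_1$. I shall show that $\Mon(\iota)$ is an isomorphism of categories by verifying directly that the defining data and equations of a monoid in $\M$ correspond bijectively to those of a monoid in $\M_\rho$.

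The key observation is that the data of any monoid $(M, m, u) \in \Mon(\M)$ lies wholly within $\M_\rho$. Indeed, the multiplication $m \colon M, M \to M$ contains no marker $\bullet$, and the unit $u \colon \bullet \to M$ has its single marker in the (first) initial position; hence both are right-normal multimorphisms of $\M_\rho$. A monoid morphism $\tau \colon M \to M'$ is a fortiori unary, and so automatically lies in $\M_\rho$ as well.

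For the axioms, the left-unit law $(u, 1_M) \d m = \lambda_{(M;M),0}(1_M)$, the associativity law $(m, 1_M) \d m = (1_M, m) \d m$, and the equations defining monoid morphisms involve only composites and applications of $\lambda$ that preserve right-normality, and so transfer immediately between $\M$ and $\M_\rho$ along $\iota$. The subtlety is the right-unit axiom $\rho_{(M;M),1}((1_M, u) \d m) = 1_M$, whose intermediate composite $(1_M, u) \d m \colon M, \bullet \to M$ is not right-normal and so does not itself lie in $\M_\rho$. Nevertheless, the asserted equation is between multimorphisms $M \to M$ which do lie in $\M_\rho$; and since $\rho$ is invertible in the $\{\alpha, \rho\}$-normal $\M_\rho$, the left-hand side may be constructed intrinsically via the skew-multicategory operations of $\M_\rho$. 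The resulting axiom in $\M_\rho$ is equivalent to the original in $\M$ by $\rho$-normalization. I expect the main delicacy to lie in making precise this intrinsic formulation of the right-unit law; once that is done, injectivity on objects, fullness, and faithfulness follow from $\iota$ being a wide full sub-skew-multicategory, so that $\Mon(\iota)$ is an isomorphism and the commutative triangle of the statement is verified.
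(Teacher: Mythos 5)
Your proposal is correct and takes essentially the same route as the paper, whose entire proof is the observation that the data and axioms of a monoid (homomorphism) involve only the structure of the underlying $\{\alpha,\rho\}$-normal sub-skew-multicategory $\M_\rho$. The ``delicacy'' you flag about the right-unit law is precisely the implicit content of that observation: substituting $u$ into the second argument of $m$ and then applying $\rho$ is, by construction, the corresponding composition operation of $\M_\rho$ (where $\rho$ is strict rather than merely invertible), so the two formulations of that axiom literally coincide.
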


\begin{proof}
	The data for a monoid (homomorphism) in $\M$ only involves the data of the underlying $\{ \alpha, \rho \}$-normal left-skew-multicategory.
\end{proof}

While it is not possible to directly represent relative monads as monoids in the underlying (non-skew) multicategory of $\X[j]$, since the data of a monoid involves a multimorphism with domain~$\bullet$, it is possible to characterise when $\X[j]_\rho$ is equivalent to a multicategory.

\begin{proposition}
	\label{left-normal-if-dense}
	Let $\jAE$ be a tight-cell. $\X[j]_\rho$ is left-normal if $j$ is dense.
\end{proposition}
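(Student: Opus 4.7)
The plan is to reduce the proof of left-normality to showing that the left unitor $\lambda_0$ is bijective for every multimorphism of $\X[j]$: in the right-normal setting of $\X[j]_\rho$, this is the only left unitor that is not already manifestly invertible (by the interaction law $\lambda_k \d \rho_k = 1$ combined with invertibility of the $\rho_k$ for $k \geq 1$).

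Fix a multimorphism $(p_1, \ldots, p_n; q)$ in $\X[j]$ with $p_i = E(1, t_i)$ and $q = E(1, s)$. I will exhibit an explicit bijection
\[\X[j]_\rho(\bullet, p_1, \ldots, p_n; q) \xrightarrow{\sim} \X[j]_\rho(p_1, \ldots, p_n; q)\]
as the composite of two canonical bijections, and then identify its inverse with $\lambda_0$. The first bijection uses \cref{lift-companion}: the companion $j_* = E(1, j)$ exhibits the restriction $E(j, s) \iso E(1, s)(j, 1)$ as the right lift $E(1, s) \rf E(1, j)$, whose universal property identifies 2-cells $p_n, j^*, \ldots, j^*, p_1, j^*, j_* \tto E(1, s)$ with 2-cells $p_n, j^*, \ldots, j^*, p_1, j^* \tto E(j, s)$ by absorbing the trailing $j_*$ into the codomain. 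The second bijection specialises the density bijection of \cref{density-implies-j*-is-ff} to $g = 1_E$ and $h = s$: density of $j$ produces a bijection between these 2-cells and $\X[j]_\rho(p_1, \ldots, p_n; q)$, removing the trailing $j^* = E(j, 1)$ from the domain and restoring the codomain from $E(j, s)$ to $E(1, s)$.

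The principal obstacle is then to verify that the inverse of this composite bijection coincides with $\lambda_0$. By construction, $\lambda_0(\phi)$ is given by pasting the counit $\varepsilon = \cp j$ of the loose-adjunction $E(1, j) \adj E(j, 1)$ (\cref{loose-adjunction-induced-by-tight-cell}) together with the opcartesian unitor for the loose-identity $E(1, 1)_E$. The counit of the right lift in the first bijection (arising from bending $j$) and the specialised density 2-cell in the second (built from $\cp j$ and the loose-identity unitor) are precisely the two pieces that assemble to give the pasting defining $\lambda_0$. This verification reduces to a compatibility check between canonical 2-cells in $\X$; while notation-heavy, it is the principal calculational step of the argument.
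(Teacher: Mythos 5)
Your proof is correct and follows essentially the same route as the paper's: both exhibit the bijection between the marker-free and marker-inserted multimorphism sets by composing the companion right-lift bijection $E(j, s) \iso E(1, s) \rf E(1, j)$ (\cref{lift-companion}) with the density bijection of \cref{density-implies-j*-is-ff}, the only immaterial difference being that you instantiate the latter at the identity and $s$ on the uncollapsed chain, whereas the paper instantiates it at the first domain object and the codomain after collapsing each adjacent pair $E(j,1), E(1, t_i)$ into the restriction $E(j, t_i)$ --- and your closing identification of the composite bijection with $\lambda_0$ (via $\cp j$ and the loose-identity unitor) is precisely the check the paper leaves implicit. One small inaccuracy: your opening appeal to the law $\lambda_k \d \rho_k = 1$ and to ``invertibility of the $\rho_k$'' is unneeded (and the $\rho_k$ are not invertible in general); in $\X[j]_\rho$ the only left-unitors present are the front insertions $\lambda_0$, simply because no multimorphism there has a marker beyond the first position.
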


\begin{proof}
    2-cells $E(1, f_1), \ldots, E(j, f_n) \tto E(1, g)$ are in bijection with 2-cells $E(j, f_1), \ldots, E(j, f_n) \tto E(j, g)$ when $j$ is dense by \cref{density-implies-j*-is-ff}, and hence with 2-cells $E(1, j), E(j, f_1), \ldots, E(j, f_n) \tto E(1, g)$, exhibiting the left-unitor of $\X[j]_\rho$ as invertible.
\end{proof}

Therefore, relative monads with dense roots may be represented as monoids in the multicategory $\X[j]_{\lambda\rho}$ whose multimorphisms $f_1, \ldots, f_n \to g$ are the 2-cells ${E(1, j), E(j, f_1), \ldots, E(j, f_n) \tto E(1, g)}$.

\begin{corollary}\label{j-dense-rmnd}
	Let $\jAE$ be a dense tight-cell. Then there is an isomorphism of categories rendering the following diagram commutative.
	% https://q.uiver.app/?q=WzAsNCxbMCwwLCJcXE1vbihcXFhbal1fe1xcbGFtYmRhXFxyaG99KSJdLFsxLDAsIlxcTW9uKFxcWFtqXSkiXSxbMSwxLCJcXFhbQSwgRV0iXSxbMiwwLCJcXFJNbmQoaikiXSxbMCwyLCJVX3tcXFhbal1fe1xcbGFtYmRhXFxyaG99fSIsMl0sWzEsMiwiVV97XFxYW2pdfSIsMV0sWzAsMSwiXFxpc28iXSxbMywyLCJVX2oiXSxbMSwzLCJcXGlzbyIsMl0sWzEsMywiXFx0ZXh0dXB7KFxcY3JlZntyZWxhdGl2ZS1tb25hZHMtYXJlLXRpZ2h0LW1vbm9pZHN9KX0iLDAseyJzdHlsZSI6eyJib2R5Ijp7Im5hbWUiOiJub25lIn0sImhlYWQiOnsibmFtZSI6Im5vbmUifX19XV0=
	\[\begin{tikzcd}[column sep=7em]
		{\Mon(\X[j]_{\lambda\rho})} & {\Mon(\X[j])} & {\RMnd(j)} \\
		& {\X[A, E]}
		\arrow["{U_{\X[j]_{\lambda\rho}}}"', from=1-1, to=2-2]
		\arrow["{U_{\X[j]}}"{description}, from=1-2, to=2-2]
		\arrow["\iso", from=1-1, to=1-2]
		\arrow["{U_j}", from=1-3, to=2-2]
		\arrow["\iso"', from=1-2, to=1-3]
		\arrow["{\textup{(\cref{relative-monads-are-tight-monoids})}}", draw=none, from=1-2, to=1-3]
	\end{tikzcd}\]
\end{corollary}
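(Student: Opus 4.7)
The plan is to chain together three isomorphisms of categories over $\X[A,E]$, obtaining the required commuting triangle from right to left.

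The rightmost isomorphism $\Mon(\X[j]) \iso \RMnd(j)$ over $\X[A,E]$ is already \cref{relative-monads-are-tight-monoids}, so nothing new is required there. The leftmost isomorphism is the one I must establish, and it will be factored through the intermediate category $\Mon(\X[j]_\rho)$. First I apply \cref{monoids-in-M-rho-are-monoids-in-M} to $\M \defeq \X[j]$ to get the isomorphism $\Mon(\X[j]_\rho) \iso \Mon(\X[j])$ over $\X[j]_1 = \X[A,E]$; this step is purely formal, since the data of a monoid (and monoid morphism) in $\M$ only uses multimorphisms with markers appearing in the leftmost position.

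The main step will then be to identify $\Mon(\X[j]_\rho)$ with $\Mon(\X[j]_{\lambda\rho})$ under the hypothesis of density. By \cref{left-normal-if-dense}, $\X[j]_\rho$ is left-normal when $j$ is dense, so the left-unitor $\lambda$ is a bijection. Consequently $\X[j]_\rho$ is a fully normal (\ie{} associative-normal, right-normal, left-normal) left-skew-multicategory, and such a structure is the same thing as an ordinary multicategory: explicitly, the multicategory $\X[j]_{\lambda\rho}$ with $\M(f_1,\ldots,f_n;g) \defeq \X[j]_\rho(\bullet,f_1,\ldots,f_n;g)$, whose multimorphisms unfold to 2-cells of the form $E(1,j), E(j,f_1), \ldots, E(j,f_n) \tto E(1,g)$ by the description in the proof of \cref{left-normal-if-dense}. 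The bijection sending a multimorphism of $\X[j]_{\lambda\rho}$ to the corresponding multimorphism of $\X[j]_\rho$ preserves identities, composition, and unit, hence induces an isomorphism on categories of monoids, the underlying carriers being unchanged.

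I do not anticipate any real obstacle; the only point requiring care is to check that the identification of a fully normal left-skew-multicategory with a multicategory really preserves the monoid data (unit, multiplication, and associativity/unit axioms) strictly, so that the isomorphism of categories of monoids commutes on the nose with the forgetful functors to $\X[A,E]$. This is a direct unpacking: the relevant unit and multiplication multimorphisms live in degrees $0$ and $2$ respectively, where the translation between $\lambda$-normalised and marker-free presentations is given by precomposition with the unit $u \colon \bullet \to E(1,j)$ exhibiting $E(1,j)$ as the unit of $\X[j]$, and the monoid axioms transport across this translation by the coherence equations of \cref{alpha-normal-left-skew-multicategory}.
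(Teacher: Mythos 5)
Your proposal is correct and follows essentially the same route as the paper: the paper likewise identifies monoids in $\X[j]_{\lambda\rho}$ with monoids in $\X[j]_\rho$ via left-normality from density (\cref{left-normal-if-dense}) and then invokes \cref{monoids-in-M-rho-are-monoids-in-M} before composing with \cref{relative-monads-are-tight-monoids}. The only difference is the order in which the two identifications are stated, which is immaterial.
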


\begin{proof}
	By \cref{left-normal-if-dense}, a monoid (homomorphism) in $\X[j]_{\lambda\rho}$ is equivalently a monoid (homomorphism) in $\X[j]_\rho$, from which the result follows by \cref{monoids-in-M-rho-are-monoids-in-M}.
\end{proof}

\subsection{Relative monads as monoids in a skew-monoidal category}

To relate our characterisation of relative monads as monoids in a skew-multicategory to the characterisation of \textcite{altenkirch2010monads, altenkirch2015monads} of relative monads as monoids in a skew-monoidal category, we consider representability of the skew-multicategory $\X[j]_\rho$. The appropriate notion of representability turns out to be the \emph{left-representability} of \cite[Definition~4.4]{bourke2018skew}: in particular, when $\X[j]$ admits a tensor product satisfying a certain universal property with respect to right-normal multimorphisms, it is possible to equip the category $\X[j]_1$ with skew-monoidal structure $(\skt, j)$, such that monoids in $(\X[j]_1, \skt, j)$ are equivalent to monoids in $\X[j]$.

Since the definition of monoid in a skew-monoidal category has not yet appeared explicitly in the literature, we give the definition here.

\begin{definition}[{\cf{}~\cites[Theorem~5]{altenkirch2010monads}[Theorems~3.4 \& 3.5]{altenkirch2015monads}}]
    \label{monoid-in-skew-monoidal-category}
    Let $(\M, \skt, J, \alpha, \lambda, \rho)$ be a left-skew-monoidal category~\cite[Definition~2.1]{szlachanyi2012skew}. A \emph{monoid} in $\M$ comprises
    \begin{enumerate}
        \item an object $M \in \M$, the \emph{carrier};
        \item a morphism $m \colon M \skt M \to M$, the \emph{multiplication};
        \item a morphism $u \colon J \to M$, the \emph{unit},
    \end{enumerate}
    rendering the following diagrams commutative.
	\[
	% https://q.uiver.app/#q=WzAsMyxbMCwwLCJKIFxcc2t0IE0iXSxbMSwwLCJNIFxcc2t0IE0iXSxbMSwxLCJNIl0sWzAsMiwiXFxsYW1iZGFfTSIsMl0sWzEsMiwibSJdLFswLDEsInUgXFxza3QgTSJdXQ==
	\begin{tikzcd}
		{J \skt M} & {M \skt M} \\
		& M
		\arrow["{\lambda_M}"', from=1-1, to=2-2]
		\arrow["m", from=1-2, to=2-2]
		\arrow["{u \skt M}", from=1-1, to=1-2]
	\end{tikzcd}
	\hspace{2em}
	% https://q.uiver.app/#q=WzAsNCxbMCwwLCJNIl0sWzEsMCwiTSBcXHNrdCBKIl0sWzEsMSwiTSBcXHNrdCBNIl0sWzAsMSwiTSJdLFswLDMsIiIsMCx7ImxldmVsIjoyLCJzdHlsZSI6eyJoZWFkIjp7Im5hbWUiOiJub25lIn19fV0sWzEsMiwiTSBcXHNrdCB1Il0sWzAsMSwiXFxyaG9fTSJdLFsyLDMsIm0iXV0=
	\begin{tikzcd}
		M & {M \skt J} \\
		M & {M \skt M}
		\arrow[Rightarrow, no head, from=1-1, to=2-1]
		\arrow["{M \skt u}", from=1-2, to=2-2]
		\arrow["{\rho_M}", from=1-1, to=1-2]
		\arrow["m", from=2-2, to=2-1]
	\end{tikzcd}
	\hspace{2em}
	% https://q.uiver.app/?q=WzAsNSxbMCwwLCIoTSBcXHNrdCBNKSBcXHNrdCBNIl0sWzIsMCwiTSBcXHNrdCAoTSBcXHNrdCBNKSJdLFsyLDEsIk0gXFxza3QgTSJdLFswLDEsIk0gXFxza3QgTSJdLFsxLDIsIk0iXSxbMCwxLCJcXGFscGhhX3tNLCBNLCBNfSJdLFsxLDIsIk0gXFxza3QgbSJdLFswLDMsIm0gXFxza3QgTSIsMl0sWzIsNCwibSJdLFszLDQsIm0iLDJdXQ==
		\begin{tikzcd}[row sep=1.5em, column sep=small]
		{(M \skt M) \skt M} && {M \skt (M \skt M)} \\
		{M \skt M} && {M \skt M} \\
		& M
		\arrow["{\alpha_{M, M, M}}", from=1-1, to=1-3]
		\arrow["{M \skt m}", from=1-3, to=2-3]
		\arrow["{m \skt M}"', from=1-1, to=2-1]
		\arrow["m", from=2-3, to=3-2]
		\arrow["m"', from=2-1, to=3-2]
	\end{tikzcd}\]
    A \emph{monoid homomorphism} from $(M, u, m)$ to $(M', u', m')$ is a morphism $f \colon M \to M'$ rendering the following diagrams commutative.
	\[
	% https://q.uiver.app/#q=WzAsMyxbMSwwLCJKIl0sWzAsMSwiTSJdLFsyLDEsIk0iXSxbMSwyLCJmIiwyXSxbMCwxLCJ1IiwyXSxbMCwyLCJ1JyJdXQ==
	\begin{tikzcd}
		& J \\
		M && M
		\arrow["f"', from=2-1, to=2-3]
		\arrow["u"', from=1-2, to=2-1]
		\arrow["{u'}", from=1-2, to=2-3]
	\end{tikzcd}
	\hspace{4em}
	% https://q.uiver.app/?q=WzAsNCxbMCwwLCJNIFxcc2t0IE0iXSxbMSwwLCJNJyBcXHNrdCBNJyJdLFswLDEsIk0iXSxbMSwxLCJNJyJdLFsxLDMsIm0nIl0sWzIsMywiZiIsMl0sWzAsMiwibSIsMl0sWzAsMSwiZiBcXHNrdCBmIl1d
	\begin{tikzcd}
		{M \skt M} & {M' \skt M'} \\
		M & {M'}
		\arrow["{m'}", from=1-2, to=2-2]
		\arrow["f"', from=2-1, to=2-2]
		\arrow["m"', from=1-1, to=2-1]
		\arrow["{f \skt f}", from=1-1, to=1-2]
	\end{tikzcd}
	\]
    Monoids in $\M$ and their homomorphisms form a category $\Mon(\M)$ functorial in $\M$. Denote by $U_{\M} \colon \Mon(\M) \to \M$ the faithful functor sending each monoid $(M, m, u)$ to its carrier $M$.
\end{definition}

\begin{theorem}
	\label{Xj1-is-skew-monoidal}
	Suppose that $\X$ admits left extensions of tight-cells $A \to E$ along a tight-cell $\jAE$. Then the category $\X[j]_1$ is equipped with left-skew-monoidal structure for which there is an isomorphism of categories rendering the following diagram commutative.
	% https://q.uiver.app/?q=WzAsNCxbMCwwLCJcXE1vbihcXFhbal1fMSkiXSxbMSwwLCJcXE1vbihcXFhbal0pIl0sWzEsMSwiXFxYW0EsIEVdIl0sWzIsMCwiXFxSTW5kKGopIl0sWzAsMiwiVV97XFxYW2pdXzF9IiwyXSxbMSwyLCJVX3tcXFhbal19IiwxXSxbMCwxLCJcXGlzbyJdLFszLDIsIlVfaiJdLFsxLDMsIlxcaXNvIiwyXSxbMSwzLCJcXHRleHR1cHsoXFxjcmVme3JlbGF0aXZlLW1vbmFkcy1hcmUtdGlnaHQtbW9ub2lkc30pfSIsMCx7InN0eWxlIjp7ImJvZHkiOnsibmFtZSI6Im5vbmUifSwiaGVhZCI6eyJuYW1lIjoibm9uZSJ9fX1dXQ==
	\[\begin{tikzcd}[column sep=7em]
		{\Mon(\X[j]_1)} & {\Mon(\X[j])} & {\RMnd(j)} \\
		& {\X[A, E]}
		\arrow["{U_{\X[j]_1}}"', from=1-1, to=2-2]
		\arrow["{U_{\X[j]}}"{description}, from=1-2, to=2-2]
		\arrow["\iso", from=1-1, to=1-2]
		\arrow["{U_j}", from=1-3, to=2-2]
		\arrow["\iso"', from=1-2, to=1-3]
		\arrow["{\textup{(\cref{relative-monads-are-tight-monoids})}}", draw=none, from=1-2, to=1-3]
	\end{tikzcd}\]
	Furthermore, $\X[j]_1$ is
	\begin{enumerate}
        \item associative-normal if every such left extension $j \plx f \colon E \to E$ is $j$-absolute;
        \item left-normal if $j$ is dense;
        \item right-normal if $j$ is \ff{}.
    \end{enumerate}
\end{theorem}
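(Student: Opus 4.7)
The plan is to define a left-skew-monoidal structure on $\X[j]_1 = \X[A, E]$ with tensor $t \skt s \defeq (j \plx t) \circ s$ and unit $I \defeq j$, and to identify this structure with the skew-multicategory $\X[j]$ via a left-representability argument. The structural transformations are determined as follows. The right unitor $\rho_t \colon t \tto t \skt j = (j \plx t) \circ j$ is the unit $\lextcell_t$ of the left extension. The left unitor $\lambda_t \colon j \skt t = (j \plx j) \circ t \tto t$ arises by whiskering with $t$ the canonical 2-cell $\varepsilon \colon j \plx j \tto 1_E$, obtained by transposing $1_j$ through the universal property of the left extension $j \plx j$. The associator $\alpha_{t_1, t_2, t_3} \colon (j \plx ((j \plx t_1) \circ t_2)) \circ t_3 \tto (j \plx t_1) \circ (j \plx t_2) \circ t_3$ arises by whiskering with $t_3$ the canonical 2-cell $j \plx ((j \plx t_1) \circ t_2) \tto (j \plx t_1) \circ (j \plx t_2)$, which corresponds under the universal property of the left extension $j \plx ((j \plx t_1) \circ t_2)$ to the whiskering of $\lextcell_{t_2}$ with $(j \plx t_1)$. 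The coherence axioms follow routinely from repeated application of the universal properties of the left extensions involved.

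To identify monoids in $(\X[j]_1, \skt, j)$ with monoids in the skew-multicategory $\X[j]$, I would show that $\X[j]_\rho$ is left-representable in the sense of \cite[\S4]{bourke2018skew}. For each pair $(t, s)$, the universal binary multimap $u_{t, s} \colon (t, s) \to t \skt s$ is the 2-cell $E(1, t), E(j, 1), E(1, s) \tto E(1, (j \plx t) \circ s)$ obtained by bending the leftmost companion $E(1, t)$ (using the loose-adjunction $E(1, t) \adj E(t, 1)$) and then applying the colimit-cylinder 2-cell $E(j, 1) \tto E(t, j \plx t)$ that exhibits $j \plx t$ as a pointwise weighted colimit. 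Its universal property is a direct consequence of the universal property of the colimit together with the zig-zag laws for the loose-adjunction. Combining this with \cref{relative-monads-are-tight-monoids} then yields the commutative diagram of the theorem; concretely, the multiplication $(j \plx t) \circ t \tto t$ of a monoid in $\X[j]_1$ corresponds, under the right lift $E(1, t) \rf E(1, t) \iso E(t, t)$, to the extension operator $\dag \colon E(j, t) \tto E(t, t)$ of the associated relative monad.

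For the three normality statements, the results follow directly from earlier lemmas. Associative-normality (invertibility of $\alpha$) reduces to the isomorphism $j \plx ((j \plx t_1) \circ t_2) \iso (j \plx t_1) \circ (j \plx t_2)$, which is the content of \cref{j-plx-preserves-j-absolute-colimits} whenever $j \plx t_2$ is $j$-absolute. Left-normality (invertibility of $\lambda$) reduces to the invertibility of $\varepsilon \colon j \plx j \tto 1_E$, which is precisely the definition of density (\cref{density}). Right-normality (invertibility of $\rho$) reduces to the invertibility of $\lextcell_t$ for every $t \colon A \to E$, which is \cref{ff-implies-lx-unit-is-invertible}. The main technical obstacle is the construction of the universal multimap $u_{t, s}$ and the verification of its universal property, which requires careful string-diagrammatic manipulation of the loose-adjunctions between companions and conjoints; the remaining coherence axioms and normality statements then follow essentially mechanically from the universal properties of left extensions.
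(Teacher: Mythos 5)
Your proposal is correct and follows essentially the same route as the paper: exhibit $\X[j]_\rho$ as left-representable (unit $E(1,j)$, universal binary map built from the extension cylinder $E(j,1) \tto E(t, j \plx t)$ and companion bending, with the universal property coming from \cref{hom-cell-via-left-extension}), transfer monoids via \cref{relative-monads-are-tight-monoids}, and deduce the three normality statements from \cref{j-plx-preserves-j-absolute-colimits}, density, and \cref{ff-implies-lx-unit-is-invertible}. The only deviations are cosmetic: your hand-construction of the tensor, unitors, associator and the "routine" coherence check is redundant once left-representability is in place, since the paper simply invokes Bourke--Lack's Theorem 6.1 to obtain the skew-monoidal structure, and for left-normality the paper goes through \cref{left-normal-if-dense} together with Bourke--Lack's Theorem 6.3 rather than your (equivalent) direct argument that density makes the comparison $j \plx j \tto 1_E$ invertible.
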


\begin{proof}
	By the universal property of the left extension $j \plx f$, there is a 2-cell $E(1, f), E(j, 1) \tto E(1, j \plx f)$, and hence a 2-cell $E(1, f), E(j, 1), E(1, g) \tto E(1, (j \plx f) g)$ for all tight-cells $f, g \colon A \to E$.
    2-cells $g \d (j \plx f) \tto h$ are in bijection with 2-cells $E(1, f), E(j, 1), E(1, g) \tto E(1, h)$ by \cref{hom-cell-via-left-extension}.
	Thus, $(j \plx {-}) \c \ph$ together with the unit $E(1, j)$ exhibits $\X[j]_\rho$ as left-representable in the sense of \cite[Definition~4.4]{bourke2018skew}. Thus, by \cite[Theorem~6.1]{bourke2018skew}, $(\X[j]_\rho)_1 = \X[j]_1$ is left-skew-monoidal.

	The data of a monoid (homomorphism) in $\X[j]_1$ coincides with the data of a monoid (homomorphism) in $\X[j]$ by the above; that the laws coincide follows from the definitions of the structural transformations in the left-skew-monoidal category induced by an associative-normal left-skew-multicategory, observing that the laws in \cref{monoid-in-skew-monoidal-category} are precisely the internalisations of the laws in \cref{monoid-in-skew-multicategory}.

	Furthermore,
	\begin{enumerate}
		\item the associator $(f \skt g) \skt h \to f \skt (g \skt h)$ is given by the canonical 2-cell $h \d (j \plx (g \d (j \plx f))) \tto (h \d (j \plx g)) \d (j \plx f)$ induced by precomposition of $j \plx (g \d (j \plx f)) \tto (j \plx g) \d (j \plx f)$ by $h$, which is hence invertible if left extensions along $j$ are $j$-absolute, since $j \plx g$ preserves $j$-absolute colimits by \cref{j-plx-preserves-j-absolute-colimits};
		\item invertibility of the left-unitor follows from \cref{left-normal-if-dense}, since in this case $\X[j]_\rho$ is a multicategory, and left-representable multicategories are left-normal by \cite[Theorem~6.3]{bourke2018skew};
		\item the right-unitor $f \to f \skt j$ is given by the canonical 2-cell $f \tto j \d (j \plx f)$, which is hence invertible if $j$ is \ff{} by \cref{ff-implies-lx-unit-is-invertible}. \qedhere
	\end{enumerate}
\end{proof}

\begin{remark}
    From \cref{Xj1-is-skew-monoidal}, we recover \cites[Theorem~4]{altenkirch2010monads}[Theorems~3.1]{altenkirch2015monads} regarding skew-monoidality of $\Cat[A, E]$ given a well-behaved functor $\jAE$, \cites[Theorem~6]{altenkirch2010monads}[Theorem~4.4]{altenkirch2015monads} regarding sufficient conditions for monoidality, and \cite[Example 3.6]{uustalu2020eilenberg} regarding sufficient conditions for normality; and in conjunction with \cref{relative-monads-are-tight-monoids} recover \cites[Theorem~5]{altenkirch2010monads}[Theorems~3.4 \& 3.5]{altenkirch2015monads} regarding the equivalence between $j$-monads and monoids in $\Cat[A, E]$.

	Note that \cite[Example 3.6]{uustalu2020eilenberg} states that the sufficient conditions for normality are also necessary. However, this is not true. For example, for a counterexample for right-normality, consider the following diagram of categories.
	% https://q.uiver.app/?q=WzAsMyxbMCwxLCJcXHsgMCBcXHJpZ2h0cmlnaHRhcnJvd3MgMSBcXH0iXSxbMSwwLCIxIl0sWzIsMSwiMSJdLFswLDEsIlxcdW5pdCJdLFsxLDIsIjFfMSJdLFswLDIsIlxcdW5pdCIsMl1d
	\[\begin{tikzcd}
		& 1 \\
		{\{ 0 \rightrightarrows 1 \}} && 1
		\arrow["\unit", from=2-1, to=1-2]
		\arrow["{1_1}", from=1-2, to=2-3]
		\arrow["\unit"', from=2-1, to=2-3]
	\end{tikzcd}\]
	The unique (identity) 2-cell $\unit \tto \unit \d 1_1$ exhibits $1_1$ as the (pointwise) left extension $\unit \plx \unit$. Therefore, the skew-monoidal category $\Cat[\unit]$ is right-normal. However $\unit$ is not \ff{}.
\end{remark}

\begin{remark}
	The conditions of \cref{Xj1-is-skew-monoidal} correspond to the \emph{well-behavedness} conditions of \cites[Definition~4]{altenkirch2010monads}[Definition~4.1]{altenkirch2010monads}, and the \emph{eleuthericity} conditions of \cite[\S7.3]{lucyshyn2016enriched}. We observe in passing that these conditions essentially characterise cocompletions under classes of weights, as observed by \textcite[\S8]{szlachanyi2017tensor} for well-behavedness, and by \textcite[Theorem~7.8]{lucyshyn2016enriched} for eleuthericity. We shall prove in future work that this holds more generally in our formal setting, and thereby deduce that relative monads in such cases are equivalent to monads preserving classes of colimits.
\end{remark}

\section{Relative adjunctions}
\label{relative-adjunctions}

The study of monads is inseparable from the study of adjunctions. An adjunction is the structure obtained by splitting the underlying endo-1-cell $A \xto t A$ of a monad into a composable pair of 1-cells $A \xto\ell C \xto r A$ in such a way that the monad structure may be recovered from corresponding structure on $(\ell, r)$. It is often convenient to study properties of monads in terms of the adjunctions that induce them: in this way, an adjunction acts as a notion of presentation for a monad. In this section, we examine the concept of relative adjunction and show that it behaves in many ways analogously to the non-relative concept, though there are subtleties in the theory not present in the non-relative setting.

\begin{definition}
    \label{relative-adjunction}
    Let $\X$ be an equipment. A \emph{relative adjunction} in $\X$ comprises
    % https://q.uiver.app/?q=WzAsMyxbMCwxLCJBIl0sWzEsMCwiQyJdLFsyLDEsIkUiXSxbMCwxLCJcXGVsbCIsMCx7ImxhYmVsX3Bvc2l0aW9uIjo0MH1dLFsxLDIsInIiLDAseyJsYWJlbF9wb3NpdGlvbiI6NjB9XSxbMCwyLCJqIiwyXSxbMyw0LCIiLDAseyJvZmZzZXQiOjIsImxldmVsIjoxLCJzdHlsZSI6eyJuYW1lIjoiYWRqdW5jdGlvbiJ9fV1d
    \[\begin{tikzcd}
    	& C \\
    	A && E
    	\arrow[""{name=0, anchor=center, inner sep=0}, "\ell"{pos=0.4}, from=2-1, to=1-2]
    	\arrow[""{name=1, anchor=center, inner sep=0}, "r"{pos=0.6}, from=1-2, to=2-3]
    	\arrow["j"', from=2-1, to=2-3]
    	\arrow["\dashv"{anchor=center}, shift right=2, draw=none, from=0, to=1]
    \end{tikzcd}\]
    \begin{enumerate}
        \item a tight-cell $\jAE$, the \emph{root};
        \item a tight-cell $\ell \colon A \to C$, the \emph{left (relative) adjoint};
        \item a tight-cell $r \colon C \to E$, the \emph{right (relative) adjoint};
        \item an isomorphism $\sharp \colon C(\ell, 1) \iso E(j, r) \cocolon \flat$, the \emph{(left- and right-) transposition operators}.
    \end{enumerate}
    \begin{tangleeqs*}
    % https://varkor.github.io/tangle/?t=W1tbNywzLDVdXSxbW1sxLFsxLDAsMSwwXV0sWzAsW11dXSxbWzEsWzEsMSwxLDBdXSxbMSxbMCwwLDEsMV1dXSxbWzEsWzEsMSwxLDBdXSxbMSxbMSwwLDAsMV1dXSxbWzEsWzEsMCwxLDBdXSxbMCxbXV1dXSxbWzEsMC41LDEsWzBdXSxbMSwwLjUsMyxbMF1dLFswLDEsMS41LFsiXFxzaGFycCIsMV1dLFswLDEsMi41LFsiXFxmbGF0IiwxXV0sWzEsMC41LDIsWzBdXSxbMSwxLjUsMixbMV1dXSxbWzAsMCwzLCJcXGVsbCJdLFswLDMsMSwiXFxlbGwiXV1d&c=F5A3A3,F5CCA3,F5F5A3,CCF5A3,A3F5A3,A3F5CC,A3F5F5,A3CCF5,A3A3F5,CCA3F5,F5A3F5,F5A3CC
	\begin{tangle}{(2,4)}[trim y]
		\tgBorderA{(0,0)}{\tgColour6}{\tgColour2}{\tgColour2}{\tgColour6}
		\tgBlank{(1,0)}{\tgColour2}
		\tgBorderA{(0,1)}{\tgColour6}{\tgColour2}{\tgColour4}{\tgColour6}
		\tgBorderA{(1,1)}{\tgColour2}{\tgColour2}{\tgColour2}{\tgColour4}
		\tgBorderA{(0,2)}{\tgColour6}{\tgColour4}{\tgColour2}{\tgColour6}
		\tgBorderA{(1,2)}{\tgColour4}{\tgColour2}{\tgColour2}{\tgColour2}
		\tgBorderA{(0,3)}{\tgColour6}{\tgColour2}{\tgColour2}{\tgColour6}
		\tgBlank{(1,3)}{\tgColour2}
		\tgArrow{(0,0.5)}{1}
		\tgArrow{(0,2.5)}{1}
		\tgCell[(1,0)]{(0.5,1)}{\sharp}
		\tgCell[(1,0)]{(0.5,2)}{\flat}
		\tgArrow{(0,1.5)}{1}
		\tgArrow{(1,1.5)}{3}
		\tgAxisLabel{(0.5,0.75)}{south}{\ell}
		\tgAxisLabel{(0.5,3.25)}{north}{\ell}
	\end{tangle}
    \=
    % https://varkor.github.io/tangle/?t=W1tbNywzXV0sW1tbMSxbMSwwLDEsMF1dXSxbWzEsWzEsMCwxLDBdXV0sW1sxLFsxLDAsMSwwXV1dLFtbMSxbMSwwLDEsMF1dXV0sW1sxLDAuNSwxLFswXV0sWzEsMC41LDMsWzBdXSxbMSwwLjUsMixbMF1dXSxbWzAsMCwzLCJcXGVsbCJdLFswLDMsMSwiXFxlbGwiXV1d&c=F5A3A3,F5CCA3,F5F5A3,CCF5A3,A3F5A3,A3F5CC,A3F5F5,A3CCF5,A3A3F5,CCA3F5,F5A3F5,F5A3CC
	\begin{tangle}{(1,4)}[trim y]
		\tgBorderA{(0,0)}{\tgColour6}{\tgColour2}{\tgColour2}{\tgColour6}
		\tgBorderA{(0,1)}{\tgColour6}{\tgColour2}{\tgColour2}{\tgColour6}
		\tgBorderA{(0,2)}{\tgColour6}{\tgColour2}{\tgColour2}{\tgColour6}
		\tgBorderA{(0,3)}{\tgColour6}{\tgColour2}{\tgColour2}{\tgColour6}
		\tgArrow{(0,0.5)}{1}
		\tgArrow{(0,2.5)}{1}
		\tgArrow{(0,1.5)}{1}
		\tgAxisLabel{(0.5,0.75)}{south}{\ell}
		\tgAxisLabel{(0.5,3.25)}{north}{\ell}
	\end{tangle}
    \hspace{4em}
    % https://varkor.github.io/tangle/?t=W1tbNyw1LDMsNV1dLFtbWzEsWzEsMCwxLDBdXSxbMSxbMSwwLDEsMF1dXSxbWzEsWzEsMSwxLDBdXSxbMSxbMSwwLDAsMV1dXSxbWzEsWzEsMSwxLDBdXSxbMSxbMCwwLDEsMV1dXSxbWzEsWzEsMCwxLDBdXSxbMSxbMSwwLDEsMF1dXV0sW1sxLDAuNSwyLFswXV0sWzAsMSwxLjUsWyJcXGZsYXQiLDFdXSxbMCwxLDIuNSxbIlxcc2hhcnAiLDFdXSxbMSwwLjUsMSxbMF1dLFsxLDAuNSwzLFswXV0sWzEsMS41LDEsWzFdXSxbMSwxLjUsMyxbMV1dXSxbWzAsMCwzLCJqIl0sWzEsMCwzLCJyIl0sWzAsMywxLCJqIl0sWzEsMywxLCJyIl1dXQ==&c=F5A3A3,F5CCA3,F5F5A3,CCF5A3,A3F5A3,A3F5CC,A3F5F5,A3CCF5,A3A3F5,CCA3F5,F5A3F5,F5A3CC
	\begin{tangle}{(2,4)}[trim y]
		\tgBorderA{(0,0)}{\tgColour6}{\tgColour4}{\tgColour4}{\tgColour6}
		\tgBorderA{(1,0)}{\tgColour4}{\tgColour2}{\tgColour2}{\tgColour4}
		\tgBorderA{(0,1)}{\tgColour6}{\tgColour4}{\tgColour2}{\tgColour6}
		\tgBorderA{(1,1)}{\tgColour4}{\tgColour2}{\tgColour2}{\tgColour2}
		\tgBorderA{(0,2)}{\tgColour6}{\tgColour2}{\tgColour4}{\tgColour6}
		\tgBorderA{(1,2)}{\tgColour2}{\tgColour2}{\tgColour2}{\tgColour4}
		\tgBorderA{(0,3)}{\tgColour6}{\tgColour4}{\tgColour4}{\tgColour6}
		\tgBorderA{(1,3)}{\tgColour4}{\tgColour2}{\tgColour2}{\tgColour4}
		\tgArrow{(0,1.5)}{1}
		\tgCell[(1,0)]{(0.5,1)}{\flat}
		\tgCell[(1,0)]{(0.5,2)}{\sharp}
		\tgArrow{(0,0.5)}{1}
		\tgArrow{(0,2.5)}{1}
		\tgArrow{(1,0.5)}{3}
		\tgArrow{(1,2.5)}{3}
		\tgAxisLabel{(0.5,0.75)}{south}{j}
		\tgAxisLabel{(1.5,0.75)}{south}{r}
		\tgAxisLabel{(0.5,3.25)}{north}{j}
		\tgAxisLabel{(1.5,3.25)}{north}{r}
	\end{tangle}
    \=
    % https://varkor.github.io/tangle/?t=W1tbNyw1LDNdXSxbW1sxLFsxLDAsMSwwXV0sWzEsWzEsMCwxLDBdXV0sW1sxLFsxLDAsMSwwXV0sWzEsWzEsMCwxLDBdXV0sW1sxLFsxLDAsMSwwXV0sWzEsWzEsMCwxLDBdXV0sW1sxLFsxLDAsMSwwXV0sWzEsWzEsMCwxLDBdXV1dLFtbMSwwLjUsMixbMF1dLFsxLDAuNSwxLFswXV0sWzEsMC41LDMsWzBdXSxbMSwxLjUsMSxbMV1dLFsxLDEuNSwzLFsxXV0sWzEsMS41LDIsWzFdXV0sW1swLDAsMywiaiJdLFsxLDAsMywiciJdLFswLDMsMSwiaiJdLFsxLDMsMSwiciJdXV0=&c=F5A3A3,F5CCA3,F5F5A3,CCF5A3,A3F5A3,A3F5CC,A3F5F5,A3CCF5,A3A3F5,CCA3F5,F5A3F5,F5A3CC
	\begin{tangle}{(2,4)}[trim y]
		\tgBorderA{(0,0)}{\tgColour6}{\tgColour4}{\tgColour4}{\tgColour6}
		\tgBorderA{(1,0)}{\tgColour4}{\tgColour2}{\tgColour2}{\tgColour4}
		\tgBorderA{(0,1)}{\tgColour6}{\tgColour4}{\tgColour4}{\tgColour6}
		\tgBorderA{(1,1)}{\tgColour4}{\tgColour2}{\tgColour2}{\tgColour4}
		\tgBorderA{(0,2)}{\tgColour6}{\tgColour4}{\tgColour4}{\tgColour6}
		\tgBorderA{(1,2)}{\tgColour4}{\tgColour2}{\tgColour2}{\tgColour4}
		\tgBorderA{(0,3)}{\tgColour6}{\tgColour4}{\tgColour4}{\tgColour6}
		\tgBorderA{(1,3)}{\tgColour4}{\tgColour2}{\tgColour2}{\tgColour4}
		\tgArrow{(0,1.5)}{1}
		\tgArrow{(0,0.5)}{1}
		\tgArrow{(0,2.5)}{1}
		\tgArrow{(1,0.5)}{3}
		\tgArrow{(1,2.5)}{3}
		\tgArrow{(1,1.5)}{3}
		\tgAxisLabel{(0.5,0.75)}{south}{j}
		\tgAxisLabel{(1.5,0.75)}{south}{r}
		\tgAxisLabel{(0.5,3.25)}{north}{j}
		\tgAxisLabel{(1.5,3.25)}{north}{r}
	\end{tangle}
	\end{tangleeqs*}
    We denote by $\ljr$ such data\footnotemark{} (by convention leaving the transposition operators implicit), and call $C$ the \emph{apex}. A \emph{$j$-relative adjunction} (alternatively \emph{adjunction relative to $j$}, or simply \emph{$j$-adjunction}) is a relative adjunction with root $j$.
    \footnotetext{Neither \textcite{ulmer1968properties} nor \textcite{altenkirch2010monads,altenkirch2015monads} introduced notation for relative adjunctions. We follow the convention of \cite[359]{street1978yoneda}, which places the root on the same side of $\adj$ as the right adjoint. However, we note that the alternative convention $\ell \mathrel{{}_j\!\!\adj} r$, which was introduced in \cite[\S0.2]{blackwell1976some} (\cf{}~\cite[\S1]{szigeti1983limits}), has also been used in recent work (\cf{}~\cite{staton2020classical,lobbia2023distributive}).}
\end{definition}

\begin{remark}
    Our definition of relative adjunction coincides with that of \cite[\S3]{wood1982abstract} in a representable equipment.
\end{remark}

\begin{remark}
    Relative adjunctions whose roots are \ff{} are sometimes called \emph{partial adjunctions}~(\eg{} in \cite[\S1.11]{kelly1982basic}), as in this case we may view the left adjoint $\ell \colon A \to C$ as being a \emph{partial morphism} from $E$ to $C$.
\end{remark}

\begin{example}
	\label{trivial-relative-adjunctions}
    Consider tight-cells $\ell, \jAE$. We have that $\ell \jadj 1_E$ if and only if $\ell \iso j$ (by \ffness{} of restriction); and that $1_A \jadj j$ if and only if $j$ if \ff{} (by
	\cref{ff-iff-invertible}).
\end{example}

There are several equivalent formulations of adjunctions~\cite[Theorem~IV.1.2]{maclane1998categories}, for which analogues exist for relative adjunctions. A subtlety is that the definition of counit is not immediately evident for relative adjunctions. We may resolve this difficulty via the techniques of \cref{skew-multicategorical-hom-categories}, using the loose-cell $E(j, 1) \colon E \lto A$ to facilitate composition of tight-cells $r \colon C \to E$ and $\ell \colon A \to C$.

\begin{lemma}
    \label{reformulations-of-relative-adjunction}
    Let $\jAE$, $\ell \colon A \to C$, and $r \colon C \to E$ be tight-cells. The following data are equivalent, exhibiting a relative adjunction $\ljr$.
    \begin{enumerate}
        \item (Hom isomorphism) An isomorphism $\sharp \colon C(\ell, 1) \iso E(j, r) \cocolon \flat$.
        \item (Universal arrow) A 2-cell $\eta \colon j \tto \ell \d r$, the \emph{unit}, and a 2-cell $\flat \colon E(j, r) \tto C(\ell, 1)$ rendering the following diagrams commutative.
		\[
		% https://q.uiver.app/#q=WzAsNCxbMCwwLCJBKDEsIDEpIl0sWzEsMSwiRShqLCByXFxlbGwpIl0sWzAsMSwiQyhcXGVsbCwgXFxlbGwpIl0sWzEsMCwiRShqLCBqKSJdLFsxLDIsIlxcZmxhdCgxLCBcXGVsbCkiXSxbMCwyLCJcXHBje1xcZWxsfSIsMl0sWzMsMSwiRShqLCBcXGV0YSkiXSxbMCwzLCJcXHBjIGoiXV0=
    \begin{tikzcd}
      {A(1, 1)} & {E(j, j)} \\
      {C(\ell, \ell)} & {E(j, r\ell)}
      \arrow["{\flat(1, \ell)}", from=2-2, to=2-1]
      \arrow["{\pc{\ell}}"', from=1-1, to=2-1]
      \arrow["{E(j, \eta)}", from=1-2, to=2-2]
      \arrow["{\pc j}", from=1-1, to=1-2]
    \end{tikzcd}
		\hspace{4em}
		% https://q.uiver.app/#q=WzAsNCxbMCwwLCJFKGosIHIpIl0sWzEsMCwiQyhcXGVsbCwgMSkiXSxbMSwxLCJFKHJcXGVsbCwgcikiXSxbMCwxLCJFKGosIHIpIl0sWzAsMywiIiwwLHsibGV2ZWwiOjIsInN0eWxlIjp7ImhlYWQiOnsibmFtZSI6Im5vbmUifX19XSxbMCwxLCJcXGZsYXQiXSxbMSwyLCJcXHBjIHIoXFxlbGwsIDEpIl0sWzIsMywiRShcXGV0YSwgcikiXV0=
    \begin{tikzcd}
      {E(j, r)} & {C(\ell, 1)} \\
      {E(j, r)} & {E(r\ell, r)}
      \arrow[Rightarrow, no head, from=1-1, to=2-1]
      \arrow["\flat", from=1-1, to=1-2]
      \arrow["{\pc r(\ell, 1)}", from=1-2, to=2-2]
      \arrow["{E(\eta, r)}", from=2-2, to=2-1]
    \end{tikzcd}
		\]
        \item (Unit--counit) A 2-cell $\eta \colon j \tto \ell \d r$, the \emph{unit}, and a 2-cell $\varepsilon \colon C(1, \ell), E(j, r) \tto C(1, 1)$, the \emph{counit}, satisfying the following equations.
        \[
        \begin{tikzcd}[column sep=huge]
        	C & A & A \\
        	C & A & A \\
        	C & A & A \\
        	C && A
        	\arrow[""{name=0, anchor=center, inner sep=0}, Rightarrow, no head, from=1-1, to=2-1]
        	\arrow["{C(1, \ell)}"', "\shortmid"{marking}, from=1-2, to=1-1]
        	\arrow[""{name=1, anchor=center, inner sep=0}, Rightarrow, no head, from=1-2, to=2-2]
        	\arrow[Rightarrow, no head, from=1-3, to=1-2]
        	\arrow[""{name=2, anchor=center, inner sep=0}, Rightarrow, no head, from=1-3, to=2-3]
        	\arrow[""{name=3, anchor=center, inner sep=0}, Rightarrow, no head, from=2-1, to=3-1]
        	\arrow["{C(1, \ell)}"{description}, from=2-2, to=2-1]
        	\arrow[""{name=4, anchor=center, inner sep=0}, Rightarrow, no head, from=2-2, to=3-2]
        	\arrow["{E(j, j)}"{description}, from=2-3, to=2-2]
        	\arrow[""{name=5, anchor=center, inner sep=0}, Rightarrow, no head, from=2-3, to=3-3]
        	\arrow[""{name=6, anchor=center, inner sep=0}, Rightarrow, no head, from=3-1, to=4-1]
        	\arrow["{C(1, \ell)}"{description}, from=3-2, to=3-1]
        	\arrow["{E(j, r \ell)}"{description}, from=3-3, to=3-2]
        	\arrow[""{name=7, anchor=center, inner sep=0}, Rightarrow, no head, from=3-3, to=4-3]
        	\arrow["{C(1, \ell)}", "\shortmid"{marking}, from=4-3, to=4-1]
        	\arrow["{=}"{description}, draw=none, from=1, to=0]
        	\arrow["{\pc j}"{description}, draw=none, from=2, to=1]
        	\arrow["{=}"{description}, draw=none, from=4, to=3]
        	\arrow["{E(j, \eta)}"{description}, draw=none, from=5, to=4]
        	\arrow["{\varepsilon(1, \ell)}"{description}, draw=none, from=7, to=6]
        \end{tikzcd}
        \quad = \quad
        % https://q.uiver.app/#q=WzAsNCxbMSwwLCJBIl0sWzEsMSwiQSJdLFswLDEsIkMiXSxbMCwwLCJDIl0sWzEsMiwiQygxLCBcXGVsbCkiLDAseyJzdHlsZSI6eyJib2R5Ijp7Im5hbWUiOiJiYXJyZWQifX19XSxbMCwxLCIiLDEseyJsZXZlbCI6Miwic3R5bGUiOnsiaGVhZCI6eyJuYW1lIjoibm9uZSJ9fX1dLFswLDMsIkMoMSwgXFxlbGwpIiwyLHsic3R5bGUiOnsiYm9keSI6eyJuYW1lIjoiYmFycmVkIn19fV0sWzMsMiwiIiwxLHsibGV2ZWwiOjIsInN0eWxlIjp7ImhlYWQiOnsibmFtZSI6Im5vbmUifX19XSxbNSw3LCI9IiwxLHsic2hvcnRlbiI6eyJzb3VyY2UiOjIwLCJ0YXJnZXQiOjIwfSwic3R5bGUiOnsiYm9keSI6eyJuYW1lIjoibm9uZSJ9LCJoZWFkIjp7Im5hbWUiOiJub25lIn19fV1d
		\begin{tikzcd}[column sep=2.25em]
			C & A \\
			C & A
			\arrow["{C(1, \ell)}", "\shortmid"{marking}, from=2-2, to=2-1]
			\arrow[""{name=0, anchor=center, inner sep=0}, Rightarrow, no head, from=1-2, to=2-2]
			\arrow["{C(1, \ell)}"', "\shortmid"{marking}, from=1-2, to=1-1]
			\arrow[""{name=1, anchor=center, inner sep=0}, Rightarrow, no head, from=1-1, to=2-1]
			\arrow["{=}"{description}, draw=none, from=0, to=1]
		\end{tikzcd}
        \]
        \[
		\begin{tikzcd}[column sep=large]
			E & A & E & C \\
			E & A & E & C \\
			E &&& C
			\arrow["{E(1, j)}"', "\shortmid"{marking}, from=1-2, to=1-1]
			\arrow["{E(j, 1)}"', "\shortmid"{marking}, from=1-3, to=1-2]
			\arrow["{E(1, r)}"', "\shortmid"{marking}, from=1-4, to=1-3]
			\arrow["{E(1, r \ell)}"{description}, from=2-2, to=2-1]
			\arrow[""{name=0, anchor=center, inner sep=0}, Rightarrow, no head, from=1-4, to=2-4]
			\arrow[""{name=1, anchor=center, inner sep=0}, Rightarrow, no head, from=1-2, to=2-2]
			\arrow[""{name=2, anchor=center, inner sep=0}, Rightarrow, no head, from=1-1, to=2-1]
			\arrow["{E(1, r)}", "\shortmid"{marking}, from=3-4, to=3-1]
			\arrow[""{name=3, anchor=center, inner sep=0}, Rightarrow, no head, from=2-4, to=3-4]
			\arrow[""{name=4, anchor=center, inner sep=0}, Rightarrow, no head, from=2-1, to=3-1]
			\arrow["{E(1, r)}"{description}, from=2-4, to=2-3]
			\arrow["{E(j, 1)}"{description}, from=2-3, to=2-2]
			\arrow[""{name=5, anchor=center, inner sep=0}, Rightarrow, no head, from=1-3, to=2-3]
			\arrow["{E(1, \eta)}"{description}, draw=none, from=1, to=2]
			\arrow["{E(1, r), \varepsilon}"{description}, draw=none, from=3, to=4]
			\arrow["{=}"{description}, draw=none, from=0, to=5]
			\arrow["{=}"{description}, draw=none, from=5, to=1]
		\end{tikzcd}
        \quad = \quad
        % https://q.uiver.app/#q=WzAsNixbMSwwLCJBIl0sWzAsMCwiRSJdLFsyLDAsIkUiXSxbMywwLCJDIl0sWzMsMSwiQyJdLFswLDEsIkUiXSxbMCwxLCJFKDEsIGopIiwyLHsic3R5bGUiOnsiYm9keSI6eyJuYW1lIjoiYmFycmVkIn19fV0sWzIsMCwiRShqLCAxKSIsMix7InN0eWxlIjp7ImJvZHkiOnsibmFtZSI6ImJhcnJlZCJ9fX1dLFszLDIsIkUoMSwgcikiLDIseyJzdHlsZSI6eyJib2R5Ijp7Im5hbWUiOiJiYXJyZWQifX19XSxbNCw1LCJFKDEsIHIpIiwwLHsic3R5bGUiOnsiYm9keSI6eyJuYW1lIjoiYmFycmVkIn19fV0sWzMsNCwiIiwwLHsibGV2ZWwiOjIsInN0eWxlIjp7ImhlYWQiOnsibmFtZSI6Im5vbmUifX19XSxbMSw1LCIiLDIseyJsZXZlbCI6Miwic3R5bGUiOnsiaGVhZCI6eyJuYW1lIjoibm9uZSJ9fX1dLFsxMCwxMSwiXFxjcCBqKDEsIHIpIiwxLHsic2hvcnRlbiI6eyJzb3VyY2UiOjIwLCJ0YXJnZXQiOjIwfSwic3R5bGUiOnsiYm9keSI6eyJuYW1lIjoibm9uZSJ9LCJoZWFkIjp7Im5hbWUiOiJub25lIn19fV1d
		\begin{tikzcd}[column sep=large]
			E & A & E & C \\
			E &&& C
			\arrow["{E(1, j)}"', "\shortmid"{marking}, from=1-2, to=1-1]
			\arrow["{E(j, 1)}"', "\shortmid"{marking}, from=1-3, to=1-2]
			\arrow["{E(1, r)}"', "\shortmid"{marking}, from=1-4, to=1-3]
			\arrow["{E(1, r)}", "\shortmid"{marking}, from=2-4, to=2-1]
			\arrow[""{name=0, anchor=center, inner sep=0}, Rightarrow, no head, from=1-4, to=2-4]
			\arrow[""{name=1, anchor=center, inner sep=0}, Rightarrow, no head, from=1-1, to=2-1]
			\arrow["{\cp j(1, r)}"{description}, draw=none, from=0, to=1]
		\end{tikzcd}
		\]
        \item (Couniversal arrow) A 2-cell $\sharp \colon C(\ell, 1) \tto E(j, r)$, and a 2-cell $\varepsilon \colon C(1, \ell), E(j, r) \tto C(1, 1)$, the \emph{counit}, satisfying the following equations.
        \[
		% https://q.uiver.app/#q=WzAsOCxbMiwwLCJDIl0sWzEsMCwiQSJdLFswLDAsIkMiXSxbMiwxLCJDIl0sWzEsMSwiQSJdLFswLDEsIkMiXSxbMiwyLCJDIl0sWzAsMiwiQyJdLFswLDEsIkMoXFxlbGwsIDEpIiwyLHsic3R5bGUiOnsiYm9keSI6eyJuYW1lIjoiYmFycmVkIn19fV0sWzEsMiwiQygxLCBcXGVsbCkiLDIseyJzdHlsZSI6eyJib2R5Ijp7Im5hbWUiOiJiYXJyZWQifX19XSxbMyw0LCJFKGosIHIpIiwxXSxbNCw1LCJDKDEsIFxcZWxsKSIsMV0sWzYsNywiIiwxLHsibGV2ZWwiOjIsInN0eWxlIjp7ImJvZHkiOnsibmFtZSI6ImJhcnJlZCJ9LCJoZWFkIjp7Im5hbWUiOiJub25lIn19fV0sWzAsMywiIiwxLHsibGV2ZWwiOjIsInN0eWxlIjp7ImhlYWQiOnsibmFtZSI6Im5vbmUifX19XSxbMyw2LCIiLDEseyJsZXZlbCI6Miwic3R5bGUiOnsiaGVhZCI6eyJuYW1lIjoibm9uZSJ9fX1dLFsyLDUsIiIsMSx7ImxldmVsIjoyLCJzdHlsZSI6eyJoZWFkIjp7Im5hbWUiOiJub25lIn19fV0sWzUsNywiIiwxLHsibGV2ZWwiOjIsInN0eWxlIjp7ImhlYWQiOnsibmFtZSI6Im5vbmUifX19XSxbMSw0LCIiLDEseyJsZXZlbCI6Miwic3R5bGUiOnsiaGVhZCI6eyJuYW1lIjoibm9uZSJ9fX1dLFsxNywxNSwiPSIsMSx7InNob3J0ZW4iOnsic291cmNlIjoyMCwidGFyZ2V0IjoyMH0sInN0eWxlIjp7ImJvZHkiOnsibmFtZSI6Im5vbmUifSwiaGVhZCI6eyJuYW1lIjoibm9uZSJ9fX1dLFsxMywxNywiXFxzaGFycCIsMSx7InNob3J0ZW4iOnsic291cmNlIjoyMCwidGFyZ2V0IjoyMH0sInN0eWxlIjp7ImJvZHkiOnsibmFtZSI6Im5vbmUifSwiaGVhZCI6eyJuYW1lIjoibm9uZSJ9fX1dLFsxNCwxNiwiXFx2YXJlcHNpbG9uIiwxLHsic2hvcnRlbiI6eyJzb3VyY2UiOjIwLCJ0YXJnZXQiOjIwfSwic3R5bGUiOnsiYm9keSI6eyJuYW1lIjoibm9uZSJ9LCJoZWFkIjp7Im5hbWUiOiJub25lIn19fV1d
		\begin{tikzcd}[column sep=large]
			C & A & C \\
			C & A & C \\
			C && C
			\arrow["{C(\ell, 1)}"', "\shortmid"{marking}, from=1-3, to=1-2]
			\arrow["{C(1, \ell)}"', "\shortmid"{marking}, from=1-2, to=1-1]
			\arrow["{E(j, r)}"{description}, from=2-3, to=2-2]
			\arrow["{C(1, \ell)}"{description}, from=2-2, to=2-1]
			\arrow["\shortmid"{marking}, Rightarrow, no head, from=3-3, to=3-1]
			\arrow[""{name=0, anchor=center, inner sep=0}, Rightarrow, no head, from=1-3, to=2-3]
			\arrow[""{name=1, anchor=center, inner sep=0}, Rightarrow, no head, from=2-3, to=3-3]
			\arrow[""{name=2, anchor=center, inner sep=0}, Rightarrow, no head, from=1-1, to=2-1]
			\arrow[""{name=3, anchor=center, inner sep=0}, Rightarrow, no head, from=2-1, to=3-1]
			\arrow[""{name=4, anchor=center, inner sep=0}, Rightarrow, no head, from=1-2, to=2-2]
			\arrow["{=}"{description}, draw=none, from=4, to=2]
			\arrow["\sharp"{description}, draw=none, from=0, to=4]
			\arrow["\varepsilon"{description}, draw=none, from=1, to=3]
		\end{tikzcd}
		\,=\,
		% https://q.uiver.app/#q=WzAsNSxbMiwwLCJDIl0sWzEsMCwiQSJdLFswLDAsIkMiXSxbMCwxLCJDIl0sWzIsMSwiQyJdLFswLDEsIkMoXFxlbGwsIDEpIiwyLHsic3R5bGUiOnsiYm9keSI6eyJuYW1lIjoiYmFycmVkIn19fV0sWzEsMiwiQygxLCBcXGVsbCkiLDIseyJzdHlsZSI6eyJib2R5Ijp7Im5hbWUiOiJiYXJyZWQifX19XSxbNCwzLCIiLDEseyJsZXZlbCI6Miwic3R5bGUiOnsiYm9keSI6eyJuYW1lIjoiYmFycmVkIn0sImhlYWQiOnsibmFtZSI6Im5vbmUifX19XSxbMCw0LCIiLDEseyJsZXZlbCI6Miwic3R5bGUiOnsiaGVhZCI6eyJuYW1lIjoibm9uZSJ9fX1dLFsyLDMsIiIsMSx7ImxldmVsIjoyLCJzdHlsZSI6eyJoZWFkIjp7Im5hbWUiOiJub25lIn19fV0sWzgsOSwiXFxjcFxcZWxsIiwxLHsic2hvcnRlbiI6eyJzb3VyY2UiOjIwLCJ0YXJnZXQiOjIwfSwic3R5bGUiOnsiYm9keSI6eyJuYW1lIjoibm9uZSJ9LCJoZWFkIjp7Im5hbWUiOiJub25lIn19fV1d
		\begin{tikzcd}[column sep=1.5em]
			C & A & C \\
			C && C
			\arrow["{C(\ell, 1)}"', "\shortmid"{marking}, from=1-3, to=1-2]
			\arrow["{C(1, \ell)}"', "\shortmid"{marking}, from=1-2, to=1-1]
			\arrow["\shortmid"{marking}, Rightarrow, no head, from=2-3, to=2-1]
			\arrow[""{name=0, anchor=center, inner sep=0}, Rightarrow, no head, from=1-3, to=2-3]
			\arrow[""{name=1, anchor=center, inner sep=0}, Rightarrow, no head, from=1-1, to=2-1]
			\arrow["\cp\ell"{description}, draw=none, from=0, to=1]
		\end{tikzcd}
		\quad
		% https://q.uiver.app/#q=WzAsOCxbMiwwLCJDIl0sWzEsMCwiQSJdLFswLDAsIkEiXSxbMCwxLCJBIl0sWzIsMSwiQyJdLFsxLDEsIkEiXSxbMiwyLCJDIl0sWzAsMiwiQSJdLFswLDEsIkUoaiwgcikiLDIseyJzdHlsZSI6eyJib2R5Ijp7Im5hbWUiOiJiYXJyZWQifX19XSxbMSwyLCIiLDIseyJsZXZlbCI6Miwic3R5bGUiOnsiaGVhZCI6eyJuYW1lIjoibm9uZSJ9fX1dLFswLDQsIiIsMSx7ImxldmVsIjoyLCJzdHlsZSI6eyJoZWFkIjp7Im5hbWUiOiJub25lIn19fV0sWzIsMywiIiwxLHsibGV2ZWwiOjIsInN0eWxlIjp7ImhlYWQiOnsibmFtZSI6Im5vbmUifX19XSxbMSw1LCIiLDEseyJsZXZlbCI6Miwic3R5bGUiOnsiaGVhZCI6eyJuYW1lIjoibm9uZSJ9fX1dLFs0LDUsIkUoaiwgcikiLDFdLFs1LDMsIkMoXFxlbGwsIFxcZWxsKSIsMV0sWzYsNywiRShqLCByKSIsMCx7InN0eWxlIjp7ImJvZHkiOnsibmFtZSI6ImJhcnJlZCJ9fX1dLFs0LDYsIiIsMSx7ImxldmVsIjoyLCJzdHlsZSI6eyJoZWFkIjp7Im5hbWUiOiJub25lIn19fV0sWzMsNywiIiwxLHsibGV2ZWwiOjIsInN0eWxlIjp7ImhlYWQiOnsibmFtZSI6Im5vbmUifX19XSxbMTAsMTIsIj0iLDEseyJzaG9ydGVuIjp7InNvdXJjZSI6MjAsInRhcmdldCI6MjB9LCJzdHlsZSI6eyJib2R5Ijp7Im5hbWUiOiJub25lIn0sImhlYWQiOnsibmFtZSI6Im5vbmUifX19XSxbMTIsMTEsIlxccGNcXGVsbCIsMSx7InNob3J0ZW4iOnsic291cmNlIjoyMCwidGFyZ2V0IjoyMH0sInN0eWxlIjp7ImJvZHkiOnsibmFtZSI6Im5vbmUifSwiaGVhZCI6eyJuYW1lIjoibm9uZSJ9fX1dLFsxNiwxNywiXFxzaGFycCwgXFx2YXJlcHNpbG9uIiwxLHsic2hvcnRlbiI6eyJzb3VyY2UiOjIwLCJ0YXJnZXQiOjIwfSwic3R5bGUiOnsiYm9keSI6eyJuYW1lIjoibm9uZSJ9LCJoZWFkIjp7Im5hbWUiOiJub25lIn19fV1d
		\begin{tikzcd}[column sep=large]
			A & A & C \\
			A & A & C \\
			A && C
			\arrow["{E(j, r)}"', "\shortmid"{marking}, from=1-3, to=1-2]
			\arrow[Rightarrow, no head, from=1-2, to=1-1]
			\arrow[""{name=0, anchor=center, inner sep=0}, Rightarrow, no head, from=1-3, to=2-3]
			\arrow[""{name=1, anchor=center, inner sep=0}, Rightarrow, no head, from=1-1, to=2-1]
			\arrow[""{name=2, anchor=center, inner sep=0}, Rightarrow, no head, from=1-2, to=2-2]
			\arrow["{E(j, r)}"{description}, from=2-3, to=2-2]
			\arrow["{C(\ell, \ell)}"{description}, from=2-2, to=2-1]
			\arrow["{E(j, r)}", "\shortmid"{marking}, from=3-3, to=3-1]
			\arrow[""{name=3, anchor=center, inner sep=0}, Rightarrow, no head, from=2-3, to=3-3]
			\arrow[""{name=4, anchor=center, inner sep=0}, Rightarrow, no head, from=2-1, to=3-1]
			\arrow["{=}"{description}, draw=none, from=0, to=2]
			\arrow["\pc\ell"{description}, draw=none, from=2, to=1]
			\arrow["{\sharp, \varepsilon}"{description}, draw=none, from=3, to=4]
		\end{tikzcd}
		\,=\,
		% https://q.uiver.app/#q=WzAsNCxbMSwwLCJDIl0sWzAsMCwiQSJdLFsxLDEsIkMiXSxbMCwxLCJBIl0sWzAsMSwiRShqLCByKSIsMix7InN0eWxlIjp7ImJvZHkiOnsibmFtZSI6ImJhcnJlZCJ9fX1dLFswLDIsIiIsMSx7ImxldmVsIjoyLCJzdHlsZSI6eyJoZWFkIjp7Im5hbWUiOiJub25lIn19fV0sWzEsMywiIiwxLHsibGV2ZWwiOjIsInN0eWxlIjp7ImhlYWQiOnsibmFtZSI6Im5vbmUifX19XSxbMiwzLCJFKGosIHIpIiwwLHsic3R5bGUiOnsiYm9keSI6eyJuYW1lIjoiYmFycmVkIn19fV0sWzUsNiwiPSIsMSx7InNob3J0ZW4iOnsic291cmNlIjoyMCwidGFyZ2V0IjoyMH0sInN0eWxlIjp7ImJvZHkiOnsibmFtZSI6Im5vbmUifSwiaGVhZCI6eyJuYW1lIjoibm9uZSJ9fX1dXQ==
		\begin{tikzcd}[column sep=1.5em]
			A & C \\
			A & C
			\arrow["{E(j, r)}"', "\shortmid"{marking}, from=1-2, to=1-1]
			\arrow[""{name=0, anchor=center, inner sep=0}, Rightarrow, no head, from=1-2, to=2-2]
			\arrow[""{name=1, anchor=center, inner sep=0}, Rightarrow, no head, from=1-1, to=2-1]
			\arrow["{E(j, r)}", "\shortmid"{marking}, from=2-2, to=2-1]
			\arrow["{=}"{description}, draw=none, from=0, to=1]
		\end{tikzcd}
		\]
        \item A $j$-representation of $C(\ell, 1)$ by $r$.
		\item A corepresentation of $E(j, r)$ by $\ell$.
		\item \label{adjunction-via-loose-adjunction} A loose-adjunction $C(1, \ell) \adj E(j, r)$.
    \end{enumerate}
\end{lemma}

\begin{proof}
    Given a 2-cell $\sharp \colon C(\ell, 1) \tto E(j, r)$, we define a unit $\eta \colon j \tto \ell \d r$ by the 2-cell on the left below; and given a 2-cell $\flat \colon E(j, r) \tto C(\ell, 1)$, we define a counit $\varepsilon \colon C(1, \ell), E(j, r) \tto C(1, 1)$ by the 2-cell on the right below.
    \begin{align*}
        \eta & \defeq
			% https://varkor.github.io/tangle/?t=W1tbNywzLDVdXSxbW1swLFtdXSxbMCxbXV0sWzIsWzJdXSxbMSxbMCwxLDAsMV1dLFsxLFswLDEsMCwxXV1dLFtbMCxbXV0sWzEsWzAsMSwxLDBdXSxbMSxbMSwxLDAsMV1dLFsxLFswLDAsMSwxXV0sWzAsW11dXSxbWzEsWzAsMSwwLDFdXSxbMixbMF1dLFswLFtdXSxbMixbMV1dLFsxLFswLDEsMCwxXV1dXSxbWzAsMi41LDEuNSxbIlxcc2hhcnAiLDJdXSxbMSwyLjUsMSxbMF1dLFsxLDMsMC41LFswXV0sWzEsNCwwLjUsWzBdXSxbMSwzLjUsMixbMV1dLFsxLDQsMi41LFswXV0sWzEsMS41LDIsWzBdXSxbMSwxLDIuNSxbMF1dXSxbWzQsMCwwLCJcXGVsbCJdLFswLDIsMiwiaiJdLFs0LDIsMCwiciJdXV0=&c=F5A3A3,F5CCA3,F5F5A3,CCF5A3,A3F5A3,A3F5CC,A3F5F5,A3CCF5,A3A3F5,CCA3F5,F5A3F5,F5A3CC
			\begin{tangle}{(5,3)}[trim x]
				\tgBlank{(0,0)}{\tgColour6}
				\tgBlank{(1,0)}{\tgColour6}
				\tgBorderC{(2,0)}{3}{\tgColour6}{\tgColour2}
				\tgBorderA{(3,0)}{\tgColour6}{\tgColour6}{\tgColour2}{\tgColour2}
				\tgBorderA{(4,0)}{\tgColour6}{\tgColour6}{\tgColour2}{\tgColour2}
				\tgBlank{(0,1)}{\tgColour6}
				\tgBorderA{(1,1)}{\tgColour6}{\tgColour6}{\tgColour4}{\tgColour6}
				\tgBorderA{(2,1)}{\tgColour6}{\tgColour2}{\tgColour4}{\tgColour4}
				\tgBorderA{(3,1)}{\tgColour2}{\tgColour2}{\tgColour2}{\tgColour4}
				\tgBlank{(4,1)}{\tgColour2}
				\tgBorderA{(0,2)}{\tgColour6}{\tgColour6}{\tgColour4}{\tgColour4}
				\tgBorderC{(1,2)}{1}{\tgColour4}{\tgColour6}
				\tgBlank{(2,2)}{\tgColour4}
				\tgBorderC{(3,2)}{0}{\tgColour4}{\tgColour2}
				\tgBorderA{(4,2)}{\tgColour2}{\tgColour2}{\tgColour4}{\tgColour4}
				\tgCell[(2,0)]{(2,1)}{\sharp}
				\tgArrow{(2,0.5)}{1}
				\tgArrow{(2.5,0)}{0}
				\tgArrow{(3.5,0)}{0}
				\tgArrow{(3,1.5)}{3}
				\tgArrow{(3.5,2)}{0}
				\tgArrow{(1,1.5)}{1}
				\tgArrow{(0.5,2)}{0}
				\tgAxisLabel{(4.25,0.5)}{west}{\ell}
				\tgAxisLabel{(0.75,2.5)}{east}{j}
				\tgAxisLabel{(4.25,2.5)}{west}{r}
			\end{tangle}
            &
        \varepsilon & \defeq
			% https://varkor.github.io/tangle/?t=W1tbMyw3LDVdXSxbW1sxLFsxLDAsMSwwXV0sWzEsWzEsMCwxLDBdXSxbMCxbXV0sWzEsWzEsMCwxLDBdXV0sW1sxLFsxLDAsMSwwXV0sWzEsWzEsMSwwLDBdXSxbMSxbMCwxLDEsMV1dLFsxLFsxLDAsMCwxXV1dLFtbMixbMV1dLFsxLFswLDEsMCwxXV0sWzIsWzBdXSxbMCxbXV1dLFtbMCxbXV0sWzAsW11dLFswLFtdXSxbMCxbXV1dXSxbWzAsMi41LDEuNSxbIlxcZmxhdCIsMl1dLFsxLDEuNSwxLFswXV0sWzEsMy41LDEsWzFdXSxbMSwyLjUsMixbMF1dLFsxLDIsMi41LFswXV0sWzEsMSwyLjUsWzBdXSxbMSwwLjUsMixbMV1dLFsxLDAuNSwxLFsxXV1dLFtbMCwwLDMsIlxcZWxsIl0sWzEsMCwzLCJqIl0sWzMsMCwzLCJyIl1dXQ==&c=F5A3A3,F5CCA3,F5F5A3,CCF5A3,A3F5A3,A3F5CC,A3F5F5,A3CCF5,A3A3F5,CCA3F5,F5A3F5,F5A3CC
			\begin{tangle}{(4,4)}[trim y=.5]
				\tgBorderA{(0,0)}{\tgColour2}{\tgColour6}{\tgColour6}{\tgColour2}
				\tgBorderA{(1,0)}{\tgColour6}{\tgColour4}{\tgColour4}{\tgColour6}
				\tgBlank{(2,0)}{\tgColour4}
				\tgBorderA{(3,0)}{\tgColour4}{\tgColour2}{\tgColour2}{\tgColour4}
				\tgBorderA{(0,1)}{\tgColour2}{\tgColour6}{\tgColour6}{\tgColour2}
				\tgBorderA{(1,1)}{\tgColour6}{\tgColour4}{\tgColour6}{\tgColour6}
				\tgBorderA{(2,1)}{\tgColour4}{\tgColour4}{\tgColour2}{\tgColour6}
				\tgBorderA{(3,1)}{\tgColour4}{\tgColour2}{\tgColour2}{\tgColour2}
				\tgBorderC{(0,2)}{0}{\tgColour2}{\tgColour6}
				\tgBorderA{(1,2)}{\tgColour6}{\tgColour6}{\tgColour2}{\tgColour2}
				\tgBorderC{(2,2)}{1}{\tgColour2}{\tgColour6}
				\tgBlank{(3,2)}{\tgColour2}
				\tgBlank{(0,3)}{\tgColour2}
				\tgBlank{(1,3)}{\tgColour2}
				\tgBlank{(2,3)}{\tgColour2}
				\tgBlank{(3,3)}{\tgColour2}
				\tgCell[(2,0)]{(2,1)}{\flat}
				\tgArrow{(1,0.5)}{1}
				\tgArrow{(3,0.5)}{3}
				\tgArrow{(2,1.5)}{1}
				\tgArrow{(1.5,2)}{0}
				\tgArrow{(0.5,2)}{0}
				\tgArrow{(0,1.5)}{3}
				\tgArrow{(0,0.5)}{3}
				\tgAxisLabel{(0.5,.5)}{south}{\ell}
				\tgAxisLabel{(1.5,.5)}{south}{j}
				\tgAxisLabel{(3.5,.5)}{south}{r}
			\end{tangle}
    \end{align*}
    Conversely, given a 2-cell $\eta \colon j \tto \ell \d r$, we define a left-transposition operator $\sharp \colon C(\ell, 1) \tto E(j, r)$ by the 2-cell on the left below; and given a 2-cell $\varepsilon \colon C(1, \ell), E(j, r) \tto C(1, 1)$, we define a right-transposition operator $\flat \colon E(j, r) \tto C(\ell, 1)$ by the 2-cell on the right below.
    \begin{align*}
        \sharp & \defeq
            % https://varkor.github.io/tangle/?t=W1tbNywzLDVdXSxbW1swLFtdXSxbMCxbXV0sWzEsWzEsMCwxLDBdXV0sW1swLFtdXSxbMSxbMCwxLDEsMF1dLFsyLFswXV1dLFtbMixbMl1dLFsxLFsxLDAsMSwxXV0sWzAsW11dXSxbWzEsWzEsMCwxLDBdXSxbMSxbMSwxLDAsMF1dLFsyLFszXV1dLFtbMSxbMSwwLDEsMF1dLFswLFtdXSxbMSxbMSwwLDEsMF1dXV0sW1swLDEuNSwyLjUsWyJcXGV0YSIsMCwyXV0sWzEsMSwyLjUsWzBdXSxbMSwwLjUsMyxbMF1dLFsxLDAuNSw0LFswXV0sWzEsMiwxLjUsWzBdXSxbMSwyLjUsMSxbMF1dLFsxLDIsMy41LFswXV0sWzEsMi41LDQsWzFdXV0sW1syLDAsMywiXFxlbGwiXSxbMCw0LDEsImoiXSxbMiw0LDEsInIiXV1d&c=F5A3A3,F5CCA3,F5F5A3,CCF5A3,A3F5A3,A3F5CC,A3F5F5,A3CCF5,A3A3F5,CCA3F5,F5A3F5,F5A3CC
			\begin{tangle}{(3,5)}[trim y]
				\tgBlank{(0,0)}{\tgColour6}
				\tgBlank{(1,0)}{\tgColour6}
				\tgBorderA{(2,0)}{\tgColour6}{\tgColour2}{\tgColour2}{\tgColour6}
				\tgBlank{(0,1)}{\tgColour6}
				\tgBorderA{(1,1)}{\tgColour6}{\tgColour6}{\tgColour2}{\tgColour6}
				\tgBorderC{(2,1)}{1}{\tgColour2}{\tgColour6}
				\tgBorderC{(0,2)}{3}{\tgColour6}{\tgColour4}
				\tgBorderA{(1,2)}{\tgColour6}{\tgColour2}{\tgColour2}{\tgColour4}
				\tgBlank{(2,2)}{\tgColour2}
				\tgBorderA{(0,3)}{\tgColour6}{\tgColour4}{\tgColour4}{\tgColour6}
				\tgBorderA{(1,3)}{\tgColour4}{\tgColour2}{\tgColour4}{\tgColour4}
				\tgBorderC{(2,3)}{2}{\tgColour2}{\tgColour4}
				\tgBorderA{(0,4)}{\tgColour6}{\tgColour4}{\tgColour4}{\tgColour6}
				\tgBlank{(1,4)}{\tgColour4}
				\tgBorderA{(2,4)}{\tgColour4}{\tgColour2}{\tgColour2}{\tgColour4}
				\tgCell[(0,2)]{(1,2)}{\eta}
				\tgArrow{(0.5,2)}{0}
				\tgArrow{(0,2.5)}{1}
				\tgArrow{(0,3.5)}{1}
				\tgArrow{(1.5,1)}{0}
				\tgArrow{(2,0.5)}{1}
				\tgArrow{(1.5,3)}{0}
				\tgArrow{(2,3.5)}{3}
				\tgAxisLabel{(2.5,0.75)}{south}{\ell}
				\tgAxisLabel{(0.5,4.25)}{north}{j}
				\tgAxisLabel{(2.5,4.25)}{north}{r}
			\end{tangle}
            &
        \flat & \defeq
            % https://varkor.github.io/tangle/?t=W1tbNyw1LDNdXSxbW1swLFtdXSxbMCxbXV0sWzEsWzEsMCwxLDBdXSxbMSxbMSwwLDEsMF1dXSxbWzIsWzJdXSxbMixbM11dLFsxLFsxLDAsMSwwXV0sWzEsWzEsMCwxLDBdXV0sW1sxLFsxLDAsMSwwXV0sWzEsWzEsMSwwLDBdXSxbMSxbMSwxLDAsMV1dLFsxLFsxLDAsMCwxXV1dLFtbMSxbMSwwLDEsMF1dLFswLFtdXSxbMCxbXV0sWzAsW11dXV0sW1swLDIuNSwyLjUsWyJcXHZhcmVwc2lsb24iLDJdXSxbMSwyLjUsMixbMF1dLFsxLDMuNSwyLFsxXV0sWzEsMS41LDIsWzFdXSxbMSwxLDEuNSxbMF1dLFsxLDAuNSwyLFswXV0sWzEsMi41LDEsWzBdXSxbMSwzLjUsMSxbMV1dLFsxLDAuNSwzLFswXV1dLFtbMiwwLDMsImoiXSxbMywwLDMsInIiXSxbMCwzLDEsIlxcZWxsIl1dXQ==&c=F5A3A3,F5CCA3,F5F5A3,CCF5A3,A3F5A3,A3F5CC,A3F5F5,A3CCF5,A3A3F5,CCA3F5,F5A3F5,F5A3CC
			\begin{tangle}{(4,4)}[trim y]
				\tgBlank{(0,0)}{\tgColour6}
				\tgBlank{(1,0)}{\tgColour6}
				\tgBorderA{(2,0)}{\tgColour6}{\tgColour4}{\tgColour4}{\tgColour6}
				\tgBorderA{(3,0)}{\tgColour4}{\tgColour2}{\tgColour2}{\tgColour4}
				\tgBorderC{(0,1)}{3}{\tgColour6}{\tgColour2}
				\tgBorderC{(1,1)}{2}{\tgColour6}{\tgColour2}
				\tgBorderA{(2,1)}{\tgColour6}{\tgColour4}{\tgColour4}{\tgColour6}
				\tgBorderA{(3,1)}{\tgColour4}{\tgColour2}{\tgColour2}{\tgColour4}
				\tgBorderA{(0,2)}{\tgColour6}{\tgColour2}{\tgColour2}{\tgColour6}
				\tgBorderA{(1,2)}{\tgColour2}{\tgColour6}{\tgColour2}{\tgColour2}
				\tgBorderA{(2,2)}{\tgColour6}{\tgColour4}{\tgColour2}{\tgColour2}
				\tgBorderA{(3,2)}{\tgColour4}{\tgColour2}{\tgColour2}{\tgColour2}
				\tgBorderA{(0,3)}{\tgColour6}{\tgColour2}{\tgColour2}{\tgColour6}
				\tgBlank{(1,3)}{\tgColour2}
				\tgBlank{(2,3)}{\tgColour2}
				\tgBlank{(3,3)}{\tgColour2}
				\tgCell[(2,0)]{(2,2)}{\varepsilon}
				\tgArrow{(2,1.5)}{1}
				\tgArrow{(3,1.5)}{3}
				\tgArrow{(1,1.5)}{3}
				\tgArrow{(0.5,1)}{0}
				\tgArrow{(0,1.5)}{1}
				\tgArrow{(2,0.5)}{1}
				\tgArrow{(3,0.5)}{3}
				\tgArrow{(0,2.5)}{1}
				\tgAxisLabel{(2.5,0.75)}{south}{j}
				\tgAxisLabel{(3.5,0.75)}{south}{r}
				\tgAxisLabel{(0.5,3.25)}{north}{\ell}
			\end{tangle}
    \end{align*}
	That these definitions induce a bijection between 2-cells of the form $\sharp$ and $\eta$, and $\flat$ and $\varepsilon$, follows from the zig-zag laws for restriction. That the conditions (1) -- (4) are then equivalent follows by elementary string diagrammatic reasoning.
	(5) and (6) are equivalent to (1) by definition. Finally, since $E(j, r\ell) \iso E(j, r) \odot C(1, \ell)$, by essential uniqueness of adjoints, $C(1, \ell) \adj E(j, r)$ if and only if $C(\ell, 1) \iso E(j, r)$, so that (7) is equivalent to (1).
\end{proof}

Henceforth, in the context of a relative adjunction $\ljr$, we shall use $\sharp$, $\eta$, $\flat$, and $\varepsilon$ to denote the 2-cells defined above.

When $j$ is the identity, we should anticipate that $j$-adjunctions are precisely (non-relative) adjunctions. This is indeed so.

\begin{corollary}
    Let $\ell \colon A \to C$ and $r \colon C \to A$ be tight-cells. The following are equivalent.
    \begin{enumerate}
        \item $\ell \radj{1_A} r$ (\cref{relative-adjunction}).
        \item $\ell \adj r$ (\cref{tight-adjunction}).
    \end{enumerate}
\end{corollary}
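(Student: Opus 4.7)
The approach is to chain two equivalences. By item~(7) of \cref{reformulations-of-relative-adjunction}, a $1_A$-relative adjunction $\ell \radj{1_A} r$ is the same as a loose-adjunction $C(1, \ell) \adj A(1, r)$ between the companions of $\ell$ and $r$. It therefore suffices to prove that such a loose-adjunction between companions is equivalent to a tight-adjunction $\ell \adj r$ in the tight 2-category $\tX$.

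For the forward direction, given a tight-adjunction with unit $\eta \colon 1_A \tto \ell \d r$ and counit $\varepsilon' \colon r \d \ell \tto 1_C$, we loosen into the equipment. In an equipment, companion formation sends composites of tight-cells to loose-composites of companions, giving $C(1, \ell) \odot A(1, r) \iso A(1, \ell \d r)$ and $A(1, r) \odot C(1, \ell) \iso C(1, r \d \ell)$. Hence $\eta$ corresponds, via the bijection between tight 2-cells and 2-cells between their companions noted after \cref{cartesian-cell}, to a 2-cell $A(1, 1) \tto C(1, \ell) \odot A(1, r)$ that serves as the unit of the desired loose-adjunction; similarly $\varepsilon'$ induces a 2-cell $A(1, r), C(1, \ell) \tto C(1, 1)$ serving as its counit. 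The loose zig-zag laws then follow from their tight counterparts, since loosening respects composition.

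Conversely, a loose-adjunction $C(1, \ell) \adj A(1, r)$ has unit and counit of the above forms, which convert back under the same bijection to a unit $1_A \tto \ell \d r$ and counit $r \d \ell \tto 1_C$ in $\tX$ satisfying the tight zig-zag laws. The principal obstacle is bookkeeping: carefully tracking the opcartesian composite $A(1, r) \odot C(1, \ell) \iso C(1, r \d \ell)$ occurring in the counit's frame, and checking that the triangle identities match on the nose under these correspondences. However, these verifications are essentially mechanical given the structural properties of companions already established.
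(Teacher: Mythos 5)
Your argument is correct in substance, but it takes a different route from the paper. The paper's proof is a one-liner: it invokes formulation~(3) of \cref{reformulations-of-relative-adjunction} (the unit--counit presentation) and observes that, when $j = 1_A$, the data $\eta \colon 1_A \tto \ell \d r$ and $\varepsilon \colon C(1, \ell), A(1, r) \tto C(1,1)$ together with the two equations are precisely the classical $\eta$--$\varepsilon$ definition of an adjunction in the tight 2-category $\tX$. You instead go through formulation~(7), identifying $\ell \radj{1_A} r$ with a loose-adjunction $C(1,\ell) \adj A(1,r)$ between companions, and then prove separately that such a loose-adjunction is equivalent to a tight adjunction $\ell \adj r$ by loosening and tightening the unit and counit. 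This second step is a standard fact about equipments and your sketch of it is sound: it rests on the companion-of-composites isomorphisms and on the bijection between tight 2-cells $f \tto g$ and 2-cells $B(1,f) \tto B(1,g)$ recorded after \cref{cartesian-cell}, with the zig-zag laws transported along this locally fully faithful loosening. The trade-off is that your route has to supply (or at least invoke) these companion facts, which the paper's choice of formulation~(3) avoids, since there the unit is already a tight 2-cell and the counit equations reduce immediately to the triangle identities; on the other hand, your argument makes explicit the link to \cref{loose-adjunction-induced-by-tight-cell} and to the loose-adjunction picture, which the paper leaves implicit. One small notational caution: with the paper's nondiagrammatic convention for $\odot$, the isomorphisms should read $A(1,r) \odot C(1,\ell) \iso A(1, \ell \d r)$ and $C(1,\ell) \odot A(1,r) \iso C(1, r \d \ell)$, and the counit of the loose-adjunction has frame $C(1,\ell), A(1,r) \tto C(1,1)$ as in \cref{loose-adjunction}; you have written these in the opposite order, which is internally consistent with a diagrammatic convention but clashes with the paper's notation.
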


\begin{proof}
    When $j$ is the identity, condition (3) of \cref{reformulations-of-relative-adjunction} is precisely the classical $\eta$--$\varepsilon$ definition of adjunction in a 2-category.
\end{proof}

As with ordinary adjunctions, left relative adjoints are unique up to isomorphism, though, in general, it is not true that right relative adjoints are essentially unique: for instance, denoting by $0$ the empty category, every functor $r \colon C \to E$ (for arbitrary categories $C$ and $E$) is right adjoint to the unique functor $[]_C \colon 0 \to C$ relative to the unique functor $[]_E \colon 0 \to E$, but there are typically many such (non-isomorphic) functors.
% https://q.uiver.app/?q=WzAsMyxbMCwxLCIwIl0sWzEsMCwiQyJdLFsyLDEsIkUiXSxbMCwxLCJbXV9DIl0sWzEsMiwiciJdLFswLDIsIltdX0UiLDJdLFszLDQsIiIsMCx7Im9mZnNldCI6MiwibGV2ZWwiOjEsInN0eWxlIjp7Im5hbWUiOiJhZGp1bmN0aW9uIn19XV0=
\[\begin{tikzcd}
	& C \\
	0 && E
	\arrow[""{name=0, anchor=center, inner sep=0}, "{[]_C}", from=2-1, to=1-2]
	\arrow[""{name=1, anchor=center, inner sep=0}, "r", from=1-2, to=2-3]
	\arrow["{[]_E}"', from=2-1, to=2-3]
	\arrow["\dashv"{anchor=center}, shift right=2, draw=none, from=0, to=1]
\end{tikzcd}\]
However, when the root $j$ is dense, right $j$-adjoints are unique up to isomorphism. In practice, many results of interest for relative adjunctions and relative monads hold only for those with dense roots.

\begin{lemma}
    \label{uniqueness-of-relative-adjoints}
    If $\ljr$ and $\ell' \jadj r$, then $\ell \iso \ell'$. If $\ljr$ and $\ell \jadj r'$ and $j$ is dense, then $r \iso r'$.
\end{lemma}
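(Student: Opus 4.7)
The plan is to reduce both statements directly to the corresponding rigidity properties of the loose-cells $C(\ell, 1)$ and $E(j, r)$, exploiting the characterisation of relative adjunctions in terms of the hom isomorphism $C(\ell, 1) \iso E(j, r)$ (condition~(1) of \cref{reformulations-of-relative-adjunction}).

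For the first statement, suppose $\ell \jadj r$ and $\ell' \jadj r$. Composing the two hom isomorphisms gives an isomorphism
\[
    C(\ell, 1) \iso E(j, r) \iso C(\ell', 1)
\]
of loose-cells $C \lto A$. Now recall the observation immediately following the zig-zag laws for restriction: since restriction is \ff{}, two parallel tight-cells $f, g \colon A \to B$ are isomorphic precisely when their conjoints $B(g, 1)$ and $B(f, 1)$ are. Applying this to $\ell, \ell' \colon A \to C$ yields $\ell \iso \ell'$.

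For the second statement, suppose $\ell \jadj r$ and $\ell \jadj r'$. The same composition of hom isomorphisms gives
\[
    E(j, r) \iso C(\ell, 1) \iso E(j, r').
\]
Now invoke \cref{density-implies-j*-is-ff}: when $j$ is dense, the assignment $h \mapsto E(j, h)$ induces a bijection on 2-cells $r \tto r'$, hence reflects isomorphisms. Thus $r \iso r'$.

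The argument is essentially a two-line application of known facts, so there is no real obstacle beyond assembling them; the conceptual point is simply that the first statement uses \ffness{} of the conjoint construction (which holds unconditionally in any equipment), whereas the second requires \ffness{} of the restriction-along-$j$ functor on hom-categories, which is precisely the density hypothesis.
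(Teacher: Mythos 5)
Your proof is correct and follows exactly the paper's argument: compose the two hom isomorphisms, then use \ffness{} of the conjoint construction for the left adjoints and \cref{density-implies-j*-is-ff} for the right adjoints. No differences worth noting.
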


\begin{proof}
    If $\ljr$ and $\ell' \jadj r$, then $B(\ell, 1) \iso E(j, r) \iso B(\ell', 1)$, hence $\ell \iso \ell'$. If $\ljr$ and $\ell \jadj r'$, then $E(j, r) \iso B(\ell, 1) \iso E(j, r')$, hence, if $j$ is dense, $r \iso r'$ by \cref{density-implies-j*-is-ff}.
\end{proof}

Non-relative adjoints may be computed by means of absolute lifts and extensions~\cite[Proposition~2]{street1978yoneda}. An analogous statement is true of relative adjoints, though we must replace the notion of \emph{absolute (nonpointwise) lift} in a 2-category with the notion of \emph{(pointwise) lift} in an equipment.

\begin{proposition}
    \label{left-adjoint-is-left-lift}
    Let $\jAE$ and $r \colon C \to E$ be tight-cells. A tight-cell $\ell \colon A \to C$ is left $j$-adjoint to $r$ if and only if there is a 2-cell $\eta \colon j \tto \ell \d r$ exhibiting $\ell$ as the left lift $j \plf r$ of $j$ through~$r$.
	% https://q.uiver.app/#q=WzAsMyxbMCwxLCJBIl0sWzEsMCwiQyJdLFsyLDEsIkUiXSxbMCwxLCJqIFxccGxmIHIiLDAseyJzdHlsZSI6eyJib2R5Ijp7Im5hbWUiOiJkYXNoZWQifX19XSxbMSwyLCJyIl0sWzAsMiwiaiIsMl0sWzMsNCwiIiwwLHsib2Zmc2V0IjoyLCJsZXZlbCI6MSwic3R5bGUiOnsibmFtZSI6ImFkanVuY3Rpb24ifX1dXQ==
	\[\begin{tikzcd}
		& C \\
		A && E
		\arrow[""{name=0, anchor=center, inner sep=0}, "{j \plf r}", dashed, from=2-1, to=1-2]
		\arrow[""{name=1, anchor=center, inner sep=0}, "r", from=1-2, to=2-3]
		\arrow["j"', from=2-1, to=2-3]
		\arrow["\dashv"{anchor=center}, shift right=2, draw=none, from=0, to=1]
	\end{tikzcd}\]
\end{proposition}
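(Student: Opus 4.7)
The plan is to reduce both conditions to the single statement that a canonically constructed 2-cell $C(\ell, 1) \tto E(j, r)$ built from $\eta$ is invertible, using \cref{pointwise-left-lift-is-hom} for one side and \cref{reformulations-of-relative-adjunction} for the other.

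For the ($\Leftarrow$) direction, suppose the left lift $j \plf r$ exists and $\ell = j \plf r$ with unit $\eta \colon j \tto \ell \d r$. \Cref{pointwise-left-lift-is-hom} then provides an isomorphism $\sharp \colon C(\ell, 1) \iso E(j, r)$ which is given by pasting $\eta$ against the conjoint of $r$. This is exactly the data of a transposition operator as in clause (1) of \cref{reformulations-of-relative-adjunction}, and by the construction in the proof of that lemma the resulting universal arrow is precisely the given $\eta$. Hence $\ell \jadj r$ with $\eta$ as its unit.

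For the ($\Rightarrow$) direction, suppose $\ell \jadj r$ with transposition operator $\sharp \colon C(\ell, 1) \iso E(j, r)$ and associated unit $\eta \colon j \tto \ell \d r$ obtained from clause (2) of \cref{reformulations-of-relative-adjunction}. By \cref{lift-companion}, we may identify $E(r, 1) \rx E(j, 1) \iso E(j, 1)(1, r) \iso E(j, r)$, so exhibiting $\ell$ as the pointwise left lift $j \plf r$ with unit $\eta$ amounts to showing that the canonical 2-cell
\[
C(\ell, 1), E(r, 1) \xtto{\opcart} E(r\ell, 1) \xtto{E(\eta, 1)} E(j, 1)
\]
of \cref{left-lift} is the counit of the right extension $E(j, r) \rx E(j, 1)$, which, by universal property transport, is equivalent to showing that the transpose 2-cell $C(\ell, 1) \tto E(j, r)$ through the iso of \cref{lift-companion} is invertible. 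A short string-diagram calculation, using the construction of $\sharp$ from $\eta$ in the proof of \cref{reformulations-of-relative-adjunction} and the zig-zag laws for restriction to bend the companion of $\ell$ and conjoint of $r$, identifies this transpose with $\sharp$ itself. Since $\sharp$ is an isomorphism, we are done.

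The main obstacle is the string-diagrammatic identification in the ($\Rightarrow$) direction: verifying that the specific 2-cell in \cref{left-lift}, once transposed through the restriction isomorphism of \cref{lift-companion}, agrees on the nose with the transposition operator $\sharp$ constructed in \cref{reformulations-of-relative-adjunction}. Both 2-cells are composites of $\eta$ with opcartesian cells and companion/conjoint units and counits, so this is a routine zig-zag calculation, but it is the only non-formal step.
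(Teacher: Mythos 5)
Your proposal is correct and follows essentially the same route as the paper, whose proof is simply ``Immediate from \cref{pointwise-left-lift-is-hom}'': both directions come down to identifying the canonical 2-cell $C(\ell,1) \tto E(j,r)$ obtained by pasting $\eta$ against the conjoint of $r$ with the transposition operator $\sharp$ of \cref{reformulations-of-relative-adjunction}, via \cref{lift-companion}. You merely spell out the routine transposition/zig-zag details that the paper leaves implicit.
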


\begin{proof}
    Immediate from \cref{pointwise-left-lift-is-hom}.
\end{proof}

\begin{remark}
	The distinction between pointwise and nonpointwise extensions is well appreciated in the categorical literature (\cf{}~\cite{dubuc1970kan}). However, the notion of \emph{pointwise lift} has not been explicitly identified in the literature. \Cref{left-adjoint-is-left-lift} provides an explanation for this seeming omission: pointwise lifts are precisely relative adjoints. (Conversely, pointwise extensions are not always relative adjoints, though often are in practice, \cf~\cref{right-adjoint-vs-absolute-left-extension}.)

	Furthermore, observe that, by \cref{pointwise-left-lift-is-absolute-left-lift}, for every relative adjunction $\ljr$, the left adjoint is the absolute left lift of $j$ through $r$ in the tight 2-category. In $\Cat$, the converse also holds: that is, $\ell$ is the left $j$-adjoint of $r$ if and only if $\ell$ is the absolute left lift of $j$ through $r$. In other words, every absolute not-necessarily-pointwise left lift is automatically pointwise in $\Cat$. However, this is not true for a general \ve{} (\cf{}~\cite[Proposition~7]{street1978yoneda}). The definition of \emph{relative adjunction} of \cite[Definition~1.4]{lobbia2023distributive}, which is equivalent to an absolute left lift~\cite[Proposition~1.6]{lobbia2023distributive}, therefore suffers from similar issues to that of relative monads \ibid{} (\cf{}~\cref{extension-systems}).
\end{remark}

\Cref{left-adjoint-is-left-lift} gives a way to compute a left relative adjoint given the right relative adjoint. We should like a converse. A subtlety is that right relative adjoints are not unique when the root is not dense.

\begin{proposition}
    \label{right-adjoint-vs-absolute-left-extension}
    Let $\jAE$ and $\ell \colon A \to C$ be tight-cells, and suppose that $j$ is \ff{}.
	% https://q.uiver.app/#q=WzAsMyxbMCwxLCJBIl0sWzEsMCwiQyJdLFsyLDEsIkUiXSxbMCwxLCJcXGVsbCJdLFsxLDIsIlxcZWxsIFxccGx4IGoiLDAseyJzdHlsZSI6eyJib2R5Ijp7Im5hbWUiOiJkYXNoZWQifX19XSxbMCwyLCJqIiwyXSxbMyw0LCIiLDAseyJvZmZzZXQiOjIsImxldmVsIjoxLCJzdHlsZSI6eyJuYW1lIjoiYWRqdW5jdGlvbiJ9fV1d
	\[\begin{tikzcd}
		& C \\
		A && E
		\arrow[""{name=0, anchor=center, inner sep=0}, "\ell", from=2-1, to=1-2]
		\arrow[""{name=1, anchor=center, inner sep=0}, "{\ell \plx j}", dashed, from=1-2, to=2-3]
		\arrow["j"', from=2-1, to=2-3]
		\arrow["\dashv"{anchor=center}, shift right=2, draw=none, from=0, to=1]
	\end{tikzcd}\]
    \begin{enumerate}
        \item Suppose that the left extension $\ell \plx j$ exists and is $j$-absolute. Then $\ell \jadj \ell \plx j$.
        \item Suppose that $j$ is dense and that $\ell$ has a right $j$-adjoint $r$. Then $r$ exhibits the left extension $\ell \plx j$ and this extension is $j$-absolute.
    \end{enumerate}
\end{proposition}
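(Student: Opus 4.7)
The plan is to prove both parts by reducing to the characterisation of $\ell \plx j$ as the $C(\ell, 1)$-weighted colimit of $j$ (\cref{left-extension}), and then exploiting $E(j, j) \iso A(1, 1)$ (which holds by \ffness{} of $j$, \cref{full-faithfulness}). The bridge between the adjunction data and the colimit data will be the characterisation of $\ell \jadj r$ as an isomorphism $C(\ell, 1) \iso E(j, r)$ from \cref{reformulations-of-relative-adjunction}(1).

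For part (1), I would begin by unwrapping the hypothesis: by \cref{left-extension}, $\ell \plx j$ is the $C(\ell, 1)$-colimit of $j$, and the assumption that this colimit is $j$-absolute provides a left-opcartesian 2-cell witnessing $E(j, j) \odotl C(\ell, 1) \iso E(j, \ell \plx j)$. Since $j$ is \ff{}, the canonical 2-cell $A(1, 1) \tto E(j, j)$ is invertible, so the left-composite $E(j, j) \odotl C(\ell, 1)$ is canonically isomorphic to $C(\ell, 1)$ itself. Composing these isomorphisms yields $C(\ell, 1) \iso E(j, \ell \plx j)$, which by \cref{reformulations-of-relative-adjunction}(1) is exactly the data of the sought relative adjunction $\ell \jadj \ell \plx j$.

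For part (2), I would apply \cref{absolute-implies-colimit}: since $j$ is dense, to show that $r$ forms the $j$-absolute $C(\ell, 1)$-colimit of $j$, it suffices to exhibit an isomorphism $E(j, j) \odotl C(\ell, 1) \iso E(j, r)$. From $\ell \jadj r$ we already have $C(\ell, 1) \iso E(j, r)$ by \cref{reformulations-of-relative-adjunction}(1), and \ffness{} of $j$ gives $E(j, j) \odotl C(\ell, 1) \iso C(\ell, 1)$ as before, so the required isomorphism is immediate. By \cref{left-extension}, the $C(\ell, 1)$-colimit of $j$ is precisely the pointwise left extension $\ell \plx j$, so $r$ exhibits this extension and it is $j$-absolute.

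The main thing to verify carefully is that, in part (1), the isomorphism $C(\ell, 1) \iso E(j, \ell \plx j)$ is not merely an abstract isomorphism but the canonical one induced by the universal 2-cell $\pi$ of the extension together with the \ff{} structure on $j$; equivalently, that the resulting unit $\eta \colon j \tto \ell \d (\ell \plx j)$ agrees with $\pi$. This should follow by tracing through the construction of $\sharp$ from $\eta$ in \cref{reformulations-of-relative-adjunction}, observing that both $\pi$ and the $j$-absoluteness datum are defined via the same left-opcartesian 2-cell. Part (2) is cleaner, as \cref{absolute-implies-colimit} directly handles the question of canonicity through density of $j$.
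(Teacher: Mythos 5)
Your argument is correct and is essentially the paper's own proof: both parts hinge on the isomorphism $C(\ell, 1) \iso E(j, j) \odot C(\ell, 1)$ coming from \ffness{} of $j$, with (1) using $j$-absoluteness to get $C(\ell, 1) \iso E(j, \ell \plx j)$ and hence the adjunction via the hom-isomorphism characterisation, and (2) combining the adjunction isomorphism with \cref{absolute-implies-colimit}. The canonicity check you flag in (1) is not actually needed, since by \cref{reformulations-of-relative-adjunction}(1) any isomorphism $C(\ell, 1) \iso E(j, \ell \plx j)$ already constitutes the data of the relative adjunction.
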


\begin{proof}
  First observe that, since $j$ is fully faithful, there is an isomorphism:
  \[
    C(\ell, 1) \iso E(j, j) \odot C(\ell, 1)
  \]
  For (1), $j$-absoluteness of $\ell \plx j$ implies there is an isomorphism $E(j, \ell \plx j) \iso E(j, j) \odotl C(\ell, 1)$, so we have $C(\ell, 1) \iso E(j, \ell \plx j)$ as required.
  For (2), if $\ell \jadj r$ then there is an isomorphism $E(j, j) \odot C(\ell, 1) \iso E(j, r)$, so we can conclude by applying \cref{absolute-implies-colimit}.
\end{proof}

Left relative adjoints preserve those colimits preserved by the root; while right relative adjoints preserve all limits when the root is dense.

\begin{proposition}
	\label{left-adjoints-preserve-colimits}
    Let $\jAE$ be a tight-cell. If $\ell \colon A \to C$ is a left $j$-adjoint, then $\ell$ preserves every colimit that $j$ preserves.
\end{proposition}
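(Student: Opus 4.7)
The plan is to use the hom-isomorphism characterisation of a relative adjunction (\cref{reformulations-of-relative-adjunction}) to transport colimit-preservation from $j$ to $\ell$. Fix a right $j$-adjoint $r \colon C \to E$ of $\ell$, yielding an isomorphism $C(\ell, 1) \iso E(j, r)$. Suppose $p \wc f \colon Y \to A$ is a colimit (with $p \colon Y \lto Z$ and $f \colon Z \to A$) preserved by $j$; I aim to show that $((p \wc f) \d \ell, \lambda \d \ell)$ is the $p$-colimit of $f \d \ell \colon Z \to C$. By \cref{weighted-colimit}, it suffices to exhibit $C((p \wc f) \d \ell, 1)$ as the right lift $C(f \d \ell, 1) \rf p$ via the canonical 2-cell induced by $\lambda \d \ell$.

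Using pseudofunctoriality of restriction together with the hom-isomorphism, I would establish the canonical isomorphisms
\begin{align*}
    C((p \wc f) \d \ell, 1) &\iso C(\ell, 1)(p \wc f, 1) \iso E(j, r)(p \wc f, 1) \iso E((p \wc f) \d j, r), \\
    C(f \d \ell, 1) &\iso C(\ell, 1)(f, 1) \iso E(j, r)(f, 1) \iso E(f \d j, r).
\end{align*}
Since $j$ preserves $p \wc f$ by hypothesis, $(p \wc f) \d j$ is the $p$-colimit of $f \d j$. Applying \cref{colimit-and-restriction} with $x \defeq r$ produces a canonical isomorphism
\[
    E((p \wc f) \d j, r) \iso E(f \d j, r) \rf p,
\]
induced by the colimiting cylinder for $(p \wc f) \d j$. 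Composing with the displayed isomorphisms above yields the desired $C((p \wc f) \d \ell, 1) \iso C(f \d \ell, 1) \rf p$.

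The hard part will be verifying that the composite isomorphism is exhibited by the canonical 2-cell arising from $\lambda \d \ell$, rather than by an abstract isomorphism. This amounts to checking that the hom-isomorphism $C(\ell, 1) \iso E(j, r)$ intertwines the structural 2-cells $\cp \ell$, $\pc j$, and the unit $\eta \colon j \tto \ell \d r$ in the appropriate way; this is immediate from the unit--counit presentation of \cref{reformulations-of-relative-adjunction} by a routine string-diagram calculation that I would not write out in detail.
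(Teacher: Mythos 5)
Your proposal is correct and follows essentially the same route as the paper: the same chain of isomorphisms $C(\ell(p \wc f), 1) \iso E(j(p \wc f), r) \iso E(p \wc (jf), r) \iso E(jf, r) \rf p \iso C(\ell f, 1) \rf p$, using the hom-isomorphism of the relative adjunction together with \cref{colimit-and-restriction}, with the final check that the exhibiting 2-cell is the canonical one deferred to a routine calculation exactly as the paper does.
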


\begin{proof}
	Suppose that $\ell \jadj r$, let $p \colon Y \lto Z$ be a loose-cell, and let $f \colon Z \to A$ be a tight-cell admitting a $p$-colimit $p \wc f \colon Y \to A$.
	If $j$ preserves $p \wc f$, then we have the following isomorphisms.
	\begin{align*}
		C(\ell (p \wc f), 1)
		& \iso E(j (p \wc f), r) \tag{$\ljr$} \\
		& \iso E(p \wc (jf), r)  \tag{$j$ preserves $p \wc f$} \\
		& \iso E(jf, r) \rf p  \tag{\cref{colimit-and-restriction}} \\
		& \iso C(\ell f, 1) \rf p  \tag{$\ljr$}
	\end{align*}
	Hence $((p \wc f) \d \ell)$ forms the colimit $p \wc (f \d \ell)$; a simple calculation shows the universal 2-cell is the canonical one.
\end{proof}

\begin{proposition}
    \label{right-adjoints-preserve-limits}
    Let $\jAE$ be a tight-cell. If $j$ is dense and $r \colon C \to E$ is a right $j$-adjoint, then $r$ preserves limits.
\end{proposition}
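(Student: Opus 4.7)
The plan is to show that for every weight $p \colon Z \lto Y$ and tight-cell $f \colon Z \to C$ admitting a $p$-limit $p \wl f$, the composite $r \d (p \wl f)$ forms the $p$-limit of $r \d f$. By \cref{weighted-limit}, this reduces to exhibiting $E(1, r(p \wl f))$ as the right extension $p \rx E(1, rf)$, with counit induced by the universal cocylinder $\limcell$ of $p \wl f$ together with $r$.

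First I would perform the core calculation at the level of $E(j, \ph)$:
\[
  E(j, r(p \wl f)) \iso C(\ell, p \wl f) \iso p \rx C(\ell, f) \iso p \rx E(j, rf).
\]
The first and third isomorphisms follow from $\ljr$ via \cref{reformulations-of-relative-adjunction} and the pseudofunctoriality of restriction. The middle isomorphism arises from the limit property $C(1, p \wl f) \iso p \rx C(1, f)$ by restricting along $\ell$ using the dual of \cref{right-lift-and-restriction}. Each step is witnessed by a canonical 2-cell, so the composite exhibits $E(j, r(p \wl f))$ as the right extension $p \rx E(j, rf)$ with counit arising from $\limcell \d r$ together with the unit and counit of the adjunction.

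Next I would transfer this right-extension property from $E(j, \ph)$ back to $E(1, \ph)$ using density. By \cref{density-implies-j*-is-ff}, 2-cells $r_1, \ldots, r_n \tto E(g, h)$ are in bijection with 2-cells $E(j, g), r_1, \ldots, r_n \tto E(j, h)$. Applying this bijection (with $g = 1_E$) to an arbitrary test 2-cell $\phi \colon r_1, \ldots, r_n, p \tto E(1, rf)$ yields a 2-cell $E(j, 1), r_1, \ldots, r_n, p \tto E(j, rf)$; the calculation above then gives a unique factorisation $E(j, 1), r_1, \ldots, r_n \tto E(j, r(p \wl f))$; and a second invocation of density produces the required unique factorisation $r_1, \ldots, r_n \tto E(1, r(p \wl f))$. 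The composite bijection is precisely the universal property exhibiting $E(1, r(p \wl f))$ as $p \rx E(1, rf)$.

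The hardest part will be checking naturality, namely that the composite chain of bijections is induced by composition with the canonical counit built from $\limcell \d r$, and hence that $r$ genuinely \emph{preserves} the limit rather than merely that $r(p \wl f)$ forms some limit of $rf$. This amounts to a routine but somewhat tedious diagram-chase verifying that each isomorphism in the three-step calculation respects the universal cocylinder $\limcell$ and the counit of the adjunction; the essential conceptual content of the proposition is entirely captured by that calculation.
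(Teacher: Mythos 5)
Your proposal is correct and follows essentially the same route as the paper: the central chain $E(j, r(p \wl f)) \iso C(\ell, p \wl f) \iso p \rx C(\ell, f) \iso p \rx E(j, rf)$ is exactly the middle of the paper's calculation, and your two invocations of the density bijection from \cref{density-implies-j*-is-ff} are just the paper's use of the right lift through $E(j,1)$ (via \cref{pointwise-extension-and-restriction}) together with \cref{lifts-commute-with-extensions}, phrased as bijections of 2-cells rather than as isomorphisms of loose-cells. The paper likewise dispatches the identification of the universal cocylinder with the canonical one as "a simple calculation using the density of $j$", matching your final remark.
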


\begin{proof}
  Suppose that $\ell \jadj r$, let $p \colon X \lto Y$ be a loose-cell, and let $f \colon X \to C$ be a tight-cell admitting a $p$-limit $p \wl f \colon Y \to C$.
  We have the following isomorphisms.
  \begin{align*}
    E(1, r(p \wl f))
    &\iso \tag{\cref{pointwise-extension-and-restriction}, using density of $j$}	  E(j, r(p \wl f)) \rf E(j, 1)
    \\&\iso \tag{$\ljr$}
    C(\ell, p \wl f) \rf E(j, 1)
    \\& \iso \tag{\cref{right-lift-and-restriction}}
    (p \rx C(\ell, f)) \rf E(j, 1)
    \\& \iso \tag{$\ljr$}
    (p \rx E(j, r f)) \rf E(j, 1)
    \\& \iso \tag{\cref{lifts-commute-with-extensions}}
    p \rx (E(j, r f) \rf E(j, 1))
    \\& \iso \tag{\cref{pointwise-extension-and-restriction}, using density of $j$}
    p \rx E(1, r f)
  \end{align*}
  Hence $((p \wl f) \d r)$ forms the limit $p \wl (f \d r)$; a simple calculation using the density of $j$ shows the universal 2-cell is the canonical one.
\end{proof}

\begin{remark}
	From \cref{left-adjoints-preserve-colimits,right-adjoints-preserve-limits}, we recover \cite[Theorem~2.13 \& footnote 13]{ulmer1968properties} respectively.
	As a special case, we recover \cite[Theorem~9.5.7]{riehl2022elements}, expressing that left adjoints preserve colimits (since identities trivially preserve colimits), and that right adjoints preserve limits (since identities are trivially dense).
\end{remark}

\subsection{Morphisms of relative adjunctions}

Just as relative monads are presented by relative adjunctions, so too are morphisms of relative monads presented by morphisms of relative adjunctions. In fact, there are two natural notions of morphisms of relative adjunctions, corresponding to each of the two tight-cells $\ell$ and $r$, which we term \emph{left-morphisms} and \emph{right-morphisms} respectively.

\begin{definition}
	\label{left-morphism}
    Let $\jAE$ be a tight-cell. A \emph{left-morphism} of $j$-adjunctions from $\ljr$ to $\ljrp$ comprises
    % https://q.uiver.app/?q=WzAsNCxbMCwxLCJBIl0sWzEsMSwiQyJdLFsyLDEsIkUiXSxbMSwwLCJDJyJdLFswLDEsIlxcZWxsIiwyXSxbMSwyLCJyIiwyXSxbMCwzLCJcXGVsbCciXSxbMSwzLCJjIiwxXSxbMywyLCJyJyJdLFs2LDEsIlxcbGFtYmRhIiwyLHsic2hvcnRlbiI6eyJzb3VyY2UiOjMwfX1dXQ==
    \[\begin{tikzcd}
    	& {C'} \\
    	A & C & E
    	\arrow["\ell"', from=2-1, to=2-2]
    	\arrow["r"', from=2-2, to=2-3]
    	\arrow[""{name=0, anchor=center, inner sep=0}, "{\ell'}", from=2-1, to=1-2]
    	\arrow["c"{description}, from=2-2, to=1-2]
    	\arrow["{r'}", from=1-2, to=2-3]
    	\arrow["\lambda"', shorten <=3pt, Rightarrow, from=0, to=2-2]
    \end{tikzcd}\]
    \begin{enumerate}
        \item a 1-cell $c \colon C \to C'$ such that $r = c \d r'$;
        \item a 2-cell $\lambda \colon \ell' \tto \ell \d c$,
    \end{enumerate}
    rendering the following diagram commutative.
    % https://q.uiver.app/#q=WzAsNCxbMCwwLCJDKFxcZWxsLCAxKSJdLFsxLDAsIkUoaiwgcikiXSxbMSwxLCJDJyhcXGVsbCcsIGMpIl0sWzAsMSwiQycoYyBcXGVsbCwgYykiXSxbMCwxLCJcXHNoYXJwIl0sWzIsMSwiXFxzaGFycCcoMSwgYykiLDJdLFszLDIsIkMnKFxcbGFtYmRhLCBjKSIsMl0sWzAsMywiXFxwYyBjKFxcZWxsLCAxKSIsMl1d
    \[\begin{tikzcd}[column sep=large]
      {C(\ell, 1)} & {E(j, r)} \\
      {C'(c \ell, c)} & {C'(\ell', c)}
      \arrow["\sharp", from=1-1, to=1-2]
      \arrow["{\sharp'(1, c)}"', from=2-2, to=1-2]
      \arrow["{C'(\lambda, c)}"', from=2-1, to=2-2]
      \arrow["{\pc c(\ell, 1)}"', from=1-1, to=2-1]
    \end{tikzcd}\]
    It is \emph{strict} when $\lambda$ is the identity. $j$-adjunctions and their left-morphisms form a category $\RAdj_L(j)$.
\end{definition}

\begin{example}
	For any tight-cell $\jAE$ and $j$-adjunction $\ljr$, the pair $(r, \eta)$ forms a unique left-morphism from $\ljr$ to $j \jadj 1_E$ (\cref{trivial-relative-adjunctions}), exhibiting the latter as terminal in $\RAdj_L(j)$.
\end{example}

While the data of a left-morphism in \cref{left-morphism} involves both a tight-cell $c$ and a 2-cell $\lambda$, the following lemma shows that the 2-cell $\lambda$ is redundant, being uniquely determined by the tight-cell $c$. However, the analogous statement for right-morphisms is not true in general; we make the 2-cell $\lambda$ explicit for symmetry with \cref{right-morphism}.

\begin{definition}
	For each object $E$ of $\X$, denote by $\tX/E$ the category of strict slices over $E$, whose objects are tight-cells $\cdot \to E$ and whose morphisms are commutative triangles.
\end{definition}

\begin{lemma}
	\label{left-morphisms-to-slices-is-ff}
	Let $\jAE$ be a tight-cell. The functor $\RAdj_L(j) \to \tX/E$ sending each $j$-adjunction $\ljr$ to its right adjoint $r$, and sending each left-morphism $(c, \lambda)$ to its tight-cell $c$, is \ff{}.
\end{lemma}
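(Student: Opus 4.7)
The plan is to identify the unique $\lambda$ directly via the universal property of $\ell'$ as a relative left adjoint, and then to verify that this $\lambda$ satisfies the left-morphism condition.

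Fix a tight-cell $c \colon C \to C'$ with $c \d r' = r$; I must show that there is a unique 2-cell $\lambda \colon \ell' \tto c\ell$ such that $(c, \lambda)$ is a left-morphism. By \cref{left-adjoint-is-left-lift,pointwise-left-lift-is-absolute-left-lift}, $\ell'$ is the absolute left lift of $j$ through $r'$ in the tight 2-category $\tX$, with universal 2-cell the unit $\eta' \colon j \tto \ell' \d r'$. Taking $y \defeq c\ell \colon A \to C'$ and noting that $c\ell \d r' = \ell \d (c \d r') = \ell \d r$, this universal property yields a bijection between 2-cells $\lambda \colon \ell' \tto c\ell$ in $\tX$ and 2-cells $j \tto \ell \d r$ in $\tX$, sending $\lambda$ to $\eta' \d (\lambda \d r')$. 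Consequently, the unit $\eta \colon j \tto \ell \d r$ of $\ljr$ picks out a unique 2-cell $\lambda$, characterised by the equation $\eta = \eta' \d (\lambda \d r')$.

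It remains to show that this equation is equivalent to the commutativity of the square in \cref{left-morphism}, after which existence and uniqueness of $\lambda$ follow immediately. This equivalence amounts to translating between the transposition-operator and unit-based presentations of a relative adjunction (conditions (1) and (2) of \cref{reformulations-of-relative-adjunction}), which are related by bending the tight-cell $\ell$ (resp.\ $\ell'$) in the string-diagram representation of $\sharp$ (resp.\ $\sharp'$). Applying this bending operation to both sides of the left-morphism square, the right-hand side assembles, via the zig-zag laws for restriction, into the vertical composite $\eta' \d (\lambda \d r')$, while the left-hand side becomes $\eta$.

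The main obstacle is this last string-diagrammatic verification, and in particular tracking how $\pc c$ interacts with the units $\cp\ell$ and $\cp{\ell'}$ used to transpose $\sharp$ and $\sharp'$ into $\eta$ and $\eta'$. The manipulation is routine but requires careful bookkeeping; once settled, both directions of the equivalence are established, and the full faithfulness of the forgetful functor follows directly from the absolute left lift property applied above.
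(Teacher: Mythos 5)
Your proof is correct and in substance the same as the paper's: both solve uniquely for $\lambda$ by inverting the transposition of the adjunction $\ell' \jadj r'$ --- the paper by pasting $\flat'$ and bending $c$ directly in the compatibility square, you by first rewriting that square as the unit equation $\eta = \eta' \d (\lambda \d r')$ and then invoking \cref{left-adjoint-is-left-lift,pointwise-left-lift-is-absolute-left-lift} to get a unique $\lambda$. The ``bookkeeping'' step you defer (equivalence of the square with the unit equation, by bending $\ell$ and using the zig-zag laws) is exactly the one-line pasting manipulation the paper itself performs, so there is no gap.
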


\begin{proof}
	By pasting $\flat'$ and bending $c$, the compatibility condition for a left-morphism states that $\lambda$ is equal to the following 2-cell.
	\[
	% https://varkor.github.io/tangle/?t=W1tbNywzLDIsNV1dLFtbWzEsWzEsMCwxLDBdXSxbMCxbXV0sWzAsW11dLFsxLFsxLDAsMSwwXV1dLFtbMSxbMSwxLDEsMF1dLFsxLFswLDEsMSwxXV0sWzEsWzAsMCwxLDFdXSxbMSxbMSwwLDEsMF1dXSxbWzEsWzEsMSwxLDBdXSxbMSxbMSwwLDAsMV1dLFsyLFsxXV0sWzIsWzBdXV0sW1sxLFsxLDAsMSwwXV0sWzAsW11dLFswLFtdXSxbMCxbXV1dXSxbWzAsMSwyLjUsWyJcXGZsYXQnIiwxXV0sWzEsMS41LDIsWzFdXSxbMSwwLjUsMixbMF1dLFsxLDAuNSwxLFswXV0sWzEsMC41LDMsWzBdXSxbMCwxLjUsMS41LFsiXFxzaGFycCIsMl1dLFsxLDIuNSwyLFsxXV0sWzEsMywyLjUsWzBdXSxbMSwzLjUsMixbMF1dLFsxLDMuNSwxLFswXV1dLFtbMCwwLDMsIlxcZWxsIl0sWzMsMCwzLCJjIl0sWzAsMywxLCJcXGVsbCciXV1d&c=F5A3A3,F5CCA3,F5F5A3,CCF5A3,A3F5A3,A3F5CC,A3F5F5,A3CCF5,A3A3F5,CCA3F5,F5A3F5,F5A3CC
	\begin{tangle}{(4,4)}[trim y]
		\tgBorderA{(0,0)}{\tgColour6}{\tgColour2}{\tgColour2}{\tgColour6}
		\tgBlank{(1,0)}{\tgColour2}
		\tgBlank{(2,0)}{\tgColour2}
		\tgBorderA{(3,0)}{\tgColour2}{\tgColour1}{\tgColour1}{\tgColour2}
		\tgBorderA{(0,1)}{\tgColour6}{\tgColour2}{\tgColour4}{\tgColour6}
		\tgBorderA{(1,1)}{\tgColour2}{\tgColour2}{\tgColour1}{\tgColour4}
		\tgBorderA{(2,1)}{\tgColour2}{\tgColour2}{\tgColour2}{\tgColour1}
		\tgBorderA{(3,1)}{\tgColour2}{\tgColour1}{\tgColour1}{\tgColour2}
		\tgBorderA{(0,2)}{\tgColour6}{\tgColour4}{\tgColour1}{\tgColour6}
		\tgBorderA{(1,2)}{\tgColour4}{\tgColour1}{\tgColour1}{\tgColour1}
		\tgBorderC{(2,2)}{0}{\tgColour1}{\tgColour2}
		\tgBorderC{(3,2)}{1}{\tgColour1}{\tgColour2}
		\tgBorderA{(0,3)}{\tgColour6}{\tgColour1}{\tgColour1}{\tgColour6}
		\tgBlank{(1,3)}{\tgColour1}
		\tgBlank{(2,3)}{\tgColour1}
		\tgBlank{(3,3)}{\tgColour1}
		\tgCell[(1,0)]{(0.5,2)}{\flat'}
		\tgArrow{(1,1.5)}{3}
		\tgArrow{(0,1.5)}{1}
		\tgArrow{(0,0.5)}{1}
		\tgArrow{(0,2.5)}{1}
		\tgCell[(2,0)]{(1,1)}{\sharp}
		\tgArrow{(2,1.5)}{3}
		\tgArrow{(2.5,2)}{0}
		\tgArrow{(3,1.5)}{1}
		\tgArrow{(3,0.5)}{1}
		\tgAxisLabel{(0.5,0.75)}{south}{\ell}
		\tgAxisLabel{(3.5,0.75)}{south}{c}
		\tgAxisLabel{(0.5,3.25)}{north}{\ell'}
	\end{tangle}
	\]
	Thus, for any tight-cell $c \colon C \to C'$ between the apices of $j$-adjunctions $\ljr$ and $\ljrp$, such that $r = c \d r'$, the 2-cell above defines a unique left-morphism $(\ljr) \to (\ljrp)$ with tight-cell $c$.
\end{proof}

\begin{definition}
    \label{right-morphism}
    Let $\jAE$ be a tight-cell in $\X$. A \emph{right-morphism} of $j$-adjunctions from $\ljr$ to $\ell' \jadj r'$ comprises
    % https://q.uiver.app/?q=WzAsNCxbMCwxLCJBIl0sWzEsMSwiQyJdLFsyLDEsIkUiXSxbMSwwLCJDJyJdLFswLDEsIlxcZWxsIiwyXSxbMSwyLCJyIiwyXSxbMCwzLCJcXGVsbCciXSxbMSwzLCJjIiwxXSxbMywyLCJyJyJdLFsxLDgsIlxccmhvIiwyLHsic2hvcnRlbiI6eyJ0YXJnZXQiOjMwfX1dXQ==
	\[\begin{tikzcd}
		& {C'} \\
		A & C & E
		\arrow["\ell"', from=2-1, to=2-2]
		\arrow["r"', from=2-2, to=2-3]
		\arrow["{\ell'}", from=2-1, to=1-2]
		\arrow["c"{description}, from=2-2, to=1-2]
		\arrow[""{name=0, anchor=center, inner sep=0}, "{r'}", from=1-2, to=2-3]
		\arrow["\rho"', shorten >=3pt, Rightarrow, from=2-2, to=0]
	\end{tikzcd}\]
    \begin{enumerate}
        \item a tight-cell $c \colon C \to C'$ such that $\ell \d c = \ell'$;
        \item a 2-cell $\rho \colon r \tto c \d r'$,
    \end{enumerate}
    rendering the following diagram commutative.
    % https://q.uiver.app/#q=WzAsNCxbMCwwLCJDKFxcZWxsLCAxKSJdLFsxLDAsIkUoaiwgcikiXSxbMSwxLCJFKGosIHInIGMpIl0sWzAsMSwiQycoXFxlbGwnLCBjKSJdLFswLDEsIlxcc2hhcnAiXSxbMSwyLCJFKGosIFxccmhvKSJdLFszLDIsIlxcc2hhcnAnKDEsIGMpIiwyXSxbMCwzLCJcXHBjIGMoXFxlbGwsIDEpIiwyXV0=
    \[\begin{tikzcd}[column sep=large]
      {C(\ell, 1)} & {E(j, r)} \\
      {C'(\ell', c)} & {E(j, r' c)}
      \arrow["\sharp", from=1-1, to=1-2]
      \arrow["{E(j, \rho)}", from=1-2, to=2-2]
      \arrow["{\sharp'(1, c)}"', from=2-1, to=2-2]
      \arrow["{\pc c(\ell, 1)}"', from=1-1, to=2-1]
    \end{tikzcd}\]
    It is \emph{strict} when $\rho$ is the identity. $j$-adjunctions and right-morphisms form a category $\RAdj_R(j)$.
\end{definition}

\begin{example}
	For any \ff{} tight-cell $\jAE$ and $j$-adjunction $\ljr$, the pair $(\ell, \eta)$ forms a unique right-morphism from $1_A \jadj j$ (\cref{trivial-relative-adjunctions}) to $\ljr$, exhibiting the former as initial in $\RAdj_R(j)$.
\end{example}

As mentioned above, the 2-cell $\rho$ in the data of a right-morphism $(c, \rho)$ is not uniquely determined by the tight-cell $c$ in general. However, in analogy with the essential uniqueness of relative adjoints (\cref{uniqueness-of-relative-adjoints}), it is uniquely determined when $j$ is dense.

\begin{definition}
	For each object $A$ of $\X$, denote by $A/\tX$ the category of strict coslices under $A$, whose objects are tight-cells $A \to \cdot$ and whose morphisms are commutative triangles.
\end{definition}

\begin{lemma}
	\label{right-morphisms-to-coslices-is-ff}
	Let $\jAE$ be a tight-cell. If $j$ is dense, then the functor $\RAdj_R(j) \to A/\tX$ sending each $j$-adjunction $\ljr$ to its left adjoint $\ell$, and sending each right-morphism $(c, \rho)$ to its tight-cell $c$, is \ff{}.
\end{lemma}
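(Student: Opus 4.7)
The plan is to show that, for any two $j$-adjunctions $\ljr$ and $\ljrp$ and any morphism $c \colon \ell \to \ell'$ in $A/\tX$ (\ie{} any tight-cell $c \colon C \to C'$ with $\ell \d c = \ell'$), there exists a unique 2-cell $\rho \colon r \tto c \d r'$ rendering $(c, \rho)$ a right-morphism of $j$-adjunctions. Both faithfulness and fullness of the functor then follow at once.

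First, I would observe that, since $\sharp$ is an isomorphism with inverse $\flat$, the compatibility square of \cref{right-morphism} determines $E(j, \rho)$ uniquely as the composite
\[
E(j, r) \xto{\flat} C(\ell, 1) \xto{C(\ell, 1),\, \pc c} C'(\ell', c) \xto{\sharp',\, C'(1, c)} E(j, r' c),
\]
and conversely, any 2-cell $\rho \colon r \tto c \d r'$ whose image under $E(j, {-})$ equals this composite automatically yields a right-morphism $(c, \rho)$. In other words, the right-morphism condition is equivalent to pinning down $E(j, \rho)$ to a specific 2-cell that is already entirely determined by $c$ and the two adjunction data.

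Next, I would invoke density of $j$: by \cref{density-implies-j*-is-ff}, restriction along $j$ induces a bijection between 2-cells $r \tto c \d r'$ in $\tX$ and 2-cells $E(j, r) \tto E(j, c \d r')$ in $\X$. Hence the prescribed composite above is the image under $E(j, {-})$ of a unique 2-cell $\rho \colon r \tto c \d r'$, giving existence and uniqueness of $\rho$ simultaneously. This establishes full faithfulness of $\RAdj_R(j) \to A/\tX$.

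I do not anticipate a substantive obstacle here: the argument is a direct application of the density characterisation, and mirrors the proof of \cref{left-morphisms-to-slices-is-ff} with the added input that density is precisely the property required to invert restriction along $j$ at the hom-level and thereby reconstruct $\rho$ from the data fixed by $c$. The only subtlety worth flagging is the necessity of density: in its absence, the composite above still uniquely determines $E(j, \rho)$, but there is no guarantee that it lifts (uniquely, or at all) to a 2-cell between the tight-cells $r$ and $c \d r'$, in keeping with the observation after \cref{uniqueness-of-relative-adjoints} that right relative adjoints need not be essentially unique without density.
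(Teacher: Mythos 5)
Your proposal is correct and follows essentially the same route as the paper's proof: rewrite the compatibility square using $\flat$ to see that $E(j,\rho)$ is pinned down by $c$ and the two adjunctions, then use \cref{density-implies-j*-is-ff} to lift this uniquely to a 2-cell $\rho \colon r \tto c \d r'$, yielding both fullness and faithfulness at once. Your closing remark about the necessity of density is a sensible observation but not part of the statement being proved.
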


\begin{proof}
	The compatibility condition for $\rho$ equivalently states that the following square commutes.
	% https://q.uiver.app/#q=WzAsNCxbMCwwLCJDKFxcZWxsLCAxKSJdLFsxLDAsIkUoaiwgcikiXSxbMSwxLCJFKGosIHInIGMpIl0sWzAsMSwiQycoXFxlbGwnLCBjKSJdLFsxLDAsIlxcZmxhdCIsMl0sWzEsMiwiRShqLCBcXHJobykiXSxbMywyLCJcXHNoYXJwJygxLCBjKSIsMl0sWzAsMywiXFxwYyBjKFxcZWxsLCAxKSIsMl1d
	\[\begin{tikzcd}[column sep=huge]
		{C(\ell, 1)} & {E(j, r)} \\
		{C'(\ell', c)} & {E(j, r' c)}
		\arrow["\flat"', from=1-2, to=1-1]
		\arrow["{E(j, \rho)}", from=1-2, to=2-2]
		\arrow["{\sharp'(1, c)}"', from=2-1, to=2-2]
		\arrow["{\pc c(\ell, 1)}"', from=1-1, to=2-1]
	\end{tikzcd}\]
	Thus, for any tight-cell $c \colon C \to C'$ between the apices of $j$-adjunctions $\ljr$ and $\ljrp$ such that $\ell \d c = \ell'$, the 2-cell above uniquely determines $E(j, \rho)$.
        Hence, when $j$ is dense, it uniquely determines $\rho$ by \cref{density-implies-j*-is-ff}, thus defining a unique right-morphism $(\ljr) \to (\ljrp)$ with tight-cell $c$.
\end{proof}

The compatibility condition between $\sharp$ and $\sharp'$ in the definitions of left-morphisms and right-morphisms may be re\"expressed in terms of $\flat$, $\eta$, or $\varepsilon$ as in \cref{reformulations-of-relative-adjunction}: we leave the elementary details to the reader.

\begin{remark}
    Our definitions of left- and right-morphisms of relative adjunctions coincide with those of \cite[Definitions~5.2.20 \& 5.2.12]{arkor2022monadic} in a representable equipment, by the preceding remark.
\end{remark}

Strict morphisms of (relative) adjunctions appear more commonly in the literature (\eg{}~\cite[\S IV.7]{maclane1998categories}) than general left- and right-morphisms, and play an important role in the study of relative monads, as shall be shown in the following section.

\begin{definition}
	A \emph{strict morphism} of relative adjunctions is a strict left- (equivalently, right-) morphism of relative adjunctions. Denote by $\RAdj(j)$ the category of $j$-adjunctions and their strict morphisms.
\end{definition}

\subsection{Resolutions of relative monads}

Our motivation for introducing relative adjunctions is our interest in relative monads.
The connection between relative adjunctions and relative monads is analogous to the connection between non-relative adjunctions and non-relative monads: just as every adjunction induces a monad, every relative adjunction induces a relative monad. The converse is not necessarily true in an arbitrary equipment, though in \cref{algebras-and-opalgebras} we give sufficient conditions for a relative monad to be induced by a canonical relative adjunction, which in particular hold in $\Cat$.

\begin{theorem}
    \label{relative-adjunction-induces-relative-monad}
    Every relative adjunction $\ljr$ induces a relative monad ${\wedge_j}(\ljr)$ with underlying tight-cell $(\ell \d r)$. Furthermore, this assignment extends to functors:
	\begin{align*}
		\obslash_j & \colon \RAdj_L(j) \to \RMnd(j)\op \\
		\oslash_j & \colon \RAdj_R(j) \to \RMnd(j)
	\end{align*}
\end{theorem}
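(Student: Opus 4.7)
The plan is to construct the $j$-relative monad ${\wedge_j}(\ljr)$ directly from the unit--counit presentation afforded by \cref{reformulations-of-relative-adjunction}(3). Take the underlying tight-cell to be $t \defeq \ell \d r$, with the unit of the monad given by the unit $\eta \colon j \tto \ell \d r$ of the adjunction. The extension operator $\dag \colon E(j, t) \tto E(t, t)$ is obtained by ``whiskering'' the counit $\varepsilon \colon C(1, \ell), E(j, r) \tto C(1, 1)$: in string diagrams, the inner $r$ and inner $\ell$ strands of the domain $E(1, \ell r), E(j, 1), E(1, \ell r)$ are bent down to expose $C(1, \ell), E(j, r)$, to which one applies $\varepsilon$, leaving the outer $\ell$ and $r$ to form the codomain $E(1, \ell r)$. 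Equivalently, one may observe via \cref{reformulations-of-relative-adjunction}(7) that a relative adjunction is precisely a loose-adjunction $C(1, \ell) \adj E(j, r)$, which by \cref{loose-adjunction-induces-loose-monad} induces a loose-monad whose underlying loose-cell is $j$-represented by $\ell r$; transposing this loose-monad structure produces the required monoid in $\X[j]$, equivalently a $j$-relative monad by \cref{relative-monads-are-tight-monoids}.

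Verification of the three axioms of \cref{relative-monad} then reduces to string-diagrammatic manipulations. The left unit law follows from the first zig-zag identity, which collapses an occurrence of $E(j, \eta)$ placed adjacent to $\varepsilon$ to the identity. The right unit law follows dually from the second zig-zag. Associativity of $\dag$ follows from the interchange law for 2-cells in a virtual double category, which allows the two copies of $\varepsilon$ appearing on either side of the associativity equation to be rearranged freely between the two parenthesisations.

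For functoriality, a right-morphism $(c, \rho) \colon (\ljr) \to (\ljrp)$ satisfies $\ell' = \ell \d c$ and $\rho \colon r \tto c \d r'$, so the whiskered 2-cell $\ell \rho \colon \ell r \tto \ell c r' = \ell' r'$ is a candidate morphism between the underlying tight-cells of the induced relative monads. The plan is to define $\oslash_j(c, \rho) \defeq \ell \rho$ and verify the two conditions of a morphism in \cref{relative-monad} --- compatibility with $\eta$ and with $\dag$ --- using the $\sharp$-compatibility condition of a right-morphism together with the zig-zag laws. Dually, a left-morphism $(c, \lambda) \colon (\ljr) \to (\ljrp)$ satisfies $r = c \d r'$ and $\lambda \colon \ell' \tto \ell \d c$, so $\lambda r' \colon \ell' r' \tto \ell c r' = \ell r$ points in the opposite direction, whence the contravariance of $\obslash_j$. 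The main obstacle is the careful tracking of companions, conjoints, and restrictions in the axiom and morphism-compatibility verifications; each step is individually routine in string-diagrammatic form, but requires an application of the pseudofunctoriality of restriction to be fully justified as a pasting-diagram calculation.
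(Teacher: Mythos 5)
Your construction is essentially the paper's own: the unit is taken directly from the relative adjunction, the extension/multiplication is obtained by pasting the counit $\varepsilon$ (equivalently $\flat$) onto the inner $C(1,\ell), E(j,r)$ strands between the two copies of $\ell \d r$, the unit laws reduce to the triangle identities of \cref{reformulations-of-relative-adjunction}(3) (equivalently the $\sharp$--$\flat$ isomorphism), associativity reduces to interchange of the two occurrences of $\varepsilon$, which act on disjoint segments of the chain when one works with the multiplication $\mu$ rather than $\dag$ (the paper's two auxiliary identities play exactly this role in the transposed $\dag$-form), and the functorial actions $\lambda \d r'$ and $\ell \d \rho$, together with the verification of the monad-morphism axioms from the left-/right-morphism compatibility conditions expressed in terms of $\eta$ and $\flat$, coincide with the paper's.

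One caveat: your ``equivalently'' aside is not correct as stated. The loose-adjunction $C(1, \ell) \adj E(j, r)$ of \cref{reformulations-of-relative-adjunction}(7) does induce a loose-monad on $E(j, r\ell)$ via \cref{loose-adjunction-induces-loose-monad}, but a loose-monad structure on $E(j, r\ell)$ cannot in general be transposed back to a monoid in $\X[j]$: passing from $j$-representable loose-monads to tight relative monads requires density of $j$ (cf.\ \cref{relative-monads-are-loose-monads}), since one must strip a restriction along $j$ from the codomain of the multiplication. What you in fact transpose is the loose-adjunction itself --- i.e.\ you whisker its counit, which is precisely your primary construction --- so nothing in your argument depends on the aside, but it should not be offered as an independent route to the theorem; the comparison between the two induced loose-monads is a separate (later) statement, \cref{relative-adjunction-to-loose-monad}.
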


\begin{proof}
    Define $\eta$ as in \cref{reformulations-of-relative-adjunction}
    and $\dag \colon E(j, r \ell) \tto E(r \ell, r \ell)$ to be
    \[
    % https://varkor.github.io/tangle/?t=W1tbNyw1LDMsNyw1XV0sW1tbMSxbMSwwLDEsMF1dLFsxLFsxLDAsMSwwXV0sWzAsW11dLFswLFtdXSxbMSxbMSwwLDEsMF1dXSxbWzEsWzEsMSwxLDBdXSxbMSxbMSwwLDAsMV1dLFsyLFsyXV0sWzIsWzNdXSxbMSxbMSwwLDEsMF1dXSxbWzEsWzEsMCwxLDBdXSxbMCxbXV0sWzEsWzEsMCwxLDBdXSxbMSxbMSwwLDEsMF1dLFsxLFsxLDAsMSwwXV1dXSxbWzEsMywxLjUsWzBdXSxbMSwyLjUsMixbMF1dLFsxLDMuNSwyLFsxXV0sWzEsNC41LDIsWzFdXSxbMSw0LjUsMSxbMV1dLFswLDEsMS41LFsiXFxmbGF0IiwxXV0sWzEsMC41LDIsWzBdXSxbMSwxLjUsMSxbMV1dLFsxLDAuNSwxLFswXV1dLFtbMCwwLDMsImoiXSxbMSwwLDMsInIiXSxbNCwwLDMsIlxcZWxsIl0sWzAsMiwxLCJcXGVsbCJdLFsyLDIsMSwiciJdLFszLDIsMSwiciJdLFs0LDIsMSwiXFxlbGwiXV1d&c=F5A3A3,F5CCA3,F5F5A3,CCF5A3,A3F5A3,A3F5CC,A3F5F5,A3CCF5,A3A3F5,CCA3F5,F5A3F5,F5A3CC
	\begin{tangle}{(5,3)}[trim y]
		\tgBorderA{(0,0)}{\tgColour6}{\tgColour4}{\tgColour4}{\tgColour6}
		\tgBorderA{(1,0)}{\tgColour4}{\tgColour2}{\tgColour2}{\tgColour4}
		\tgBlank{(2,0)}{\tgColour2}
		\tgBlank{(3,0)}{\tgColour2}
		\tgBorderA{(4,0)}{\tgColour2}{\tgColour6}{\tgColour6}{\tgColour2}
		\tgBorderA{(0,1)}{\tgColour6}{\tgColour4}{\tgColour2}{\tgColour6}
		\tgBorderA{(1,1)}{\tgColour4}{\tgColour2}{\tgColour2}{\tgColour2}
		\tgBorderC{(2,1)}{3}{\tgColour2}{\tgColour4}
		\tgBorderC{(3,1)}{2}{\tgColour2}{\tgColour4}
		\tgBorderA{(4,1)}{\tgColour2}{\tgColour6}{\tgColour6}{\tgColour2}
		\tgBorderA{(0,2)}{\tgColour6}{\tgColour2}{\tgColour2}{\tgColour6}
		\tgBlank{(1,2)}{\tgColour2}
		\tgBorderA{(2,2)}{\tgColour2}{\tgColour4}{\tgColour4}{\tgColour2}
		\tgBorderA{(3,2)}{\tgColour4}{\tgColour2}{\tgColour2}{\tgColour4}
		\tgBorderA{(4,2)}{\tgColour2}{\tgColour6}{\tgColour6}{\tgColour2}
		\tgArrow{(2.5,1)}{0}
		\tgArrow{(2,1.5)}{1}
		\tgArrow{(3,1.5)}{3}
		\tgArrow{(4,1.5)}{3}
		\tgArrow{(4,0.5)}{3}
		\tgCell[(1,0)]{(0.5,1)}{\flat}
		\tgArrow{(0,1.5)}{1}
		\tgArrow{(1,0.5)}{3}
		\tgArrow{(0,0.5)}{1}
		\tgAxisLabel{(0.5,0.75)}{south}{j}
		\tgAxisLabel{(1.5,0.75)}{south}{r}
		\tgAxisLabel{(4.5,0.75)}{south}{\ell}
		\tgAxisLabel{(0.5,2.25)}{north}{\ell}
		\tgAxisLabel{(2.5,2.25)}{north}{r}
		\tgAxisLabel{(3.5,2.25)}{north}{r}
		\tgAxisLabel{(4.5,2.25)}{north}{\ell}
	\end{tangle}
    \]
    The unit laws follow from the $\sharp$--$\flat$ isomorphism. The associativity law follows by elementary string diagrammatic reasoning, in particular observing the following two identities.
	\[
	\hspace{-.5cm}
	% https://varkor.github.io/tangle/?t=W1tbNyw1LDMsNyw1LDVdXSxbW1sxLFsxLDAsMSwwXV0sWzEsWzEsMCwxLDBdXSxbMCxbXV0sWzAsW11dLFsxLFsxLDAsMSwwXV0sWzEsWzEsMCwxLDBdXSxbMSxbMSwwLDEsMF1dLFswLFtdXSxbMCxbXV1dLFtbMSxbMSwxLDEsMF1dLFsxLFsxLDAsMCwxXV0sWzIsWzJdXSxbMixbM11dLFsxLFsxLDEsMCwwXV0sWzEsWzEsMSwwLDFdXSxbMSxbMSwwLDAsMV1dLFsyLFsyXV0sWzIsWzNdXV0sW1sxLFsxLDAsMSwwXV0sWzAsW11dLFsxLFsxLDAsMSwwXV0sWzIsWzFdXSxbMSxbMCwxLDAsMV1dLFsxLFswLDEsMCwxXV0sWzEsWzAsMSwwLDFdXSxbMixbMF1dLFsxLFsxLDAsMSwwXV1dLFtbMSxbMSwwLDEsMF1dLFswLFtdXSxbMSxbMSwwLDEsMF1dLFswLFtdXSxbMCxbXV0sWzAsW11dLFswLFtdXSxbMCxbXV0sWzEsWzEsMCwxLDBdXV1dLFtbMSwzLDEuNSxbMF1dLFsxLDMuNSwyLFsxXV0sWzEsMi41LDIsWzBdXSxbMSw3LjUsMixbMF1dLFsxLDQsMi41LFswXV0sWzEsNiwyLjUsWzBdXSxbMSw1LDIuNSxbMF1dLFswLDEsMS41LFsiXFxmbGF0IiwxXV0sWzEsMi41LDMsWzBdXSxbMSw3LDIuNSxbMF1dLFsxLDgsMS41LFswXV0sWzEsOC41LDIsWzFdXSxbMSw4LjUsMyxbMV1dLFsxLDQuNSwxLFsxXV0sWzEsNS41LDEsWzBdXSxbMSw2LjUsMSxbMV1dLFsxLDEuNSwxLFsxXV0sWzEsMC41LDEsWzBdXSxbMSwwLjUsMixbMF1dLFsxLDAuNSwzLFswXV0sWzAsNS41LDEuNSxbIlxcdmFyZXBzaWxvbiIsMl1dXSxbWzAsMCwzLCJqIl0sWzEsMCwzLCJyIl0sWzQsMCwzLCJcXGVsbCJdLFs1LDAsMywiaiJdLFs2LDAsMywiciJdLFswLDMsMSwiXFxlbGwiXSxbMiwzLDEsInIiXSxbOCwzLDEsInIiXV1d&c=F5A3A3,F5CCA3,F5F5A3,CCF5A3,A3F5A3,A3F5CC,A3F5F5,A3CCF5,A3A3F5,CCA3F5,F5A3F5,F5A3CC
	\begin{tangle}{(9,4)}[trim y]
		\tgBorderA{(0,0)}{\tgColour6}{\tgColour4}{\tgColour4}{\tgColour6}
		\tgBorderA{(1,0)}{\tgColour4}{\tgColour2}{\tgColour2}{\tgColour4}
		\tgBlank{(2,0)}{\tgColour2}
		\tgBlank{(3,0)}{\tgColour2}
		\tgBorderA{(4,0)}{\tgColour2}{\tgColour6}{\tgColour6}{\tgColour2}
		\tgBorderA{(5,0)}{\tgColour6}{\tgColour4}{\tgColour4}{\tgColour6}
		\tgBorderA{(6,0)}{\tgColour4}{\tgColour2}{\tgColour2}{\tgColour4}
		\tgBlank{(7,0)}{\tgColour2}
		\tgBlank{(8,0)}{\tgColour2}
		\tgBorderA{(0,1)}{\tgColour6}{\tgColour4}{\tgColour2}{\tgColour6}
		\tgBorderA{(1,1)}{\tgColour4}{\tgColour2}{\tgColour2}{\tgColour2}
		\tgBorderC{(2,1)}{3}{\tgColour2}{\tgColour4}
		\tgBorderC{(3,1)}{2}{\tgColour2}{\tgColour4}
		\tgBorderA{(4,1)}{\tgColour2}{\tgColour6}{\tgColour2}{\tgColour2}
		\tgBorderA{(5,1)}{\tgColour6}{\tgColour4}{\tgColour2}{\tgColour2}
		\tgBorderA{(6,1)}{\tgColour4}{\tgColour2}{\tgColour2}{\tgColour2}
		\tgBorderC{(7,1)}{3}{\tgColour2}{\tgColour4}
		\tgBorderC{(8,1)}{2}{\tgColour2}{\tgColour4}
		\tgBorderA{(0,2)}{\tgColour6}{\tgColour2}{\tgColour2}{\tgColour6}
		\tgBlank{(1,2)}{\tgColour2}
		\tgBorderA{(2,2)}{\tgColour2}{\tgColour4}{\tgColour4}{\tgColour2}
		\tgBorderC{(3,2)}{0}{\tgColour4}{\tgColour2}
		\tgBorderA{(4,2)}{\tgColour2}{\tgColour2}{\tgColour4}{\tgColour4}
		\tgBorderA{(5,2)}{\tgColour2}{\tgColour2}{\tgColour4}{\tgColour4}
		\tgBorderA{(6,2)}{\tgColour2}{\tgColour2}{\tgColour4}{\tgColour4}
		\tgBorderC{(7,2)}{1}{\tgColour4}{\tgColour2}
		\tgBorderA{(8,2)}{\tgColour4}{\tgColour2}{\tgColour2}{\tgColour4}
		\tgBorderA{(0,3)}{\tgColour6}{\tgColour2}{\tgColour2}{\tgColour6}
		\tgBlank{(1,3)}{\tgColour2}
		\tgBorderA{(2,3)}{\tgColour2}{\tgColour4}{\tgColour4}{\tgColour2}
		\tgBlank{(3,3)}{\tgColour4}
		\tgBlank{(4,3)}{\tgColour4}
		\tgBlank{(5,3)}{\tgColour4}
		\tgBlank{(6,3)}{\tgColour4}
		\tgBlank{(7,3)}{\tgColour4}
		\tgBorderA{(8,3)}{\tgColour4}{\tgColour2}{\tgColour2}{\tgColour4}
		\tgArrow{(2.5,1)}{0}
		\tgArrow{(3,1.5)}{3}
		\tgArrow{(2,1.5)}{1}
		\tgArrow{(7,1.5)}{1}
		\tgArrow{(3.5,2)}{0}
		\tgArrow{(5.5,2)}{0}
		\tgArrow{(4.5,2)}{0}
		\tgCell[(1,0)]{(0.5,1)}{\flat}
		\tgArrow{(2,2.5)}{1}
		\tgArrow{(6.5,2)}{0}
		\tgArrow{(7.5,1)}{0}
		\tgArrow{(8,1.5)}{3}
		\tgArrow{(8,2.5)}{3}
		\tgArrow{(4,0.5)}{3}
		\tgArrow{(5,0.5)}{1}
		\tgArrow{(6,0.5)}{3}
		\tgArrow{(1,0.5)}{3}
		\tgArrow{(0,0.5)}{1}
		\tgArrow{(0,1.5)}{1}
		\tgArrow{(0,2.5)}{1}
		\tgCell[(2,0)]{(5,1)}{\varepsilon}
		\tgAxisLabel{(0.5,0.75)}{south}{j}
		\tgAxisLabel{(1.5,0.75)}{south}{r}
		\tgAxisLabel{(4.5,0.75)}{south}{\ell}
		\tgAxisLabel{(5.5,0.75)}{south}{j}
		\tgAxisLabel{(6.5,0.75)}{south}{r}
		\tgAxisLabel{(0.5,3.25)}{north}{\ell}
		\tgAxisLabel{(2.5,3.25)}{north}{r}
		\tgAxisLabel{(8.5,3.25)}{north}{r}
	\end{tangle}
	\!\tangleeq*\!
	% https://varkor.github.io/tangle/?t=W1tbNyw1LDMsNyw1LDVdXSxbW1sxLFsxLDAsMSwwXV0sWzEsWzEsMCwxLDBdXSxbMSxbMSwwLDEsMF1dLFsxLFsxLDAsMSwwXV0sWzEsWzEsMCwxLDBdXV0sW1sxLFsxLDEsMSwwXV0sWzEsWzEsMCwwLDFdXSxbMSxbMSwxLDAsMF1dLFsxLFsxLDEsMCwxXV0sWzEsWzEsMCwwLDFdXV0sW1sxLFsxLDAsMSwwXV0sWzIsWzJdXSxbMSxbMCwxLDAsMV1dLFsxLFswLDEsMCwxXV0sWzIsWzNdXV0sW1sxLFsxLDAsMSwwXV0sWzEsWzEsMCwxLDBdXSxbMCxbXV0sWzAsW11dLFsxLFsxLDAsMSwwXV1dXSxbWzEsMC41LDIsWzBdXSxbMSwyLDIuNSxbMF1dLFsxLDMsMi41LFswXV0sWzEsMC41LDMsWzBdXSxbMSwyLjUsMSxbMV1dLFsxLDMuNSwxLFswXV0sWzEsNC41LDEsWzFdXSxbMCwxLDEuNSxbIlxcZmxhdCIsMV1dLFsxLDAuNSwxLFswXV0sWzEsMS41LDMsWzBdXSxbMSwxLjUsMSxbMV1dLFswLDMuNSwxLjUsWyJcXHZhcmVwc2lsb24iLDJdXSxbMSw0LDIuNSxbMF1dLFsxLDQuNSwzLFsxXV1dLFtbMCwwLDMsImoiXSxbMSwwLDMsInIiXSxbMiwwLDMsIlxcZWxsIl0sWzMsMCwzLCJqIl0sWzQsMCwzLCJyIl0sWzAsMywxLCJcXGVsbCJdLFsxLDMsMSwiciJdLFs0LDMsMSwiciJdXV0=&c=F5A3A3,F5CCA3,F5F5A3,CCF5A3,A3F5A3,A3F5CC,A3F5F5,A3CCF5,A3A3F5,CCA3F5,F5A3F5,F5A3CC
	\begin{tangle}{(5,4)}[trim y]
		\tgBorderA{(0,0)}{\tgColour6}{\tgColour4}{\tgColour4}{\tgColour6}
		\tgBorderA{(1,0)}{\tgColour4}{\tgColour2}{\tgColour2}{\tgColour4}
		\tgBorderA{(2,0)}{\tgColour2}{\tgColour6}{\tgColour6}{\tgColour2}
		\tgBorderA{(3,0)}{\tgColour6}{\tgColour4}{\tgColour4}{\tgColour6}
		\tgBorderA{(4,0)}{\tgColour4}{\tgColour2}{\tgColour2}{\tgColour4}
		\tgBorderA{(0,1)}{\tgColour6}{\tgColour4}{\tgColour2}{\tgColour6}
		\tgBorderA{(1,1)}{\tgColour4}{\tgColour2}{\tgColour2}{\tgColour2}
		\tgBorderA{(2,1)}{\tgColour2}{\tgColour6}{\tgColour2}{\tgColour2}
		\tgBorderA{(3,1)}{\tgColour6}{\tgColour4}{\tgColour2}{\tgColour2}
		\tgBorderA{(4,1)}{\tgColour4}{\tgColour2}{\tgColour2}{\tgColour2}
		\tgBorderA{(0,2)}{\tgColour6}{\tgColour2}{\tgColour2}{\tgColour6}
		\tgBorderC{(1,2)}{3}{\tgColour2}{\tgColour4}
		\tgBorderA{(2,2)}{\tgColour2}{\tgColour2}{\tgColour4}{\tgColour4}
		\tgBorderA{(3,2)}{\tgColour2}{\tgColour2}{\tgColour4}{\tgColour4}
		\tgBorderC{(4,2)}{2}{\tgColour2}{\tgColour4}
		\tgBorderA{(0,3)}{\tgColour6}{\tgColour2}{\tgColour2}{\tgColour6}
		\tgBorderA{(1,3)}{\tgColour2}{\tgColour4}{\tgColour4}{\tgColour2}
		\tgBlank{(2,3)}{\tgColour4}
		\tgBlank{(3,3)}{\tgColour4}
		\tgBorderA{(4,3)}{\tgColour4}{\tgColour2}{\tgColour2}{\tgColour4}
		\tgArrow{(0,1.5)}{1}
		\tgArrow{(1.5,2)}{0}
		\tgArrow{(2.5,2)}{0}
		\tgArrow{(0,2.5)}{1}
		\tgArrow{(2,0.5)}{3}
		\tgArrow{(3,0.5)}{1}
		\tgArrow{(4,0.5)}{3}
		\tgCell[(1,0)]{(0.5,1)}{\flat}
		\tgArrow{(0,0.5)}{1}
		\tgArrow{(1,2.5)}{1}
		\tgArrow{(1,0.5)}{3}
		\tgCell[(2,0)]{(3,1)}{\varepsilon}
		\tgArrow{(3.5,2)}{0}
		\tgArrow{(4,2.5)}{3}
		\tgAxisLabel{(0.5,0.75)}{south}{j}
		\tgAxisLabel{(1.5,0.75)}{south}{r}
		\tgAxisLabel{(2.5,0.75)}{south}{\ell}
		\tgAxisLabel{(3.5,0.75)}{south}{j}
		\tgAxisLabel{(4.5,0.75)}{south}{r}
		\tgAxisLabel{(0.5,3.25)}{north}{\ell}
		\tgAxisLabel{(1.5,3.25)}{north}{r}
		\tgAxisLabel{(4.5,3.25)}{north}{r}
	\end{tangle}
	\]
	\[
	% https://varkor.github.io/tangle/?t=W1tbNyw1LDMsNyw1XV0sW1tbMSxbMSwwLDEsMF1dLFsxLFsxLDAsMSwwXV0sWzEsWzEsMCwxLDBdXSxbMSxbMSwwLDEsMF1dLFsxLFsxLDAsMSwwXV1dLFtbMSxbMSwwLDEsMF1dLFsxLFsxLDAsMSwwXV0sWzEsWzEsMCwxLDBdXSxbMSxbMSwwLDEsMF1dLFsxLFsxLDAsMSwwXV1dLFtbMSxbMSwxLDEsMF1dLFsxLFsxLDAsMCwxXV0sWzEsWzEsMSwwLDBdXSxbMSxbMSwxLDAsMV1dLFsxLFsxLDAsMCwxXV1dLFtbMSxbMSwwLDEsMF1dLFswLFtdXSxbMCxbXV0sWzAsW11dLFswLFtdXV0sW1sxLFsxLDAsMSwwXV0sWzAsW11dLFswLFtdXSxbMCxbXV0sWzAsW11dXV0sW1sxLDAuNSwzLFswXV0sWzEsMi41LDIsWzFdXSxbMSwzLjUsMixbMF1dLFsxLDQuNSwyLFsxXV0sWzAsMSwyLjUsWyJcXGZsYXQiLDFdXSxbMSwwLjUsMixbMF1dLFsxLDEuNSwyLFsxXV0sWzAsMy41LDIuNSxbIlxcdmFyZXBzaWxvbiIsMl1dLFsxLDEuNSwxLFsxXV0sWzEsMi41LDEsWzFdXSxbMSw0LjUsMSxbMV1dLFsxLDAuNSwxLFswXV0sWzEsMy41LDEsWzBdXSxbMSwwLjUsNCxbMF1dXSxbWzAsMCwzLCJqIl0sWzEsMCwzLCJyIl0sWzIsMCwzLCJcXGVsbCJdLFszLDAsMywiaiJdLFs0LDAsMywiciJdLFswLDQsMSwiXFxlbGwiXV1d&c=F5A3A3,F5CCA3,F5F5A3,CCF5A3,A3F5A3,A3F5CC,A3F5F5,A3CCF5,A3A3F5,CCA3F5,F5A3F5,F5A3CC
	\begin{tangle}{(5,5)}[trim y]
		\tgBorderA{(0,0)}{\tgColour6}{\tgColour4}{\tgColour4}{\tgColour6}
		\tgBorderA{(1,0)}{\tgColour4}{\tgColour2}{\tgColour2}{\tgColour4}
		\tgBorderA{(2,0)}{\tgColour2}{\tgColour6}{\tgColour6}{\tgColour2}
		\tgBorderA{(3,0)}{\tgColour6}{\tgColour4}{\tgColour4}{\tgColour6}
		\tgBorderA{(4,0)}{\tgColour4}{\tgColour2}{\tgColour2}{\tgColour4}
		\tgBorderA{(0,1)}{\tgColour6}{\tgColour4}{\tgColour4}{\tgColour6}
		\tgBorderA{(1,1)}{\tgColour4}{\tgColour2}{\tgColour2}{\tgColour4}
		\tgBorderA{(2,1)}{\tgColour2}{\tgColour6}{\tgColour6}{\tgColour2}
		\tgBorderA{(3,1)}{\tgColour6}{\tgColour4}{\tgColour4}{\tgColour6}
		\tgBorderA{(4,1)}{\tgColour4}{\tgColour2}{\tgColour2}{\tgColour4}
		\tgBorderA{(0,2)}{\tgColour6}{\tgColour4}{\tgColour2}{\tgColour6}
		\tgBorderA{(1,2)}{\tgColour4}{\tgColour2}{\tgColour2}{\tgColour2}
		\tgBorderA{(2,2)}{\tgColour2}{\tgColour6}{\tgColour2}{\tgColour2}
		\tgBorderA{(3,2)}{\tgColour6}{\tgColour4}{\tgColour2}{\tgColour2}
		\tgBorderA{(4,2)}{\tgColour4}{\tgColour2}{\tgColour2}{\tgColour2}
		\tgBorderA{(0,3)}{\tgColour6}{\tgColour2}{\tgColour2}{\tgColour6}
		\tgBlank{(1,3)}{\tgColour2}
		\tgBlank{(2,3)}{\tgColour2}
		\tgBlank{(3,3)}{\tgColour2}
		\tgBlank{(4,3)}{\tgColour2}
		\tgBorderA{(0,4)}{\tgColour6}{\tgColour2}{\tgColour2}{\tgColour6}
		\tgBlank{(1,4)}{\tgColour2}
		\tgBlank{(2,4)}{\tgColour2}
		\tgBlank{(3,4)}{\tgColour2}
		\tgBlank{(4,4)}{\tgColour2}
		\tgArrow{(0,2.5)}{1}
		\tgArrow{(2,1.5)}{3}
		\tgArrow{(3,1.5)}{1}
		\tgArrow{(4,1.5)}{3}
		\tgCell[(1,0)]{(0.5,2)}{\flat}
		\tgArrow{(0,1.5)}{1}
		\tgArrow{(1,1.5)}{3}
		\tgCell[(2,0)]{(3,2)}{\varepsilon}
		\tgArrow{(1,0.5)}{3}
		\tgArrow{(2,0.5)}{3}
		\tgArrow{(4,0.5)}{3}
		\tgArrow{(0,0.5)}{1}
		\tgArrow{(3,0.5)}{1}
		\tgArrow{(0,3.5)}{1}
		\tgAxisLabel{(0.5,0.75)}{south}{j}
		\tgAxisLabel{(1.5,0.75)}{south}{r}
		\tgAxisLabel{(2.5,0.75)}{south}{\ell}
		\tgAxisLabel{(3.5,0.75)}{south}{j}
		\tgAxisLabel{(4.5,0.75)}{south}{r}
		\tgAxisLabel{(0.5,4.25)}{north}{\ell}
	\end{tangle}
	\tangleeq*
	% https://varkor.github.io/tangle/?t=W1tbNyw1LDMsNyw1XV0sW1tbMSxbMSwwLDEsMF1dLFsxLFsxLDAsMSwwXV0sWzEsWzEsMCwxLDBdXSxbMSxbMSwwLDEsMF1dLFsxLFsxLDAsMSwwXV0sWzAsW11dLFswLFtdXV0sW1sxLFsxLDAsMSwwXV0sWzEsWzEsMCwxLDBdXSxbMSxbMSwxLDAsMF1dLFsxLFsxLDEsMCwxXV0sWzEsWzEsMCwwLDFdXSxbMixbMl1dLFsyLFszXV1dLFtbMSxbMSwwLDEsMF1dLFsyLFsxXV0sWzEsWzAsMSwwLDFdXSxbMSxbMCwxLDAsMV1dLFsxLFswLDEsMCwxXV0sWzIsWzBdXSxbMSxbMSwwLDEsMF1dXSxbWzEsWzEsMSwxLDBdXSxbMSxbMCwxLDAsMV1dLFsxLFswLDEsMCwxXV0sWzEsWzAsMSwwLDFdXSxbMSxbMCwxLDAsMV1dLFsxLFswLDEsMCwxXV0sWzEsWzEsMCwwLDFdXV0sW1sxLFsxLDAsMSwwXV0sWzAsW11dLFswLFtdXSxbMCxbXV0sWzAsW11dLFswLFtdXSxbMCxbXV1dXSxbWzEsMS41LDIsWzFdXSxbMSwwLjUsMixbMF1dLFsxLDUuNSwyLFswXV0sWzEsMiwyLjUsWzBdXSxbMSw0LDIuNSxbMF1dLFsxLDMsMi41LFswXV0sWzEsMC41LDMsWzBdXSxbMSw1LDIuNSxbMF1dLFsxLDYsMS41LFswXV0sWzEsNi41LDIsWzFdXSxbMSw2LjUsMyxbMV1dLFsxLDIuNSwxLFsxXV0sWzEsMy41LDEsWzBdXSxbMSw0LjUsMSxbMV1dLFswLDMuNSwxLjUsWyJcXHZhcmVwc2lsb24iLDJdXSxbMCwzLjUsMy41LFsiXFxmbGF0Iiw2XV0sWzEsMC41LDEsWzBdXSxbMSwxLjUsMSxbMV1dLFsxLDAuNSw0LFswXV1dLFtbMCwwLDMsImoiXSxbMSwwLDMsInIiXSxbMiwwLDMsIlxcZWxsIl0sWzMsMCwzLCJqIl0sWzQsMCwzLCJyIl0sWzAsNCwxLCJcXGVsbCJdXV0=&c=F5A3A3,F5CCA3,F5F5A3,CCF5A3,A3F5A3,A3F5CC,A3F5F5,A3CCF5,A3A3F5,CCA3F5,F5A3F5,F5A3CC
	\begin{tangle}{(7,5)}[trim y]
		\tgBorderA{(0,0)}{\tgColour6}{\tgColour4}{\tgColour4}{\tgColour6}
		\tgBorderA{(1,0)}{\tgColour4}{\tgColour2}{\tgColour2}{\tgColour4}
		\tgBorderA{(2,0)}{\tgColour2}{\tgColour6}{\tgColour6}{\tgColour2}
		\tgBorderA{(3,0)}{\tgColour6}{\tgColour4}{\tgColour4}{\tgColour6}
		\tgBorderA{(4,0)}{\tgColour4}{\tgColour2}{\tgColour2}{\tgColour4}
		\tgBlank{(5,0)}{\tgColour2}
		\tgBlank{(6,0)}{\tgColour2}
		\tgBorderA{(0,1)}{\tgColour6}{\tgColour4}{\tgColour4}{\tgColour6}
		\tgBorderA{(1,1)}{\tgColour4}{\tgColour2}{\tgColour2}{\tgColour4}
		\tgBorderA{(2,1)}{\tgColour2}{\tgColour6}{\tgColour2}{\tgColour2}
		\tgBorderA{(3,1)}{\tgColour6}{\tgColour4}{\tgColour2}{\tgColour2}
		\tgBorderA{(4,1)}{\tgColour4}{\tgColour2}{\tgColour2}{\tgColour2}
		\tgBorderC{(5,1)}{3}{\tgColour2}{\tgColour4}
		\tgBorderC{(6,1)}{2}{\tgColour2}{\tgColour4}
		\tgBorderA{(0,2)}{\tgColour6}{\tgColour4}{\tgColour4}{\tgColour6}
		\tgBorderC{(1,2)}{0}{\tgColour4}{\tgColour2}
		\tgBorderA{(2,2)}{\tgColour2}{\tgColour2}{\tgColour4}{\tgColour4}
		\tgBorderA{(3,2)}{\tgColour2}{\tgColour2}{\tgColour4}{\tgColour4}
		\tgBorderA{(4,2)}{\tgColour2}{\tgColour2}{\tgColour4}{\tgColour4}
		\tgBorderC{(5,2)}{1}{\tgColour4}{\tgColour2}
		\tgBorderA{(6,2)}{\tgColour4}{\tgColour2}{\tgColour2}{\tgColour4}
		\tgBorderA{(0,3)}{\tgColour6}{\tgColour4}{\tgColour2}{\tgColour6}
		\tgBorderA{(1,3)}{\tgColour4}{\tgColour4}{\tgColour2}{\tgColour2}
		\tgBorderA{(2,3)}{\tgColour4}{\tgColour4}{\tgColour2}{\tgColour2}
		\tgBorderA{(3,3)}{\tgColour4}{\tgColour4}{\tgColour2}{\tgColour2}
		\tgBorderA{(4,3)}{\tgColour4}{\tgColour4}{\tgColour2}{\tgColour2}
		\tgBorderA{(5,3)}{\tgColour4}{\tgColour4}{\tgColour2}{\tgColour2}
		\tgBorderA{(6,3)}{\tgColour4}{\tgColour2}{\tgColour2}{\tgColour2}
		\tgBorderA{(0,4)}{\tgColour6}{\tgColour2}{\tgColour2}{\tgColour6}
		\tgBlank{(1,4)}{\tgColour2}
		\tgBlank{(2,4)}{\tgColour2}
		\tgBlank{(3,4)}{\tgColour2}
		\tgBlank{(4,4)}{\tgColour2}
		\tgBlank{(5,4)}{\tgColour2}
		\tgBlank{(6,4)}{\tgColour2}
		\tgArrow{(1,1.5)}{3}
		\tgArrow{(0,1.5)}{1}
		\tgArrow{(5,1.5)}{1}
		\tgArrow{(1.5,2)}{0}
		\tgArrow{(3.5,2)}{0}
		\tgArrow{(2.5,2)}{0}
		\tgArrow{(0,2.5)}{1}
		\tgArrow{(4.5,2)}{0}
		\tgArrow{(5.5,1)}{0}
		\tgArrow{(6,1.5)}{3}
		\tgArrow{(6,2.5)}{3}
		\tgArrow{(2,0.5)}{3}
		\tgArrow{(3,0.5)}{1}
		\tgArrow{(4,0.5)}{3}
		\tgCell[(2,0)]{(3,1)}{\varepsilon}
		\tgCell[(6,0)]{(3,3)}{\flat}
		\tgArrow{(0,0.5)}{1}
		\tgArrow{(1,0.5)}{3}
		\tgArrow{(0,3.5)}{1}
		\tgAxisLabel{(0.5,0.75)}{south}{j}
		\tgAxisLabel{(1.5,0.75)}{south}{r}
		\tgAxisLabel{(2.5,0.75)}{south}{\ell}
		\tgAxisLabel{(3.5,0.75)}{south}{j}
		\tgAxisLabel{(4.5,0.75)}{south}{r}
		\tgAxisLabel{(0.5,4.25)}{north}{\ell}
	\end{tangle}
	\]
	Given a left-morphism of $j$-adjunctions $(c, \lambda)$, define $\tau \colon (\ell' \d r') \tto (\ell \d r)$ by $\lambda \d r'$. Given a right-morphism of $j$-adjunctions $(c, \rho)$, define $\tau \colon (\ell \d r) \tto (\ell' \d r')$ by $\ell \d \rho$. In both cases, $\tau$ is a $j$-monad morphism, the unit preservation condition following from the reformulation of the compatibility condition for the $j$-adjunction morphism in terms of $\eta$ and $\eta'$; and the extension operator preservation condition following from the reformulation in terms of $\flat$ and $\flat'$. Preservation of identities and composites in both cases is trivial.
\end{proof}

\begin{definition}
	\label{resolution}
    Let $\jAE$ be a tight-cell, and let $T$ be a $j$-monad.
    A \emph{resolution}\footnotemark{} of $T$ is a $j$-adjunction $\ljr$ for which $T$ is
    equal to the $j$-monad ${\wedge_j}(\ljr)$ constructed in \cref{relative-adjunction-induces-relative-monad}.
	\footnotetext{We follow the terminology of \textcite[Chapter~IV]{bunge1966categories}. Resolutions of relative monads were called \emph{splittings} in \cite{altenkirch2010monads}.}
	A \emph{morphism} of resolutions of $T$ from $\ljr$ to $\ljrp$ is a tight-cell $c \colon C \to C'$ between the apices rendering the following diagram commutative.
    % https://q.uiver.app/?q=WzAsNCxbMCwxLCJBIl0sWzEsMCwiQyJdLFsxLDIsIkMnIl0sWzIsMSwiRSJdLFswLDEsIlxcZWxsIl0sWzEsMywiciJdLFswLDIsIlxcZWxsJyIsMl0sWzIsMywiciciLDJdLFsxLDIsImMiLDFdXQ==
    \[\begin{tikzcd}[row sep=small]
	& C \\
	A && E \\
	& {C'}
	\arrow["\ell", from=2-1, to=1-2]
	\arrow["r", from=1-2, to=2-3]
	\arrow["{\ell'}"', from=2-1, to=3-2]
	\arrow["{r'}"', from=3-2, to=2-3]
	\arrow["c"{description}, from=1-2, to=3-2]
    \end{tikzcd}\]
    Resolutions of $T$ and their morphisms form a category $\Res(T)$.
\end{definition}

We do not explicitly require morphisms of resolutions to be compatible with the isomorphisms $\sharp$ and $\flat$: this follows automatically from the following lemma\footnotemark{}, which in particular shows that morphisms of resolutions are strict morphisms of relative adjunctions.
\footnotetext{Compatibility with $\flat$ was imposed as an additional condition on morphisms of resolutions in \cite[Theorem~3]{altenkirch2010monads}, but is redundant by \cref{strict-morphism-implies-same-relative-monad}.}

\begin{lemma}
	\label{strict-morphism-implies-same-relative-monad}
	Let $\ljr$ and $\ljrp$ be relative adjunctions. A tight-cell $c \colon C \to C'$ between their apices satisfying $\ell \d c = \ell'$ and $c \d r = r'$ is a strict morphism from $\ljr$ to $\ljrp$ if and only if $\wedge_j(\ljr) = \wedge_j(\ljrp)$.
\end{lemma}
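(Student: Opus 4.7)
The plan is to prove each direction by exploiting the bijection between the transposition operator $\sharp$ and the unit $\eta$ of a $j$-adjunction established in the proof of \cref{reformulations-of-relative-adjunction}, together with the functorial construction of \cref{relative-adjunction-induces-relative-monad}.

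For $(\Rightarrow)$, I will apply the functor $\obslash_j \colon \RAdj_L(j) \to \RMnd(j)\op$ to the strict left-morphism $(c, 1_{\ell'}) \colon (\ljr) \to (\ljrp)$. This produces a morphism of $j$-monads ${\wedge_j}(\ljrp) \to {\wedge_j}(\ljr)$ whose underlying 2-cell is the whiskering $1_{\ell'} \d r' = 1_{\ell' r'}$. Since $\ell \d c = \ell'$ and $c \d r' = r$, the two monads already share the underlying tight-cell $\ell r = \ell \d c \d r' = \ell' r'$. The monad-morphism equations of \cref{relative-monad}, specialised to an identity underlying 2-cell, reduce to $\eta = \eta'$ and $\dag = \dag'$, so that ${\wedge_j}(\ljr) = {\wedge_j}(\ljrp)$.

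For $(\Leftarrow)$, I will verify the compatibility square defining a strict left-morphism: namely, that $\sharp$ equals the composite $(C(\ell, 1), \pc c) \d (\sharp', C'(1, c))$ as 2-cells $C(\ell, 1) \tto E(j, r)$. By the equivalence (1) $\Leftrightarrow$ (2) of \cref{reformulations-of-relative-adjunction}, $\sharp$ is uniquely determined by the unit $\eta$ (given the tight-cells $\ell, r$), so it suffices to show that the right-hand side corresponds, under the bijection $\sharp \leftrightarrow \eta$, to the same $\eta$. Hypothetical equality of monads supplies $\eta = \eta'$ as 2-cells $j \tto \ell r = \ell' r'$. Converting the right-hand side through the string-diagram formula $\eta = \cp{\ell} \d \sharp \d \pc{r}$ and simplifying via the composition laws $\cp{\ell} \d \pc{c} = \cp{\ell c} = \cp{\ell'}$ and dually $\cp{c} \d \pc{r'} = \pc{c r'} = \pc{r}$ (arising from associativity of loose-composition and the universal properties of companions and conjoints applied to the composites $\ell \d c$ and $c \d r'$), the unit corresponding to the right-hand side reduces to $\cp{\ell'} \d \sharp' \d \pc{r'} = \eta'$. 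Since $\eta = \eta'$, the bijection gives the desired equality of $\sharp$ with the right-hand side, so $c$ is a strict left-morphism and (using the given equation $\ell \d c = \ell'$) a strict morphism.

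The main obstacle will be carrying out the string-diagram calculation in the reverse direction cleanly, in particular the decomposition of the companion and conjoint cells of the composite tight-cells $\ell \d c$ and $c \d r'$; but these are routine manipulations once drawn diagrammatically, so the overall structure of the argument remains straightforward.
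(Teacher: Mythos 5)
Your proposal is correct and follows essentially the same route as the paper: the backward direction is precisely the paper's observation that the strict-morphism compatibility condition, transported along the $\sharp\leftrightarrow\eta$ bijection of \cref{reformulations-of-relative-adjunction}, is equivalent to $\eta = \eta'$. Your forward direction, via applying $\obslash_j$ to $(c, 1_{\ell'})$ and specialising the $j$-monad-morphism axioms at the identity 2-cell, is just a repackaging of the paper's direct argument, since the functoriality of $\obslash_j$ in \cref{relative-adjunction-induces-relative-monad} was itself established from the same reformulations of the compatibility condition in terms of $\eta$ and $\flat$.
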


\begin{proof}
	By assumption, both relative monads are equal on underlying tight-cells. The compatibility condition for a strict morphism, expressed in terms of $\eta$ and $\eta'$, is equivalent to the condition that $\eta = \eta'$; and, expressed in terms of $\flat$ and $\flat'$, implies that $\dag = \dag'$ using the definition in \cref{relative-adjunction-induces-relative-monad}.
\end{proof}

Just as every relative monad induces a loose-monad (\cref{relative-monads-are-loose-monads}), every relative adjunction induces a loose-adjunction by \cref{adjunction-via-loose-adjunction}. Our primary motivation for introducing loose-adjunctions in \cref{loose-adjunction} is the following lemma, which establishes that the two methods by which we may derive a loose-monad from a relative adjunction coincide.

\begin{lemma}
	\label{relative-adjunction-to-loose-monad}
	Let $\ljr$ be a resolution of a relative monad $T$. The loose-monad induced by the loose-adjunction $C(1, \ell) \adj E(j, r)$ is isomorphic to $E(j, T)$.
\end{lemma}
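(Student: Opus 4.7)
The plan is to verify that the underlying loose-cell, unit, and multiplication of the two loose-monads agree, from which the isomorphism of loose-monads follows.

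First, I would establish the isomorphism on underlying loose-cells. The loose-monad induced by $C(1, \ell) \adj E(j, r)$ has underlying loose-cell $E(j, r) \odot C(1, \ell)$ by \cref{loose-adjunction-induces-loose-monad}, whereas the loose-monad $E(j, T)$ obtained from $T$ via \cref{relative-monads-are-loose-monads} has underlying loose-cell $E(j, r\ell)$. Pseudofunctoriality of restriction in an equipment, together with the decompositions $E(j, r) \iso E(j, 1) \odot E(1, r)$ and $E(1, r) \odot C(1, \ell) \iso E(1, r\ell)$ (companions of composable tight-cells compose to the companion of the composite) yields the canonical isomorphism
\[E(j, r) \odot C(1, \ell) \iso E(j, 1) \odot E(1, r\ell) \iso E(j, r\ell) = E(j, T).\]

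Next, I would match the two units. By \cref{reformulations-of-relative-adjunction}, the relative adjunction $\ljr$ induces the unit $\eta \colon j \tto \ell \d r$; and by construction of \cref{relative-adjunction-induces-relative-monad}, this is precisely the unit of the relative monad $T$. Passing through $E(j, {-})$, the loose-monad unit on $E(j, T)$ is obtained by pasting $\eta$ with $\pc j$. On the other side, the loose-adjunction $C(1, \ell) \adj E(j, r)$ arises (by \cref{loose-adjunction-induced-by-tight-cell}) from the isomorphism $\sharp \colon C(\ell, 1) \iso E(j, r)$ transferring the canonical loose-adjunction $C(1, \ell) \adj C(\ell, 1)$; its unit is obtained by composing $\pc \ell$ with $\sharp$, which under the equivalence between $\sharp$ and $\eta$ in \cref{reformulations-of-relative-adjunction} matches $\pc j$ pasted with $\eta$.

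Finally, I would match the two multiplications. The multiplication on $E(j, r) \odot C(1, \ell)$ coming from the loose-adjunction is built from the counit $\varepsilon_{\text{loose}} \colon C(1, \ell), E(j, r) \tto C(1, 1)$, which, via \cref{loose-adjunction-induced-by-tight-cell} and the isomorphism $\sharp$, equals the pasting of $\flat \colon E(j, r) \tto C(\ell, 1)$ with $\cp \ell$. On the other side, the loose-monad multiplication on $E(j, T)$ is determined by $\dag \colon E(j, r\ell) \tto E(r\ell, r\ell)$ via the functor $\ph(j, 1) \colon \LRMnd(j) \to \LMnd(A)$ of \cref{loose-relative-monad-to-loose-monad}. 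By the construction of $\dag$ in \cref{relative-adjunction-induces-relative-monad}, $\dag$ is precisely the pasting of $\flat$ with $\cp \ell$ (read with the $r$ and $\ell$ legs appropriately bent via companions and conjoints), so the two multiplications agree under the isomorphism of underlying loose-cells.

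The main obstacle is purely bookkeeping: tracking how the isomorphism $E(j, r) \odot C(1, \ell) \iso E(j, r\ell)$ interacts with the different expressions of counit and extension operator, each written in terms of different companion/conjoint decompositions. String-diagrammatically, all three identifications reduce to evident isotopies of the zig-zags coming from the loose-adjunction $C(1, \ell) \adj C(\ell, 1)$ together with the invertible $\sharp$, but making this precise at the level of pasting diagrams requires care.
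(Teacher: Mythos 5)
Your proposal is correct and follows essentially the same route as the paper: the paper's proof simply observes that both loose-monads have underlying loose-cell $E(j, r\ell)$ by definition (via the isomorphism $E(j,r)\odot C(1,\ell)\iso E(j,r\ell)$ built into \cref{reformulations-of-relative-adjunction}(7)) and that the units and extension operators coincide by elementary string-diagrammatic reasoning, which is exactly the bookkeeping with $\sharp$, $\flat$, $\eta$, $\varepsilon$ and the definition of $\dag$ from \cref{relative-adjunction-induces-relative-monad} that you spell out.
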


\begin{proof}
	By definition, the underlying loose-cell of each is $E(j, r \ell)$. That the units and extension operators coincide follows by elementary string diagrammatic reasoning.
\end{proof}

Consequently, the following shows that the loose-monad induced by a relative monad is isomorphic to the loose-monad induced by the left adjoint of any of its resolutions.
We use this observation in \cref{opalgebra-objects} to give sufficient
conditions for the existence of opalgebra objects.

\begin{corollary}
	\label{loose-monad-associated-to-relative-monad-is-kernel-of-left-adjoint}
	Let $\ljr$ be resolution of a relative monad $T$. The loose-monad $C(\ell, \ell)$ is isomorphic to $E(j, T)$.
\end{corollary}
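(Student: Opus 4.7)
The plan is to exploit the fact that the relative adjunction $\ell \jadj r$ produces two loose-adjunctions with the same left loose-adjoint $C(1, \ell)$, and to transport the loose-monad structure across the isomorphism identifying their right loose-adjoints. By \cref{loose-adjunction-induced-by-tight-cell}, the tight-cell $\ell$ induces a loose-adjunction $C(1, \ell) \adj C(\ell, 1)$, whose associated loose-monad has underlying loose-cell $C(\ell, 1) \odot C(1, \ell) \iso C(\ell, \ell)$, this being by definition (\cref{co-representable-resolution}) the loose-monad induced by $\ell$. On the other hand, by part~(7) of \cref{reformulations-of-relative-adjunction}, the relative adjunction $\ell \jadj r$ itself manifests as a loose-adjunction $C(1, \ell) \adj E(j, r)$; and by \cref{relative-adjunction-to-loose-monad} the loose-monad induced by the latter is isomorphic to $E(j, T)$.

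Since both loose-adjunctions share the left loose-adjoint $C(1, \ell)$, the isomorphism $\sharp \colon C(\ell, 1) \iso E(j, r)$ supplied by the relative adjunction identifies the two right loose-adjoints. I would next verify that $\sharp$ moreover intertwines the unit and counit data: this is essentially the content of the equational reformulations in \cref{reformulations-of-relative-adjunction}, where the unit $\eta \colon j \tto \ell \d r$ and counit $\varepsilon \colon C(1, \ell), E(j, r) \tto C(1, 1)$ of the relative adjunction correspond, via $\sharp$ and $\flat$, to the unit $\pc\ell$ and counit $\cp\ell$ of the companion--conjoint loose-adjunction. Hence $\sharp$ promotes to an isomorphism of loose-adjunctions, and by functoriality of the construction in \cref{loose-adjunction-induces-loose-monad} it lifts to an isomorphism of induced loose-monads $C(\ell, \ell) \iso E(j, r) \odot C(1, \ell)$, with the right-hand side being the underlying loose-cell of the loose-monad isomorphic to $E(j, T)$ from \cref{relative-adjunction-to-loose-monad}.

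Concatenating this isomorphism with the one of \cref{relative-adjunction-to-loose-monad} yields the desired isomorphism $C(\ell, \ell) \iso E(j, T)$. The main obstacle I anticipate is in the verification that $\sharp$ intertwines the units and counits of the two loose-adjunctions, since the unit and counit of $C(1, \ell) \adj E(j, r)$ are by construction built from $\eta$ and $\varepsilon$ (via bending of $j$ and $r$), whereas those of $C(1, \ell) \adj C(\ell, 1)$ are built purely from the zig-zag cells $\pc\ell$ and $\cp\ell$; but the requisite compatibilities are precisely the two defining equations that arise when one translates between the $\sharp$ and the unit--counit presentation in \cref{reformulations-of-relative-adjunction}, and this reduces to routine string-diagrammatic manipulation.
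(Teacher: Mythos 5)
Your proposal is correct and follows essentially the same route as the paper: \cref{relative-adjunction-to-loose-monad} identifies $E(j, T)$ as the loose-monad induced by the loose-adjunction $C(1, \ell) \adj E(j, r)$, and the isomorphism $E(j, r) \iso C(\ell, 1)$ then transfers this to the loose-monad induced by $C(1, \ell) \adj C(\ell, 1)$, which is $C(\ell, \ell)$. The compatibility check you flag as the main obstacle is already taken care of by essential uniqueness of (loose-)adjoints --- indeed, the loose-adjunction of \cref{reformulations-of-relative-adjunction}(7) is obtained from $C(1, \ell) \adj C(\ell, 1)$ precisely by transporting along $\sharp$, so no further string-diagrammatic verification is required.
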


\begin{proof}
	By \cref{relative-adjunction-to-loose-monad}, $E(j, T)$ is induced by the loose-adjunction $C(1, \ell) \adj E(j, r)$. By definition, $E(j, r) \iso C(\ell, 1)$. Hence, $E(j, T)$ is isomorphic to the loose-monad induced by $C(1, \ell) \adj C(\ell, 1)$, which is precisely $C(\ell, \ell)$.
\end{proof}

\subsection{Composition of relative adjunctions}
\label{composition-of-relative-adjunctions}

There are two fundamental methods for constructing new relative adjunctions from existing relative adjunctions.

First, we may precompose a relative adjunction by a tight-cell (\cf{}~\cite[Lemma~2.6]{ulmer1968properties}).

\begin{proposition}
    \label{relative-adjunction-precomposition}
    Let $\ljr$ be a relative adjunction and $\ell' \colon A \to B$ be a tight-cell as below.
    \[
    % https://q.uiver.app/#q=WzAsNCxbMSwxLCJCIl0sWzIsMCwiQyJdLFszLDEsIkQiXSxbMCwxLCJBIl0sWzAsMSwiXFxlbGwiXSxbMSwyLCJyIl0sWzAsMiwiaiIsMl0sWzMsMCwiXFxlbGwnIl0sWzQsNSwiIiwwLHsib2Zmc2V0IjoyLCJsZXZlbCI6MSwic3R5bGUiOnsibmFtZSI6ImFkanVuY3Rpb24ifX1dXQ==
    \begin{tikzcd}
    	&& C \\
    	A & B && D
    	\arrow[""{name=0, anchor=center, inner sep=0}, "\ell", from=2-2, to=1-3]
    	\arrow[""{name=1, anchor=center, inner sep=0}, "r", from=1-3, to=2-4]
    	\arrow["j"', from=2-2, to=2-4]
    	\arrow["{\ell'}", from=2-1, to=2-2]
    	\arrow["\dashv"{anchor=center}, shift right=2, draw=none, from=0, to=1]
    \end{tikzcd}
    \quad\implies\quad
    % https://q.uiver.app/#q=WzAsMyxbMCwxLCJBIl0sWzEsMCwiQyJdLFsyLDEsIkQiXSxbMCwxLCJcXGVsbCcgXFxkIFxcZWxsIl0sWzEsMiwiciJdLFswLDIsIlxcZWxsJyBcXGQgaiIsMl0sWzMsNCwiIiwwLHsib2Zmc2V0IjoyLCJsZXZlbCI6MSwic3R5bGUiOnsibmFtZSI6ImFkanVuY3Rpb24ifX1dXQ==
    \begin{tikzcd}
    	& C \\
    	A && D
    	\arrow[""{name=0, anchor=center, inner sep=0}, "{\ell' \d \ell}", from=2-1, to=1-2]
    	\arrow[""{name=1, anchor=center, inner sep=0}, "r", from=1-2, to=2-3]
    	\arrow["{\ell' \d j}"', from=2-1, to=2-3]
    	\arrow["\dashv"{anchor=center}, shift right=2, draw=none, from=0, to=1]
    \end{tikzcd}
    \]
    Then $(\ell' \d \ell) \radj{\ell' \d j} r$. Furthermore, this assignment extends to functors:
    \begin{align*}
		\ell' \d \ph & \colon \RAdj_L(j) \to \RAdj_L(\ell' \d j) \\
		\ell' \d \ph & \colon \RAdj_R(j) \to \RAdj_R(\ell' \d j)
	\end{align*}
\end{proposition}

\begin{proof}
    We have $C(\ell \ell', 1) \iso D(j \ell', r)$ using $\ljr$. Given a left-morphism $(c, \lambda)$ or right-morphism $(c, \rho)$ from $\ell_1 \jadj r_1$ to $\ell_2 \jadj r_2$, the pairs $(c, (\ell' \d \lambda))$ and $(c, \rho)$ respectively define left- and right-morphisms from $(\ell' \d \ell_1) \radj{\ell' \d j} r_1$ to $(\ell' \d \ell_2) \radj{\ell' \d j} r_2$, the compatibility conditions following immediately from those of $(c, \lambda)$ and $(c, \rho)$ respectively.
	Functoriality is trivial in both cases.
\end{proof}

Second, we have the following pasting law for relative adjunctions, analogous to the classical pasting law for pullbacks.

\begin{proposition}[Pasting law]
    \label{pasting-law}
    Consider the following diagram.
    % https://q.uiver.app/?q=WzAsNCxbMiwwLCJDIl0sWzMsMSwiRCJdLFswLDIsIkEiXSxbNCwyLCJFIl0sWzAsMSwiciJdLFsyLDMsImoiLDJdLFsxLDMsInInIl0sWzIsMSwiXFxlbGwnIiwxXSxbMiwwLCJcXGVsbCJdLFs3LDYsIiIsMSx7Im9mZnNldCI6MSwibGV2ZWwiOjEsInN0eWxlIjp7Im5hbWUiOiJhZGp1bmN0aW9uIn19XV0=
    \[\begin{tikzcd}[sep=small]
        && C \\
        &&& D \\
        A &&&& E
        \arrow["r", from=1-3, to=2-4]
        \arrow["j"', from=3-1, to=3-5]
        \arrow[""{name=0, anchor=center, inner sep=0}, "{r'}", from=2-4, to=3-5]
        \arrow[""{name=1, anchor=center, inner sep=0}, "{\ell'}"{description}, from=3-1, to=2-4]
        \arrow["\ell", from=3-1, to=1-3]
        \arrow["\dashv"{anchor=center}, shift right=1, draw=none, from=1, to=0]
    \end{tikzcd}\]
    The left triangle is a relative adjunction ($\ell \radj{\ell'} r$) if and only if the outer triangle is a relative adjunction ($\ell \jadj (r \d r'$)).
    In this case, $(r, \eta)$ exhibits a left-morphism of $j$-adjunctions $(\ell \jadj (r \d r')) \to (\ell' \jadj r')$, where $\eta$ is the unit of $\ell \radj{\ell'} r$. Furthermore, this assignment extends to functors:
    \begin{align*}
		\ph \d r' & \colon \RAdj_L(\ell') \to \RAdj_L(j) \\
		\ph \d r' & \colon \RAdj_R(\ell') \to \RAdj_R(j)
	\end{align*}
\end{proposition}

\begin{proof}
    We have $D(\ell', r) \iso E(j, r' r)$ since $\ell' \jadj r'$. The condition that the left, respectively outer, triangle be a relative adjunction asserts that the left-hand side, respectively the right-hand side, be isomorphic to $C(\ell, 1)$. That $(r, \eta)$ exhibits a left-morphism in this case follows by definition.

	Given a left-morphism $(c, \lambda)$ or right-morphism $(c, \rho)$ from $\ell_1 \radj{\ell'} r_1$ to $\ell_2 \radj{\ell'} r_2$, the pairs $(c, \lambda)$ and $(c, (\rho \d r'))$ respectively define left- and right-morphisms from $\ell_1 \radj{j'} (r_1 \d r')$ to $\ell_2 \radj{j'} (r_2 \d r')$, the compatibility conditions following immediately from those of $(c, \lambda)$ and $(c, \rho)$ respectively.
	Functoriality is trivial in both cases.
\end{proof}

\begin{example}
	Taking $j = r'$ for a \ff{} $r'$ in \cref{pasting-law} (\cf{}~\cref{trivial-relative-adjunctions}), it follows that every $\ell'$-adjunction induces an $(\ell' \d r')$-adjunction by postcomposition.
\end{example}

The pasting law may be seen as a currying result for pointwise left lifts analogous to that for pointwise left extensions (\cref{currying-pointwise-extension}), or as a pointwise analogue of \cite[Proposition~1]{street1978yoneda}. It may also be seen in one respect as a relative analogue of the classical adjoint triangle theorems~\cite{dubuc1968adjoint}. When $j = 1$, we obtain that $r$ admits a left $\ell'$-adjoint if and only if $(r \d r')$ admits a left adjoint. If $r$ has a left adjoint, then it has a left $\ell'$-adjoint by precomposing $\ell'$ (\cref{relative-adjunction-precomposition}). Adjoint triangle theorems may therefore be seen as providing a converse: giving sufficient conditions for every left $\ell'$-adjoint to extend to a left adjoint.
% https://q.uiver.app/?q=WzAsNCxbMSwwLCJDIl0sWzIsMSwiRCJdLFswLDIsIkUiXSxbMywyLCJFIl0sWzAsMSwiciIsMCx7ImN1cnZlIjotMn1dLFsyLDMsIiIsMix7ImxldmVsIjoyLCJzdHlsZSI6eyJoZWFkIjp7Im5hbWUiOiJub25lIn19fV0sWzEsMywiciciXSxbMiwxLCJcXGVsbCciLDFdLFsyLDAsIlxcZWxsIl0sWzEsMCwiIiwyLHsiY3VydmUiOi0yLCJzdHlsZSI6eyJib2R5Ijp7Im5hbWUiOiJkYXNoZWQifX19XSxbNyw2LCIiLDEseyJvZmZzZXQiOjEsImxldmVsIjoxLCJzdHlsZSI6eyJuYW1lIjoiYWRqdW5jdGlvbiJ9fV0sWzksNCwiIiwyLHsibGV2ZWwiOjEsInN0eWxlIjp7Im5hbWUiOiJhZGp1bmN0aW9uIn19XSxbOCw3LCI9IiwxLHsib2Zmc2V0IjotMiwic2hvcnRlbiI6eyJzb3VyY2UiOjIwLCJ0YXJnZXQiOjIwfSwic3R5bGUiOnsiYm9keSI6eyJuYW1lIjoibm9uZSJ9LCJoZWFkIjp7Im5hbWUiOiJub25lIn19fV1d
\[\begin{tikzcd}[sep=small]
	& C \\
	&& D \\
	E &&& E
	\arrow[""{name=0, anchor=center, inner sep=0}, "r", curve={height=-12pt}, from=1-2, to=2-3]
	\arrow[Rightarrow, no head, from=3-1, to=3-4]
	\arrow[""{name=1, anchor=center, inner sep=0}, "{r'}", from=2-3, to=3-4]
	\arrow[""{name=2, anchor=center, inner sep=0}, "{\ell'}"{description}, from=3-1, to=2-3]
	\arrow[""{name=3, anchor=center, inner sep=0}, "\ell", from=3-1, to=1-2]
	\arrow[""{name=4, anchor=center, inner sep=0}, curve={height=-12pt}, dashed, from=2-3, to=1-2]
	\arrow["\dashv"{anchor=center}, shift right=1, draw=none, from=2, to=1]
	\arrow["\dashv"{anchor=center, rotate=45}, draw=none, from=4, to=0]
	\arrow["{=}"{description}, shift left=2, draw=none, from=3, to=2]
\end{tikzcd}\]

\begin{corollary}
    \label{resolute-transposition}
    Let $(\ell_1 \d \ell_2) \jadj r$ be a relative adjunction with unit $\eta$. Then $\eta$ exhibits a relative adjunction $\ell_1 \jadj (\ell_2 \d r)$ if and only if the identity $1_{\ell_1 \d \ell_2}$ exhibits a relative adjunction $\ell_1 \radj{\ell_1 \d \ell_2} \ell_2$. In this case, the two induced $j$-monads are identical.
    \[
    % https://q.uiver.app/#q=WzAsNCxbMCwxLCJBIl0sWzIsMSwiRSJdLFsxLDAsIkMiXSxbMCwwLCJCIl0sWzAsMSwiaiIsMl0sWzIsMSwiciJdLFswLDIsIlxcZWxsXzEgXFxkIFxcZWxsXzIiLDFdLFswLDMsIlxcZWxsXzEiXSxbMywyLCJcXGVsbF8yIl0sWzYsNSwiIiwwLHsib2Zmc2V0IjoyLCJsZXZlbCI6MSwic3R5bGUiOnsibmFtZSI6ImFkanVuY3Rpb24ifX1dXQ==
	\begin{tikzcd}
		B & C \\
		A && E
		\arrow["j"', from=2-1, to=2-3]
		\arrow[""{name=0, anchor=center, inner sep=0}, "r", from=1-2, to=2-3]
		\arrow[""{name=1, anchor=center, inner sep=0}, "{\ell_1 \d \ell_2}"{description}, from=2-1, to=1-2]
		\arrow["{\ell_1}", from=2-1, to=1-1]
		\arrow["{\ell_2}", from=1-1, to=1-2]
		\arrow["\dashv"{anchor=center}, shift right=2, draw=none, from=1, to=0]
	\end{tikzcd}
    \quad\implies\quad
    % https://q.uiver.app/#q=WzAsMyxbMCwxLCJBIl0sWzIsMSwiRSJdLFsxLDAsIkIiXSxbMCwxLCJqIiwyXSxbMiwxLCJcXGVsbF8yIFxcZCByIl0sWzAsMiwiXFxlbGxfMSJdLFs1LDQsIiIsMCx7Im9mZnNldCI6MiwibGV2ZWwiOjEsInN0eWxlIjp7Im5hbWUiOiJhZGp1bmN0aW9uIn19XV0=
	\begin{tikzcd}
		& B \\
		A && E
		\arrow["j"', from=2-1, to=2-3]
		\arrow[""{name=0, anchor=center, inner sep=0}, "{\ell_2 \d r}", from=1-2, to=2-3]
		\arrow[""{name=1, anchor=center, inner sep=0}, "{\ell_1}", from=2-1, to=1-2]
		\arrow["\dashv"{anchor=center}, shift right=2, draw=none, from=1, to=0]
	\end{tikzcd}
    \]
\end{corollary}

\begin{proof}
    Follows directly from \cref{pasting-law}; the two $j$-monads are identical because the unit of the relative adjunction $\ell_1 \jadj (\ell_2 \d r)$, forming a left-morphism between them, is the identity.
\end{proof}

\begin{remark}
    \Cref{resolute-transposition} recovers \cite[Proposition~6.1.6]{arkor2022monadic}, in which pairs $(\ell_1, \ell_2)$ satisfying $\ell_1 \radj{\ell_1 \d \ell_2} \ell_2$ were called \emph{resolute}~\cite[Definition~6.1.4]{arkor2022monadic}. In particular, a pair $(\ell_1, \ell_2)$ is resolute if $\ell_2$ is \ff{}, in which case we recover a well-known result for adjunctions (\eg{}~\cite[Proposition~1.1]{deleanu1975idempotent}).
\end{remark}

In contrast to non-relative adjunctions, we cannot in general compose relative adjunctions simply by composing the left adjoints and the right adjoints.
However, using \cref{relative-adjunction-precomposition} in conjunction with \cref{pasting-law}, we may compose relative adjunctions when the first left relative adjoint factors through the root of the second relative adjunction.

\begin{corollary}
	\label{relative-adjunction-composition}
	Let $\ljr$ and $(\ell' \d j) \radj{j'} r'$ be relative adjunctions as below.
	% https://q.uiver.app/?q=WzAsNSxbMSwxLCJCIl0sWzIsMCwiQyJdLFszLDEsIkQiXSxbMCwyLCJBIl0sWzQsMiwiRSJdLFswLDEsIlxcZWxsIl0sWzEsMiwiciJdLFswLDIsImoiLDFdLFszLDAsIlxcZWxsJyJdLFszLDQsImonIiwyXSxbMiw0LCJyJyJdLFszLDIsIlxcZWxsJyBcXGQgaiIsMV0sWzUsNiwiIiwwLHsib2Zmc2V0IjoyLCJsZXZlbCI6MSwic3R5bGUiOnsibmFtZSI6ImFkanVuY3Rpb24ifX1dLFsxMSwxMCwiIiwxLHsib2Zmc2V0IjoyLCJsZXZlbCI6MSwic3R5bGUiOnsibmFtZSI6ImFkanVuY3Rpb24ifX1dXQ==
	\[\begin{tikzcd}
		&& C \\
		& B && D \\
		A &&&& E
		\arrow[""{name=0, anchor=center, inner sep=0}, "\ell", from=2-2, to=1-3]
		\arrow[""{name=1, anchor=center, inner sep=0}, "r", from=1-3, to=2-4]
		\arrow["j"{description}, from=2-2, to=2-4]
		\arrow["{\ell'}", from=3-1, to=2-2]
		\arrow["{j'}"', from=3-1, to=3-5]
		\arrow[""{name=2, anchor=center, inner sep=0}, "{r'}", from=2-4, to=3-5]
		\arrow[""{name=3, anchor=center, inner sep=0}, "{\ell' \d j}"{description}, from=3-1, to=2-4]
		\arrow["\dashv"{anchor=center}, shift right=2, draw=none, from=0, to=1]
		\arrow["\dashv"{anchor=center}, shift right=2, draw=none, from=3, to=2]
	\end{tikzcd}\]
	Then $(\ell' \d \ell) \radj{j'} (r \d r')$, and $((\ell' \d \eta), r)$ is a left-morphism $((\ell' \d \ell) \radj{j'} (r \d r')) \to ((\ell' \d j) \radj{j'} r')$. Furthermore, this assignment extends to functors:
	\begin{align*}
		\ell' \d \ph \d r' & \colon \RAdj_L(j) \to \RAdj_L(j') \\
		\ell' \d \ph \d r' & \colon \RAdj_R(j) \to \RAdj_R(j')
	\end{align*}
\end{corollary}

\begin{proof}
    Follows by first precomposing $\ljr$ by $\ell'$ using \cref{relative-adjunction-precomposition}, then pasting $(\ell' \d \ell) \radj{\ell' \d j} r$ along $(\ell' \d j) \radj{j'} r'$ using \cref{pasting-law}.
\end{proof}

\begin{example}
	Taking $j = 1$ in \cref{relative-adjunction-composition}, it follows that we may compose relative adjunctions with adjunctions on their apices, recovering \cites[Lemma~2.10]{lewicki2020categories}[Lemma~18]{forster2019category}, as well as \cite[Proposition~4.6]{fiore2018relative} for relative 2-adjunctions.
\end{example}

When relative monads admit resolutions, \cref{relative-adjunction-precomposition,pasting-law} permit relative monads to be precomposed by tight-cells, and to be pasted along left relative adjoints, by first decomposing a relative monad into its resolution, applying the relevant proposition, and then taking the relative monad induced by the new relative adjunction. However, in an arbitrary equipment, it is not necessarily true that relative monads admit resolutions. In the following, we show that this assumption may be dropped.

First, we may precompose a relative monad by a tight-cell (\cf{}~\cites[Definition~1.3.1]{walters1970categorical}[Construction~2.1.15]{voevodsky2023c}, and \cites[Theorem~1]{altenkirch2010monads}[Proposition~2.3]{altenkirch2015monads} when $j = 1$).

\begin{proposition}
	\label{relative-monad-precomposition}
	Let $j \colon B \to D$ be a tight-cell, let $T = (t, \dag, \eta)$ be a $j$-monad, and let $\ell' \colon A \to B$ be a tight-cell. Then $(\ell \d t)$ may be equipped with the structure of an $(\ell' \d j)$-monad. Furthermore, this assignment extends to a functor $\RMnd(j) \to \RMnd(\ell' \d j)$ rendering the following diagram commutative.
	% https://q.uiver.app/#q=WzAsNixbMSwwLCJcXFJNbmQoaikiXSxbMSwxLCJcXFJNbmQoXFxlbGwnIFxcZCBqKSJdLFswLDAsIlxcUkFkal9MKGopXFxvcCJdLFswLDEsIlxcUkFkal9MKFxcZWxsJyBcXGQgailcXG9wIl0sWzIsMCwiXFxSQWRqX1IoaikiXSxbMiwxLCJcXFJBZGpfUihcXGVsbCcgXFxkIGopIl0sWzIsMCwiXFxvYnNsYXNoX2oiXSxbMiwzLCIoXFxlbGwnIFxcZCBcXHBoKVxcb3AiLDJdLFszLDEsIlxcb2JzbGFzaF97XFxlbGwnIFxcZCBqfSIsMl0sWzAsMSwiXFxlbGwnIFxcZCBcXHBoIiwxXSxbNCwwLCJcXG9zbGFzaF9qIiwyXSxbNSwxLCJcXG9zbGFzaF97XFxlbGwnIFxcZCBqfSJdLFs0LDUsIlxcZWxsJyBcXGQgXFxwaCJdXQ==
	\[\begin{tikzcd}
		{\RAdj_L(j)\op} & {\RMnd(j)} & {\RAdj_R(j)} \\
		{\RAdj_L(\ell' \d j)\op} & {\RMnd(\ell' \d j)} & {\RAdj_R(\ell' \d j)}
		\arrow["{\obslash_j}", from=1-1, to=1-2]
		\arrow["{(\ell' \d \ph)\op}"', from=1-1, to=2-1]
		\arrow["{\obslash_{\ell' \d j}}"', from=2-1, to=2-2]
		\arrow["{\ell' \d \ph}"{description}, from=1-2, to=2-2]
		\arrow["{\oslash_j}"', from=1-3, to=1-2]
		\arrow["{\oslash_{\ell' \d j}}", from=2-3, to=2-2]
		\arrow["{\ell' \d \ph}", from=1-3, to=2-3]
	\end{tikzcd}\]
\end{proposition}

\begin{proof}
	The unit is given by $(\ell' \d \eta)$ and the extension operator is given by $\dag(\ell', \ell')$. That the relative monad laws are satisfied follows trivially from those for $T$. Given a $j$-monad morphism $\tau \colon T \to T'$, the 2-cell $(\ell' \d \tau)$ forms an $(\ell' \d j)$-monad morphism $(\ell' \d T) \to (\ell' \d T')$, the relative monad morphism laws following trivially from those for $\tau$. Functoriality of the assignment, given by precomposing $\ell'$, is trivial; commutativity of the squares follows by definition.
\end{proof}

Second, we may paste a relative monad along a left relative adjoint.

\begin{proposition}
	\label{relative-monad-relative-adjunction-pasting}
	Let $\ell' \jadj r'$ be a relative adjunction and let $T = (t, \dag, \eta)$ be an $\ell'$-monad.
	% https://q.uiver.app/#q=WzAsMyxbMSwwLCJEIl0sWzAsMSwiQSJdLFsyLDEsIkUiXSxbMSwyLCJqIiwyXSxbMCwyLCJyJyJdLFsxLDAsIlxcZWxsJyIsMV0sWzEsMCwidCIsMCx7ImN1cnZlIjotM31dLFs1LDQsIiIsMSx7Im9mZnNldCI6MSwibGV2ZWwiOjEsInN0eWxlIjp7Im5hbWUiOiJhZGp1bmN0aW9uIn19XSxbNSw2LCJcXGV0YSIsMix7InNob3J0ZW4iOnsic291cmNlIjoyMCwidGFyZ2V0IjoyMH19XV0=
	\[\begin{tikzcd}
		& D \\
		A && E
		\arrow["j"', from=2-1, to=2-3]
		\arrow[""{name=0, anchor=center, inner sep=0}, "{r'}", from=1-2, to=2-3]
		\arrow[""{name=1, anchor=center, inner sep=0}, "{\ell'}"{description}, from=2-1, to=1-2]
		\arrow[""{name=2, anchor=center, inner sep=0}, "t", curve={height=-18pt}, from=2-1, to=1-2]
		\arrow["\dashv"{anchor=center}, shift right=1, draw=none, from=1, to=0]
		\arrow["\eta"', shorten <=3pt, shorten >=3pt, Rightarrow, from=1, to=2]
	\end{tikzcd}\]
	Then $(t \d r')$ may be equipped with the structure of a $j$-monad, for which $(\eta \d r')$ is a $j$-monad morphism $\wedge_j (\ell' \jadj r') \to (T \d r')$. Furthermore, this assignment extends to a functor $\RMnd(\ell') \to \RMnd(j)$ rendering the following diagram commutative.
	% https://q.uiver.app/#q=WzAsNixbMSwwLCJcXFJNbmQoXFxlbGwnKSJdLFsxLDEsIlxcUk1uZChqKSJdLFswLDAsIlxcUkFkal9MKFxcZWxsJylcXG9wIl0sWzAsMSwiXFxSQWRqX0woailcXG9wIl0sWzIsMCwiXFxSQWRqX1IoXFxlbGwnKSJdLFsyLDEsIlxcUkFkal9SKGopIl0sWzIsMCwiXFxvYnNsYXNoX3tcXGVsbCd9Il0sWzIsMywiKFxccGggXFxkIHInKVxcb3AiLDJdLFszLDEsIlxcb2JzbGFzaF9qIiwyXSxbMCwxLCJcXHBoIFxcZCByJyIsMV0sWzQsMCwiXFxvc2xhc2hfe1xcZWxsJ30iLDJdLFs1LDEsIlxcb3NsYXNoX2oiXSxbNCw1LCJcXHBoIFxcZCByJyJdXQ==
	\[\begin{tikzcd}
		{\RAdj_L(\ell')\op} & {\RMnd(\ell')} & {\RAdj_R(\ell')} \\
		{\RAdj_L(j)\op} & {\RMnd(j)} & {\RAdj_R(j)}
		\arrow["{\obslash_{\ell'}}", from=1-1, to=1-2]
		\arrow["{(\ph \d r')\op}"', from=1-1, to=2-1]
		\arrow["{\obslash_j}"', from=2-1, to=2-2]
		\arrow["{\ph \d r'}"{description}, from=1-2, to=2-2]
		\arrow["{\oslash_{\ell'}}"', from=1-3, to=1-2]
		\arrow["{\oslash_j}", from=2-3, to=2-2]
		\arrow["{\ph \d r'}", from=1-3, to=2-3]
	\end{tikzcd}\]
\end{proposition}

\begin{proof}
	The extension operator of the induced $j$-monad is the 2-cell on the left below; the unit is the 2-cell on the right below.
	\[
	% https://varkor.github.io/tangle/?t=W1tbNyw1LDEsNywxLDVdXSxbW1sxLFsxLDAsMSwwXV0sWzEsWzEsMCwxLDBdXSxbMCxbXV0sWzEsWzEsMCwxLDBdXV0sW1sxLFsxLDEsMSwwXV0sWzEsWzEsMCwwLDFdXSxbMCxbXV0sWzEsWzEsMCwxLDBdXV0sW1sxLFsxLDEsMSwwXV0sWzEsWzAsMSwwLDFdXSxbMSxbMCwxLDAsMV1dLFsxLFsxLDAsMSwxXV1dLFtbMSxbMSwwLDEsMF1dLFsyLFsyXV0sWzIsWzNdXSxbMSxbMSwwLDEsMF1dXSxbWzEsWzEsMCwxLDBdXSxbMSxbMSwwLDEsMF1dLFsxLFsxLDAsMSwwXV0sWzEsWzEsMCwxLDBdXV1dLFtbMCwxLDEuNSxbIlxcZmxhdCciLDFdXSxbMSwyLjUsNCxbMF1dLFsxLDMuNSw0LFsxXV0sWzEsMS41LDQsWzBdXSxbMSwwLjUsMixbMF1dLFsxLDAuNSwzLFswXV0sWzEsMC41LDQsWzBdXSxbMSwwLjUsMSxbMF1dLFsxLDEuNSwxLFsxXV0sWzEsMy41LDMsWzFdXSxbMSwzLjUsMixbMV1dLFsxLDMuNSwxLFsxXV0sWzEsMiwzLjUsWzBdXSxbMCwyLDIuNSxbIlxcZGFnIiwzXV1dLFtbMCwwLDMsImoiXSxbMSwwLDMsInInIl0sWzMsMCwzLCJ0Il0sWzAsNCwxLCJ0Il0sWzEsNCwxLCJyJyJdLFsyLDQsMSwiciciXSxbMyw0LDEsInQiXV1d&c=F5A3A3,F5CCA3,F5F5A3,CCF5A3,A3F5A3,A3F5CC,A3F5F5,A3CCF5,A3A3F5,CCA3F5,F5A3F5,F5A3CC
	\begin{tangle}{(4,5)}[trim y]
		\tgBorderA{(0,0)}{\tgColour6}{\tgColour4}{\tgColour4}{\tgColour6}
		\tgBorderA{(1,0)}{\tgColour4}{\tgColour0}{\tgColour0}{\tgColour4}
		\tgBlank{(2,0)}{\tgColour0}
		\tgBorderA{(3,0)}{\tgColour0}{\tgColour6}{\tgColour6}{\tgColour0}
		\tgBorderA{(0,1)}{\tgColour6}{\tgColour4}{\tgColour0}{\tgColour6}
		\tgBorderA{(1,1)}{\tgColour4}{\tgColour0}{\tgColour0}{\tgColour0}
		\tgBlank{(2,1)}{\tgColour0}
		\tgBorderA{(3,1)}{\tgColour0}{\tgColour6}{\tgColour6}{\tgColour0}
		\tgBorderA{(0,2)}{\tgColour6}{\tgColour0}{\tgColour0}{\tgColour6}
		\tgBorder{(0,2)}{0}{1}{0}{0}
		\tgBorderA{(1,2)}{\tgColour0}{\tgColour0}{\tgColour0}{\tgColour0}
		\tgBorder{(1,2)}{0}{1}{0}{1}
		\tgBorderA{(2,2)}{\tgColour0}{\tgColour0}{\tgColour0}{\tgColour0}
		\tgBorder{(2,2)}{0}{1}{0}{1}
		\tgBorderA{(3,2)}{\tgColour0}{\tgColour6}{\tgColour6}{\tgColour0}
		\tgBorder{(3,2)}{0}{0}{0}{1}
		\tgBorderA{(0,3)}{\tgColour6}{\tgColour0}{\tgColour0}{\tgColour6}
		\tgBorderC{(1,3)}{3}{\tgColour0}{\tgColour4}
		\tgBorderC{(2,3)}{2}{\tgColour0}{\tgColour4}
		\tgBorderA{(3,3)}{\tgColour0}{\tgColour6}{\tgColour6}{\tgColour0}
		\tgBorderA{(0,4)}{\tgColour6}{\tgColour0}{\tgColour0}{\tgColour6}
		\tgBorderA{(1,4)}{\tgColour0}{\tgColour4}{\tgColour4}{\tgColour0}
		\tgBorderA{(2,4)}{\tgColour4}{\tgColour0}{\tgColour0}{\tgColour4}
		\tgBorderA{(3,4)}{\tgColour0}{\tgColour6}{\tgColour6}{\tgColour0}
		\tgCell[(1,0)]{(0.5,1)}{\flat'}
		\tgArrow{(2,3.5)}{1}
		\tgArrow{(3,3.5)}{3}
		\tgArrow{(1,3.5)}{1}
		\tgArrow{(0,1.5)}{1}
		\tgArrow{(0,2.5)}{1}
		\tgArrow{(0,3.5)}{1}
		\tgArrow{(0,0.5)}{1}
		\tgArrow{(1,0.5)}{3}
		\tgArrow{(3,2.5)}{3}
		\tgArrow{(3,1.5)}{3}
		\tgArrow{(3,0.5)}{3}
		\tgArrow{(1.5,3)}{0}
		\tgCell[(3,0)]{(1.5,2)}{\dag}
		\tgAxisLabel{(0.5,0.75)}{south}{j}
		\tgAxisLabel{(1.5,0.75)}{south}{r'}
		\tgAxisLabel{(3.5,0.75)}{south}{t}
		\tgAxisLabel{(0.5,4.25)}{north}{t}
		\tgAxisLabel{(1.5,4.25)}{north}{r'}
		\tgAxisLabel{(2.5,4.25)}{north}{r'}
		\tgAxisLabel{(3.5,4.25)}{north}{t}
	\end{tangle}
	\hspace{4em}
	% https://varkor.github.io/tangle/?t=W1tbNywxLDVdXSxbW1swLFtdXSxbMSxbMCwxLDEsMF1dLFsxLFswLDEsMCwxXV0sWzEsWzAsMSwwLDFdXV0sW1sxLFswLDEsMCwxXV0sWzEsWzEsMCwxLDFdXSxbMCxbXV0sWzAsW11dXSxbWzAsW11dLFsxLFsxLDEsMCwwXV0sWzEsWzAsMSwwLDFdXSxbMSxbMCwxLDAsMV1dXV0sW1swLDEuNSwxLjUsWyJcXGV0YSciLDAsMl1dLFsxLDIsMC41LFswXV0sWzEsMywwLjUsWzBdXSxbMSwzLDIuNSxbMF1dLFsxLDIsMi41LFswXV0sWzEsMSwxLjUsWzBdXSxbMCwyLjUsMC41LFsiXFxldGEiXV1dLFtbMywwLDAsInQiXSxbMCwxLDIsImoiXSxbMywyLDAsInInIl1dXQ==&c=F5A3A3,F5CCA3,F5F5A3,CCF5A3,A3F5A3,A3F5CC,A3F5F5,A3CCF5,A3A3F5,CCA3F5,F5A3F5,F5A3CC
	\begin{tangle}{(4,3)}[trim x]
		\tgBlank{(0,0)}{\tgColour6}
		\tgBorderA{(1,0)}{\tgColour6}{\tgColour6}{\tgColour0}{\tgColour6}
		\tgBorderA{(2,0)}{\tgColour6}{\tgColour6}{\tgColour0}{\tgColour0}
		\tgBorderA{(3,0)}{\tgColour6}{\tgColour6}{\tgColour0}{\tgColour0}
		\tgBorderA{(0,1)}{\tgColour6}{\tgColour6}{\tgColour4}{\tgColour4}
		\tgBorderA{(1,1)}{\tgColour6}{\tgColour0}{\tgColour0}{\tgColour4}
		\tgBlank{(2,1)}{\tgColour0}
		\tgBlank{(3,1)}{\tgColour0}
		\tgBlank{(0,2)}{\tgColour4}
		\tgBorderA{(1,2)}{\tgColour4}{\tgColour0}{\tgColour4}{\tgColour4}
		\tgBorderA{(2,2)}{\tgColour0}{\tgColour0}{\tgColour4}{\tgColour4}
		\tgBorderA{(3,2)}{\tgColour0}{\tgColour0}{\tgColour4}{\tgColour4}
		\tgCell[(0,2)]{(1,1)}{\eta'}
		\tgArrow{(1.5,0)}{0}
		\tgArrow{(2.5,0)}{0}
		\tgArrow{(2.5,2)}{0}
		\tgArrow{(1.5,2)}{0}
		\tgArrow{(0.5,1)}{0}
		\tgCell{(2,0)}{\eta}
		\tgAxisLabel{(3.25,0.5)}{west}{t}
		\tgAxisLabel{(0.75,1.5)}{east}{j}
		\tgAxisLabel{(3.25,2.5)}{west}{r'}
	\end{tangle}
	\]
	The unit laws follow from the unit laws for $T$, together with the $\sharp'$--$\flat'$ isomorphism. The associativity law follows from the associativity law for $T$. That $(\eta \d r')$ is a $j'$-monad morphism follows by definition. Functoriality of the assignment, given by postcomposing $r'$, is trivial.

	The square on the left and on the right agree on objects, so to show commutativity of the object assignments, it suffices to show that the following diagram of sets commutes.
	% https://q.uiver.app/#q=WzAsNCxbMCwxLCJcXFJNbmQoXFxlbGwnKSJdLFsxLDEsIlxcUk1uZChqKSJdLFswLDAsIlxcUkFkaihcXGVsbCcpIl0sWzEsMCwiXFxSQWRqKGopIl0sWzIsMCwiXFx3ZWRnZV97XFxlbGwnfSIsMl0sWzIsMywiXFxwaCBcXGQgciciXSxbMywxLCJcXHdlZGdlX2oiXSxbMCwxLCJcXHBoIFxcZCByJyIsMl1d
	\[\begin{tikzcd}
		{\RAdj(\ell')} & {\RAdj(j)} \\
		{\RMnd(\ell')} & {\RMnd(j)}
		\arrow["{\wedge_{\ell'}}"', from=1-1, to=2-1]
		\arrow["{\ph \d r'}", from=1-1, to=1-2]
		\arrow["{\wedge_j}", from=1-2, to=2-2]
		\arrow["{\ph \d r'}"', from=2-1, to=2-2]
	\end{tikzcd}\]
	That the assignments in the diagram above agree on the underlying tight-cell and unit is trivial; that they agree on the extension operator follows from the $\sharp'$--$\flat'$ isomorphism.

	Finally, that the two squares commute on morphisms is trivial.
\end{proof}

\begin{example}
	Taking $j = r'$ for a \ff{} $r'$ in \cref{relative-monad-relative-adjunction-pasting} (\cf{}~\cref{trivial-relative-adjunctions}), it follows that every $\ell'$-monad induces an $(\ell' \d r')$-monad by postcomposition, recovering \cite[Examples~15 \& 25]{ahrens2015terminal}.
\end{example}

\begin{example}
	Let $\ell' \adj r'$ be an adjunction on $A$ with $\ell'$ \ff{}, so that $\ell' \d r \iso 1_A$. Then, by two applications of \cref{relative-monad-relative-adjunction-pasting}, there are induced functors $\ph \d r' \colon \RMnd(\ell') \to \Mnd(A)$ and $\ph \d \ell' \colon \Mnd(A) \to \RMnd(\ell')$, for which the former is a retraction, up to isomorphism, of the latter. Furthermore, the adjunction $\ell' \adj r'$ lifts to an adjunction between categories of relative monads, recovering \cite[Remark~26]{ahrens2015terminal};
	% https://q.uiver.app/#q=WzAsMixbMCwwLCJcXE1uZChBKSJdLFsxLDAsIlxcUk1uZChcXGVsbCcpIl0sWzAsMSwiXFxwaCBcXGQgXFxlbGwnIiwwLHsib2Zmc2V0IjotMiwic3R5bGUiOnsidGFpbCI6eyJuYW1lIjoiaG9vayIsInNpZGUiOiJ0b3AifX19XSxbMSwwLCJcXHBoIFxcZCByJyIsMCx7Im9mZnNldCI6LTJ9XSxbMiwzLCIiLDIseyJsZXZlbCI6MSwic3R5bGUiOnsibmFtZSI6ImFkanVuY3Rpb24ifX1dXQ==
	\[\begin{tikzcd}[column sep=huge]
		{\Mnd(A)} & {\RMnd(\ell')}
		\arrow[""{name=0, anchor=center, inner sep=0}, "{\ph \d \ell'}", shift left=2, hook, from=1-1, to=1-2]
		\arrow[""{name=1, anchor=center, inner sep=0}, "{\ph \d r'}", shift left=2, from=1-2, to=1-1]
		\arrow["\dashv"{anchor=center, rotate=-90}, draw=none, from=0, to=1]
	\end{tikzcd}\]
	and, similarly, by \cref{pasting-law}, to adjunctions between categories of relative adjunctions.
	\[
	% https://q.uiver.app/#q=WzAsMixbMCwwLCJcXFJBZGpfTChBKSJdLFsxLDAsIlxcUkFkal9MKFxcZWxsJykiXSxbMCwxLCJcXHBoIFxcZCBcXGVsbCciLDAseyJvZmZzZXQiOi0yLCJzdHlsZSI6eyJ0YWlsIjp7Im5hbWUiOiJob29rIiwic2lkZSI6InRvcCJ9fX1dLFsxLDAsIlxccGggXFxkIHInIiwwLHsib2Zmc2V0IjotMn1dLFsyLDMsIiIsMix7ImxldmVsIjoxLCJzdHlsZSI6eyJuYW1lIjoiYWRqdW5jdGlvbiJ9fV1d
	\begin{tikzcd}[column sep=huge]
		{\RAdj_L(A)} & {\RAdj_L(\ell')}
		\arrow[""{name=0, anchor=center, inner sep=0}, "{\ph \d \ell'}", shift left=2, hook, from=1-1, to=1-2]
		\arrow[""{name=1, anchor=center, inner sep=0}, "{\ph \d r'}", shift left=2, from=1-2, to=1-1]
		\arrow["\dashv"{anchor=center, rotate=-90}, draw=none, from=0, to=1]
	\end{tikzcd}
	\hspace{4em}
	% https://q.uiver.app/#q=WzAsMixbMCwwLCJcXFJBZGpfUihBKSJdLFsxLDAsIlxcUkFkal9SKFxcZWxsJykiXSxbMCwxLCJcXHBoIFxcZCBcXGVsbCciLDAseyJvZmZzZXQiOi0yLCJzdHlsZSI6eyJ0YWlsIjp7Im5hbWUiOiJob29rIiwic2lkZSI6InRvcCJ9fX1dLFsxLDAsIlxccGggXFxkIHInIiwwLHsib2Zmc2V0IjotMn1dLFsyLDMsIiIsMix7ImxldmVsIjoxLCJzdHlsZSI6eyJuYW1lIjoiYWRqdW5jdGlvbiJ9fV1d
	\begin{tikzcd}[column sep=huge]
		{\RAdj_R(A)} & {\RAdj_R(\ell')}
		\arrow[""{name=0, anchor=center, inner sep=0}, "{\ph \d \ell'}", shift left=2, hook, from=1-1, to=1-2]
		\arrow[""{name=1, anchor=center, inner sep=0}, "{\ph \d r'}", shift left=2, from=1-2, to=1-1]
		\arrow["\dashv"{anchor=center, rotate=-90}, draw=none, from=0, to=1]
	\end{tikzcd}
	\]
\end{example}

Using \cref{relative-monad-precomposition} in conjunction with \cref{relative-monad-relative-adjunction-pasting}, we may compose relative monads with relative adjunctions whose left relative adjoint factors through the root of the relative monad.

\begin{corollary}
	\label{relative-monad-relative-adjunction-composition}
	Let $(\ell' \d j) \radj{j'} r'$ be a relative adjunction, and let $T = (t, \dag, \eta)$ be a $j$-monad.
	% https://q.uiver.app/?q=WzAsNCxbMSwwLCJCIl0sWzMsMCwiRCJdLFswLDEsIkEiXSxbNCwxLCJFIl0sWzAsMSwiaiIsMV0sWzIsMCwiXFxlbGwnIl0sWzIsMywiaiciLDJdLFsxLDMsInInIl0sWzIsMSwiXFxlbGwnIFxcZCBqIiwxXSxbMCwxLCJ0IiwwLHsiY3VydmUiOi01fV0sWzgsNywiIiwxLHsib2Zmc2V0IjoyLCJsZXZlbCI6MSwic3R5bGUiOnsibmFtZSI6ImFkanVuY3Rpb24ifX1dLFs0LDksIlxcZXRhIiwyLHsic2hvcnRlbiI6eyJzb3VyY2UiOjQwLCJ0YXJnZXQiOjMwfX1dXQ==
	\[\begin{tikzcd}
		& B && D \\
		A &&&& E
		\arrow[""{name=0, anchor=center, inner sep=0}, "j"{description}, from=1-2, to=1-4]
		\arrow["{\ell'}", from=2-1, to=1-2]
		\arrow["{j'}"', from=2-1, to=2-5]
		\arrow[""{name=1, anchor=center, inner sep=0}, "{r'}", from=1-4, to=2-5]
		\arrow[""{name=2, anchor=center, inner sep=0}, "{\ell' \d j}"{description}, from=2-1, to=1-4]
		\arrow[""{name=3, anchor=center, inner sep=0}, "t", curve={height=-30pt}, from=1-2, to=1-4]
		\arrow["\dashv"{anchor=center}, shift right=2, draw=none, from=2, to=1]
		\arrow["\eta"', shorten <=8pt, shorten >=6pt, Rightarrow, from=0, to=3]
	\end{tikzcd}\]
	Then $(\ell' \d t \d r')$ may be equipped with the structure of a $j'$-monad for which $(\ell' \d \eta \d r')$ is a $j'$-monad morphism $(\ell' \d j \d r') \to (\ell' \d T \d r')$. Furthermore, this assignment extends to a functor $\RMnd(j) \to \RMnd(j')$ rendering the following diagram commutative.
	% https://q.uiver.app/#q=WzAsNixbMSwwLCJcXFJNbmQoaikiXSxbMSwxLCJcXFJNbmQoaicpIl0sWzAsMCwiXFxSQWRqX0woailcXG9wIl0sWzAsMSwiXFxSQWRqX0woaicpXFxvcCJdLFsyLDAsIlxcUkFkal9SKGopIl0sWzIsMSwiXFxSQWRqX1IoaicpIl0sWzIsMCwiXFxvYnNsYXNoX2oiXSxbMiwzLCIoXFxlbGwnIFxcZCBcXHBoIFxcZCByJylcXG9wIiwyXSxbMywxLCJcXG9ic2xhc2hfe2onfSIsMl0sWzAsMSwiXFxlbGwnIFxcZCBcXHBoIFxcZCByJyIsMV0sWzQsMCwiXFxvc2xhc2hfaiIsMl0sWzUsMSwiXFxvc2xhc2hfe2onfSJdLFs0LDUsIlxcZWxsJyBcXGQgXFxwaCBcXGQgciciXV0=
	\[\begin{tikzcd}
		{\RAdj_L(j)\op} & {\RMnd(j)} & {\RAdj_R(j)} \\
		{\RAdj_L(j')\op} & {\RMnd(j')} & {\RAdj_R(j')}
		\arrow["{\obslash_j}", from=1-1, to=1-2]
		\arrow["{(\ell' \d \ph \d r')\op}"', from=1-1, to=2-1]
		\arrow["{\obslash_{j'}}"', from=2-1, to=2-2]
		\arrow["{\ell' \d \ph \d r'}"{description}, from=1-2, to=2-2]
		\arrow["{\oslash_j}"', from=1-3, to=1-2]
		\arrow["{\oslash_{j'}}", from=2-3, to=2-2]
		\arrow["{\ell' \d \ph \d r'}", from=1-3, to=2-3]
	\end{tikzcd}\]
\end{corollary}

\begin{proof}
	Follows by first precomposing $T$ by $\ell'$ using \cref{relative-monad-precomposition}, then pasting $(\ell' \d T)$ along $(\ell' \d j) \radj{j'} r'$ using \cref{relative-monad-relative-adjunction-pasting}.
\end{proof}

\begin{example}
	\Cref{relative-monad-relative-adjunction-composition} recovers several constructions in the literature.
	\begin{enumerate}
		\item Taking $j = 1$ above, it follows that every monad on the apex of a $j'$-adjunction induces a $j'$-monad. Taking furthermore $j' = 1$, we recover \cite[Theorem~4.2]{huber1961homotopy}.
		\item Taking $j = j'$, we recover \cite[Theorem~5.5]{abate2021ssprove}. \qedhere
	\end{enumerate}
\end{example}

\section{Algebras and opalgebras}
\label{algebras-and-opalgebras}

In the classical setting of relative monads in $\Cat$, there are two fundamental constructions associated to a relative monad: the \EM{} category and the Kleisli category~\cite[\S2.3]{altenkirch2015monads}. In this section, we study the corresponding notions in a \ve{}. To do so, we characterise the structures that the \EM{} category and the Kleisli category classify: namely, the \emph{algebras} and \emph{opalgebras}. While the term ``algebra'' is a familiar generalisation of the notion of algebra for a relative monad, the term ``opalgebra'' is less so. This is due in part to the fact that, in $\Cat$, the Kleisli category may be characterised as the category of free algebras, which permits the conflation of the notions of opalgebra and free algebra. However, in a general equipment this is not possible (\cf{}~\cref{comparison-tight-cell}), and it is the opalgebras that are the fundamental notion (\cf{}~\cite[\S4]{street1972formal}).

\begin{definition}
	\label{algebra}
    Let $T$ be a relative monad. An \emph{algebra for $T$} (or simply \emph{$T$-algebra}) comprises
    \begin{enumerate}
		\item an object $D$, the \emph{domain};
        \item a tight-cell $e \colon D \to E$, the \emph{carrier} or \emph{underlying tight-cell};
        \item a 2-cell $\aop \colon E(j, e) \tto E(t, e)$, the \emph{extension operator},
    \end{enumerate}
    satisfying the following equations.
    \[
    % https://q.uiver.app/#q=WzAsNixbMSwwLCJEIl0sWzAsMCwiQSJdLFsxLDEsIkQiXSxbMCwxLCJBIl0sWzEsMiwiRCJdLFswLDIsIkEiXSxbMCwxLCJFKGosIGUpIiwyLHsic3R5bGUiOnsiYm9keSI6eyJuYW1lIjoiYmFycmVkIn19fV0sWzIsMywiRSh0LCBlKSIsMV0sWzAsMiwiIiwwLHsibGV2ZWwiOjIsInN0eWxlIjp7ImhlYWQiOnsibmFtZSI6Im5vbmUifX19XSxbMSwzLCIiLDEseyJsZXZlbCI6Miwic3R5bGUiOnsiaGVhZCI6eyJuYW1lIjoibm9uZSJ9fX1dLFsyLDQsIiIsMix7ImxldmVsIjoyLCJzdHlsZSI6eyJoZWFkIjp7Im5hbWUiOiJub25lIn19fV0sWzMsNSwiIiwxLHsibGV2ZWwiOjIsInN0eWxlIjp7ImhlYWQiOnsibmFtZSI6Im5vbmUifX19XSxbNCw1LCJFKGosIGUpIiwwLHsic3R5bGUiOnsiYm9keSI6eyJuYW1lIjoiYmFycmVkIn19fV0sWzgsOSwiXFxhb3AiLDEseyJzaG9ydGVuIjp7InNvdXJjZSI6MjAsInRhcmdldCI6MjB9LCJzdHlsZSI6eyJib2R5Ijp7Im5hbWUiOiJub25lIn0sImhlYWQiOnsibmFtZSI6Im5vbmUifX19XSxbMTAsMTEsIkUoXFxldGEsIGUpIiwxLHsic2hvcnRlbiI6eyJzb3VyY2UiOjIwLCJ0YXJnZXQiOjIwfSwic3R5bGUiOnsiYm9keSI6eyJuYW1lIjoibm9uZSJ9LCJoZWFkIjp7Im5hbWUiOiJub25lIn19fV1d
	\begin{tikzcd}[column sep=large]
		A & D \\
		A & D \\
		A & D
		\arrow["{E(j, e)}"', "\shortmid"{marking}, from=1-2, to=1-1]
		\arrow["{E(t, e)}"{description}, from=2-2, to=2-1]
		\arrow[""{name=0, anchor=center, inner sep=0}, Rightarrow, no head, from=1-2, to=2-2]
		\arrow[""{name=1, anchor=center, inner sep=0}, Rightarrow, no head, from=1-1, to=2-1]
		\arrow[""{name=2, anchor=center, inner sep=0}, Rightarrow, no head, from=2-2, to=3-2]
		\arrow[""{name=3, anchor=center, inner sep=0}, Rightarrow, no head, from=2-1, to=3-1]
		\arrow["{E(j, e)}", "\shortmid"{marking}, from=3-2, to=3-1]
		\arrow["\aop"{description}, draw=none, from=0, to=1]
		\arrow["{E(\eta, e)}"{description}, draw=none, from=2, to=3]
	\end{tikzcd}
    \quad = \quad
    % https://q.uiver.app/#q=WzAsNCxbMSwwLCJEIl0sWzAsMCwiQSJdLFsxLDEsIkQiXSxbMCwxLCJBIl0sWzAsMSwiRShqLCBlKSIsMix7InN0eWxlIjp7ImJvZHkiOnsibmFtZSI6ImJhcnJlZCJ9fX1dLFsyLDMsIkUoaiwgZSkiLDAseyJzdHlsZSI6eyJib2R5Ijp7Im5hbWUiOiJiYXJyZWQifX19XSxbMCwyLCIiLDAseyJsZXZlbCI6Miwic3R5bGUiOnsiaGVhZCI6eyJuYW1lIjoibm9uZSJ9fX1dLFsxLDMsIiIsMSx7ImxldmVsIjoyLCJzdHlsZSI6eyJoZWFkIjp7Im5hbWUiOiJub25lIn19fV0sWzYsNywiPSIsMSx7InNob3J0ZW4iOnsic291cmNlIjoyMCwidGFyZ2V0IjoyMH0sImxldmVsIjoxLCJzdHlsZSI6eyJib2R5Ijp7Im5hbWUiOiJub25lIn0sImhlYWQiOnsibmFtZSI6Im5vbmUifX19XV0=
	\begin{tikzcd}
		A & D \\
		A & D
		\arrow["{E(j, e)}"', "\shortmid"{marking}, from=1-2, to=1-1]
		\arrow["{E(j, e)}", "\shortmid"{marking}, from=2-2, to=2-1]
		\arrow[""{name=0, anchor=center, inner sep=0}, Rightarrow, no head, from=1-2, to=2-2]
		\arrow[""{name=1, anchor=center, inner sep=0}, Rightarrow, no head, from=1-1, to=2-1]
		\arrow["{=}"{description}, draw=none, from=0, to=1]
	\end{tikzcd}
	\hspace{4em}
	% https://varkor.github.io/tangle/?t=W1tbNyw1LDEsNV1dLFtbWzEsWzEsMCwxLDBdXSxbMSxbMSwwLDEsMF1dXSxbWzEsWzEsMSwxLDBdXSxbMSxbMSwwLDEsMV1dXSxbWzEsWzEsMCwxLDBdXSxbMSxbMSwwLDEsMF1dXSxbWzEsWzEsMCwxLDBdXSxbMSxbMSwwLDEsMF1dXV0sW1swLDEsMS41LFsiXFxhb3AiLDFdXSxbMCwwLjUsMi41LFsiXFxldGEiXV0sWzEsMC41LDIsWzBdXSxbMSwwLjUsMyxbMF1dLFsxLDAuNSwxLFswXV0sWzEsMS41LDEsWzFdXSxbMSwxLjUsMixbMV1dLFsxLDEuNSwzLFsxXV1dLFtbMCwwLDMsImoiXSxbMSwwLDMsImUiXSxbMCwzLDEsImoiXSxbMSwzLDEsImUiXV1d&c=F5A3A3,F5CCA3,F5F5A3,CCF5A3,A3F5A3,A3F5CC,A3F5F5,A3CCF5,A3A3F5,CCA3F5,F5A3F5,F5A3CC
	\begin{tangle}{(2,4)}[trim y]
		\tgBorderA{(0,0)}{\tgColour6}{\tgColour4}{\tgColour4}{\tgColour6}
		\tgBorderA{(1,0)}{\tgColour4}{\tgColour0}{\tgColour0}{\tgColour4}
		\tgBorderA{(0,1)}{\tgColour6}{\tgColour4}{\tgColour4}{\tgColour6}
		\tgBorder{(0,1)}{0}{1}{0}{0}
		\tgBorderA{(1,1)}{\tgColour4}{\tgColour0}{\tgColour0}{\tgColour4}
		\tgBorder{(1,1)}{0}{0}{0}{1}
		\tgBorderA{(0,2)}{\tgColour6}{\tgColour4}{\tgColour4}{\tgColour6}
		\tgBorderA{(1,2)}{\tgColour4}{\tgColour0}{\tgColour0}{\tgColour4}
		\tgBorderA{(0,3)}{\tgColour6}{\tgColour4}{\tgColour4}{\tgColour6}
		\tgBorderA{(1,3)}{\tgColour4}{\tgColour0}{\tgColour0}{\tgColour4}
		\tgCell[(1,0)]{(0.5,1)}{\aop}
		\tgCell{(0,2)}{\eta}
		\tgArrow{(0,1.5)}{1}
		\tgArrow{(0,2.5)}{1}
		\tgArrow{(0,0.5)}{1}
		\tgArrow{(1,0.5)}{3}
		\tgArrow{(1,1.5)}{3}
		\tgArrow{(1,2.5)}{3}
		\tgAxisLabel{(0.5,0.75)}{south}{j}
		\tgAxisLabel{(1.5,0.75)}{south}{e}
		\tgAxisLabel{(0.5,3.25)}{north}{j}
		\tgAxisLabel{(1.5,3.25)}{north}{e}
	\end{tangle}
	\tangleeq*
	% https://varkor.github.io/tangle/?t=W1tbNyw1LDFdXSxbW1sxLFsxLDAsMSwwXV0sWzEsWzEsMCwxLDBdXV0sW1sxLFsxLDAsMSwwXV0sWzEsWzEsMCwxLDBdXV0sW1sxLFsxLDAsMSwwXV0sWzEsWzEsMCwxLDBdXV0sW1sxLFsxLDAsMSwwXV0sWzEsWzEsMCwxLDBdXV1dLFtbMSwwLjUsMixbMF1dLFsxLDAuNSwzLFswXV0sWzEsMC41LDEsWzBdXSxbMSwxLjUsMSxbMV1dLFsxLDEuNSwyLFsxXV0sWzEsMS41LDMsWzFdXV0sW1swLDAsMywiaiJdLFsxLDAsMywiZSJdLFswLDMsMSwiaiJdLFsxLDMsMSwiZSJdXV0=&c=F5A3A3,F5CCA3,F5F5A3,CCF5A3,A3F5A3,A3F5CC,A3F5F5,A3CCF5,A3A3F5,CCA3F5,F5A3F5,F5A3CC
	\begin{tangle}{(2,4)}[trim y]
		\tgBorderA{(0,0)}{\tgColour6}{\tgColour4}{\tgColour4}{\tgColour6}
		\tgBorderA{(1,0)}{\tgColour4}{\tgColour0}{\tgColour0}{\tgColour4}
		\tgBorderA{(0,1)}{\tgColour6}{\tgColour4}{\tgColour4}{\tgColour6}
		\tgBorderA{(1,1)}{\tgColour4}{\tgColour0}{\tgColour0}{\tgColour4}
		\tgBorderA{(0,2)}{\tgColour6}{\tgColour4}{\tgColour4}{\tgColour6}
		\tgBorderA{(1,2)}{\tgColour4}{\tgColour0}{\tgColour0}{\tgColour4}
		\tgBorderA{(0,3)}{\tgColour6}{\tgColour4}{\tgColour4}{\tgColour6}
		\tgBorderA{(1,3)}{\tgColour4}{\tgColour0}{\tgColour0}{\tgColour4}
		\tgArrow{(0,1.5)}{1}
		\tgArrow{(0,2.5)}{1}
		\tgArrow{(0,0.5)}{1}
		\tgArrow{(1,0.5)}{3}
		\tgArrow{(1,1.5)}{3}
		\tgArrow{(1,2.5)}{3}
		\tgAxisLabel{(0.5,0.75)}{south}{j}
		\tgAxisLabel{(1.5,0.75)}{south}{e}
		\tgAxisLabel{(0.5,3.25)}{north}{j}
		\tgAxisLabel{(1.5,3.25)}{north}{e}
	\end{tangle}
    \]
    \[
    % https://q.uiver.app/#q=WzAsOCxbMCwwLCJBIl0sWzEsMCwiQSJdLFsyLDAsIkQiXSxbMCwxLCJBIl0sWzEsMSwiQSJdLFsyLDEsIkQiXSxbMCwyLCJBIl0sWzIsMiwiRCJdLFsxLDAsIkUoaiwgdCkiLDIseyJzdHlsZSI6eyJib2R5Ijp7Im5hbWUiOiJiYXJyZWQifX19XSxbMiwxLCJFKGosIGUpIiwyLHsic3R5bGUiOnsiYm9keSI6eyJuYW1lIjoiYmFycmVkIn19fV0sWzQsMywiRSh0LCB0KSIsMV0sWzUsNCwiRSh0LCBlKSIsMV0sWzMsMCwiIiwwLHsibGV2ZWwiOjIsInN0eWxlIjp7ImhlYWQiOnsibmFtZSI6Im5vbmUifX19XSxbNCwxLCIiLDEseyJsZXZlbCI6Miwic3R5bGUiOnsiaGVhZCI6eyJuYW1lIjoibm9uZSJ9fX1dLFs1LDIsIiIsMix7ImxldmVsIjoyLCJzdHlsZSI6eyJoZWFkIjp7Im5hbWUiOiJub25lIn19fV0sWzcsNiwiRSh0LCBlKSIsMCx7InN0eWxlIjp7ImJvZHkiOnsibmFtZSI6ImJhcnJlZCJ9fX1dLFs2LDMsIiIsMCx7ImxldmVsIjoyLCJzdHlsZSI6eyJoZWFkIjp7Im5hbWUiOiJub25lIn19fV0sWzcsNSwiIiwwLHsibGV2ZWwiOjIsInN0eWxlIjp7ImhlYWQiOnsibmFtZSI6Im5vbmUifX19XSxbMTMsMTQsIlxcYW9wIiwxLHsic2hvcnRlbiI6eyJzb3VyY2UiOjIwLCJ0YXJnZXQiOjIwfSwic3R5bGUiOnsiYm9keSI6eyJuYW1lIjoibm9uZSJ9LCJoZWFkIjp7Im5hbWUiOiJub25lIn19fV0sWzEyLDEzLCJcXGRhZyIsMSx7InNob3J0ZW4iOnsic291cmNlIjoyMCwidGFyZ2V0IjoyMH0sInN0eWxlIjp7ImJvZHkiOnsibmFtZSI6Im5vbmUifSwiaGVhZCI6eyJuYW1lIjoibm9uZSJ9fX1dLFsxNiwxNywiXFxjcCB0KHQsIGUpIiwxLHsic2hvcnRlbiI6eyJzb3VyY2UiOjIwLCJ0YXJnZXQiOjQwfSwic3R5bGUiOnsiYm9keSI6eyJuYW1lIjoibm9uZSJ9LCJoZWFkIjp7Im5hbWUiOiJub25lIn19fV1d
	\begin{tikzcd}[column sep=large]
		A & A & D \\
		A & A & D \\
		A && D
		\arrow["{E(j, t)}"', "\shortmid"{marking}, from=1-2, to=1-1]
		\arrow["{E(j, e)}"', "\shortmid"{marking}, from=1-3, to=1-2]
		\arrow["{E(t, t)}"{description}, from=2-2, to=2-1]
		\arrow["{E(t, e)}"{description}, from=2-3, to=2-2]
		\arrow[""{name=0, anchor=center, inner sep=0}, Rightarrow, no head, from=2-1, to=1-1]
		\arrow[""{name=1, anchor=center, inner sep=0}, Rightarrow, no head, from=2-2, to=1-2]
		\arrow[""{name=2, anchor=center, inner sep=0}, Rightarrow, no head, from=2-3, to=1-3]
		\arrow["{E(t, e)}", "\shortmid"{marking}, from=3-3, to=3-1]
		\arrow[""{name=3, anchor=center, inner sep=0}, Rightarrow, no head, from=3-1, to=2-1]
		\arrow[""{name=4, anchor=center, inner sep=0}, Rightarrow, no head, from=3-3, to=2-3]
		\arrow["\aop"{description}, draw=none, from=1, to=2]
		\arrow["\dag"{description}, draw=none, from=0, to=1]
		\arrow["{\cp t(t, e)}"{description}, draw=none, from=3, to=4]
	\end{tikzcd}
    \quad = \quad
    % https://q.uiver.app/#q=WzAsMTAsWzEsMCwiQSJdLFsyLDAsIkQiXSxbMCwxLCJBIl0sWzEsMSwiQSJdLFsyLDEsIkQiXSxbMCwyLCJBIl0sWzIsMiwiRCJdLFswLDMsIkEiXSxbMiwzLCJEIl0sWzAsMCwiQSJdLFsxLDAsIkUoaiwgZSkiLDIseyJzdHlsZSI6eyJib2R5Ijp7Im5hbWUiOiJiYXJyZWQifX19XSxbMywyLCJFKGosIHQpIiwxXSxbNCwzLCJFKHQsIGUpIiwxXSxbMywwLCIiLDEseyJsZXZlbCI6Miwic3R5bGUiOnsiaGVhZCI6eyJuYW1lIjoibm9uZSJ9fX1dLFs0LDEsIiIsMix7ImxldmVsIjoyLCJzdHlsZSI6eyJoZWFkIjp7Im5hbWUiOiJub25lIn19fV0sWzYsNSwiRShqLCBlKSIsMV0sWzUsMiwiIiwwLHsibGV2ZWwiOjIsInN0eWxlIjp7ImhlYWQiOnsibmFtZSI6Im5vbmUifX19XSxbNiw0LCIiLDAseyJsZXZlbCI6Miwic3R5bGUiOnsiaGVhZCI6eyJuYW1lIjoibm9uZSJ9fX1dLFs4LDcsIkUodCwgZSkiLDAseyJzdHlsZSI6eyJib2R5Ijp7Im5hbWUiOiJiYXJyZWQifX19XSxbNyw1LCIiLDEseyJsZXZlbCI6Miwic3R5bGUiOnsiaGVhZCI6eyJuYW1lIjoibm9uZSJ9fX1dLFs4LDYsIiIsMSx7ImxldmVsIjoyLCJzdHlsZSI6eyJoZWFkIjp7Im5hbWUiOiJub25lIn19fV0sWzAsOSwiRShqLCB0KSIsMix7InN0eWxlIjp7ImJvZHkiOnsibmFtZSI6ImJhcnJlZCJ9fX1dLFsyLDksIiIsMix7ImxldmVsIjoyLCJzdHlsZSI6eyJoZWFkIjp7Im5hbWUiOiJub25lIn19fV0sWzEzLDE0LCJcXGFvcCIsMSx7InNob3J0ZW4iOnsic291cmNlIjoyMCwidGFyZ2V0IjoyMH0sInN0eWxlIjp7ImJvZHkiOnsibmFtZSI6Im5vbmUifSwiaGVhZCI6eyJuYW1lIjoibm9uZSJ9fX1dLFsxNiwxNywiXFxjcCB0KGosIGUpIiwxLHsic2hvcnRlbiI6eyJzb3VyY2UiOjIwLCJ0YXJnZXQiOjIwfSwic3R5bGUiOnsiYm9keSI6eyJuYW1lIjoibm9uZSJ9LCJoZWFkIjp7Im5hbWUiOiJub25lIn19fV0sWzE5LDIwLCJcXGFvcCIsMSx7InNob3J0ZW4iOnsic291cmNlIjoyMCwidGFyZ2V0IjoyMH0sInN0eWxlIjp7ImJvZHkiOnsibmFtZSI6Im5vbmUifSwiaGVhZCI6eyJuYW1lIjoibm9uZSJ9fX1dXQ==
	\begin{tikzcd}[column sep=large]
		A & A & D \\
		A & A & D \\
		A && D \\
		A && D
		\arrow["{E(j, e)}"', "\shortmid"{marking}, from=1-3, to=1-2]
		\arrow["{E(j, t)}"{description}, from=2-2, to=2-1]
		\arrow["{E(t, e)}"{description}, from=2-3, to=2-2]
		\arrow[""{name=0, anchor=center, inner sep=0}, Rightarrow, no head, from=2-2, to=1-2]
		\arrow[""{name=1, anchor=center, inner sep=0}, Rightarrow, no head, from=2-3, to=1-3]
		\arrow["{E(j, e)}"{description}, from=3-3, to=3-1]
		\arrow[""{name=2, anchor=center, inner sep=0}, Rightarrow, no head, from=3-1, to=2-1]
		\arrow[""{name=3, anchor=center, inner sep=0}, Rightarrow, no head, from=3-3, to=2-3]
		\arrow["{E(t, e)}", "\shortmid"{marking}, from=4-3, to=4-1]
		\arrow[""{name=4, anchor=center, inner sep=0}, Rightarrow, no head, from=4-1, to=3-1]
		\arrow[""{name=5, anchor=center, inner sep=0}, Rightarrow, no head, from=4-3, to=3-3]
		\arrow["{E(j, t)}"', "\shortmid"{marking}, from=1-2, to=1-1]
		\arrow[Rightarrow, no head, from=2-1, to=1-1]
		\arrow["\aop"{description}, draw=none, from=0, to=1]
		\arrow["{\cp t(j, e)}"{description}, draw=none, from=2, to=3]
		\arrow["\aop"{description}, draw=none, from=4, to=5]
	\end{tikzcd}
    \]
    \[
	% https://varkor.github.io/tangle/?t=W1tbNyw1LDcsNSwxLDVdXSxbW1sxLFsxLDAsMSwwXV0sWzEsWzEsMCwxLDBdXSxbMSxbMSwwLDEsMF1dLFsxLFsxLDAsMSwwXV1dLFtbMSxbMSwxLDEsMF1dLFsxLFsxLDEsMCwxXV0sWzEsWzEsMSwwLDFdXSxbMSxbMSwwLDEsMV1dXSxbWzEsWzEsMCwxLDBdXSxbMCxbXV0sWzAsW11dLFsxLFsxLDAsMSwwXV1dXSxbWzAsMSwxLjUsWyJcXGRhZyIsMV1dLFswLDMsMS41LFsiXFxhb3AiLDFdXSxbMSwyLDEuNSxbMF1dLFsxLDAuNSwyLFswXV0sWzEsMy41LDIsWzFdXSxbMSwwLjUsMSxbMF1dLFsxLDIuNSwxLFswXV0sWzEsMS41LDEsWzFdXSxbMSwzLjUsMSxbMV1dXSxbWzAsMCwzLCJqIl0sWzEsMCwzLCJ0Il0sWzIsMCwzLCJqIl0sWzMsMCwzLCJlIl0sWzAsMiwxLCJ0Il0sWzMsMiwxLCJlIl1dXQ==&c=F5A3A3,F5CCA3,F5F5A3,CCF5A3,A3F5A3,A3F5CC,A3F5F5,A3CCF5,A3A3F5,CCA3F5,F5A3F5,F5A3CC
	\begin{tangle}{(4,3)}[trim y=.25]
		\tgBorderA{(0,0)}{\tgColour6}{\tgColour4}{\tgColour4}{\tgColour6}
		\tgBorderA{(1,0)}{\tgColour4}{\tgColour6}{\tgColour6}{\tgColour4}
		\tgBorderA{(2,0)}{\tgColour6}{\tgColour4}{\tgColour4}{\tgColour6}
		\tgBorderA{(3,0)}{\tgColour4}{\tgColour0}{\tgColour0}{\tgColour4}
		\tgBorderA{(0,1)}{\tgColour6}{\tgColour4}{\tgColour4}{\tgColour6}
		\tgBorder{(0,1)}{0}{1}{0}{0}
		\tgBorderA{(1,1)}{\tgColour4}{\tgColour6}{\tgColour4}{\tgColour4}
		\tgBorder{(1,1)}{0}{0}{0}{1}
		\tgBorderA{(2,1)}{\tgColour6}{\tgColour4}{\tgColour4}{\tgColour4}
		\tgBorder{(2,1)}{0}{1}{0}{0}
		\tgBorderA{(3,1)}{\tgColour4}{\tgColour0}{\tgColour0}{\tgColour4}
		\tgBorder{(3,1)}{0}{0}{0}{1}
		\tgBorderA{(0,2)}{\tgColour6}{\tgColour4}{\tgColour4}{\tgColour6}
		\tgBlank{(1,2)}{\tgColour4}
		\tgBlank{(2,2)}{\tgColour4}
		\tgBorderA{(3,2)}{\tgColour4}{\tgColour0}{\tgColour0}{\tgColour4}
		\tgCell[(1,0)]{(0.5,1)}{\dag}
		\tgCell[(1,0)]{(2.5,1)}{\aop}
		\tgArrow{(1.5,1)}{0}
		\tgArrow{(0,1.5)}{1}
		\tgArrow{(3,1.5)}{3}
		\tgArrow{(0,0.5)}{1}
		\tgArrow{(2,0.5)}{1}
		\tgArrow{(1,0.5)}{3}
		\tgArrow{(3,0.5)}{3}
		\tgAxisLabel{(0.5,0.25)}{south}{j}
		\tgAxisLabel{(1.5,0.25)}{south}{t}
		\tgAxisLabel{(2.5,0.25)}{south}{j}
		\tgAxisLabel{(3.5,0.25)}{south}{e}
		\tgAxisLabel{(0.5,2.75)}{north}{t}
		\tgAxisLabel{(3.5,2.75)}{north}{e}
	\end{tangle}
	\tangleeq*
	% https://varkor.github.io/tangle/?t=W1tbNyw1LDcsNSwxLDVdXSxbW1sxLFsxLDAsMSwwXV0sWzEsWzEsMCwxLDBdXSxbMSxbMSwwLDEsMF1dLFsxLFsxLDAsMSwwXV1dLFtbMSxbMSwwLDEsMF1dLFsxLFsxLDEsMCwwXV0sWzEsWzEsMSwxLDFdXSxbMSxbMSwwLDAsMV1dXSxbWzEsWzEsMSwxLDBdXSxbMSxbMCwxLDAsMV1dLFsxLFsxLDAsMSwxXV0sWzAsW11dXSxbWzEsWzEsMCwxLDBdXSxbMCxbXV0sWzEsWzEsMCwxLDBdXSxbMCxbXV1dXSxbWzAsMS41LDIuNSxbIlxcYW9wIiwyXV0sWzEsMC41LDIsWzBdXSxbMSwyLjUsMixbMV1dLFsxLDAuNSwzLFswXV0sWzEsMC41LDEsWzBdXSxbMSwyLjUsMSxbMF1dLFswLDIuNSwxLjUsWyJcXGFvcCIsMl1dLFsxLDEuNSwxLFsxXV0sWzEsMy41LDEsWzFdXSxbMSwyLjUsMyxbMV1dXSxbWzAsMCwzLCJqIl0sWzEsMCwzLCJ0Il0sWzIsMCwzLCJqIl0sWzMsMCwzLCJlIl0sWzAsMywxLCJ0Il0sWzIsMywxLCJlIl1dXQ==&c=F5A3A3,F5CCA3,F5F5A3,CCF5A3,A3F5A3,A3F5CC,A3F5F5,A3CCF5,A3A3F5,CCA3F5,F5A3F5,F5A3CC
	\begin{tangle}{(4,4)}[trim y]
		\tgBorderA{(0,0)}{\tgColour6}{\tgColour4}{\tgColour4}{\tgColour6}
		\tgBorderA{(1,0)}{\tgColour4}{\tgColour6}{\tgColour6}{\tgColour4}
		\tgBorderA{(2,0)}{\tgColour6}{\tgColour4}{\tgColour4}{\tgColour6}
		\tgBorderA{(3,0)}{\tgColour4}{\tgColour0}{\tgColour0}{\tgColour4}
		\tgBorderA{(0,1)}{\tgColour6}{\tgColour4}{\tgColour4}{\tgColour6}
		\tgBorderA{(1,1)}{\tgColour4}{\tgColour6}{\tgColour4}{\tgColour4}
		\tgBorderA{(2,1)}{\tgColour6}{\tgColour4}{\tgColour0}{\tgColour4}
		\tgBorderA{(3,1)}{\tgColour4}{\tgColour0}{\tgColour0}{\tgColour0}
		\tgBorderA{(0,2)}{\tgColour6}{\tgColour4}{\tgColour4}{\tgColour6}
		\tgBorder{(0,2)}{0}{1}{0}{0}
		\tgBorderA{(1,2)}{\tgColour4}{\tgColour4}{\tgColour4}{\tgColour4}
		\tgBorder{(1,2)}{0}{1}{0}{1}
		\tgBorderA{(2,2)}{\tgColour4}{\tgColour0}{\tgColour0}{\tgColour4}
		\tgBorder{(2,2)}{0}{0}{0}{1}
		\tgBlank{(3,2)}{\tgColour0}
		\tgBorderA{(0,3)}{\tgColour6}{\tgColour4}{\tgColour4}{\tgColour6}
		\tgBlank{(1,3)}{\tgColour4}
		\tgBorderA{(2,3)}{\tgColour4}{\tgColour0}{\tgColour0}{\tgColour4}
		\tgBlank{(3,3)}{\tgColour0}
		\tgCell[(2,0)]{(1,2)}{\aop}
		\tgArrow{(0,1.5)}{1}
		\tgArrow{(2,1.5)}{3}
		\tgArrow{(0,2.5)}{1}
		\tgArrow{(0,0.5)}{1}
		\tgArrow{(2,0.5)}{1}
		\tgCell[(2,0)]{(2,1)}{\aop}
		\tgArrow{(1,0.5)}{3}
		\tgArrow{(3,0.5)}{3}
		\tgArrow{(2,2.5)}{3}
		\tgAxisLabel{(0.5,0.75)}{south}{j}
		\tgAxisLabel{(1.5,0.75)}{south}{t}
		\tgAxisLabel{(2.5,0.75)}{south}{j}
		\tgAxisLabel{(3.5,0.75)}{south}{e}
		\tgAxisLabel{(0.5,3.25)}{north}{t}
		\tgAxisLabel{(2.5,3.25)}{north}{e}
	\end{tangle}
    \]
    Let $D$ be an object of $\X$. Suppose that $(e, \aop)$ and $(e', \aop')$ are $T$-algebras with domain $D$. A \emph{$T$-algebra morphism} from $(e, \aop)$ to $(e', \aop')$ is a 2-cell $\epsilon \colon e \tto e'$ satisfying the following equation.
	\[
    % https://q.uiver.app/?q=WzAsNCxbMCwwLCJFKGosIGUpIl0sWzEsMCwiRSh0LCBlKSJdLFswLDEsIkUoaiwgZScpIl0sWzEsMSwiRSh0LCBlJykiXSxbMCwxLCJcXHByb3Bmcm9tIl0sWzAsMiwiRShqLCBcXGVwc2lsb24pIiwyXSxbMiwzLCJcXHByb3Bmcm9tJyIsMl0sWzEsMywiRSh0LCBcXGVwc2lsb24pIl1d
	\begin{tikzcd}
		{E(j, e)} & {E(t, e)} \\
		{E(j, e')} & {E(t, e')}
		\arrow["\aop", from=1-1, to=1-2]
		\arrow["{E(j, \epsilon)}"', from=1-1, to=2-1]
		\arrow["{\aop'}"', from=2-1, to=2-2]
		\arrow["{E(t, \epsilon)}", from=1-2, to=2-2]
	\end{tikzcd}
	\hspace{4em}
	% https://varkor.github.io/tangle/?t=W1tbNyw1LDEsNV1dLFtbWzEsWzEsMCwxLDBdXSxbMSxbMSwwLDEsMF1dXSxbWzEsWzEsMCwxLDBdXSxbMSxbMSwwLDEsMF1dXSxbWzEsWzEsMSwxLDBdXSxbMSxbMSwwLDEsMV1dXSxbWzEsWzEsMCwxLDBdXSxbMSxbMSwwLDEsMF1dXSxbWzEsWzEsMCwxLDBdXSxbMSxbMSwwLDEsMF1dXV0sW1swLDEsMi41LFsiXFxhb3AiLDFdXSxbMSwwLjUsMyxbMF1dLFsxLDAuNSwyLFswXV0sWzEsMS41LDIsWzFdXSxbMCwxLjUsMy41LFsiXFxlcHNpbG9uIl1dLFsxLDAuNSw0LFswXV0sWzEsMS41LDMsWzFdXSxbMSwxLjUsNCxbMV1dLFsxLDAuNSwxLFswXV0sWzEsMS41LDEsWzFdXV0sW1swLDAsMywiaiJdLFsxLDAsMywiZSJdLFswLDQsMSwidCJdLFsxLDQsMSwiZSciXV1d&c=F5A3A3,F5CCA3,F5F5A3,CCF5A3,A3F5A3,A3F5CC,A3F5F5,A3CCF5,A3A3F5,CCA3F5,F5A3F5,F5A3CC
	\begin{tangle}{(2,5)}[trim y]
		\tgBorderA{(0,0)}{\tgColour6}{\tgColour4}{\tgColour4}{\tgColour6}
		\tgBorderA{(1,0)}{\tgColour4}{\tgColour0}{\tgColour0}{\tgColour4}
		\tgBorderA{(0,1)}{\tgColour6}{\tgColour4}{\tgColour4}{\tgColour6}
		\tgBorderA{(1,1)}{\tgColour4}{\tgColour0}{\tgColour0}{\tgColour4}
		\tgBorderA{(0,2)}{\tgColour6}{\tgColour4}{\tgColour4}{\tgColour6}
		\tgBorder{(0,2)}{0}{1}{0}{0}
		\tgBorderA{(1,2)}{\tgColour4}{\tgColour0}{\tgColour0}{\tgColour4}
		\tgBorder{(1,2)}{0}{0}{0}{1}
		\tgBorderA{(0,3)}{\tgColour6}{\tgColour4}{\tgColour4}{\tgColour6}
		\tgBorderA{(1,3)}{\tgColour4}{\tgColour0}{\tgColour0}{\tgColour4}
		\tgBorderA{(0,4)}{\tgColour6}{\tgColour4}{\tgColour4}{\tgColour6}
		\tgBorderA{(1,4)}{\tgColour4}{\tgColour0}{\tgColour0}{\tgColour4}
		\tgCell[(1,0)]{(0.5,2)}{\aop}
		\tgArrow{(0,2.5)}{1}
		\tgArrow{(0,1.5)}{1}
		\tgArrow{(1,1.5)}{3}
		\tgCell{(1,3)}{\epsilon}
		\tgArrow{(0,3.5)}{1}
		\tgArrow{(1,2.5)}{3}
		\tgArrow{(1,3.5)}{3}
		\tgArrow{(0,0.5)}{1}
		\tgArrow{(1,0.5)}{3}
		\tgAxisLabel{(0.5,0.75)}{south}{j}
		\tgAxisLabel{(1.5,0.75)}{south}{e}
		\tgAxisLabel{(0.5,4.25)}{north}{t}
		\tgAxisLabel{(1.5,4.25)}{north}{e'}
	\end{tangle}
	\tangleeq*
	% https://varkor.github.io/tangle/?t=W1tbNyw1LDEsNV1dLFtbWzEsWzEsMCwxLDBdXSxbMSxbMSwwLDEsMF1dXSxbWzEsWzEsMCwxLDBdXSxbMSxbMSwwLDEsMF1dXSxbWzEsWzEsMSwxLDBdXSxbMSxbMSwwLDEsMV1dXSxbWzEsWzEsMCwxLDBdXSxbMSxbMSwwLDEsMF1dXSxbWzEsWzEsMCwxLDBdXSxbMSxbMSwwLDEsMF1dXV0sW1swLDEsMi41LFsiXFxhb3AnIiwxXV0sWzEsMC41LDIsWzBdXSxbMSwxLjUsMixbMV1dLFsxLDEuNSwzLFsxXV0sWzEsMC41LDMsWzBdXSxbMSwxLjUsNCxbMV1dLFsxLDAuNSw0LFswXV0sWzAsMS41LDEuNSxbIlxcZXBzaWxvbiJdXSxbMSwxLjUsMSxbMV1dLFsxLDAuNSwxLFswXV1dLFtbMCwwLDMsImoiXSxbMSwwLDMsImUiXSxbMCw0LDEsInQiXSxbMSw0LDEsImUnIl1dXQ==&c=F5A3A3,F5CCA3,F5F5A3,CCF5A3,A3F5A3,A3F5CC,A3F5F5,A3CCF5,A3A3F5,CCA3F5,F5A3F5,F5A3CC
	\begin{tangle}{(2,5)}[trim y]
		\tgBorderA{(0,0)}{\tgColour6}{\tgColour4}{\tgColour4}{\tgColour6}
		\tgBorderA{(1,0)}{\tgColour4}{\tgColour0}{\tgColour0}{\tgColour4}
		\tgBorderA{(0,1)}{\tgColour6}{\tgColour4}{\tgColour4}{\tgColour6}
		\tgBorderA{(1,1)}{\tgColour4}{\tgColour0}{\tgColour0}{\tgColour4}
		\tgBorderA{(0,2)}{\tgColour6}{\tgColour4}{\tgColour4}{\tgColour6}
		\tgBorder{(0,2)}{0}{1}{0}{0}
		\tgBorderA{(1,2)}{\tgColour4}{\tgColour0}{\tgColour0}{\tgColour4}
		\tgBorder{(1,2)}{0}{0}{0}{1}
		\tgBorderA{(0,3)}{\tgColour6}{\tgColour4}{\tgColour4}{\tgColour6}
		\tgBorderA{(1,3)}{\tgColour4}{\tgColour0}{\tgColour0}{\tgColour4}
		\tgBorderA{(0,4)}{\tgColour6}{\tgColour4}{\tgColour4}{\tgColour6}
		\tgBorderA{(1,4)}{\tgColour4}{\tgColour0}{\tgColour0}{\tgColour4}
		\tgCell[(1,0)]{(0.5,2)}{\aop'}
		\tgArrow{(0,1.5)}{1}
		\tgArrow{(1,1.5)}{3}
		\tgArrow{(1,2.5)}{3}
		\tgArrow{(0,2.5)}{1}
		\tgArrow{(1,3.5)}{3}
		\tgArrow{(0,3.5)}{1}
		\tgCell{(1,1)}{\epsilon}
		\tgArrow{(1,0.5)}{3}
		\tgArrow{(0,0.5)}{1}
		\tgAxisLabel{(0.5,0.75)}{south}{j}
		\tgAxisLabel{(1.5,0.75)}{south}{e}
		\tgAxisLabel{(0.5,4.25)}{north}{t}
		\tgAxisLabel{(1.5,4.25)}{north}{e'}
	\end{tangle}
	\]
    $T$-algebras with domain $D$ and their morphisms form an category $T\h\Alg_D$ functorial contravariantly in $D$ and $T$.
	Denote by $U_{T, D} \colon T\h\Alg_D \to \X[D, E]$ the faithful functor sending each $T$-algebra $(e, \aop)$ to its carrier $e$.
\end{definition}

\begin{remark}
    Algebras for relative monads, and their morphisms, have been studied under that name by \textcite[Definitions~3.5.3 \& 3.5.4]{maillard2019principles}, as \emph{relative left modules} by \textcite[Definitions~4.1 \& 4.2]{lobbia2023distributive}, and as \emph{left modules} by \textcite[Definition~5.3.1]{arkor2022monadic}.
\end{remark}

\begin{example}
	\label{relative-monad-forms-algebra}
	Let $T$ be a relative monad. Then $(t, \dag)$ forms a $T$-algebra (\cf{}~\cref{monoid-forms-left-action}).
\end{example}

\begin{definition}
	\label{opalgebra}
    Let $T$ be a relative monad. An \emph{opalgebra for $T$} (or simply \emph{$T$-opalgebra}) comprises
    \begin{enumerate}
		\item an object $B$, the \emph{codomain};
        \item a tight-cell $a \colon A \to B$, the \emph{carrier} or \emph{underlying tight-cell};
        \item a 2-cell $\oop \colon E(j, t) \tto B(a, a)$, the \emph{extension operator},
    \end{enumerate}
    satisfying the following equations.
	\[
	% https://q.uiver.app/#q=WzAsOCxbMSwyLCJBIl0sWzAsMiwiQSJdLFsxLDMsIkEiXSxbMCwzLCJBIl0sWzEsMSwiQSJdLFswLDEsIkEiXSxbMSwwLCJBIl0sWzAsMCwiQSJdLFswLDEsIkUoaiwgdCkiLDFdLFsyLDMsIkIoYSwgYSkiLDAseyJzdHlsZSI6eyJib2R5Ijp7Im5hbWUiOiJiYXJyZWQifX19XSxbMCwyLCIiLDAseyJsZXZlbCI6Miwic3R5bGUiOnsiaGVhZCI6eyJuYW1lIjoibm9uZSJ9fX1dLFsxLDMsIiIsMSx7ImxldmVsIjoyLCJzdHlsZSI6eyJoZWFkIjp7Im5hbWUiOiJub25lIn19fV0sWzQsNSwiRShqLCBqKSIsMV0sWzQsMCwiIiwxLHsibGV2ZWwiOjIsInN0eWxlIjp7ImhlYWQiOnsibmFtZSI6Im5vbmUifX19XSxbNSwxLCIiLDEseyJsZXZlbCI6Miwic3R5bGUiOnsiaGVhZCI6eyJuYW1lIjoibm9uZSJ9fX1dLFs2LDQsIiIsMCx7ImxldmVsIjoyLCJzdHlsZSI6eyJoZWFkIjp7Im5hbWUiOiJub25lIn19fV0sWzYsNywiIiwyLHsibGV2ZWwiOjIsInN0eWxlIjp7ImhlYWQiOnsibmFtZSI6Im5vbmUifX19XSxbNyw1LCIiLDIseyJsZXZlbCI6Miwic3R5bGUiOnsiaGVhZCI6eyJuYW1lIjoibm9uZSJ9fX1dLFsxNSwxNywiXFxwYyBqIiwxLHsic2hvcnRlbiI6eyJzb3VyY2UiOjIwLCJ0YXJnZXQiOjIwfSwic3R5bGUiOnsiYm9keSI6eyJuYW1lIjoibm9uZSJ9LCJoZWFkIjp7Im5hbWUiOiJub25lIn19fV0sWzEzLDE0LCJFKGosIFxcZXRhKSIsMSx7InNob3J0ZW4iOnsic291cmNlIjoyMCwidGFyZ2V0IjoyMH0sInN0eWxlIjp7ImJvZHkiOnsibmFtZSI6Im5vbmUifSwiaGVhZCI6eyJuYW1lIjoibm9uZSJ9fX1dLFsxMCwxMSwiXFxvb3AiLDEseyJzaG9ydGVuIjp7InNvdXJjZSI6MjAsInRhcmdldCI6MjB9LCJzdHlsZSI6eyJib2R5Ijp7Im5hbWUiOiJub25lIn0sImhlYWQiOnsibmFtZSI6Im5vbmUifX19XV0=
	\begin{tikzcd}[column sep=large]
		A & A \\
		A & A \\
		A & A \\
		A & A
		\arrow["{E(j, t)}"{description}, from=3-2, to=3-1]
		\arrow["{B(a, a)}", "\shortmid"{marking}, from=4-2, to=4-1]
		\arrow[""{name=0, anchor=center, inner sep=0}, Rightarrow, no head, from=3-2, to=4-2]
		\arrow[""{name=1, anchor=center, inner sep=0}, Rightarrow, no head, from=3-1, to=4-1]
		\arrow["{E(j, j)}"{description}, from=2-2, to=2-1]
		\arrow[""{name=2, anchor=center, inner sep=0}, Rightarrow, no head, from=2-2, to=3-2]
		\arrow[""{name=3, anchor=center, inner sep=0}, Rightarrow, no head, from=2-1, to=3-1]
		\arrow[""{name=4, anchor=center, inner sep=0}, Rightarrow, no head, from=1-2, to=2-2]
		\arrow[Rightarrow, no head, from=1-2, to=1-1]
		\arrow[""{name=5, anchor=center, inner sep=0}, Rightarrow, no head, from=1-1, to=2-1]
		\arrow["{\pc j}"{description}, draw=none, from=4, to=5]
		\arrow["{E(j, \eta)}"{description}, draw=none, from=2, to=3]
		\arrow["\oop"{description}, draw=none, from=0, to=1]
	\end{tikzcd}
	\quad = \quad
	% https://q.uiver.app/#q=WzAsNCxbMSwxLCJBIl0sWzAsMSwiQSJdLFsxLDAsIkEiXSxbMCwwLCJBIl0sWzAsMSwiQihhLCBhKSIsMCx7InN0eWxlIjp7ImJvZHkiOnsibmFtZSI6ImJhcnJlZCJ9fX1dLFsyLDAsIiIsMCx7ImxldmVsIjoyLCJzdHlsZSI6eyJoZWFkIjp7Im5hbWUiOiJub25lIn19fV0sWzIsMywiIiwyLHsibGV2ZWwiOjIsInN0eWxlIjp7ImhlYWQiOnsibmFtZSI6Im5vbmUifX19XSxbMywxLCIiLDIseyJsZXZlbCI6Miwic3R5bGUiOnsiaGVhZCI6eyJuYW1lIjoibm9uZSJ9fX1dLFs1LDcsIlxccGMgYSIsMSx7InNob3J0ZW4iOnsic291cmNlIjoyMCwidGFyZ2V0IjoyMH0sInN0eWxlIjp7ImJvZHkiOnsibmFtZSI6Im5vbmUifSwiaGVhZCI6eyJuYW1lIjoibm9uZSJ9fX1dXQ==
	\begin{tikzcd}
		A & A \\
		A & A
		\arrow["{B(a, a)}", "\shortmid"{marking}, from=2-2, to=2-1]
		\arrow[""{name=0, anchor=center, inner sep=0}, Rightarrow, no head, from=1-2, to=2-2]
		\arrow[Rightarrow, no head, from=1-2, to=1-1]
		\arrow[""{name=1, anchor=center, inner sep=0}, Rightarrow, no head, from=1-1, to=2-1]
		\arrow["{\pc a}"{description}, draw=none, from=0, to=1]
	\end{tikzcd}
	\hspace{4em}
	% https://varkor.github.io/tangle/?t=W1tbNyw1LDExXV0sW1tbMCxbXV0sWzAsW11dXSxbWzEsWzAsMSwxLDBdXSxbMSxbMCwwLDEsMV1dXSxbWzEsWzEsMSwxLDBdXSxbMSxbMSwwLDEsMV1dXSxbWzEsWzEsMCwxLDBdXSxbMSxbMSwwLDEsMF1dXV0sW1swLDEsMi41LFsiXFxvb3AiLDFdXSxbMSwxLjUsMixbMV1dLFsxLDEuNSwzLFsxXV0sWzEsMC41LDIsWzBdXSxbMSwwLjUsMyxbMF1dLFswLDEsMS41LFsiXFxldGEiLDFdXV0sW1swLDMsMSwiYSJdLFsxLDMsMSwiYSJdXV0=&c=F5A3A3,F5CCA3,F5F5A3,CCF5A3,A3F5A3,A3F5CC,A3F5F5,A3CCF5,A3A3F5,CCA3F5,F5A3F5,F5A3CC
	\begin{tangle}{(2,4)}[trim y]
		\tgBlank{(0,0)}{\tgColour6}
		\tgBlank{(1,0)}{\tgColour6}
		\tgBorderA{(0,1)}{\tgColour6}{\tgColour6}{\tgColour4}{\tgColour6}
		\tgBorderA{(1,1)}{\tgColour6}{\tgColour6}{\tgColour6}{\tgColour4}
		\tgBorderA{(0,2)}{\tgColour6}{\tgColour4}{\tgColour10}{\tgColour6}
		\tgBorderA{(1,2)}{\tgColour4}{\tgColour6}{\tgColour6}{\tgColour10}
		\tgBorderA{(0,3)}{\tgColour6}{\tgColour10}{\tgColour10}{\tgColour6}
		\tgBorderA{(1,3)}{\tgColour10}{\tgColour6}{\tgColour6}{\tgColour10}
		\tgCell[(1,0)]{(0.5,2)}{\oop}
		\tgArrow{(1,1.5)}{3}
		\tgArrow{(1,2.5)}{3}
		\tgArrow{(0,1.5)}{1}
		\tgArrow{(0,2.5)}{1}
		\tgCell[(1,0)]{(0.5,1)}{\eta}
		\tgAxisLabel{(0.5,3.25)}{north}{a}
		\tgAxisLabel{(1.5,3.25)}{north}{a}
	\end{tangle}
	\tangleeq*
	% https://varkor.github.io/tangle/?t=W1tbNywxMV1dLFtbWzAsW11dLFswLFtdXV0sW1syLFsyXV0sWzIsWzNdXV0sW1sxLFsxLDAsMSwwXV0sWzEsWzEsMCwxLDBdXV0sW1sxLFsxLDAsMSwwXV0sWzEsWzEsMCwxLDBdXV1dLFtbMSwxLDEuNSxbMF1dLFsxLDAuNSwyLFswXV0sWzEsMS41LDIsWzFdXSxbMSwwLjUsMyxbMF1dLFsxLDEuNSwzLFsxXV1dLFtbMCwzLDEsImEiXSxbMSwzLDEsImEiXV1d&c=F5A3A3,F5CCA3,F5F5A3,CCF5A3,A3F5A3,A3F5CC,A3F5F5,A3CCF5,A3A3F5,CCA3F5,F5A3F5,F5A3CC
	\begin{tangle}{(2,4)}[trim y]
		\tgBlank{(0,0)}{\tgColour6}
		\tgBlank{(1,0)}{\tgColour6}
		\tgBorderC{(0,1)}{3}{\tgColour6}{\tgColour10}
		\tgBorderC{(1,1)}{2}{\tgColour6}{\tgColour10}
		\tgBorderA{(0,2)}{\tgColour6}{\tgColour10}{\tgColour10}{\tgColour6}
		\tgBorderA{(1,2)}{\tgColour10}{\tgColour6}{\tgColour6}{\tgColour10}
		\tgBorderA{(0,3)}{\tgColour6}{\tgColour10}{\tgColour10}{\tgColour6}
		\tgBorderA{(1,3)}{\tgColour10}{\tgColour6}{\tgColour6}{\tgColour10}
		\tgArrow{(0.5,1)}{0}
		\tgArrow{(0,1.5)}{1}
		\tgArrow{(1,1.5)}{3}
		\tgArrow{(0,2.5)}{1}
		\tgArrow{(1,2.5)}{3}
		\tgAxisLabel{(0.5,3.25)}{north}{a}
		\tgAxisLabel{(1.5,3.25)}{north}{a}
	\end{tangle}
	\]
	\[
	% https://q.uiver.app/#q=WzAsOCxbMiwwLCJBIl0sWzEsMCwiQSJdLFswLDAsIkEiXSxbMiwxLCJBIl0sWzEsMSwiQSJdLFswLDEsIkEiXSxbMiwyLCJBIl0sWzAsMiwiQSJdLFswLDEsIkUoaiwgdCkiLDIseyJzdHlsZSI6eyJib2R5Ijp7Im5hbWUiOiJiYXJyZWQifX19XSxbMSwyLCJFKGosIHQpIiwyLHsic3R5bGUiOnsiYm9keSI6eyJuYW1lIjoiYmFycmVkIn19fV0sWzMsNCwiQihhLCBhKSIsMV0sWzQsNSwiQihhLCBhKSIsMV0sWzAsMywiIiwwLHsibGV2ZWwiOjIsInN0eWxlIjp7ImhlYWQiOnsibmFtZSI6Im5vbmUifX19XSxbMSw0LCIiLDEseyJsZXZlbCI6Miwic3R5bGUiOnsiaGVhZCI6eyJuYW1lIjoibm9uZSJ9fX1dLFsyLDUsIiIsMix7ImxldmVsIjoyLCJzdHlsZSI6eyJoZWFkIjp7Im5hbWUiOiJub25lIn19fV0sWzYsNywiQihhLCBhKSIsMCx7InN0eWxlIjp7ImJvZHkiOnsibmFtZSI6ImJhcnJlZCJ9fX1dLFszLDYsIiIsMCx7ImxldmVsIjoyLCJzdHlsZSI6eyJoZWFkIjp7Im5hbWUiOiJub25lIn19fV0sWzUsNywiIiwwLHsibGV2ZWwiOjIsInN0eWxlIjp7ImhlYWQiOnsibmFtZSI6Im5vbmUifX19XSxbMTIsMTMsIlxcb29wIiwxLHsic2hvcnRlbiI6eyJzb3VyY2UiOjIwLCJ0YXJnZXQiOjIwfSwic3R5bGUiOnsiYm9keSI6eyJuYW1lIjoibm9uZSJ9LCJoZWFkIjp7Im5hbWUiOiJub25lIn19fV0sWzEzLDE0LCJcXG9vcCIsMSx7InNob3J0ZW4iOnsic291cmNlIjoyMCwidGFyZ2V0IjoyMH0sInN0eWxlIjp7ImJvZHkiOnsibmFtZSI6Im5vbmUifSwiaGVhZCI6eyJuYW1lIjoibm9uZSJ9fX1dLFsxNiwxNywiXFxjcCBhKGEsIGEpIiwxLHsic2hvcnRlbiI6eyJzb3VyY2UiOjIwLCJ0YXJnZXQiOjIwfSwic3R5bGUiOnsiYm9keSI6eyJuYW1lIjoibm9uZSJ9LCJoZWFkIjp7Im5hbWUiOiJub25lIn19fV1d
	\begin{tikzcd}[column sep=large]
		A & A & A \\
		A & A & A \\
		A && A
		\arrow["{E(j, t)}"', "\shortmid"{marking}, from=1-3, to=1-2]
		\arrow["{E(j, t)}"', "\shortmid"{marking}, from=1-2, to=1-1]
		\arrow["{B(a, a)}"{description}, from=2-3, to=2-2]
		\arrow["{B(a, a)}"{description}, from=2-2, to=2-1]
		\arrow[""{name=0, anchor=center, inner sep=0}, Rightarrow, no head, from=1-3, to=2-3]
		\arrow[""{name=1, anchor=center, inner sep=0}, Rightarrow, no head, from=1-2, to=2-2]
		\arrow[""{name=2, anchor=center, inner sep=0}, Rightarrow, no head, from=1-1, to=2-1]
		\arrow["{B(a, a)}", "\shortmid"{marking}, from=3-3, to=3-1]
		\arrow[""{name=3, anchor=center, inner sep=0}, Rightarrow, no head, from=2-3, to=3-3]
		\arrow[""{name=4, anchor=center, inner sep=0}, Rightarrow, no head, from=2-1, to=3-1]
		\arrow["\oop"{description}, draw=none, from=0, to=1]
		\arrow["\oop"{description}, draw=none, from=1, to=2]
		\arrow["{\cp a(a, a)}"{description}, draw=none, from=3, to=4]
	\end{tikzcd}
	\quad = \quad
	% https://q.uiver.app/#q=WzAsMTAsWzEsMCwiQSJdLFswLDAsIkEiXSxbMiwxLCJBIl0sWzEsMSwiQSJdLFswLDEsIkEiXSxbMiwyLCJBIl0sWzAsMiwiQSJdLFsyLDMsIkEiXSxbMCwzLCJBIl0sWzIsMCwiQSJdLFswLDEsIkUoaiwgdCkiLDIseyJzdHlsZSI6eyJib2R5Ijp7Im5hbWUiOiJiYXJyZWQifX19XSxbMiwzLCJFKHQsIHQpIiwxXSxbMyw0LCJFKGosIHQpIiwxXSxbMCwzLCIiLDEseyJsZXZlbCI6Miwic3R5bGUiOnsiaGVhZCI6eyJuYW1lIjoibm9uZSJ9fX1dLFsxLDQsIiIsMix7ImxldmVsIjoyLCJzdHlsZSI6eyJoZWFkIjp7Im5hbWUiOiJub25lIn19fV0sWzUsNiwiRShqLCB0KSIsMV0sWzIsNSwiIiwwLHsibGV2ZWwiOjIsInN0eWxlIjp7ImhlYWQiOnsibmFtZSI6Im5vbmUifX19XSxbNCw2LCIiLDAseyJsZXZlbCI6Miwic3R5bGUiOnsiaGVhZCI6eyJuYW1lIjoibm9uZSJ9fX1dLFs3LDgsIkIoYSwgYSkiLDAseyJzdHlsZSI6eyJib2R5Ijp7Im5hbWUiOiJiYXJyZWQifX19XSxbNSw3LCIiLDEseyJsZXZlbCI6Miwic3R5bGUiOnsiaGVhZCI6eyJuYW1lIjoibm9uZSJ9fX1dLFs2LDgsIiIsMSx7ImxldmVsIjoyLCJzdHlsZSI6eyJoZWFkIjp7Im5hbWUiOiJub25lIn19fV0sWzksMCwiRShqLCB0KSIsMix7InN0eWxlIjp7ImJvZHkiOnsibmFtZSI6ImJhcnJlZCJ9fX1dLFs5LDIsIiIsMix7ImxldmVsIjoyLCJzdHlsZSI6eyJoZWFkIjp7Im5hbWUiOiJub25lIn19fV0sWzIyLDEzLCJcXGRhZyIsMSx7InNob3J0ZW4iOnsic291cmNlIjoyMCwidGFyZ2V0IjoyMH0sInN0eWxlIjp7ImJvZHkiOnsibmFtZSI6Im5vbmUifSwiaGVhZCI6eyJuYW1lIjoibm9uZSJ9fX1dLFsxNiwxNywiXFxjcCB0KGosIHQpIiwxLHsic2hvcnRlbiI6eyJzb3VyY2UiOjIwLCJ0YXJnZXQiOjIwfSwic3R5bGUiOnsiYm9keSI6eyJuYW1lIjoibm9uZSJ9LCJoZWFkIjp7Im5hbWUiOiJub25lIn19fV0sWzE5LDIwLCJcXG9vcCIsMSx7InNob3J0ZW4iOnsic291cmNlIjoyMCwidGFyZ2V0IjoyMH0sInN0eWxlIjp7ImJvZHkiOnsibmFtZSI6Im5vbmUifSwiaGVhZCI6eyJuYW1lIjoibm9uZSJ9fX1dLFsxMywxNCwiPSIsMSx7InNob3J0ZW4iOnsic291cmNlIjoyMCwidGFyZ2V0IjoyMH0sInN0eWxlIjp7ImJvZHkiOnsibmFtZSI6Im5vbmUifSwiaGVhZCI6eyJuYW1lIjoibm9uZSJ9fX1dXQ==
	\begin{tikzcd}[column sep=large]
		A & A & A \\
		A & A & A \\
		A && A \\
		A && A
		\arrow["{E(j, t)}"', "\shortmid"{marking}, from=1-2, to=1-1]
		\arrow["{E(t, t)}"{description}, from=2-3, to=2-2]
		\arrow["{E(j, t)}"{description}, from=2-2, to=2-1]
		\arrow[""{name=0, anchor=center, inner sep=0}, Rightarrow, no head, from=1-2, to=2-2]
		\arrow[""{name=1, anchor=center, inner sep=0}, Rightarrow, no head, from=1-1, to=2-1]
		\arrow["{E(j, t)}"{description}, from=3-3, to=3-1]
		\arrow[""{name=2, anchor=center, inner sep=0}, Rightarrow, no head, from=2-3, to=3-3]
		\arrow[""{name=3, anchor=center, inner sep=0}, Rightarrow, no head, from=2-1, to=3-1]
		\arrow["{B(a, a)}", "\shortmid"{marking}, from=4-3, to=4-1]
		\arrow[""{name=4, anchor=center, inner sep=0}, Rightarrow, no head, from=3-3, to=4-3]
		\arrow[""{name=5, anchor=center, inner sep=0}, Rightarrow, no head, from=3-1, to=4-1]
		\arrow["{E(j, t)}"', "\shortmid"{marking}, from=1-3, to=1-2]
		\arrow[""{name=6, anchor=center, inner sep=0}, Rightarrow, no head, from=1-3, to=2-3]
		\arrow["\dag"{description}, draw=none, from=6, to=0]
		\arrow["{\cp t(j, t)}"{description}, draw=none, from=2, to=3]
		\arrow["\oop"{description}, draw=none, from=4, to=5]
		\arrow["{=}"{description}, draw=none, from=0, to=1]
	\end{tikzcd}
	\]
	\[
	% https://varkor.github.io/tangle/?t=W1tbNyw1LDcsNSw3LDExXV0sW1tbMSxbMSwwLDEsMF1dLFsxLFsxLDAsMSwwXV0sWzEsWzEsMCwxLDBdXSxbMSxbMSwwLDEsMF1dXSxbWzEsWzEsMSwxLDBdXSxbMSxbMSwxLDAsMV1dLFsxLFsxLDEsMCwxXV0sWzEsWzEsMCwxLDFdXV0sW1sxLFsxLDAsMSwwXV0sWzAsW11dLFswLFtdXSxbMSxbMSwwLDEsMF1dXV0sW1swLDEsMS41LFsiXFxvb3AiLDFdXSxbMCwzLDEuNSxbIlxcb29wIiwxXV0sWzEsMiwxLjUsWzBdXSxbMSwwLjUsMixbMF1dLFsxLDMuNSwyLFsxXV0sWzEsMC41LDEsWzBdXSxbMSwyLjUsMSxbMF1dLFsxLDEuNSwxLFsxXV0sWzEsMy41LDEsWzFdXV0sW1swLDAsMywiaiJdLFsxLDAsMywidCJdLFsyLDAsMywiaiJdLFszLDAsMywidCJdLFswLDIsMSwiYSJdLFszLDIsMSwiYSJdXV0=&c=F5A3A3,F5CCA3,F5F5A3,CCF5A3,A3F5A3,A3F5CC,A3F5F5,A3CCF5,A3A3F5,CCA3F5,F5A3F5,F5A3CC
	\begin{tangle}{(4,3)}[trim y=.25]
		\tgBorderA{(0,0)}{\tgColour6}{\tgColour4}{\tgColour4}{\tgColour6}
		\tgBorderA{(1,0)}{\tgColour4}{\tgColour6}{\tgColour6}{\tgColour4}
		\tgBorderA{(2,0)}{\tgColour6}{\tgColour4}{\tgColour4}{\tgColour6}
		\tgBorderA{(3,0)}{\tgColour4}{\tgColour6}{\tgColour6}{\tgColour4}
		\tgBorderA{(0,1)}{\tgColour6}{\tgColour4}{\tgColour10}{\tgColour6}
		\tgBorderA{(1,1)}{\tgColour4}{\tgColour6}{\tgColour10}{\tgColour10}
		\tgBorderA{(2,1)}{\tgColour6}{\tgColour4}{\tgColour10}{\tgColour10}
		\tgBorderA{(3,1)}{\tgColour4}{\tgColour6}{\tgColour6}{\tgColour10}
		\tgBorderA{(0,2)}{\tgColour6}{\tgColour10}{\tgColour10}{\tgColour6}
		\tgBlank{(1,2)}{\tgColour10}
		\tgBlank{(2,2)}{\tgColour10}
		\tgBorderA{(3,2)}{\tgColour10}{\tgColour6}{\tgColour6}{\tgColour10}
		\tgCell[(1,0)]{(0.5,1)}{\oop}
		\tgCell[(1,0)]{(2.5,1)}{\oop}
		\tgArrow{(1.5,1)}{0}
		\tgArrow{(0,1.5)}{1}
		\tgArrow{(3,1.5)}{3}
		\tgArrow{(0,0.5)}{1}
		\tgArrow{(2,0.5)}{1}
		\tgArrow{(1,0.5)}{3}
		\tgArrow{(3,0.5)}{3}
		\tgAxisLabel{(0.5,0.25)}{south}{j}
		\tgAxisLabel{(1.5,0.25)}{south}{t}
		\tgAxisLabel{(2.5,0.25)}{south}{j}
		\tgAxisLabel{(3.5,0.25)}{south}{t}
		\tgAxisLabel{(0.5,2.75)}{north}{a}
		\tgAxisLabel{(3.5,2.75)}{north}{a}
	\end{tangle}
	\tangleeq*
	% https://varkor.github.io/tangle/?t=W1tbNyw1LDcsNSw3LDExXV0sW1tbMSxbMSwwLDEsMF1dLFsxLFsxLDAsMSwwXV0sWzEsWzEsMCwxLDBdXSxbMSxbMSwwLDEsMF1dXSxbWzEsWzEsMCwxLDBdXSxbMSxbMSwxLDAsMF1dLFsxLFsxLDEsMSwxXV0sWzEsWzEsMCwwLDFdXV0sW1sxLFsxLDEsMSwwXV0sWzEsWzAsMSwwLDFdXSxbMSxbMSwwLDEsMV1dLFswLFtdXV0sW1sxLFsxLDAsMSwwXV0sWzAsW11dLFsxLFsxLDAsMSwwXV0sWzAsW11dXV0sW1swLDEuNSwyLjUsWyJcXG9vcCIsMl1dLFsxLDAuNSwyLFswXV0sWzEsMi41LDIsWzFdXSxbMSwwLjUsMyxbMF1dLFsxLDAuNSwxLFswXV0sWzEsMi41LDEsWzBdXSxbMCwyLjUsMS41LFsiXFxkYWciLDJdXSxbMSwxLjUsMSxbMV1dLFsxLDMuNSwxLFsxXV0sWzEsMi41LDMsWzFdXV0sW1swLDAsMywiaiJdLFsxLDAsMywidCJdLFsyLDAsMywiaiJdLFszLDAsMywidCJdLFswLDMsMSwiYSJdLFsyLDMsMSwiYSJdXV0=&c=F5A3A3,F5CCA3,F5F5A3,CCF5A3,A3F5A3,A3F5CC,A3F5F5,A3CCF5,A3A3F5,CCA3F5,F5A3F5,F5A3CC
	\begin{tangle}{(4,4)}[trim y]
		\tgBorderA{(0,0)}{\tgColour6}{\tgColour4}{\tgColour4}{\tgColour6}
		\tgBorderA{(1,0)}{\tgColour4}{\tgColour6}{\tgColour6}{\tgColour4}
		\tgBorderA{(2,0)}{\tgColour6}{\tgColour4}{\tgColour4}{\tgColour6}
		\tgBorderA{(3,0)}{\tgColour4}{\tgColour6}{\tgColour6}{\tgColour4}
		\tgBorderA{(0,1)}{\tgColour6}{\tgColour4}{\tgColour4}{\tgColour6}
		\tgBorderA{(1,1)}{\tgColour4}{\tgColour6}{\tgColour4}{\tgColour4}
		\tgBorderA{(2,1)}{\tgColour6}{\tgColour4}{\tgColour6}{\tgColour4}
		\tgBorderA{(3,1)}{\tgColour4}{\tgColour6}{\tgColour6}{\tgColour6}
		\tgBorderA{(0,2)}{\tgColour6}{\tgColour4}{\tgColour10}{\tgColour6}
		\tgBorderA{(1,2)}{\tgColour4}{\tgColour4}{\tgColour10}{\tgColour10}
		\tgBorderA{(2,2)}{\tgColour4}{\tgColour6}{\tgColour6}{\tgColour10}
		\tgBlank{(3,2)}{\tgColour6}
		\tgBorderA{(0,3)}{\tgColour6}{\tgColour10}{\tgColour10}{\tgColour6}
		\tgBlank{(1,3)}{\tgColour10}
		\tgBorderA{(2,3)}{\tgColour10}{\tgColour6}{\tgColour6}{\tgColour10}
		\tgBlank{(3,3)}{\tgColour6}
		\tgCell[(2,0)]{(1,2)}{\oop}
		\tgArrow{(0,1.5)}{1}
		\tgArrow{(2,1.5)}{3}
		\tgArrow{(0,2.5)}{1}
		\tgArrow{(0,0.5)}{1}
		\tgArrow{(2,0.5)}{1}
		\tgCell[(2,0)]{(2,1)}{\dag}
		\tgArrow{(1,0.5)}{3}
		\tgArrow{(3,0.5)}{3}
		\tgArrow{(2,2.5)}{3}
		\tgAxisLabel{(0.5,0.75)}{south}{j}
		\tgAxisLabel{(1.5,0.75)}{south}{t}
		\tgAxisLabel{(2.5,0.75)}{south}{j}
		\tgAxisLabel{(3.5,0.75)}{south}{t}
		\tgAxisLabel{(0.5,3.25)}{north}{a}
		\tgAxisLabel{(2.5,3.25)}{north}{a}
	\end{tangle}
	\]
    Let $B$ be an object of $\X$. Suppose that $(a, \oop)$ and $(a', \oop')$ are $T$-opalgebras with codomain $B$. A \emph{$T$-opalgebra morphism} from $(a, \oop)$ to $(a', \oop')$ is a 2-cell $\alpha \colon a \tto a'$ satisfying the following equation.
	\[
    % https://q.uiver.app/?q=WzAsNCxbMCwwLCJFKGosIHQpIl0sWzAsMSwiQihhJywgYScpIl0sWzEsMCwiQihhLCBhKSJdLFsxLDEsIkIoYSwgYScpIl0sWzAsMSwiXFxwcm9wdG8nIiwyXSxbMCwyLCJcXHByb3B0byJdLFsyLDMsIkIoYSwgXFxhbHBoYSkiXSxbMSwzLCJCKFxcYWxwaGEsIGEnKSIsMl1d
	\begin{tikzcd}
		{E(j, t)} & {B(a, a)} \\
		{B(a', a')} & {B(a, a')}
		\arrow["{\oop'}"', from=1-1, to=2-1]
		\arrow["\oop", from=1-1, to=1-2]
		\arrow["{B(a, \alpha)}", from=1-2, to=2-2]
		\arrow["{B(\alpha, a')}"', from=2-1, to=2-2]
	\end{tikzcd}
	\hspace{4em}
	% https://varkor.github.io/tangle/?t=W1tbNyw1LDcsMTFdXSxbW1sxLFsxLDAsMSwwXV0sWzEsWzEsMCwxLDBdXV0sW1sxLFsxLDEsMSwwXV0sWzEsWzEsMCwxLDFdXV0sW1sxLFsxLDAsMSwwXV0sWzEsWzEsMCwxLDBdXV0sW1sxLFsxLDAsMSwwXV0sWzEsWzEsMCwxLDBdXV1dLFtbMCwxLDEuNSxbIlxcb29wIiwxXV0sWzEsMC41LDIsWzBdXSxbMSwwLjUsMSxbMF1dLFsxLDEuNSwxLFsxXV0sWzAsMS41LDIuNSxbIlxcYWxwaGEiXV0sWzEsMC41LDMsWzBdXSxbMSwxLjUsMixbMV1dLFsxLDEuNSwzLFsxXV1dLFtbMCwwLDMsImoiXSxbMSwwLDMsInQiXSxbMCwzLDEsImEiXSxbMSwzLDEsImEnIl1dXQ==&c=F5A3A3,F5CCA3,F5F5A3,CCF5A3,A3F5A3,A3F5CC,A3F5F5,A3CCF5,A3A3F5,CCA3F5,F5A3F5,F5A3CC
	\begin{tangle}{(2,4)}[trim y]
		\tgBorderA{(0,0)}{\tgColour6}{\tgColour4}{\tgColour4}{\tgColour6}
		\tgBorderA{(1,0)}{\tgColour4}{\tgColour6}{\tgColour6}{\tgColour4}
		\tgBorderA{(0,1)}{\tgColour6}{\tgColour4}{\tgColour10}{\tgColour6}
		\tgBorderA{(1,1)}{\tgColour4}{\tgColour6}{\tgColour6}{\tgColour10}
		\tgBorderA{(0,2)}{\tgColour6}{\tgColour10}{\tgColour10}{\tgColour6}
		\tgBorderA{(1,2)}{\tgColour10}{\tgColour6}{\tgColour6}{\tgColour10}
		\tgBorderA{(0,3)}{\tgColour6}{\tgColour10}{\tgColour10}{\tgColour6}
		\tgBorderA{(1,3)}{\tgColour10}{\tgColour6}{\tgColour6}{\tgColour10}
		\tgCell[(1,0)]{(0.5,1)}{\oop}
		\tgArrow{(0,1.5)}{1}
		\tgArrow{(0,0.5)}{1}
		\tgArrow{(1,0.5)}{3}
		\tgCell{(1,2)}{\alpha}
		\tgArrow{(0,2.5)}{1}
		\tgArrow{(1,1.5)}{3}
		\tgArrow{(1,2.5)}{3}
		\tgAxisLabel{(0.5,0.75)}{south}{j}
		\tgAxisLabel{(1.5,0.75)}{south}{t}
		\tgAxisLabel{(0.5,3.25)}{north}{a}
		\tgAxisLabel{(1.5,3.25)}{north}{a'}
	\end{tangle}
	\tangleeq*
	% https://varkor.github.io/tangle/?t=W1tbNyw1LDcsMTFdXSxbW1sxLFsxLDAsMSwwXV0sWzEsWzEsMCwxLDBdXV0sW1sxLFsxLDEsMSwwXV0sWzEsWzEsMCwxLDFdXV0sW1sxLFsxLDAsMSwwXV0sWzEsWzEsMCwxLDBdXV0sW1sxLFsxLDAsMSwwXV0sWzEsWzEsMCwxLDBdXV1dLFtbMCwxLDEuNSxbIlxcb29wJyIsMV1dLFsxLDAuNSwxLFswXV0sWzEsMS41LDEsWzFdXSxbMSwxLjUsMixbMV1dLFswLDAuNSwyLjUsWyJcXGFscGhhIl1dLFsxLDAuNSwyLFswXV0sWzEsMS41LDMsWzFdXSxbMSwwLjUsMyxbMF1dXSxbWzAsMCwzLCJqIl0sWzEsMCwzLCJ0Il0sWzAsMywxLCJhIl0sWzEsMywxLCJhJyJdXV0=&c=F5A3A3,F5CCA3,F5F5A3,CCF5A3,A3F5A3,A3F5CC,A3F5F5,A3CCF5,A3A3F5,CCA3F5,F5A3F5,F5A3CC
	\begin{tangle}{(2,4)}[trim y]
		\tgBorderA{(0,0)}{\tgColour6}{\tgColour4}{\tgColour4}{\tgColour6}
		\tgBorderA{(1,0)}{\tgColour4}{\tgColour6}{\tgColour6}{\tgColour4}
		\tgBorderA{(0,1)}{\tgColour6}{\tgColour4}{\tgColour10}{\tgColour6}
		\tgBorderA{(1,1)}{\tgColour4}{\tgColour6}{\tgColour6}{\tgColour10}
		\tgBorderA{(0,2)}{\tgColour6}{\tgColour10}{\tgColour10}{\tgColour6}
		\tgBorderA{(1,2)}{\tgColour10}{\tgColour6}{\tgColour6}{\tgColour10}
		\tgBorderA{(0,3)}{\tgColour6}{\tgColour10}{\tgColour10}{\tgColour6}
		\tgBorderA{(1,3)}{\tgColour10}{\tgColour6}{\tgColour6}{\tgColour10}
		\tgCell[(1,0)]{(0.5,1)}{\oop'}
		\tgArrow{(0,0.5)}{1}
		\tgArrow{(1,0.5)}{3}
		\tgArrow{(1,1.5)}{3}
		\tgCell{(0,2)}{\alpha}
		\tgArrow{(0,1.5)}{1}
		\tgArrow{(1,2.5)}{3}
		\tgArrow{(0,2.5)}{1}
		\tgAxisLabel{(0.5,0.75)}{south}{j}
		\tgAxisLabel{(1.5,0.75)}{south}{t}
		\tgAxisLabel{(0.5,3.25)}{north}{a}
		\tgAxisLabel{(1.5,3.25)}{north}{a'}
	\end{tangle}
	\]
	$T$-opalgebras with codomain $B$ and their morphisms form a category $T\h\Opalg_B$ functorial covariantly in $B$ and contravariantly in $T$.
    Denote by $U_{T, B} \colon T\h\Opalg_B \to \X[A, B]$ the faithful functor sending each $T$-opalgebra $(a, \oop)$ to its carrier $a$.
\end{definition}

\begin{remark}
    Opalgebras for relative monads, and their morphisms, have been studied as \emph{modules over a relative monad} by \citeauthor{ahrens2012initiality}~\cites[Definitions~2.90 \& 2.94]{ahrens2012initiality}[Definitions~9 \& 14]{ahrens2016modules}, as \emph{Kleisli algebras} by \textcite[Definitions~3.5.6 \& 3.5.7]{maillard2019principles}, and as \emph{relative right modules} by \textcite[Definitions~6.1 \& 6.2]{lobbia2023distributive}.
\end{remark}

\begin{example}
	\label{relative-monad-forms-opalgebra}
	Let $T$ be a relative monad. Then $(t, \dag)$ forms a $T$-opalgebra (\cf{}~\cref{monoid-forms-right-action}).
\end{example}

It will be useful to observe that $T$-opalgebras admit an alternative description in terms of the loose-monad $E(j, T)$ associated to $T$ (\cref{relative-monads-are-loose-monads}). As a consequence of the following lemma, $j\h\Opalg_B$ forms the category of \emph{extraordinary transformations from $j$} of \cite[369]{street1978yoneda}. The connection to the \emph{bijective-on-objects} tight-cells \loccit{} will be discussed in \cref{opalgebra-objects}.

\begin{lemma}
	\label{opalgebras-are-loose-monad-morphisms}
	Let $\jAE$ be a tight-cell and let $T$ be a $j$-monad. A $T$-opalgebra comprises a tight-cell $a \colon A \to B$ and a loose-monad morphism from $E(j, T)$ to $B(a, a)$.
\end{lemma}

\begin{proof}
	The unit and compatibility laws for a $T$-opalgebra correspond respectively to the unit and multiplication laws for a loose-monad morphism.
\end{proof}

In \cref{relative-adjunctions-and-(op)algebras}, we shall discuss the relationship between algebras, opalgebras, and relative adjunctions, which motivates the study of universal algebras (called \emph{algebra objects}) in \cref{algebra-objects} and universal opalgebras (called \emph{opalgebra objects}) in \cref{opalgebra-objects}. However, before we do so, we shall consider the relationship between the definitions of algebras and opalgebras above, and the monoid presentation of a relative monad (\cref{relative-monads-as-monoids}).

\subsection{Algebras and opalgebras as actions}

In the non-relative setting, algebras and opalgebras for a monad are 1-cells equipped with actions compatible with the monad structure~\cite{street1972formal}. Recall that a monad $T$ on an object $A$ of a 2-category $\K$ is a monoid in $\K[A, A]$. For any object $D \in \K$, the hom-category $\K[D, A]$ forms a left-$\K[A, A]$-actegory via precomposition. A $T$-action in $\K[D, A]$ is precisely a $T$-algebra with domain $D$. Symmetrically, for any object $B \in \K$, the hom-category $\K[A, B]$ forms a right-$\K[A, A]$-actegory via postcomposition. A $T$-action in $\K[A, B]$ is precisely a $T$-opalgebra with codomain $B$.

We should like to characterise algebras and opalgebras for relative monads similarly, following our treatment of relative monads as monoids in skew-multicategories (\cref{skew-multicategorical-hom-categories}). However, we must generalise the notion of action accordingly, to account for skewness: in particular, we introduce the notion of \emph{skew-multiactegory}.
A skew-multiactegory may be thought of as that which acts on a skew-multicategory, in the same way that an actegory acts on a monoidal category.
Just as a skew-multicategory has multimorphisms, rather than a tensor ${\otimes} \colon \M \times \M \to \M$, a skew-multiactegory has multimorphisms, rather than an action ${\obslash} \colon \M \times \A \to \A$.

In the following, \cref{algebras,algebra-objects}, which treat algebras and algebra objects respectively, proceed analogously to \cref{opalgebras,opalgebra-objects}, which treat opalgebras and opalgebra objects respectively. The sections are structured analogously to \cref{skew-multicategorical-hom-categories}, which treated relative monads. However, since opalgebras are not formally dual to algebras (in contrast to the notions of opalgebra and algebra in a 2-category~\cite[\S4]{street1972formal}), they must be treated separately. This is reflected in their theory, which, though similar, is not identical.

\begin{remark}
	It appears likely that there exists a two-dimensional treatment of relative monads, in which algebras and opalgebras become special cases of relative monad morphisms, analogously to the formal theory of monads~\cite{street1972formal}. However, since the setting of \ve{}s and skew-multicategories already incurs a significant increase in complexity over 2-categories and monoidal categories, we defer such a two-dimensional treatment to future work.
\end{remark}

\subsection{Algebras as left-actions in a skew-multiactegory}
\label{algebras}

We start by introducing the analogue of an actegory for a skew-multicategory. There are two variants, acting on the left and on the right respectively. Since the definitions are almost identical, we define them simultaneously.

\begin{definition}
	Let $\M$ be an associative-normal left-skew-multicategory. A \emph{right- } (respectively \emph{left-}) \emph{$\M$-multiactegory} $\A$ comprises
	\begin{enumerate}
		\item a class $\ob\A$ of \emph{objects};
		\item a class $\A(A, M_1, \ldots, M_n; A')$ of \emph{multimorphisms} for each $n \geq 0$, $M_1, \ldots, M_n \in \ob\M + \{ \bullet \}$ and $A, A' \in \ob\A$;
		\item an \emph{identity} multimorphism $1_A \in \A(A; A)$ for each $A \in \ob{\A}$;
        \item for each multimorphism $g \colon A', \bullet^{m_0}, M_1, \bullet^{m_1}, \ldots, \bullet^{m_{n - 1}}, M_n, \bullet^{m_n} \to A''$ where $M_1, \ldots, M_n \in \ob\M$, $A', A'' \in \ob\A$ and $n, m_i \geq 0$, morphism $f_0 \colon A, \vec{M_0} \to A'$ in $\A$, and multimorphisms $f_1 \colon \vec{M_1} \to M_1$, \ldots, $f_n \colon \vec{M_n} \to M_n$ in $\M$, a \emph{composite} multimorphism in~$\A$: \[((f_0, f_1, \ldots, f_n) \d g) \colon A, \vec{M_0}, \bullet^{m_0}, \vec{M_1}, \bullet^{m_1}, \ldots, \bullet^{m_{n - 1}}, \vec{M_n}, \bullet^{m_n} \to A''\]
		\item a \emph{left-unitor} function \[\lambda_{(A, \vec M; A'), k} \colon \A(A, M_1, \ldots, M_k, M_{k + 1}, \ldots, M_n; A') \to \A(A, M_1, \ldots, M_k, \bullet, M_{k + 1}, \ldots, M_n; A')\] for each $0 \leq k \leq n$ (respectively for each $0 \leq k < n$);
		\item a \emph{right-unitor} function \[\rho_{(A, \vec M; A'),k} \colon \A(A, M_1, \ldots, M_k, \bullet, M_{k + 1}, \ldots, M_n; A') \to \A(A, M_1, \ldots, M_k, M_{k + 1}, \ldots, M_n; A')\] for each $0 < k \leq n$ (respectively for each $0 \leq k \leq n$),
	\end{enumerate}
	such that composition in $\A$ coheres with identities and composites in $\M$ in the following sense,
	\begin{align*}
		(1_A, 1_{M_1}, \ldots, 1_{M_n}) \d g & = g \\
		(((f_0^0, f_0^1, \ldots, f_0^{m_0}) \d g_0), &((f_1^1, \ldots, f_1^{m_1}) \d g_1), \ldots, ((f_n^1, \ldots, f_n^{m_n}) \d g_n)) \d h \\ & = (f_0^0, f_0^1, \ldots, f_0^{m_0}, f_1^1, \ldots, f_1^{m_1}, \ldots, f_n^1, \ldots, f_n^{m_n}) \d (g_0, g_1, \ldots, g_n) \d h
	\end{align*}
	and that the left- and right-unitors cohere with composition, the left- and right-unitors cohere with themselves, and the right-unitor is inverse to the left-unitor, in the sense of \cref{alpha-normal-left-skew-multicategory}.

	$\A$ is \emph{left-normal} when $\lambda$ is invertible; and is \emph{right-normal} when $\rho$ is invertible.

	A \emph{functor} between left/right-$\M$-multiactegories is a homomorphism of left/right-$\M$-multi\-actegories.
\end{definition}

By notational convention, we shall write the hom-sets of a left-multiactegory $\A$ in the form $\A(M_1, \ldots, M_n, A; A')$.

\begin{definition}
    Given a left/right-skew-multiactegory $\A$, denote by $\A_1$ the category of unary multimorphisms, \ie{} the category whose objects are those of $\A$ and whose hom-set $\A_1(A, A') \defeq \A(A; A')$.
\end{definition}

\begin{definition}
	\label{left-action}
    Let $\M$ be an associative-normal left-skew-multicategory and let $\A$ be a left-$\M$-multiactegory. An \emph{action} in $\A$ for a monoid $(M, m, u)$ in $\M$ (or simply \emph{$(M, m, u)$-action}) comprises
    \begin{enumerate}
        \item an object $A \in \A$, the \emph{carrier};
        \item a multimorphism $a \colon M, A \to A$, the \emph{action},
    \end{enumerate}
    satisfying the following equations.
	\begin{align*}
		(u, 1_A) \d a & = \lambda_{(A; A),0}(1_A) &
		(m, 1_A) \d a & = (1_M, a) \d a
	\end{align*}
    % \[
    % \begin{forest}
    %     [$A$ [$a$,multimap [$M$ [$u$,multimap [$\bullet$]]] [$A$]]]
    % \end{forest}
    % \quad = \quad
	% \lambda_{(A; A)_0}(
    % % https://q.uiver.app/?q=WzAsMixbMCwwLCJNIl0sWzAsMSwiTSJdLFswLDEsIiIsMCx7ImxldmVsIjoyLCJzdHlsZSI6eyJoZWFkIjp7Im5hbWUiOiJub25lIn19fV1d
    % \begin{tikzcd}
	% A \\
	% A
	% \arrow[Rightarrow, no head, from=1-1, to=2-1]
    % \end{tikzcd}
	% )
    % \hspace{4em}
    % \begin{forest}
    %     [$A$ [$a$,multimap [$M$ [$m$,multimap [$M$] [$M$]]] [$M$]]]
    % \end{forest}
    % \quad = \quad
    % \begin{forest}
    %     [$A$ [$a$,multimap [$M$] [$A$ [$a$,multimap [$M$] [$A$]]]]]
    % \end{forest}
    % \]
    An \emph{action homomorphism} from $(A, a)$ to $(A', a')$ is a unary multimorphism $f \colon A \to A'$ satisfying the following equation.
	\begin{align*}
		a \d f = (1_M, f) \d a'
	\end{align*}
    % \[
    % \begin{forest}
    %     [$A'$ [$f$,multimap [$A$ [$a$,multimap [$M$] [$A$]]]]]
    % \end{forest}
    % \quad = \quad
    % \begin{forest}
    %     [$A'$ [$a'$,multimap [$M$] [$A'$ [$f$,multimap [$A$]]]]]
    % \end{forest}
    % \]
	$(M, m, u)$-actions and their homomorphisms form a category $\Act(\A, (M, m, u))$ functorial covariantly in $\A$ and contravariantly in $(M, m, u)$. Denote by $U_{\A, (M, m, u)} \colon \Act(\A, (M, m, u)) \to \A_1$ the faithful functor sending each action $(A, a)$ to its carrier $a$.
\end{definition}

Every skew-multicategory acts on itself trivially.

\begin{proposition}
    \label{monoid-forms-left-action}
    Let $\M$ be an associative-normal left-skew-multicategory. Then $\M$ forms a left-$\M$-multiactegory. Furthermore any monoid $(M, m, u)$ in $\M$ forms an $(M, m, u)$-action therein.
\end{proposition}

\begin{proof}
    The left-$\M$-multiactegory structure is defined to have the same objects, multimorphisms, and composition as $\M$, from which the laws hold trivially. Given a monoid $(M, m, u)$, we define an action $(M, m)$: the unit and multiplication laws follow from those of the monoid.
\end{proof}

\begin{proposition}
	Let $\X$ be a \vdc{} with a loose-adjunction $j_* \adj j^* \colon E \lto A$ and an object $D$. The loose-cells $e \colon D \lto E$ in $\X$ together with 2-cells of the form $p_1, j^*, p_2, j^*, \ldots, j^*, p_n, j^*, e \tto e'$ form a left-$\X\lh{j_* \adj j^*}$-multiactegory.
\end{proposition}

\begin{proof}
    We define a left-$\X\lh{j_* \adj j^*}$-multiactegory $\X\lh{D, j_* \adj j^*}$ as follows. The class of objects is given by those of $\X\lh{D, E}$. The left- and right-normal multimorphisms $p_1, \ldots, p_n, e \to e'$ ($n \geq 0$) are 2-cells  $p_1, j^*, p_2, j^*, \ldots, j^*, p_n, j^*, e \tto e'$.
    % https://q.uiver.app/#q=WzAsOSxbNiwwLCJEIl0sWzMsMCwiRSJdLFsyLDAsIlxcY2RvdHMiXSxbMSwwLCJBIl0sWzAsMCwiRSJdLFs2LDEsIkQiXSxbMCwxLCJFIl0sWzQsMCwiQSJdLFs1LDAsIkUiXSxbMiwzLCJqXioiLDIseyJzdHlsZSI6eyJib2R5Ijp7Im5hbWUiOiJiYXJyZWQifX19XSxbMyw0LCJwXzEiLDIseyJzdHlsZSI6eyJib2R5Ijp7Im5hbWUiOiJiYXJyZWQifX19XSxbNSw2LCJlJyIsMCx7InN0eWxlIjp7ImJvZHkiOnsibmFtZSI6ImJhcnJlZCJ9fX1dLFswLDUsIiIsMCx7ImxldmVsIjoyLCJzdHlsZSI6eyJoZWFkIjp7Im5hbWUiOiJub25lIn19fV0sWzEsMiwial4qIiwyLHsic3R5bGUiOnsiYm9keSI6eyJuYW1lIjoiYmFycmVkIn19fV0sWzcsMSwicF9uIiwyLHsic3R5bGUiOnsiYm9keSI6eyJuYW1lIjoiYmFycmVkIn19fV0sWzgsNywial4qIiwyLHsic3R5bGUiOnsiYm9keSI6eyJuYW1lIjoiYmFycmVkIn19fV0sWzAsOCwiZSIsMix7InN0eWxlIjp7ImJvZHkiOnsibmFtZSI6ImJhcnJlZCJ9fX1dLFs0LDYsIiIsMix7ImxldmVsIjoyLCJzdHlsZSI6eyJoZWFkIjp7Im5hbWUiOiJub25lIn19fV0sWzEyLDE3LCJcXHBoaSIsMSx7InNob3J0ZW4iOnsic291cmNlIjoyMCwidGFyZ2V0IjoyMH0sInN0eWxlIjp7ImJvZHkiOnsibmFtZSI6Im5vbmUifSwiaGVhZCI6eyJuYW1lIjoibm9uZSJ9fX1dXQ==
	\[\begin{tikzcd}
		E & A & \cdots & E & A & E & D \\
		E &&&&&& D
		\arrow["{j^*}"', "\shortmid"{marking}, from=1-3, to=1-2]
		\arrow["{p_1}"', "\shortmid"{marking}, from=1-2, to=1-1]
		\arrow["{e'}", "\shortmid"{marking}, from=2-7, to=2-1]
		\arrow[""{name=0, anchor=center, inner sep=0}, Rightarrow, no head, from=1-7, to=2-7]
		\arrow["{j^*}"', "\shortmid"{marking}, from=1-4, to=1-3]
		\arrow["{p_n}"', "\shortmid"{marking}, from=1-5, to=1-4]
		\arrow["{j^*}"', "\shortmid"{marking}, from=1-6, to=1-5]
		\arrow["e"', "\shortmid"{marking}, from=1-7, to=1-6]
		\arrow[""{name=1, anchor=center, inner sep=0}, Rightarrow, no head, from=1-1, to=2-1]
		\arrow["\phi"{description}, draw=none, from=0, to=1]
	\end{tikzcd}\]
	The general multimorphisms, composition structure, and left- and right-unitors are defined as in \cref{skew-multicategorical-hom}, and satisfy the laws for the same reasons.
	Functoriality in $D$ follows from pasting on the right.
\end{proof}

\begin{definition}
	\label{XDj}
	Let $\X$ be an equipment with a tight-cell $\jAE$. Denote by $\X\lh{D, j}$ the $\X\lh j$-multiactegory $\X\lh{D, E(1, j) \adj E(j, 1)}$. Define $\X[D, j]$ to be the full sub-multiactegory of $\X\lh{D, j}$ spanned by the representable loose-cells.
\end{definition}

In particular $\X\lh{D, j}_1 = \X\lh{D, E}$ and $\X[D, j]_1 = \X[D, E]$. We shall unwrap the definition of an action in $\X[D, j]$ to compare it with the classical definition of an algebra for a relative monad. Explicitly, an action in $\X[D, j]$ comprises
\begin{enumerate}
	\item a tight-cell $e \colon D \to E$;
	\item a 2-cell ${\lambda \colon E(1, t), E(j, 1), E(1, e) \tto E(1, e)}$,
\end{enumerate}
satisfying the following equations.
\[
% https://q.uiver.app/#q=WzAsMTAsWzMsMSwiRCJdLFsyLDEsIkUiXSxbMSwxLCJBIl0sWzAsMSwiRSJdLFszLDIsIkQiXSxbMCwyLCJFIl0sWzEsMCwiQSJdLFswLDAsIkUiXSxbMywwLCJEIl0sWzIsMCwiRSJdLFswLDEsIkUoMSwgZSkiLDFdLFsxLDIsIkUoaiwgMSkiLDFdLFsyLDMsIkUoMSwgdCkiLDFdLFs0LDUsIkUoMSwgZSkiLDAseyJzdHlsZSI6eyJib2R5Ijp7Im5hbWUiOiJiYXJyZWQifX19XSxbMCw0LCIiLDAseyJsZXZlbCI6Miwic3R5bGUiOnsiaGVhZCI6eyJuYW1lIjoibm9uZSJ9fX1dLFszLDUsIiIsMix7ImxldmVsIjoyLCJzdHlsZSI6eyJoZWFkIjp7Im5hbWUiOiJub25lIn19fV0sWzYsMiwiIiwxLHsibGV2ZWwiOjIsInN0eWxlIjp7ImhlYWQiOnsibmFtZSI6Im5vbmUifX19XSxbNywzLCIiLDEseyJsZXZlbCI6Miwic3R5bGUiOnsiaGVhZCI6eyJuYW1lIjoibm9uZSJ9fX1dLFs2LDcsIkUoMSwgaikiLDIseyJzdHlsZSI6eyJib2R5Ijp7Im5hbWUiOiJiYXJyZWQifX19XSxbOCw5LCJFKDEsIGUpIiwyLHsic3R5bGUiOnsiYm9keSI6eyJuYW1lIjoiYmFycmVkIn19fV0sWzksNiwiRShqLCAxKSIsMix7InN0eWxlIjp7ImJvZHkiOnsibmFtZSI6ImJhcnJlZCJ9fX1dLFs4LDAsIiIsMix7ImxldmVsIjoyLCJzdHlsZSI6eyJoZWFkIjp7Im5hbWUiOiJub25lIn19fV0sWzksMSwiIiwxLHsibGV2ZWwiOjIsInN0eWxlIjp7ImhlYWQiOnsibmFtZSI6Im5vbmUifX19XSxbMTQsMTUsIlxcbGFtYmRhIiwxLHsic2hvcnRlbiI6eyJzb3VyY2UiOjIwLCJ0YXJnZXQiOjIwfSwic3R5bGUiOnsiYm9keSI6eyJuYW1lIjoibm9uZSJ9LCJoZWFkIjp7Im5hbWUiOiJub25lIn19fV0sWzE2LDE3LCJFKDEsIFxcZXRhKSIsMSx7InNob3J0ZW4iOnsic291cmNlIjoyMCwidGFyZ2V0IjoyMH0sInN0eWxlIjp7ImJvZHkiOnsibmFtZSI6Im5vbmUifSwiaGVhZCI6eyJuYW1lIjoibm9uZSJ9fX1dLFsyMSwyMiwiPSIsMSx7InNob3J0ZW4iOnsic291cmNlIjoyMCwidGFyZ2V0IjoyMH0sInN0eWxlIjp7ImJvZHkiOnsibmFtZSI6Im5vbmUifSwiaGVhZCI6eyJuYW1lIjoibm9uZSJ9fX1dLFsyMiwxNiwiPSIsMSx7InNob3J0ZW4iOnsic291cmNlIjoyMCwidGFyZ2V0IjoyMH0sInN0eWxlIjp7ImJvZHkiOnsibmFtZSI6Im5vbmUifSwiaGVhZCI6eyJuYW1lIjoibm9uZSJ9fX1dXQ==
\begin{tikzcd}[column sep=large]
	E & A & E & D \\
	E & A & E & D \\
	E &&& D
	\arrow["{E(1, e)}"{description}, from=2-4, to=2-3]
	\arrow["{E(j, 1)}"{description}, from=2-3, to=2-2]
	\arrow["{E(1, t)}"{description}, from=2-2, to=2-1]
	\arrow["{E(1, e)}", "\shortmid"{marking}, from=3-4, to=3-1]
	\arrow[""{name=0, anchor=center, inner sep=0}, Rightarrow, no head, from=2-4, to=3-4]
	\arrow[""{name=1, anchor=center, inner sep=0}, Rightarrow, no head, from=2-1, to=3-1]
	\arrow[""{name=2, anchor=center, inner sep=0}, Rightarrow, no head, from=1-2, to=2-2]
	\arrow[""{name=3, anchor=center, inner sep=0}, Rightarrow, no head, from=1-1, to=2-1]
	\arrow["{E(1, j)}"', "\shortmid"{marking}, from=1-2, to=1-1]
	\arrow["{E(1, e)}"', "\shortmid"{marking}, from=1-4, to=1-3]
	\arrow["{E(j, 1)}"', "\shortmid"{marking}, from=1-3, to=1-2]
	\arrow[""{name=4, anchor=center, inner sep=0}, Rightarrow, no head, from=1-4, to=2-4]
	\arrow[""{name=5, anchor=center, inner sep=0}, Rightarrow, no head, from=1-3, to=2-3]
	\arrow["\lambda"{description}, draw=none, from=0, to=1]
	\arrow["{E(1, \eta)}"{description}, draw=none, from=2, to=3]
	\arrow["{=}"{description}, draw=none, from=4, to=5]
	\arrow["{=}"{description}, draw=none, from=5, to=2]
\end{tikzcd}
\quad = \quad
% https://q.uiver.app/#q=WzAsOSxbMywxLCJEIl0sWzIsMSwiRSJdLFswLDEsIkUiXSxbMSwwLCJBIl0sWzAsMCwiRSJdLFszLDAsIkQiXSxbMiwwLCJFIl0sWzMsMiwiRCJdLFswLDIsIkUiXSxbMCwxLCJFKDEsIGUpIiwxXSxbNCwyLCIiLDEseyJsZXZlbCI6Miwic3R5bGUiOnsiaGVhZCI6eyJuYW1lIjoibm9uZSJ9fX1dLFszLDQsIkUoMSwgaikiLDIseyJzdHlsZSI6eyJib2R5Ijp7Im5hbWUiOiJiYXJyZWQifX19XSxbNSw2LCJFKDEsIGUpIiwyLHsic3R5bGUiOnsiYm9keSI6eyJuYW1lIjoiYmFycmVkIn19fV0sWzYsMywiRShqLCAxKSIsMix7InN0eWxlIjp7ImJvZHkiOnsibmFtZSI6ImJhcnJlZCJ9fX1dLFs1LDAsIiIsMix7ImxldmVsIjoyLCJzdHlsZSI6eyJoZWFkIjp7Im5hbWUiOiJub25lIn19fV0sWzYsMSwiIiwxLHsibGV2ZWwiOjIsInN0eWxlIjp7ImhlYWQiOnsibmFtZSI6Im5vbmUifX19XSxbMSwyLCIiLDEseyJsZXZlbCI6Miwic3R5bGUiOnsiYm9keSI6eyJuYW1lIjoiYmFycmVkIn0sImhlYWQiOnsibmFtZSI6Im5vbmUifX19XSxbNyw4LCJFKDEsIGUpIiwwLHsic3R5bGUiOnsiYm9keSI6eyJuYW1lIjoiYmFycmVkIn19fV0sWzAsNywiIiwwLHsibGV2ZWwiOjIsInN0eWxlIjp7ImhlYWQiOnsibmFtZSI6Im5vbmUifX19XSxbMiw4LCIiLDAseyJsZXZlbCI6Miwic3R5bGUiOnsiaGVhZCI6eyJuYW1lIjoibm9uZSJ9fX1dLFsxNCwxNSwiPSIsMSx7InNob3J0ZW4iOnsic291cmNlIjoyMCwidGFyZ2V0IjoyMH0sInN0eWxlIjp7ImJvZHkiOnsibmFtZSI6Im5vbmUifSwiaGVhZCI6eyJuYW1lIjoibm9uZSJ9fX1dLFsxNSwxMCwiXFxjcCBqIiwxLHsic2hvcnRlbiI6eyJzb3VyY2UiOjIwLCJ0YXJnZXQiOjIwfSwic3R5bGUiOnsiYm9keSI6eyJuYW1lIjoibm9uZSJ9LCJoZWFkIjp7Im5hbWUiOiJub25lIn19fV0sWzE4LDE5LCJcXG9wY2FydCIsMSx7InNob3J0ZW4iOnsic291cmNlIjoyMCwidGFyZ2V0IjoyMH0sInN0eWxlIjp7ImJvZHkiOnsibmFtZSI6Im5vbmUifSwiaGVhZCI6eyJuYW1lIjoibm9uZSJ9fX1dXQ==
\begin{tikzcd}[column sep=large]
	E & A & E & D \\
	E && E & D \\
	E &&& D
	\arrow["{E(1, e)}"{description}, from=2-4, to=2-3]
	\arrow[""{name=0, anchor=center, inner sep=0}, Rightarrow, no head, from=1-1, to=2-1]
	\arrow["{E(1, j)}"', "\shortmid"{marking}, from=1-2, to=1-1]
	\arrow["{E(1, e)}"', "\shortmid"{marking}, from=1-4, to=1-3]
	\arrow["{E(j, 1)}"', "\shortmid"{marking}, from=1-3, to=1-2]
	\arrow[""{name=1, anchor=center, inner sep=0}, Rightarrow, no head, from=1-4, to=2-4]
	\arrow[""{name=2, anchor=center, inner sep=0}, Rightarrow, no head, from=1-3, to=2-3]
	\arrow["\shortmid"{marking}, Rightarrow, no head, from=2-3, to=2-1]
	\arrow["{E(1, e)}", "\shortmid"{marking}, from=3-4, to=3-1]
	\arrow[""{name=3, anchor=center, inner sep=0}, Rightarrow, no head, from=2-4, to=3-4]
	\arrow[""{name=4, anchor=center, inner sep=0}, Rightarrow, no head, from=2-1, to=3-1]
	\arrow["{=}"{description}, draw=none, from=1, to=2]
	\arrow["{\cp j}"{description}, draw=none, from=2, to=0]
	\arrow["\opcart"{description}, draw=none, from=3, to=4]
\end{tikzcd}
\]\[
% https://varkor.github.io/tangle/?t=W1tbNSw3LDUsMV1dLFtbWzEsWzEsMCwxLDBdXSxbMSxbMSwwLDEsMF1dLFsxLFsxLDAsMSwwXV1dLFtbMSxbMSwwLDEsMF1dLFsxLFsxLDAsMSwwXV0sWzEsWzEsMCwxLDBdXV0sW1sxLFsxLDEsMCwwXV0sWzEsWzEsMSwxLDFdXSxbMSxbMSwwLDAsMV1dXSxbWzAsW11dLFsxLFsxLDAsMSwwXV0sWzAsW11dXV0sW1swLDEuNSwyLjUsWyJcXGxhbWJkYSIsMl1dLFsxLDEuNSwyLFswXV0sWzEsMC41LDIsWzFdXSxbMSwyLjUsMixbMV1dLFswLDAuNSwxLjUsWyJcXGV0YSJdXSxbMSwwLjUsMSxbMV1dLFsxLDIuNSwxLFsxXV0sWzEsMS41LDMsWzFdXSxbMSwxLjUsMSxbMF1dXSxbWzAsMCwzLCJqIl0sWzEsMCwzLCJqIl0sWzIsMCwzLCJlIl0sWzEsMywxLCJlIl1dXQ==&c=F5A3A3,F5CCA3,F5F5A3,CCF5A3,A3F5A3,A3F5CC,A3F5F5,A3CCF5,A3A3F5,CCA3F5,F5A3F5,F5A3CC
\begin{tangle}{(3,4)}[trim y]
	\tgBorderA{(0,0)}{\tgColour4}{\tgColour6}{\tgColour6}{\tgColour4}
	\tgBorderA{(1,0)}{\tgColour6}{\tgColour4}{\tgColour4}{\tgColour6}
	\tgBorderA{(2,0)}{\tgColour4}{\tgColour0}{\tgColour0}{\tgColour4}
	\tgBorderA{(0,1)}{\tgColour4}{\tgColour6}{\tgColour6}{\tgColour4}
	\tgBorderA{(1,1)}{\tgColour6}{\tgColour4}{\tgColour4}{\tgColour6}
	\tgBorderA{(2,1)}{\tgColour4}{\tgColour0}{\tgColour0}{\tgColour4}
	\tgBorderA{(0,2)}{\tgColour4}{\tgColour6}{\tgColour4}{\tgColour4}
	\tgBorderA{(1,2)}{\tgColour6}{\tgColour4}{\tgColour0}{\tgColour4}
	\tgBorderA{(2,2)}{\tgColour4}{\tgColour0}{\tgColour0}{\tgColour0}
	\tgBlank{(0,3)}{\tgColour4}
	\tgBorderA{(1,3)}{\tgColour4}{\tgColour0}{\tgColour0}{\tgColour4}
	\tgBlank{(2,3)}{\tgColour0}
	\tgCell[(2,0)]{(1,2)}{\lambda}
	\tgArrow{(1,1.5)}{1}
	\tgArrow{(0,1.5)}{3}
	\tgArrow{(2,1.5)}{3}
	\tgCell{(0,1)}{\eta}
	\tgArrow{(0,0.5)}{3}
	\tgArrow{(2,0.5)}{3}
	\tgArrow{(1,2.5)}{3}
	\tgArrow{(1,0.5)}{1}
	\tgAxisLabel{(0.5,0.75)}{south}{j}
	\tgAxisLabel{(1.5,0.75)}{south}{j}
	\tgAxisLabel{(2.5,0.75)}{south}{e}
	\tgAxisLabel{(1.5,3.25)}{north}{e}
\end{tangle}
\tangleeq*
% https://varkor.github.io/tangle/?t=W1tbNSw3LDFdXSxbW1sxLFsxLDAsMSwwXV0sWzEsWzEsMCwxLDBdXSxbMSxbMSwwLDEsMF1dXSxbWzEsWzEsMCwxLDBdXSxbMSxbMSwwLDEsMF1dLFsxLFsxLDAsMSwwXV1dLFtbMixbMV1dLFsyLFswXV0sWzEsWzEsMCwxLDBdXV0sW1swLFtdXSxbMCxbXV0sWzEsWzEsMCwxLDBdXV1dLFtbMSwxLDIuNSxbMF1dLFsxLDAuNSwyLFsxXV0sWzEsMi41LDIsWzFdXSxbMSwyLjUsMyxbMV1dLFsxLDEuNSwyLFswXV0sWzEsMi41LDEsWzFdXSxbMSwwLjUsMSxbMV1dLFsxLDEuNSwxLFswXV1dLFtbMCwwLDMsImoiXSxbMSwwLDMsImoiXSxbMiwwLDMsImUiXSxbMiwzLDEsImUiXV1d&c=F5A3A3,F5CCA3,F5F5A3,CCF5A3,A3F5A3,A3F5CC,A3F5F5,A3CCF5,A3A3F5,CCA3F5,F5A3F5,F5A3CC
\begin{tangle}{(3,4)}[trim y]
	\tgBorderA{(0,0)}{\tgColour4}{\tgColour6}{\tgColour6}{\tgColour4}
	\tgBorderA{(1,0)}{\tgColour6}{\tgColour4}{\tgColour4}{\tgColour6}
	\tgBorderA{(2,0)}{\tgColour4}{\tgColour0}{\tgColour0}{\tgColour4}
	\tgBorderA{(0,1)}{\tgColour4}{\tgColour6}{\tgColour6}{\tgColour4}
	\tgBorderA{(1,1)}{\tgColour6}{\tgColour4}{\tgColour4}{\tgColour6}
	\tgBorderA{(2,1)}{\tgColour4}{\tgColour0}{\tgColour0}{\tgColour4}
	\tgBorderC{(0,2)}{0}{\tgColour4}{\tgColour6}
	\tgBorderC{(1,2)}{1}{\tgColour4}{\tgColour6}
	\tgBorderA{(2,2)}{\tgColour4}{\tgColour0}{\tgColour0}{\tgColour4}
	\tgBlank{(0,3)}{\tgColour4}
	\tgBlank{(1,3)}{\tgColour4}
	\tgBorderA{(2,3)}{\tgColour4}{\tgColour0}{\tgColour0}{\tgColour4}
	\tgArrow{(0.5,2)}{0}
	\tgArrow{(0,1.5)}{3}
	\tgArrow{(2,1.5)}{3}
	\tgArrow{(2,2.5)}{3}
	\tgArrow{(1,1.5)}{1}
	\tgArrow{(2,0.5)}{3}
	\tgArrow{(0,0.5)}{3}
	\tgArrow{(1,0.5)}{1}
	\tgAxisLabel{(0.5,0.75)}{south}{j}
	\tgAxisLabel{(1.5,0.75)}{south}{j}
	\tgAxisLabel{(2.5,0.75)}{south}{e}
	\tgAxisLabel{(2.5,3.25)}{north}{e}
\end{tangle}
\]
\[
\begin{tikzcd}[column sep=1.54em]
	E & A & E & A && E && D \\
	E &&& A && E && D \\
	E &&&&&&& D
	\arrow["{E(1, e)}"{description}, from=2-8, to=2-6]
	\arrow["{E(j, 1)}"{description}, from=2-6, to=2-4]
	\arrow["{E(1, t)}"{description}, from=2-4, to=2-1]
	\arrow["{E(1, e)}", "\shortmid"{marking}, from=3-8, to=3-1]
	\arrow[""{name=0, anchor=center, inner sep=0}, Rightarrow, no head, from=2-8, to=3-8]
	\arrow[""{name=1, anchor=center, inner sep=0}, Rightarrow, no head, from=2-1, to=3-1]
	\arrow["{E(1, e)}"', "\shortmid"{marking}, from=1-8, to=1-6]
	\arrow["{E(j, 1)}"', "\shortmid"{marking}, from=1-6, to=1-4]
	\arrow["{E(1, t)}"', "\shortmid"{marking}, from=1-4, to=1-3]
	\arrow["{E(j, 1)}"', "\shortmid"{marking}, from=1-3, to=1-2]
	\arrow["{E(1, t)}"', "\shortmid"{marking}, from=1-2, to=1-1]
	\arrow[""{name=2, anchor=center, inner sep=0}, Rightarrow, no head, from=1-1, to=2-1]
	\arrow[""{name=3, anchor=center, inner sep=0}, Rightarrow, no head, from=1-8, to=2-8]
	\arrow[""{name=4, anchor=center, inner sep=0}, Rightarrow, no head, from=1-4, to=2-4]
	\arrow[""{name=5, anchor=center, inner sep=0}, Rightarrow, no head, from=1-6, to=2-6]
	\arrow["\lambda"{description}, draw=none, from=0, to=1]
	\arrow["\mu"{description}, draw=none, from=4, to=2]
	\arrow["{=}"{description}, draw=none, from=3, to=5]
	\arrow["{=}"{description}, draw=none, from=5, to=4]
\end{tikzcd}
=
\begin{tikzcd}[column sep=1.54em]
	E && A && E & A & E & D \\
	E && A && E &&& D \\
	E &&&&&&& D
	\arrow["{E(1, e)}"{description}, from=2-8, to=2-5]
	\arrow["{E(j, 1)}"{description}, from=2-5, to=2-3]
	\arrow["{E(1, t)}"{description}, from=2-3, to=2-1]
	\arrow["{E(1, e)}", "\shortmid"{marking}, from=3-8, to=3-1]
	\arrow[""{name=0, anchor=center, inner sep=0}, Rightarrow, no head, from=2-8, to=3-8]
	\arrow[""{name=1, anchor=center, inner sep=0}, Rightarrow, no head, from=2-1, to=3-1]
	\arrow["{E(1, e)}"', "\shortmid"{marking}, from=1-8, to=1-7]
	\arrow["{E(j, 1)}"', "\shortmid"{marking}, from=1-7, to=1-6]
	\arrow["{E(1, t)}"', "\shortmid"{marking}, from=1-6, to=1-5]
	\arrow["{E(j, 1)}"', "\shortmid"{marking}, from=1-5, to=1-3]
	\arrow["{E(1, t)}"', "\shortmid"{marking}, from=1-3, to=1-1]
	\arrow[""{name=2, anchor=center, inner sep=0}, Rightarrow, no head, from=1-1, to=2-1]
	\arrow[""{name=3, anchor=center, inner sep=0}, Rightarrow, no head, from=1-5, to=2-5]
	\arrow[""{name=4, anchor=center, inner sep=0}, Rightarrow, no head, from=1-8, to=2-8]
	\arrow[""{name=5, anchor=center, inner sep=0}, Rightarrow, no head, from=1-3, to=2-3]
	\arrow["\lambda"{description}, draw=none, from=0, to=1]
	\arrow["\lambda"{description}, draw=none, from=4, to=3]
	\arrow["{=}"{description}, draw=none, from=3, to=5]
	\arrow["{=}"{description}, draw=none, from=5, to=2]
\end{tikzcd}
\]\[
% https://varkor.github.io/tangle/?t=W1tbNSw3LDUsNyw1LDFdXSxbW1sxLFsxLDAsMSwwXV0sWzEsWzEsMCwxLDBdXSxbMSxbMSwwLDEsMF1dLFsxLFsxLDAsMSwwXV0sWzAsW11dLFsxLFsxLDAsMSwwXV1dLFtbMSxbMSwxLDAsMF1dLFsxLFsxLDEsMSwxXV0sWzEsWzEsMCwwLDFdXSxbMSxbMSwwLDEsMF1dLFswLFtdXSxbMSxbMSwwLDEsMF1dXSxbWzAsW11dLFsxLFsxLDEsMCwwXV0sWzEsWzAsMSwwLDFdXSxbMSxbMSwxLDEsMV1dLFsxLFswLDEsMCwxXV0sWzEsWzEsMCwwLDFdXV0sW1swLFtdXSxbMCxbXV0sWzAsW11dLFsxLFsxLDAsMSwwXV0sWzAsW11dLFswLFtdXV1dLFtbMCwxLjUsMS41LFsiXFxtdSIsMl1dLFswLDMuNSwyLjUsWyJcXGxhbWJkYSIsNF1dLFsxLDMuNSwyLFswXV0sWzEsNS41LDIsWzFdXSxbMSwxLjUsMixbMV1dLFsxLDAuNSwxLFsxXV0sWzEsMi41LDEsWzFdXSxbMSw1LjUsMSxbMV1dLFsxLDMuNSwzLFsxXV0sWzEsMS41LDEsWzBdXSxbMSwzLjUsMSxbMF1dXSxbWzAsMCwzLCJ0Il0sWzEsMCwzLCJqIl0sWzIsMCwzLCJ0Il0sWzMsMCwzLCJqIl0sWzUsMCwzLCJlIl0sWzMsMywxLCJlIl1dXQ==&c=F5A3A3,F5CCA3,F5F5A3,CCF5A3,A3F5A3,A3F5CC,A3F5F5,A3CCF5,A3A3F5,CCA3F5,F5A3F5,F5A3CC
\begin{tangle}{(6,4)}[trim y]
	\tgBorderA{(0,0)}{\tgColour4}{\tgColour6}{\tgColour6}{\tgColour4}
	\tgBorderA{(1,0)}{\tgColour6}{\tgColour4}{\tgColour4}{\tgColour6}
	\tgBorderA{(2,0)}{\tgColour4}{\tgColour6}{\tgColour6}{\tgColour4}
	\tgBorderA{(3,0)}{\tgColour6}{\tgColour4}{\tgColour4}{\tgColour6}
	\tgBlank{(4,0)}{\tgColour4}
	\tgBorderA{(5,0)}{\tgColour4}{\tgColour0}{\tgColour0}{\tgColour4}
	\tgBorderA{(0,1)}{\tgColour4}{\tgColour6}{\tgColour4}{\tgColour4}
	\tgBorderA{(1,1)}{\tgColour6}{\tgColour4}{\tgColour6}{\tgColour4}
	\tgBorderA{(2,1)}{\tgColour4}{\tgColour6}{\tgColour6}{\tgColour6}
	\tgBorderA{(3,1)}{\tgColour6}{\tgColour4}{\tgColour4}{\tgColour6}
	\tgBlank{(4,1)}{\tgColour4}
	\tgBorderA{(5,1)}{\tgColour4}{\tgColour0}{\tgColour0}{\tgColour4}
	\tgBlank{(0,2)}{\tgColour4}
	\tgBorderA{(1,2)}{\tgColour4}{\tgColour6}{\tgColour4}{\tgColour4}
	\tgBorderA{(2,2)}{\tgColour6}{\tgColour6}{\tgColour4}{\tgColour4}
	\tgBorderA{(3,2)}{\tgColour6}{\tgColour4}{\tgColour0}{\tgColour4}
	\tgBorderA{(4,2)}{\tgColour4}{\tgColour4}{\tgColour0}{\tgColour0}
	\tgBorderA{(5,2)}{\tgColour4}{\tgColour0}{\tgColour0}{\tgColour0}
	\tgBlank{(0,3)}{\tgColour4}
	\tgBlank{(1,3)}{\tgColour4}
	\tgBlank{(2,3)}{\tgColour4}
	\tgBorderA{(3,3)}{\tgColour4}{\tgColour0}{\tgColour0}{\tgColour4}
	\tgBlank{(4,3)}{\tgColour0}
	\tgBlank{(5,3)}{\tgColour0}
	\tgCell[(2,0)]{(1,1)}{\mu}
	\tgCell[(4,0)]{(3,2)}{\lambda}
	\tgArrow{(3,1.5)}{1}
	\tgArrow{(5,1.5)}{3}
	\tgArrow{(1,1.5)}{3}
	\tgArrow{(0,0.5)}{3}
	\tgArrow{(2,0.5)}{3}
	\tgArrow{(5,0.5)}{3}
	\tgArrow{(3,2.5)}{3}
	\tgArrow{(1,0.5)}{1}
	\tgArrow{(3,0.5)}{1}
	\tgAxisLabel{(0.5,0.75)}{south}{t}
	\tgAxisLabel{(1.5,0.75)}{south}{j}
	\tgAxisLabel{(2.5,0.75)}{south}{t}
	\tgAxisLabel{(3.5,0.75)}{south}{j}
	\tgAxisLabel{(5.5,0.75)}{south}{e}
	\tgAxisLabel{(3.5,3.25)}{north}{e}
\end{tangle}
\tangleeq*
% https://varkor.github.io/tangle/?t=W1tbNSw3LDUsNyw1LDFdXSxbW1sxLFsxLDAsMSwwXV0sWzAsW11dLFsxLFsxLDAsMSwwXV0sWzEsWzEsMCwxLDBdXSxbMSxbMSwwLDEsMF1dLFsxLFsxLDAsMSwwXV1dLFtbMSxbMSwwLDEsMF1dLFswLFtdXSxbMSxbMSwwLDEsMF1dLFsxLFsxLDEsMCwwXV0sWzEsWzEsMSwxLDFdXSxbMSxbMSwwLDAsMV1dXSxbWzEsWzEsMSwwLDBdXSxbMSxbMCwxLDAsMV1dLFsxLFsxLDEsMSwxXV0sWzEsWzAsMSwwLDFdXSxbMSxbMSwwLDAsMV1dLFswLFtdXV0sW1swLFtdXSxbMCxbXV0sWzEsWzEsMCwxLDBdXSxbMCxbXV0sWzAsW11dLFswLFtdXV1dLFtbMCwyLjUsMi41LFsiXFxsYW1iZGEiLDRdXSxbMSwyLjUsMixbMF1dLFsxLDQuNSwyLFsxXV0sWzEsMC41LDIsWzFdXSxbMCw0LjUsMS41LFsiXFxsYW1iZGEiLDJdXSxbMSwzLjUsMSxbMV1dLFsxLDUuNSwxLFsxXV0sWzEsMi41LDMsWzFdXSxbMSw0LjUsMSxbMF1dLFsxLDAuNSwxLFsxXV0sWzEsMi41LDEsWzBdXV0sW1swLDAsMywidCJdLFsyLDAsMywiaiJdLFszLDAsMywidCJdLFs0LDAsMywiaiJdLFs1LDAsMywiZSJdLFsyLDMsMSwiZSJdXV0=&c=F5A3A3,F5CCA3,F5F5A3,CCF5A3,A3F5A3,A3F5CC,A3F5F5,A3CCF5,A3A3F5,CCA3F5,F5A3F5,F5A3CC
\begin{tangle}{(6,4)}[trim y]
	\tgBorderA{(0,0)}{\tgColour4}{\tgColour6}{\tgColour6}{\tgColour4}
	\tgBlank{(1,0)}{\tgColour6}
	\tgBorderA{(2,0)}{\tgColour6}{\tgColour4}{\tgColour4}{\tgColour6}
	\tgBorderA{(3,0)}{\tgColour4}{\tgColour6}{\tgColour6}{\tgColour4}
	\tgBorderA{(4,0)}{\tgColour6}{\tgColour4}{\tgColour4}{\tgColour6}
	\tgBorderA{(5,0)}{\tgColour4}{\tgColour0}{\tgColour0}{\tgColour4}
	\tgBorderA{(0,1)}{\tgColour4}{\tgColour6}{\tgColour6}{\tgColour4}
	\tgBlank{(1,1)}{\tgColour6}
	\tgBorderA{(2,1)}{\tgColour6}{\tgColour4}{\tgColour4}{\tgColour6}
	\tgBorderA{(3,1)}{\tgColour4}{\tgColour6}{\tgColour4}{\tgColour4}
	\tgBorderA{(4,1)}{\tgColour6}{\tgColour4}{\tgColour0}{\tgColour4}
	\tgBorderA{(5,1)}{\tgColour4}{\tgColour0}{\tgColour0}{\tgColour0}
	\tgBorderA{(0,2)}{\tgColour4}{\tgColour6}{\tgColour4}{\tgColour4}
	\tgBorderA{(1,2)}{\tgColour6}{\tgColour6}{\tgColour4}{\tgColour4}
	\tgBorderA{(2,2)}{\tgColour6}{\tgColour4}{\tgColour0}{\tgColour4}
	\tgBorderA{(3,2)}{\tgColour4}{\tgColour4}{\tgColour0}{\tgColour0}
	\tgBorderA{(4,2)}{\tgColour4}{\tgColour0}{\tgColour0}{\tgColour0}
	\tgBlank{(5,2)}{\tgColour0}
	\tgBlank{(0,3)}{\tgColour4}
	\tgBlank{(1,3)}{\tgColour4}
	\tgBorderA{(2,3)}{\tgColour4}{\tgColour0}{\tgColour0}{\tgColour4}
	\tgBlank{(3,3)}{\tgColour0}
	\tgBlank{(4,3)}{\tgColour0}
	\tgBlank{(5,3)}{\tgColour0}
	\tgCell[(4,0)]{(2,2)}{\lambda}
	\tgArrow{(2,1.5)}{1}
	\tgArrow{(4,1.5)}{3}
	\tgArrow{(0,1.5)}{3}
	\tgCell[(2,0)]{(4,1)}{\lambda}
	\tgArrow{(3,0.5)}{3}
	\tgArrow{(5,0.5)}{3}
	\tgArrow{(2,2.5)}{3}
	\tgArrow{(4,0.5)}{1}
	\tgArrow{(0,0.5)}{3}
	\tgArrow{(2,0.5)}{1}
	\tgAxisLabel{(0.5,0.75)}{south}{t}
	\tgAxisLabel{(2.5,0.75)}{south}{j}
	\tgAxisLabel{(3.5,0.75)}{south}{t}
	\tgAxisLabel{(4.5,0.75)}{south}{j}
	\tgAxisLabel{(5.5,0.75)}{south}{e}
	\tgAxisLabel{(2.5,3.25)}{north}{e}
\end{tangle}
\]
An action homomorphism is a 2-cell $E(1, \epsilon) \colon E(1, e) \tto E(1, e')$ satisfying the following equation.
\[
% https://q.uiver.app/#q=WzAsOCxbMywwLCJEIl0sWzIsMCwiRSJdLFsxLDAsIkEiXSxbMCwwLCJFIl0sWzMsMSwiRCJdLFswLDEsIkUiXSxbMywyLCJEIl0sWzAsMiwiRSJdLFswLDEsIkUoMSwgZSkiLDIseyJzdHlsZSI6eyJib2R5Ijp7Im5hbWUiOiJiYXJyZWQifX19XSxbMSwyLCJFKGosIDEpIiwyLHsic3R5bGUiOnsiYm9keSI6eyJuYW1lIjoiYmFycmVkIn19fV0sWzIsMywiRSgxLCB0KSIsMix7InN0eWxlIjp7ImJvZHkiOnsibmFtZSI6ImJhcnJlZCJ9fX1dLFs0LDUsIkUoMSwgZSkiLDFdLFswLDQsIiIsMCx7ImxldmVsIjoyLCJzdHlsZSI6eyJoZWFkIjp7Im5hbWUiOiJub25lIn19fV0sWzMsNSwiIiwyLHsibGV2ZWwiOjIsInN0eWxlIjp7ImhlYWQiOnsibmFtZSI6Im5vbmUifX19XSxbNiw3LCJFKDEsIGUnKSIsMCx7InN0eWxlIjp7ImJvZHkiOnsibmFtZSI6ImJhcnJlZCJ9fX1dLFs0LDYsIiIsMCx7ImxldmVsIjoyLCJzdHlsZSI6eyJoZWFkIjp7Im5hbWUiOiJub25lIn19fV0sWzUsNywiIiwwLHsibGV2ZWwiOjIsInN0eWxlIjp7ImhlYWQiOnsibmFtZSI6Im5vbmUifX19XSxbMTIsMTMsIlxcbGFtYmRhIiwxLHsic2hvcnRlbiI6eyJzb3VyY2UiOjIwLCJ0YXJnZXQiOjIwfSwic3R5bGUiOnsiYm9keSI6eyJuYW1lIjoibm9uZSJ9LCJoZWFkIjp7Im5hbWUiOiJub25lIn19fV0sWzE1LDE2LCJFKDEsIFxcZXBzaWxvbikiLDEseyJzaG9ydGVuIjp7InNvdXJjZSI6MjAsInRhcmdldCI6MjB9LCJzdHlsZSI6eyJib2R5Ijp7Im5hbWUiOiJub25lIn0sImhlYWQiOnsibmFtZSI6Im5vbmUifX19XV0=
\begin{tikzcd}[column sep=large]
	E & A & E & D \\
	E &&& D \\
	E &&& D
	\arrow["{E(1, e)}"', "\shortmid"{marking}, from=1-4, to=1-3]
	\arrow["{E(j, 1)}"', "\shortmid"{marking}, from=1-3, to=1-2]
	\arrow["{E(1, t)}"', "\shortmid"{marking}, from=1-2, to=1-1]
	\arrow["{E(1, e)}"{description}, from=2-4, to=2-1]
	\arrow[""{name=0, anchor=center, inner sep=0}, Rightarrow, no head, from=1-4, to=2-4]
	\arrow[""{name=1, anchor=center, inner sep=0}, Rightarrow, no head, from=1-1, to=2-1]
	\arrow["{E(1, e')}", "\shortmid"{marking}, from=3-4, to=3-1]
	\arrow[""{name=2, anchor=center, inner sep=0}, Rightarrow, no head, from=2-4, to=3-4]
	\arrow[""{name=3, anchor=center, inner sep=0}, Rightarrow, no head, from=2-1, to=3-1]
	\arrow["\lambda"{description}, draw=none, from=0, to=1]
	\arrow["{E(1, \epsilon)}"{description}, draw=none, from=2, to=3]
\end{tikzcd}
\quad = \quad
% https://q.uiver.app/#q=WzAsMTAsWzMsMSwiRCJdLFsyLDEsIkUiXSxbMSwxLCJBIl0sWzAsMSwiRSJdLFszLDIsIkQiXSxbMCwyLCJFIl0sWzMsMCwiRCJdLFsyLDAsIkUiXSxbMSwwLCJBIl0sWzAsMCwiRSJdLFswLDEsIkUoMSwgZScpIiwxXSxbMSwyLCJFKGosIDEpIiwxXSxbMiwzLCJFKDEsIHQpIiwxXSxbNCw1LCJFKDEsIGUnKSIsMCx7InN0eWxlIjp7ImJvZHkiOnsibmFtZSI6ImJhcnJlZCJ9fX1dLFswLDQsIiIsMCx7ImxldmVsIjoyLCJzdHlsZSI6eyJoZWFkIjp7Im5hbWUiOiJub25lIn19fV0sWzMsNSwiIiwyLHsibGV2ZWwiOjIsInN0eWxlIjp7ImhlYWQiOnsibmFtZSI6Im5vbmUifX19XSxbNiw3LCJFKDEsIGUpIiwyLHsic3R5bGUiOnsiYm9keSI6eyJuYW1lIjoiYmFycmVkIn19fV0sWzcsOCwiRShqLCAxKSIsMix7InN0eWxlIjp7ImJvZHkiOnsibmFtZSI6ImJhcnJlZCJ9fX1dLFs4LDksIkUoMSwgdCkiLDIseyJzdHlsZSI6eyJib2R5Ijp7Im5hbWUiOiJiYXJyZWQifX19XSxbOCwyLCIiLDEseyJsZXZlbCI6Miwic3R5bGUiOnsiaGVhZCI6eyJuYW1lIjoibm9uZSJ9fX1dLFs5LDMsIiIsMSx7ImxldmVsIjoyLCJzdHlsZSI6eyJoZWFkIjp7Im5hbWUiOiJub25lIn19fV0sWzYsMCwiIiwxLHsibGV2ZWwiOjIsInN0eWxlIjp7ImhlYWQiOnsibmFtZSI6Im5vbmUifX19XSxbNywxLCIiLDEseyJsZXZlbCI6Miwic3R5bGUiOnsiaGVhZCI6eyJuYW1lIjoibm9uZSJ9fX1dLFsxNCwxNSwiXFxsYW1iZGEnIiwxLHsic2hvcnRlbiI6eyJzb3VyY2UiOjIwLCJ0YXJnZXQiOjIwfSwic3R5bGUiOnsiYm9keSI6eyJuYW1lIjoibm9uZSJ9LCJoZWFkIjp7Im5hbWUiOiJub25lIn19fV0sWzIxLDIyLCJFKDEsIFxcZXBzaWxvbikiLDEseyJzaG9ydGVuIjp7InNvdXJjZSI6MjAsInRhcmdldCI6MjB9LCJzdHlsZSI6eyJib2R5Ijp7Im5hbWUiOiJub25lIn0sImhlYWQiOnsibmFtZSI6Im5vbmUifX19XSxbMjIsMTksIj0iLDEseyJzaG9ydGVuIjp7InNvdXJjZSI6MjAsInRhcmdldCI6MjB9LCJzdHlsZSI6eyJib2R5Ijp7Im5hbWUiOiJub25lIn0sImhlYWQiOnsibmFtZSI6Im5vbmUifX19XSxbMTksMjAsIj0iLDEseyJzaG9ydGVuIjp7InNvdXJjZSI6MjAsInRhcmdldCI6MjB9LCJzdHlsZSI6eyJib2R5Ijp7Im5hbWUiOiJub25lIn0sImhlYWQiOnsibmFtZSI6Im5vbmUifX19XV0=
\begin{tikzcd}[column sep=large]
	E & A & E & D \\
	E & A & E & D \\
	E &&& D
	\arrow["{E(1, e')}"{description}, from=2-4, to=2-3]
	\arrow["{E(j, 1)}"{description}, from=2-3, to=2-2]
	\arrow["{E(1, t)}"{description}, from=2-2, to=2-1]
	\arrow["{E(1, e')}", "\shortmid"{marking}, from=3-4, to=3-1]
	\arrow[""{name=0, anchor=center, inner sep=0}, Rightarrow, no head, from=2-4, to=3-4]
	\arrow[""{name=1, anchor=center, inner sep=0}, Rightarrow, no head, from=2-1, to=3-1]
	\arrow["{E(1, e)}"', "\shortmid"{marking}, from=1-4, to=1-3]
	\arrow["{E(j, 1)}"', "\shortmid"{marking}, from=1-3, to=1-2]
	\arrow["{E(1, t)}"', "\shortmid"{marking}, from=1-2, to=1-1]
	\arrow[""{name=2, anchor=center, inner sep=0}, Rightarrow, no head, from=1-2, to=2-2]
	\arrow[""{name=3, anchor=center, inner sep=0}, Rightarrow, no head, from=1-1, to=2-1]
	\arrow[""{name=4, anchor=center, inner sep=0}, Rightarrow, no head, from=1-4, to=2-4]
	\arrow[""{name=5, anchor=center, inner sep=0}, Rightarrow, no head, from=1-3, to=2-3]
	\arrow["{\lambda'}"{description}, draw=none, from=0, to=1]
	\arrow["{E(1, \epsilon)}"{description}, draw=none, from=4, to=5]
	\arrow["{=}"{description}, draw=none, from=5, to=2]
	\arrow["{=}"{description}, draw=none, from=2, to=3]
\end{tikzcd}
\]\[
% https://varkor.github.io/tangle/?t=W1tbNSw3LDUsMV1dLFtbWzEsWzEsMCwxLDBdXSxbMSxbMSwwLDEsMF1dLFsxLFsxLDAsMSwwXV1dLFtbMSxbMSwxLDAsMF1dLFsxLFsxLDEsMSwxXV0sWzEsWzEsMCwwLDFdXV0sW1swLFtdXSxbMSxbMSwwLDEsMF1dLFswLFtdXV0sW1swLFtdXSxbMSxbMSwwLDEsMF1dLFswLFtdXV1dLFtbMCwxLjUsMS41LFsiXFxsYW1iZGEiLDJdXSxbMCwxLjUsMi41LFsiXFxlcHNpbG9uIl1dLFsxLDEuNSwyLFsxXV0sWzEsMS41LDMsWzFdXSxbMSwwLjUsMSxbMV1dLFsxLDIuNSwxLFsxXV0sWzEsMS41LDEsWzBdXV0sW1swLDAsMywidCJdLFsxLDAsMywiaiJdLFsyLDAsMywiZSJdLFsxLDMsMSwiZSciXV1d&c=F5A3A3,F5CCA3,F5F5A3,CCF5A3,A3F5A3,A3F5CC,A3F5F5,A3CCF5,A3A3F5,CCA3F5,F5A3F5,F5A3CC
\begin{tangle}{(3,4)}[trim y]
	\tgBorderA{(0,0)}{\tgColour4}{\tgColour6}{\tgColour6}{\tgColour4}
	\tgBorderA{(1,0)}{\tgColour6}{\tgColour4}{\tgColour4}{\tgColour6}
	\tgBorderA{(2,0)}{\tgColour4}{\tgColour0}{\tgColour0}{\tgColour4}
	\tgBorderA{(0,1)}{\tgColour4}{\tgColour6}{\tgColour4}{\tgColour4}
	\tgBorderA{(1,1)}{\tgColour6}{\tgColour4}{\tgColour0}{\tgColour4}
	\tgBorderA{(2,1)}{\tgColour4}{\tgColour0}{\tgColour0}{\tgColour0}
	\tgBlank{(0,2)}{\tgColour4}
	\tgBorderA{(1,2)}{\tgColour4}{\tgColour0}{\tgColour0}{\tgColour4}
	\tgBlank{(2,2)}{\tgColour0}
	\tgBlank{(0,3)}{\tgColour4}
	\tgBorderA{(1,3)}{\tgColour4}{\tgColour0}{\tgColour0}{\tgColour4}
	\tgBlank{(2,3)}{\tgColour0}
	\tgCell[(2,0)]{(1,1)}{\lambda}
	\tgCell{(1,2)}{\epsilon}
	\tgArrow{(1,1.5)}{3}
	\tgArrow{(1,2.5)}{3}
	\tgArrow{(0,0.5)}{3}
	\tgArrow{(2,0.5)}{3}
	\tgArrow{(1,0.5)}{1}
	\tgAxisLabel{(0.5,0.75)}{south}{t}
	\tgAxisLabel{(1.5,0.75)}{south}{j}
	\tgAxisLabel{(2.5,0.75)}{south}{e}
	\tgAxisLabel{(1.5,3.25)}{north}{e'}
\end{tangle}
\tangleeq*
% https://varkor.github.io/tangle/?t=W1tbNSw3LDUsMV1dLFtbWzEsWzEsMCwxLDBdXSxbMSxbMSwwLDEsMF1dLFsxLFsxLDAsMSwwXV1dLFtbMSxbMSwwLDEsMF1dLFsxLFsxLDAsMSwwXV0sWzEsWzEsMCwxLDBdXV0sW1sxLFsxLDEsMCwwXV0sWzEsWzEsMSwxLDFdXSxbMSxbMSwwLDAsMV1dXSxbWzAsW11dLFsxLFsxLDAsMSwwXV0sWzAsW11dXV0sW1swLDEuNSwyLjUsWyJcXGxhbWJkYSciLDJdXSxbMCwyLjUsMS41LFsiXFxlcHNpbG9uIl1dLFsxLDEuNSwyLFswXV0sWzEsMC41LDIsWzFdXSxbMSwyLjUsMixbMV1dLFsxLDAuNSwxLFsxXV0sWzEsMi41LDEsWzFdXSxbMSwxLjUsMyxbMV1dLFsxLDEuNSwxLFswXV1dLFtbMCwwLDMsInQiXSxbMSwwLDMsImoiXSxbMiwwLDMsImUiXSxbMSwzLDEsImUnIl1dXQ==&c=F5A3A3,F5CCA3,F5F5A3,CCF5A3,A3F5A3,A3F5CC,A3F5F5,A3CCF5,A3A3F5,CCA3F5,F5A3F5,F5A3CC
\begin{tangle}{(3,4)}[trim y]
	\tgBorderA{(0,0)}{\tgColour4}{\tgColour6}{\tgColour6}{\tgColour4}
	\tgBorderA{(1,0)}{\tgColour6}{\tgColour4}{\tgColour4}{\tgColour6}
	\tgBorderA{(2,0)}{\tgColour4}{\tgColour0}{\tgColour0}{\tgColour4}
	\tgBorderA{(0,1)}{\tgColour4}{\tgColour6}{\tgColour6}{\tgColour4}
	\tgBorderA{(1,1)}{\tgColour6}{\tgColour4}{\tgColour4}{\tgColour6}
	\tgBorderA{(2,1)}{\tgColour4}{\tgColour0}{\tgColour0}{\tgColour4}
	\tgBorderA{(0,2)}{\tgColour4}{\tgColour6}{\tgColour4}{\tgColour4}
	\tgBorderA{(1,2)}{\tgColour6}{\tgColour4}{\tgColour0}{\tgColour4}
	\tgBorderA{(2,2)}{\tgColour4}{\tgColour0}{\tgColour0}{\tgColour0}
	\tgBlank{(0,3)}{\tgColour4}
	\tgBorderA{(1,3)}{\tgColour4}{\tgColour0}{\tgColour0}{\tgColour4}
	\tgBlank{(2,3)}{\tgColour0}
	\tgCell[(2,0)]{(1,2)}{\lambda'}
	\tgCell{(2,1)}{\epsilon}
	\tgArrow{(1,1.5)}{1}
	\tgArrow{(0,1.5)}{3}
	\tgArrow{(2,1.5)}{3}
	\tgArrow{(0,0.5)}{3}
	\tgArrow{(2,0.5)}{3}
	\tgArrow{(1,2.5)}{3}
	\tgArrow{(1,0.5)}{1}
	\tgAxisLabel{(0.5,0.75)}{south}{t}
	\tgAxisLabel{(1.5,0.75)}{south}{j}
	\tgAxisLabel{(2.5,0.75)}{south}{e}
	\tgAxisLabel{(1.5,3.25)}{north}{e'}
\end{tangle}
\]

We may now exhibit the $T$-algebras of \cref{algebra} as actions in $\X[D, j]$.

\begin{theorem}
    \label{algebras-are-left-actions}
    There is an isomorphism of categories rendering the following diagram commutative, natural in $D$ and $T$.
    % https://q.uiver.app/?q=WzAsMyxbMCwwLCJUXFxoXFxBbGdfRCJdLFsyLDAsIlxcQWN0KFxcWFtELCBqXSwgVCkiXSxbMSwxLCJcXFhbRCwgRV0iXSxbMCwyLCJVX3tULCBEfSIsMl0sWzEsMiwiVV97XFxYW0QsIGpdLCBUfSJdLFswLDEsIlxcaXNvIl1d
	\[\begin{tikzcd}
		{T\h\Alg_D} && {\Act(\X[D, j], T)} \\
		& {\X[D, E]}
		\arrow["{U_{T, D}}"', from=1-1, to=2-2]
		\arrow["{U_{\X[D, j], T}}", from=1-3, to=2-2]
		\arrow["\iso", from=1-1, to=1-3]
	\end{tikzcd}\]
\end{theorem}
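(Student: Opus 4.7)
The plan is to mimic the proof of \cref{relative-monads-are-tight-monoids}, where $j$-relative monads were identified with monoids in $\X[j]$. The key observation is that the loose-adjunction $E(1, t) \adj E(t, 1)$ induced by the underlying tight-cell $t \colon A \to E$ (via \cref{loose-adjunction-induced-by-tight-cell}) permits us to transpose between an extension operator $\aop$ (whose codomain features the conjoint $E(t, 1)$) and an action 2-cell $\lambda$ (whose domain features the companion $E(1, t)$).

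First, I will establish a bijection on data. Given a $T$-algebra $(e, \aop)$ with $\aop \colon E(j, e) \tto E(t, e)$, I transpose to define $\lambda \colon E(1, t), E(j, 1), E(1, e) \tto E(1, e)$ by bending the conjoint $E(t, 1)$ appearing on the left side of the codomain of $\aop$ downward, turning it into a companion $E(1, t)$ appearing on the right side of the domain. Conversely, from an action $(e, \lambda)$, I bend the companion $E(1, t)$ back up to obtain $\aop$. The zig-zag laws for the loose-adjunction (equivalently, those for restriction) ensure these assignments are mutually inverse, exactly as in the transposition between $\dag$ and $\mu$ used in the proof of \cref{relative-monads-are-tight-monoids}.

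Next, I will verify that the two unit laws and the associativity law defining a $T$-algebra correspond under this bijection precisely to the unit and multiplication laws defining an action of the monoid corresponding to $T$ in $\X[D, j]$. This is a direct string-diagrammatic calculation: each algebra law translates, by bending the $t$-string, into the corresponding action law with the transposed multiplication $\mu$ of $T$ (itself obtained by bending $\dag$, by \cref{relative-monads-are-tight-monoids}). The same bending argument then identifies $T$-algebra morphisms $\epsilon \colon e \tto e'$ (2-cells satisfying compatibility with $\aop$ and $\aop'$) with action homomorphisms (2-cells satisfying compatibility with $\lambda$ and $\lambda'$), so the bijection extends to an isomorphism of categories. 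Commutativity with the forgetful functors to $\X[D, E]$ is immediate, since both constructions preserve the underlying tight-cell $e$.

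Finally, naturality in $D$ and $T$ follows because the bijection is defined pointwise by a bending operation that is stable under precomposition of tight-cells $D' \to D$ (on the domain side) and under the bijection \cref{relative-monads-are-tight-monoids} on $T$. The main subtlety, as in \cref{relative-monads-are-tight-monoids}, is bookkeeping: one must carefully check that each of the three defining equations for a $T$-algebra, once transposed, matches exactly the shape of the corresponding equation for an action (where the multiplication of the monoid is itself the transpose of $\dag$); this matching is a routine string-diagrammatic exercise using the zig-zag laws, but verifying all cases is where the work lies.
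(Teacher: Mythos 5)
Your proposal is correct and follows essentially the same route as the paper: the paper's proof also transposes between $\aop$ and $\lambda$ by bending the $t$-string via the companion--conjoint structure, and then observes that the algebra (morphism) laws are precisely the action (homomorphism) laws under this transformation. Nothing in your argument diverges from this in substance.
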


\begin{proof}
    Observe that, given an action, we can define an algebra structure, and conversely:
    \[
    % https://varkor.github.io/tangle/?t=W1tbNyw1LDEsNV1dLFtbWzAsW11dLFswLFtdXSxbMSxbMSwwLDEsMF1dLFsxLFsxLDAsMSwwXV1dLFtbMixbMl1dLFsyLFszXV0sWzEsWzEsMCwxLDBdXSxbMSxbMSwwLDEsMF1dXSxbWzEsWzEsMCwxLDBdXSxbMSxbMSwxLDAsMF1dLFsxLFsxLDEsMSwxXV0sWzEsWzEsMCwwLDFdXV0sW1sxLFsxLDAsMSwwXV0sWzAsW11dLFsxLFsxLDAsMSwwXV0sWzAsW11dXV0sW1swLDIuNSwyLjUsWyJcXGxhbWJkYSIsMl1dLFsxLDIuNSwyLFswXV0sWzEsMy41LDIsWzFdXSxbMSwxLjUsMixbMV1dLFsxLDEsMS41LFswXV0sWzEsMC41LDIsWzBdXSxbMSwzLjUsMSxbMV1dLFsxLDIuNSwzLFsxXV0sWzEsMC41LDMsWzBdXSxbMSwyLjUsMSxbMF1dXSxbWzIsMCwzLCJqIl0sWzMsMCwzLCJlIl0sWzAsMywxLCJ0Il0sWzIsMywxLCJlIl1dXQ==&c=F5A3A3,F5CCA3,F5F5A3,CCF5A3,A3F5A3,A3F5CC,A3F5F5,A3CCF5,A3A3F5,CCA3F5,F5A3F5,F5A3CC
	\begin{tangle}{(4,4)}[trim y]
		\tgBlank{(0,0)}{\tgColour6}
		\tgBlank{(1,0)}{\tgColour6}
		\tgBorderA{(2,0)}{\tgColour6}{\tgColour4}{\tgColour4}{\tgColour6}
		\tgBorderA{(3,0)}{\tgColour4}{\tgColour0}{\tgColour0}{\tgColour4}
		\tgBorderC{(0,1)}{3}{\tgColour6}{\tgColour4}
		\tgBorderC{(1,1)}{2}{\tgColour6}{\tgColour4}
		\tgBorderA{(2,1)}{\tgColour6}{\tgColour4}{\tgColour4}{\tgColour6}
		\tgBorderA{(3,1)}{\tgColour4}{\tgColour0}{\tgColour0}{\tgColour4}
		\tgBorderA{(0,2)}{\tgColour6}{\tgColour4}{\tgColour4}{\tgColour6}
		\tgBorderA{(1,2)}{\tgColour4}{\tgColour6}{\tgColour4}{\tgColour4}
		\tgBorderA{(2,2)}{\tgColour6}{\tgColour4}{\tgColour0}{\tgColour4}
		\tgBorderA{(3,2)}{\tgColour4}{\tgColour0}{\tgColour0}{\tgColour0}
		\tgBorderA{(0,3)}{\tgColour6}{\tgColour4}{\tgColour4}{\tgColour6}
		\tgBlank{(1,3)}{\tgColour4}
		\tgBorderA{(2,3)}{\tgColour4}{\tgColour0}{\tgColour0}{\tgColour4}
		\tgBlank{(3,3)}{\tgColour0}
		\tgCell[(2,0)]{(2,2)}{\lambda}
		\tgArrow{(2,1.5)}{1}
		\tgArrow{(3,1.5)}{3}
		\tgArrow{(1,1.5)}{3}
		\tgArrow{(0.5,1)}{0}
		\tgArrow{(0,1.5)}{1}
		\tgArrow{(3,0.5)}{3}
		\tgArrow{(2,2.5)}{3}
		\tgArrow{(0,2.5)}{1}
		\tgArrow{(2,0.5)}{1}
		\tgAxisLabel{(2.5,0.75)}{south}{j}
		\tgAxisLabel{(3.5,0.75)}{south}{e}
		\tgAxisLabel{(0.5,3.25)}{north}{t}
		\tgAxisLabel{(2.5,3.25)}{north}{e}
	\end{tangle}
    \hspace{4em}
    % https://varkor.github.io/tangle/?t=W1tbNSw3LDUsMV1dLFtbWzEsWzEsMCwxLDBdXSxbMSxbMSwwLDEsMF1dLFsxLFsxLDAsMSwwXV1dLFtbMSxbMSwwLDEsMF1dLFsxLFsxLDEsMSwwXV0sWzEsWzEsMCwxLDFdXV0sW1syLFsxXV0sWzIsWzBdXSxbMSxbMSwwLDEsMF1dXSxbWzAsW11dLFswLFtdXSxbMSxbMSwwLDEsMF1dXV0sW1swLDIsMS41LFsiXFxhb3AiLDFdXSxbMSwyLjUsMixbMV1dLFsxLDEuNSwyLFswXV0sWzEsMSwyLjUsWzBdXSxbMSwwLjUsMixbMV1dLFsxLDIuNSwzLFsxXV0sWzEsMC41LDEsWzFdXSxbMSwyLjUsMSxbMV1dLFsxLDEuNSwxLFswXV1dLFtbMCwwLDMsInQiXSxbMSwwLDMsImoiXSxbMiwwLDMsImUiXSxbMiwzLDEsImUiXV1d&c=F5A3A3,F5CCA3,F5F5A3,CCF5A3,A3F5A3,A3F5CC,A3F5F5,A3CCF5,A3A3F5,CCA3F5,F5A3F5,F5A3CC
	\begin{tangle}{(3,4)}[trim y]
		\tgBorderA{(0,0)}{\tgColour4}{\tgColour6}{\tgColour6}{\tgColour4}
		\tgBorderA{(1,0)}{\tgColour6}{\tgColour4}{\tgColour4}{\tgColour6}
		\tgBorderA{(2,0)}{\tgColour4}{\tgColour0}{\tgColour0}{\tgColour4}
		\tgBorderA{(0,1)}{\tgColour4}{\tgColour6}{\tgColour6}{\tgColour4}
		\tgBorderA{(1,1)}{\tgColour6}{\tgColour4}{\tgColour4}{\tgColour6}
		\tgBorder{(1,1)}{0}{1}{0}{0}
		\tgBorderA{(2,1)}{\tgColour4}{\tgColour0}{\tgColour0}{\tgColour4}
		\tgBorder{(2,1)}{0}{0}{0}{1}
		\tgBorderC{(0,2)}{0}{\tgColour4}{\tgColour6}
		\tgBorderC{(1,2)}{1}{\tgColour4}{\tgColour6}
		\tgBorderA{(2,2)}{\tgColour4}{\tgColour0}{\tgColour0}{\tgColour4}
		\tgBlank{(0,3)}{\tgColour4}
		\tgBlank{(1,3)}{\tgColour4}
		\tgBorderA{(2,3)}{\tgColour4}{\tgColour0}{\tgColour0}{\tgColour4}
		\tgCell[(1,0)]{(1.5,1)}{\aop}
		\tgArrow{(2,1.5)}{3}
		\tgArrow{(1,1.5)}{1}
		\tgArrow{(0.5,2)}{0}
		\tgArrow{(0,1.5)}{3}
		\tgArrow{(2,2.5)}{3}
		\tgArrow{(0,0.5)}{3}
		\tgArrow{(2,0.5)}{3}
		\tgArrow{(1,0.5)}{1}
		\tgAxisLabel{(0.5,0.75)}{south}{t}
		\tgAxisLabel{(1.5,0.75)}{south}{j}
		\tgAxisLabel{(2.5,0.75)}{south}{e}
		\tgAxisLabel{(2.5,3.25)}{north}{e}
	\end{tangle}
    \]
    It is immediate that the laws for an algebra (morphism) are precisely those for an action (homomorphism) under these transformations.
\end{proof}

\begin{remark}
	\label{representability-for-skew-multiactegories}
	While we shall not formally introduce representability for (skew) multiactegories, the evident generalisation of \cref{Xj1-is-skew-monoidal} suggests that the skew-multiactegory $\X[D, j]$ of \cref{XDj} will be representable in an appropriate sense when $\X$ admits left extensions of tight-cells $A \to E$ along $j$, in which case we recover \cite[Theorem~3.7]{altenkirch2015monads}.
\end{remark}

\begin{corollary}
	\label{algebras-for-identity-relative-monads-are-algebras-for-monads}
    An algebra (morphism) for a $1_E$-monad is precisely an algebra (morphism) for the corresponding monad on $E$ (\cref{identity-relative-monads-are-monads}).
\end{corollary}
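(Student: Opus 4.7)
The plan is to mirror the proof of \cref{identity-relative-monads-are-monads}, making use of the representation theorem \cref{algebras-are-left-actions}. First, by that theorem, $T\h\Alg_D$ is isomorphic to $\Act(\X[D, 1_E], T)$, where here $T$ is viewed as the corresponding monoid in $\X[1_E]$. By \cref{identity-relative-monads-are-monads}, this monoid arises from a monad $T'$ on $E$ in the tight 2-category $\tX$. So the remaining task is to identify $\Act(\X[D, 1_E], T)$ with the category of classical $T'$-algebras with domain $D$.

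The key simplification is that, when $j = 1_E$, the conjoint $E(1_E, 1)$ is the loose-identity $E(1,1)$, which is both a left- and right-identity for loose-composition. Hence, restricted to representable loose-cells, the skew-multiactegory $\X[D, 1_E]$ reduces to the strict actegory of $\X[E, E]$ acting on $\X[D, E]$ by composition of tight-cells (using \ffness{} of restriction on representables). Under this identification, the extension operator $\aop \colon E(1, e) \tto E(t, e)$ transposes, using the loose-adjunction $E(1, t) \adj E(t, 1)$ of \cref{loose-adjunction-induced-by-tight-cell}, to a 2-cell $E(1, te) \tto E(1, e)$ and thence, by \ffness{} of restriction, to a 2-cell $a \colon te \tto e$ in $\tX$. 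The action laws then reduce to the classical unit and associativity laws for a $T'$-algebra, and a similar transposition sends 2-cells $\epsilon \colon e \tto e'$ satisfying the algebra-morphism condition to classical algebra-morphism equations.

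The main obstacle is the routine string-diagrammatic verification that the laws correspond on the nose under this transposition. This is tedious but direct, following the same pattern as the analogous translation carried out in the proof of \cref{relative-monads-are-tight-monoids} for monoids. Naturality in $D$ is immediate from the construction. The resulting isomorphism of categories commutes with the faithful functors to $\X[D, E]$, giving the desired commutative diagram.
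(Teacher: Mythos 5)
Your proposal is correct and follows essentially the same route as the paper: both reduce via \cref{algebras-are-left-actions} to actions in $\X[D, 1_E]$ and then observe that, since the root is the identity, the action data is exactly a 2-cell $e \d t \tto e$ in $\tX$ satisfying the classical unit and associativity laws. The extra transposition step you describe (bending $t$ via $E(1,t) \adj E(t,1)$ and invoking \ffness{} of restriction) is precisely what the proof of \cref{algebras-are-left-actions} already packages, so nothing further is needed.
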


\begin{proof}
    By \cref{algebras-are-left-actions}, it suffices to consider action (morphisms) in place of algebra (morphisms). An action in $\X[D, 1_E]$ for a monad $T$ comprises a tight-cell $e \colon D \to E$ and a 2-cell $\lambda \colon (e \d t) \tto e$ rendering the following diagrams commutative.
	\[
	% https://q.uiver.app/#q=WzAsMyxbMCwwLCJlIl0sWzEsMCwiZSBcXGQgdCJdLFsxLDEsImUiXSxbMCwxLCJlIFxcZCBcXGV0YSJdLFsxLDIsIlxcbGFtYmRhIl0sWzAsMiwiIiwyLHsibGV2ZWwiOjIsInN0eWxlIjp7ImhlYWQiOnsibmFtZSI6Im5vbmUifX19XV0=
	\begin{tikzcd}
		e & {e \d t} \\
		& e
		\arrow["{e \d \eta}", from=1-1, to=1-2]
		\arrow["\lambda", from=1-2, to=2-2]
		\arrow[Rightarrow, no head, from=1-1, to=2-2]
	\end{tikzcd}
	\hspace{4em}
	% https://q.uiver.app/?q=WzAsNCxbMCwwLCJlIFxcZCB0IFxcZCB0Il0sWzEsMCwiZSBcXGQgdCJdLFsxLDEsImUiXSxbMCwxLCJlIFxcZCB0Il0sWzAsMSwiZSBcXGQgXFxtdSJdLFsxLDIsIlxcbGFtYmRhIl0sWzAsMywiXFxsYW1iZGEgXFxkIHQiLDJdLFszLDIsIlxcbGFtYmRhIiwyXV0=
	\begin{tikzcd}
		{e \d t \d t} & {e \d t} \\
		{e \d t} & e
		\arrow["{e \d \mu}", from=1-1, to=1-2]
		\arrow["\lambda", from=1-2, to=2-2]
		\arrow["{\lambda \d t}"', from=1-1, to=2-1]
		\arrow["\lambda"', from=2-1, to=2-2]
	\end{tikzcd}
	\]
	An action morphism in $\X[D, 1_E]$ is a 2-cell $\epsilon \colon e \tto e'$ rendering the following diagram commutative.
	% https://q.uiver.app/#q=WzAsNCxbMCwwLCJlIFxcZCB0Il0sWzAsMSwiZSJdLFsxLDEsImUnIl0sWzEsMCwiZScgXFxkIHQiXSxbMCwxLCJcXGxhbWJkYSIsMl0sWzEsMiwiXFxlcHNpbG9uIiwyXSxbMCwzLCJcXGVwc2lsb24gXFxkIHQiXSxbMywyLCJcXGxhbWJkYSciXV0=
	\[\begin{tikzcd}
		{e \d t} & {e' \d t} \\
		e & {e'}
		\arrow["\lambda"', from=1-1, to=2-1]
		\arrow["\epsilon"', from=2-1, to=2-2]
		\arrow["{\epsilon \d t}", from=1-1, to=1-2]
		\arrow["{\lambda'}", from=1-2, to=2-2]
	\end{tikzcd}\]
	This is precisely the definition of an algebra (morphism) for a monad (\cf{}~\cite[\S3.1]{kelly1974review}).
\end{proof}

\subsection{Opalgebras as right-actions in a skew-multiactegory}
\label{opalgebras}

\begin{definition}
	\label{right-action}
    Let $\M$ be an associative-normal left-skew-multicategory and let $\A$ be a right-$\M$-multiactegory. An \emph{action} in $\A$ for a monoid $(M, m, u)$ in $\M$ (or simply \emph{$(M, \mu, u)$-action}) comprises
    \begin{enumerate}
        \item an object $A \in \A$, the \emph{carrier};
        \item a multimorphism $a \colon A, M \to A$, the \emph{action},
    \end{enumerate}
    satisfying the following equations.
	\begin{align*}
		\rho_{(A; A), 1}((1_A, u) \d a) & = 1_A &
		(a, 1_M) \d a = (1_A, m) \d a
	\end{align*}
    % \[
    % % https://q.uiver.app/?q=WzAsMixbMCwwLCJNIl0sWzAsMSwiTSJdLFswLDEsIiIsMCx7ImxldmVsIjoyLCJzdHlsZSI6eyJoZWFkIjp7Im5hbWUiOiJub25lIn19fV1d
    % \begin{tikzcd}
	% A \\
	% A
	% \arrow[Rightarrow, no head, from=1-1, to=2-1]
    % \end{tikzcd}
    % \quad = \quad
	% \rho_{(A; A), 1}(
    % \begin{forest}
    %     [$A$ [$a$,multimap [$A$] [$M$ [$u$,multimap [$\bullet$]]]]]
    % \end{forest}
	% )
    % \hspace{4em}
    % \begin{forest}
    %     [$A$ [$a$,multimap [$A$ [$a$,multimap [$A$] [$M$]]] [$M$]]]
    % \end{forest}
    % \quad = \quad
    % \begin{forest}
    %     [$A$ [$a$,multimap [$A$] [$M$ [$m$,multimap [$M$] [$M$]]]]]
    % \end{forest}
    % \]
    An \emph{action homomorphism} from $(A, a)$ to $(A', a')$ is a unary multimorphism $f \colon A \to A'$ satisfying the following equation.
	\begin{align*}
		a \d f & = (f, 1_M) \d a'
	\end{align*}
    % \[
    % \begin{forest}
    %     [$A'$ [$f$,multimap [$A$ [$a$,multimap [$A$] [$M$]]]]]
    % \end{forest}
    % \quad = \quad
    % \begin{forest}
    %     [$A'$ [$a'$,multimap [$A'$ [$f$,multimap [$A$]]] [$M$]]]
    % \end{forest}
    % \]
	$(M, m, u)$-actions and their homomorphisms form a category $\Act(\A, (M, m, u))$ functorial covariantly in $\A$ and contravariantly in $(M, m, u)$. Denote by $U_{\A, (M, m, u)} \colon \Act(\A, (M, m, u)) \to \A_1$ the faithful functor sending each action $(A, a)$ to its carrier $a$.
\end{definition}

\begin{proposition}
	\label{monoid-forms-right-action}
    Let $\M$ be an associative-normal left-skew-multicategory. Then $\M$ forms a right-$\M$-multiactegory. Furthermore any monoid $(M, m, u)$ in $\M$ forms an $(M, m, u)$-action therein.
\end{proposition}

\begin{proof}
    The right-$\M$-multiactegory structure is defined to have the same objects, multimorphisms, and composition as $\M$, from which the laws hold trivially. Given a monoid $(M, m, u)$, we define an action $(M, m)$: the unit and multiplication laws follow from those of the monoid.
\end{proof}

\begin{proposition}
	Let $\X$ be a \vdc{} with a loose-adjunction $j_* \adj j^* \colon E \lto A$ and an object $B$. The loose-cells $a \colon A \lto B$ in $\X$ together with 2-cells of the form $a, j^*, p_1, j^*, p_2, j^*, \ldots, j^*, p_n \tto a'$ form a right-$\X\lh{j_* \adj j^*}$-multiactegory.
\end{proposition}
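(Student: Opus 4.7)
The plan is to mirror the constructions in \cref{skew-multicategorical-hom} and in the preceding proposition on $\X\lh{D, j_* \adj j^*}$, substituting ``action on the right'' for ``action on the left''. Concretely, define $\X\lh{j_* \adj j^*, B}$ to have as objects the loose-cells $A \lto B$, and as multimorphisms $a, p_1, \ldots, p_n \to a'$ (for $n \geq 0$) the 2-cells
\[

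\begin{tikzcd}
	A & B & A & E & \cdots & A & E & A & E \\
	A &&&&&&&& B
	\arrow["a", "\shortmid"{marking}, from=1-1, to=1-2]
	\arrow["{j^*}", "\shortmid"{marking}, from=1-2, to=1-3]
	\arrow["{p_1}", "\shortmid"{marking}, from=1-3, to=1-4]
	\arrow["{j^*}", "\shortmid"{marking}, from=1-4, to=1-5]
	\arrow["{j^*}", "\shortmid"{marking}, from=1-5, to=1-6]
	\arrow["{p_{n-1}}", "\shortmid"{marking}, from=1-6, to=1-7]
	\arrow["{j^*}", "\shortmid"{marking}, from=1-7, to=1-8]
	\arrow["{p_n}", "\shortmid"{marking}, from=1-8, to=1-9]
	\arrow["{a'}"', "\shortmid"{marking}, from=2-1, to=2-9]
	\arrow[""{name=0, anchor=center, inner sep=0}, Rightarrow, no head, from=1-1, to=2-1]
	\arrow[""{name=1, anchor=center, inner sep=0}, Rightarrow, no head, from=1-9, to=2-9]
	\arrow["\phi"{description}, draw=none, from=0, to=1]
\end{tikzcd}
\]
(The only difference from the left case is that the $B$-typed slot appears at the \emph{left} of the domain, so $\bullet$-markers are allowed at every position between or to the right of the $p_i$, but not to the left of $a$; this is precisely the orientation that characterises right-multiactegories as opposed to left-multiactegories.)

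Composition is defined by vertical pasting of the 2-cell $\phi$ with multimorphisms $f_i \colon \vec{p_i} \to p_i$ in $\X\lh{j_* \adj j^*}$, exactly as in \cref{skew-multicategorical-hom}; the identity on $a$ is $1_a$. The left-unitor $\lambda_{(a, \vec p; a'),k}$ for $0 \leq k < n$ is defined by inserting the counit $\varepsilon \colon j_*, j^* \tto A(1,1)$ of the loose-adjunction between $p_k$ and $p_{k+1}$, and the right-unitor $\rho_{(a, \vec p; a'),k}$ for $0 \leq k \leq n$ is defined by inserting the unit $\eta \colon \tto j^* \odot j_*$. (For the definition of a right-multiactegory, $\lambda$ admits $k = 0$ corresponding to inserting $\bullet$ immediately after $a$, and $\rho$ admits $k = n$ corresponding to inserting $\bullet$ at the far right.)

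Once the data are in place, the axioms are verified exactly as in \cref{skew-multicategorical-hom} and the preceding proposition on $\X\lh{D, j_* \adj j^*}$: associativity and unitality of composition follow from the corresponding properties in $\X$; coherence of $\lambda$ and $\rho$ with composition is associativity of 2-cell pasting; and the compatibility $\lambda \d \rho = \mathrm{id}$ follows from the zig-zag laws for $j_* \adj j^*$. Functoriality in $B$ is given by pasting on the right by a tight-cell $B \to B'$. There is no genuine obstacle beyond this bookkeeping: the essential content is the symmetry between left and right actions, together with the already-established fact (in \cref{skew-multicategorical-hom}) that a loose-adjunction $j_* \adj j^*$ produces well-behaved unitors on sequences of $j^*$-interleaved loose-cells.
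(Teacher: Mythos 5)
You follow the paper's route exactly — objects are the loose-cells $A \lto B$, multimorphisms are 2-cells interleaving the $p_i$ with $j^*$, composition is pasting, the unitors come from the counit and unit of $j_* \adj j^*$, the laws hold for the same reasons as in \cref{skew-multicategorical-hom}, and functoriality in $B$ is pasting on the right — but the one piece of data you actually write out, the frame of a multimorphism, is ill-typed. Chains are written in nondiagrammatic order, so $a, j^*, p_1, j^*, \ldots, j^*, p_n$ denotes the composite $a \odot j^* \odot p_1 \odot \cdots \odot j^* \odot p_n \colon A \lto B$: starting from the source one traverses $p_n$ first and $a$ last, so $a$ is the cell adjacent to $B$ and, in a pasting diagram, the rightmost segment, the top row reading $A, E, A, E, \ldots, A, E, A, B$ with $p_n$ leftmost. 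In your diagram $a$ is leftmost, so the next cell $j^* \colon E \lto A$ is drawn with source $B$, and the right-hand boundary closes $E$ against $B$ along an identity tight-cell; that is not a valid frame (it would force $E = B$), so no 2-cells of your shape exist, and there is no well-typed way to interleave the $j^*$'s once $a$ sits at the source end. This placement is forced by the intended right action: a monoid $t$ of $\X\lh{j_* \adj j^*}$ must act on $a$ via a 2-cell $a, j^*, t \tto a$, i.e.\ by precomposing $a$ with $j^* \odot t \colon A \lto A$ on its $A$-side. Your gloss that ``the $B$-typed slot appears at the left of the domain'' conflates the textual listing with the drawn order, and is what produces the error.

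Two smaller slips in your parenthetical on the unitors: $\rho$ removes a marker rather than inserting one, and the endpoints you single out ($\lambda$ at $k = 0$, $\rho$ at $k = n$) are available in the left case as well — what distinguishes the right-multiactegory is that $\lambda$ is unavailable at $k = n$ while $\rho$ becomes available at $k = 0$. Neither point affects the rest of the argument: once the frame is corrected, the composition, unitors, axiom checks, and functoriality in $B$ are indeed verbatim those of \cref{skew-multicategorical-hom}, which is all the paper's own proof does.
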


\begin{proof}
    We define a right-$\X\lh{j_* \adj j^*}$-multiactegory $\X\lh{j_* \adj j^*, B}$ as follows. The class of objects is given by those of $\X\lh{A, B}$. The left- and right-normal multimorphisms $a, p_1, \ldots, p_n \to a'$ ($n \geq 0$) are 2-cells $a, j^*, p_1, j^*, \ldots, j^*, p_n \tto a'$.
    % https://q.uiver.app/#q=WzAsOSxbNiwwLCJBIl0sWzUsMCwiRSJdLFs0LDAsIlxcY2RvdHMiXSxbMywwLCJBIl0sWzIsMCwiRSJdLFs2LDEsIkEiXSxbMCwxLCJCIl0sWzEsMCwiQSJdLFswLDAsIkIiXSxbMCwxLCJwX24iLDIseyJzdHlsZSI6eyJib2R5Ijp7Im5hbWUiOiJiYXJyZWQifX19XSxbMiwzLCJqXioiLDIseyJzdHlsZSI6eyJib2R5Ijp7Im5hbWUiOiJiYXJyZWQifX19XSxbMyw0LCJwXzEiLDIseyJzdHlsZSI6eyJib2R5Ijp7Im5hbWUiOiJiYXJyZWQifX19XSxbNSw2LCJhJyIsMCx7InN0eWxlIjp7ImJvZHkiOnsibmFtZSI6ImJhcnJlZCJ9fX1dLFswLDUsIiIsMCx7ImxldmVsIjoyLCJzdHlsZSI6eyJoZWFkIjp7Im5hbWUiOiJub25lIn19fV0sWzEsMiwial4qIiwyLHsic3R5bGUiOnsiYm9keSI6eyJuYW1lIjoiYmFycmVkIn19fV0sWzQsNywial4qIiwyLHsic3R5bGUiOnsiYm9keSI6eyJuYW1lIjoiYmFycmVkIn19fV0sWzcsOCwiYSIsMix7InN0eWxlIjp7ImJvZHkiOnsibmFtZSI6ImJhcnJlZCJ9fX1dLFs4LDYsIiIsMix7ImxldmVsIjoyLCJzdHlsZSI6eyJoZWFkIjp7Im5hbWUiOiJub25lIn19fV0sWzEzLDE3LCJcXHBoaSIsMSx7InNob3J0ZW4iOnsic291cmNlIjoyMCwidGFyZ2V0IjoyMH0sInN0eWxlIjp7ImJvZHkiOnsibmFtZSI6Im5vbmUifSwiaGVhZCI6eyJuYW1lIjoibm9uZSJ9fX1dXQ==
	\[\begin{tikzcd}
		B & A & E & A & \cdots & E & A \\
		B &&&&&& A
		\arrow["{p_n}"', "\shortmid"{marking}, from=1-7, to=1-6]
		\arrow["{j^*}"', "\shortmid"{marking}, from=1-5, to=1-4]
		\arrow["{p_1}"', "\shortmid"{marking}, from=1-4, to=1-3]
		\arrow["{a'}", "\shortmid"{marking}, from=2-7, to=2-1]
		\arrow[""{name=0, anchor=center, inner sep=0}, Rightarrow, no head, from=1-7, to=2-7]
		\arrow["{j^*}"', "\shortmid"{marking}, from=1-6, to=1-5]
		\arrow["{j^*}"', "\shortmid"{marking}, from=1-3, to=1-2]
		\arrow["a"', "\shortmid"{marking}, from=1-2, to=1-1]
		\arrow[""{name=1, anchor=center, inner sep=0}, Rightarrow, no head, from=1-1, to=2-1]
		\arrow["\phi"{description}, draw=none, from=0, to=1]
	\end{tikzcd}\]
	The general multimorphisms, composition structure, and left- and right-unitors are defined as in \cref{skew-multicategorical-hom}, and satisfy the laws for the same reasons.
	Functoriality in $B$ follows from pasting on the left.
\end{proof}

\begin{definition}
	\label{XjB}
	Let $\X$ be an equipment with a tight-cell $\jAE$. Denote by $\X\lh{j, B}$ the $\X\lh j$-multiactegory $\X\lh{E(1, j) \adj E(j, 1), B}$. Define $\X[j, B]$ to be the full sub-multiactegory of $\X\lh{j, B}$ spanned by the representable loose-cells.
\end{definition}

In particular $\X\lh{j, B}_1 = \X\lh{A, B}$ and $\X[j, B]_1 = \X[A, B]$. We shall unwrap the definition of an action in $\X[j, B]$ to compare it with the definition of an opalgebra for a relative monad. Explicitly, an action in $\X[j, B]$ comprises
\begin{enumerate}
	\item a tight-cell $a \colon A \to B$,
	\item a 2-cell ${\rho \colon B(1, a), E(j, 1), E(1, t) \tto B(1, a)}$,
\end{enumerate}
satisfying the following equations.
\[
\begin{tikzcd}[column sep=large]
	B & A & A \\
	B & A & A \\
	B & A & A \\
	B && A
	\arrow["{B(1, a)}"{description}, from=3-2, to=3-1]
	\arrow["{B(1, a)}", "\shortmid"{marking}, from=4-3, to=4-1]
	\arrow[""{name=0, anchor=center, inner sep=0}, Rightarrow, no head, from=3-3, to=4-3]
	\arrow[""{name=1, anchor=center, inner sep=0}, Rightarrow, no head, from=3-1, to=4-1]
	\arrow[""{name=2, anchor=center, inner sep=0}, Rightarrow, no head, from=2-2, to=3-2]
	\arrow[""{name=3, anchor=center, inner sep=0}, Rightarrow, no head, from=2-1, to=3-1]
	\arrow["{B(1, a)}"{description}, from=2-2, to=2-1]
	\arrow["{E(j, t)}"{description}, from=3-3, to=3-2]
	\arrow[Rightarrow, no head, from=1-3, to=1-2]
	\arrow["{E(j, j)}"{description}, from=2-3, to=2-2]
	\arrow[""{name=4, anchor=center, inner sep=0}, Rightarrow, no head, from=2-3, to=3-3]
	\arrow[""{name=5, anchor=center, inner sep=0}, Rightarrow, no head, from=1-3, to=2-3]
	\arrow[""{name=6, anchor=center, inner sep=0}, Rightarrow, no head, from=1-2, to=2-2]
	\arrow[""{name=7, anchor=center, inner sep=0}, Rightarrow, no head, from=1-1, to=2-1]
	\arrow["{B(1, a)}"', "\shortmid"{marking}, from=1-2, to=1-1]
	\arrow["\rho"{description}, draw=none, from=0, to=1]
	\arrow["{=}"{description}, draw=none, from=2, to=3]
	\arrow["{E(j, \eta)}"{description}, draw=none, from=4, to=2]
	\arrow["{=}"{description}, draw=none, from=6, to=7]
	\arrow["{\pc j}"{description}, draw=none, from=5, to=6]
\end{tikzcd}
\quad = \quad
% https://q.uiver.app/#q=WzAsNCxbMSwxLCJBIl0sWzAsMSwiQiJdLFsxLDAsIkEiXSxbMCwwLCJCIl0sWzAsMSwiQigxLCBhKSIsMCx7InN0eWxlIjp7ImJvZHkiOnsibmFtZSI6ImJhcnJlZCJ9LCJoZWFkIjp7Im5hbWUiOiJub25lIn19fV0sWzIsMCwiIiwxLHsibGV2ZWwiOjIsInN0eWxlIjp7ImhlYWQiOnsibmFtZSI6Im5vbmUifX19XSxbMywxLCIiLDEseyJsZXZlbCI6Miwic3R5bGUiOnsiaGVhZCI6eyJuYW1lIjoibm9uZSJ9fX1dLFsyLDMsIkIoMSwgYSkiLDIseyJzdHlsZSI6eyJib2R5Ijp7Im5hbWUiOiJiYXJyZWQifX19XSxbNSw2LCI9IiwxLHsic2hvcnRlbiI6eyJzb3VyY2UiOjIwLCJ0YXJnZXQiOjIwfSwic3R5bGUiOnsiYm9keSI6eyJuYW1lIjoibm9uZSJ9LCJoZWFkIjp7Im5hbWUiOiJub25lIn19fV1d
\begin{tikzcd}
	B & A \\
	B & A
	\arrow["{B(1, a)}", "\shortmid"{marking}, no head, from=2-2, to=2-1]
	\arrow[""{name=0, anchor=center, inner sep=0}, Rightarrow, no head, from=1-2, to=2-2]
	\arrow[""{name=1, anchor=center, inner sep=0}, Rightarrow, no head, from=1-1, to=2-1]
	\arrow["{B(1, a)}"', "\shortmid"{marking}, from=1-2, to=1-1]
	\arrow["{=}"{description}, draw=none, from=0, to=1]
\end{tikzcd}
\hspace{4em}
% https://varkor.github.io/tangle/?t=W1tbMTEsNyw1XV0sW1tbMSxbMSwwLDEsMF1dLFswLFtdXSxbMCxbXV1dLFtbMSxbMSwwLDEsMF1dLFsxLFswLDEsMSwwXV0sWzEsWzAsMCwxLDFdXV0sW1sxLFsxLDEsMCwwXV0sWzEsWzEsMSwxLDFdXSxbMSxbMSwwLDAsMV1dXSxbWzAsW11dLFsxLFsxLDAsMSwwXV0sWzAsW11dXV0sW1swLDEuNSwyLjUsWyJcXHJobyIsMl1dLFsxLDEuNSwyLFswXV0sWzEsMi41LDIsWzFdXSxbMSwwLjUsMixbMV1dLFswLDIsMS41LFsiXFxldGEiLDFdXSxbMSwwLjUsMSxbMV1dLFsxLDEuNSwzLFsxXV1dLFtbMCwwLDMsImEiXSxbMSwzLDEsImEiXV1d&c=F5A3A3,F5CCA3,F5F5A3,CCF5A3,A3F5A3,A3F5CC,A3F5F5,A3CCF5,A3A3F5,CCA3F5,F5A3F5,F5A3CC
\begin{tangle}{(3,4)}[trim y]
	\tgBorderA{(0,0)}{\tgColour10}{\tgColour6}{\tgColour6}{\tgColour10}
	\tgBlank{(1,0)}{\tgColour6}
	\tgBlank{(2,0)}{\tgColour6}
	\tgBorderA{(0,1)}{\tgColour10}{\tgColour6}{\tgColour6}{\tgColour10}
	\tgBorderA{(1,1)}{\tgColour6}{\tgColour6}{\tgColour4}{\tgColour6}
	\tgBorderA{(2,1)}{\tgColour6}{\tgColour6}{\tgColour6}{\tgColour4}
	\tgBorderA{(0,2)}{\tgColour10}{\tgColour6}{\tgColour10}{\tgColour10}
	\tgBorderA{(1,2)}{\tgColour6}{\tgColour4}{\tgColour6}{\tgColour10}
	\tgBorderA{(2,2)}{\tgColour4}{\tgColour6}{\tgColour6}{\tgColour6}
	\tgBlank{(0,3)}{\tgColour10}
	\tgBorderA{(1,3)}{\tgColour10}{\tgColour6}{\tgColour6}{\tgColour10}
	\tgBlank{(2,3)}{\tgColour6}
	\tgCell[(2,0)]{(1,2)}{\rho}
	\tgArrow{(1,1.5)}{1}
	\tgArrow{(2,1.5)}{3}
	\tgArrow{(0,1.5)}{3}
	\tgCell[(1,0)]{(1.5,1)}{\eta}
	\tgArrow{(0,0.5)}{3}
	\tgArrow{(1,2.5)}{3}
	\tgAxisLabel{(0.5,0.75)}{south}{a}
	\tgAxisLabel{(1.5,3.25)}{north}{a}
\end{tangle}
\tangleeq*
% https://varkor.github.io/tangle/?t=W1tbMTEsN11dLFtbWzEsWzEsMCwxLDBdXV0sW1sxLFsxLDAsMSwwXV1dLFtbMSxbMSwwLDEsMF1dXSxbWzEsWzEsMCwxLDBdXV1dLFtbMSwwLjUsMixbMV1dLFsxLDAuNSwzLFsxXV0sWzEsMC41LDEsWzFdXV0sW1swLDAsMywiYSJdLFswLDMsMSwiYSJdXV0=&c=F5A3A3,F5CCA3,F5F5A3,CCF5A3,A3F5A3,A3F5CC,A3F5F5,A3CCF5,A3A3F5,CCA3F5,F5A3F5,F5A3CC
\begin{tangle}{(1,4)}[trim y]
	\tgBorderA{(0,0)}{\tgColour10}{\tgColour6}{\tgColour6}{\tgColour10}
	\tgBorderA{(0,1)}{\tgColour10}{\tgColour6}{\tgColour6}{\tgColour10}
	\tgBorderA{(0,2)}{\tgColour10}{\tgColour6}{\tgColour6}{\tgColour10}
	\tgBorderA{(0,3)}{\tgColour10}{\tgColour6}{\tgColour6}{\tgColour10}
	\tgArrow{(0,1.5)}{3}
	\tgArrow{(0,2.5)}{3}
	\tgArrow{(0,0.5)}{3}
	\tgAxisLabel{(0.5,0.75)}{south}{a}
	\tgAxisLabel{(0.5,3.25)}{north}{a}
\end{tangle}
\]
\[
\begin{tikzcd}[column sep=1.55em]
	B & A & E & A && E && A \\
	B &&& A && E && A \\
	B &&&&&&& A
	\arrow["{E(1, t)}"{description}, from=2-8, to=2-6]
	\arrow["{E(j, 1)}"{description}, from=2-6, to=2-4]
	\arrow["{B(1, a)}"{description}, from=2-4, to=2-1]
	\arrow["{B(1, a)}", "\shortmid"{marking}, from=3-8, to=3-1]
	\arrow[""{name=0, anchor=center, inner sep=0}, Rightarrow, no head, from=2-8, to=3-8]
	\arrow[""{name=1, anchor=center, inner sep=0}, Rightarrow, no head, from=2-1, to=3-1]
	\arrow["{E(1, t)}"', "\shortmid"{marking}, from=1-8, to=1-6]
	\arrow["{E(j, 1)}"', "\shortmid"{marking}, from=1-6, to=1-4]
	\arrow["{E(1, t)}"', "\shortmid"{marking}, from=1-4, to=1-3]
	\arrow["{E(j, 1)}"', "\shortmid"{marking}, from=1-3, to=1-2]
	\arrow["{B(1, a)}"', "\shortmid"{marking}, from=1-2, to=1-1]
	\arrow[""{name=2, anchor=center, inner sep=0}, Rightarrow, no head, from=1-1, to=2-1]
	\arrow[""{name=3, anchor=center, inner sep=0}, Rightarrow, no head, from=1-8, to=2-8]
	\arrow[""{name=4, anchor=center, inner sep=0}, Rightarrow, no head, from=1-4, to=2-4]
	\arrow[""{name=5, anchor=center, inner sep=0}, Rightarrow, no head, from=1-6, to=2-6]
	\arrow["\rho"{description}, draw=none, from=0, to=1]
	\arrow["\rho"{description}, draw=none, from=4, to=2]
	\arrow["{=}"{description}, draw=none, from=3, to=5]
	\arrow["{=}"{description}, draw=none, from=5, to=4]
\end{tikzcd}
\!=\!
\begin{tikzcd}[column sep=1.54em]
	B && A && E & A & E & A \\
	B && A && E &&& A \\
	B &&&&&&& A
	\arrow["{E(1, t)}"{description}, from=2-8, to=2-5]
	\arrow["{E(j, 1)}"{description}, from=2-5, to=2-3]
	\arrow["{B(1, a)}"{description}, from=2-3, to=2-1]
	\arrow["{B(1, a)}", "\shortmid"{marking}, from=3-8, to=3-1]
	\arrow[""{name=0, anchor=center, inner sep=0}, Rightarrow, no head, from=2-8, to=3-8]
	\arrow[""{name=1, anchor=center, inner sep=0}, Rightarrow, no head, from=2-1, to=3-1]
	\arrow["{E(1, t)}"', "\shortmid"{marking}, from=1-8, to=1-7]
	\arrow["{E(j, 1)}"', "\shortmid"{marking}, from=1-7, to=1-6]
	\arrow["{E(1, t)}"', "\shortmid"{marking}, from=1-6, to=1-5]
	\arrow["{E(j, 1)}"', "\shortmid"{marking}, from=1-5, to=1-3]
	\arrow["{B(1, a)}"', "\shortmid"{marking}, from=1-3, to=1-1]
	\arrow[""{name=2, anchor=center, inner sep=0}, Rightarrow, no head, from=1-1, to=2-1]
	\arrow[""{name=3, anchor=center, inner sep=0}, Rightarrow, no head, from=1-5, to=2-5]
	\arrow[""{name=4, anchor=center, inner sep=0}, Rightarrow, no head, from=1-8, to=2-8]
	\arrow[""{name=5, anchor=center, inner sep=0}, Rightarrow, no head, from=1-3, to=2-3]
	\arrow["\rho"{description}, draw=none, from=0, to=1]
	\arrow["\mu"{description}, draw=none, from=4, to=3]
	\arrow["{=}"{description}, draw=none, from=3, to=5]
	\arrow["{=}"{description}, draw=none, from=5, to=2]
\end{tikzcd}
\]\[
% https://varkor.github.io/tangle/?t=W1tbMTEsNyw1LDcsNSw3XV0sW1tbMSxbMSwwLDEsMF1dLFsxLFsxLDAsMSwwXV0sWzEsWzEsMCwxLDBdXSxbMSxbMSwwLDEsMF1dLFswLFtdXSxbMSxbMSwwLDEsMF1dXSxbWzEsWzEsMSwwLDBdXSxbMSxbMSwxLDEsMV1dLFsxLFsxLDAsMCwxXV0sWzEsWzEsMCwxLDBdXSxbMCxbXV0sWzEsWzEsMCwxLDBdXV0sW1swLFtdXSxbMSxbMSwxLDAsMF1dLFsxLFswLDEsMCwxXV0sWzEsWzEsMSwxLDFdXSxbMSxbMCwxLDAsMV1dLFsxLFsxLDAsMCwxXV1dLFtbMCxbXV0sWzAsW11dLFswLFtdXSxbMSxbMSwwLDEsMF1dLFswLFtdXSxbMCxbXV1dXSxbWzAsMS41LDEuNSxbIlxccmhvIiwyXV0sWzAsMy41LDIuNSxbIlxccmhvIiw0XV0sWzEsMy41LDIsWzBdXSxbMSw1LjUsMixbMV1dLFsxLDEuNSwyLFsxXV0sWzEsMC41LDEsWzFdXSxbMSwyLjUsMSxbMV1dLFsxLDUuNSwxLFsxXV0sWzEsMy41LDMsWzFdXSxbMSwzLjUsMSxbMF1dLFsxLDEuNSwxLFswXV1dLFtbMCwwLDMsImEiXSxbMSwwLDMsImoiXSxbMiwwLDMsInQiXSxbMywwLDMsImoiXSxbNSwwLDMsInQiXSxbMywzLDEsImEiXV1d&c=F5A3A3,F5CCA3,F5F5A3,CCF5A3,A3F5A3,A3F5CC,A3F5F5,A3CCF5,A3A3F5,CCA3F5,F5A3F5,F5A3CC
\begin{tangle}{(6,4)}[trim y]
	\tgBorderA{(0,0)}{\tgColour10}{\tgColour6}{\tgColour6}{\tgColour10}
	\tgBorderA{(1,0)}{\tgColour6}{\tgColour4}{\tgColour4}{\tgColour6}
	\tgBorderA{(2,0)}{\tgColour4}{\tgColour6}{\tgColour6}{\tgColour4}
	\tgBorderA{(3,0)}{\tgColour6}{\tgColour4}{\tgColour4}{\tgColour6}
	\tgBlank{(4,0)}{\tgColour4}
	\tgBorderA{(5,0)}{\tgColour4}{\tgColour6}{\tgColour6}{\tgColour4}
	\tgBorderA{(0,1)}{\tgColour10}{\tgColour6}{\tgColour10}{\tgColour10}
	\tgBorderA{(1,1)}{\tgColour6}{\tgColour4}{\tgColour6}{\tgColour10}
	\tgBorderA{(2,1)}{\tgColour4}{\tgColour6}{\tgColour6}{\tgColour6}
	\tgBorderA{(3,1)}{\tgColour6}{\tgColour4}{\tgColour4}{\tgColour6}
	\tgBlank{(4,1)}{\tgColour4}
	\tgBorderA{(5,1)}{\tgColour4}{\tgColour6}{\tgColour6}{\tgColour4}
	\tgBlank{(0,2)}{\tgColour10}
	\tgBorderA{(1,2)}{\tgColour10}{\tgColour6}{\tgColour10}{\tgColour10}
	\tgBorderA{(2,2)}{\tgColour6}{\tgColour6}{\tgColour10}{\tgColour10}
	\tgBorderA{(3,2)}{\tgColour6}{\tgColour4}{\tgColour6}{\tgColour10}
	\tgBorderA{(4,2)}{\tgColour4}{\tgColour4}{\tgColour6}{\tgColour6}
	\tgBorderA{(5,2)}{\tgColour4}{\tgColour6}{\tgColour6}{\tgColour6}
	\tgBlank{(0,3)}{\tgColour10}
	\tgBlank{(1,3)}{\tgColour10}
	\tgBlank{(2,3)}{\tgColour10}
	\tgBorderA{(3,3)}{\tgColour10}{\tgColour6}{\tgColour6}{\tgColour10}
	\tgBlank{(4,3)}{\tgColour6}
	\tgBlank{(5,3)}{\tgColour6}
	\tgCell[(2,0)]{(1,1)}{\rho}
	\tgCell[(4,0)]{(3,2)}{\rho}
	\tgArrow{(3,1.5)}{1}
	\tgArrow{(5,1.5)}{3}
	\tgArrow{(1,1.5)}{3}
	\tgArrow{(0,0.5)}{3}
	\tgArrow{(2,0.5)}{3}
	\tgArrow{(5,0.5)}{3}
	\tgArrow{(3,2.5)}{3}
	\tgArrow{(3,0.5)}{1}
	\tgArrow{(1,0.5)}{1}
	\tgAxisLabel{(0.5,0.75)}{south}{a}
	\tgAxisLabel{(1.5,0.75)}{south}{j}
	\tgAxisLabel{(2.5,0.75)}{south}{t}
	\tgAxisLabel{(3.5,0.75)}{south}{j}
	\tgAxisLabel{(5.5,0.75)}{south}{t}
	\tgAxisLabel{(3.5,3.25)}{north}{a}
\end{tangle}
\tangleeq*
% https://varkor.github.io/tangle/?t=W1tbMTEsNyw1LDcsNSw3XV0sW1tbMSxbMSwwLDEsMF1dLFswLFtdXSxbMSxbMSwwLDEsMF1dLFsxLFsxLDAsMSwwXV0sWzEsWzEsMCwxLDBdXSxbMSxbMSwwLDEsMF1dXSxbWzEsWzEsMCwxLDBdXSxbMCxbXV0sWzEsWzEsMCwxLDBdXSxbMSxbMSwxLDAsMF1dLFsxLFsxLDEsMSwxXV0sWzEsWzEsMCwwLDFdXV0sW1sxLFsxLDEsMCwwXV0sWzEsWzAsMSwwLDFdXSxbMSxbMSwxLDEsMV1dLFsxLFswLDEsMCwxXV0sWzEsWzEsMCwwLDFdXSxbMCxbXV1dLFtbMCxbXV0sWzAsW11dLFsxLFsxLDAsMSwwXV0sWzAsW11dLFswLFtdXSxbMCxbXV1dXSxbWzAsMi41LDIuNSxbIlxccmhvIiw0XV0sWzEsMi41LDIsWzBdXSxbMSw0LjUsMixbMV1dLFsxLDAuNSwyLFsxXV0sWzAsNC41LDEuNSxbIlxcbXUiLDJdXSxbMSwwLjUsMSxbMV1dLFsxLDMuNSwxLFsxXV0sWzEsNS41LDEsWzFdXSxbMSwyLjUsMyxbMV1dLFsxLDIuNSwxLFswXV0sWzEsNC41LDEsWzBdXV0sW1swLDAsMywiYSJdLFsyLDAsMywiaiJdLFszLDAsMywidCJdLFs0LDAsMywiaiJdLFs1LDAsMywidCJdLFsyLDMsMSwiYSJdXV0=&c=F5A3A3,F5CCA3,F5F5A3,CCF5A3,A3F5A3,A3F5CC,A3F5F5,A3CCF5,A3A3F5,CCA3F5,F5A3F5,F5A3CC
\begin{tangle}{(6,4)}[trim y]
	\tgBorderA{(0,0)}{\tgColour10}{\tgColour6}{\tgColour6}{\tgColour10}
	\tgBlank{(1,0)}{\tgColour6}
	\tgBorderA{(2,0)}{\tgColour6}{\tgColour4}{\tgColour4}{\tgColour6}
	\tgBorderA{(3,0)}{\tgColour4}{\tgColour6}{\tgColour6}{\tgColour4}
	\tgBorderA{(4,0)}{\tgColour6}{\tgColour4}{\tgColour4}{\tgColour6}
	\tgBorderA{(5,0)}{\tgColour4}{\tgColour6}{\tgColour6}{\tgColour4}
	\tgBorderA{(0,1)}{\tgColour10}{\tgColour6}{\tgColour6}{\tgColour10}
	\tgBlank{(1,1)}{\tgColour6}
	\tgBorderA{(2,1)}{\tgColour6}{\tgColour4}{\tgColour4}{\tgColour6}
	\tgBorderA{(3,1)}{\tgColour4}{\tgColour6}{\tgColour4}{\tgColour4}
	\tgBorderA{(4,1)}{\tgColour6}{\tgColour4}{\tgColour6}{\tgColour4}
	\tgBorderA{(5,1)}{\tgColour4}{\tgColour6}{\tgColour6}{\tgColour6}
	\tgBorderA{(0,2)}{\tgColour10}{\tgColour6}{\tgColour10}{\tgColour10}
	\tgBorderA{(1,2)}{\tgColour6}{\tgColour6}{\tgColour10}{\tgColour10}
	\tgBorderA{(2,2)}{\tgColour6}{\tgColour4}{\tgColour6}{\tgColour10}
	\tgBorderA{(3,2)}{\tgColour4}{\tgColour4}{\tgColour6}{\tgColour6}
	\tgBorderA{(4,2)}{\tgColour4}{\tgColour6}{\tgColour6}{\tgColour6}
	\tgBlank{(5,2)}{\tgColour6}
	\tgBlank{(0,3)}{\tgColour10}
	\tgBlank{(1,3)}{\tgColour10}
	\tgBorderA{(2,3)}{\tgColour10}{\tgColour6}{\tgColour6}{\tgColour10}
	\tgBlank{(3,3)}{\tgColour6}
	\tgBlank{(4,3)}{\tgColour6}
	\tgBlank{(5,3)}{\tgColour6}
	\tgCell[(4,0)]{(2,2)}{\rho}
	\tgArrow{(2,1.5)}{1}
	\tgArrow{(4,1.5)}{3}
	\tgArrow{(0,1.5)}{3}
	\tgCell[(2,0)]{(4,1)}{\mu}
	\tgArrow{(0,0.5)}{3}
	\tgArrow{(3,0.5)}{3}
	\tgArrow{(5,0.5)}{3}
	\tgArrow{(2,2.5)}{3}
	\tgArrow{(2,0.5)}{1}
	\tgArrow{(4,0.5)}{1}
	\tgAxisLabel{(0.5,0.75)}{south}{a}
	\tgAxisLabel{(2.5,0.75)}{south}{j}
	\tgAxisLabel{(3.5,0.75)}{south}{t}
	\tgAxisLabel{(4.5,0.75)}{south}{j}
	\tgAxisLabel{(5.5,0.75)}{south}{t}
	\tgAxisLabel{(2.5,3.25)}{north}{a}
\end{tangle}
\]
An action homomorphism is a 2-cell $B(1, \alpha) \colon B(1, a) \tto B(1, a')$ satisfying the following equation.
\[
% https://q.uiver.app/#q=WzAsOCxbMywwLCJBIl0sWzIsMCwiRSJdLFsxLDAsIkEiXSxbMCwwLCJCIl0sWzMsMSwiQSJdLFswLDEsIkIiXSxbMywyLCJBIl0sWzAsMiwiQiJdLFswLDEsIkUoMSwgdCkiLDIseyJzdHlsZSI6eyJib2R5Ijp7Im5hbWUiOiJiYXJyZWQifX19XSxbMSwyLCJFKGosIDEpIiwyLHsic3R5bGUiOnsiYm9keSI6eyJuYW1lIjoiYmFycmVkIn19fV0sWzIsMywiQigxLCBhKSIsMix7InN0eWxlIjp7ImJvZHkiOnsibmFtZSI6ImJhcnJlZCJ9fX1dLFs0LDUsIkIoMSwgYSkiLDFdLFswLDQsIiIsMCx7ImxldmVsIjoyLCJzdHlsZSI6eyJoZWFkIjp7Im5hbWUiOiJub25lIn19fV0sWzMsNSwiIiwyLHsibGV2ZWwiOjIsInN0eWxlIjp7ImhlYWQiOnsibmFtZSI6Im5vbmUifX19XSxbNiw3LCJCKDEsIGEnKSIsMCx7InN0eWxlIjp7ImJvZHkiOnsibmFtZSI6ImJhcnJlZCJ9fX1dLFs0LDYsIiIsMCx7ImxldmVsIjoyLCJzdHlsZSI6eyJoZWFkIjp7Im5hbWUiOiJub25lIn19fV0sWzUsNywiIiwwLHsibGV2ZWwiOjIsInN0eWxlIjp7ImhlYWQiOnsibmFtZSI6Im5vbmUifX19XSxbMTIsMTMsIlxccmhvIiwxLHsic2hvcnRlbiI6eyJzb3VyY2UiOjIwLCJ0YXJnZXQiOjIwfSwic3R5bGUiOnsiYm9keSI6eyJuYW1lIjoibm9uZSJ9LCJoZWFkIjp7Im5hbWUiOiJub25lIn19fV0sWzE1LDE2LCJCKDEsIFxcYWxwaGEpIiwxLHsic2hvcnRlbiI6eyJzb3VyY2UiOjIwLCJ0YXJnZXQiOjIwfSwic3R5bGUiOnsiYm9keSI6eyJuYW1lIjoibm9uZSJ9LCJoZWFkIjp7Im5hbWUiOiJub25lIn19fV1d
\begin{tikzcd}[column sep=large]
	B & A & E & A \\
	B &&& A \\
	B &&& A
	\arrow["{E(1, t)}"', "\shortmid"{marking}, from=1-4, to=1-3]
	\arrow["{E(j, 1)}"', "\shortmid"{marking}, from=1-3, to=1-2]
	\arrow["{B(1, a)}"', "\shortmid"{marking}, from=1-2, to=1-1]
	\arrow["{B(1, a)}"{description}, from=2-4, to=2-1]
	\arrow[""{name=0, anchor=center, inner sep=0}, Rightarrow, no head, from=1-4, to=2-4]
	\arrow[""{name=1, anchor=center, inner sep=0}, Rightarrow, no head, from=1-1, to=2-1]
	\arrow["{B(1, a')}", "\shortmid"{marking}, from=3-4, to=3-1]
	\arrow[""{name=2, anchor=center, inner sep=0}, Rightarrow, no head, from=2-4, to=3-4]
	\arrow[""{name=3, anchor=center, inner sep=0}, Rightarrow, no head, from=2-1, to=3-1]
	\arrow["\rho"{description}, draw=none, from=0, to=1]
	\arrow["{B(1, \alpha)}"{description}, draw=none, from=2, to=3]
\end{tikzcd}
\quad = \quad
% https://q.uiver.app/#q=WzAsMTAsWzMsMSwiQSJdLFsyLDEsIkUiXSxbMSwxLCJBIl0sWzAsMSwiQiJdLFszLDIsIkEiXSxbMCwyLCJCIl0sWzMsMCwiQSJdLFsyLDAsIkUiXSxbMSwwLCJBIl0sWzAsMCwiQiJdLFswLDEsIkUoMSwgdCkiLDFdLFsxLDIsIkUoaiwgMSkiLDFdLFsyLDMsIkIoMSwgYScpIiwxXSxbNCw1LCJCKDEsIGEnKSIsMCx7InN0eWxlIjp7ImJvZHkiOnsibmFtZSI6ImJhcnJlZCJ9fX1dLFswLDQsIiIsMCx7ImxldmVsIjoyLCJzdHlsZSI6eyJoZWFkIjp7Im5hbWUiOiJub25lIn19fV0sWzMsNSwiIiwyLHsibGV2ZWwiOjIsInN0eWxlIjp7ImhlYWQiOnsibmFtZSI6Im5vbmUifX19XSxbNiw3LCJFKDEsIHQpIiwyLHsic3R5bGUiOnsiYm9keSI6eyJuYW1lIjoiYmFycmVkIn19fV0sWzcsOCwiRShqLCAxKSIsMix7InN0eWxlIjp7ImJvZHkiOnsibmFtZSI6ImJhcnJlZCJ9fX1dLFs4LDksIkIoMSwgYSkiLDIseyJzdHlsZSI6eyJib2R5Ijp7Im5hbWUiOiJiYXJyZWQifX19XSxbOCwyLCIiLDEseyJsZXZlbCI6Miwic3R5bGUiOnsiaGVhZCI6eyJuYW1lIjoibm9uZSJ9fX1dLFs5LDMsIiIsMSx7ImxldmVsIjoyLCJzdHlsZSI6eyJoZWFkIjp7Im5hbWUiOiJub25lIn19fV0sWzYsMCwiIiwxLHsibGV2ZWwiOjIsInN0eWxlIjp7ImhlYWQiOnsibmFtZSI6Im5vbmUifX19XSxbNywxLCIiLDEseyJsZXZlbCI6Miwic3R5bGUiOnsiaGVhZCI6eyJuYW1lIjoibm9uZSJ9fX1dLFsxNCwxNSwiXFxyaG8nIiwxLHsic2hvcnRlbiI6eyJzb3VyY2UiOjIwLCJ0YXJnZXQiOjIwfSwic3R5bGUiOnsiYm9keSI6eyJuYW1lIjoibm9uZSJ9LCJoZWFkIjp7Im5hbWUiOiJub25lIn19fV0sWzE5LDIwLCJCKDEsIFxcYWxwaGEpIiwxLHsic2hvcnRlbiI6eyJzb3VyY2UiOjIwLCJ0YXJnZXQiOjIwfSwic3R5bGUiOnsiYm9keSI6eyJuYW1lIjoibm9uZSJ9LCJoZWFkIjp7Im5hbWUiOiJub25lIn19fV0sWzIyLDE5LCI9IiwxLHsic2hvcnRlbiI6eyJzb3VyY2UiOjIwLCJ0YXJnZXQiOjIwfSwic3R5bGUiOnsiYm9keSI6eyJuYW1lIjoibm9uZSJ9LCJoZWFkIjp7Im5hbWUiOiJub25lIn19fV0sWzIxLDIyLCI9IiwxLHsic2hvcnRlbiI6eyJzb3VyY2UiOjIwLCJ0YXJnZXQiOjIwfSwic3R5bGUiOnsiYm9keSI6eyJuYW1lIjoibm9uZSJ9LCJoZWFkIjp7Im5hbWUiOiJub25lIn19fV1d
\begin{tikzcd}[column sep=large]
	B & A & E & A \\
	B & A & E & A \\
	B &&& A
	\arrow["{E(1, t)}"{description}, from=2-4, to=2-3]
	\arrow["{E(j, 1)}"{description}, from=2-3, to=2-2]
	\arrow["{B(1, a')}"{description}, from=2-2, to=2-1]
	\arrow["{B(1, a')}", "\shortmid"{marking}, from=3-4, to=3-1]
	\arrow[""{name=0, anchor=center, inner sep=0}, Rightarrow, no head, from=2-4, to=3-4]
	\arrow[""{name=1, anchor=center, inner sep=0}, Rightarrow, no head, from=2-1, to=3-1]
	\arrow["{E(1, t)}"', "\shortmid"{marking}, from=1-4, to=1-3]
	\arrow["{E(j, 1)}"', "\shortmid"{marking}, from=1-3, to=1-2]
	\arrow["{B(1, a)}"', "\shortmid"{marking}, from=1-2, to=1-1]
	\arrow[""{name=2, anchor=center, inner sep=0}, Rightarrow, no head, from=1-2, to=2-2]
	\arrow[""{name=3, anchor=center, inner sep=0}, Rightarrow, no head, from=1-1, to=2-1]
	\arrow[""{name=4, anchor=center, inner sep=0}, Rightarrow, no head, from=1-4, to=2-4]
	\arrow[""{name=5, anchor=center, inner sep=0}, Rightarrow, no head, from=1-3, to=2-3]
	\arrow["{\rho'}"{description}, draw=none, from=0, to=1]
	\arrow["{B(1, \alpha)}"{description}, draw=none, from=2, to=3]
	\arrow["{=}"{description}, draw=none, from=4, to=5]
	\arrow["{=}"{description}, draw=none, from=5, to=2]
\end{tikzcd}
\]\[
% https://varkor.github.io/tangle/?t=W1tbMTEsNyw1LDddXSxbW1sxLFsxLDAsMSwwXV0sWzEsWzEsMCwxLDBdXSxbMSxbMSwwLDEsMF1dXSxbWzEsWzEsMSwwLDBdXSxbMSxbMSwxLDEsMV1dLFsxLFsxLDAsMCwxXV1dLFtbMCxbXV0sWzEsWzEsMCwxLDBdXSxbMCxbXV1dLFtbMCxbXV0sWzEsWzEsMCwxLDBdXSxbMCxbXV1dXSxbWzAsMS41LDEuNSxbIlxccmhvIiwyXV0sWzAsMS41LDIuNSxbIlxcYWxwaGEiXV0sWzEsMS41LDIsWzFdXSxbMSwwLjUsMSxbMV1dLFsxLDIuNSwxLFsxXV0sWzEsMS41LDMsWzFdXSxbMSwxLjUsMSxbMF1dXSxbWzAsMCwzLCJhIl0sWzEsMCwzLCJqIl0sWzIsMCwzLCJ0Il0sWzEsMywxLCJhJyJdXV0=&c=F5A3A3,F5CCA3,F5F5A3,CCF5A3,A3F5A3,A3F5CC,A3F5F5,A3CCF5,A3A3F5,CCA3F5,F5A3F5,F5A3CC
\begin{tangle}{(3,4)}[trim y]
	\tgBorderA{(0,0)}{\tgColour10}{\tgColour6}{\tgColour6}{\tgColour10}
	\tgBorderA{(1,0)}{\tgColour6}{\tgColour4}{\tgColour4}{\tgColour6}
	\tgBorderA{(2,0)}{\tgColour4}{\tgColour6}{\tgColour6}{\tgColour4}
	\tgBorderA{(0,1)}{\tgColour10}{\tgColour6}{\tgColour10}{\tgColour10}
	\tgBorderA{(1,1)}{\tgColour6}{\tgColour4}{\tgColour6}{\tgColour10}
	\tgBorderA{(2,1)}{\tgColour4}{\tgColour6}{\tgColour6}{\tgColour6}
	\tgBlank{(0,2)}{\tgColour10}
	\tgBorderA{(1,2)}{\tgColour10}{\tgColour6}{\tgColour6}{\tgColour10}
	\tgBlank{(2,2)}{\tgColour6}
	\tgBlank{(0,3)}{\tgColour10}
	\tgBorderA{(1,3)}{\tgColour10}{\tgColour6}{\tgColour6}{\tgColour10}
	\tgBlank{(2,3)}{\tgColour6}
	\tgCell[(2,0)]{(1,1)}{\rho}
	\tgCell{(1,2)}{\alpha}
	\tgArrow{(1,1.5)}{3}
	\tgArrow{(0,0.5)}{3}
	\tgArrow{(2,0.5)}{3}
	\tgArrow{(1,2.5)}{3}
	\tgArrow{(1,0.5)}{1}
	\tgAxisLabel{(0.5,0.75)}{south}{a}
	\tgAxisLabel{(1.5,0.75)}{south}{j}
	\tgAxisLabel{(2.5,0.75)}{south}{t}
	\tgAxisLabel{(1.5,3.25)}{north}{a'}
\end{tangle}
\tangleeq*
% https://varkor.github.io/tangle/?t=W1tbMTEsNyw1LDddXSxbW1sxLFsxLDAsMSwwXV0sWzEsWzEsMCwxLDBdXSxbMSxbMSwwLDEsMF1dXSxbWzEsWzEsMCwxLDBdXSxbMSxbMSwwLDEsMF1dLFsxLFsxLDAsMSwwXV1dLFtbMSxbMSwxLDAsMF1dLFsxLFsxLDEsMSwxXV0sWzEsWzEsMCwwLDFdXV0sW1swLFtdXSxbMSxbMSwwLDEsMF1dLFswLFtdXV1dLFtbMCwxLjUsMi41LFsiXFxyaG8nIiwyXV0sWzAsMi41LDEuNSxbIlxcYWxwaGEiXV0sWzEsMS41LDIsWzBdXSxbMSwwLjUsMixbMV1dLFsxLDIuNSwyLFsxXV0sWzEsMi41LDEsWzFdXSxbMSwwLjUsMSxbMV1dLFsxLDEuNSwxLFswXV0sWzEsMS41LDMsWzFdXV0sW1swLDAsMywiYSJdLFsxLDAsMywiaiJdLFsyLDAsMywidCJdLFsxLDMsMSwiYSciXV1d&c=F5A3A3,F5CCA3,F5F5A3,CCF5A3,A3F5A3,A3F5CC,A3F5F5,A3CCF5,A3A3F5,CCA3F5,F5A3F5,F5A3CC
\begin{tangle}{(3,4)}[trim y]
	\tgBorderA{(0,0)}{\tgColour10}{\tgColour6}{\tgColour6}{\tgColour10}
	\tgBorderA{(1,0)}{\tgColour6}{\tgColour4}{\tgColour4}{\tgColour6}
	\tgBorderA{(2,0)}{\tgColour4}{\tgColour6}{\tgColour6}{\tgColour4}
	\tgBorderA{(0,1)}{\tgColour10}{\tgColour6}{\tgColour6}{\tgColour10}
	\tgBorderA{(1,1)}{\tgColour6}{\tgColour4}{\tgColour4}{\tgColour6}
	\tgBorderA{(2,1)}{\tgColour4}{\tgColour6}{\tgColour6}{\tgColour4}
	\tgBorderA{(0,2)}{\tgColour10}{\tgColour6}{\tgColour10}{\tgColour10}
	\tgBorderA{(1,2)}{\tgColour6}{\tgColour4}{\tgColour6}{\tgColour10}
	\tgBorderA{(2,2)}{\tgColour4}{\tgColour6}{\tgColour6}{\tgColour6}
	\tgBlank{(0,3)}{\tgColour10}
	\tgBorderA{(1,3)}{\tgColour10}{\tgColour6}{\tgColour6}{\tgColour10}
	\tgBlank{(2,3)}{\tgColour6}
	\tgCell[(2,0)]{(1,2)}{\rho'}
	\tgCell{(2,1)}{\alpha}
	\tgArrow{(1,1.5)}{1}
	\tgArrow{(0,1.5)}{3}
	\tgArrow{(2,1.5)}{3}
	\tgArrow{(2,0.5)}{3}
	\tgArrow{(0,0.5)}{3}
	\tgArrow{(1,0.5)}{1}
	\tgArrow{(1,2.5)}{3}
	\tgAxisLabel{(0.5,0.75)}{south}{a}
	\tgAxisLabel{(1.5,0.75)}{south}{j}
	\tgAxisLabel{(2.5,0.75)}{south}{t}
	\tgAxisLabel{(1.5,3.25)}{north}{a'}
\end{tangle}
\]

We may now exhibit the $T$-opalgebras of \cref{opalgebra} as actions in $\X[j, B]$.

\begin{theorem}
    \label{opalgebras-are-right-actions}
    There is an isomorphism of categories rendering the following diagram commutative, natural in $B$ and $T$.
    % https://q.uiver.app/?q=WzAsMyxbMCwwLCJUXFxoXFxPcGFsZ19CIl0sWzIsMCwiXFxBY3QoXFxYW2osIEJdLCBUKSJdLFsxLDEsIlxcWFtBLCBCXSJdLFswLDIsIlVfe1QsIEJ9IiwyXSxbMSwyLCJVX3tcXFhbaiwgQl0sIFR9Il0sWzAsMSwiXFxpc28iXV0=
	\[\begin{tikzcd}
		{T\h\Opalg_B} && {\Act(\X[j, B], T)} \\
		& {\X[A, B]}
		\arrow["{U_{T, B}}"', from=1-1, to=2-2]
		\arrow["{U_{\X[j, B], T}}", from=1-3, to=2-2]
		\arrow["\iso", from=1-1, to=1-3]
	\end{tikzcd}\]
\end{theorem}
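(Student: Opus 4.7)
The plan is to mirror the argument of \cref{algebras-are-left-actions}, which established the analogous statement for algebras as left actions. Given a right-$\X\lh{j}$-action $\rho \colon B(1, a), E(j, 1), E(1, t) \tto B(1, a)$ in $\X[j, B]$, we may read its input as a composite via the opcartesian 2-cell $E(j, 1), E(1, t) \tto E(j, t)$, presenting $\rho$ equivalently as a 2-cell $B(1, a), E(j, t) \tto B(1, a)$. Bending the input $B(1, a)$ up to the codomain by pasting with the unit $\pc{a}$ of the loose-adjunction $B(1, a) \adj B(a, 1)$ (\cref{loose-adjunction-induced-by-tight-cell}), and using the isomorphism $B(a, 1) \odot B(1, a) \iso B(a, a)$, yields a 2-cell $\oop \colon E(j, t) \tto B(a, a)$. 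Dually, given $\oop$, one bends the $B(a, 1)$ factor of $B(a, a)$ back down using $\cp a$, obtaining a right action. The zig-zag laws for $B(1, a) \adj B(a, 1)$ ensure these assignments are mutually inverse, exactly as in the algebras case.

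Next I would check that this bijection on underlying data sends the opalgebra axioms to the action axioms. The unit law for the opalgebra, expressed in terms of $\oop$ and $\eta$, translates under the bending operation to the unit law $(u, 1_{B(1,a)}) \d \rho = \lambda_{(B(1,a); B(1,a)), 0}(1_{B(1,a)})$, using the zig-zag law to absorb the bent strand. Similarly, the associativity law for $\oop$ (involving $\dag$) translates to the action axiom $(m, 1) \d \rho = (1, \rho) \d \rho$, since both correspond to the same pasting after bending is undone on both sides of the equation. Opalgebra morphisms $\alpha \colon a \tto a'$ correspond under the same bending to 2-cells $B(1, \alpha) \colon B(1, a) \tto B(1, a')$ satisfying the action-homomorphism axiom, again by a direct string-diagrammatic calculation.

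Finally I would verify commutativity of the forgetful triangle, which is essentially by construction (both functors send the structure to its underlying tight-cell $a$), together with naturality in $B$ (via pasting with tight-cells on the right) and in $T$ (via the contravariant assignment of monoid morphisms).

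There is no serious obstacle here: the entire argument is the right-handed, nonformal dual of \cref{algebras-are-left-actions}. The only mild subtlety is that this is \emph{not} a formal duality in $\X$ (opalgebras are not algebras in $\X\co$ for the same $j$-monad, owing to the asymmetric role of $j$); consequently the argument must be reproduced rather than deduced, even though each individual step is a routine transcription of the corresponding step in the algebras case.
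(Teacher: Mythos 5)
Your proposal matches the paper's own argument: the paper likewise defines the correspondence by bending the string for $a$ via the companion–conjoint adjunction (transposing $B(1,a)$ against $B(a,1)$) and observes that the opalgebra laws and morphism conditions transcribe directly into the action laws, with the forgetful triangle commuting by construction. The observation that this must be redone rather than formally dualised from \cref{algebras-are-left-actions} is also in line with the paper's discussion at the start of \cref{algebras-and-opalgebras}.
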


\begin{proof}
    Observe that, given an action, we can define an opalgebra structure, and conversely:
    \[
    % https://varkor.github.io/tangle/?t=W1tbNyw1LDcsMTFdXSxbW1swLFtdXSxbMCxbXV0sWzEsWzEsMCwxLDBdXSxbMSxbMSwwLDEsMF1dXSxbWzIsWzJdXSxbMixbM11dLFsxLFsxLDAsMSwwXV0sWzEsWzEsMCwxLDBdXV0sW1sxLFsxLDAsMSwwXV0sWzEsWzEsMSwwLDBdXSxbMSxbMSwxLDEsMV1dLFsxLFsxLDAsMCwxXV1dLFtbMSxbMSwwLDEsMF1dLFswLFtdXSxbMSxbMSwwLDEsMF1dLFswLFtdXV1dLFtbMCwyLjUsMi41LFsiXFxyaG8iLDJdXSxbMSwyLjUsMixbMF1dLFsxLDMuNSwyLFsxXV0sWzEsMS41LDIsWzFdXSxbMSwxLDEuNSxbMF1dLFsxLDAuNSwyLFswXV0sWzEsMi41LDEsWzBdXSxbMSwzLjUsMSxbMV1dLFsxLDIuNSwzLFsxXV0sWzEsMC41LDMsWzBdXV0sW1syLDAsMywiaiJdLFszLDAsMywidCJdLFswLDMsMSwiYSJdLFsyLDMsMSwiYSJdXV0=&c=F5A3A3,F5CCA3,F5F5A3,CCF5A3,A3F5A3,A3F5CC,A3F5F5,A3CCF5,A3A3F5,CCA3F5,F5A3F5,F5A3CC
	\begin{tangle}{(4,4)}[trim y]
		\tgBlank{(0,0)}{\tgColour6}
		\tgBlank{(1,0)}{\tgColour6}
		\tgBorderA{(2,0)}{\tgColour6}{\tgColour4}{\tgColour4}{\tgColour6}
		\tgBorderA{(3,0)}{\tgColour4}{\tgColour6}{\tgColour6}{\tgColour4}
		\tgBorderC{(0,1)}{3}{\tgColour6}{\tgColour10}
		\tgBorderC{(1,1)}{2}{\tgColour6}{\tgColour10}
		\tgBorderA{(2,1)}{\tgColour6}{\tgColour4}{\tgColour4}{\tgColour6}
		\tgBorderA{(3,1)}{\tgColour4}{\tgColour6}{\tgColour6}{\tgColour4}
		\tgBorderA{(0,2)}{\tgColour6}{\tgColour10}{\tgColour10}{\tgColour6}
		\tgBorderA{(1,2)}{\tgColour10}{\tgColour6}{\tgColour10}{\tgColour10}
		\tgBorderA{(2,2)}{\tgColour6}{\tgColour4}{\tgColour6}{\tgColour10}
		\tgBorderA{(3,2)}{\tgColour4}{\tgColour6}{\tgColour6}{\tgColour6}
		\tgBorderA{(0,3)}{\tgColour6}{\tgColour10}{\tgColour10}{\tgColour6}
		\tgBlank{(1,3)}{\tgColour10}
		\tgBorderA{(2,3)}{\tgColour10}{\tgColour6}{\tgColour6}{\tgColour10}
		\tgBlank{(3,3)}{\tgColour6}
		\tgCell[(2,0)]{(2,2)}{\rho}
		\tgArrow{(2,1.5)}{1}
		\tgArrow{(3,1.5)}{3}
		\tgArrow{(1,1.5)}{3}
		\tgArrow{(0.5,1)}{0}
		\tgArrow{(0,1.5)}{1}
		\tgArrow{(2,0.5)}{1}
		\tgArrow{(3,0.5)}{3}
		\tgArrow{(2,2.5)}{3}
		\tgArrow{(0,2.5)}{1}
		\tgAxisLabel{(2.5,0.75)}{south}{j}
		\tgAxisLabel{(3.5,0.75)}{south}{t}
		\tgAxisLabel{(0.5,3.25)}{north}{a}
		\tgAxisLabel{(2.5,3.25)}{north}{a}
	\end{tangle}
    \hspace{4em}
    % https://varkor.github.io/tangle/?t=W1tbMTEsNyw1LDddXSxbW1sxLFsxLDAsMSwwXV0sWzEsWzEsMCwxLDBdXSxbMSxbMSwwLDEsMF1dXSxbWzEsWzEsMCwxLDBdXSxbMSxbMSwxLDEsMF1dLFsxLFsxLDAsMSwxXV1dLFtbMixbMV1dLFsyLFswXV0sWzEsWzEsMCwxLDBdXV0sW1swLFtdXSxbMCxbXV0sWzEsWzEsMCwxLDBdXV1dLFtbMCwyLDEuNSxbIlxcb29wIiwxXV0sWzEsMi41LDIsWzFdXSxbMSwxLjUsMixbMF1dLFsxLDEsMi41LFswXV0sWzEsMC41LDIsWzFdXSxbMSwyLjUsMyxbMV1dLFsxLDIuNSwxLFsxXV0sWzEsMS41LDEsWzBdXSxbMSwwLjUsMSxbMV1dXSxbWzAsMCwzLCJhIl0sWzEsMCwzLCJqIl0sWzIsMCwzLCJ0Il0sWzIsMywxLCJhIl1dXQ==&c=F5A3A3,F5CCA3,F5F5A3,CCF5A3,A3F5A3,A3F5CC,A3F5F5,A3CCF5,A3A3F5,CCA3F5,F5A3F5,F5A3CC
	\begin{tangle}{(3,4)}[trim y]
		\tgBorderA{(0,0)}{\tgColour10}{\tgColour6}{\tgColour6}{\tgColour10}
		\tgBorderA{(1,0)}{\tgColour6}{\tgColour4}{\tgColour4}{\tgColour6}
		\tgBorderA{(2,0)}{\tgColour4}{\tgColour6}{\tgColour6}{\tgColour4}
		\tgBorderA{(0,1)}{\tgColour10}{\tgColour6}{\tgColour6}{\tgColour10}
		\tgBorderA{(1,1)}{\tgColour6}{\tgColour4}{\tgColour10}{\tgColour6}
		\tgBorderA{(2,1)}{\tgColour4}{\tgColour6}{\tgColour6}{\tgColour10}
		\tgBorderC{(0,2)}{0}{\tgColour10}{\tgColour6}
		\tgBorderC{(1,2)}{1}{\tgColour10}{\tgColour6}
		\tgBorderA{(2,2)}{\tgColour10}{\tgColour6}{\tgColour6}{\tgColour10}
		\tgBlank{(0,3)}{\tgColour10}
		\tgBlank{(1,3)}{\tgColour10}
		\tgBorderA{(2,3)}{\tgColour10}{\tgColour6}{\tgColour6}{\tgColour10}
		\tgCell[(1,0)]{(1.5,1)}{\oop}
		\tgArrow{(2,1.5)}{3}
		\tgArrow{(1,1.5)}{1}
		\tgArrow{(0.5,2)}{0}
		\tgArrow{(0,1.5)}{3}
		\tgArrow{(2,2.5)}{3}
		\tgArrow{(2,0.5)}{3}
		\tgArrow{(1,0.5)}{1}
		\tgArrow{(0,0.5)}{3}
		\tgAxisLabel{(0.5,0.75)}{south}{a}
		\tgAxisLabel{(1.5,0.75)}{south}{j}
		\tgAxisLabel{(2.5,0.75)}{south}{t}
		\tgAxisLabel{(2.5,3.25)}{north}{a}
	\end{tangle}
    \]
    It is immediate that the laws for an opalgebra (morphism) are precisely those for an action (homomorphism) under these transformations.
\end{proof}

\begin{remark}
	Similarly to \cref{representability-for-skew-multiactegories}, the evident generalisation of \cref{Xj1-is-skew-monoidal} suggests that the skew-multiactegory $\X[j, B]$ of \cref{XjB} will be representable in an appropriate sense when $\X$ admits left extensions of tight-cells $A \to B$ along $j$. However, \textcite{altenkirch2015monads} do not give a characterisation of opalgebras for relative monads as right-actions for monoids in the skew-monoidal $\X[j]$.
\end{remark}

\begin{corollary}
	\label{opalgebras-for-identity-relative-monads-are-opalgebras-for-monads}
    An opalgebra (morphism) for a $1_A$-monad is precisely an opalgebra (morphism) for the corresponding monad on $A$ (\cref{identity-relative-monads-are-monads}).
\end{corollary}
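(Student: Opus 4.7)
The plan is to dualise the proof of \cref{algebras-for-identity-relative-monads-are-algebras-for-monads} directly. First I would invoke \cref{opalgebras-are-right-actions} to reduce the statement to one about actions (and their homomorphisms) in the right-$\X\lh{1_A}$-multiactegory $\X[1_A, B]$ for the monoid corresponding to a monad $T = (t, \mu, \eta)$ on $A$ under the isomorphism $\RMnd(1_A) \iso \Mnd(A)$ of \cref{identity-relative-monads-are-monads}.

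I would then unfold the definitions: since $j = 1_A$, the loose-cell $A(j, 1) = A(1, 1)$ is the loose-identity, so an action in $\X[1_A, B]$ for $T$ comprises a tight-cell $a \colon A \to B$ together with a 2-cell $\rho \colon t \d a \tto a$ satisfying the unit and associativity laws
\[
	(\eta \d a) \d \rho = 1_a
	\qquad \text{and} \qquad
	(\mu \d a) \d \rho = (t \d \rho) \d \rho,
\]
while an action homomorphism $\alpha \colon a \tto a'$ satisfies $\rho \d \alpha = (t \d \alpha) \d \rho'$. These are precisely the classical axioms defining an opalgebra (morphism) for the monad $T$ on $A$ in the tight 2-category $\tX$ (\cf{}~\cite[\S3.1]{kelly1974review}).

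I anticipate no serious obstacle: the argument is a routine dualisation of the algebra case. The only step requiring some attention is verifying that the skew-multiactegorical left- and right-unitors collapse when $j = 1_A$, so that no extraneous coherence data survives beyond the classical action axioms; this follows from the same observation underpinning the algebra proof, namely that the loose-adjunction $A(1, 1) \adj A(1, 1)$ induced by the identity tight-cell is witnessed by identity 2-cells.
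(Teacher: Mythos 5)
Your proposal is correct and matches the paper's own proof: both reduce via \cref{opalgebras-are-right-actions} to actions in $\X[1_A, B]$, unfold the definitions using the isomorphism of \cref{identity-relative-monads-are-monads}, and observe that the resulting unit, associativity, and homomorphism laws are exactly the classical opalgebra axioms. Your extra remark about the unitors collapsing for $j = 1_A$ is a harmless elaboration of the same observation.
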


\begin{proof}
	By \cref{opalgebras-are-right-actions}, it suffices to consider action (morphisms) in place of opalgebra (morphisms). An action in $\X[1_A, B]$ for a monad $T$ comprises a tight-cell $a \colon A \to B$ and a 2-cell $\rho \colon (t \d a) \tto a$ rendering the following diagrams commutative.
	\[
	% https://q.uiver.app/?q=WzAsMyxbMCwwLCJhIl0sWzEsMCwidCBcXGQgYSJdLFsxLDEsImEiXSxbMCwxLCJcXGV0YSBcXGQgYSJdLFsxLDIsIlxccmhvIl0sWzAsMiwiIiwyLHsibGV2ZWwiOjIsInN0eWxlIjp7ImhlYWQiOnsibmFtZSI6Im5vbmUifX19XV0=
	\begin{tikzcd}
		a & {t \d a} \\
		& a
		\arrow["{\eta \d a}", from=1-1, to=1-2]
		\arrow["\rho", from=1-2, to=2-2]
		\arrow[Rightarrow, no head, from=1-1, to=2-2]
	\end{tikzcd}
	\hspace{4em}
	% https://q.uiver.app/?q=WzAsNCxbMCwwLCJ0IFxcZCB0IFxcZCBhIl0sWzEsMCwidCBcXGQgYSJdLFsxLDEsImEiXSxbMCwxLCJ0IFxcZCBhIl0sWzAsMSwidCBcXGQgXFxyaG8iXSxbMSwyLCJcXHJobyJdLFswLDMsIlxcbXUgXFxkIGEiLDJdLFszLDIsIlxccmhvIiwyXV0=
	\begin{tikzcd}
		{t \d t \d a} & {t \d a} \\
		{t \d a} & a
		\arrow["{t \d \rho}", from=1-1, to=1-2]
		\arrow["\rho", from=1-2, to=2-2]
		\arrow["{\mu \d a}"', from=1-1, to=2-1]
		\arrow["\rho"', from=2-1, to=2-2]
	\end{tikzcd}
	\]
	An action morphism in $\X[1_A, B]$ is a 2-cell $\alpha \colon a \tto a'$ rendering the following diagram commutative.
	% https://q.uiver.app/#q=WzAsNCxbMCwwLCJ0IFxcZCBhIl0sWzAsMSwiYSJdLFsxLDEsImEnIl0sWzEsMCwidCBcXGQgYSciXSxbMCwxLCJcXHJobyIsMl0sWzEsMiwiXFxhbHBoYSIsMl0sWzAsMywidCBcXGQgXFxhbHBoYSJdLFszLDIsIlxccmhvJyJdXQ==
	\[\begin{tikzcd}
		{t \d a} & {t \d a'} \\
		a & {a'}
		\arrow["\rho"', from=1-1, to=2-1]
		\arrow["\alpha"', from=2-1, to=2-2]
		\arrow["{t \d \alpha}", from=1-1, to=1-2]
		\arrow["{\rho'}", from=1-2, to=2-2]
	\end{tikzcd}\]
	This is precisely the definition of an opalgebra (morphism) for a monad (\cf{}~\cite[\S3.1]{kelly1974review}).
\end{proof}

\subsection{Relative adjunctions and (op)algebras}
\label{relative-adjunctions-and-(op)algebras}

Just as a relative adjunction induces a relative monad (\cref{relative-adjunction-induces-relative-monad}), so too does it induce an algebra and opalgebra for the induced relative monad.

\begin{proposition}
	\label{relative-adjunction-induces-algebra-and-opalgebra}
    Let $\ljr$ be a relative adjunction and denote by $T$ the induced $j$-monad. The left $j$-adjoint forms a $T$-opalgebra; and the right $j$-adjoint forms a $T$-algebra.\footnotemark{}
	\footnotetext{Note that a left relative adjoint induces a \emph{right}-action, whereas a right relative adjoint induces a \emph{left}-action. This apparent discrepancy arises from the convention to consider diagrammatic composition of tight-cells (for the adjoints), and nondiagrammatic composition of loose-cells (for the actions).}
\end{proposition}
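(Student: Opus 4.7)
The plan is to define the opalgebra structure on $\ell$ and the algebra structure on $r$ explicitly in terms of the transposition $\flat \colon E(j, r) \tto C(\ell, 1)$ of the relative adjunction, and then to verify the axioms by elementary string-diagrammatic reasoning, paralleling the proof of \cref{relative-adjunction-induces-relative-monad}.

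For the opalgebra structure on $\ell$, I would take the opextension operator $\oop \colon E(j, \ell r) \tto C(\ell, \ell)$ to be the 2-cell obtained from the cartesian decomposition $E(j, \ell r) \iso E(j, r) \odot C(1, \ell)$ (via functoriality of restriction) by applying $\flat$ to the $E(j, r)$ factor, landing in $C(\ell, 1) \odot C(1, \ell) \iso C(\ell, \ell)$. For the algebra structure on $r$, I would take the extension operator $\aop \colon E(j, r) \tto E(\ell r, r)$ to be $\flat$ composed with the 2-cell $C(\ell, 1) \tto C(\ell, 1) \odot E(r, r) \iso E(\ell r, r)$ that introduces a companion--conjoint pair for $r$ via the unit $\pc{r}$ of the loose-adjunction of \cref{loose-adjunction-induced-by-tight-cell}. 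String-diagrammatically, both constructions amount to bending $\ell$ (respectively $r$) out of the way and applying $\flat$ to the remaining strings.

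The two unit axioms each reduce, after unfolding $\eta$ using \cref{reformulations-of-relative-adjunction}(2), to the $\sharp$--$\flat$ isomorphism of the relative adjunction. The associativity axioms are the main content: recalling that $\dag$ in \cref{relative-adjunction-induces-relative-monad} is itself built from $\flat$ and the counit-like 2-cell $\varepsilon$, the associativity laws for $\oop$ and $\aop$ both unpack into string-diagrammatic identities in which two instances of $\flat$ must be mediated by an instance of $\varepsilon$; these follow from the zig-zag laws for the induced loose-adjunction $C(1, \ell) \adj E(j, r)$ (\cref{reformulations-of-relative-adjunction}(7)), echoing verbatim the key identities used in the associativity verification for $\dag$ itself. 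Functoriality of the assignments $\ljr \mapsto (\ell, \oop)$ and $\ljr \mapsto (r, \aop)$ in the appropriate morphisms of resolutions is immediate from the explicit definitions.

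The main obstacle is the associativity axiom for $\aop$: it requires a slightly more intricate rearrangement than the one for $\oop$, since composing $\aop$ with $\dag$ produces a diagram involving $\flat$, $\varepsilon$, and the unit $\pc{r}$, which must be manipulated so that the $\varepsilon$-mediated cancellation of $\ell$'s companion and conjoint can be applied. With the explicit definitions above, however, each such step is a routine string-diagrammatic manipulation using only the zig-zag identities and the cartesian structure, so no essentially new ideas are needed beyond those employed in the proof of \cref{relative-adjunction-induces-relative-monad}.
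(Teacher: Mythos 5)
Your proposal is correct and coincides with the paper's own proof: the paper defines $\oop$ by pasting $\flat$ onto the decomposition of $E(j, r\ell)$ with $\ell$ bent, and $\aop$ as $\flat$ followed by inserting $\pc{r}$, then verifies the (op)algebra laws exactly as in the proof of \cref{relative-adjunction-induces-relative-monad} (unit laws from the $\sharp$--$\flat$ isomorphism, associativity from the $\flat$/$\varepsilon$ identities). The only quibble is notational: in the paper's conventions the composite is written $r\ell = \ell \d r$, so your $E(j,\ell r)$ and $E(\ell r, r)$ should read $E(j, r\ell)$ and $E(r\ell, r)$.
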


\begin{proof}
	Define $\oop \colon E(j, r \ell) \tto C(\ell, \ell)$ to be the 2-cell on the left below, and $\aop \colon E(j, r) \tto E(r \ell, r)$ to be the 2-cell on the right below.
    \[
    % https://varkor.github.io/tangle/?t=W1tbNyw1LDMsN11dLFtbWzEsWzEsMCwxLDBdXSxbMSxbMSwwLDEsMF1dLFsxLFsxLDAsMSwwXV1dLFtbMSxbMSwxLDEsMF1dLFsxLFsxLDAsMCwxXV0sWzEsWzEsMCwxLDBdXV0sW1sxLFsxLDAsMSwwXV0sWzAsW11dLFsxLFsxLDAsMSwwXV1dXSxbWzEsMi41LDEsWzFdXSxbMSwyLjUsMixbMV1dLFsxLDAuNSwyLFswXV0sWzEsMC41LDEsWzBdXSxbMSwxLjUsMSxbMV1dLFswLDEsMS41LFsiXFxmbGF0IiwxXV1dLFtbMCwwLDMsImoiXSxbMSwwLDMsInIiXSxbMiwwLDMsIlxcZWxsIl0sWzAsMiwxLCJcXGVsbCJdLFsyLDIsMSwiXFxlbGwiXV1d&c=F5A3A3,F5CCA3,F5F5A3,CCF5A3,A3F5A3,A3F5CC,A3F5F5,A3CCF5,A3A3F5,CCA3F5,F5A3F5,F5A3CC
	\begin{tangle}{(3,3)}[trim y]
		\tgBorderA{(0,0)}{\tgColour6}{\tgColour4}{\tgColour4}{\tgColour6}
		\tgBorderA{(1,0)}{\tgColour4}{\tgColour2}{\tgColour2}{\tgColour4}
		\tgBorderA{(2,0)}{\tgColour2}{\tgColour6}{\tgColour6}{\tgColour2}
		\tgBorderA{(0,1)}{\tgColour6}{\tgColour4}{\tgColour2}{\tgColour6}
		\tgBorderA{(1,1)}{\tgColour4}{\tgColour2}{\tgColour2}{\tgColour2}
		\tgBorderA{(2,1)}{\tgColour2}{\tgColour6}{\tgColour6}{\tgColour2}
		\tgBorderA{(0,2)}{\tgColour6}{\tgColour2}{\tgColour2}{\tgColour6}
		\tgBlank{(1,2)}{\tgColour2}
		\tgBorderA{(2,2)}{\tgColour2}{\tgColour6}{\tgColour6}{\tgColour2}
		\tgArrow{(2,0.5)}{3}
		\tgArrow{(2,1.5)}{3}
		\tgArrow{(0,1.5)}{1}
		\tgArrow{(0,0.5)}{1}
		\tgArrow{(1,0.5)}{3}
		\tgCell[(1,0)]{(0.5,1)}{\flat}
		\tgAxisLabel{(0.5,0.75)}{south}{j}
		\tgAxisLabel{(1.5,0.75)}{south}{r}
		\tgAxisLabel{(2.5,0.75)}{south}{\ell}
		\tgAxisLabel{(0.5,2.25)}{north}{\ell}
		\tgAxisLabel{(2.5,2.25)}{north}{\ell}
	\end{tangle}
	\hspace{4em}
    % https://varkor.github.io/tangle/?t=W1tbNyw1LDMsNV1dLFtbWzEsWzEsMCwxLDBdXSxbMSxbMSwwLDEsMF1dLFswLFtdXSxbMCxbXV1dLFtbMSxbMSwxLDEsMF1dLFsxLFsxLDAsMCwxXV0sWzIsWzJdXSxbMixbM11dXSxbWzEsWzEsMCwxLDBdXSxbMCxbXV0sWzEsWzEsMCwxLDBdXSxbMSxbMSwwLDEsMF1dXV0sW1sxLDAuNSwyLFswXV0sWzEsMC41LDEsWzBdXSxbMSwxLjUsMSxbMV1dLFswLDEsMS41LFsiXFxmbGF0IiwxXV0sWzEsMywxLjUsWzBdXSxbMSwzLjUsMixbMV1dLFsxLDIuNSwyLFswXV1dLFtbMCwwLDMsImoiXSxbMSwwLDMsInIiXSxbMCwyLDEsIlxcZWxsIl0sWzIsMiwxLCJyIl0sWzMsMiwxLCJyIl1dXQ==&c=F5A3A3,F5CCA3,F5F5A3,CCF5A3,A3F5A3,A3F5CC,A3F5F5,A3CCF5,A3A3F5,CCA3F5,F5A3F5,F5A3CC
	\begin{tangle}{(4,3)}[trim y]
		\tgBorderA{(0,0)}{\tgColour6}{\tgColour4}{\tgColour4}{\tgColour6}
		\tgBorderA{(1,0)}{\tgColour4}{\tgColour2}{\tgColour2}{\tgColour4}
		\tgBlank{(2,0)}{\tgColour2}
		\tgBlank{(3,0)}{\tgColour2}
		\tgBorderA{(0,1)}{\tgColour6}{\tgColour4}{\tgColour2}{\tgColour6}
		\tgBorderA{(1,1)}{\tgColour4}{\tgColour2}{\tgColour2}{\tgColour2}
		\tgBorderC{(2,1)}{3}{\tgColour2}{\tgColour4}
		\tgBorderC{(3,1)}{2}{\tgColour2}{\tgColour4}
		\tgBorderA{(0,2)}{\tgColour6}{\tgColour2}{\tgColour2}{\tgColour6}
		\tgBlank{(1,2)}{\tgColour2}
		\tgBorderA{(2,2)}{\tgColour2}{\tgColour4}{\tgColour4}{\tgColour2}
		\tgBorderA{(3,2)}{\tgColour4}{\tgColour2}{\tgColour2}{\tgColour4}
		\tgArrow{(0,1.5)}{1}
		\tgArrow{(0,0.5)}{1}
		\tgArrow{(1,0.5)}{3}
		\tgCell[(1,0)]{(0.5,1)}{\flat}
		\tgArrow{(2.5,1)}{0}
		\tgArrow{(3,1.5)}{3}
		\tgArrow{(2,1.5)}{1}
		\tgAxisLabel{(0.5,0.75)}{south}{j}
		\tgAxisLabel{(1.5,0.75)}{south}{r}
		\tgAxisLabel{(0.5,2.25)}{north}{\ell}
		\tgAxisLabel{(2.5,2.25)}{north}{r}
		\tgAxisLabel{(3.5,2.25)}{north}{r}
	\end{tangle}
    \]
    The proof of each $T$-(op)algebra law is analogous to the proof of the corresponding relative monad law for $T$ (\cref{relative-adjunction-induces-relative-monad}).
\end{proof}

Furthermore, just as relative adjunctions and relative monads may be constructed from existing relative adjunctions and relative monads (\cref{composition-of-relative-adjunctions}), so too may (op)algebras be constructed from existing (op)algebras, and this coheres with the process of forming (op)algebras from relative adjunctions (\cf{}~\cite[Construction~2.2.10]{voevodsky2023c}).

\begin{proposition}
	\label{algebra-and-opalgebra-precomposition}
	Let $j \colon B \to D$ be a tight-cell, let $T$ be a $j$-monad, and let $\ell' \colon A \to B$ be a tight-cell, as in \cref{relative-monad-precomposition}. Every $T$-opalgebra induces an $(\ell' \d T)$-opalgebra; and every $T$-algebra (morphism) induces an $(\ell' \d T)$-algebra (morphism), forming functors
	\begin{align*}
		\ell' \d \ph & \colon T\h\Opalg_C \to (\ell' \d T)\h\Opalg_C \\
		\ell' \d \ph & \colon T\h\Alg_C \to (\ell' \d T)\h\Alg_C
	\end{align*}
	natural in $T$ and $C$. Furthermore, for every object $C$, the following diagram commutes,
	% https://q.uiver.app/#q=WzAsNixbMSwwLCJcXG9ie1xcUmVzKFQpX0N9Il0sWzEsMSwiXFxvYntcXFJlcyhcXGVsbCcgXFxkIFQpX0N9Il0sWzAsMCwiXFxvYntUXFxoXFxPcGFsZ19DfSJdLFswLDEsIlxcb2J7KFxcZWxsJyBcXGQgVClcXGhcXE9wYWxnX0N9Il0sWzIsMCwiXFxvYntUXFxoXFxBbGdfQ30iXSxbMiwxLCJcXG9ieyhcXGVsbCcgXFxkIFQpXFxoXFxBbGdfQ30iXSxbMCwxLCJcXG9ie1xcZWxsJyBcXGQgXFxwaH0iLDFdLFsyLDMsIlxcb2J7XFxlbGwnIFxcZCBcXHBofSIsMl0sWzAsMl0sWzEsM10sWzAsNF0sWzQsNSwiXFxvYntcXGVsbCcgXFxkIFxccGh9Il0sWzEsNV1d
	\[\begin{tikzcd}
		{\ob{T\h\Opalg_C}} & {\ob{\Res(T)_C}} & {\ob{T\h\Alg_C}} \\
		{\ob{(\ell' \d T)\h\Opalg_C}} & {\ob{\Res(\ell' \d T)_C}} & {\ob{(\ell' \d T)\h\Alg_C}}
		\arrow["{\ob{\ell' \d \ph}}"{description}, from=1-2, to=2-2]
		\arrow["{\ob{\ell' \d \ph}}"', from=1-1, to=2-1]
		\arrow[from=1-2, to=1-1]
		\arrow[from=2-2, to=2-1]
		\arrow[from=1-2, to=1-3]
		\arrow["{\ob{\ell' \d \ph}}", from=1-3, to=2-3]
		\arrow[from=2-2, to=2-3]
	\end{tikzcd}\]
	where we denote by $\Res(T)_C$ the full subcategory of $\Res(T)$ spanned by relative adjunctions with apex $C$, and where the unlabelled functions are given by \cref{relative-adjunction-induces-algebra-and-opalgebra}.
\end{proposition}

\begin{proof}
	Given a $T$-opalgebra $(b, \oop)$, we have a 2-cell $E(j \ell', t \ell') \tto C(b \ell', b \ell')$ by precomposing $\ell'$ in both arguments; given a $T$-algebra $(d, \aop)$, we have a 2-cell $E(j \ell', d) \tto E(t \ell', d)$ by precomposing $\ell'$ in the first argument.
	The proof that these 2-cells define an $(\ell' \d T)$-opalgebra and an $(\ell' \d T)$-algebra respectively is analogous to the proof that $(\ell' \d T)$ forms an $(\ell' \d j)$-monad (\cref{relative-monad-precomposition}). Functoriality of the assignments, given by precomposing $\ell'$, together with naturality, is trivial. That the specified diagram commutes follows directly from the definitions of the respective opalgebra and algebra structures.
\end{proof}

\begin{proposition}
	\label{algebra-and-opalgebra-pasting}
	Let $\ell' \jadj r'$ be a relative adjunction and let $T = (t, \dag, \eta)$ be an $\ell'$-monad, as in \cref{relative-monad-relative-adjunction-pasting}. Every $T$-opalgebra (morphism) induces a $(T \d r')$-opalgebra; and every $T$-algebra (morphism) induces an $(T \d r')$-algebra, forming functors
	\begin{align*}
		\ph \d r' & \colon T\h\Opalg_C \to (T \d r')\h\Opalg_C \\
		\ph \d r' & \colon T\h\Alg_C \to (T \d r')\h\Alg_C
	\end{align*}
	natural in $T$ and $C$. Furthermore, for every object $C$, the following diagram commutes,
	% https://q.uiver.app/#q=WzAsNixbMSwwLCJcXG9ie1xcUmVzKFQpX0N9Il0sWzEsMSwiXFxvYntcXFJlcyhUIFxcZCByJylfQ30iXSxbMCwwLCJcXG9ie1RcXGhcXE9wYWxnX0N9Il0sWzAsMSwiXFxvYnsoVCBcXGQgcicpXFxoXFxPcGFsZ19DfSJdLFsyLDAsIlxcb2J7VFxcaFxcQWxnX0N9Il0sWzIsMSwiXFxvYnsoVCBcXGQgcicpXFxoXFxBbGdfQ30iXSxbMCwxLCJcXG9ie1xccGggXFxkIHInfSIsMV0sWzIsMywiXFxvYntcXHBoIFxcZCByJ30iLDJdLFswLDJdLFsxLDNdLFswLDRdLFs0LDUsIlxcb2J7XFxwaCBcXGQgcid9Il0sWzEsNV1d
	\[\begin{tikzcd}
		{\ob{T\h\Opalg_C}} & {\ob{\Res(T)_C}} & {\ob{T\h\Alg_C}} \\
		{\ob{(T \d r')\h\Opalg_C}} & {\ob{\Res(T \d r')_C}} & {\ob{(T \d r')\h\Alg_C}}
		\arrow["{\ob{\ph \d r'}}"{description}, from=1-2, to=2-2]
		\arrow["{\ob{\ph \d r'}}"', from=1-1, to=2-1]
		\arrow[from=1-2, to=1-1]
		\arrow[from=2-2, to=2-1]
		\arrow[from=1-2, to=1-3]
		\arrow["{\ob{\ph \d r'}}", from=1-3, to=2-3]
		\arrow[from=2-2, to=2-3]
	\end{tikzcd}\]
	where the unlabelled functions are given by \cref{relative-adjunction-induces-algebra-and-opalgebra}.
\end{proposition}

\begin{proof}
	Given a $T$-opalgebra $(a, \oop)$, define a 2-cell $E(j, r' t) \iso D(\ell', t) \tto C(a, a)$ by applying the relative adjunction; given a $T$-algebra $(e, \aop)$, define a 2-cell $E(j, r' d) \tto E(r' t, r' d)$ as below.
	\[
	% https://varkor.github.io/tangle/?t=W1tbNyw1LDEsMywxLDVdXSxbW1sxLFsxLDAsMSwwXV0sWzEsWzEsMCwxLDBdXSxbMCxbXV0sWzAsW11dLFsxLFsxLDAsMSwwXV1dLFtbMSxbMSwxLDAsMF1dLFsxLFsxLDAsMSwxXV0sWzAsW11dLFswLFtdXSxbMSxbMSwwLDEsMF1dXSxbWzAsW11dLFsxLFsxLDEsMSwwXV0sWzEsWzAsMSwwLDFdXSxbMSxbMCwxLDAsMV1dLFsxLFsxLDAsMSwxXV1dLFtbMCxbXV0sWzEsWzEsMCwxLDBdXSxbMixbMl1dLFsyLFszXV0sWzEsWzEsMCwxLDBdXV0sW1swLFtdXSxbMSxbMSwwLDEsMF1dLFsxLFsxLDAsMSwwXV0sWzEsWzEsMCwxLDBdXSxbMSxbMSwwLDEsMF1dXV0sW1swLDMsMi41LFsiXFxhb3AiLDNdXSxbMCwxLDEuNSxbIlxcZmxhdCciLDFdXSxbMSwyLjUsNCxbMF1dLFsxLDMuNSw0LFsxXV0sWzEsMywzLjUsWzBdXSxbMSwxLjUsMyxbMF1dLFsxLDEuNSw0LFswXV0sWzEsMS41LDIsWzBdXSxbMSwxLjUsMSxbMV1dLFsxLDQuNSwxLFsxXV0sWzEsNC41LDIsWzFdXSxbMSw0LjUsMyxbMV1dLFsxLDQuNSw0LFsxXV0sWzEsMC41LDEsWzBdXV0sW1swLDAsMywiaiJdLFsxLDAsMywiciciXSxbNCwwLDMsImQiXSxbMSw0LDEsInQiXSxbMiw0LDEsInInIl0sWzMsNCwxLCJyJyJdLFs0LDQsMSwiZCJdXV0=&c=F5A3A3,F5CCA3,F5F5A3,CCF5A3,A3F5A3,A3F5CC,A3F5F5,A3CCF5,A3A3F5,CCA3F5,F5A3F5,F5A3CC
	\begin{tangle}{(5,5)}[trim y]
		\tgBorderA{(0,0)}{\tgColour6}{\tgColour4}{\tgColour4}{\tgColour6}
		\tgBorderA{(1,0)}{\tgColour4}{\tgColour0}{\tgColour0}{\tgColour4}
		\tgBlank{(2,0)}{\tgColour0}
		\tgBlank{(3,0)}{\tgColour0}
		\tgBorderA{(4,0)}{\tgColour0}{\tgColour2}{\tgColour2}{\tgColour0}
		\tgBorderA{(0,1)}{\tgColour6}{\tgColour4}{\tgColour6}{\tgColour6}
		\tgBorderA{(1,1)}{\tgColour4}{\tgColour0}{\tgColour0}{\tgColour6}
		\tgBlank{(2,1)}{\tgColour0}
		\tgBlank{(3,1)}{\tgColour0}
		\tgBorderA{(4,1)}{\tgColour0}{\tgColour2}{\tgColour2}{\tgColour0}
		\tgBlank{(0,2)}{\tgColour6}
		\tgBorderA{(1,2)}{\tgColour6}{\tgColour0}{\tgColour0}{\tgColour6}
		\tgBorder{(1,2)}{0}{1}{0}{0}
		\tgBorderA{(2,2)}{\tgColour0}{\tgColour0}{\tgColour0}{\tgColour0}
		\tgBorder{(2,2)}{0}{1}{0}{1}
		\tgBorderA{(3,2)}{\tgColour0}{\tgColour0}{\tgColour0}{\tgColour0}
		\tgBorder{(3,2)}{0}{1}{0}{1}
		\tgBorderA{(4,2)}{\tgColour0}{\tgColour2}{\tgColour2}{\tgColour0}
		\tgBorder{(4,2)}{0}{0}{0}{1}
		\tgBlank{(0,3)}{\tgColour6}
		\tgBorderA{(1,3)}{\tgColour6}{\tgColour0}{\tgColour0}{\tgColour6}
		\tgBorderC{(2,3)}{3}{\tgColour0}{\tgColour4}
		\tgBorderC{(3,3)}{2}{\tgColour0}{\tgColour4}
		\tgBorderA{(4,3)}{\tgColour0}{\tgColour2}{\tgColour2}{\tgColour0}
		\tgBlank{(0,4)}{\tgColour6}
		\tgBorderA{(1,4)}{\tgColour6}{\tgColour0}{\tgColour0}{\tgColour6}
		\tgBorderA{(2,4)}{\tgColour0}{\tgColour4}{\tgColour4}{\tgColour0}
		\tgBorderA{(3,4)}{\tgColour4}{\tgColour0}{\tgColour0}{\tgColour4}
		\tgBorderA{(4,4)}{\tgColour0}{\tgColour2}{\tgColour2}{\tgColour0}
		\tgCell[(3,0)]{(2.5,2)}{\aop}
		\tgCell[(1,0)]{(0.5,1)}{\flat'}
		\tgArrow{(2,3.5)}{1}
		\tgArrow{(3,3.5)}{3}
		\tgArrow{(2.5,3)}{0}
		\tgArrow{(1,2.5)}{1}
		\tgArrow{(1,3.5)}{1}
		\tgArrow{(1,1.5)}{1}
		\tgArrow{(1,0.5)}{3}
		\tgArrow{(4,0.5)}{3}
		\tgArrow{(4,1.5)}{3}
		\tgArrow{(4,2.5)}{3}
		\tgArrow{(4,3.5)}{3}
		\tgArrow{(0,0.5)}{1}
		\tgAxisLabel{(0.5,0.75)}{south}{j}
		\tgAxisLabel{(1.5,0.75)}{south}{r'}
		\tgAxisLabel{(4.5,0.75)}{south}{d}
		\tgAxisLabel{(1.5,4.25)}{north}{t}
		\tgAxisLabel{(2.5,4.25)}{north}{r'}
		\tgAxisLabel{(3.5,4.25)}{north}{r'}
		\tgAxisLabel{(4.5,4.25)}{north}{d}
	\end{tangle}
	\]
	The proof that these 2-cells define a $(T \d r')$-opalgebra and a $(T \d r')$-algebra respectively is analogous to the proof that $(T \d r')$ forms a $j$-monad (\cref{relative-monad-relative-adjunction-pasting}). Functoriality of the assignments, given by postcomposing $r'$, together with naturality, is trivial. That the specified diagram commutes follows directly from the definitions of the respective opalgebra and algebra structures.
\end{proof}

\begin{remark}
	The proof of \cref{algebra-and-opalgebra-pasting} actually establishes a stronger property: that the functor $(\ph \d r') \colon T\h\Opalg_C \to (T \d r')\h\Opalg_C$ is invertible.
\end{remark}

The converse to \cref{relative-adjunction-induces-algebra-and-opalgebra} is not generally true: that is, not every (op)algebra arises from a relative adjunction. However, we might be led to wonder whether there are any natural conditions on an algebra or opalgebra that ensure they arise from a relative adjunction. In the case of non-relative monads, the answer is affirmative: namely, an algebra object for a monad $T$ is always induced by a resolution of $T$~\cite[Theorem~2 \& Theorem~3]{street1972formal}; and the same is true for an opalgebra object by duality.

We should like to deduce something similar for relative monads. However, the na\"ive definitions of (op)algebra objects, defined to be (op)algebras universal with respect to (op)algebra morphisms turn out to be insufficient. Instead, it is necessary to identify stronger universal properties making use of the equipment structure.

\subsection{Algebra objects}
\label{algebra-objects}

The definition of algebra morphism in \cref{algebra} is given only between algebras with the same domain. We now give a more general definition of (graded) morphism between any algebras for a relative monad, which is necessary to express the universal property of algebra objects for relative monads.

\begin{definition}
	\label{graded-algebra-morphism}
    Let $(e \colon D \to E, \aop)$ and $(e' \colon D' \to E, \aop')$ be $T$-algebras. A \emph{$(p_1, \ldots, p_n)$-graded $T$-algebra morphism} from $(e, \aop)$ to $(e', \aop')$ is a 2-cell \[\epsilon \colon E(1, e), p_1, \ldots, p_n \tto E(1, e')\] satisfying the following equation (defining $\lambda$ and $\lambda'$ as in \cref{algebras-are-left-actions}).
    \[
	\hspace{-.2em}
	% https://q.uiver.app/#q=WzAsMTEsWzQsMCwiRCciXSxbNCwxLCJEJyJdLFsyLDAsIkQiXSxbMywwLCJcXGNkb3RzIl0sWzQsMiwiRCciXSxbMCwyLCJFIl0sWzAsMSwiRSJdLFsxLDAsIkEiXSxbMiwxLCJEIl0sWzMsMSwiXFxjZG90cyJdLFswLDAsIkUiXSxbMCwxLCIiLDAseyJsZXZlbCI6Miwic3R5bGUiOnsiaGVhZCI6eyJuYW1lIjoibm9uZSJ9fX1dLFswLDMsInBfbiIsMix7InN0eWxlIjp7ImJvZHkiOnsibmFtZSI6ImJhcnJlZCJ9fX1dLFs0LDUsIkUoMSwgZScpIiwwLHsic3R5bGUiOnsiYm9keSI6eyJuYW1lIjoiYmFycmVkIn19fV0sWzMsMiwicF8xIiwyLHsic3R5bGUiOnsiYm9keSI6eyJuYW1lIjoiYmFycmVkIn19fV0sWzUsNiwiIiwxLHsic3R5bGUiOnsiYm9keSI6eyJuYW1lIjoibm9uZSJ9LCJoZWFkIjp7Im5hbWUiOiJub25lIn19fV0sWzYsNSwiIiwwLHsibGV2ZWwiOjIsInN0eWxlIjp7ImhlYWQiOnsibmFtZSI6Im5vbmUifX19XSxbMiw3LCJFKGosIGUpIiwyLHsic3R5bGUiOnsiYm9keSI6eyJuYW1lIjoiYmFycmVkIn19fV0sWzEsNCwiIiwyLHsibGV2ZWwiOjIsInN0eWxlIjp7ImhlYWQiOnsibmFtZSI6Im5vbmUifX19XSxbMiw4LCIiLDIseyJsZXZlbCI6Miwic3R5bGUiOnsiaGVhZCI6eyJuYW1lIjoibm9uZSJ9fX1dLFs4LDYsIkUoMSwgZSkiLDFdLFsxLDksInBfbiIsMV0sWzksOCwicF8xIiwxXSxbNywxMCwiRSgxLCB0KSIsMix7InN0eWxlIjp7ImJvZHkiOnsibmFtZSI6ImJhcnJlZCJ9fX1dLFsxMCw2LCIiLDIseyJsZXZlbCI6Miwic3R5bGUiOnsiaGVhZCI6eyJuYW1lIjoibm9uZSJ9fX1dLFsxMSwxOSwiPSIsMSx7InNob3J0ZW4iOnsic291cmNlIjoyMCwidGFyZ2V0IjoyMH0sInN0eWxlIjp7ImJvZHkiOnsibmFtZSI6Im5vbmUifSwiaGVhZCI6eyJuYW1lIjoibm9uZSJ9fX1dLFsxOCwxNiwiXFxlcHNpbG9uIiwxLHsic2hvcnRlbiI6eyJzb3VyY2UiOjIwLCJ0YXJnZXQiOjIwfSwic3R5bGUiOnsiYm9keSI6eyJuYW1lIjoibm9uZSJ9LCJoZWFkIjp7Im5hbWUiOiJub25lIn19fV0sWzE5LDI0LCJcXGxhbWJkYSIsMSx7InNob3J0ZW4iOnsic291cmNlIjoyMCwidGFyZ2V0IjoyMH0sInN0eWxlIjp7ImJvZHkiOnsibmFtZSI6Im5vbmUifSwiaGVhZCI6eyJuYW1lIjoibm9uZSJ9fX1dXQ==
	\begin{tikzcd}
		E & A & D & \cdots & {D'} \\
		E && D & \cdots & {D'} \\
		E &&&& {D'}
		\arrow[""{name=0, anchor=center, inner sep=0}, Rightarrow, no head, from=1-5, to=2-5]
		\arrow["{p_n}"', "\shortmid"{marking}, from=1-5, to=1-4]
		\arrow["{E(1, e')}", "\shortmid"{marking}, from=3-5, to=3-1]
		\arrow["{p_1}"', "\shortmid"{marking}, from=1-4, to=1-3]
		\arrow[draw=none, from=3-1, to=2-1]
		\arrow[""{name=1, anchor=center, inner sep=0}, Rightarrow, no head, from=2-1, to=3-1]
		\arrow["{E(j, e)}"', "\shortmid"{marking}, from=1-3, to=1-2]
		\arrow[""{name=2, anchor=center, inner sep=0}, Rightarrow, no head, from=2-5, to=3-5]
		\arrow[""{name=3, anchor=center, inner sep=0}, Rightarrow, no head, from=1-3, to=2-3]
		\arrow["{E(1, e)}"{description}, from=2-3, to=2-1]
		\arrow["{p_n}"{description}, from=2-5, to=2-4]
		\arrow["{p_1}"{description}, from=2-4, to=2-3]
		\arrow["{E(1, t)}"', "\shortmid"{marking}, from=1-2, to=1-1]
		\arrow[""{name=4, anchor=center, inner sep=0}, Rightarrow, no head, from=1-1, to=2-1]
		\arrow["{=}"{description}, draw=none, from=0, to=3]
		\arrow["\epsilon"{description}, draw=none, from=2, to=1]
		\arrow["\lambda"{description}, draw=none, from=3, to=4]
	\end{tikzcd}
	\hspace{.2em} = \hspace{.2em}
	\begin{tikzcd}
		E & A & E & D & \cdots & {D'} \\
		E & A & E &&& {D'} \\
		E &&&&& {D'}
		\arrow["{E(1, e')}", "\shortmid"{marking}, from=3-6, to=3-1]
		\arrow[""{name=0, anchor=center, inner sep=0}, Rightarrow, no head, from=1-6, to=2-6]
		\arrow["{p_n}"', "\shortmid"{marking}, from=1-6, to=1-5]
		\arrow["{p_1}"', "\shortmid"{marking}, from=1-5, to=1-4]
		\arrow[""{name=1, anchor=center, inner sep=0}, Rightarrow, no head, from=2-6, to=3-6]
		\arrow["{E(1, t)}"', "\shortmid"{marking}, from=1-2, to=1-1]
		\arrow[""{name=2, anchor=center, inner sep=0}, Rightarrow, no head, from=1-1, to=2-1]
		\arrow[""{name=3, anchor=center, inner sep=0}, Rightarrow, no head, from=2-1, to=3-1]
		\arrow[""{name=4, anchor=center, inner sep=0}, Rightarrow, no head, from=1-2, to=2-2]
		\arrow["{E(1, t)}"{description}, from=2-2, to=2-1]
		\arrow["{E(1, e)}"', "\shortmid"{marking}, from=1-4, to=1-3]
		\arrow["{E(j, 1)}"', "\shortmid"{marking}, from=1-3, to=1-2]
		\arrow["{E(j, 1)}"{description}, from=2-3, to=2-2]
		\arrow[""{name=5, anchor=center, inner sep=0}, Rightarrow, no head, from=1-3, to=2-3]
		\arrow["{E(1, e')}"{description}, from=2-6, to=2-3]
		\arrow["{\lambda'}"{description}, draw=none, from=1, to=3]
		\arrow["{=}"{description}, draw=none, from=4, to=2]
		\arrow["{=}"{description}, draw=none, from=5, to=4]
		\arrow["\epsilon"{description}, draw=none, from=0, to=5]
	\end{tikzcd}
	\]\[
	% https://varkor.github.io/tangle/?t=W1tbNSw3LDUsMSwwLDEyXV0sW1tbMSxbMSwwLDEsMF1dLFsxLFsxLDAsMSwwXV0sWzEsWzEsMCwxLDBdXSxbMSxbMSwwLDEsMF1dLFswLFtdXSxbMSxbMSwwLDEsMF1dXSxbWzEsWzEsMSwwLDBdXSxbMSxbMSwxLDEsMV1dLFsxLFsxLDAsMCwxXV0sWzEsWzEsMCwxLDBdXSxbMCxbXV0sWzEsWzEsMCwxLDBdXV0sW1swLFtdXSxbMSxbMSwxLDEsMF1dLFsxLFswLDEsMCwxXV0sWzEsWzEsMSwwLDFdXSxbMSxbMCwxLDAsMV1dLFsxLFsxLDAsMCwxXV1dLFtbMCxbXV0sWzEsWzEsMCwxLDBdXSxbMCxbXV0sWzAsW11dLFswLFtdXSxbMCxbXV1dXSxbWzEsMi41LDEsWzFdXSxbMSwxLjUsMixbMV1dLFsxLDEuNSwzLFsxXV0sWzEsMS41LDEsWzBdXSxbMCwzLjUsMi41LFsiXFxlcHNpbG9uIiw0XV0sWzAsMS41LDEuNSxbIlxcbGFtYmRhIiwyXV0sWzEsMC41LDEsWzFdXV0sW1swLDAsMywidCJdLFsxLDAsMywiaiJdLFsyLDAsMywiZSJdLFszLDAsMywicF8xIl0sWzUsMCwzLCJwX24iXSxbMSwzLDEsImUnIl1dXQ==&c=F5A3A3,F5CCA3,F5F5A3,CCF5A3,A3F5A3,A3F5CC,A3F5F5,A3CCF5,A3A3F5,CCA3F5,F5A3F5,F5A3CC
	\begin{tangle}{(6,4)}[trim y]
		\tgBorderA{(0,0)}{\tgColour4}{\tgColour6}{\tgColour6}{\tgColour4}
		\tgBorderA{(1,0)}{\tgColour6}{\tgColour4}{\tgColour4}{\tgColour6}
		\tgBorderA{(2,0)}{\tgColour4}{\tgColour0}{\tgColour0}{\tgColour4}
		\tgBorderA{(3,0)}{\tgColour0}{white}{white}{\tgColour0}
		\tgBlank{(4,0)}{white}
		\tgBorderA{(5,0)}{white}{\tgColour11}{\tgColour11}{white}
		\tgBorderA{(0,1)}{\tgColour4}{\tgColour6}{\tgColour4}{\tgColour4}
		\tgBorderA{(1,1)}{\tgColour6}{\tgColour4}{\tgColour0}{\tgColour4}
		\tgBorderA{(2,1)}{\tgColour4}{\tgColour0}{\tgColour0}{\tgColour0}
		\tgBorderA{(3,1)}{\tgColour0}{white}{white}{\tgColour0}
		\tgBlank{(4,1)}{white}
		\tgBorderA{(5,1)}{white}{\tgColour11}{\tgColour11}{white}
		\tgBlank{(0,2)}{\tgColour4}
		\tgBorderA{(1,2)}{\tgColour4}{\tgColour0}{\tgColour11}{\tgColour4}
		\tgBorderA{(2,2)}{\tgColour0}{\tgColour0}{\tgColour11}{\tgColour11}
		\tgBorderA{(3,2)}{\tgColour0}{white}{\tgColour11}{\tgColour11}
		\tgBorderA{(4,2)}{white}{white}{\tgColour11}{\tgColour11}
		\tgBorderA{(5,2)}{white}{\tgColour11}{\tgColour11}{\tgColour11}
		\tgBlank{(0,3)}{\tgColour4}
		\tgBorderA{(1,3)}{\tgColour4}{\tgColour11}{\tgColour11}{\tgColour4}
		\tgBlank{(2,3)}{\tgColour11}
		\tgBlank{(3,3)}{\tgColour11}
		\tgBlank{(4,3)}{\tgColour11}
		\tgBlank{(5,3)}{\tgColour11}
		\tgArrow{(2,0.5)}{3}
		\tgArrow{(1,1.5)}{3}
		\tgArrow{(1,2.5)}{3}
		\tgArrow{(1,0.5)}{1}
		\tgCell[(4,0)]{(3,2)}{\epsilon}
		\tgCell[(2,0)]{(1,1)}{\lambda}
		\tgArrow{(0,0.5)}{3}
		\tgAxisLabel{(0.5,0.75)}{south}{t}
		\tgAxisLabel{(1.5,0.75)}{south}{j}
		\tgAxisLabel{(2.5,0.75)}{south}{e}
		\tgAxisLabel{(3.5,0.75)}{south}{p_1}
		\tgAxisLabel{(5.5,0.75)}{south}{p_n}
		\tgAxisLabel{(1.5,3.25)}{north}{e'}
		\node at (4.5,1.9) {$\cdots$};
	\end{tangle}
	\tangleeq*
	% https://varkor.github.io/tangle/?t=W1tbNSw3LDUsMSwwLDEyXV0sW1tbMSxbMSwwLDEsMF1dLFsxLFsxLDAsMSwwXV0sWzEsWzEsMCwxLDBdXSxbMSxbMSwwLDEsMF1dLFswLFtdXSxbMSxbMSwwLDEsMF1dXSxbWzEsWzEsMCwxLDBdXSxbMSxbMSwwLDEsMF1dLFsxLFsxLDEsMSwwXV0sWzEsWzEsMSwwLDFdXSxbMSxbMCwxLDAsMV1dLFsxLFsxLDAsMCwxXV1dLFtbMSxbMSwxLDAsMF1dLFsxLFsxLDEsMSwxXV0sWzEsWzEsMCwwLDFdXSxbMCxbXV0sWzAsW11dLFswLFtdXV0sW1swLFtdXSxbMSxbMSwwLDEsMF1dLFswLFtdXSxbMCxbXV0sWzAsW11dLFswLFtdXV1dLFtbMCw0LDEuNSxbIlxcZXBzaWxvbiIsM11dLFsxLDIuNSwxLFsxXV0sWzEsMi41LDIsWzFdXSxbMSwxLjUsMSxbMF1dLFsxLDEuNSwyLFswXV0sWzEsMS41LDMsWzFdXSxbMCwxLjUsMi41LFsiXFxsYW1iZGEnIiwyXV0sWzEsMC41LDEsWzFdXSxbMSwwLjUsMixbMV1dXSxbWzAsMCwzLCJ0Il0sWzEsMCwzLCJqIl0sWzIsMCwzLCJlIl0sWzMsMCwzLCJwXzEiXSxbNSwwLDMsInBfbiJdLFsxLDMsMSwiZSciXV1d&c=F5A3A3,F5CCA3,F5F5A3,CCF5A3,A3F5A3,A3F5CC,A3F5F5,A3CCF5,A3A3F5,CCA3F5,F5A3F5,F5A3CC
	\begin{tangle}{(6,4)}[trim y]
		\tgBorderA{(0,0)}{\tgColour4}{\tgColour6}{\tgColour6}{\tgColour4}
		\tgBorderA{(1,0)}{\tgColour6}{\tgColour4}{\tgColour4}{\tgColour6}
		\tgBorderA{(2,0)}{\tgColour4}{\tgColour0}{\tgColour0}{\tgColour4}
		\tgBorderA{(3,0)}{\tgColour0}{white}{white}{\tgColour0}
		\tgBlank{(4,0)}{white}
		\tgBorderA{(5,0)}{white}{\tgColour11}{\tgColour11}{white}
		\tgBorderA{(0,1)}{\tgColour4}{\tgColour6}{\tgColour6}{\tgColour4}
		\tgBorderA{(1,1)}{\tgColour6}{\tgColour4}{\tgColour4}{\tgColour6}
		\tgBorderA{(2,1)}{\tgColour4}{\tgColour0}{\tgColour11}{\tgColour4}
		\tgBorderA{(3,1)}{\tgColour0}{white}{\tgColour11}{\tgColour11}
		\tgBorderA{(4,1)}{white}{white}{\tgColour11}{\tgColour11}
		\tgBorderA{(5,1)}{white}{\tgColour11}{\tgColour11}{\tgColour11}
		\tgBorderA{(0,2)}{\tgColour4}{\tgColour6}{\tgColour4}{\tgColour4}
		\tgBorderA{(1,2)}{\tgColour6}{\tgColour4}{\tgColour11}{\tgColour4}
		\tgBorderA{(2,2)}{\tgColour4}{\tgColour11}{\tgColour11}{\tgColour11}
		\tgBlank{(3,2)}{\tgColour11}
		\tgBlank{(4,2)}{\tgColour11}
		\tgBlank{(5,2)}{\tgColour11}
		\tgBlank{(0,3)}{\tgColour4}
		\tgBorderA{(1,3)}{\tgColour4}{\tgColour11}{\tgColour11}{\tgColour4}
		\tgBlank{(2,3)}{\tgColour11}
		\tgBlank{(3,3)}{\tgColour11}
		\tgBlank{(4,3)}{\tgColour11}
		\tgBlank{(5,3)}{\tgColour11}
		\tgCell[(3,0)]{(3.5,1)}{\epsilon}
		\tgArrow{(2,0.5)}{3}
		\tgArrow{(2,1.5)}{3}
		\tgArrow{(1,0.5)}{1}
		\tgArrow{(1,1.5)}{1}
		\tgArrow{(1,2.5)}{3}
		\tgCell[(2,0)]{(1,2)}{\lambda'}
		\tgArrow{(0,0.5)}{3}
		\tgArrow{(0,1.5)}{3}
		\tgAxisLabel{(0.5,0.75)}{south}{t}
		\tgAxisLabel{(1.5,0.75)}{south}{j}
		\tgAxisLabel{(2.5,0.75)}{south}{e}
		\tgAxisLabel{(3.5,0.75)}{south}{p_1}
		\tgAxisLabel{(5.5,0.75)}{south}{p_n}
		\tgAxisLabel{(1.5,3.25)}{north}{e'}
		\node at (4.5,.9) {$\cdots$};
	\end{tangle}
	\]
	When $n = 0$, we call such a morphism \emph{ungraded}.
\end{definition}

In particular, ungraded algebra morphisms are precisely those given in \cref{algebra}.

\begin{remark}
    \label{algebra-morphisms-equivalent}
	It will be convenient to have the following alternative description of \cref{graded-algebra-morphism}.
	A $(p_1, \dots, p_n)$-graded $T$-algebra morphism is equivalently a 2-cell
	\[\epsilon \colon p_1, \ldots, p_n \tto E(e, e')\]
  	satisfying the following equation.
    \[
    % https://q.uiver.app/#q=WzAsOSxbMywwLCJEJyJdLFszLDEsIkQnIl0sWzEsMSwiRCJdLFsxLDAsIkQiXSxbMiwwLCJcXGNkb3RzIl0sWzMsMiwiRCciXSxbMCwyLCJBIl0sWzAsMSwiQSJdLFswLDAsIkEiXSxbMSwyLCJFKGUsIGUnKSIsMV0sWzMsMiwiIiwyLHsibGV2ZWwiOjIsInN0eWxlIjp7ImhlYWQiOnsibmFtZSI6Im5vbmUifX19XSxbMCwxLCIiLDAseyJsZXZlbCI6Miwic3R5bGUiOnsiaGVhZCI6eyJuYW1lIjoibm9uZSJ9fX1dLFswLDQsInBfbiIsMix7InN0eWxlIjp7ImJvZHkiOnsibmFtZSI6ImJhcnJlZCJ9fX1dLFs1LDYsIkUodCwgZScpIiwwLHsic3R5bGUiOnsiYm9keSI6eyJuYW1lIjoiYmFycmVkIn19fV0sWzQsMywicF8xIiwyLHsic3R5bGUiOnsiYm9keSI6eyJuYW1lIjoiYmFycmVkIn19fV0sWzYsNywiIiwxLHsic3R5bGUiOnsiYm9keSI6eyJuYW1lIjoibm9uZSJ9LCJoZWFkIjp7Im5hbWUiOiJub25lIn19fV0sWzcsNiwiIiwwLHsibGV2ZWwiOjIsInN0eWxlIjp7ImhlYWQiOnsibmFtZSI6Im5vbmUifX19XSxbMiw3LCJFKHQsIGUpIiwxXSxbMyw4LCJFKGosIGUpIiwyLHsic3R5bGUiOnsiYm9keSI6eyJuYW1lIjoiYmFycmVkIn19fV0sWzgsNywiIiwyLHsibGV2ZWwiOjIsInN0eWxlIjp7ImhlYWQiOnsibmFtZSI6Im5vbmUifX19XSxbMSw1LCIiLDIseyJsZXZlbCI6Miwic3R5bGUiOnsiaGVhZCI6eyJuYW1lIjoibm9uZSJ9fX1dLFsxMSwxMCwiXFxlcHNpbG9uIiwxLHsic2hvcnRlbiI6eyJzb3VyY2UiOjIwLCJ0YXJnZXQiOjIwfSwic3R5bGUiOnsiYm9keSI6eyJuYW1lIjoibm9uZSJ9LCJoZWFkIjp7Im5hbWUiOiJub25lIn19fV0sWzEwLDE5LCJcXGFvcCIsMSx7InNob3J0ZW4iOnsic291cmNlIjoyMCwidGFyZ2V0IjoyMH0sInN0eWxlIjp7ImJvZHkiOnsibmFtZSI6Im5vbmUifSwiaGVhZCI6eyJuYW1lIjoibm9uZSJ9fX1dLFsyMCwxNiwiXFxjcCBlKHQsIGUnKSIsMSx7InNob3J0ZW4iOnsic291cmNlIjoyMCwidGFyZ2V0IjoyMH0sInN0eWxlIjp7ImJvZHkiOnsibmFtZSI6Im5vbmUifSwiaGVhZCI6eyJuYW1lIjoibm9uZSJ9fX1dXQ==
	\begin{tikzcd}[column sep=large]
		A & D & \cdots & {D'} \\
		A & D && {D'} \\
		A &&& {D'}
		\arrow["{E(e, e')}"{description}, from=2-4, to=2-2]
		\arrow[""{name=0, anchor=center, inner sep=0}, Rightarrow, no head, from=1-2, to=2-2]
		\arrow[""{name=1, anchor=center, inner sep=0}, Rightarrow, no head, from=1-4, to=2-4]
		\arrow["{p_n}"', "\shortmid"{marking}, from=1-4, to=1-3]
		\arrow["{E(t, e')}", "\shortmid"{marking}, from=3-4, to=3-1]
		\arrow["{p_1}"', "\shortmid"{marking}, from=1-3, to=1-2]
		\arrow[draw=none, from=3-1, to=2-1]
		\arrow[""{name=2, anchor=center, inner sep=0}, Rightarrow, no head, from=2-1, to=3-1]
		\arrow["{E(t, e)}"{description}, from=2-2, to=2-1]
		\arrow["{E(j, e)}"', "\shortmid"{marking}, from=1-2, to=1-1]
		\arrow[""{name=3, anchor=center, inner sep=0}, Rightarrow, no head, from=1-1, to=2-1]
		\arrow[""{name=4, anchor=center, inner sep=0}, Rightarrow, no head, from=2-4, to=3-4]
		\arrow["\epsilon"{description}, draw=none, from=1, to=0]
		\arrow["\aop"{description}, draw=none, from=0, to=3]
		\arrow["{\cp e(t, e')}"{description}, draw=none, from=4, to=2]
	\end{tikzcd}
    \quad = \quad
	\begin{tikzcd}[column sep=large]
		A & D & \cdots & {D'} \\
		A & D && {D'} \\
		A &&& {D'} \\
		A &&& {D'}
		\arrow[""{name=0, anchor=center, inner sep=0}, Rightarrow, no head, from=1-2, to=2-2]
		\arrow["{E(t, e')}", "\shortmid"{marking}, from=4-4, to=4-1]
		\arrow[""{name=1, anchor=center, inner sep=0}, Rightarrow, no head, from=1-4, to=2-4]
		\arrow["{E(e, e')}"{description}, from=2-4, to=2-2]
		\arrow["{p_n}"', "\shortmid"{marking}, from=1-4, to=1-3]
		\arrow["{p_1}"', "\shortmid"{marking}, from=1-3, to=1-2]
		\arrow["{E(j, e)}"', "\shortmid"{marking}, from=1-2, to=1-1]
		\arrow["{E(j, e)}"{description}, from=2-2, to=2-1]
		\arrow[""{name=2, anchor=center, inner sep=0}, Rightarrow, no head, from=1-1, to=2-1]
		\arrow[""{name=3, anchor=center, inner sep=0}, Rightarrow, no head, from=2-1, to=3-1]
		\arrow[""{name=4, anchor=center, inner sep=0}, Rightarrow, no head, from=2-4, to=3-4]
		\arrow["{E(j, e')}"{description}, from=3-4, to=3-1]
		\arrow[""{name=5, anchor=center, inner sep=0}, Rightarrow, no head, from=3-4, to=4-4]
		\arrow[""{name=6, anchor=center, inner sep=0}, Rightarrow, no head, from=3-1, to=4-1]
		\arrow["\epsilon"{description}, draw=none, from=1, to=0]
		\arrow["{\cp e(j, e')}"{description}, draw=none, from=4, to=3]
		\arrow["{\aop'}"{description}, draw=none, from=5, to=6]
		\arrow["{=}"{description}, draw=none, from=0, to=2]
	\end{tikzcd}
    \]\[
	% https://varkor.github.io/tangle/?t=W1tbNyw1LDEsMCwxMiw1XV0sW1tbMSxbMSwwLDEsMF1dLFsxLFsxLDAsMSwwXV0sWzEsWzEsMCwxLDBdXSxbMCxbXV0sWzEsWzEsMCwxLDBdXV0sW1sxLFsxLDEsMSwwXV0sWzEsWzEsMCwxLDFdXSxbMSxbMSwwLDEsMF1dLFswLFtdXSxbMSxbMSwwLDEsMF1dXSxbWzEsWzEsMCwxLDBdXSxbMSxbMSwxLDEsMF1dLFsxLFsxLDEsMCwxXV0sWzEsWzAsMSwwLDFdXSxbMSxbMSwwLDAsMV1dXSxbWzEsWzEsMCwxLDBdXSxbMSxbMSwwLDEsMF1dLFswLFtdXSxbMCxbXV0sWzAsW11dXV0sW1swLDMsMi41LFsiXFxlcHNpbG9uIiwzXV0sWzEsMS41LDIsWzFdXSxbMSwxLjUsMyxbMV1dLFswLDEsMS41LFsiXFxhb3AiLDFdXSxbMSwxLjUsMSxbMV1dLFsxLDAuNSwyLFswXV0sWzEsMC41LDMsWzBdXSxbMSwwLjUsMSxbMF1dXSxbWzAsMCwzLCJqIl0sWzEsMCwzLCJlIl0sWzIsMCwzLCJwXzEiXSxbNCwwLDMsInBfbiJdLFswLDMsMSwidCJdLFsxLDMsMSwiZSciXV1d&c=F5A3A3,F5CCA3,F5F5A3,CCF5A3,A3F5A3,A3F5CC,A3F5F5,A3CCF5,A3A3F5,CCA3F5,F5A3F5,F5A3CC
	\begin{tangle}{(5,4)}[trim y]
		\tgBorderA{(0,0)}{\tgColour6}{\tgColour4}{\tgColour4}{\tgColour6}
		\tgBorderA{(1,0)}{\tgColour4}{\tgColour0}{\tgColour0}{\tgColour4}
		\tgBorderA{(2,0)}{\tgColour0}{white}{white}{\tgColour0}
		\tgBlank{(3,0)}{white}
		\tgBorderA{(4,0)}{white}{\tgColour11}{\tgColour11}{white}
		\tgBorderA{(0,1)}{\tgColour6}{\tgColour4}{\tgColour4}{\tgColour6}
		\tgBorder{(0,1)}{0}{1}{0}{0}
		\tgBorderA{(1,1)}{\tgColour4}{\tgColour0}{\tgColour0}{\tgColour4}
		\tgBorder{(1,1)}{0}{0}{0}{1}
		\tgBorderA{(2,1)}{\tgColour0}{white}{white}{\tgColour0}
		\tgBlank{(3,1)}{white}
		\tgBorderA{(4,1)}{white}{\tgColour11}{\tgColour11}{white}
		\tgBorderA{(0,2)}{\tgColour6}{\tgColour4}{\tgColour4}{\tgColour6}
		\tgBorderA{(1,2)}{\tgColour4}{\tgColour0}{\tgColour11}{\tgColour4}
		\tgBorderA{(2,2)}{\tgColour0}{white}{\tgColour11}{\tgColour11}
		\tgBorderA{(3,2)}{white}{white}{\tgColour11}{\tgColour11}
		\tgBorderA{(4,2)}{white}{\tgColour11}{\tgColour11}{\tgColour11}
		\tgBorderA{(0,3)}{\tgColour6}{\tgColour4}{\tgColour4}{\tgColour6}
		\tgBorderA{(1,3)}{\tgColour4}{\tgColour11}{\tgColour11}{\tgColour4}
		\tgBlank{(2,3)}{\tgColour11}
		\tgBlank{(3,3)}{\tgColour11}
		\tgBlank{(4,3)}{\tgColour11}
		\tgCell[(3,0)]{(2.5,2)}{\epsilon}
		\tgArrow{(1,1.5)}{3}
		\tgArrow{(1,2.5)}{3}
		\tgCell[(1,0)]{(0.5,1)}{\aop}
		\tgArrow{(1,0.5)}{3}
		\tgArrow{(0,1.5)}{1}
		\tgArrow{(0,2.5)}{1}
		\tgArrow{(0,0.5)}{1}
		\tgAxisLabel{(0.5,0.75)}{south}{j}
		\tgAxisLabel{(1.5,0.75)}{south}{e}
		\tgAxisLabel{(2.5,0.75)}{south}{p_1}
		\tgAxisLabel{(4.5,0.75)}{south}{p_n}
		\tgAxisLabel{(0.5,3.25)}{north}{t}
		\tgAxisLabel{(1.5,3.25)}{north}{e'}
		\node at (3.5,1.9) {$\cdots$};
	\end{tangle}
	\tangleeq*
	% https://varkor.github.io/tangle/?t=W1tbNyw1LDEsMCwxMiw1XV0sW1tbMSxbMSwwLDEsMF1dLFsxLFsxLDAsMSwwXV0sWzEsWzEsMCwxLDBdXSxbMCxbXV0sWzEsWzEsMCwxLDBdXV0sW1sxLFsxLDAsMSwwXV0sWzEsWzEsMSwxLDBdXSxbMSxbMSwxLDAsMV1dLFsxLFswLDEsMCwxXV0sWzEsWzEsMCwwLDFdXV0sW1sxLFsxLDEsMSwwXV0sWzEsWzEsMCwxLDFdXSxbMCxbXV0sWzAsW11dLFswLFtdXV0sW1sxLFsxLDAsMSwwXV0sWzEsWzEsMCwxLDBdXSxbMCxbXV0sWzAsW11dLFswLFtdXV1dLFtbMCwzLDEuNSxbIlxcZXBzaWxvbiIsM11dLFsxLDEuNSwxLFsxXV0sWzEsMS41LDIsWzFdXSxbMCwxLDIuNSxbIlxcYW9wJyIsMV1dLFsxLDAuNSwxLFswXV0sWzEsMC41LDIsWzBdXSxbMSwwLjUsMyxbMF1dLFsxLDEuNSwzLFsxXV1dLFtbMCwwLDMsImoiXSxbMSwwLDMsImUiXSxbMiwwLDMsInBfMSJdLFs0LDAsMywicF9uIl0sWzAsMywxLCJ0Il0sWzEsMywxLCJlJyJdXV0=&c=F5A3A3,F5CCA3,F5F5A3,CCF5A3,A3F5A3,A3F5CC,A3F5F5,A3CCF5,A3A3F5,CCA3F5,F5A3F5,F5A3CC
	\begin{tangle}{(5,4)}[trim y]
		\tgBorderA{(0,0)}{\tgColour6}{\tgColour4}{\tgColour4}{\tgColour6}
		\tgBorderA{(1,0)}{\tgColour4}{\tgColour0}{\tgColour0}{\tgColour4}
		\tgBorderA{(2,0)}{\tgColour0}{white}{white}{\tgColour0}
		\tgBlank{(3,0)}{white}
		\tgBorderA{(4,0)}{white}{\tgColour11}{\tgColour11}{white}
		\tgBorderA{(0,1)}{\tgColour6}{\tgColour4}{\tgColour4}{\tgColour6}
		\tgBorderA{(1,1)}{\tgColour4}{\tgColour0}{\tgColour11}{\tgColour4}
		\tgBorderA{(2,1)}{\tgColour0}{white}{\tgColour11}{\tgColour11}
		\tgBorderA{(3,1)}{white}{white}{\tgColour11}{\tgColour11}
		\tgBorderA{(4,1)}{white}{\tgColour11}{\tgColour11}{\tgColour11}
		\tgBorderA{(0,2)}{\tgColour6}{\tgColour4}{\tgColour4}{\tgColour6}
		\tgBorder{(0,2)}{0}{1}{0}{0}
		\tgBorderA{(1,2)}{\tgColour4}{\tgColour11}{\tgColour11}{\tgColour4}
		\tgBorder{(1,2)}{0}{0}{0}{1}
		\tgBlank{(2,2)}{\tgColour11}
		\tgBlank{(3,2)}{\tgColour11}
		\tgBlank{(4,2)}{\tgColour11}
		\tgBorderA{(0,3)}{\tgColour6}{\tgColour4}{\tgColour4}{\tgColour6}
		\tgBorderA{(1,3)}{\tgColour4}{\tgColour11}{\tgColour11}{\tgColour4}
		\tgBlank{(2,3)}{\tgColour11}
		\tgBlank{(3,3)}{\tgColour11}
		\tgBlank{(4,3)}{\tgColour11}
		\tgCell[(3,0)]{(2.5,1)}{\epsilon}
		\tgArrow{(1,0.5)}{3}
		\tgArrow{(1,1.5)}{3}
		\tgCell[(1,0)]{(0.5,2)}{\aop'}
		\tgArrow{(0,0.5)}{1}
		\tgArrow{(0,1.5)}{1}
		\tgArrow{(0,2.5)}{1}
		\tgArrow{(1,2.5)}{3}
		\tgAxisLabel{(0.5,0.75)}{south}{j}
		\tgAxisLabel{(1.5,0.75)}{south}{e}
		\tgAxisLabel{(2.5,0.75)}{south}{p_1}
		\tgAxisLabel{(4.5,0.75)}{south}{p_n}
		\tgAxisLabel{(0.5,3.25)}{north}{t}
		\tgAxisLabel{(1.5,3.25)}{north}{e'}
		\node at (3.5,.9) {$\cdots$};
	\end{tangle}
	\]
\end{remark}

\begin{example}
	\label{algebra-extension-operator-is-graded-morphism}
    For a given relative monad $T$, the extension operator $\aop \colon E(j, e) \tto E(t, e)$ of a $T$-algebra $(e, \aop)$ is an $(E(j, e))$-graded $T$-algebra morphism from $(t, \dag)$ to $(e, \aop)$ in the sense of \cref{algebra-morphisms-equivalent}, the law for the graded morphism being precisely the extension law for the algebra.
\end{example}

\begin{remark}
	For each relative monad $T$, the $\ob\X$-indexed family of categories $T\h\Alg_{\ph}$ (\cref{algebra}) assembles into a category \emph{locally graded} by $\X$. Its morphisms, which are graded by chains of loose-cells in $\X$, are the graded algebra morphisms of \cref{graded-algebra-morphism}; the same is true of the graded opalgebra morphisms of \cref{graded-opalgebra-morphism}. The construction of an $(\ell' \d T)$-(op)algebra from a $T$-(op)algebra described in \cref{algebra-and-opalgebra-precomposition}, and the construction of a $(T \d r')$-(op)algebra from a $T$-(op)algebra described in \cref{algebra-and-opalgebra-pasting} then extend to locally graded functors between locally graded categories of (op)algebras. Since explicating this idea would require the introduction of categories locally graded by \vdcs{}, rather than simply monoidal categories as have been considered in the literature~\cite{wood1976indicial,levy2019locally}, we do not do so here.
\end{remark}

\begin{definition}
	\label{algebra-object}
	Let $T$ be a relative monad. A $T$-algebra $(u_T \colon \Alg(T) \to E, \aop_T)$ is called an
	\emph{algebra object} for $T$ when
	\begin{enumerate}
		\item \label{algebra-object-tight-cell-UP} for every $T$-algebra $(e \colon D \to E, \aop)$, there is a
		unique tight-cell $\unit_{(e, \aop)} \colon D \to \Alg(T)$ such that $\unit_{(e, \aop)} \d u_T = e$ and $\aop_T(1, \unit_{(e, \aop)}) = \aop$;
		\item \label{algebra-object-2-cell-UP} for every graded $T$-algebra morphism $\epsilon \colon E(1, e), p_1, \dots, p_n \tto E(1, e')$ there is a unique 2-cell $\unit_\epsilon \colon \Alg(T)(1, \unit_{(e, \aop)}), p_1, \dots, p_n \tto \Alg(T)(1, \unit_{(e', \aop')})$ such that:
		\[
		\epsilon~=~
		% https://q.uiver.app/#q=WzAsNixbMywwLCJEJyJdLFszLDEsIkQnIl0sWzEsMCwiRCJdLFsyLDAsIlxcY2RvdHMiXSxbMCwwLCJFIl0sWzAsMSwiRSJdLFswLDEsIiIsMCx7ImxldmVsIjoyLCJzdHlsZSI6eyJoZWFkIjp7Im5hbWUiOiJub25lIn19fV0sWzAsMywicF9uIiwyLHsic3R5bGUiOnsiYm9keSI6eyJuYW1lIjoiYmFycmVkIn19fV0sWzMsMiwicF8xIiwyLHsic3R5bGUiOnsiYm9keSI6eyJuYW1lIjoiYmFycmVkIn19fV0sWzIsNCwiRSgxLCBlKSIsMix7InN0eWxlIjp7ImJvZHkiOnsibmFtZSI6ImJhcnJlZCJ9fX1dLFsxLDUsIkUoMSwgZScpIiwwLHsic3R5bGUiOnsiYm9keSI6eyJuYW1lIjoiYmFycmVkIn19fV0sWzQsNSwiIiwyLHsibGV2ZWwiOjIsInN0eWxlIjp7ImhlYWQiOnsibmFtZSI6Im5vbmUifX19XSxbNiwxMSwiRSgxLCB1X1QpLCBcXHVuaXRfXFxlcHNpbG9uIiwxLHsic2hvcnRlbiI6eyJzb3VyY2UiOjIwLCJ0YXJnZXQiOjIwfSwic3R5bGUiOnsiYm9keSI6eyJuYW1lIjoibm9uZSJ9LCJoZWFkIjp7Im5hbWUiOiJub25lIn19fV1d
		\begin{tikzcd}
			E & D & \cdots & {D'} \\
			E &&& {D'}
			\arrow[""{name=0, anchor=center, inner sep=0}, Rightarrow, no head, from=1-4, to=2-4]
			\arrow["{p_n}"', "\shortmid"{marking}, from=1-4, to=1-3]
			\arrow["{p_1}"', "\shortmid"{marking}, from=1-3, to=1-2]
			\arrow["{E(1, e)}"', "\shortmid"{marking}, from=1-2, to=1-1]
			\arrow["{E(1, e')}", "\shortmid"{marking}, from=2-4, to=2-1]
			\arrow[""{name=1, anchor=center, inner sep=0}, Rightarrow, no head, from=1-1, to=2-1]
			\arrow["{E(1, u_T), \unit_\epsilon}"{description}, draw=none, from=0, to=1]
		\end{tikzcd}
		\]
	\end{enumerate}
\end{definition}

In writing $\epsilon$ as a 2-cell $(E(1, u_T), \unit_\epsilon)$ in \ref{algebra-object-2-cell-UP} above, we are using the fact (from \ref{algebra-object-tight-cell-UP}) that $\unit_{(e, \aop)} \d u_T = e$.
The property $\aop_T(1, \unit_{(e, \aop)}) = \aop$ implies that the equation in \ref{algebra-object-2-cell-UP} expresses a bijection between graded $T$-algebra morphisms $E(1, e), p_1, \dots, p_n \tto E(1, e')$ and 2-cells $\Alg(T)(1, \unit_{(e, \aop)}), p_1, \dots, p_n \tto \Alg(T){(1, \unit_{(e', \aop')})}$.

Our definition of algebra object deserves some elaboration. The motivation for \cref{algebra-object} is to capture the universal property possessed by the category of algebras for a relative monad. To reconstruct the 2-categorical universal property of a given category $C$, it is necessary not only to consider the objects and morphisms of the category (viewed as \emph{global objects}, \ie{} functors $1 \to C$, and \emph{global morphisms}, \ie{} natural transformations between such functors), but, more generally, arbitrary functors into $C$ (viewed as \emph{generalised objects}) and natural transformations between them (viewed as \emph{generalised morphisms}). Doing so in the case of the category of algebras for an (enriched) monad leads to the universal property considered by \textcite[Proposition~II.1.1]{dubuc1970kan} and formalised in the definition of an algebra object by \textcite[\S1]{street1972formal} (\cf{}~\cite[\S3.3]{kelly1974review}). However, in a \ve{} we may identify a stronger universal property than the 2-categorical one: rather than considering generalised morphisms, we may consider \emph{graded} generalised morphisms, which permits the consideration of 2-cells $p_1, \ldots, p_n \tto C(c, c')$ between tight-cells $c \colon X \to C$ and $c' \colon X' \to C$ with different domains. \Cref{algebra-object} thus arises by considering the virtual double categorical universal property of the category of algebras for a relative monad in terms of generalised objects and graded generalised morphisms.

Consequently, the universal property of \cref{algebra-object} is stronger than that of an algebra object in the sense of \cite{street1972formal}, even for non-relative monads: while, as observed in \cref{algebras-for-identity-relative-monads-are-algebras-for-monads}, algebras and ungraded morphisms in the sense of \cref{algebra} for $1_E$-monads are algebras and morphisms for monads on $E$ in the classical sense, the definition of graded algebra morphism cannot be stated in an arbitrary 2-category. In \cref{relative-monads-in-VCat}, we will show that algebra objects for relative monads exist in $\VCat$, and thus that algebra objects for enriched (relative) monads satisfy a stronger universal property with respect to distributors than has traditionally been recognised. Similar considerations apply to the definition of opalgebra object given in the next section.

In light of \cref{relative-adjunction-induces-algebra-and-opalgebra}, given a relative adjunction $\ljr$ inducing a monad $T$ that admits an algebra object, we will denote by $\unit_{\ljr} \colon C \to \Alg(T)$ the mediating morphism induced by the $T$-algebra structure on the right $j$-adjoint.

\begin{definition}
	Let $T$ be a relative monad admitting an algebra object. Denote by ${f_T \colon A \to \Alg(T)}$ the mediating tight-cell $\unit_{(t, \dag)}$ induced by the $T$-algebra $(t, \dag)$ (\cref{relative-monad-forms-algebra}).
\end{definition}

\begin{lemma}
	\label{algebra-object-induces-resolution}
	Let $T$ be a relative monad. If $T$ admits an algebra object, then $T$ admits a resolution.
	% https://q.uiver.app/?q=WzAsMyxbMCwxLCJBIl0sWzEsMCwiXFxBbGcoVCkiXSxbMiwxLCJFIl0sWzAsMSwiZl9UIl0sWzEsMiwidV9UIl0sWzAsMiwiaiIsMl0sWzMsNCwiIiwwLHsib2Zmc2V0IjoyLCJsZXZlbCI6MSwic3R5bGUiOnsibmFtZSI6ImFkanVuY3Rpb24ifX1dXQ==
	\[\begin{tikzcd}
		& {\Alg(T)} \\
		A && E
		\arrow[""{name=0, anchor=center, inner sep=0}, "{f_T}", from=2-1, to=1-2]
		\arrow[""{name=1, anchor=center, inner sep=0}, "{u_T}", from=1-2, to=2-3]
		\arrow["j"', from=2-1, to=2-3]
		\arrow["\dashv"{anchor=center}, shift right=2, draw=none, from=0, to=1]
	\end{tikzcd}\]
\end{lemma}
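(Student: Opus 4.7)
The strategy is to define $f_T \colon A \to \Alg(T)$ as the tight-cell mediating the tautological $T$-algebra $(t, \dag)$ (\cref{relative-monad-forms-algebra}) and then exhibit $f_T \jadj u_T$ as a resolution of $T$. First I would invoke the universal property (1) of $\Alg(T)$ applied to $(t, \dag)$ to obtain $f_T$ satisfying $f_T \d u_T = t$ and $f_T \d \aop_T = \dag$; this already matches the underlying tight-cell of the would-be resolution.

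Next I would construct the transposition isomorphism $\sharp \colon \Alg(T)(f_T, 1) \iso E(j, u_T)$ and invoke \cref{reformulations-of-relative-adjunction}(1) to conclude that $f_T \jadj u_T$. The key observation is a two-step natural bijection, for any loose-cell $q$ with appropriate frame, between 2-cells $q \tto \Alg(T)(f_T, 1)$ and 2-cells $q \tto E(j, u_T)$. On one side, 2-cells $q \tto E(j, u_T)$ are in bijection with graded $T$-algebra morphisms $q \tto E(t, u_T)$ from $(t, \dag)$ to $(u_T, \aop_T)$: one direction postcomposes with $\aop_T$ (which is itself a graded $T$-algebra morphism by \cref{extension-operator-is-graded-morphism}, and closure of graded morphisms under precomposition yields compatibility), while the inverse postcomposes with $E(\eta, u_T)$ (with the unit laws for algebras ensuring the two assignments are mutually inverse). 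On the other side, the universal property (2) of $\Alg(T)$, together with bending the resulting 2-cell into $\Alg(T)(1, 1)$ via the companion--conjoint of $1_{\Alg(T)}$, puts graded morphisms $q \tto E(t, u_T)$ in bijection with 2-cells $q \tto \Alg(T)(f_T, 1)$. Composing produces a natural isomorphism, which by the Yoneda-style argument underlying the universal property of restriction must be induced by an isomorphism $\sharp \colon \Alg(T)(f_T, 1) \iso E(j, u_T)$.

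Finally I would verify that the relative monad $\wedge_j(f_T \jadj u_T)$ induced by this adjunction coincides with $T$. Its underlying tight-cell is $f_T \d u_T = t$ by construction, so by the data of \cref{relative-monad} it remains only to match the unit and extension operator. Unwinding the formulas of \cref{reformulations-of-relative-adjunction} and \cref{relative-adjunction-induces-relative-monad} and tracing the bijections above, the induced unit is the 2-cell corresponding under $\sharp$ to a restriction 2-cell built from companions and conjoints, and reduces via $f_T \d \aop_T = \dag$ and the first unit law of the $T$-algebra $(u_T, \aop_T)$ to $\eta$. The induced extension operator reduces analogously to $\dag$.

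The main obstacle is the bookkeeping in this last verification: carefully unpacking the chain of natural bijections so as to show on the nose (rather than merely up to canonical isomorphism) that the canonical unit and extension operator of $f_T \jadj u_T$ coincide with $\eta$ and $\dag$. A string-diagrammatic argument using the zig-zag laws and the explicit descriptions of $\sharp$ and $\flat$ in \cref{reformulations-of-relative-adjunction} should render this routine, but requires some care in threading the two bijections (graded morphism $\leftrightarrow$ 2-cell into $E(j, u_T)$, and graded morphism $\leftrightarrow$ 2-cell into $\Alg(T)(f_T, 1)$) through the defining equations of the relative monad.
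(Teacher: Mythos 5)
Your argument is sound and reaches the same resolution $f_T \jadj u_T$, but it packages the adjunction differently from the paper. The paper works with the unit--counit presentation (\cref{reformulations-of-relative-adjunction}(3)): it re\"expresses $\aop_T$ as an action $\lambda_T \colon E(1,t), E(j,u_T) \tto E(1,u_T)$ via \cref{algebras-are-left-actions}, observes that the compatibility with $\dag = \aop_T, \Alg(T)(1,f_T)$ makes $\lambda_T$ an $E(j,u_T)$-graded morphism from $(t,\dag)$ to $(u_T,\aop_T)$, obtains the counit from clause (2) of \cref{algebra-object}, and then verifies the two zig-zag laws and the identity of the induced extension operator by explicit string-diagram computations (the second zig-zag law exploiting uniqueness in the universal property). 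You instead assemble the hom-isomorphism $\Alg(T)(f_T,1) \iso E(j,u_T)$ from two natural bijections — 2-cells into $E(j,u_T)$ correspond to graded morphisms $(t,\dag) \to (u_T,\aop_T)$ via $\aop_T$ and $E(\eta,u_T)$, and these correspond to 2-cells into $\Alg(T)(f_T,1)$ via clause (2) and bending — and conclude by representability. The ingredients are the same (clause (2) applied to $\aop_T$ qua graded morphism from $(t,\dag)$, the unit laws, and uniqueness), so what your route buys is chiefly a cleaner conceptual statement in form (1)/(5) at the cost of the representability step and the deferred on-the-nose identification of unit and extension operator, which the paper handles directly in its diagrams.

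Two small slips, neither fatal: mutual inverseness of your first bijection is not a consequence of the algebra unit laws alone — the roundtrip $\psi \mapsto \psi \d E(\eta,u_T) \d \aop_T$ requires the graded-morphism law of $\psi$ (\cref{algebra-morphisms-equivalent}) instantiated at $\eta$ together with the monad's unit law $\eta \d \dag = \pc{t}$, while the other roundtrip uses $\aop_T \d E(\eta,u_T) = 1$; and the bending in your second bijection is along the companion--conjoint of $f_T$, not of $1_{\Alg(T)}$. The deferred verification is indeed routine: the induced unit reduces to $\eta$ using that $u_T$ sends $\pc{f_T}$ to $\pc{t}$, and the induced extension operator equals $\aop_T, \Alg(T)(1,f_T) = \dag$ by the defining property of $\flat$ and $f_T \d \aop_T = \dag$.
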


\begin{proof}
	The unit of $T$ is a 2-cell $\eta \colon j \tto (t = f_T \d u_T)$. By \cref{algebra-extension-operator-is-graded-morphism}, the extension operator $\aop_T$ is an $(E(j, u_T))$-graded algebra morphism from $(t, \dag)$ to $(u_T, \aop_T)$, so that the universal property of the algebra object (with respect to algebra morphisms in the form of \cref{algebra-morphisms-equivalent}) induces a 2-cell $\unit_{\aop_T} \colon E(j, u_T) \tto \Alg(T)(f_T, 1)$. We prove that $\eta$ together with $\unit_{\aop_T}$ forms a $j$-adjunction (in universal arrow form).
	First, we have
	\begin{tangleeqs}
	% https://varkor.github.io/tangle/?t=W1tbNyw1LDEsNV1dLFtbWzAsW11dLFswLFtdXSxbMSxbMSwwLDEsMF1dLFsxLFsxLDAsMSwwXV1dLFtbMSxbMCwxLDEsMF1dLFsxLFswLDEsMSwxXV0sWzEsWzEsMSwwLDFdXSxbMSxbMSwwLDAsMV1dXSxbWzEsWzEsMCwxLDBdXSxbMSxbMSwwLDEsMF1dLFswLFtdXSxbMCxbXV1dXSxbWzAsMSwxLjUsWyJcXGV0YSIsMV1dLFswLDMsMS41LFsiXFx1bml0X3tcXGFvcF9UfSIsMV1dLFsxLDIuNSwxLFswXV0sWzEsMy41LDEsWzFdXSxbMSwxLjUsMixbMV1dLFsxLDAuNSwyLFswXV0sWzEsMiwxLjUsWzBdXV0sW1syLDAsMywiaiJdLFszLDAsMywidV9UIl0sWzAsMiwxLCJqIl0sWzEsMiwxLCJ1X1QiXV1d&c=F5A3A3,F5CCA3,F5F5A3,CCF5A3,A3F5A3,A3F5CC,A3F5F5,A3CCF5,A3A3F5,CCA3F5,F5A3F5,F5A3CC
	\begin{tangle}{(4,3)}[trim y=.25]
		\tgBlank{(0,0)}{\tgColour6}
		\tgBlank{(1,0)}{\tgColour6}
		\tgBorderA{(2,0)}{\tgColour6}{\tgColour4}{\tgColour4}{\tgColour6}
		\tgBorderA{(3,0)}{\tgColour4}{\tgColour0}{\tgColour0}{\tgColour4}
		\tgBorderA{(0,1)}{\tgColour6}{\tgColour6}{\tgColour4}{\tgColour6}
		\tgBorderA{(1,1)}{\tgColour6}{\tgColour6}{\tgColour0}{\tgColour4}
		\tgBorderA{(2,1)}{\tgColour6}{\tgColour4}{\tgColour0}{\tgColour0}
		\tgBorderA{(3,1)}{\tgColour4}{\tgColour0}{\tgColour0}{\tgColour0}
		\tgBorderA{(0,2)}{\tgColour6}{\tgColour4}{\tgColour4}{\tgColour6}
		\tgBorderA{(1,2)}{\tgColour4}{\tgColour0}{\tgColour0}{\tgColour4}
		\tgBlank{(2,2)}{\tgColour0}
		\tgBlank{(3,2)}{\tgColour0}
		\tgCell[(1,0)]{(0.5,1)}{\eta}
		\tgCell[(1,0)]{(2.5,1)}{\unit_{\aop_T}}
		\tgArrow{(2,0.5)}{1}
		\tgArrow{(3,0.5)}{3}
		\tgArrow{(1,1.5)}{3}
		\tgArrow{(0,1.5)}{1}
		\tgArrow{(1.5,1)}{0}
		\tgAxisLabel{(2.5,0.25)}{south}{j}
		\tgAxisLabel{(3.5,0.25)}{south}{u_T}
		\tgAxisLabel{(0.5,2.75)}{north}{j}
		\tgAxisLabel{(1.5,2.75)}{north}{u_T}
	\end{tangle}
    \=
    % https://varkor.github.io/tangle/?t=W1tbNyw1LDEsNV1dLFtbWzAsW11dLFswLFtdXSxbMCxbXV0sWzEsWzEsMCwxLDBdXSxbMSxbMSwwLDEsMF1dXSxbWzEsWzAsMSwxLDBdXSxbMSxbMCwxLDEsMV1dLFsxLFswLDAsMSwxXV0sWzEsWzEsMCwxLDBdXSxbMSxbMSwwLDEsMF1dXSxbWzEsWzEsMCwxLDBdXSxbMSxbMSwwLDEsMF1dLFsxLFsxLDEsMCwwXV0sWzEsWzEsMSwwLDFdXSxbMSxbMSwwLDAsMV1dXSxbWzEsWzEsMCwxLDBdXSxbMSxbMSwwLDEsMF1dLFswLFtdXSxbMCxbXV0sWzAsW11dXV0sW1swLDEuNSwxLjUsWyJcXGV0YSIsMl1dLFswLDMuNSwyLjUsWyJcXHVuaXRfe1xcYW9wX1R9IiwyXV0sWzEsMC41LDIsWzBdXSxbMSwxLjUsMixbMV1dLFsxLDIuNSwyLFsxXV0sWzEsMy41LDIsWzBdXSxbMSw0LjUsMixbMV1dLFsxLDQuNSwxLFsxXV0sWzEsMS41LDMsWzFdXSxbMSwwLjUsMyxbMF1dLFsxLDMuNSwxLFswXV1dLFtbMywwLDMsImoiXSxbNCwwLDMsInVfVCJdLFswLDMsMSwiaiJdLFsxLDMsMSwidV9UIl1dXQ==&c=F5A3A3,F5CCA3,F5F5A3,CCF5A3,A3F5A3,A3F5CC,A3F5F5,A3CCF5,A3A3F5,CCA3F5,F5A3F5,F5A3CC
	\begin{tangle}{(5,4)}[trim y]
		\tgBlank{(0,0)}{\tgColour6}
		\tgBlank{(1,0)}{\tgColour6}
		\tgBlank{(2,0)}{\tgColour6}
		\tgBorderA{(3,0)}{\tgColour6}{\tgColour4}{\tgColour4}{\tgColour6}
		\tgBorderA{(4,0)}{\tgColour4}{\tgColour0}{\tgColour0}{\tgColour4}
		\tgBorderA{(0,1)}{\tgColour6}{\tgColour6}{\tgColour4}{\tgColour6}
		\tgBorderA{(1,1)}{\tgColour6}{\tgColour6}{\tgColour0}{\tgColour4}
		\tgBorderA{(2,1)}{\tgColour6}{\tgColour6}{\tgColour6}{\tgColour0}
		\tgBorderA{(3,1)}{\tgColour6}{\tgColour4}{\tgColour4}{\tgColour6}
		\tgBorderA{(4,1)}{\tgColour4}{\tgColour0}{\tgColour0}{\tgColour4}
		\tgBorderA{(0,2)}{\tgColour6}{\tgColour4}{\tgColour4}{\tgColour6}
		\tgBorderA{(1,2)}{\tgColour4}{\tgColour0}{\tgColour0}{\tgColour4}
		\tgBorderA{(2,2)}{\tgColour0}{\tgColour6}{\tgColour0}{\tgColour0}
		\tgBorderA{(3,2)}{\tgColour6}{\tgColour4}{\tgColour0}{\tgColour0}
		\tgBorderA{(4,2)}{\tgColour4}{\tgColour0}{\tgColour0}{\tgColour0}
		\tgBorderA{(0,3)}{\tgColour6}{\tgColour4}{\tgColour4}{\tgColour6}
		\tgBorderA{(1,3)}{\tgColour4}{\tgColour0}{\tgColour0}{\tgColour4}
		\tgBlank{(2,3)}{\tgColour0}
		\tgBlank{(3,3)}{\tgColour0}
		\tgBlank{(4,3)}{\tgColour0}
		\tgCell[(2,0)]{(1,1)}{\eta}
		\tgCell[(2,0)]{(3,2)}{\unit_{\aop_T}}
		\tgArrow{(0,1.5)}{1}
		\tgArrow{(1,1.5)}{3}
		\tgArrow{(2,1.5)}{3}
		\tgArrow{(3,1.5)}{1}
		\tgArrow{(4,1.5)}{3}
		\tgArrow{(4,0.5)}{3}
		\tgArrow{(1,2.5)}{3}
		\tgArrow{(0,2.5)}{1}
		\tgArrow{(3,0.5)}{1}
		\tgAxisLabel{(3.5,0.75)}{south}{j}
		\tgAxisLabel{(4.5,0.75)}{south}{u_T}
		\tgAxisLabel{(0.5,3.25)}{north}{j}
		\tgAxisLabel{(1.5,3.25)}{north}{u_T}
	\end{tangle}
    \\
    % https://varkor.github.io/tangle/?t=W1tbNyw1LDEsNSwxXV0sW1tbMCxbXV0sWzAsW11dLFswLFtdXSxbMSxbMSwwLDEsMF1dLFsxLFsxLDAsMSwwXV1dLFtbMSxbMCwxLDEsMF1dLFsxLFswLDEsMSwxXV0sWzEsWzAsMCwxLDFdXSxbMSxbMSwwLDEsMF1dLFsxLFsxLDAsMSwwXV1dLFtbMSxbMSwwLDEsMF1dLFsxLFsxLDEsMSwwXV0sWzEsWzEsMSwwLDFdXSxbMSxbMSwxLDAsMV1dLFsxLFsxLDAsMCwxXV1dLFtbMSxbMSwwLDEsMF1dLFsxLFsxLDAsMSwwXV0sWzAsW11dLFswLFtdXSxbMCxbXV1dXSxbWzAsMS41LDEuNSxbIlxcZXRhIiwyXV0sWzEsMC41LDIsWzBdXSxbMSwxLjUsMixbMV1dLFsxLDIuNSwyLFsxXV0sWzEsMy41LDIsWzBdXSxbMSw0LjUsMixbMV1dLFswLDMsMi41LFsiXFxhb3BfVCIsM11dLFsxLDQuNSwxLFsxXV0sWzEsMS41LDMsWzFdXSxbMSwwLjUsMyxbMF1dLFsxLDMuNSwxLFswXV1dLFtbMywwLDMsImoiXSxbNCwwLDMsInVfVCJdLFswLDMsMSwiaiJdLFsxLDMsMSwidV9UIl1dXQ==&c=F5A3A3,F5CCA3,F5F5A3,CCF5A3,A3F5A3,A3F5CC,A3F5F5,A3CCF5,A3A3F5,CCA3F5,F5A3F5,F5A3CC
	\begin{tangle}{(5,4)}[trim y]
		\tgBlank{(0,0)}{\tgColour6}
		\tgBlank{(1,0)}{\tgColour6}
		\tgBlank{(2,0)}{\tgColour6}
		\tgBorderA{(3,0)}{\tgColour6}{\tgColour4}{\tgColour4}{\tgColour6}
		\tgBorderA{(4,0)}{\tgColour4}{\tgColour0}{\tgColour0}{\tgColour4}
		\tgBorderA{(0,1)}{\tgColour6}{\tgColour6}{\tgColour4}{\tgColour6}
		\tgBorderA{(1,1)}{\tgColour6}{\tgColour6}{\tgColour0}{\tgColour4}
		\tgBorderA{(2,1)}{\tgColour6}{\tgColour6}{\tgColour6}{\tgColour0}
		\tgBorderA{(3,1)}{\tgColour6}{\tgColour4}{\tgColour4}{\tgColour6}
		\tgBorderA{(4,1)}{\tgColour4}{\tgColour0}{\tgColour0}{\tgColour4}
		\tgBorderA{(0,2)}{\tgColour6}{\tgColour4}{\tgColour4}{\tgColour6}
		\tgBorderA{(1,2)}{\tgColour4}{\tgColour0}{\tgColour0}{\tgColour4}
		\tgBorder{(1,2)}{0}{1}{0}{0}
		\tgBorderA{(2,2)}{\tgColour0}{\tgColour6}{\tgColour0}{\tgColour0}
		\tgBorder{(2,2)}{0}{0}{0}{1}
		\tgBorderA{(3,2)}{\tgColour6}{\tgColour4}{\tgColour0}{\tgColour0}
		\tgBorderA{(4,2)}{\tgColour4}{\tgColour0}{\tgColour0}{\tgColour0}
		\tgBorderA{(0,3)}{\tgColour6}{\tgColour4}{\tgColour4}{\tgColour6}
		\tgBorderA{(1,3)}{\tgColour4}{\tgColour0}{\tgColour0}{\tgColour4}
		\tgBlank{(2,3)}{\tgColour0}
		\tgBlank{(3,3)}{\tgColour0}
		\tgBlank{(4,3)}{\tgColour0}
		\tgCell[(2,0)]{(1,1)}{\eta}
		\tgArrow{(0,1.5)}{1}
		\tgArrow{(1,1.5)}{3}
		\tgArrow{(2,1.5)}{3}
		\tgArrow{(3,1.5)}{1}
		\tgArrow{(4,1.5)}{3}
		\tgCell[(3,0)]{(2.5,2)}{\aop_T}
		\tgArrow{(4,0.5)}{3}
		\tgArrow{(1,2.5)}{3}
		\tgArrow{(0,2.5)}{1}
		\tgArrow{(3,0.5)}{1}
		\tgAxisLabel{(3.5,0.75)}{south}{j}
		\tgAxisLabel{(4.5,0.75)}{south}{u_T}
		\tgAxisLabel{(0.5,3.25)}{north}{j}
		\tgAxisLabel{(1.5,3.25)}{north}{u_T}
	\end{tangle}
    \=
    % https://varkor.github.io/tangle/?t=W1tbNyw1LDFdXSxbW1sxLFsxLDAsMSwwXV0sWzEsWzEsMCwxLDBdXV0sW1sxLFsxLDAsMSwwXV0sWzEsWzEsMCwxLDBdXV0sW1sxLFsxLDAsMSwwXV0sWzEsWzEsMCwxLDBdXV0sW1sxLFsxLDAsMSwwXV0sWzEsWzEsMCwxLDBdXV1dLFtbMSwxLjUsMixbMV1dLFsxLDAuNSwyLFswXV0sWzEsMS41LDEsWzFdXSxbMSwxLjUsMyxbMV1dLFsxLDAuNSwxLFswXV0sWzEsMC41LDMsWzBdXV0sW1swLDAsMywiaiJdLFsxLDAsMywidV9UIl0sWzAsMywxLCJqIl0sWzEsMywxLCJ1X1QiXV1d&c=F5A3A3,F5CCA3,F5F5A3,CCF5A3,A3F5A3,A3F5CC,A3F5F5,A3CCF5,A3A3F5,CCA3F5,F5A3F5,F5A3CC
	\begin{tangle}{(2,4)}[trim y]
		\tgBorderA{(0,0)}{\tgColour6}{\tgColour4}{\tgColour4}{\tgColour6}
		\tgBorderA{(1,0)}{\tgColour4}{\tgColour0}{\tgColour0}{\tgColour4}
		\tgBorderA{(0,1)}{\tgColour6}{\tgColour4}{\tgColour4}{\tgColour6}
		\tgBorderA{(1,1)}{\tgColour4}{\tgColour0}{\tgColour0}{\tgColour4}
		\tgBorderA{(0,2)}{\tgColour6}{\tgColour4}{\tgColour4}{\tgColour6}
		\tgBorderA{(1,2)}{\tgColour4}{\tgColour0}{\tgColour0}{\tgColour4}
		\tgBorderA{(0,3)}{\tgColour6}{\tgColour4}{\tgColour4}{\tgColour6}
		\tgBorderA{(1,3)}{\tgColour4}{\tgColour0}{\tgColour0}{\tgColour4}
		\tgArrow{(1,1.5)}{3}
		\tgArrow{(0,1.5)}{1}
		\tgArrow{(1,0.5)}{3}
		\tgArrow{(1,2.5)}{3}
		\tgArrow{(0,0.5)}{1}
		\tgArrow{(0,2.5)}{1}
		\tgAxisLabel{(0.5,0.75)}{south}{j}
		\tgAxisLabel{(1.5,0.75)}{south}{u_T}
		\tgAxisLabel{(0.5,3.25)}{north}{j}
		\tgAxisLabel{(1.5,3.25)}{north}{u_T}
	\end{tangle}
	\end{tangleeqs}
	by: \eqstepref{1.1} bending $f_T$; \eqstepref{1.2} using the definition of $\unit_{\aop_T}$; and \eqstepref{1.3} the unit law for the algebra $(u_T, \aop_T)$. Second, we have
    \begin{tangleeqs}
    % https://varkor.github.io/tangle/?t=W1tbNSwxLDcsNV1dLFtbWzEsWzEsMCwxLDBdXSxbMSxbMSwwLDEsMF1dLFswLFtdXSxbMCxbXV0sWzAsW11dXSxbWzEsWzEsMCwxLDBdXSxbMSxbMSwwLDEsMF1dLFsxLFswLDEsMSwwXV0sWzEsWzAsMSwxLDFdXSxbMSxbMCwwLDEsMV1dXSxbWzEsWzEsMCwxLDBdXSxbMSxbMSwxLDAsMF1dLFsxLFsxLDEsMCwxXV0sWzEsWzEsMCwwLDFdXSxbMSxbMSwwLDEsMF1dXSxbWzEsWzEsMCwxLDBdXSxbMCxbXV0sWzAsW11dLFswLFtdXSxbMSxbMSwwLDEsMF1dXV0sW1swLDIuNSwyLjUsWyJcXHVuaXRfe1xcYW9wX1R9IiwyXV0sWzAsMy41LDEuNSxbIlxcZXRhIiwyXV0sWzEsNC41LDIsWzFdXSxbMSwyLjUsMixbMF1dLFsxLDMuNSwyLFsxXV0sWzEsMC41LDIsWzFdXSxbMSwxLjUsMixbMV1dLFsxLDQuNSwzLFsxXV0sWzEsMS41LDEsWzFdXSxbMSwwLjUsMSxbMV1dLFsxLDAuNSwzLFsxXV1dLFtbMCwwLDMsInVfVCJdLFsxLDAsMywiZl9UIl0sWzAsMywxLCJ1X1QiXSxbNCwzLDEsImZfVCJdXV0=&c=F5A3A3,F5CCA3,F5F5A3,CCF5A3,A3F5A3,A3F5CC,A3F5F5,A3CCF5,A3A3F5,CCA3F5,F5A3F5,F5A3CC
	\begin{tangle}{(5,4)}[trim y]
		\tgBorderA{(0,0)}{\tgColour4}{\tgColour0}{\tgColour0}{\tgColour4}
		\tgBorderA{(1,0)}{\tgColour0}{\tgColour6}{\tgColour6}{\tgColour0}
		\tgBlank{(2,0)}{\tgColour6}
		\tgBlank{(3,0)}{\tgColour6}
		\tgBlank{(4,0)}{\tgColour6}
		\tgBorderA{(0,1)}{\tgColour4}{\tgColour0}{\tgColour0}{\tgColour4}
		\tgBorderA{(1,1)}{\tgColour0}{\tgColour6}{\tgColour6}{\tgColour0}
		\tgBorderA{(2,1)}{\tgColour6}{\tgColour6}{\tgColour4}{\tgColour6}
		\tgBorderA{(3,1)}{\tgColour6}{\tgColour6}{\tgColour0}{\tgColour4}
		\tgBorderA{(4,1)}{\tgColour6}{\tgColour6}{\tgColour6}{\tgColour0}
		\tgBorderA{(0,2)}{\tgColour4}{\tgColour0}{\tgColour0}{\tgColour4}
		\tgBorderA{(1,2)}{\tgColour0}{\tgColour6}{\tgColour0}{\tgColour0}
		\tgBorderA{(2,2)}{\tgColour6}{\tgColour4}{\tgColour0}{\tgColour0}
		\tgBorderA{(3,2)}{\tgColour4}{\tgColour0}{\tgColour0}{\tgColour0}
		\tgBorderA{(4,2)}{\tgColour0}{\tgColour6}{\tgColour6}{\tgColour0}
		\tgBorderA{(0,3)}{\tgColour4}{\tgColour0}{\tgColour0}{\tgColour4}
		\tgBlank{(1,3)}{\tgColour0}
		\tgBlank{(2,3)}{\tgColour0}
		\tgBlank{(3,3)}{\tgColour0}
		\tgBorderA{(4,3)}{\tgColour0}{\tgColour6}{\tgColour6}{\tgColour0}
		\tgCell[(2,0)]{(2,2)}{\unit_{\aop_T}}
		\tgCell[(2,0)]{(3,1)}{\eta}
		\tgArrow{(4,1.5)}{3}
		\tgArrow{(2,1.5)}{1}
		\tgArrow{(3,1.5)}{3}
		\tgArrow{(0,1.5)}{3}
		\tgArrow{(1,1.5)}{3}
		\tgArrow{(4,2.5)}{3}
		\tgArrow{(1,0.5)}{3}
		\tgArrow{(0,0.5)}{3}
		\tgArrow{(0,2.5)}{3}
		\tgAxisLabel{(0.5,0.75)}{south}{u_T}
		\tgAxisLabel{(1.5,0.75)}{south}{f_T}
		\tgAxisLabel{(0.5,3.25)}{north}{u_T}
		\tgAxisLabel{(4.5,3.25)}{north}{f_T}
	\end{tangle}
    \=
    % https://varkor.github.io/tangle/?t=W1tbNSwxLDcsNSwxXV0sW1tbMSxbMSwwLDEsMF1dLFsxLFsxLDAsMSwwXV0sWzAsW11dLFswLFtdXSxbMCxbXV1dLFtbMSxbMSwwLDEsMF1dLFsxLFsxLDAsMSwwXV0sWzEsWzAsMSwxLDBdXSxbMSxbMCwxLDEsMV1dLFsxLFswLDAsMSwxXV1dLFtbMSxbMSwxLDEsMF1dLFsxLFsxLDEsMCwxXV0sWzEsWzEsMSwwLDFdXSxbMSxbMSwwLDAsMV1dLFsxLFsxLDAsMSwwXV1dLFtbMSxbMSwwLDEsMF1dLFswLFtdXSxbMCxbXV0sWzAsW11dLFsxLFsxLDAsMSwwXV1dXSxbWzAsMy41LDEuNSxbIlxcZXRhIiwyXV0sWzEsNC41LDIsWzFdXSxbMSwyLjUsMixbMF1dLFsxLDMuNSwyLFsxXV0sWzEsMC41LDIsWzFdXSxbMSwxLjUsMixbMV1dLFswLDIsMi41LFsiXFxhb3BfVCIsM11dLFsxLDQuNSwzLFsxXV0sWzEsMC41LDMsWzFdXSxbMSwxLjUsMSxbMV1dLFsxLDAuNSwxLFsxXV1dLFtbMCwwLDMsInVfVCJdLFsxLDAsMywiZl9UIl0sWzAsMywxLCJ1X1QiXSxbNCwzLDEsImZfVCJdXV0=&c=F5A3A3,F5CCA3,F5F5A3,CCF5A3,A3F5A3,A3F5CC,A3F5F5,A3CCF5,A3A3F5,CCA3F5,F5A3F5,F5A3CC
	\begin{tangle}{(5,4)}[trim y]
		\tgBorderA{(0,0)}{\tgColour4}{\tgColour0}{\tgColour0}{\tgColour4}
		\tgBorderA{(1,0)}{\tgColour0}{\tgColour6}{\tgColour6}{\tgColour0}
		\tgBlank{(2,0)}{\tgColour6}
		\tgBlank{(3,0)}{\tgColour6}
		\tgBlank{(4,0)}{\tgColour6}
		\tgBorderA{(0,1)}{\tgColour4}{\tgColour0}{\tgColour0}{\tgColour4}
		\tgBorderA{(1,1)}{\tgColour0}{\tgColour6}{\tgColour6}{\tgColour0}
		\tgBorderA{(2,1)}{\tgColour6}{\tgColour6}{\tgColour4}{\tgColour6}
		\tgBorderA{(3,1)}{\tgColour6}{\tgColour6}{\tgColour0}{\tgColour4}
		\tgBorderA{(4,1)}{\tgColour6}{\tgColour6}{\tgColour6}{\tgColour0}
		\tgBorderA{(0,2)}{\tgColour4}{\tgColour0}{\tgColour0}{\tgColour4}
		\tgBorder{(0,2)}{0}{1}{0}{0}
		\tgBorderA{(1,2)}{\tgColour0}{\tgColour6}{\tgColour0}{\tgColour0}
		\tgBorder{(1,2)}{0}{0}{0}{1}
		\tgBorderA{(2,2)}{\tgColour6}{\tgColour4}{\tgColour0}{\tgColour0}
		\tgBorderA{(3,2)}{\tgColour4}{\tgColour0}{\tgColour0}{\tgColour0}
		\tgBorderA{(4,2)}{\tgColour0}{\tgColour6}{\tgColour6}{\tgColour0}
		\tgBorderA{(0,3)}{\tgColour4}{\tgColour0}{\tgColour0}{\tgColour4}
		\tgBlank{(1,3)}{\tgColour0}
		\tgBlank{(2,3)}{\tgColour0}
		\tgBlank{(3,3)}{\tgColour0}
		\tgBorderA{(4,3)}{\tgColour0}{\tgColour6}{\tgColour6}{\tgColour0}
		\tgCell[(2,0)]{(3,1)}{\eta}
		\tgArrow{(4,1.5)}{3}
		\tgArrow{(2,1.5)}{1}
		\tgArrow{(3,1.5)}{3}
		\tgArrow{(0,1.5)}{3}
		\tgArrow{(1,1.5)}{3}
		\tgCell[(3,0)]{(1.5,2)}{\aop_T}
		\tgArrow{(4,2.5)}{3}
		\tgArrow{(0,2.5)}{3}
		\tgArrow{(1,0.5)}{3}
		\tgArrow{(0,0.5)}{3}
		\tgAxisLabel{(0.5,0.75)}{south}{u_T}
		\tgAxisLabel{(1.5,0.75)}{south}{f_T}
		\tgAxisLabel{(0.5,3.25)}{north}{u_T}
		\tgAxisLabel{(4.5,3.25)}{north}{f_T}
	\end{tangle}
    \\
    % https://varkor.github.io/tangle/?t=W1tbNyw1LDEsNV1dLFtbWzAsW11dLFswLFtdXSxbMCxbXV0sWzAsW11dXSxbWzAsW11dLFsxLFswLDEsMSwwXV0sWzEsWzAsMSwxLDFdXSxbMSxbMCwwLDEsMV1dXSxbWzEsWzAsMSwxLDBdXSxbMSxbMSwxLDAsMV1dLFsxLFsxLDAsMSwxXV0sWzEsWzEsMCwxLDBdXV0sW1sxLFsxLDAsMSwwXV0sWzAsW11dLFsxLFsxLDAsMSwwXV0sWzEsWzEsMCwxLDBdXV1dLFtbMCwyLjUsMS41LFsiXFxldGEiLDJdXSxbMSwzLjUsMixbMV1dLFsxLDEuNSwyLFswXV0sWzEsMi41LDIsWzFdXSxbMCwxLjUsMi41LFsiXFxhb3BfVCIsMl1dLFsxLDMuNSwzLFsxXV0sWzEsMi41LDMsWzFdXSxbMSwwLjUsMyxbMV1dXSxbWzAsMywxLCJ0Il0sWzIsMywxLCJ1X1QiXSxbMywzLDEsImZfVCJdXV0=&c=F5A3A3,F5CCA3,F5F5A3,CCF5A3,A3F5A3,A3F5CC,A3F5F5,A3CCF5,A3A3F5,CCA3F5,F5A3F5,F5A3CC
	\begin{tangle}{(4,4)}[trim y]
		\tgBlank{(0,0)}{\tgColour6}
		\tgBlank{(1,0)}{\tgColour6}
		\tgBlank{(2,0)}{\tgColour6}
		\tgBlank{(3,0)}{\tgColour6}
		\tgBlank{(0,1)}{\tgColour6}
		\tgBorderA{(1,1)}{\tgColour6}{\tgColour6}{\tgColour4}{\tgColour6}
		\tgBorderA{(2,1)}{\tgColour6}{\tgColour6}{\tgColour0}{\tgColour4}
		\tgBorderA{(3,1)}{\tgColour6}{\tgColour6}{\tgColour6}{\tgColour0}
		\tgBorderA{(0,2)}{\tgColour6}{\tgColour6}{\tgColour4}{\tgColour6}
		\tgBorderA{(1,2)}{\tgColour6}{\tgColour4}{\tgColour4}{\tgColour4}
		\tgBorder{(1,2)}{0}{1}{0}{0}
		\tgBorderA{(2,2)}{\tgColour4}{\tgColour0}{\tgColour0}{\tgColour4}
		\tgBorder{(2,2)}{0}{0}{0}{1}
		\tgBorderA{(3,2)}{\tgColour0}{\tgColour6}{\tgColour6}{\tgColour0}
		\tgBorderA{(0,3)}{\tgColour6}{\tgColour4}{\tgColour4}{\tgColour6}
		\tgBlank{(1,3)}{\tgColour4}
		\tgBorderA{(2,3)}{\tgColour4}{\tgColour0}{\tgColour0}{\tgColour4}
		\tgBorderA{(3,3)}{\tgColour0}{\tgColour6}{\tgColour6}{\tgColour0}
		\tgCell[(2,0)]{(2,1)}{\eta}
		\tgArrow{(3,1.5)}{3}
		\tgArrow{(1,1.5)}{1}
		\tgArrow{(2,1.5)}{3}
		\tgCell[(2,0)]{(1,2)}{\aop_T}
		\tgArrow{(3,2.5)}{3}
		\tgArrow{(2,2.5)}{3}
		\tgArrow{(0,2.5)}{3}
		\tgAxisLabel{(0.5,3.25)}{north}{t}
		\tgAxisLabel{(2.5,3.25)}{north}{u_T}
		\tgAxisLabel{(3.5,3.25)}{north}{f_T}
	\end{tangle}
    \=
    % https://varkor.github.io/tangle/?t=W1tbNyw1LDVdXSxbW1swLFtdXSxbMCxbXV1dLFtbMSxbMCwxLDEsMF1dLFsxLFswLDAsMSwxXV1dLFtbMSxbMSwxLDEsMF1dLFsxLFsxLDAsMSwxXV1dLFtbMSxbMSwwLDEsMF1dLFsxLFsxLDAsMSwwXV1dXSxbWzAsMSwxLjUsWyJcXGV0YSIsMV1dLFswLDEsMi41LFsiXFxkYWciLDFdXSxbMSwwLjUsMixbMF1dLFsxLDAuNSwzLFswXV0sWzEsMS41LDIsWzFdXSxbMSwxLjUsMyxbMV1dXSxbWzAsMywxLCJ0Il0sWzEsMywxLCJ0Il1dXQ==&c=F5A3A3,F5CCA3,F5F5A3,CCF5A3,A3F5A3,A3F5CC,A3F5F5,A3CCF5,A3A3F5,CCA3F5,F5A3F5,F5A3CC
	\begin{tangle}{(2,4)}[trim y]
		\tgBlank{(0,0)}{\tgColour6}
		\tgBlank{(1,0)}{\tgColour6}
		\tgBorderA{(0,1)}{\tgColour6}{\tgColour6}{\tgColour4}{\tgColour6}
		\tgBorderA{(1,1)}{\tgColour6}{\tgColour6}{\tgColour6}{\tgColour4}
		\tgBorderA{(0,2)}{\tgColour6}{\tgColour4}{\tgColour4}{\tgColour6}
		\tgBorder{(0,2)}{0}{1}{0}{0}
		\tgBorderA{(1,2)}{\tgColour4}{\tgColour6}{\tgColour6}{\tgColour4}
		\tgBorder{(1,2)}{0}{0}{0}{1}
		\tgBorderA{(0,3)}{\tgColour6}{\tgColour4}{\tgColour4}{\tgColour6}
		\tgBorderA{(1,3)}{\tgColour4}{\tgColour6}{\tgColour6}{\tgColour4}
		\tgCell[(1,0)]{(0.5,1)}{\eta}
		\tgCell[(1,0)]{(0.5,2)}{\dag}
		\tgArrow{(0,1.5)}{1}
		\tgArrow{(0,2.5)}{1}
		\tgArrow{(1,1.5)}{3}
		\tgArrow{(1,2.5)}{3}
		\tgAxisLabel{(0.5,3.25)}{north}{t}
		\tgAxisLabel{(1.5,3.25)}{north}{t}
	\end{tangle}
    \=
    % https://varkor.github.io/tangle/?t=W1tbNSwxLDddXSxbW1sxLFsxLDAsMSwwXV0sWzEsWzEsMCwxLDBdXV0sW1sxLFsxLDAsMSwwXV0sWzEsWzEsMCwxLDBdXV0sW1sxLFsxLDAsMSwwXV0sWzEsWzEsMCwxLDBdXV0sW1sxLFsxLDAsMSwwXV0sWzEsWzEsMCwxLDBdXV1dLFtbMSwwLjUsMixbMV1dLFsxLDEuNSwyLFsxXV0sWzEsMS41LDEsWzFdXSxbMSwxLjUsMyxbMV1dLFsxLDAuNSwxLFsxXV0sWzEsMC41LDMsWzFdXV0sW1swLDAsMywidV9UIl0sWzEsMCwzLCJmX1QiXSxbMCwzLDEsInVfVCJdLFsxLDMsMSwiZl9UIl1dXQ==&c=F5A3A3,F5CCA3,F5F5A3,CCF5A3,A3F5A3,A3F5CC,A3F5F5,A3CCF5,A3A3F5,CCA3F5,F5A3F5,F5A3CC
	\begin{tangle}{(2,4)}[trim y]
		\tgBorderA{(0,0)}{\tgColour4}{\tgColour0}{\tgColour0}{\tgColour4}
		\tgBorderA{(1,0)}{\tgColour0}{\tgColour6}{\tgColour6}{\tgColour0}
		\tgBorderA{(0,1)}{\tgColour4}{\tgColour0}{\tgColour0}{\tgColour4}
		\tgBorderA{(1,1)}{\tgColour0}{\tgColour6}{\tgColour6}{\tgColour0}
		\tgBorderA{(0,2)}{\tgColour4}{\tgColour0}{\tgColour0}{\tgColour4}
		\tgBorderA{(1,2)}{\tgColour0}{\tgColour6}{\tgColour6}{\tgColour0}
		\tgBorderA{(0,3)}{\tgColour4}{\tgColour0}{\tgColour0}{\tgColour4}
		\tgBorderA{(1,3)}{\tgColour0}{\tgColour6}{\tgColour6}{\tgColour0}
		\tgArrow{(0,1.5)}{3}
		\tgArrow{(1,1.5)}{3}
		\tgArrow{(1,0.5)}{3}
		\tgArrow{(1,2.5)}{3}
		\tgArrow{(0,0.5)}{3}
		\tgArrow{(0,2.5)}{3}
		\tgAxisLabel{(0.5,0.75)}{south}{u_T}
		\tgAxisLabel{(1.5,0.75)}{south}{f_T}
		\tgAxisLabel{(0.5,3.25)}{north}{u_T}
		\tgAxisLabel{(1.5,3.25)}{north}{f_T}
	\end{tangle}
	\end{tangleeqs}
	using: \eqstepref{2.1} the definition of $\unit_{\aop_T}$; \eqstepref{2.2} bending $(f_T \d u_T) = t$; \eqstepref{2.3} using $\aop_T(1, f_T) = \dag$; and \eqstepref{2.4} the right unit law for $T$. Hence, since $1_t$ is an (ungraded) algebra endomorphism on $(t, \dag)$, by uniqueness the 2-cell above is induced from $1_{\Alg(T)(1, f_T)}$ by postcomposing $1_{E(1, u_T)}$. Hence the zig-zag laws are satisfied, and so $f_T \jadj u_T$. The induced operator is given by
	\begin{tangleeqs}
	% https://varkor.github.io/tangle/?t=W1tbNyw1LDEsNyw1XV0sW1tbMSxbMSwwLDEsMF1dLFsxLFsxLDAsMSwwXV0sWzAsW11dLFswLFtdXSxbMSxbMSwwLDEsMF1dXSxbWzEsWzEsMCwxLDBdXSxbMSxbMSwwLDEsMF1dLFswLFtdXSxbMCxbXV0sWzEsWzEsMCwxLDBdXV0sW1sxLFsxLDEsMSwwXV0sWzEsWzEsMCwwLDFdXSxbMixbMl1dLFsyLFszXV0sWzEsWzEsMCwxLDBdXV0sW1sxLFsxLDAsMSwwXV0sWzAsW11dLFsxLFsxLDAsMSwwXV0sWzEsWzEsMCwxLDBdXSxbMSxbMSwwLDEsMF1dXV0sW1sxLDMsMi41LFswXV0sWzEsNC41LDMsWzFdXSxbMSwyLjUsMyxbMF1dLFsxLDAuNSwzLFswXV0sWzAsMSwyLjUsWyJcXHVuaXRfe1xcYW9wX1R9IiwxXV0sWzEsMC41LDIsWzBdXSxbMSwxLjUsMixbMV1dLFsxLDMuNSwzLFsxXV0sWzEsNC41LDIsWzFdXSxbMSw0LjUsMSxbMV1dLFsxLDEuNSwxLFsxXV0sWzEsMC41LDEsWzBdXV0sW1swLDAsMywiaiJdLFsxLDAsMywidV9UIl0sWzQsMCwzLCJmX1QiXSxbMCwzLDEsImZfVCJdLFsyLDMsMSwidV9UIl0sWzMsMywxLCJ1X1QiXSxbNCwzLDEsImZfVCJdXV0=&c=F5A3A3,F5CCA3,F5F5A3,CCF5A3,A3F5A3,A3F5CC,A3F5F5,A3CCF5,A3A3F5,CCA3F5,F5A3F5,F5A3CC
	\begin{tangle}{(5,4)}[trim y]
		\tgBorderA{(0,0)}{\tgColour6}{\tgColour4}{\tgColour4}{\tgColour6}
		\tgBorderA{(1,0)}{\tgColour4}{\tgColour0}{\tgColour0}{\tgColour4}
		\tgBlank{(2,0)}{\tgColour0}
		\tgBlank{(3,0)}{\tgColour0}
		\tgBorderA{(4,0)}{\tgColour0}{\tgColour6}{\tgColour6}{\tgColour0}
		\tgBorderA{(0,1)}{\tgColour6}{\tgColour4}{\tgColour4}{\tgColour6}
		\tgBorderA{(1,1)}{\tgColour4}{\tgColour0}{\tgColour0}{\tgColour4}
		\tgBlank{(2,1)}{\tgColour0}
		\tgBlank{(3,1)}{\tgColour0}
		\tgBorderA{(4,1)}{\tgColour0}{\tgColour6}{\tgColour6}{\tgColour0}
		\tgBorderA{(0,2)}{\tgColour6}{\tgColour4}{\tgColour0}{\tgColour6}
		\tgBorderA{(1,2)}{\tgColour4}{\tgColour0}{\tgColour0}{\tgColour0}
		\tgBorderC{(2,2)}{3}{\tgColour0}{\tgColour4}
		\tgBorderC{(3,2)}{2}{\tgColour0}{\tgColour4}
		\tgBorderA{(4,2)}{\tgColour0}{\tgColour6}{\tgColour6}{\tgColour0}
		\tgBorderA{(0,3)}{\tgColour6}{\tgColour0}{\tgColour0}{\tgColour6}
		\tgBlank{(1,3)}{\tgColour0}
		\tgBorderA{(2,3)}{\tgColour0}{\tgColour4}{\tgColour4}{\tgColour0}
		\tgBorderA{(3,3)}{\tgColour4}{\tgColour0}{\tgColour0}{\tgColour4}
		\tgBorderA{(4,3)}{\tgColour0}{\tgColour6}{\tgColour6}{\tgColour0}
		\tgArrow{(2.5,2)}{0}
		\tgArrow{(4,2.5)}{3}
		\tgArrow{(2,2.5)}{1}
		\tgArrow{(0,2.5)}{1}
		\tgCell[(1,0)]{(0.5,2)}{\unit_{\aop_T}}
		\tgArrow{(0,1.5)}{1}
		\tgArrow{(1,1.5)}{3}
		\tgArrow{(3,2.5)}{3}
		\tgArrow{(4,1.5)}{3}
		\tgArrow{(4,0.5)}{3}
		\tgArrow{(1,0.5)}{3}
		\tgArrow{(0,0.5)}{1}
		\tgAxisLabel{(0.5,0.75)}{south}{j}
		\tgAxisLabel{(1.5,0.75)}{south}{u_T}
		\tgAxisLabel{(4.5,0.75)}{south}{f_T}
		\tgAxisLabel{(0.5,3.25)}{north}{f_T}
		\tgAxisLabel{(2.5,3.25)}{north}{u_T}
		\tgAxisLabel{(3.5,3.25)}{north}{u_T}
		\tgAxisLabel{(4.5,3.25)}{north}{f_T}
	\end{tangle}
	\=
	% https://varkor.github.io/tangle/?t=W1tbNyw1LDEsNyw1XV0sW1tbMCxbXV0sWzAsW11dLFswLFtdXSxbMCxbXV0sWzEsWzEsMCwxLDBdXSxbMSxbMSwwLDEsMF1dLFsxLFsxLDAsMSwwXV1dLFtbMixbMl1dLFsxLFswLDEsMCwxXV0sWzEsWzAsMSwwLDFdXSxbMixbM11dLFsxLFsxLDAsMSwwXV0sWzEsWzEsMCwxLDBdXSxbMSxbMSwwLDEsMF1dXSxbWzEsWzEsMCwxLDBdXSxbMixbMl1dLFsyLFszXV0sWzEsWzEsMSwwLDBdXSxbMSxbMSwxLDAsMV1dLFsxLFsxLDAsMCwxXV0sWzEsWzEsMCwxLDBdXV0sW1sxLFsxLDAsMSwwXV0sWzEsWzEsMCwxLDBdXSxbMSxbMSwwLDEsMF1dLFswLFtdXSxbMCxbXV0sWzAsW11dLFsxLFsxLDAsMSwwXV1dXSxbWzAsNC41LDIuNSxbIlxcdW5pdF97XFxhb3BfVH0iLDJdXSxbMSwxLjUsMyxbMF1dLFsxLDIuNSwzLFsxXV0sWzEsMiwyLjUsWzBdXSxbMSwwLjUsMyxbMF1dLFsxLDAuNSwyLFswXV0sWzEsMSwxLjUsWzBdXSxbMSwyLDEuNSxbMF1dLFsxLDMsMS41LFswXV0sWzEsMy41LDIsWzFdXSxbMSw1LjUsMSxbMV1dLFsxLDUuNSwyLFsxXV0sWzEsNi41LDEsWzFdXSxbMSw2LjUsMixbMV1dLFsxLDYuNSwzLFsxXV0sWzEsNC41LDIsWzBdXSxbMSw0LjUsMSxbMF1dXSxbWzQsMCwzLCJqIl0sWzUsMCwzLCJ1X1QiXSxbNiwwLDMsImZfVCJdLFswLDMsMSwiZl9UIl0sWzEsMywxLCJ1X1QiXSxbMiwzLDEsInVfVCJdLFs2LDMsMSwiZl9UIl1dXQ==&c=F5A3A3,F5CCA3,F5F5A3,CCF5A3,A3F5A3,A3F5CC,A3F5F5,A3CCF5,A3A3F5,CCA3F5,F5A3F5,F5A3CC
	\begin{tangle}{(7,4)}[trim y]
		\tgBlank{(0,0)}{\tgColour6}
		\tgBlank{(1,0)}{\tgColour6}
		\tgBlank{(2,0)}{\tgColour6}
		\tgBlank{(3,0)}{\tgColour6}
		\tgBorderA{(4,0)}{\tgColour6}{\tgColour4}{\tgColour4}{\tgColour6}
		\tgBorderA{(5,0)}{\tgColour4}{\tgColour0}{\tgColour0}{\tgColour4}
		\tgBorderA{(6,0)}{\tgColour0}{\tgColour6}{\tgColour6}{\tgColour0}
		\tgBorderC{(0,1)}{3}{\tgColour6}{\tgColour0}
		\tgBorderA{(1,1)}{\tgColour6}{\tgColour6}{\tgColour0}{\tgColour0}
		\tgBorderA{(2,1)}{\tgColour6}{\tgColour6}{\tgColour0}{\tgColour0}
		\tgBorderC{(3,1)}{2}{\tgColour6}{\tgColour0}
		\tgBorderA{(4,1)}{\tgColour6}{\tgColour4}{\tgColour4}{\tgColour6}
		\tgBorderA{(5,1)}{\tgColour4}{\tgColour0}{\tgColour0}{\tgColour4}
		\tgBorderA{(6,1)}{\tgColour0}{\tgColour6}{\tgColour6}{\tgColour0}
		\tgBorderA{(0,2)}{\tgColour6}{\tgColour0}{\tgColour0}{\tgColour6}
		\tgBorderC{(1,2)}{3}{\tgColour0}{\tgColour4}
		\tgBorderC{(2,2)}{2}{\tgColour0}{\tgColour4}
		\tgBorderA{(3,2)}{\tgColour0}{\tgColour6}{\tgColour0}{\tgColour0}
		\tgBorderA{(4,2)}{\tgColour6}{\tgColour4}{\tgColour0}{\tgColour0}
		\tgBorderA{(5,2)}{\tgColour4}{\tgColour0}{\tgColour0}{\tgColour0}
		\tgBorderA{(6,2)}{\tgColour0}{\tgColour6}{\tgColour6}{\tgColour0}
		\tgBorderA{(0,3)}{\tgColour6}{\tgColour0}{\tgColour0}{\tgColour6}
		\tgBorderA{(1,3)}{\tgColour0}{\tgColour4}{\tgColour4}{\tgColour0}
		\tgBorderA{(2,3)}{\tgColour4}{\tgColour0}{\tgColour0}{\tgColour4}
		\tgBlank{(3,3)}{\tgColour0}
		\tgBlank{(4,3)}{\tgColour0}
		\tgBlank{(5,3)}{\tgColour0}
		\tgBorderA{(6,3)}{\tgColour0}{\tgColour6}{\tgColour6}{\tgColour0}
		\tgCell[(2,0)]{(4,2)}{\unit_{\aop_T}}
		\tgArrow{(1,2.5)}{1}
		\tgArrow{(2,2.5)}{3}
		\tgArrow{(1.5,2)}{0}
		\tgArrow{(0,2.5)}{1}
		\tgArrow{(0,1.5)}{1}
		\tgArrow{(0.5,1)}{0}
		\tgArrow{(1.5,1)}{0}
		\tgArrow{(2.5,1)}{0}
		\tgArrow{(3,1.5)}{3}
		\tgArrow{(5,0.5)}{3}
		\tgArrow{(5,1.5)}{3}
		\tgArrow{(6,0.5)}{3}
		\tgArrow{(6,1.5)}{3}
		\tgArrow{(6,2.5)}{3}
		\tgArrow{(4,1.5)}{1}
		\tgArrow{(4,0.5)}{1}
		\tgAxisLabel{(4.5,0.75)}{south}{j}
		\tgAxisLabel{(5.5,0.75)}{south}{u_T}
		\tgAxisLabel{(6.5,0.75)}{south}{f_T}
		\tgAxisLabel{(0.5,3.25)}{north}{f_T}
		\tgAxisLabel{(1.5,3.25)}{north}{u_T}
		\tgAxisLabel{(2.5,3.25)}{north}{u_T}
		\tgAxisLabel{(6.5,3.25)}{north}{f_T}
	\end{tangle}
	\\
	% https://varkor.github.io/tangle/?t=W1tbNyw1LDEsNywxLDVdXSxbW1swLFtdXSxbMCxbXV0sWzAsW11dLFswLFtdXSxbMSxbMSwwLDEsMF1dLFsxLFsxLDAsMSwwXV0sWzEsWzEsMCwxLDBdXV0sW1syLFsyXV0sWzEsWzAsMSwwLDFdXSxbMSxbMCwxLDAsMV1dLFsyLFszXV0sWzEsWzEsMCwxLDBdXSxbMSxbMSwwLDEsMF1dLFsxLFsxLDAsMSwwXV1dLFtbMSxbMSwwLDEsMF1dLFsxLFswLDEsMSwwXV0sWzEsWzAsMSwxLDFdXSxbMSxbMSwxLDAsMV1dLFsxLFsxLDEsMCwxXV0sWzEsWzEsMCwwLDFdXSxbMSxbMSwwLDEsMF1dXSxbWzEsWzEsMCwxLDBdXSxbMSxbMSwwLDEsMF1dLFsxLFsxLDAsMSwwXV0sWzAsW11dLFswLFtdXSxbMCxbXV0sWzEsWzEsMCwxLDBdXV1dLFtbMSwxLjUsMyxbMF1dLFsxLDIuNSwzLFsxXV0sWzEsMiwyLjUsWzBdXSxbMSwwLjUsMyxbMF1dLFsxLDAuNSwyLFswXV0sWzEsMSwxLjUsWzBdXSxbMSwyLDEuNSxbMF1dLFsxLDMsMS41LFswXV0sWzEsMy41LDIsWzFdXSxbMSw1LjUsMSxbMV1dLFsxLDUuNSwyLFsxXV0sWzEsNi41LDEsWzFdXSxbMSw2LjUsMixbMV1dLFsxLDYuNSwzLFsxXV0sWzEsNC41LDIsWzBdXSxbMSw0LjUsMSxbMF1dLFswLDMuNSwyLjUsWyJcXGFvcF9UIiw0XV1dLFtbNCwwLDMsImoiXSxbNSwwLDMsInVfVCJdLFs2LDAsMywiZl9UIl0sWzAsMywxLCJmX1QiXSxbMSwzLDEsInVfVCJdLFsyLDMsMSwidV9UIl0sWzYsMywxLCJmX1QiXV1d&c=F5A3A3,F5CCA3,F5F5A3,CCF5A3,A3F5A3,A3F5CC,A3F5F5,A3CCF5,A3A3F5,CCA3F5,F5A3F5,F5A3CC
	\begin{tangle}{(7,4)}[trim y]
		\tgBlank{(0,0)}{\tgColour6}
		\tgBlank{(1,0)}{\tgColour6}
		\tgBlank{(2,0)}{\tgColour6}
		\tgBlank{(3,0)}{\tgColour6}
		\tgBorderA{(4,0)}{\tgColour6}{\tgColour4}{\tgColour4}{\tgColour6}
		\tgBorderA{(5,0)}{\tgColour4}{\tgColour0}{\tgColour0}{\tgColour4}
		\tgBorderA{(6,0)}{\tgColour0}{\tgColour6}{\tgColour6}{\tgColour0}
		\tgBorderC{(0,1)}{3}{\tgColour6}{\tgColour0}
		\tgBorderA{(1,1)}{\tgColour6}{\tgColour6}{\tgColour0}{\tgColour0}
		\tgBorderA{(2,1)}{\tgColour6}{\tgColour6}{\tgColour0}{\tgColour0}
		\tgBorderC{(3,1)}{2}{\tgColour6}{\tgColour0}
		\tgBorderA{(4,1)}{\tgColour6}{\tgColour4}{\tgColour4}{\tgColour6}
		\tgBorderA{(5,1)}{\tgColour4}{\tgColour0}{\tgColour0}{\tgColour4}
		\tgBorderA{(6,1)}{\tgColour0}{\tgColour6}{\tgColour6}{\tgColour0}
		\tgBorderA{(0,2)}{\tgColour6}{\tgColour0}{\tgColour0}{\tgColour6}
		\tgBorderA{(1,2)}{\tgColour0}{\tgColour0}{\tgColour4}{\tgColour0}
		\tgBorderA{(2,2)}{\tgColour0}{\tgColour0}{\tgColour0}{\tgColour4}
		\tgBorder{(2,2)}{0}{1}{0}{0}
		\tgBorderA{(3,2)}{\tgColour0}{\tgColour6}{\tgColour0}{\tgColour0}
		\tgBorder{(3,2)}{0}{0}{0}{1}
		\tgBorderA{(4,2)}{\tgColour6}{\tgColour4}{\tgColour0}{\tgColour0}
		\tgBorderA{(5,2)}{\tgColour4}{\tgColour0}{\tgColour0}{\tgColour0}
		\tgBorderA{(6,2)}{\tgColour0}{\tgColour6}{\tgColour6}{\tgColour0}
		\tgBorderA{(0,3)}{\tgColour6}{\tgColour0}{\tgColour0}{\tgColour6}
		\tgBorderA{(1,3)}{\tgColour0}{\tgColour4}{\tgColour4}{\tgColour0}
		\tgBorderA{(2,3)}{\tgColour4}{\tgColour0}{\tgColour0}{\tgColour4}
		\tgBlank{(3,3)}{\tgColour0}
		\tgBlank{(4,3)}{\tgColour0}
		\tgBlank{(5,3)}{\tgColour0}
		\tgBorderA{(6,3)}{\tgColour0}{\tgColour6}{\tgColour6}{\tgColour0}
		\tgArrow{(1,2.5)}{1}
		\tgArrow{(2,2.5)}{3}
		\tgArrow{(1.5,2)}{0}
		\tgArrow{(0,2.5)}{1}
		\tgArrow{(0,1.5)}{1}
		\tgArrow{(0.5,1)}{0}
		\tgArrow{(1.5,1)}{0}
		\tgArrow{(2.5,1)}{0}
		\tgArrow{(3,1.5)}{3}
		\tgArrow{(5,0.5)}{3}
		\tgArrow{(5,1.5)}{3}
		\tgArrow{(6,0.5)}{3}
		\tgArrow{(6,1.5)}{3}
		\tgArrow{(6,2.5)}{3}
		\tgArrow{(4,1.5)}{1}
		\tgArrow{(4,0.5)}{1}
		\tgCell[(4,0)]{(3,2)}{\aop_T}
		\tgAxisLabel{(4.5,0.75)}{south}{j}
		\tgAxisLabel{(5.5,0.75)}{south}{u_T}
		\tgAxisLabel{(6.5,0.75)}{south}{f_T}
		\tgAxisLabel{(0.5,3.25)}{north}{f_T}
		\tgAxisLabel{(1.5,3.25)}{north}{u_T}
		\tgAxisLabel{(2.5,3.25)}{north}{u_T}
		\tgAxisLabel{(6.5,3.25)}{north}{f_T}
	\end{tangle}
	\=
	% https://varkor.github.io/tangle/?t=W1tbNyw1LDcsNV1dLFtbWzEsWzEsMCwxLDBdXSxbMSxbMSwwLDEsMF1dXSxbWzEsWzEsMSwxLDBdXSxbMSxbMSwwLDEsMV1dXSxbWzEsWzEsMCwxLDBdXSxbMSxbMSwwLDEsMF1dXV0sW1swLDEsMS41LFsiXFxkYWciLDFdXSxbMSwwLjUsMixbMF1dLFsxLDAuNSwxLFswXV0sWzEsMS41LDEsWzFdXSxbMSwxLjUsMixbMV1dXSxbWzAsMCwzLCJqIl0sWzEsMCwzLCJ0Il0sWzAsMiwxLCJqIl0sWzEsMiwxLCJ0Il1dXQ==&c=F5A3A3,F5CCA3,F5F5A3,CCF5A3,A3F5A3,A3F5CC,A3F5F5,A3CCF5,A3A3F5,CCA3F5,F5A3F5,F5A3CC
	\begin{tangle}{(2,3)}[trim y=.25]
		\tgBorderA{(0,0)}{\tgColour6}{\tgColour4}{\tgColour4}{\tgColour6}
		\tgBorderA{(1,0)}{\tgColour4}{\tgColour6}{\tgColour6}{\tgColour4}
		\tgBorderA{(0,1)}{\tgColour6}{\tgColour4}{\tgColour4}{\tgColour6}
		\tgBorder{(0,1)}{0}{1}{0}{0}
		\tgBorderA{(1,1)}{\tgColour4}{\tgColour6}{\tgColour6}{\tgColour4}
		\tgBorder{(1,1)}{0}{0}{0}{1}
		\tgBorderA{(0,2)}{\tgColour6}{\tgColour4}{\tgColour4}{\tgColour6}
		\tgBorderA{(1,2)}{\tgColour4}{\tgColour6}{\tgColour6}{\tgColour4}
		\tgCell[(1,0)]{(0.5,1)}{\dag}
		\tgArrow{(0,1.5)}{1}
		\tgArrow{(0,0.5)}{1}
		\tgArrow{(1,0.5)}{3}
		\tgArrow{(1,1.5)}{3}
		\tgAxisLabel{(0.5,0.25)}{south}{j}
		\tgAxisLabel{(1.5,0.25)}{south}{t}
		\tgAxisLabel{(0.5,2.75)}{north}{j}
		\tgAxisLabel{(1.5,2.75)}{north}{t}
	\end{tangle}
	\end{tangleeqs}
	by: \eqstepref{3.1} bending $f_T$; \eqstepref{3.2} using the definition of $\unit_{\aop_T}$; and \eqstepref{3.3} that $\aop_T(1, f_T) = \dag$. Therefore $f_T \jadj u_T$ is a resolution of $T$.
\end{proof}

\begin{remark}
	The proof of \cref{algebra-object-induces-resolution} (and later that of \cref{opalgebra-object-induces-resolution} for opalgebra objects) makes crucial use of the universal property of $\Alg(T)$ with respect to \emph{graded} algebra morphisms. This may appear surprising, since the analogous result for algebra objects for non-relative monads requires no such grading~\cite[Theorem~3]{street1978yoneda}. The reason lies in the distinction between an adjunction and a relative adjunction: an adjunction may be defined entirely 2-diagrammatically, so that a universal property for algebra objects involving solely tight-cells and 2-cells is sufficient to construct an adjunction; whereas the definition of a relative adjunction requires equipment structure. Therefore, we should expect the universal property for an algebra object to also involve the equipment structure. In particular, \cref{algebra-object} rectifies inadequacies in the definition of the \emph{\EM{} objects} of \cite[Definition~3.5.5]{maillard2019principles} and the \emph{algebra-objects} of \cite[Definition~5.3.4]{arkor2022monadic}, which satisfy a universal property only with respect to ungraded morphisms.
\end{remark}

\begin{remark}
  Let $\jAE$ be a tight-cell and let $T$ be a relative monad admitting an algebra object. For any $T$-algebra $(e \colon D \to E, \aop)$, the extension operator $\aop$ factors through the algebra object in two equivalent ways. First, the universal property of $\Alg(T)$ on algebras induces a tight-cell $\unit_{(e, \aop)} \colon D \to \Alg(T)$ such that $\aop_T(1, \unit_{(e, \aop)}) = \aop$. Second, since $\aop$ may be viewed as an $E(j, e)$-graded algebra morphism by \cref{algebra-extension-operator-is-graded-morphism}, the universal property of $\Alg(T)$ on graded algebra morphisms induces a 2-cell $\unit_\aop \colon E(j, e) \tto E(f_T, \unit_{(e, \aop)})$ such that $\unit_\aop \d u_T = \aop$. Applying the latter factorisation to the algebra object $(u_T, \aop_T)$ itself, we obtain $\flat \d u_T = \aop_T$, which is precisely the construction of the extension operator from the relative adjunction $f_T \jadj u_T$ provided by \cref{relative-adjunction-induces-algebra-and-opalgebra}. Similar considerations apply to opalgebras (\cf{}~\cref{opalgebra-extension-operator-is-graded-morphism}).
\end{remark}

Before stating the main theorem of this section, we recall a useful lemma for establishing the existence of strict reflections.

\begin{lemma}
	\label{adjoint-section}
	Let $\ell \colon A \to C$ be a functor. Then $\ell$ admits a right-adjoint section if and only if the function $\ob \ell$ admits a section $\ob r \colon \ob C \to \ob A$ such that, for all $a \in A$ and $c \in C$, the function
	\[\ell_{a, \ob r c} \colon A(a, \ob r c) \to C(\ob \ell a, \ob \ell \ob r c) = C(\ob \ell a, c)\]
	admits an inverse.
	In this case, $\ob r$ extends uniquely to a functor $r \colon C \to A$, which is a section for $\ell$, such that $\ell \adj r$.
\end{lemma}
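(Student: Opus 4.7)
The plan is to prove both directions by directly exhibiting the data and verifying the categorical axioms, using the bijection hypothesis to simultaneously construct $r$ on morphisms and produce the unit of the adjunction.

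For the forward direction, suppose $\ell \adj r$ with $\ell \d r = 1_C$. Then $\ob r$ is a section of $\ob \ell$ by definition, and since $\ell \d r = 1_C$, the counit $\varepsilon \colon \ell \c r \tto 1_C$ is an identity (using that restriction along identities is fully faithful). The classical hom-bijection $A(a, rc) \iso C(\ell a, rc)$ induced by the adjunction is then $f \mapsto \varepsilon_c \c \ell(f) = \ell(f) = \ell_{a,c}(f)$, whence $\ell_{a,c}$ is invertible.

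For the converse, suppose $\ob r \colon \ob C \to \ob A$ is a section and each $\ell_{a,c}$ is invertible. I would extend $\ob r$ to a functor by defining, for each $g \colon c \to c'$ in $C$, the morphism $r(g) \defeq \ell_{rc,c'}^{-1}(g) \colon rc \to rc'$, which by construction is the unique arrow with $\ell(r(g)) = g$. Functoriality of $r$ and the equation $\ell \d r = 1_C$ then follow from the functoriality of $\ell$ together with the injectivity of each $\ell_{rc,c'}$: both $r(g' \c g)$ and $r(g') \c r(g)$ map under $\ell$ to $g' \c g$, so they coincide, and similarly for identities. This also gives uniqueness of the extension, since any functor $r$ with $\ell \d r = 1_C$ must agree with this definition on morphisms.

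To exhibit $\ell \adj r$ with identity counit, I would take the transposition operator to be $\ell_{a,c} \colon A(a, rc) \to C(\ell a, c)$ itself. Naturality in $a$ is immediate from functoriality of $\ell$; naturality in $c$ follows from the identity $\ell(r(g) \c f) = g \c \ell(f)$, which holds because $\ell(r(g)) = g$. Taking $c = \ell a$ and applying $\ell_{a,\ell a}^{-1}$ to $1_{\ell a}$ yields the unit $\eta_a \colon a \to r \ell a$; one of the triangle identities reduces to $\ell(\eta_a) = 1_{\ell a}$ (which holds by definition of $\eta$), and the other to the equation $\ell_{rc, c}(\eta_{rc}) = 1_c$ in $C$, which again follows from $\ell_{rc, c}(\eta_{rc}) = \ell(\eta_{rc}) = 1_{\ell r c} = 1_c$. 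I expect no substantive obstacle here: the proof is essentially a bookkeeping exercise in which every required equation reduces, via the bijectivity of $\ell_{a,c}$, to an identity holding after applying $\ell$, and is then discharged by the section property $\ell \d r = 1_C$.
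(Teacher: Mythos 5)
Your converse (construction) direction is correct, and it is essentially the paper's proof spelled out: the paper disposes of the lemma by citing Mac\,Lane's Theorem~IV.1.2 ``taking the counit to be the identity'', and your definition $r(g) \defeq \ell_{\ob r c, c'}^{-1}(g)$, the injectivity arguments for functoriality and uniqueness, the naturality of $\ell_{a,c}$ in both variables, and the unit $\eta_a = \ell_{a,\ell a}^{-1}(1_{\ell a})$ are exactly what that citation packages. (Two small points: in the paper's diagrammatic notation the section equation is $r \d \ell = 1_C$, not $\ell \d r = 1_C$; and with identity counit the second triangle identity reads $\eta_{rc} = 1_{rc}$ rather than $\ell_{rc,c}(\eta_{rc}) = 1_c$ --- but the former follows from the latter by injectivity of $\ell_{rc,c}$, and in any case your natural bijection already suffices, so nothing substantive is lost.)

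The genuine gap is in your forward direction. From $\ell \circ r = 1_C$ you assert that the counit of the given adjunction ``is an identity'', justified by full faithfulness of restriction. Neither holds: the counit is a natural transformation $\ell r \tto 1_C$ between \emph{equal} functors, and such a transformation need not be the identity --- take $A = C$ the one-object category on a monoid with a nontrivial central invertible element $u$, $\ell = r = 1$, unit $u$, counit $u^{-1}$ --- and full faithfulness of restriction only transports 2-cells along companions, it does not force a 2-cell between equal tight-cells to be an identity. Consequently your identification of the adjunction bijection with $\ell_{a,c}$ does not follow from your premise. The direction is nevertheless true, but needs a different observation: the adjunction bijection is $f \mapsto \varepsilon_c \circ \ell(f)$, which already gives injectivity of $\ell_{a,c}$; and the counit is automatically invertible, since naturality of $\varepsilon$ (a transformation $1_C \tto 1_C$, because $\ell r = 1_C$) at the morphism $\ell\eta_{rc}$, combined with the triangle identity $\varepsilon_{\ell r c} \circ \ell\eta_{rc} = 1$ and $\ell r c = c$, shows that $\varepsilon_c$ and $\ell\eta_{rc}$ are mutually inverse; hence $\ell_{a,c} = \varepsilon_c^{-1} \circ \bigl(\varepsilon_c \circ \ell({-})\bigr)$ is a bijection. (Alternatively, declare that ``right-adjoint section'' means an adjunction whose counit is the identity --- which is what the paper's citation suggests --- in which case your forward direction is immediate, but then you should state that convention explicitly.)
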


\begin{proof}
	Follows directly from \cite[Theorem~IV.1.2]{maclane1998categories}, taking the counit to be the identity.
\end{proof}

\begin{theorem}
	\label{algebra-objects-induce-j-monadic-resolutions}
	Let $\jAE$ be a tight-cell. The functor $\obslash_j \colon \RAdj_L(j) \to \RMnd(j)\op$ admits a partial right-adjoint section, defined on those $j$-monads admitting algebra objects.
	% https://q.uiver.app/?q=WzAsMixbMCwwLCJcXFJBZGpfbChqKSJdLFsxLDAsIlxcUk1uZChqKVxcb3AiXSxbMCwxLCJcXG9ic2xhc2hfaiIsMCx7Im9mZnNldCI6LTJ9XSxbMSwwLCJ7Zl97XFxwaH19IFxcamFkaiB7dV97XFxwaH19IiwwLHsib2Zmc2V0IjotMiwic3R5bGUiOnsidGFpbCI6eyJuYW1lIjoiaG9vayIsInNpZGUiOiJ0b3AifSwiYm9keSI6eyJuYW1lIjoiZGFzaGVkIn19fV0sWzIsMywiIiwyLHsibGV2ZWwiOjEsInN0eWxlIjp7Im5hbWUiOiJhZGp1bmN0aW9uIn19XV0=
	\[\begin{tikzcd}[column sep=huge]
		{\RAdj_L(j)} & {\RMnd(j)\op}
		\arrow[""{name=0, anchor=center, inner sep=0}, "{\obslash_j}", shift left=2, from=1-1, to=1-2]
		\arrow[""{name=1, anchor=center, inner sep=0}, "{{f_{\ph}} \jadj\;{u_{\ph}}}", shift left=2, dashed, hook, from=1-2, to=1-1]
		\arrow["\dashv"{anchor=center, rotate=-90}, draw=none, from=0, to=1]
	\end{tikzcd}\]
   Moreover, a left-morphism is strict if and only if its transpose is the identity $j$-monad morphism.
\end{theorem}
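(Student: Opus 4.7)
The plan is to apply \cref{adjoint-section}. On objects, the partial right-adjoint section assigns to each $j$-monad $T$ admitting an algebra object the resolution $f_T \jadj u_T$ provided by \cref{algebra-object-induces-resolution}, so that $\obslash_j(f_T \jadj u_T) = T$. It will then suffice to show that, for each $\ljr \in \RAdj_L(j)$ and each such $T$, the function
\[
\obslash_j \colon \RAdj_L(j)((\ljr), (f_T \jadj u_T)) \to \RMnd(j)(T, \wedge_j(\ljr))
\]
is a bijection.

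To establish this, I would factor the bijection through two auxiliary bijections. First, by \cref{left-morphisms-to-slices-is-ff}, a left-morphism $(c, \lambda) \colon (\ljr) \to (f_T \jadj u_T)$ is uniquely determined by its underlying tight-cell $c \colon C \to \Alg(T)$ satisfying $c \d u_T = r$. Second, by the universal property of the algebra object on objects (\cref{algebra-object}(1)), such tight-cells are in bijection with $T$-algebra structures $\aop$ on $r$, via $c \leftrightarrow c \d \aop_T$. It thus remains to show that the assignment $(c, \lambda) \mapsto \lambda \d u_T$, read through these identifications, exhibits $T$-algebra structures on $r$ as being in bijection with $j$-monad morphisms $\tau \colon T \to T' \defeq \wedge_j(\ljr)$.

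The inverse of this bijection will be $\tau \mapsto \aop_\tau \defeq \aop_{\ljr} \d E(\tau, r)$, where $\aop_{\ljr}$ is the canonical $T'$-algebra structure on $r$ from \cref{relative-adjunction-induces-algebra-and-opalgebra}. That $\aop_\tau$ satisfies the $T$-algebra axioms is a formal consequence of the $T'$-algebra axioms for $\aop_{\ljr}$ together with the $j$-monad morphism axioms for $\tau$. The crucial step will be to verify that these assignments are mutually inverse; this reduces to the identity
\[
c \d \aop_T = \aop_{\ljr} \d E(\lambda \d u_T, r),
\]
which unpacks the compatibility condition defining the left-morphism (\cref{left-morphism}) using the string-diagrammatic definitions of $\aop_T$ (implicit in \cref{algebra-object-induces-resolution}) and of $\aop_{\ljr}$ (\cref{relative-adjunction-induces-algebra-and-opalgebra}). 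I expect this string-diagrammatic computation to be the principal technical obstacle.

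For the strictness statement: if $(c, \lambda)$ is strict, then $\lambda = 1$ forces $\ell \d c = f_T$, so $(c, 1)$ is a strict morphism of relative adjunctions and \cref{strict-morphism-implies-same-relative-monad} yields $\wedge_j(\ljr) = T$, whence the transpose $\lambda \d u_T = 1_t$ is the identity $j$-monad morphism on $T$. Conversely, if the transpose is the identity, then $\wedge_j(\ljr) = T$ and the corresponding $T$-algebra structure on $r$ is $\aop_{\ljr}$; the induced tight-cell $c$ then satisfies $(\ell \d c) \d u_T = t$ and $(\ell \d c) \d \aop_T = \ell \d \aop_{\ljr} = \dag$, the latter equality obtained by comparing the formulas for $\aop_{\ljr}$ in \cref{relative-adjunction-induces-algebra-and-opalgebra} and $\dag$ in \cref{relative-adjunction-induces-relative-monad}, both built from $\flat$. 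By the uniqueness clause of \cref{algebra-object}(1) applied to the $T$-algebra $(t, \dag)$, we deduce $\ell \d c = f_T$, and then \cref{strict-morphism-implies-same-relative-monad} together with \cref{left-morphisms-to-slices-is-ff} ensure the left-morphism is strict.
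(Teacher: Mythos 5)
Your skeleton is essentially the paper's: the object part comes from \cref{algebra-object-induces-resolution}, the hom-bijection is checked via \cref{adjoint-section}, and the key correspondence is between $j$-monad morphisms $\tau \colon T \to \wedge_j(\ljr)$ and $T$-algebra structures on $r$, with $\tau \mapsto \aop_{\ljr} \d E(\tau, r)$ as in the paper (there phrased as functoriality of $\ph\h\Alg$ in the monad). Your identity $c \d \aop_T = \aop_{\ljr} \d E(\lambda \d u_T, r)$ is indeed the unpacked left-morphism compatibility condition; the paper uses it in exactly this form. The strictness argument is also fine, and differs only mildly from the paper's (you use \cref{strict-morphism-implies-same-relative-monad} together with \cref{left-morphisms-to-slices-is-ff}, where the paper invokes clause~(2) of \cref{algebra-object} once more); both routes work.

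The gap is in the claim that mutual inverseness ``reduces to'' that single identity. It gives one round trip: starting from a left-morphism $(c, \lambda)$, the tight-cell built from $\tau = \lambda \d u_T$ is again $c$, and \cref{left-morphisms-to-slices-is-ff} then recovers $(c,\lambda)$. But the other round trip is not addressed: if you start from $\tau$, form $c_\tau$ by clause~(1) of \cref{algebra-object}, and take the unique left-morphism $(c_\tau, \lambda_\tau)$ supplied by \cref{left-morphisms-to-slices-is-ff}, you must still show $\lambda_\tau \d u_T = \tau$. Your identity only yields $\aop_{\ljr} \d E(\lambda_\tau \d u_T, r) = c_\tau \d \aop_T = \aop_{\ljr} \d E(\tau, r)$, so you additionally need injectivity of $\sigma \mapsto \aop_{\ljr} \d E(\sigma, r)$ (provable by restricting along $\ell$, pasting the unit $\eta$ of $\ljr$, and using the $\sharp$--$\flat$ isomorphism — the relative analogue of recovering a monad morphism from the induced algebra structure on the right adjoint), or else a direct computation with the explicit 2-cell of \cref{left-morphisms-to-slices-is-ff}. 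The paper avoids this entirely by constructing the 2-cell of the inverse left-morphism not from the slice lemma but from clause~(2) of \cref{algebra-object}: $\tau$ is an ungraded $T$-algebra morphism from $(t, \dag)$ to the $T$-algebra structure induced on $\ell \d r$, and the unique factorisation $\unit_\tau$ through $u_T$ has $\unit_\tau \d u_T = \tau$ as its \emph{defining} equation. So either supply the injectivity (or direct) argument, or adopt the paper's construction of the 2-cell; as written, the proposal is incomplete at precisely the point where the graded/ungraded-morphism universal property is doing the work.
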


\begin{proof}
	We shall use \cref{adjoint-section}, which permits us to elide details of functoriality and naturality.

	\Cref{algebra-object-induces-resolution} gives a partial assignment $\RMnd(j)\op \to \RAdj_L(j)$ on objects. Let $T$ and $T'$ be $j$-monads, and denote by $\ljr$ a resolution of $T$. Assume that $T'$ admits an algebra object. We aim to define an inverse to the function
	\[(\obslash_j)_{(\ljr), (f_{T'} \jadj u_{T'})} \colon \RAdj_L(j)(\ljr, f_{T'} \jadj u_{T'}) \to \RMnd(j)(T', T)\]
	Recall that $r$ and $t$ form $T$-algebras by \cref{relative-adjunction-induces-algebra-and-opalgebra} and \cref{relative-monad-forms-algebra} respectively, so that a $j$-monad morphism $\tau \colon T' \to T$ induces $T'$-algebra structures on each by functoriality of $\ph\h\Alg$.
	The universal property of $\Alg(T')$ thus induces a unique tight-cell $\unit_{\ljr} \colon C \to \Alg(T')$ such that $r = \unit_{\ljr} \d u_{T'}$ and $\flat \d r \d E(\tau, r) = \aop_{T'}(1, \unit_{\ljr})$. Furthermore, the 2-cell $\tau$ forms an ungraded $T'$-algebra morphism from $(t', \dag')$ to the induced $T'$-algebra structure on $t$, the compatibility law following from the extension operator law for $\tau$, and hence induces a 2-cell $\unit_\tau \colon f_{T'} \tto \ell \d \unit_{\ljr}$ by the universal property of $\Alg(T')$. The pair $(\unit_{\ljr}, \unit_\tau)$ forms a left-morphism, the compatibility law following from the unit law for $\tau$. This assignment defines a function:
	\[\unit_{\ph} \colon \RMnd(j)(T', T) \to \RAdj_L(j)(\ljr, f_{T'} \jadj u_{T'})\]

	To establish that these functions are inverse, let $(c, \lambda)$ be a left-morphism from $\ljr$ to $f_{T'} \jadj u_{T'}$, inducing the $j$-monad morphism $(\lambda \d u_{T'})$. We have that $r = c \d u_{T'}$ and $\flat \d r \d E(\lambda \d u_{T'}, r) = \aop_{T'}(1, c)$ by definition of a left-morphism, so that $c = \unit_{\ljr}$ by uniqueness of the universal property; that $\unit_{\lambda \d u_{T'}} = \lambda$ is trivial. Conversely, let $\tau$ be a $j$-monad morphism from $T'$ to $T$, inducing a left-morphism $(\unit_{\ljr}, \unit_\tau)$. We have that $\unit_\tau \d u_{T'} = \tau$ by definition. Thus $\obslash_j$ admits a partial right-adjoint section.

	Finally, let $(c, \lambda)$ be a left-morphism from $\ljr$ to $f_{T'} \jadj u_{T'}$. If $\lambda$ is the identity, then the induced $j$-monad morphism is trivially also the identity. Conversely, suppose that the induced $j$-monad morphism is the identity. Then we have $(\ell \d c) \d u_{T'} = \ell \d (c \d u_{T'}) = \ell \d r = t'$ and $\aop_{T'}(1, c \ell) = \aop_{T'}(1, c)(1, \ell) = (\flat, \pc r)(1, \ell) = \dag'$, so that $\ell \d c = f_{T'}$ by uniqueness of the mediating tight-cell for $\Alg(T')$. The universal property of $\Alg(T')$ on algebra morphisms thus implies that $\lambda$ is the identity, so that the left-morphism is necessarily strict.
\end{proof}

\begin{corollary}
	\label{relative-monads-form-full-subcategory-of-slices}
	Let $\jAE$ be a tight-cell. The partial functor $u_{\ph} \colon \RMnd(j)\op \to \tX/E$, defined on those $j$-monads $T$ admitting algebra objects, is \ff{}.
\end{corollary}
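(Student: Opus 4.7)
The plan is to exhibit $u_\ph$ as a composite of two \ff{} functors. First, I would invoke \cref{algebra-objects-induce-j-monadic-resolutions}, which provides a partial right-adjoint section $(f_\ph \jadj u_\ph) \colon \RMnd(j)\op \to \RAdj_L(j)$ to $\obslash_j$, defined on those $j$-monads admitting algebra objects, and sending each such $T$ to the canonical resolution $f_T \jadj u_T$. Composing this with the functor $\RAdj_L(j) \to \tX/E$ of \cref{left-morphisms-to-slices-is-ff}, which sends each $j$-adjunction $(\ljr)$ to its right adjoint $r$, yields by construction the assignment $T \mapsto u_T$, which agrees with $u_\ph$.

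It then remains to show that each of the two constituent functors is \ff{}. The second is explicitly so by \cref{left-morphisms-to-slices-is-ff}. For the first, I would appeal to the standard fact that a right adjoint is \ff{} precisely when its counit is invertible: since $(f_\ph \jadj u_\ph)$ is a section of $\obslash_j$, by construction (\cf{} \cref{adjoint-section}) the counit of the adjunction $\obslash_j \dashv (f_\ph \jadj u_\ph)$ is the identity, hence trivially invertible. Since a composite of \ff{} functors is \ff{}, this establishes the corollary. No step of the argument presents a real obstacle, as the substantive work has already been done in \cref{algebra-objects-induce-j-monadic-resolutions} (to establish the adjoint section) and \cref{left-morphisms-to-slices-is-ff} (for \ffness{} of the loosening into the slice).
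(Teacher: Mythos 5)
Your proposal is correct and follows the paper's own argument: the paper proves this corollary exactly by composing the partial right-adjoint section of \cref{algebra-objects-induce-j-monadic-resolutions} with the \ff{} functor of \cref{left-morphisms-to-slices-is-ff}, treating the former as \ff{} for the reason you give (a right adjoint with identity counit, as in \cref{adjoint-section}, is \ff{}). Your write-up merely makes explicit the justification the paper leaves implicit.
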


\begin{proof}
	Direct by composing the \ff{} functors of \cref{algebra-objects-induce-j-monadic-resolutions,left-morphisms-to-slices-is-ff}.
\end{proof}

\begin{corollary}
	\label{algebra-object-is-j-monadic}
	Let $T$ be a relative monad admitting an algebra object. The resolution $f_T \jadj u_T$ is terminal in $\Res(T)$.
\end{corollary}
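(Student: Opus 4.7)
The plan is to derive this corollary directly from \cref{algebra-objects-induce-j-monadic-resolutions} together with \cref{strict-morphism-implies-same-relative-monad}, by translating between morphisms of resolutions and strict left-morphisms of relative adjunctions.

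Fix a resolution $\ljr$ of $T$. I would first invoke \cref{algebra-objects-induce-j-monadic-resolutions} at the pair $(T, T)$: the partial right-adjoint section yields a bijection between left-morphisms $(\ljr) \to (f_T \jadj u_T)$ and $j$-monad morphisms $T \to T$. Take the left-morphism $(\unit_{\ljr}, \unit_{1_T})$ corresponding to the identity $j$-monad morphism $1_T$. By the ``moreover'' clause of \cref{algebra-objects-induce-j-monadic-resolutions}, this left-morphism is strict, \ie{} $\unit_{1_T}$ is the identity 2-cell. A strict morphism of relative adjunctions whose transpose under $\obslash_j$ is the identity on $T$ is, by \cref{strict-morphism-implies-same-relative-monad} (combined with $\wedge_j(\ljr) = T = \wedge_j(f_T \jadj u_T)$), exactly a morphism of resolutions in $\Res(T)$. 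Hence $\unit_{\ljr}$ is a morphism of resolutions $\ljr \to f_T \jadj u_T$ in $\Res(T)$.

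For uniqueness, suppose $c \colon C \to \Alg(T)$ is any morphism of resolutions from $\ljr$ to $f_T \jadj u_T$. By definition (\cref{resolution}), $\ell \d c = f_T$ and $c \d u_T = r$, and by \cref{strict-morphism-implies-same-relative-monad} such a $c$ is a strict left-morphism of $j$-adjunctions. The bijection of \cref{algebra-objects-induce-j-monadic-resolutions} then sends $c$ to some $j$-monad morphism $T \to T$; but by the ``moreover'' part, strict left-morphisms correspond exactly to the identity $j$-monad morphism. Since $\unit_{\ljr}$ is by construction the unique left-morphism whose transpose is $1_T$, we conclude $c = \unit_{\ljr}$, proving terminality.

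No obstacle is expected: the work has already been done in \cref{algebra-objects-induce-j-monadic-resolutions}, and this corollary is essentially the observation that the adjoint section restricted to identity monad morphisms picks out the terminal object of each fibre $\Res(T)$.
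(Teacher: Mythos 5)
Your proposal is correct and follows essentially the same route as the paper: the paper's proof likewise deduces terminality from \cref{algebra-objects-induce-j-monadic-resolutions}, using the fact that morphisms of resolutions are strict morphisms (\cref{strict-morphism-implies-same-relative-monad}) and that strict left-morphisms correspond under transposition exactly to the identity $j$-monad morphism, hence are unique. Your write-up just spells out the existence half slightly more explicitly than the paper does.
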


\begin{proof}
	Suppose $\ljr$ is a resolution of $T$. From \cref{algebra-objects-induce-j-monadic-resolutions}, we have that strict morphisms from $\ljr$ to $f_T \jadj u_T$ necessarily correspond via transposition to the identity morphism on $T$, hence are unique.
\end{proof}

\Cref{algebra-objects-induce-j-monadic-resolutions} justifies our study of left-morphisms of relative adjunctions: in particular, the well-known universal property of algebra objects as terminal resolutions (\cref{algebra-object-is-j-monadic}) is a consequence of a more general universal property that is functorial in the relative monad.

As is to be expected from the non-relative setting, algebra objects for trivial relative monads are trivial.

\begin{proposition}
	\label{algebra-object-for-trivial-relative-monad}
	Let $\jAE$ be a tight-cell. Then $(1_E, 1_{E(j, 1)})$ exhibits an algebra object for the trivial $j$-monad.
\end{proposition}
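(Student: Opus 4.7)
The plan is to verify the two universal properties in \cref{algebra-object} directly, after first observing that the notion of $T$-algebra for the trivial $j$-monad collapses to something very simple.

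First, I would unpack what a $T$-algebra is for the trivial $j$-monad $(j, 1_j, 1_{E(j,j)})$. An algebra consists of $e \colon D \to E$ together with $\aop \colon E(j, e) \tto E(t, e) = E(j, e)$. The unit law of \cref{algebra} requires $\aop \d E(\eta, e) = 1_{E(j, e)}$; since $\eta = 1_j$ we have $E(\eta, e) = 1_{E(j, e)}$, forcing $\aop = 1_{E(j, e)}$. The associativity law is then trivially satisfied because $\dag = 1_{E(j, j)}$ and both sides reduce to $1_{E(j, e)}$ post-whiskered by $\cp j$ and $\cp e$ in the same way. Hence a $T$-algebra is uniquely determined by its underlying tight-cell $e \colon D \to E$, and we will denote it $(e, 1_{E(j, e)})$.

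Next I would show that a $(p_1, \ldots, p_n)$-graded $T$-algebra morphism between algebras $(e, 1_{E(j,e)})$ and $(e', 1_{E(j,e')})$ is simply an arbitrary 2-cell $\epsilon \colon E(1, e), p_1, \ldots, p_n \tto E(1, e')$, with no further constraints. This is because the compatibility equation in \cref{algebra-morphisms-equivalent} involves $\aop$ and $\aop'$ pasted at the rightmost column; since both are identities (and $t = j$), the two sides of the compatibility square become syntactically equal after applying zig-zag laws for restriction to absorb the identity cells $\cp e$, $\cp{e'}$ and $\cp j$.

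With these two facts in hand, the verification of \cref{algebra-object} for $(1_E, 1_{E(j, 1)})$ is immediate. For condition (1): given $(e, 1_{E(j, e)})$, the tight-cell $\unit_{1_{E(j,e)}} \defeq e$ satisfies $\unit \d 1_E = e$ and, since whiskering $1_{E(j,1)}$ by $e$ yields $1_{E(j,e)}$, also $\unit \d 1_{E(j,1)} = 1_{E(j,e)}$; conversely, $\unit \d 1_E = e$ already forces $\unit = e$. For condition (2): given a graded morphism $\epsilon$, we have $\Alg(T)(1, \unit_{\aop}) = E(1, e)$ and $\Alg(T)(1, \unit_{\aop'}) = E(1, e')$, and the factorisation equation demands $\epsilon = E(1, 1_E), \unit_\epsilon$, which simply reads $\unit_\epsilon = \epsilon$. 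Since every $\epsilon$ is already a graded $T$-algebra morphism by the previous paragraph, this provides the required bijection.

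There is no real obstacle; the content of the proposition is essentially that the universal property of \cref{algebra-object} degenerates when the extension structure on every algebra is forced to be the identity. The only care needed is in confirming that the graded-morphism compatibility equation truly is vacuous for trivial algebras, which follows from a short string-diagrammatic check using the zig-zag laws for the companion and conjoint of~$j$.
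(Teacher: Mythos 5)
Your proposal is correct and follows essentially the same route as the paper, whose proof simply asserts "trivially" what you spell out: the unit law forces the extension operator of any algebra for the trivial $j$-monad to be the identity, so the mediating tight-cell is $e$ itself, and the graded-morphism compatibility condition is vacuous, so factorisation through $u_T = 1_E$ is forced. The extra detail you supply (including the string-diagrammatic check that the compatibility equation degenerates) is exactly the content the paper leaves implicit.
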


\begin{proof}
	Let $(e \colon D \to E, \aop)$ be a $j$-algebra. $e$ trivially exhibits a unique mediating tight-cell $\unit_{(e, \aop)} \colon D \to E$. Trivially, every graded algebra morphism factors uniquely through the identity on~$E$.
\end{proof}

\subsection{Opalgebra objects}
\label{opalgebra-objects}

The definition of opalgebra morphism in \cref{opalgebra} is given only between opalgebras with the same codomain. We now give a more general definition of (graded) morphism between any opalgebras for a relative monad, which is necessary to express the universal property of opalgebra objects for relative monads.

\begin{definition}
	\label{graded-opalgebra-morphism}
  Let $(a \colon A \to B, \oop)$ and $(a' \colon A \to B', \oop')$ be $T$-opalgebras. A \emph{$(p_1, \dots, p_n)$-graded $T$-opalgebra morphism} from $(a, \oop)$ to $(a', \oop')$ is a 2-cell
  \[\alpha \colon p_1, \ldots, p_n, B(1, a) \tto B'(1, a')\]
  satisfying the following equation (defining $\rho$ and $\rho'$ as in \cref{opalgebras-are-right-actions}).
  \[
	% https://q.uiver.app/#q=WzAsMTEsWzQsMSwiQSJdLFsyLDEsIkIiXSxbMCwxLCJCJyJdLFs0LDIsIkEiXSxbMCwyLCJCJyJdLFsxLDEsIlxcY2RvdHMiXSxbNCwwLCJBIl0sWzMsMCwiQSJdLFsyLDAsIkIiXSxbMSwwLCJcXGNkb3RzIl0sWzAsMCwiQiciXSxbMCwxLCJCKDEsIGEpIiwxXSxbMCwzLCIiLDAseyJsZXZlbCI6Miwic3R5bGUiOnsiaGVhZCI6eyJuYW1lIjoibm9uZSJ9fX1dLFszLDQsIkInKDEsIGEnKSIsMCx7InN0eWxlIjp7ImJvZHkiOnsibmFtZSI6ImJhcnJlZCJ9fX1dLFsyLDQsIiIsMix7ImxldmVsIjoyLCJzdHlsZSI6eyJoZWFkIjp7Im5hbWUiOiJub25lIn19fV0sWzYsNywiRShqLCB0KSIsMix7InN0eWxlIjp7ImJvZHkiOnsibmFtZSI6ImJhcnJlZCJ9fX1dLFs3LDgsIkIoMSwgYSkiLDIseyJzdHlsZSI6eyJib2R5Ijp7Im5hbWUiOiJiYXJyZWQifX19XSxbNiwwLCIiLDIseyJsZXZlbCI6Miwic3R5bGUiOnsiaGVhZCI6eyJuYW1lIjoibm9uZSJ9fX1dLFs4LDEsIiIsMix7ImxldmVsIjoyLCJzdHlsZSI6eyJoZWFkIjp7Im5hbWUiOiJub25lIn19fV0sWzEwLDIsIiIsMSx7ImxldmVsIjoyLCJzdHlsZSI6eyJoZWFkIjp7Im5hbWUiOiJub25lIn19fV0sWzgsOSwicF9uIiwyLHsic3R5bGUiOnsiYm9keSI6eyJuYW1lIjoiYmFycmVkIn19fV0sWzEsNSwicF9uIiwxXSxbNSwyLCJwXzEiLDFdLFs5LDEwLCJwXzEiLDIseyJzdHlsZSI6eyJib2R5Ijp7Im5hbWUiOiJiYXJyZWQifX19XSxbMTcsMTgsIlxccmhvIiwxLHsic2hvcnRlbiI6eyJ0YXJnZXQiOjIwfSwic3R5bGUiOnsiYm9keSI6eyJuYW1lIjoibm9uZSJ9LCJoZWFkIjp7Im5hbWUiOiJub25lIn19fV0sWzE0LDEyLCJcXGFscGhhIiwxLHsic2hvcnRlbiI6eyJzb3VyY2UiOjIwLCJ0YXJnZXQiOjIwfSwic3R5bGUiOnsiYm9keSI6eyJuYW1lIjoibm9uZSJ9LCJoZWFkIjp7Im5hbWUiOiJub25lIn19fV0sWzE4LDE5LCI9IiwxLHsic2hvcnRlbiI6eyJzb3VyY2UiOjIwLCJ0YXJnZXQiOjIwfSwic3R5bGUiOnsiYm9keSI6eyJuYW1lIjoibm9uZSJ9LCJoZWFkIjp7Im5hbWUiOiJub25lIn19fV1d
	\begin{tikzcd}
		{B'} & \cdots & B & A & A \\
		{B'} & \cdots & B && A \\
		{B'} &&&& A
		\arrow["{B(1, a)}"{description}, from=2-5, to=2-3]
		\arrow[""{name=0, anchor=center, inner sep=0}, Rightarrow, no head, from=2-5, to=3-5]
		\arrow["{B'(1, a')}", "\shortmid"{marking}, from=3-5, to=3-1]
		\arrow[""{name=1, anchor=center, inner sep=0}, Rightarrow, no head, from=2-1, to=3-1]
		\arrow["{E(j, t)}"', "\shortmid"{marking}, from=1-5, to=1-4]
		\arrow["{B(1, a)}"', "\shortmid"{marking}, from=1-4, to=1-3]
		\arrow[""{name=2, anchor=center, inner sep=0}, Rightarrow, no head, from=1-5, to=2-5]
		\arrow[""{name=3, anchor=center, inner sep=0}, Rightarrow, no head, from=1-3, to=2-3]
		\arrow[""{name=4, anchor=center, inner sep=0}, Rightarrow, no head, from=1-1, to=2-1]
		\arrow["{p_n}"', "\shortmid"{marking}, from=1-3, to=1-2]
		\arrow["{p_n}"{description}, from=2-3, to=2-2]
		\arrow["{p_1}"{description}, from=2-2, to=2-1]
		\arrow["{p_1}"', "\shortmid"{marking}, from=1-2, to=1-1]
		\arrow["\rho"{description}, draw=none, from=2, to=3]
		\arrow["\alpha"{description}, draw=none, from=1, to=0]
		\arrow["{=}"{description}, draw=none, from=3, to=4]
	\end{tikzcd}
  	\quad = \quad
	% https://q.uiver.app/#q=WzAsMTAsWzMsMCwiQSJdLFsyLDAsIkIiXSxbMCwwLCJCJyJdLFszLDEsIkEiXSxbMCwxLCJCJyJdLFsxLDAsIlxcY2RvdHMiXSxbNSwxLCJBIl0sWzUsMCwiQSJdLFs1LDIsIkEiXSxbMCwyLCJCJyJdLFswLDEsIkIoMSwgYSkiLDIseyJzdHlsZSI6eyJib2R5Ijp7Im5hbWUiOiJiYXJyZWQifX19XSxbMCwzLCIiLDAseyJsZXZlbCI6Miwic3R5bGUiOnsiaGVhZCI6eyJuYW1lIjoibm9uZSJ9fX1dLFszLDQsIkInKDEsIGEnKSIsMV0sWzIsNCwiIiwyLHsibGV2ZWwiOjIsInN0eWxlIjp7ImhlYWQiOnsibmFtZSI6Im5vbmUifX19XSxbNiwzLCJFKGosIHQpIiwxXSxbNywwLCJFKGosIHQpIiwyLHsic3R5bGUiOnsiYm9keSI6eyJuYW1lIjoiYmFycmVkIn19fV0sWzcsNiwiIiwxLHsibGV2ZWwiOjIsInN0eWxlIjp7ImhlYWQiOnsibmFtZSI6Im5vbmUifX19XSxbOCw5LCJCJygxLCBhJykiLDAseyJzdHlsZSI6eyJib2R5Ijp7Im5hbWUiOiJiYXJyZWQifX19XSxbNCw5LCIiLDAseyJsZXZlbCI6Miwic3R5bGUiOnsiaGVhZCI6eyJuYW1lIjoibm9uZSJ9fX1dLFs2LDgsIiIsMCx7ImxldmVsIjoyLCJzdHlsZSI6eyJoZWFkIjp7Im5hbWUiOiJub25lIn19fV0sWzEsNSwicF9uIiwyLHsic3R5bGUiOnsiYm9keSI6eyJuYW1lIjoiYmFycmVkIn19fV0sWzUsMiwicF8xIiwyLHsic3R5bGUiOnsiYm9keSI6eyJuYW1lIjoiYmFycmVkIn19fV0sWzEzLDExLCJcXGFscGhhIiwxLHsic2hvcnRlbiI6eyJ0YXJnZXQiOjIwfSwic3R5bGUiOnsiYm9keSI6eyJuYW1lIjoibm9uZSJ9LCJoZWFkIjp7Im5hbWUiOiJub25lIn19fV0sWzE4LDE5LCJcXHJobyciLDEseyJzaG9ydGVuIjp7InNvdXJjZSI6MjAsInRhcmdldCI6MjB9LCJzdHlsZSI6eyJib2R5Ijp7Im5hbWUiOiJub25lIn0sImhlYWQiOnsibmFtZSI6Im5vbmUifX19XSxbMTYsMTEsIj0iLDEseyJzaG9ydGVuIjp7InNvdXJjZSI6MjAsInRhcmdldCI6MjB9LCJzdHlsZSI6eyJib2R5Ijp7Im5hbWUiOiJub25lIn0sImhlYWQiOnsibmFtZSI6Im5vbmUifX19XV0=
	\begin{tikzcd}
		{B'} & \cdots & B & A && A \\
		{B'} &&& A && A \\
		{B'} &&&&& A
		\arrow["{B(1, a)}"', "\shortmid"{marking}, from=1-4, to=1-3]
		\arrow[""{name=0, anchor=center, inner sep=0}, Rightarrow, no head, from=1-4, to=2-4]
		\arrow["{B'(1, a')}"{description}, from=2-4, to=2-1]
		\arrow[""{name=1, anchor=center, inner sep=0}, Rightarrow, no head, from=1-1, to=2-1]
		\arrow["{E(j, t)}"{description}, from=2-6, to=2-4]
		\arrow["{E(j, t)}"', "\shortmid"{marking}, from=1-6, to=1-4]
		\arrow[""{name=2, anchor=center, inner sep=0}, Rightarrow, no head, from=1-6, to=2-6]
		\arrow["{B'(1, a')}", "\shortmid"{marking}, from=3-6, to=3-1]
		\arrow[""{name=3, anchor=center, inner sep=0}, Rightarrow, no head, from=2-1, to=3-1]
		\arrow[""{name=4, anchor=center, inner sep=0}, Rightarrow, no head, from=2-6, to=3-6]
		\arrow["{p_n}"', "\shortmid"{marking}, from=1-3, to=1-2]
		\arrow["{p_1}"', "\shortmid"{marking}, from=1-2, to=1-1]
		\arrow["\alpha"{description}, draw=none, from=1, to=0]
		\arrow["{\rho'}"{description}, draw=none, from=3, to=4]
		\arrow["{=}"{description}, draw=none, from=2, to=0]
	\end{tikzcd}
	\]\[
	% https://varkor.github.io/tangle/?t=W1tbMTAsMCwxMSw3LDUsN11dLFtbWzEsWzEsMCwxLDBdXSxbMCxbXV0sWzEsWzEsMCwxLDBdXSxbMSxbMSwwLDEsMF1dLFsxLFsxLDAsMSwwXV0sWzEsWzEsMCwxLDBdXV0sW1sxLFsxLDAsMSwwXV0sWzAsW11dLFsxLFsxLDAsMSwwXV0sWzEsWzEsMSwwLDBdXSxbMSxbMSwxLDEsMV1dLFsxLFsxLDAsMCwxXV1dLFtbMSxbMSwxLDAsMF1dLFsxLFswLDEsMCwxXV0sWzEsWzEsMSwwLDFdXSxbMSxbMCwxLDAsMV1dLFsxLFsxLDAsMSwxXV0sWzAsW11dXSxbWzAsW11dLFswLFtdXSxbMCxbXV0sWzAsW11dLFsxLFsxLDAsMSwwXV0sWzAsW11dXV0sW1swLDQuNSwxLjUsWyJcXHJobyIsMl1dLFswLDIuNSwyLjUsWyJcXGFscGhhIiw0XV0sWzEsNC41LDMsWzFdXSxbMSw0LjUsMixbMV1dLFsxLDUuNSwxLFsxXV0sWzEsMy41LDEsWzFdXSxbMSw0LjUsMSxbMF1dXSxbWzAsMCwzLCJwXzEiXSxbMiwwLDMsInBfbiJdLFszLDAsMywiYSJdLFs0LDAsMywiaiJdLFs1LDAsMywidCJdLFs0LDMsMSwiYSciXV1d&c=F5A3A3,F5CCA3,F5F5A3,CCF5A3,A3F5A3,A3F5CC,A3F5F5,A3CCF5,A3A3F5,CCA3F5,F5A3F5,F5A3CC
	\begin{tangle}{(6,4)}[trim y]
		\tgBorderA{(0,0)}{\tgColour9}{white}{white}{\tgColour9}
		\tgBlank{(1,0)}{white}
		\tgBorderA{(2,0)}{white}{\tgColour10}{\tgColour10}{white}
		\tgBorderA{(3,0)}{\tgColour10}{\tgColour6}{\tgColour6}{\tgColour10}
		\tgBorderA{(4,0)}{\tgColour6}{\tgColour4}{\tgColour4}{\tgColour6}
		\tgBorderA{(5,0)}{\tgColour4}{\tgColour6}{\tgColour6}{\tgColour4}
		\tgBorderA{(0,1)}{\tgColour9}{white}{white}{\tgColour9}
		\tgBlank{(1,1)}{white}
		\tgBorderA{(2,1)}{white}{\tgColour10}{\tgColour10}{white}
		\tgBorderA{(3,1)}{\tgColour10}{\tgColour6}{\tgColour10}{\tgColour10}
		\tgBorderA{(4,1)}{\tgColour6}{\tgColour4}{\tgColour6}{\tgColour10}
		\tgBorderA{(5,1)}{\tgColour4}{\tgColour6}{\tgColour6}{\tgColour6}
		\tgBorderA{(0,2)}{\tgColour9}{white}{\tgColour9}{\tgColour9}
		\tgBorderA{(1,2)}{white}{white}{\tgColour9}{\tgColour9}
		\tgBorderA{(2,2)}{white}{\tgColour10}{\tgColour9}{\tgColour9}
		\tgBorderA{(3,2)}{\tgColour10}{\tgColour10}{\tgColour9}{\tgColour9}
		\tgBorderA{(4,2)}{\tgColour10}{\tgColour6}{\tgColour6}{\tgColour9}
		\tgBlank{(5,2)}{\tgColour6}
		\tgBlank{(0,3)}{\tgColour9}
		\tgBlank{(1,3)}{\tgColour9}
		\tgBlank{(2,3)}{\tgColour9}
		\tgBlank{(3,3)}{\tgColour9}
		\tgBorderA{(4,3)}{\tgColour9}{\tgColour6}{\tgColour6}{\tgColour9}
		\tgBlank{(5,3)}{\tgColour6}
		\tgCell[(2,0)]{(4,1)}{\rho}
		\tgCell[(4,0)]{(2,2)}{\alpha}
		\tgArrow{(4,2.5)}{3}
		\tgArrow{(4,1.5)}{3}
		\tgArrow{(5,0.5)}{3}
		\tgArrow{(3,0.5)}{3}
		\tgArrow{(4,0.5)}{1}
		\tgAxisLabel{(0.5,0.75)}{south}{p_1}
		\tgAxisLabel{(2.5,0.75)}{south}{p_n}
		\tgAxisLabel{(3.5,0.75)}{south}{a}
		\tgAxisLabel{(4.5,0.75)}{south}{j}
		\tgAxisLabel{(5.5,0.75)}{south}{t}
		\tgAxisLabel{(4.5,3.25)}{north}{a'}
		\node at (1.5,1.9) {$\cdots$};
	\end{tangle}
	\tangleeq*
	% https://varkor.github.io/tangle/?t=W1tbMTAsMCwxMSw3LDUsN11dLFtbWzEsWzEsMCwxLDBdXSxbMCxbXV0sWzEsWzEsMCwxLDBdXSxbMSxbMSwwLDEsMF1dLFsxLFsxLDAsMSwwXV0sWzEsWzEsMCwxLDBdXV0sW1sxLFsxLDEsMCwwXV0sWzEsWzAsMSwwLDFdXSxbMSxbMSwxLDAsMV1dLFsxLFsxLDAsMSwxXV0sWzEsWzEsMCwxLDBdXSxbMSxbMSwwLDEsMF1dXSxbWzAsW11dLFswLFtdXSxbMCxbXV0sWzEsWzEsMSwwLDBdXSxbMSxbMSwxLDEsMV1dLFsxLFsxLDAsMCwxXV1dLFtbMCxbXV0sWzAsW11dLFswLFtdXSxbMCxbXV0sWzEsWzEsMCwxLDBdXSxbMCxbXV1dXSxbWzAsMiwxLjUsWyJcXGFscGhhIiwzXV0sWzEsMy41LDEsWzFdXSxbMSwzLjUsMixbMV1dLFswLDQuNSwyLjUsWyJcXHJobyciLDJdXSxbMSw1LjUsMSxbMV1dLFsxLDUuNSwyLFsxXV0sWzEsNC41LDMsWzFdXSxbMSw0LjUsMixbMF1dLFsxLDQuNSwxLFswXV1dLFtbMCwwLDMsInBfMSJdLFsyLDAsMywicF9uIl0sWzMsMCwzLCJhIl0sWzQsMCwzLCJqIl0sWzUsMCwzLCJ0Il0sWzQsMywxLCJhJyJdXV0=&c=F5A3A3,F5CCA3,F5F5A3,CCF5A3,A3F5A3,A3F5CC,A3F5F5,A3CCF5,A3A3F5,CCA3F5,F5A3F5,F5A3CC
	\begin{tangle}{(6,4)}[trim y]
		\tgBorderA{(0,0)}{\tgColour9}{white}{white}{\tgColour9}
		\tgBlank{(1,0)}{white}
		\tgBorderA{(2,0)}{white}{\tgColour10}{\tgColour10}{white}
		\tgBorderA{(3,0)}{\tgColour10}{\tgColour6}{\tgColour6}{\tgColour10}
		\tgBorderA{(4,0)}{\tgColour6}{\tgColour4}{\tgColour4}{\tgColour6}
		\tgBorderA{(5,0)}{\tgColour4}{\tgColour6}{\tgColour6}{\tgColour4}
		\tgBorderA{(0,1)}{\tgColour9}{white}{\tgColour9}{\tgColour9}
		\tgBorderA{(1,1)}{white}{white}{\tgColour9}{\tgColour9}
		\tgBorderA{(2,1)}{white}{\tgColour10}{\tgColour9}{\tgColour9}
		\tgBorderA{(3,1)}{\tgColour10}{\tgColour6}{\tgColour6}{\tgColour9}
		\tgBorderA{(4,1)}{\tgColour6}{\tgColour4}{\tgColour4}{\tgColour6}
		\tgBorderA{(5,1)}{\tgColour4}{\tgColour6}{\tgColour6}{\tgColour4}
		\tgBlank{(0,2)}{\tgColour9}
		\tgBlank{(1,2)}{\tgColour9}
		\tgBlank{(2,2)}{\tgColour9}
		\tgBorderA{(3,2)}{\tgColour9}{\tgColour6}{\tgColour9}{\tgColour9}
		\tgBorderA{(4,2)}{\tgColour6}{\tgColour4}{\tgColour6}{\tgColour9}
		\tgBorderA{(5,2)}{\tgColour4}{\tgColour6}{\tgColour6}{\tgColour6}
		\tgBlank{(0,3)}{\tgColour9}
		\tgBlank{(1,3)}{\tgColour9}
		\tgBlank{(2,3)}{\tgColour9}
		\tgBlank{(3,3)}{\tgColour9}
		\tgBorderA{(4,3)}{\tgColour9}{\tgColour6}{\tgColour6}{\tgColour9}
		\tgBlank{(5,3)}{\tgColour6}
		\tgCell[(3,0)]{(1.5,1)}{\alpha}
		\tgArrow{(3,0.5)}{3}
		\tgArrow{(3,1.5)}{3}
		\tgCell[(2,0)]{(4,2)}{\rho'}
		\tgArrow{(5,0.5)}{3}
		\tgArrow{(5,1.5)}{3}
		\tgArrow{(4,2.5)}{3}
		\tgArrow{(4,1.5)}{1}
		\tgArrow{(4,0.5)}{1}
		\tgAxisLabel{(0.5,0.75)}{south}{p_1}
		\tgAxisLabel{(2.5,0.75)}{south}{p_n}
		\tgAxisLabel{(3.5,0.75)}{south}{a}
		\tgAxisLabel{(4.5,0.75)}{south}{j}
		\tgAxisLabel{(5.5,0.75)}{south}{t}
		\tgAxisLabel{(4.5,3.25)}{north}{a'}
		\node at (1.5,.9) {$\cdots$};
	\end{tangle}
	\]
	When $n = 0$, we call such a morphism \emph{ungraded}.
\end{definition}

In particular, ungraded opalgebra morphisms are precisely those given in \cref{opalgebra}.

\begin{example}
	\label{opalgebra-extension-operator-is-graded-morphism}
    Analogously to \cref{algebra-extension-operator-is-graded-morphism}, for a given relative monad $T$, the extension operator $\oop \colon E(j, t) \tto B(a, a)$ of a $T$-opalgebra $(a, \oop)$, viewed as a right-action $\rho \colon B(1, a), E(j, t) \tto B(1, a)$ by \cref{opalgebras-are-right-actions}, is a $(B(1, a), E(j, 1))$-graded $T$-opalgebra morphism from $(t, \dag)$ to $(a, \oop)$, the law for the graded morphism being precisely the extension law for the opalgebra.
\end{example}

\begin{definition}
	\label{opalgebra-object}
	Let $T$ be a relative monad. A $T$-opalgebra $(k_T \colon A \to \Opalg(T), \oop_T)$ is called an
	\emph{opalgebra object} for $T$ when
	\begin{enumerate}
		\item for every $T$-opalgebra $(a \colon A \to B, \oop)$, there is a unique tight-cell $[]_{(a, \oop)} \colon \Opalg(T) \to B$ such that $k_T \d []_{(a, \oop)} = a$ and $\oop_T \d []_{(a, \oop)} = \oop$;
		\item for every graded $T$-opalgebra morphism $\alpha \colon p_1, \dots, p_n, B(1, a) \tto B'(1, a')$ there is a unique 2-cell $[]_\alpha \colon p_1, \dots, p_n, B(1, []_{(a, \oop)}) \tto B'(1, []_{(a', \oop')})$ such that:
		\[
		\alpha~=~
		% https://q.uiver.app/#q=WzAsNixbMywwLCJBIl0sWzIsMCwiQiJdLFswLDAsIkInIl0sWzMsMSwiQSJdLFswLDEsIkInIl0sWzEsMCwiXFxjZG90cyJdLFswLDEsIkIoMSwgYSkiLDIseyJzdHlsZSI6eyJib2R5Ijp7Im5hbWUiOiJiYXJyZWQifX19XSxbMCwzLCIiLDAseyJsZXZlbCI6Miwic3R5bGUiOnsiaGVhZCI6eyJuYW1lIjoibm9uZSJ9fX1dLFszLDQsIkInKDEsIGEnKSIsMCx7InN0eWxlIjp7ImJvZHkiOnsibmFtZSI6ImJhcnJlZCJ9fX1dLFsyLDQsIiIsMix7ImxldmVsIjoyLCJzdHlsZSI6eyJoZWFkIjp7Im5hbWUiOiJub25lIn19fV0sWzEsNSwicF9uIiwyLHsic3R5bGUiOnsiYm9keSI6eyJuYW1lIjoiYmFycmVkIn19fV0sWzUsMiwicF8xIiwyLHsic3R5bGUiOnsiYm9keSI6eyJuYW1lIjoiYmFycmVkIn19fV0sWzcsOSwiW11fXFxhbHBoYSgxLCBrX1QpIiwxLHsic2hvcnRlbiI6eyJzb3VyY2UiOjIwLCJ0YXJnZXQiOjIwfSwic3R5bGUiOnsiYm9keSI6eyJuYW1lIjoibm9uZSJ9LCJoZWFkIjp7Im5hbWUiOiJub25lIn19fV1d
		\begin{tikzcd}
			{B'} & \cdots & B & A \\
			{B'} &&& A
			\arrow["{B(1, a)}"', "\shortmid"{marking}, from=1-4, to=1-3]
			\arrow[""{name=0, anchor=center, inner sep=0}, Rightarrow, no head, from=1-4, to=2-4]
			\arrow["{B'(1, a')}", "\shortmid"{marking}, from=2-4, to=2-1]
			\arrow[""{name=1, anchor=center, inner sep=0}, Rightarrow, no head, from=1-1, to=2-1]
			\arrow["{p_n}"', "\shortmid"{marking}, from=1-3, to=1-2]
			\arrow["{p_1}"', "\shortmid"{marking}, from=1-2, to=1-1]
			\arrow["{[]_\alpha(1, k_T)}"{description}, draw=none, from=0, to=1]
		\end{tikzcd}
		\]
  \end{enumerate}
\end{definition}

As with algebras, being an opalgebra object for a (non-relative) monad in the sense of \cref{opalgebra-object} is a stronger condition than being an opalgebra object in a 2-category in the sense of \cite[\S4]{street1972formal}.

In light of \cref{relative-adjunction-induces-algebra-and-opalgebra}, given a relative adjunction $\ljr$ inducing a monad $T$ that admits an opalgebra object, we will denote by $[]_{\ljr} \colon \Opalg(T) \to C$ the mediating morphism induced by the $T$-opalgebra structure on the left $j$-adjoint.

\begin{definition}
	Let $T$ be a relative monad admitting an opalgebra object. Denote by $v_T \colon \Opalg(T) \to E$ the mediating tight-cell $[]_{(t, \dag)}$ induced by the $T$-opalgebra $(t, \dag)$ (\cref{relative-monad-forms-opalgebra}).
\end{definition}

\begin{lemma}
	\label{opalgebra-object-induces-resolution}
	Let $T$ be a relative monad. If $T$ admits an opalgebra object, then $T$ admits a resolution.
	% https://q.uiver.app/?q=WzAsMyxbMCwxLCJBIl0sWzEsMCwiXFxPcGFsZyhUKSJdLFsyLDEsIkUiXSxbMCwxLCJrX1QiXSxbMSwyLCJ2X1QiXSxbMCwyLCJqIiwyXSxbMyw0LCIiLDAseyJvZmZzZXQiOjIsImxldmVsIjoxLCJzdHlsZSI6eyJuYW1lIjoiYWRqdW5jdGlvbiJ9fV1d
	\[\begin{tikzcd}
		& {\Opalg(T)} \\
		A && E
		\arrow[""{name=0, anchor=center, inner sep=0}, "{k_T}", from=2-1, to=1-2]
		\arrow[""{name=1, anchor=center, inner sep=0}, "{v_T}", from=1-2, to=2-3]
		\arrow["j"', from=2-1, to=2-3]
		\arrow["\dashv"{anchor=center}, shift right=2, draw=none, from=0, to=1]
	\end{tikzcd}\]
\end{lemma}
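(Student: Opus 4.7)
The plan is to construct, in direct analogy with the proof of \cref{algebra-object-induces-resolution}, a $j$-relative adjunction $k_T \jadj v_T$ in unit--counit form (\cref{reformulations-of-relative-adjunction}) whose induced relative monad coincides with $T$. The unit is simply $\eta \colon j \tto t = k_T \d v_T$, inherited from the $j$-monad, so the work lies in producing a counit $\varepsilon \colon \Opalg(T)(1, k_T), E(j, v_T) \tto \Opalg(T)(1, 1)$ and verifying the zig-zag laws.

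For the counit, I will observe that, by \cref{opalgebras-are-right-actions}, the opextension operator $\oop_T$ corresponds to a multimorphism $\rho_T \colon \Opalg(T)(1, k_T), E(j, 1), E(1, t) \tto \Opalg(T)(1, k_T)$, which I will show forms a $(\Opalg(T)(1, k_T), E(j, 1))$-graded $T$-opalgebra morphism from $(t, \dag)$ to $(k_T, \oop_T)$; the requisite compatibility is precisely the associativity law of the opalgebra $\oop_T$ (in conjunction with $\oop_T \d v_T = \dag$). The universal property of $\Opalg(T)$ applied to this graded morphism then induces a unique 2-cell $[]_{\rho_T} \colon \Opalg(T)(1, k_T), E(j, 1), E(1, v_T) \tto \Opalg(T)(1, 1)$, which, after composing $E(j, 1) \odot E(1, v_T) \iso E(j, v_T)$, yields the desired $\varepsilon$.

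The zig-zag laws will be verified by string-diagrammatic manipulations paralleling those in the algebra case: one law follows from bending $v_T$, invoking the definition of $\varepsilon$, and applying the unit law of the opalgebra $(k_T, \oop_T)$; the other follows from uniqueness of the universal property of $\Opalg(T)$, applied to the identity endomorphism on an appropriate opalgebra. Once the adjunction is established, checking that the induced extension operator coincides with $\dag$ is a direct string-diagrammatic computation that leverages the defining identity $\oop_T \d v_T = \dag$.

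The main obstacle I anticipate is the careful bookkeeping required to identify the multicategorical form $\rho_T$ of $\oop_T$ as a graded opalgebra morphism with the correct grading and source/target opalgebras. Since opalgebras carry right-actions while algebras carry left-actions, several directional conventions must be inverted relative to \cref{algebra-object-induces-resolution}, and the interaction of the grading $(\Opalg(T)(1, k_T), E(j, 1))$ with the opcartesian factorisation $E(j, 1) \odot E(1, v_T) \iso E(j, v_T)$ must be handled carefully; however, no new conceptual content beyond that already developed in the algebra case is required.
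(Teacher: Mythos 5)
Your proposal follows essentially the same route as the paper's proof: take $\eta$ as the unit, recast $\oop_T$ via \cref{opalgebras-are-right-actions} as a $(\Opalg(T)(1, k_T), E(j, 1))$-graded opalgebra morphism from $(t, \dag)$ to $(k_T, \oop_T)$, obtain the counit from the universal property of $\Opalg(T)$, verify the zig-zag laws string-diagrammatically, and check the induced extension operator is $\dag$ using $\oop_T \d v_T = \dag$. The only minor divergence is that the paper verifies both zig-zag laws by direct computation (using the opalgebra unit law and bending $k_T$ resp.\ $v_T$) rather than invoking uniqueness for one of them, but your uniqueness argument mirrors the algebra case and is equally viable.
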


\begin{proof}
	The unit of $T$ provides a 2-cell $\eta \colon j \tto (t = k_T \d v_T)$. By \cref{opalgebra-extension-operator-is-graded-morphism}, the extension operator $\oop_T$ is an $(\Opalg(T)(1, k_T), E(j, 1))$-graded opalgebra morphism from $(t, \dag)$ to $(k_T, \oop_T)$, so that the universal property of the opalgebra object induces a 2-cell $[]_{\oop_T} \colon \Opalg(T)(1, k_T), E(j, v_T) \tto \Opalg(T)(1, 1)$. We prove that $\eta$ together with $[]_{\oop_T}$ forms a $j$-adjunction (in unit--counit form). First, we have
	\begin{tangleeqs}
	% https://varkor.github.io/tangle/?t=W1tbNyw1LDExLDcsNV1dLFtbWzAsW11dLFswLFtdXSxbMSxbMSwwLDEsMF1dLFsxLFsxLDAsMSwwXV0sWzEsWzEsMCwxLDBdXV0sW1sxLFswLDEsMSwwXV0sWzEsWzAsMSwxLDFdXSxbMSxbMSwxLDAsMV1dLFsxLFsxLDAsMCwxXV0sWzEsWzEsMCwxLDBdXV0sW1sxLFsxLDAsMSwwXV0sWzEsWzEsMCwxLDBdXSxbMCxbXV0sWzAsW11dLFsxLFsxLDAsMSwwXV1dXSxbWzAsMSwxLjUsWyJcXGV0YSIsMV1dLFswLDMsMS41LFsiW11fe1xcb29wX1R9IiwxXV0sWzEsNC41LDEsWzFdXSxbMSwxLjUsMixbMV1dLFsxLDMuNSwxLFsxXV0sWzEsMi41LDEsWzBdXSxbMSwwLjUsMixbMF1dLFsxLDQuNSwyLFsxXV0sWzEsMiwxLjUsWzBdXV0sW1syLDAsMywiaiJdLFszLDAsMywidl9UIl0sWzQsMCwzLCJrX1QiXSxbMCwyLDEsImoiXSxbMSwyLDEsInZfVCJdLFs0LDIsMSwia19UIl1dXQ==&c=F5A3A3,F5CCA3,F5F5A3,CCF5A3,A3F5A3,A3F5CC,A3F5F5,A3CCF5,A3A3F5,CCA3F5,F5A3F5,F5A3CC
	\begin{tangle}{(5,3)}[trim y]
		\tgBlank{(0,0)}{\tgColour6}
		\tgBlank{(1,0)}{\tgColour6}
		\tgBorderA{(2,0)}{\tgColour6}{\tgColour4}{\tgColour4}{\tgColour6}
		\tgBorderA{(3,0)}{\tgColour4}{\tgColour10}{\tgColour10}{\tgColour4}
		\tgBorderA{(4,0)}{\tgColour10}{\tgColour6}{\tgColour6}{\tgColour10}
		\tgBorderA{(0,1)}{\tgColour6}{\tgColour6}{\tgColour4}{\tgColour6}
		\tgBorderA{(1,1)}{\tgColour6}{\tgColour6}{\tgColour10}{\tgColour4}
		\tgBorderA{(2,1)}{\tgColour6}{\tgColour4}{\tgColour10}{\tgColour10}
		\tgBorderA{(3,1)}{\tgColour4}{\tgColour10}{\tgColour10}{\tgColour10}
		\tgBorderA{(4,1)}{\tgColour10}{\tgColour6}{\tgColour6}{\tgColour10}
		\tgBorderA{(0,2)}{\tgColour6}{\tgColour4}{\tgColour4}{\tgColour6}
		\tgBorderA{(1,2)}{\tgColour4}{\tgColour10}{\tgColour10}{\tgColour4}
		\tgBlank{(2,2)}{\tgColour10}
		\tgBlank{(3,2)}{\tgColour10}
		\tgBorderA{(4,2)}{\tgColour10}{\tgColour6}{\tgColour6}{\tgColour10}
		\tgCell[(1,0)]{(0.5,1)}{\eta}
		\tgCell[(1,0)]{(2.5,1)}{[]_{\oop_T}}
		\tgArrow{(4,0.5)}{3}
		\tgArrow{(1,1.5)}{3}
		\tgArrow{(3,0.5)}{3}
		\tgArrow{(2,0.5)}{1}
		\tgArrow{(0,1.5)}{1}
		\tgArrow{(4,1.5)}{3}
		\tgArrow{(1.5,1)}{0}
		\tgAxisLabel{(2.5,0.75)}{south}{j}
		\tgAxisLabel{(3.5,0.75)}{south}{v_T}
		\tgAxisLabel{(4.5,0.75)}{south}{k_T}
		\tgAxisLabel{(0.5,2.25)}{north}{j}
		\tgAxisLabel{(1.5,2.25)}{north}{v_T}
		\tgAxisLabel{(4.5,2.25)}{north}{k_T}
	\end{tangle}
    \=
    % https://varkor.github.io/tangle/?t=W1tbNyw1LDExLDcsNSwxMV1dLFtbWzAsW11dLFswLFtdXSxbMSxbMSwwLDEsMF1dLFsxLFsxLDAsMSwwXV0sWzEsWzEsMCwxLDBdXV0sW1sxLFswLDEsMSwwXV0sWzEsWzAsMSwxLDFdXSxbMSxbMSwxLDAsMV1dLFsxLFsxLDEsMCwxXV0sWzEsWzEsMCwxLDFdXV0sW1sxLFsxLDAsMSwwXV0sWzEsWzEsMCwxLDBdXSxbMCxbXV0sWzAsW11dLFsxLFsxLDAsMSwwXV1dXSxbWzAsMSwxLjUsWyJcXGV0YSIsMV1dLFsxLDQuNSwxLFsxXV0sWzEsMS41LDIsWzFdXSxbMSwzLjUsMSxbMV1dLFsxLDIuNSwxLFswXV0sWzEsMC41LDIsWzBdXSxbMSw0LjUsMixbMV1dLFsxLDIsMS41LFswXV0sWzAsMy41LDEuNSxbIlxcb29wX1QiLDJdXV0sW1syLDAsMywiaiJdLFszLDAsMywidl9UIl0sWzQsMCwzLCJrX1QiXSxbMCwyLDEsImoiXSxbMSwyLDEsInZfVCJdLFs0LDIsMSwia19UIl1dXQ==&c=F5A3A3,F5CCA3,F5F5A3,CCF5A3,A3F5A3,A3F5CC,A3F5F5,A3CCF5,A3A3F5,CCA3F5,F5A3F5,F5A3CC
	\begin{tangle}{(5,3)}[trim y]
		\tgBlank{(0,0)}{\tgColour6}
		\tgBlank{(1,0)}{\tgColour6}
		\tgBorderA{(2,0)}{\tgColour6}{\tgColour4}{\tgColour4}{\tgColour6}
		\tgBorderA{(3,0)}{\tgColour4}{\tgColour10}{\tgColour10}{\tgColour4}
		\tgBorderA{(4,0)}{\tgColour10}{\tgColour6}{\tgColour6}{\tgColour10}
		\tgBorderA{(0,1)}{\tgColour6}{\tgColour6}{\tgColour4}{\tgColour6}
		\tgBorderA{(1,1)}{\tgColour6}{\tgColour6}{\tgColour10}{\tgColour4}
		\tgBorderA{(2,1)}{\tgColour6}{\tgColour4}{\tgColour10}{\tgColour10}
		\tgBorderA{(3,1)}{\tgColour4}{\tgColour10}{\tgColour10}{\tgColour10}
		\tgBorder{(3,1)}{0}{1}{0}{0}
		\tgBorderA{(4,1)}{\tgColour10}{\tgColour6}{\tgColour6}{\tgColour10}
		\tgBorder{(4,1)}{0}{0}{0}{1}
		\tgBorderA{(0,2)}{\tgColour6}{\tgColour4}{\tgColour4}{\tgColour6}
		\tgBorderA{(1,2)}{\tgColour4}{\tgColour10}{\tgColour10}{\tgColour4}
		\tgBlank{(2,2)}{\tgColour10}
		\tgBlank{(3,2)}{\tgColour10}
		\tgBorderA{(4,2)}{\tgColour10}{\tgColour6}{\tgColour6}{\tgColour10}
		\tgCell[(1,0)]{(0.5,1)}{\eta}
		\tgArrow{(4,0.5)}{3}
		\tgArrow{(1,1.5)}{3}
		\tgArrow{(3,0.5)}{3}
		\tgArrow{(2,0.5)}{1}
		\tgArrow{(0,1.5)}{1}
		\tgArrow{(4,1.5)}{3}
		\tgArrow{(1.5,1)}{0}
		\tgCell[(2,0)]{(3,1)}{\oop_T}
		\tgAxisLabel{(2.5,0.75)}{south}{j}
		\tgAxisLabel{(3.5,0.75)}{south}{v_T}
		\tgAxisLabel{(4.5,0.75)}{south}{k_T}
		\tgAxisLabel{(0.5,2.25)}{north}{j}
		\tgAxisLabel{(1.5,2.25)}{north}{v_T}
		\tgAxisLabel{(4.5,2.25)}{north}{k_T}
	\end{tangle}
	\\
	% https://varkor.github.io/tangle/?t=W1tbNyw1LDcsMTEsNV1dLFtbWzAsW11dLFsxLFsxLDAsMSwwXV0sWzAsW11dLFswLFtdXSxbMSxbMSwwLDEsMF1dXSxbWzAsW11dLFsxLFsxLDEsMSwwXV0sWzEsWzAsMSwwLDFdXSxbMSxbMCwxLDAsMV1dLFsxLFsxLDAsMSwxXV1dLFtbMCxbXV0sWzEsWzEsMCwxLDBdXSxbMixbMl1dLFsyLFszXV0sWzEsWzEsMCwxLDBdXV0sW1sxLFswLDEsMSwwXV0sWzEsWzEsMSwwLDFdXSxbMSxbMSwwLDAsMV1dLFsxLFsxLDAsMSwwXV0sWzEsWzEsMCwxLDBdXV0sW1sxLFsxLDAsMSwwXV0sWzAsW11dLFswLFtdXSxbMSxbMSwwLDEsMF1dLFsxLFsxLDAsMSwwXV1dXSxbWzAsMywxLjUsWyJcXG9vcF9UIiwzXV0sWzEsMi41LDMsWzBdXSxbMSwzLDIuNSxbMF1dLFsxLDMuNSwzLFsxXV0sWzEsMy41LDQsWzFdXSxbMSwxLjUsMyxbMF1dLFsxLDEuNSwyLFswXV0sWzEsNC41LDIsWzFdXSxbMSw0LjUsMyxbMV1dLFsxLDQuNSw0LFsxXV0sWzEsNC41LDEsWzFdXSxbMSwxLjUsMSxbMF1dLFsxLDAuNSw0LFswXV0sWzAsMS41LDMuNSxbIlxcZXRhIiwyXV1dLFtbMSwwLDMsImoiXSxbNCwwLDMsInQiXSxbMCw0LDEsImoiXSxbMyw0LDEsInZfVCJdLFs0LDQsMSwia19UIl1dXQ==&c=F5A3A3,F5CCA3,F5F5A3,CCF5A3,A3F5A3,A3F5CC,A3F5F5,A3CCF5,A3A3F5,CCA3F5,F5A3F5,F5A3CC
	\begin{tangle}{(5,5)}[trim y]
		\tgBlank{(0,0)}{\tgColour6}
		\tgBorderA{(1,0)}{\tgColour6}{\tgColour4}{\tgColour4}{\tgColour6}
		\tgBlank{(2,0)}{\tgColour4}
		\tgBlank{(3,0)}{\tgColour4}
		\tgBorderA{(4,0)}{\tgColour4}{\tgColour6}{\tgColour6}{\tgColour4}
		\tgBlank{(0,1)}{\tgColour6}
		\tgBorderA{(1,1)}{\tgColour6}{\tgColour4}{\tgColour10}{\tgColour6}
		\tgBorderA{(2,1)}{\tgColour4}{\tgColour4}{\tgColour10}{\tgColour10}
		\tgBorderA{(3,1)}{\tgColour4}{\tgColour4}{\tgColour10}{\tgColour10}
		\tgBorderA{(4,1)}{\tgColour4}{\tgColour6}{\tgColour6}{\tgColour10}
		\tgBlank{(0,2)}{\tgColour6}
		\tgBorderA{(1,2)}{\tgColour6}{\tgColour10}{\tgColour10}{\tgColour6}
		\tgBorderC{(2,2)}{3}{\tgColour10}{\tgColour4}
		\tgBorderC{(3,2)}{2}{\tgColour10}{\tgColour4}
		\tgBorderA{(4,2)}{\tgColour10}{\tgColour6}{\tgColour6}{\tgColour10}
		\tgBorderA{(0,3)}{\tgColour6}{\tgColour6}{\tgColour4}{\tgColour6}
		\tgBorderA{(1,3)}{\tgColour6}{\tgColour10}{\tgColour4}{\tgColour4}
		\tgBorderA{(2,3)}{\tgColour10}{\tgColour4}{\tgColour4}{\tgColour4}
		\tgBorderA{(3,3)}{\tgColour4}{\tgColour10}{\tgColour10}{\tgColour4}
		\tgBorderA{(4,3)}{\tgColour10}{\tgColour6}{\tgColour6}{\tgColour10}
		\tgBorderA{(0,4)}{\tgColour6}{\tgColour4}{\tgColour4}{\tgColour6}
		\tgBlank{(1,4)}{\tgColour4}
		\tgBlank{(2,4)}{\tgColour4}
		\tgBorderA{(3,4)}{\tgColour4}{\tgColour10}{\tgColour10}{\tgColour4}
		\tgBorderA{(4,4)}{\tgColour10}{\tgColour6}{\tgColour6}{\tgColour10}
		\tgCell[(3,0)]{(2.5,1)}{\oop_T}
		\tgArrow{(2,2.5)}{1}
		\tgArrow{(2.5,2)}{0}
		\tgArrow{(3,2.5)}{3}
		\tgArrow{(3,3.5)}{3}
		\tgArrow{(1,2.5)}{1}
		\tgArrow{(1,1.5)}{1}
		\tgArrow{(4,1.5)}{3}
		\tgArrow{(4,2.5)}{3}
		\tgArrow{(4,3.5)}{3}
		\tgArrow{(4,0.5)}{3}
		\tgArrow{(1,0.5)}{1}
		\tgArrow{(0,3.5)}{1}
		\tgCell[(2,0)]{(1,3)}{\eta}
		\tgAxisLabel{(1.5,0.75)}{south}{j}
		\tgAxisLabel{(4.5,0.75)}{south}{t}
		\tgAxisLabel{(0.5,4.25)}{north}{j}
		\tgAxisLabel{(3.5,4.25)}{north}{v_T}
		\tgAxisLabel{(4.5,4.25)}{north}{k_T}
	\end{tangle}
    \=
    % https://varkor.github.io/tangle/?t=W1tbNyw1LDcsNV1dLFtbWzEsWzEsMCwxLDBdXSxbMSxbMSwwLDEsMF1dXSxbWzEsWzEsMCwxLDBdXSxbMSxbMSwwLDEsMF1dXSxbWzEsWzEsMSwxLDBdXSxbMSxbMSwwLDEsMV1dXSxbWzEsWzEsMCwxLDBdXSxbMSxbMSwwLDEsMF1dXSxbWzEsWzEsMCwxLDBdXSxbMSxbMSwwLDEsMF1dXV0sW1swLDEsMi41LFsiXFxkYWciLDFdXSxbMCwwLjUsMy41LFsiXFxldGEiXV0sWzEsMC41LDMsWzBdXSxbMSwwLjUsNCxbMF1dLFsxLDAuNSwyLFswXV0sWzEsMS41LDIsWzFdXSxbMSwxLjUsMyxbMV1dLFsxLDEuNSw0LFsxXV0sWzEsMS41LDEsWzFdXSxbMSwwLjUsMSxbMF1dXSxbWzAsMCwzLCJqIl0sWzEsMCwzLCJ0Il0sWzAsNCwxLCJqIl0sWzEsNCwxLCJ0Il1dXQ==&c=F5A3A3,F5CCA3,F5F5A3,CCF5A3,A3F5A3,A3F5CC,A3F5F5,A3CCF5,A3A3F5,CCA3F5,F5A3F5,F5A3CC
	\begin{tangle}{(2,5)}[trim y]
		\tgBorderA{(0,0)}{\tgColour6}{\tgColour4}{\tgColour4}{\tgColour6}
		\tgBorderA{(1,0)}{\tgColour4}{\tgColour6}{\tgColour6}{\tgColour4}
		\tgBorderA{(0,1)}{\tgColour6}{\tgColour4}{\tgColour4}{\tgColour6}
		\tgBorderA{(1,1)}{\tgColour4}{\tgColour6}{\tgColour6}{\tgColour4}
		\tgBorderA{(0,2)}{\tgColour6}{\tgColour4}{\tgColour4}{\tgColour6}
		\tgBorder{(0,2)}{0}{1}{0}{0}
		\tgBorderA{(1,2)}{\tgColour4}{\tgColour6}{\tgColour6}{\tgColour4}
		\tgBorder{(1,2)}{0}{0}{0}{1}
		\tgBorderA{(0,3)}{\tgColour6}{\tgColour4}{\tgColour4}{\tgColour6}
		\tgBorderA{(1,3)}{\tgColour4}{\tgColour6}{\tgColour6}{\tgColour4}
		\tgBorderA{(0,4)}{\tgColour6}{\tgColour4}{\tgColour4}{\tgColour6}
		\tgBorderA{(1,4)}{\tgColour4}{\tgColour6}{\tgColour6}{\tgColour4}
		\tgCell[(1,0)]{(0.5,2)}{\dag}
		\tgCell{(0,3)}{\eta}
		\tgArrow{(0,2.5)}{1}
		\tgArrow{(0,3.5)}{1}
		\tgArrow{(0,1.5)}{1}
		\tgArrow{(1,1.5)}{3}
		\tgArrow{(1,2.5)}{3}
		\tgArrow{(1,3.5)}{3}
		\tgArrow{(1,0.5)}{3}
		\tgArrow{(0,0.5)}{1}
		\tgAxisLabel{(0.5,0.75)}{south}{j}
		\tgAxisLabel{(1.5,0.75)}{south}{t}
		\tgAxisLabel{(0.5,4.25)}{north}{j}
		\tgAxisLabel{(1.5,4.25)}{north}{t}
	\end{tangle}
	\=
	% https://varkor.github.io/tangle/?t=W1tbNyw1LDddXSxbW1sxLFsxLDAsMSwwXV0sWzEsWzEsMCwxLDBdXV0sW1sxLFsxLDAsMSwwXV0sWzEsWzEsMCwxLDBdXV0sW1sxLFsxLDAsMSwwXV0sWzEsWzEsMCwxLDBdXV0sW1sxLFsxLDAsMSwwXV0sWzEsWzEsMCwxLDBdXV0sW1sxLFsxLDAsMSwwXV0sWzEsWzEsMCwxLDBdXV1dLFtbMSwwLjUsMyxbMF1dLFsxLDEuNSwzLFsxXV0sWzEsMS41LDIsWzFdXSxbMSwxLjUsNCxbMV1dLFsxLDAuNSwyLFswXV0sWzEsMC41LDQsWzBdXSxbMSwwLjUsMSxbMF1dLFsxLDEuNSwxLFsxXV1dLFtbMCwwLDMsImoiXSxbMSwwLDMsInQiXSxbMCw0LDEsImoiXSxbMSw0LDEsInQiXV1d&c=F5A3A3,F5CCA3,F5F5A3,CCF5A3,A3F5A3,A3F5CC,A3F5F5,A3CCF5,A3A3F5,CCA3F5,F5A3F5,F5A3CC
	\begin{tangle}{(2,5)}[trim y]
		\tgBorderA{(0,0)}{\tgColour6}{\tgColour4}{\tgColour4}{\tgColour6}
		\tgBorderA{(1,0)}{\tgColour4}{\tgColour6}{\tgColour6}{\tgColour4}
		\tgBorderA{(0,1)}{\tgColour6}{\tgColour4}{\tgColour4}{\tgColour6}
		\tgBorderA{(1,1)}{\tgColour4}{\tgColour6}{\tgColour6}{\tgColour4}
		\tgBorderA{(0,2)}{\tgColour6}{\tgColour4}{\tgColour4}{\tgColour6}
		\tgBorderA{(1,2)}{\tgColour4}{\tgColour6}{\tgColour6}{\tgColour4}
		\tgBorderA{(0,3)}{\tgColour6}{\tgColour4}{\tgColour4}{\tgColour6}
		\tgBorderA{(1,3)}{\tgColour4}{\tgColour6}{\tgColour6}{\tgColour4}
		\tgBorderA{(0,4)}{\tgColour6}{\tgColour4}{\tgColour4}{\tgColour6}
		\tgBorderA{(1,4)}{\tgColour4}{\tgColour6}{\tgColour6}{\tgColour4}
		\tgArrow{(0,2.5)}{1}
		\tgArrow{(1,2.5)}{3}
		\tgArrow{(1,1.5)}{3}
		\tgArrow{(1,3.5)}{3}
		\tgArrow{(0,1.5)}{1}
		\tgArrow{(0,3.5)}{1}
		\tgArrow{(0,0.5)}{1}
		\tgArrow{(1,0.5)}{3}
		\tgAxisLabel{(0.5,0.75)}{south}{j}
		\tgAxisLabel{(1.5,0.75)}{south}{t}
		\tgAxisLabel{(0.5,4.25)}{north}{j}
		\tgAxisLabel{(1.5,4.25)}{north}{t}
	\end{tangle}
	\end{tangleeqs}
	by: \eqstepref{4.1} using the definition of $[]_{\oop_T}$; \eqstepref{4.2} bending $v_T$; \eqstepref{4.3} that $\oop_T \d v_T = \dag$; and \eqstepref{4.4} the left unit law for $T$. Second, we have
	\begin{tangleeqs}
	% https://varkor.github.io/tangle/?t=W1tbNywxMSw1XV0sW1tbMSxbMSwwLDEsMF1dLFswLFtdXV0sW1sxLFsxLDAsMSwwXV0sWzAsW11dXSxbWzEsWzEsMSwxLDBdXSxbMSxbMCwwLDEsMV1dXSxbWzEsWzEsMSwxLDBdXSxbMSxbMSwwLDAsMV1dXSxbWzEsWzEsMCwxLDBdXSxbMCxbXV1dLFtbMSxbMSwwLDEsMF1dLFswLFtdXV1dLFtbMCwxLDIuNSxbIlxcZXRhIiwxXV0sWzAsMSwzLjUsWyJbXV97XFxvb3BfVH0iLDFdXSxbMSwxLjUsMyxbMV1dLFsxLDAuNSwzLFswXV0sWzEsMC41LDQsWzBdXSxbMSwwLjUsMixbMF1dLFsxLDAuNSwxLFswXV0sWzEsMC41LDUsWzBdXV0sW1swLDAsMywia19UIl0sWzAsNSwxLCJrX1QiXV1d&c=F5A3A3,F5CCA3,F5F5A3,CCF5A3,A3F5A3,A3F5CC,A3F5F5,A3CCF5,A3A3F5,CCA3F5,F5A3F5,F5A3CC
	\begin{tangle}{(2,6)}[trim y]
		\tgBorderA{(0,0)}{\tgColour6}{\tgColour10}{\tgColour10}{\tgColour6}
		\tgBlank{(1,0)}{\tgColour10}
		\tgBorderA{(0,1)}{\tgColour6}{\tgColour10}{\tgColour10}{\tgColour6}
		\tgBlank{(1,1)}{\tgColour10}
		\tgBorderA{(0,2)}{\tgColour6}{\tgColour10}{\tgColour4}{\tgColour6}
		\tgBorderA{(1,2)}{\tgColour10}{\tgColour10}{\tgColour10}{\tgColour4}
		\tgBorderA{(0,3)}{\tgColour6}{\tgColour4}{\tgColour10}{\tgColour6}
		\tgBorderA{(1,3)}{\tgColour4}{\tgColour10}{\tgColour10}{\tgColour10}
		\tgBorderA{(0,4)}{\tgColour6}{\tgColour10}{\tgColour10}{\tgColour6}
		\tgBlank{(1,4)}{\tgColour10}
		\tgBorderA{(0,5)}{\tgColour6}{\tgColour10}{\tgColour10}{\tgColour6}
		\tgBlank{(1,5)}{\tgColour10}
		\tgCell[(1,0)]{(0.5,2)}{\eta}
		\tgCell[(1,0)]{(0.5,3)}{[]_{\oop_T}}
		\tgArrow{(1,2.5)}{3}
		\tgArrow{(0,2.5)}{1}
		\tgArrow{(0,3.5)}{1}
		\tgArrow{(0,1.5)}{1}
		\tgArrow{(0,0.5)}{1}
		\tgArrow{(0,4.5)}{1}
		\tgAxisLabel{(0.5,0.75)}{south}{k_T}
		\tgAxisLabel{(0.5,5.25)}{north}{k_T}
	\end{tangle}
	\=
    % https://varkor.github.io/tangle/?t=W1tbNywxMSw1XV0sW1tbMCxbXV0sWzAsW11dLFswLFtdXSxbMSxbMSwwLDEsMF1dXSxbWzIsWzJdXSxbMSxbMCwxLDAsMV1dLFsyLFszXV0sWzEsWzEsMCwxLDBdXV0sW1sxLFsxLDEsMSwwXV0sWzEsWzAsMCwxLDFdXSxbMSxbMSwwLDEsMF1dLFsxLFsxLDAsMSwwXV1dLFtbMSxbMSwxLDEsMF1dLFsxLFsxLDAsMCwxXV0sWzEsWzEsMCwxLDBdXSxbMSxbMSwwLDEsMF1dXSxbWzEsWzEsMCwxLDBdXSxbMCxbXV0sWzIsWzFdXSxbMixbMF1dXSxbWzEsWzEsMCwxLDBdXSxbMCxbXV0sWzAsW11dLFswLFtdXV1dLFtbMCwxLDIuNSxbIlxcZXRhIiwxXV0sWzAsMSwzLjUsWyJbXV97XFxvb3BfVH0iLDFdXSxbMSwxLjUsMyxbMV1dLFsxLDAuNSwzLFswXV0sWzEsMC41LDQsWzBdXSxbMSwwLjUsMixbMF1dLFsxLDEsMS41LFswXV0sWzEsMiwxLjUsWzBdXSxbMSwyLjUsMixbMV1dLFsxLDIuNSwzLFsxXV0sWzEsMi41LDQsWzFdXSxbMSwzLDQuNSxbMF1dLFsxLDMuNSw0LFswXV0sWzEsMy41LDMsWzBdXSxbMSwzLjUsMixbMF1dLFsxLDMuNSwxLFswXV0sWzEsMC41LDUsWzBdXV0sW1szLDAsMywia19UIl0sWzAsNSwxLCJrX1QiXV1d&c=F5A3A3,F5CCA3,F5F5A3,CCF5A3,A3F5A3,A3F5CC,A3F5F5,A3CCF5,A3A3F5,CCA3F5,F5A3F5,F5A3CC
	\begin{tangle}{(4,6)}[trim y]
		\tgBlank{(0,0)}{\tgColour6}
		\tgBlank{(1,0)}{\tgColour6}
		\tgBlank{(2,0)}{\tgColour6}
		\tgBorderA{(3,0)}{\tgColour6}{\tgColour10}{\tgColour10}{\tgColour6}
		\tgBorderC{(0,1)}{3}{\tgColour6}{\tgColour10}
		\tgBorderA{(1,1)}{\tgColour6}{\tgColour6}{\tgColour10}{\tgColour10}
		\tgBorderC{(2,1)}{2}{\tgColour6}{\tgColour10}
		\tgBorderA{(3,1)}{\tgColour6}{\tgColour10}{\tgColour10}{\tgColour6}
		\tgBorderA{(0,2)}{\tgColour6}{\tgColour10}{\tgColour4}{\tgColour6}
		\tgBorderA{(1,2)}{\tgColour10}{\tgColour10}{\tgColour10}{\tgColour4}
		\tgBorderA{(2,2)}{\tgColour10}{\tgColour6}{\tgColour6}{\tgColour10}
		\tgBorderA{(3,2)}{\tgColour6}{\tgColour10}{\tgColour10}{\tgColour6}
		\tgBorderA{(0,3)}{\tgColour6}{\tgColour4}{\tgColour10}{\tgColour6}
		\tgBorderA{(1,3)}{\tgColour4}{\tgColour10}{\tgColour10}{\tgColour10}
		\tgBorderA{(2,3)}{\tgColour10}{\tgColour6}{\tgColour6}{\tgColour10}
		\tgBorderA{(3,3)}{\tgColour6}{\tgColour10}{\tgColour10}{\tgColour6}
		\tgBorderA{(0,4)}{\tgColour6}{\tgColour10}{\tgColour10}{\tgColour6}
		\tgBlank{(1,4)}{\tgColour10}
		\tgBorderC{(2,4)}{0}{\tgColour10}{\tgColour6}
		\tgBorderC{(3,4)}{1}{\tgColour10}{\tgColour6}
		\tgBorderA{(0,5)}{\tgColour6}{\tgColour10}{\tgColour10}{\tgColour6}
		\tgBlank{(1,5)}{\tgColour10}
		\tgBlank{(2,5)}{\tgColour10}
		\tgBlank{(3,5)}{\tgColour10}
		\tgCell[(1,0)]{(0.5,2)}{\eta}
		\tgCell[(1,0)]{(0.5,3)}{[]_{\oop_T}}
		\tgArrow{(1,2.5)}{3}
		\tgArrow{(0,2.5)}{1}
		\tgArrow{(0,3.5)}{1}
		\tgArrow{(0,1.5)}{1}
		\tgArrow{(0.5,1)}{0}
		\tgArrow{(1.5,1)}{0}
		\tgArrow{(2,1.5)}{3}
		\tgArrow{(2,2.5)}{3}
		\tgArrow{(2,3.5)}{3}
		\tgArrow{(2.5,4)}{0}
		\tgArrow{(3,3.5)}{1}
		\tgArrow{(3,2.5)}{1}
		\tgArrow{(3,1.5)}{1}
		\tgArrow{(3,0.5)}{1}
		\tgArrow{(0,4.5)}{1}
		\tgAxisLabel{(3.5,0.75)}{south}{k_T}
		\tgAxisLabel{(0.5,5.25)}{north}{k_T}
	\end{tangle}
    \=
	% https://varkor.github.io/tangle/?t=W1tbNywxMSw1XV0sW1tbMCxbXV0sWzAsW11dLFsxLFsxLDAsMSwwXV1dLFtbMCxbXV0sWzAsW11dLFsxLFsxLDAsMSwwXV1dLFtbMSxbMCwxLDEsMF1dLFsxLFswLDAsMSwxXV0sWzEsWzEsMCwxLDBdXV0sW1sxLFsxLDEsMSwwXV0sWzEsWzEsMCwxLDFdXSxbMSxbMSwwLDEsMF1dXSxbWzEsWzEsMCwxLDBdXSxbMixbMV1dLFsyLFswXV1dLFtbMSxbMSwwLDEsMF1dLFswLFtdXSxbMCxbXV1dXSxbWzAsMSwyLjUsWyJcXGV0YSIsMV1dLFswLDEsMy41LFsiXFxvb3BfVCIsMV1dLFsxLDEuNSwzLFsxXV0sWzEsMC41LDMsWzBdXSxbMSwwLjUsNCxbMF1dLFsxLDIuNSwzLFswXV0sWzEsMi41LDIsWzBdXSxbMSwyLjUsMSxbMF1dLFsxLDAuNSw1LFswXV0sWzEsMS41LDQsWzFdXSxbMSwyLDQuNSxbMF1dLFsxLDIuNSw0LFswXV1dLFtbMiwwLDMsImtfVCJdLFswLDUsMSwia19UIl1dXQ==&c=F5A3A3,F5CCA3,F5F5A3,CCF5A3,A3F5A3,A3F5CC,A3F5F5,A3CCF5,A3A3F5,CCA3F5,F5A3F5,F5A3CC
	\begin{tangle}{(3,6)}[trim y]
		\tgBlank{(0,0)}{\tgColour6}
		\tgBlank{(1,0)}{\tgColour6}
		\tgBorderA{(2,0)}{\tgColour6}{\tgColour10}{\tgColour10}{\tgColour6}
		\tgBlank{(0,1)}{\tgColour6}
		\tgBlank{(1,1)}{\tgColour6}
		\tgBorderA{(2,1)}{\tgColour6}{\tgColour10}{\tgColour10}{\tgColour6}
		\tgBorderA{(0,2)}{\tgColour6}{\tgColour6}{\tgColour4}{\tgColour6}
		\tgBorderA{(1,2)}{\tgColour6}{\tgColour6}{\tgColour6}{\tgColour4}
		\tgBorderA{(2,2)}{\tgColour6}{\tgColour10}{\tgColour10}{\tgColour6}
		\tgBorderA{(0,3)}{\tgColour6}{\tgColour4}{\tgColour10}{\tgColour6}
		\tgBorderA{(1,3)}{\tgColour4}{\tgColour6}{\tgColour6}{\tgColour10}
		\tgBorderA{(2,3)}{\tgColour6}{\tgColour10}{\tgColour10}{\tgColour6}
		\tgBorderA{(0,4)}{\tgColour6}{\tgColour10}{\tgColour10}{\tgColour6}
		\tgBorderC{(1,4)}{0}{\tgColour10}{\tgColour6}
		\tgBorderC{(2,4)}{1}{\tgColour10}{\tgColour6}
		\tgBorderA{(0,5)}{\tgColour6}{\tgColour10}{\tgColour10}{\tgColour6}
		\tgBlank{(1,5)}{\tgColour10}
		\tgBlank{(2,5)}{\tgColour10}
		\tgCell[(1,0)]{(0.5,2)}{\eta}
		\tgCell[(1,0)]{(0.5,3)}{\oop_T}
		\tgArrow{(1,2.5)}{3}
		\tgArrow{(0,2.5)}{1}
		\tgArrow{(0,3.5)}{1}
		\tgArrow{(2,2.5)}{1}
		\tgArrow{(2,1.5)}{1}
		\tgArrow{(2,0.5)}{1}
		\tgArrow{(0,4.5)}{1}
		\tgArrow{(1,3.5)}{3}
		\tgArrow{(1.5,4)}{0}
		\tgArrow{(2,3.5)}{1}
		\tgAxisLabel{(2.5,0.75)}{south}{k_T}
		\tgAxisLabel{(0.5,5.25)}{north}{k_T}
	\end{tangle}
    \=
    % https://varkor.github.io/tangle/?t=W1tbNywxMV1dLFtbWzEsWzEsMCwxLDBdXV0sW1sxLFsxLDAsMSwwXV1dLFtbMSxbMSwwLDEsMF1dXSxbWzEsWzEsMCwxLDBdXV0sW1sxLFsxLDAsMSwwXV1dLFtbMSxbMSwwLDEsMF1dXV0sW1sxLDAuNSwzLFswXV0sWzEsMC41LDIsWzBdXSxbMSwwLjUsMSxbMF1dLFsxLDAuNSw0LFswXV0sWzEsMC41LDUsWzBdXV0sW1swLDAsMywia19UIl0sWzAsNSwxLCJrX1QiXV1d&c=F5A3A3,F5CCA3,F5F5A3,CCF5A3,A3F5A3,A3F5CC,A3F5F5,A3CCF5,A3A3F5,CCA3F5,F5A3F5,F5A3CC
	\begin{tangle}{(1,6)}[trim y]
		\tgBorderA{(0,0)}{\tgColour6}{\tgColour10}{\tgColour10}{\tgColour6}
		\tgBorderA{(0,1)}{\tgColour6}{\tgColour10}{\tgColour10}{\tgColour6}
		\tgBorderA{(0,2)}{\tgColour6}{\tgColour10}{\tgColour10}{\tgColour6}
		\tgBorderA{(0,3)}{\tgColour6}{\tgColour10}{\tgColour10}{\tgColour6}
		\tgBorderA{(0,4)}{\tgColour6}{\tgColour10}{\tgColour10}{\tgColour6}
		\tgBorderA{(0,5)}{\tgColour6}{\tgColour10}{\tgColour10}{\tgColour6}
		\tgArrow{(0,2.5)}{1}
		\tgArrow{(0,1.5)}{1}
		\tgArrow{(0,0.5)}{1}
		\tgArrow{(0,3.5)}{1}
		\tgArrow{(0,4.5)}{1}
		\tgAxisLabel{(0.5,0.75)}{south}{k_T}
		\tgAxisLabel{(0.5,5.25)}{north}{k_T}
	\end{tangle}
	\end{tangleeqs}
	by: \eqstepref{5.1} bending $k_T$; \eqstepref{5.2} the definition of $[]_{\oop_T}$; and \eqstepref{5.3} the unit law for the opalgebra $(k_T, \oop_T)$.
	Hence the zig-zag laws are satisfied, and so $k_T \jadj v_T$. The induced operator is given by
	\begin{tangleeqs}
	% https://varkor.github.io/tangle/?t=W1tbNyw1LDExLDcsNV1dLFtbWzEsWzEsMCwxLDBdXSxbMSxbMSwwLDEsMF1dLFswLFtdXSxbMCxbXV0sWzEsWzEsMCwxLDBdXV0sW1sxLFsxLDAsMSwwXV0sWzEsWzEsMCwxLDBdXSxbMCxbXV0sWzAsW11dLFsxLFsxLDAsMSwwXV1dLFtbMSxbMSwxLDEsMF1dLFsxLFsxLDAsMCwxXV0sWzIsWzJdXSxbMixbM11dLFsxLFsxLDAsMSwwXV1dLFtbMSxbMSwwLDEsMF1dLFswLFtdXSxbMSxbMSwwLDEsMF1dLFsxLFsxLDAsMSwwXV0sWzEsWzEsMCwxLDBdXV1dLFtbMCwxLDIuNSxbIltdX3tcXG9vcF9UfSIsMV1dLFsxLDEuNSwyLFsxXV0sWzEsMy41LDMsWzFdXSxbMSw0LjUsMyxbMV1dLFsxLDQuNSwyLFsxXV0sWzEsMywyLjUsWzBdXSxbMSwyLjUsMyxbMF1dLFsxLDAuNSwyLFswXV0sWzEsMC41LDMsWzBdXSxbMSwxLjUsMSxbMV1dLFsxLDQuNSwxLFsxXV0sWzEsMC41LDEsWzBdXV0sW1swLDAsMywiaiJdLFsxLDAsMywidl9UIl0sWzQsMCwzLCJrX1QiXSxbMCwzLDEsImtfVCJdLFsyLDMsMSwidl9UIl0sWzMsMywxLCJ2X1QiXSxbNCwzLDEsImtfVCJdXV0=&c=F5A3A3,F5CCA3,F5F5A3,CCF5A3,A3F5A3,A3F5CC,A3F5F5,A3CCF5,A3A3F5,CCA3F5,F5A3F5,F5A3CC
	\begin{tangle}{(5,4)}[trim y]
		\tgBorderA{(0,0)}{\tgColour6}{\tgColour4}{\tgColour4}{\tgColour6}
		\tgBorderA{(1,0)}{\tgColour4}{\tgColour10}{\tgColour10}{\tgColour4}
		\tgBlank{(2,0)}{\tgColour10}
		\tgBlank{(3,0)}{\tgColour10}
		\tgBorderA{(4,0)}{\tgColour10}{\tgColour6}{\tgColour6}{\tgColour10}
		\tgBorderA{(0,1)}{\tgColour6}{\tgColour4}{\tgColour4}{\tgColour6}
		\tgBorderA{(1,1)}{\tgColour4}{\tgColour10}{\tgColour10}{\tgColour4}
		\tgBlank{(2,1)}{\tgColour10}
		\tgBlank{(3,1)}{\tgColour10}
		\tgBorderA{(4,1)}{\tgColour10}{\tgColour6}{\tgColour6}{\tgColour10}
		\tgBorderA{(0,2)}{\tgColour6}{\tgColour4}{\tgColour10}{\tgColour6}
		\tgBorderA{(1,2)}{\tgColour4}{\tgColour10}{\tgColour10}{\tgColour10}
		\tgBorderC{(2,2)}{3}{\tgColour10}{\tgColour4}
		\tgBorderC{(3,2)}{2}{\tgColour10}{\tgColour4}
		\tgBorderA{(4,2)}{\tgColour10}{\tgColour6}{\tgColour6}{\tgColour10}
		\tgBorderA{(0,3)}{\tgColour6}{\tgColour10}{\tgColour10}{\tgColour6}
		\tgBlank{(1,3)}{\tgColour10}
		\tgBorderA{(2,3)}{\tgColour10}{\tgColour4}{\tgColour4}{\tgColour10}
		\tgBorderA{(3,3)}{\tgColour4}{\tgColour10}{\tgColour10}{\tgColour4}
		\tgBorderA{(4,3)}{\tgColour10}{\tgColour6}{\tgColour6}{\tgColour10}
		\tgCell[(1,0)]{(0.5,2)}{[]_{\oop_T}}
		\tgArrow{(1,1.5)}{3}
		\tgArrow{(3,2.5)}{3}
		\tgArrow{(4,2.5)}{3}
		\tgArrow{(4,1.5)}{3}
		\tgArrow{(2.5,2)}{0}
		\tgArrow{(2,2.5)}{1}
		\tgArrow{(0,1.5)}{1}
		\tgArrow{(0,2.5)}{1}
		\tgArrow{(1,0.5)}{3}
		\tgArrow{(4,0.5)}{3}
		\tgArrow{(0,0.5)}{1}
		\tgAxisLabel{(0.5,0.75)}{south}{j}
		\tgAxisLabel{(1.5,0.75)}{south}{v_T}
		\tgAxisLabel{(4.5,0.75)}{south}{k_T}
		\tgAxisLabel{(0.5,3.25)}{north}{k_T}
		\tgAxisLabel{(2.5,3.25)}{north}{v_T}
		\tgAxisLabel{(3.5,3.25)}{north}{v_T}
		\tgAxisLabel{(4.5,3.25)}{north}{k_T}
	\end{tangle}
	\=
	% https://varkor.github.io/tangle/?t=W1tbNyw1LDcsMTEsNV1dLFtbWzEsWzEsMCwxLDBdXSxbMCxbXV0sWzAsW11dLFsxLFsxLDAsMSwwXV1dLFtbMSxbMSwxLDEsMF1dLFsxLFswLDEsMCwxXV0sWzEsWzAsMSwwLDFdXSxbMSxbMSwwLDEsMV1dXSxbWzEsWzEsMCwxLDBdXSxbMixbMl1dLFsyLFszXV0sWzEsWzEsMCwxLDBdXV0sW1sxLFsxLDAsMSwwXV0sWzEsWzEsMCwxLDBdXSxbMSxbMSwwLDEsMF1dLFsxLFsxLDAsMSwwXV1dXSxbWzAsMiwxLjUsWyJcXG9vcF9UIiwzXV0sWzEsMiwyLjUsWzBdXSxbMSwyLjUsMyxbMV1dLFsxLDEuNSwzLFswXV0sWzEsMC41LDIsWzBdXSxbMSwwLjUsMyxbMF1dLFsxLDAuNSwxLFswXV0sWzEsMy41LDEsWzFdXSxbMSwzLjUsMixbMV1dLFsxLDMuNSwzLFsxXV1dLFtbMCwwLDMsImoiXSxbMywwLDMsInQiXSxbMCwzLDEsImtfVCJdLFsxLDMsMSwidl9UIl0sWzIsMywxLCJ2X1QiXSxbMywzLDEsImtfVCJdXV0=&c=F5A3A3,F5CCA3,F5F5A3,CCF5A3,A3F5A3,A3F5CC,A3F5F5,A3CCF5,A3A3F5,CCA3F5,F5A3F5,F5A3CC
	\begin{tangle}{(4,4)}[trim y]
		\tgBorderA{(0,0)}{\tgColour6}{\tgColour4}{\tgColour4}{\tgColour6}
		\tgBlank{(1,0)}{\tgColour4}
		\tgBlank{(2,0)}{\tgColour4}
		\tgBorderA{(3,0)}{\tgColour4}{\tgColour6}{\tgColour6}{\tgColour4}
		\tgBorderA{(0,1)}{\tgColour6}{\tgColour4}{\tgColour10}{\tgColour6}
		\tgBorderA{(1,1)}{\tgColour4}{\tgColour4}{\tgColour10}{\tgColour10}
		\tgBorderA{(2,1)}{\tgColour4}{\tgColour4}{\tgColour10}{\tgColour10}
		\tgBorderA{(3,1)}{\tgColour4}{\tgColour6}{\tgColour6}{\tgColour10}
		\tgBorderA{(0,2)}{\tgColour6}{\tgColour10}{\tgColour10}{\tgColour6}
		\tgBorderC{(1,2)}{3}{\tgColour10}{\tgColour4}
		\tgBorderC{(2,2)}{2}{\tgColour10}{\tgColour4}
		\tgBorderA{(3,2)}{\tgColour10}{\tgColour6}{\tgColour6}{\tgColour10}
		\tgBorderA{(0,3)}{\tgColour6}{\tgColour10}{\tgColour10}{\tgColour6}
		\tgBorderA{(1,3)}{\tgColour10}{\tgColour4}{\tgColour4}{\tgColour10}
		\tgBorderA{(2,3)}{\tgColour4}{\tgColour10}{\tgColour10}{\tgColour4}
		\tgBorderA{(3,3)}{\tgColour10}{\tgColour6}{\tgColour6}{\tgColour10}
		\tgCell[(3,0)]{(1.5,1)}{\oop_T}
		\tgArrow{(1.5,2)}{0}
		\tgArrow{(2,2.5)}{3}
		\tgArrow{(1,2.5)}{1}
		\tgArrow{(0,1.5)}{1}
		\tgArrow{(0,2.5)}{1}
		\tgArrow{(0,0.5)}{1}
		\tgArrow{(3,0.5)}{3}
		\tgArrow{(3,1.5)}{3}
		\tgArrow{(3,2.5)}{3}
		\tgAxisLabel{(0.5,0.75)}{south}{j}
		\tgAxisLabel{(3.5,0.75)}{south}{t}
		\tgAxisLabel{(0.5,3.25)}{north}{k_T}
		\tgAxisLabel{(1.5,3.25)}{north}{v_T}
		\tgAxisLabel{(2.5,3.25)}{north}{v_T}
		\tgAxisLabel{(3.5,3.25)}{north}{k_T}
	\end{tangle}
	\=
	% https://varkor.github.io/tangle/?t=W1tbNyw1LDcsNV1dLFtbWzEsWzEsMCwxLDBdXSxbMSxbMSwwLDEsMF1dXSxbWzEsWzEsMSwxLDBdXSxbMSxbMSwwLDEsMV1dXSxbWzEsWzEsMCwxLDBdXSxbMSxbMSwwLDEsMF1dXV0sW1swLDEsMS41LFsiXFxkYWciLDFdXSxbMSwwLjUsMixbMF1dLFsxLDAuNSwxLFswXV0sWzEsMS41LDEsWzFdXSxbMSwxLjUsMixbMV1dXSxbWzAsMCwzLCJqIl0sWzEsMCwzLCJ0Il0sWzAsMiwxLCJqIl0sWzEsMiwxLCJ0Il1dXQ==&c=F5A3A3,F5CCA3,F5F5A3,CCF5A3,A3F5A3,A3F5CC,A3F5F5,A3CCF5,A3A3F5,CCA3F5,F5A3F5,F5A3CC
	\begin{tangle}{(2,3)}[trim y=.25]
		\tgBorderA{(0,0)}{\tgColour6}{\tgColour4}{\tgColour4}{\tgColour6}
		\tgBorderA{(1,0)}{\tgColour4}{\tgColour6}{\tgColour6}{\tgColour4}
		\tgBorderA{(0,1)}{\tgColour6}{\tgColour4}{\tgColour4}{\tgColour6}
		\tgBorder{(0,1)}{0}{1}{0}{0}
		\tgBorderA{(1,1)}{\tgColour4}{\tgColour6}{\tgColour6}{\tgColour4}
		\tgBorder{(1,1)}{0}{0}{0}{1}
		\tgBorderA{(0,2)}{\tgColour6}{\tgColour4}{\tgColour4}{\tgColour6}
		\tgBorderA{(1,2)}{\tgColour4}{\tgColour6}{\tgColour6}{\tgColour4}
		\tgCell[(1,0)]{(0.5,1)}{\dag}
		\tgArrow{(0,1.5)}{1}
		\tgArrow{(0,0.5)}{1}
		\tgArrow{(1,0.5)}{3}
		\tgArrow{(1,1.5)}{3}
		\tgAxisLabel{(0.5,0.25)}{south}{j}
		\tgAxisLabel{(1.5,0.25)}{south}{t}
		\tgAxisLabel{(0.5,2.75)}{north}{j}
		\tgAxisLabel{(1.5,2.75)}{north}{t}
	\end{tangle}
	\end{tangleeqs}
	using: \eqstepref{6.1} the definition of $[]_{\oop_T}$; and \eqstepref{6.2} that $\oop_T \d v_T = \dag$. Therefore $k_T \jadj v_T$ is a resolution of~$T$.
\end{proof}

\begin{corollary}
	\label{opalgebra-object-action-is-invertible}
	Let $T$ be a relative monad admitting an opalgebra object $(k_T, \oop_T)$. Then $\oop_T$ is necessarily invertible.
\end{corollary}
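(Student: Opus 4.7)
The plan is to exhibit $\oop_T$ as the canonical isomorphism between $E(j, t)$ and $\Opalg(T)(k_T, k_T)$ furnished by the resolution constructed in \cref{opalgebra-object-induces-resolution}.

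First, I would invoke \cref{opalgebra-object-induces-resolution} to obtain a resolution $k_T \jadj v_T$ of $T$, and then \cref{relative-adjunction-induces-algebra-and-opalgebra} to equip the left adjoint $k_T$ with a (second) $T$-opalgebra structure, which I shall denote $\iota \colon E(j, t) \tto \Opalg(T)(k_T, k_T)$. Unwinding the formula for $\iota$ given in \cref{relative-adjunction-induces-algebra-and-opalgebra}, $\iota$ factors through the transposition operator $\flat \colon E(j, v_T) \iso \Opalg(T)(k_T, 1)$ via the canonical opcartesian isomorphisms
\[ E(j, t) \iso E(j, v_T) \odot \Opalg(T)(1, k_T) \quad \text{and} \quad \Opalg(T)(k_T, 1) \odot \Opalg(T)(1, k_T) \iso \Opalg(T)(k_T, k_T) \]
arising from the decomposition of $t = k_T \d v_T$ into its companion and conjoint components; since $\flat$ is invertible by definition (\cref{relative-adjunction}), so is $\iota$.

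Second, I would verify $\oop_T = \iota$, from which the claim follows at once. For this, I would trace through the construction of $k_T \jadj v_T$ in the proof of \cref{opalgebra-object-induces-resolution}: its counit $[]_{\oop_T}$ is defined as the unique factorization, provided by the universal property of the opalgebra object, of the graded opalgebra morphism $\rho_T$ obtained from $\oop_T$ via the correspondence of \cref{opalgebras-are-right-actions}; the transposition $\flat$ is then obtained from $[]_{\oop_T}$ by bending $k_T$, as in \cref{reformulations-of-relative-adjunction}. Substituting these descriptions into the composite defining $\iota$ and simplifying via the zig-zag laws for the companion--conjoint pair of $k_T$, the resulting 2-cell reduces to the action-form $\rho_T$ of $\oop_T$; bending $k_T$ back yields $\oop_T$ itself.

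The hard part will be the bookkeeping in the second step, which amounts to checking an equality between elaborate string diagrams that differ only by a zig-zag of $k_T$ arising from the two successive bending operations used first to define $\flat$ from $[]_{\oop_T}$ and then to extract $\iota$ from $\flat$. Presented string-diagrammatically, this cancellation becomes transparent.
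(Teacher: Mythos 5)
Your proposal is correct and follows essentially the same route as the paper: invoke \cref{opalgebra-object-induces-resolution} to obtain the resolution $k_T \jadj v_T$, observe that it yields an invertible 2-cell $E(j, t) = E(j, v_T k_T) \iso \Opalg(T)(k_T, k_T)$, and identify this 2-cell with $\oop_T$ by unwinding the construction of the adjunction. Your second step merely spells out the verification that the paper compresses into ``by construction of the relative adjunction, this invertible 2-cell is precisely $\oop_T$''.
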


\begin{proof}
	From \cref{opalgebra-object-induces-resolution}, we have that $k_j \jadj v_T$, and hence that $\Opalg(T)(k_T, k_T) \iso E(j, v_T k_T) = E(j, t)$. By construction of the relative adjunction, this invertible 2-cell is precisely $\oop_T$.
\end{proof}

\begin{theorem}
	\label{opalgebra-objects-induce-j-opmonadic-resolutions}
	$\oslash_j \colon \RAdj_R(j) \to \RMnd(j)$ admits a partial
        left-adjoint section, defined on those $j$-monads admitting
        opalgebra objects.
	% https://q.uiver.app/?q=WzAsMixbMCwwLCJcXFJBZGpfcihqKSJdLFsxLDAsIlxcUk1uZChqKSJdLFswLDEsIlxcb3NsYXNoX2oiLDIseyJvZmZzZXQiOjJ9XSxbMSwwLCJ7a197XFxwaH19IFxcamFkaiB7dl97XFxwaH19IiwyLHsib2Zmc2V0IjoyLCJzdHlsZSI6eyJ0YWlsIjp7Im5hbWUiOiJob29rIiwic2lkZSI6ImJvdHRvbSJ9LCJib2R5Ijp7Im5hbWUiOiJkYXNoZWQifX19XSxbMywyLCIiLDIseyJsZXZlbCI6MSwic3R5bGUiOnsibmFtZSI6ImFkanVuY3Rpb24ifX1dXQ==
	\[\begin{tikzcd}[column sep=huge]
		{\RAdj_R(j)} & {\RMnd(j)}
		\arrow[""{name=0, anchor=center, inner sep=0}, "{\oslash_j}"', shift right=2, from=1-1, to=1-2]
		\arrow[""{name=1, anchor=center, inner sep=0}, "{{k_{\ph}} \jadj\;{v_{\ph}}}"', shift right=2, dashed, hook', from=1-2, to=1-1]
		\arrow["\dashv"{anchor=center, rotate=-90}, draw=none, from=1, to=0]
	\end{tikzcd}\]
    Moreover, a right-morphism is strict if and only if its transpose is the identity $j$-monad morphism.
\end{theorem}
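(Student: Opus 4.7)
The plan is to apply the dual of \cref{adjoint-section} (for left-adjoint sections), in close analogy with the proof of \cref{algebra-objects-induce-j-monadic-resolutions}. The partial object assignment $\RMnd(j) \to \RAdj_R(j)$ is $T \mapsto (k_T \jadj v_T)$, supplied by \cref{opalgebra-object-induces-resolution}. For each $j$-monad $T$ admitting an opalgebra object and each $j$-adjunction $\ljrp$ resolving a $j$-monad $T'$, I would exhibit a natural bijection
\[
\RAdj_R(j)(k_T \jadj v_T,\, \ljrp) \iso \RMnd(j)(T, T')
\]
whose forward direction is the action of $\oslash_j$, sending a right-morphism $(c, \rho)$ to the $j$-monad morphism $k_T \d \rho$.

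For the inverse, given a $j$-monad morphism $\tau \colon T \to T'$, I would proceed as follows. By \cref{relative-adjunction-induces-algebra-and-opalgebra}, $\ell'$ carries a canonical $T'$-opalgebra structure with opextension operator induced by $\flat'$; contravariant functoriality of $\ph\h\Opalg$ in the relative monad transports this along $\tau$ to a $T$-opalgebra structure on $\ell'$ whose opextension operator is $E(j, \tau)$ pasted with $\flat'$. Part~(1) of \cref{opalgebra-object} then produces a unique $c \colon \Opalg(T) \to C'$ with $k_T \d c = \ell'$ whose action on $\oop_T$ recovers this transported opextension operator. In parallel, $\tau \colon t \tto t'$ forms an ungraded $T$-opalgebra morphism from $(t, \dag)$ to the $T$-opalgebra on $t'$ obtained by transporting $(t', \dag')$ along $\tau$; here the compatibility condition is precisely the extension law expressing that $\tau$ is a $j$-monad morphism. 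Since $c \d r'$ is directly verified to be the mediating tight-cell for this target $T$-opalgebra (both $k_T \d (c \d r') = t'$ and the equation on the opextension operator hold by construction), part~(2) of \cref{opalgebra-object} applied to $\tau$ yields a unique 2-cell $\rho \colon v_T \tto c \d r'$ satisfying $k_T \d \rho = \tau$. The candidate right-morphism is $(c, \rho)$.

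The remaining checks are: (i) that $(c, \rho)$ satisfies the compatibility condition of \cref{right-morphism}, a string-diagrammatic verification that uses $k_T \d \rho = \tau$ together with the defining equation of $c$ to rewrite $\sharp \d E(j, \rho)$ as $(\Opalg(T)(k_T, 1), \pc c) \d (\sharp', C'(1, c))$; (ii) that the two assignments are mutually inverse, since starting from a right-morphism $(c, \rho)$ and forming $\tau = k_T \d \rho$, the defining equations of $c$ and $\rho$ may be read off the right-morphism compatibility, so uniqueness in \cref{opalgebra-object} returns the same $(c, \rho)$; and (iii) for strictness, if $\rho$ is the identity then $\tau = k_T \d 1_{v_T} = 1_t$, while conversely if $\tau = 1_t$ then $1_{v_T}$ satisfies the defining equation of $\rho$ and so equals it by uniqueness. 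The main obstacle I anticipate is~(i): the compatibility condition is stated in terms of $\sharp$ for the source relative adjunction $k_T \jadj v_T$, which is the composite 2-cell constructed from $\eta$ and $[]_{\oop_T}$ in the proof of \cref{opalgebra-object-induces-resolution}, so the check amounts to unpacking this construction and invoking the universal property twice. The invertibility of $\oop_T$ from \cref{opalgebra-object-action-is-invertible} should be useful here for rewriting intermediate expressions back into the form required by the universal property.
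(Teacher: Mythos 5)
Your proposal is correct and follows essentially the same route as the paper: the dual of the adjoint-section lemma, the object assignment supplied by the opalgebra-object resolution, the inverse to $\oslash_j$ obtained by transporting opalgebra structures along $\tau$ and invoking parts (1) and (2) of the opalgebra-object universal property, and the mutual-inverse and strictness checks via uniqueness of the mediating data. The one simplification worth noting is that the compatibility check you flag as the main obstacle reduces, after reexpressing the right-morphism condition in unit form (as in \cref{reformulations-of-relative-adjunction}), to the unit law $\eta \d \tau = \eta'$ for the $j$-monad morphism $\tau$, so no unpacking of $[]_{\oop_T}$ is needed.
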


\begin{proof}
	We shall use (the dual of) \cref{adjoint-section}, which permits us to elide details of functoriality and naturality.

	\Cref{algebra-object-induces-resolution} gives a partial assignment $\RMnd(j) \to \RAdj_R(j)$ on objects. Let $T$ and $T'$ be $j$-monads, and denote by $\ljrp$ a resolution of $T'$. Assume that $T$ admits an opalgebra object. We aim to define an inverse to the function
	\[(\oslash_j)_{k_T \jadj v_T, \ljrp} \colon \RAdj_R(j)(k_T \jadj v_T, \ljrp) \to \RMnd(j)(T, T')\]

	Recall that $\ell'$ and $t$ form $T$-opalgebras by \cref{relative-adjunction-induces-algebra-and-opalgebra} and \cref{relative-monad-forms-opalgebra} respectively, so that a $j$-monad morphism $\tau \colon T \to T'$ induces $T$-opalgebra structures on each by functoriality of $\ph\h\Opalg$.
	The universal property of $\Opalg(T)$ thus induces a unique tight-cell $[]_{\ljrp} \colon \Opalg(T) \to C$ such that $\ell = k_T \d []_{\ljrp}$ and $E(j, \tau) \d \flat'(1, \ell') = \oop_T \d []_{\ljrp}$. Furthermore, the 2-cell $\tau$ forms an ungraded $T$-opalgebra morphism from $(t, \dag)$ to the induced $T$-opalgebra structure on $t'$, the compatibility law following from the extension operator law for $\tau$, and hence induces a 2-cell $[]_\tau \colon v_T \tto []_{\ljrp} \d r'$ by the universal property of $\Opalg(T)$. The pair $([]_{\ljrp}, []_\tau)$ forms a right-morphism, the compatibility law following from the unit law for $\tau$. This assignment defines a function:
	\[[]_{\ph} \colon \RMnd(j)(T, T') \to \RAdj_R(j)(k_T \jadj v_T, \ljrp)\]

	To establish that these functions are inverse, let $(c, \rho)$ be a right-morphism from $k_T \jadj v_T$ to $\ljrp$, inducing the $j$-monad morphism $(k_T \d \rho)$. We have that $\ell' = k_T \d c$ and $E(j, k_T \d \rho) \d \flat'(1, \ell') = \oop_T \d c$ by definition of a right-morphism, so that $c = []_{\ljrp}$ by uniqueness of the universal property; that $[]_{k_T \d \rho} = \rho$ is trivial. Conversely, let $\tau$ be a $j$-monad morphism from $T$ to $T'$, inducing a right-morphism $([]_{\ljrp}, []_\tau)$. We have that $k_T \d []_{\ljrp} = \tau$ by definition. Thus $\oslash_j$ admits a partial left-adjoint section.

	Finally, let $(c, \rho)$ be a right-morphism from $k_T \jadj v_T$ to $\ljrp$. If $\rho$ is the identity, then the induced $j$-monad morphism is trivially also the identity. Conversely, suppose that the induced $j$-monad morphism is the identity. Then we have $k_T \d (c \d r') = (k_T \d c) \d r' = \ell' \d r' = t$ and $\oop_T \d (c \d r') = (\oop_T \d c) \d r' = \flat'(1, \ell') \d r' = \dag$, so that $c \d r' = v_T$ by uniqueness of the mediating tight-cell for $\Opalg(T)$. The universal property of $\Opalg(T)$ on opalgebra morphisms thus implies that $\rho$ is the identity, so that the right-morphism is necessarily strict.
\end{proof}

\begin{corollary}
	\label{relative-monads-form-full-subcategory-of-coslices}
	Let $\jAE$ be a dense tight-cell. The partial functor $k_{\ph} \colon \RMnd(j) \to A/\tX$, defined on those $j$-monads $T$ admitting opalgebra objects, is \ff{}.
\end{corollary}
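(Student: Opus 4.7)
The plan is to mirror the proof of Corollary \ref{relative-monads-form-full-subcategory-of-slices} by exhibiting $k_\ph$ as the composite of two \ff{} functors.

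First, I would argue that the partial left-adjoint section $k_\ph \jadj v_\ph \colon \RMnd(j) \to \RAdj_R(j)$ provided by Theorem \ref{opalgebra-objects-induce-j-opmonadic-resolutions} is itself \ff{}. Faithfulness is automatic because it is a section of $\oslash_j$, so distinct $j$-monad morphisms $\tau, \tau'$ yield distinct right-morphisms $[]_\tau, []_{\tau'}$ (they already disagree after applying $\oslash_j$, since $\oslash_j \circ k_\ph \jadj v_\ph = \tx{id}$). Fullness follows from the ``moreover'' clause of the theorem: given any right-morphism $(c, \rho) \colon (k_T \jadj v_T) \to (k_{T'} \jadj v_{T'})$, its transpose is a $j$-monad morphism $\tau \colon T \to T'$, and the ``triangle identity'' for the partial adjunction asserts that $(c, \rho)$ coincides with the right-morphism $[]_\tau$ built from $\tau$ via the universal property of $\Opalg(T)$ --- more concretely, the composite $\oslash_j([]_\tau) = \tau$ together with the strictness characterisation forces $[]_\tau = (c, \rho)$.

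Second, Lemma \ref{right-morphisms-to-coslices-is-ff} supplies the other \ff{} functor $\RAdj_R(j) \to A/\tX$ sending a $j$-adjunction to its left adjoint; crucially, this is precisely where the density hypothesis on $j$ is required, since it is there that the 2-cell component $\rho$ of a right-morphism is uniquely determined by its tight-cell component $c$ (via \cref{density-implies-j*-is-ff}). Composing the two \ff{} functors above yields the \ff{} functor $\RMnd(j) \to A/\tX$ sending each $T$ to $k_T \jadj v_T$ and thence to the tight-cell $k_T \colon A \to \Opalg(T)$, which is exactly the partial functor $k_\ph$ in the statement.

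The only point requiring any thought is extracting \ff{}-ness of $k_\ph \jadj v_\ph$ from its being a partial \emph{adjoint} section; since sections are always faithful and the adjunction's unit is the identity on $T$ (as witnessed by the strictness clause), this is essentially formal. Modulo that, the corollary is a direct consequence of \cref{opalgebra-objects-induce-j-opmonadic-resolutions,right-morphisms-to-coslices-is-ff}, in exact analogy with \cref{relative-monads-form-full-subcategory-of-slices}.
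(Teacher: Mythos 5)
Your proposal is correct and follows essentially the same route as the paper, which proves the corollary simply by composing the \ff{} functors supplied by \cref{opalgebra-objects-induce-j-opmonadic-resolutions,right-morphisms-to-coslices-is-ff}. Your additional explanation of why the partial left-adjoint section is \ff{} (the adjunction hom-bijection, restricted to targets of the form $k_{T'} \jadj v_{T'}$, with inverse $[]_\ph$) is exactly the content the paper leaves implicit in citing that theorem.
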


\begin{proof}
	Direct by composing the \ff{} functors of \cref{opalgebra-objects-induce-j-opmonadic-resolutions,right-morphisms-to-coslices-is-ff}.
\end{proof}

\begin{corollary}
	\label{opalgebra-object-is-j-opmonadic}
	Let $T$ be a relative monad admitting an opalgebra object. The resolution $k_T \jadj v_T$ is initial in $\Res(T)$.
\end{corollary}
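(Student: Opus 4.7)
The plan is to mimic the proof of \cref{algebra-object-is-j-monadic}, dualising from algebra objects to opalgebra objects by invoking \cref{opalgebra-objects-induce-j-opmonadic-resolutions} in place of \cref{algebra-objects-induce-j-monadic-resolutions}. Concretely, given any resolution $\ljrp$ of $T$, I would show that there is a unique morphism of resolutions $k_T \jadj v_T \to \ljrp$, which by \cref{strict-morphism-implies-same-relative-monad} amounts to a unique strict right-morphism between the underlying relative adjunctions.

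To produce such a strict right-morphism, I would appeal to the partial left-adjoint section established in \cref{opalgebra-objects-induce-j-opmonadic-resolutions}. That theorem gives a bijection between right-morphisms $(k_T \jadj v_T) \to \ljrp$ and $j$-monad morphisms $T \to T$, under which strict right-morphisms correspond precisely to the identity $j$-monad morphism on $T$ (since both $k_T \jadj v_T$ and $\ljrp$ are resolutions of the same monad $T$, they induce the same monad under $\oslash_j$). Since the identity is the unique endomorphism of $T$ with this property, the strict right-morphism is unique, giving the required initiality.

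The argument is essentially formal once \cref{opalgebra-objects-induce-j-opmonadic-resolutions} is in hand: the \emph{real} content (namely, the universal property characterising strictness via identity transposes) lives in that theorem, and the present corollary is just the specialisation to a fixed monad $T$. No step should present a genuine obstacle; the only point requiring some care is observing that initiality in $\Res(T)$ corresponds to strict right-morphisms, which follows from \cref{strict-morphism-implies-same-relative-monad} together with the definition of morphism of resolutions in \cref{resolution}.
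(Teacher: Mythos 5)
Your proposal is correct and follows essentially the same route as the paper: the paper's proof likewise observes that, by \cref{opalgebra-objects-induce-j-opmonadic-resolutions}, strict (right-)morphisms from $k_T \jadj v_T$ to any resolution of $T$ correspond under transposition exactly to the identity $j$-monad morphism on $T$, hence exist uniquely, with \cref{strict-morphism-implies-same-relative-monad} supplying the identification of morphisms of resolutions with strict morphisms.
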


\begin{proof}
	Suppose $\ljr$ is a resolution of $T$. From \cref{opalgebra-objects-induce-j-opmonadic-resolutions}, we have that strict morphisms from $k_T \jadj v_T$ to $\ljr$ necessarily correspond via transposition to the identity morphism on $T$, hence are unique.
\end{proof}

\begin{remark}
	From \cref{algebra-object-is-j-monadic,opalgebra-object-is-j-opmonadic}, we recover \cites[Theorem~1.2.6]{walters1970categorical}[Theorem~3]{altenkirch2010monads}[Theorem~2.12]{altenkirch2015monads} for relative monads in $\Cat$, in particular subsuming \cites[Theorem~2.2]{eilenberg1965adjoint}[Reviewer's remark]{huber1966review}[Propositions~1 \& 2]{coppey1970factorisations} when $j = 1$. From \cref{algebra-objects-induce-j-monadic-resolutions,opalgebra-objects-induce-j-opmonadic-resolutions}, we recover \cite[Theorem~2 \& Theorem~1]{maranda1966fundamental} when $j = 1$.

	From \cref{relative-monads-form-full-subcategory-of-slices}, we recover \cite[Lemma~4.5.2]{maillard2019principles}; \cites[Theorem~3]{frei1969some}[Theorem~2.3]{wiesler1970remarks} for non-relative monads in $\Cat$ and $\VCat$, and \cite[Theorem~1.5.4]{walters1970categorical} for relative monads in $\Cat$ with \ff{} roots (\cf{}~\cref{device}).
\end{remark}

\begin{remark}
    Our definition of opalgebra object rectifies an inadequacy in the definition of the \emph{relative Kleisli objects} of \cite[Definition~6.4]{lobbia2023distributive}, which do not appear to form initial resolutions, in contrast to the \emph{relative EM objects} \ibid{}, which do form terminal resolutions (\cf{}~\cite[Remark~6.7]{lobbia2023distributive}).
\end{remark}

\begin{remark}
    \label{comparison-tight-cell}
	As a consequence of \cref{opalgebra-object-is-j-opmonadic} together with \cref{algebra-object-is-j-monadic}, for any relative monad admitting both an opalgebra object and an algebra object, there is a unique \emph{comparison} tight-cell $i_T \colon \Opalg(T) \to \Alg(T)$ (given equivalently by $[]_{f_T \jadj u_T}$ and by $\unit_{k_T \jadj v_T}$) rendering the following triangles commutative.
	\[
	% https://q.uiver.app/?q=WzAsMyxbMCwxLCJBIl0sWzAsMCwiXFxPcGFsZyhUKSJdLFsxLDAsIlxcQWxnKFQpIl0sWzEsMiwiaV9UIl0sWzAsMSwia19UIl0sWzAsMiwiZl9UIiwyXV0=
	\begin{tikzcd}
		{\Opalg(T)} & {\Alg(T)} \\
		A
		\arrow["{i_T}", from=1-1, to=1-2]
		\arrow["{k_T}", from=2-1, to=1-1]
		\arrow["{f_T}"', from=2-1, to=1-2]
	\end{tikzcd}
	\hspace{4em}
	% https://q.uiver.app/?q=WzAsMyxbMSwxLCJFIl0sWzAsMCwiXFxPcGFsZyhUKSJdLFsxLDAsIlxcQWxnKFQpIl0sWzEsMiwiaV9UIl0sWzEsMCwidl9UIiwyXSxbMiwwLCJ1X1QiXV0=
	\begin{tikzcd}
		{\Opalg(T)} & {\Alg(T)} \\
		& E
		\arrow["{i_T}", from=1-1, to=1-2]
		\arrow["{v_T}"', from=1-1, to=2-2]
		\arrow["{u_T}", from=1-2, to=2-2]
	\end{tikzcd}
	\]
	Note that, in a general equipment, in contrast to the situation in $\Cat$, there is no reason to expect that $i_T$ should be \ff{}.
\end{remark}

As is to be expected from the non-relative setting, and from the analogous result for algebra objects (\cref{algebra-object-for-trivial-relative-monad}), the existence of opalgebra objects for trivial relative monads is trivial, at least for \ff{} tight-cells.

\begin{proposition}
	\label{opalgebra-object-for-trivial-relative-monad-with-ff-root}
	Let $\jAE$ be a tight-cell. There exists a $j$-opalgebra $(1_A, \oop_j)$ exhibiting an opalgebra object for the trivial $j$-monad if and only if $j$ is \ff{}.
\end{proposition}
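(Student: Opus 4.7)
The forward direction is essentially a one-line appeal to an already-proved result. Suppose $(1_A, \oop_j)$ is an opalgebra object for the trivial $j$-monad. By \cref{opalgebra-object-action-is-invertible}, the opextension operator $\oop_j \colon E(j,j) \tto A(1,1)$ is invertible. The unit law for the opalgebra $(1_A, \oop_j)$, specialised to the trivial $j$-monad (whose unit is the identity $1_j$), reduces to $\pc j \d \oop_j = \pc{1_A}$, where the right-hand side is the canonical opcartesian 2-cell exhibiting $A(1,1)$ as the loose identity. Hence $\pc j = \pc{1_A} \d \oop_j^{-1}$ factors as an opcartesian 2-cell followed by an isomorphism, and so is itself opcartesian. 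By \cref{full-faithfulness}, $j$ is \ff{}.

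For the backward direction, assume $j$ is \ff{}. By \cref{full-faithfulness}, $\pc j$ is opcartesian, so there exists a unique invertible 2-cell $\oop_j \colon E(j,j) \to A(1,1)$ with $\pc j \d \oop_j = \pc{1_A}$. The plan is first to verify that $(1_A, \oop_j)$ is a $j$-opalgebra for the trivial monad, and then to verify the two clauses of the universal property. The opalgebra laws are easy: the unit law is forced by the defining equation of $\oop_j$, while the associativity law follows formally from the fact that the extension operator of the trivial monad is the identity on $E(j,j)$ together with the opcartesianness of $\pc j$ (which makes any 2-cell with domain $E(j, j) \odot E(j, j)$, or indeed any nonempty string of copies of $E(j,j)$, determined by its composite with $\pc j, \dots, \pc j$).

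For the universal property, fix a $j$-opalgebra $(a \colon A \to B, \oop)$ and a graded $T$-opalgebra morphism $\alpha$. The mediating tight-cell $[]_\oop$ is forced to equal $a$ by the requirement $1_A \d []_\oop = a$; and the equation $\oop_j \d a = \oop$ then follows because both sides satisfy the unit law $\pc j \d ({-}) = \pc a$ and $\pc j$ is opcartesian, which supplies uniqueness. For the graded-morphism clause, the relevant loose-cell is $\Opalg(T)(1, k_T) = A(1, 1_A) = A(1,1)$, the loose identity, so composition with it is trivial, and $[]_\alpha = \alpha$ is the unique mediator.

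The main obstacle is purely bookkeeping: extracting the unit-law equation $\pc j \d \oop_j = \pc{1_A}$ from the opalgebra axiom in its string-diagrammatic form, and recognising that for the trivial monad the opalgebra structure on any tight-cell $a$ is uniquely determined by the unit law, so that the universal property collapses to the bijection witnessed by opcartesianness of $\pc j$. Both steps are direct once the reductions induced by $t = j$, $\eta = 1_j$, and $\dag = 1_{E(j,j)}$ are made explicit.
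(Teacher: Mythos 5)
Your proof is correct and follows essentially the same route as the paper's: the forward direction combines \cref{opalgebra-object-action-is-invertible} with the unit law (you conclude opcartesianness of $\pc j$ directly, where the paper identifies the induced 2-cell $A(1,1) \tto E(j,j)$ as the inverse of $\oop_j$ — equivalent by \cref{full-faithfulness}), and the backward direction takes $[]_\oop = a$ and $[]_\alpha = \alpha$ exactly as in the paper. The only difference is that you additionally spell out that $(1_A, \oop_j)$ satisfies the opalgebra laws, a routine verification the paper leaves implicit.
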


\begin{proof}
	Suppose that $j$ is \ff{}. Then, by \cref{opalgebras-are-loose-monad-morphisms}, $(1_A, (\pc j)\inv)$ forms a $j$-opalgebra since the inverse of a loose-monad morphism is a loose-monad morphism. We shall prove that it furthermore forms an opalgebra object. For each $j$-opalgebra $(a, \oop)$, define $[]_{(a, \oop)} \defeq a$. Trivially, $1_A \d a = a$, and, by the unit law for the opalgebra, $\oop = (\pc j)\inv \d \pc a$. For each $(p_1, \ldots, p_n)$-graded $j$-opalgebra morphism $\alpha$ from $(a, \oop)$ to $(a', \oop')$, define $[]_\alpha \defeq \alpha$, which trivially satisfies the required property. Thus $(1_A, (\pc j)\inv)$ forms an opalgebra object.

	Conversely, suppose that there exists an opalgebra object $(1_A, \oop_j)$ for $j$. Then, by \cref{opalgebra-object-action-is-invertible}, we have that $\oop_j \colon E(j, j) \tto A(1, 1)$ is invertible. Thus the canonical 2-cell $\pc j \colon A(1, 1) \tto E(j, j)$ is invertible by \cref{ff-iff-invertible}.
\end{proof}

For tight-cells $\jAE$ that are not \ff{}, the situation is more interesting. Suppose that an opalgebra object $(k_j, \oop_j)$ for the trivial $j$-monad exists. By the above, $k_j$ may not be invertible, hence the opalgebra object is nontrivial. However, the universal property implies that $k_j$ is \emph{surjective} in a certain sense.
\begin{enumerate}
	\item The functor $k_j \d \ph \colon \X[\Opalg(j), B] \to j\h\Opalg_B$ is an isomorphism of categories: thus, following the discussion preceding \cref{opalgebras-are-loose-monad-morphisms}, $k_j$ is \emph{bijective-on-objects} in the terminology of \textcite[369]{street1978yoneda}.
	\item By \cref{opalgebra-object-action-is-invertible}, $\Opalg(j)(k_j, k_j) \iso E(j, v_j k_j)$, and so $k_j$ is a \emph{surjection} in the terminology of \textcite[141]{wood1985proarrows}.
	\item The 1-categorical universal property of the opalgebra object for $j$ is the condition that $k_j$ \emph{factors hom-actions} in the terminology of \textcite[Definition~6.1.2]{arkor2022monadic}.
\end{enumerate}
In $\Cat$, for instance, an opalgebra object for a functor $\jAE$ is precisely the \emph{full image factorisation} of $j$ into a bijective-on-objects functor $k_j$ followed by a \ff{} functor $v_j$. The formal relationship between opalgebra objects for trivial relative monads and factorisation systems will be expounded elsewhere.

The nontriviality of opalgebra objects for trivial relative monads is also illustrated by the following observation, which in particular significantly generalises \cite[Example~2.16]{altenkirch2015monads}.

\begin{proposition}
	\label{coincidence-of-opalgebra-objects}
	Let $T$ be a $j$-monad, and suppose that the loose-monad $E(j, T)$ is induced by a tight-cell $\ell \colon A \to C$ (\cref{co-representable-resolution}). Composition with the isomorphism $E(j, T) \iso C(\ell, \ell)$ induces a bijection between $T$-opalgebras and $\ell$-opalgebras, and their graded morphisms. Thus $T$ admits an opalgebra object if and only if the trivial $\ell$-monad admits an opalgebra object, in which case there is an isomorphism rendering the following diagram commutative. In particular, this holds when $T$ admits a resolution $\ljr$.
	% https://q.uiver.app/?q=WzAsMyxbMCwwLCJcXE9wYWxnKFxcZWxsKSJdLFsyLDAsIlxcT3BhbGcoVCkiXSxbMSwxLCJBIl0sWzIsMCwia19cXGVsbCJdLFsyLDEsImtfVCIsMl0sWzAsMSwiXFxpc28iXV0=
	\[\begin{tikzcd}
		{\Opalg(\ell)} && {\Opalg(T)} \\
		& A
		\arrow["{k_\ell}", from=2-2, to=1-1]
		\arrow["{k_T}"', from=2-2, to=1-3]
		\arrow["\iso", from=1-1, to=1-3]
	\end{tikzcd}\]
\end{proposition}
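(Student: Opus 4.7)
The plan is to reduce the claim to the fact that both a $T$-opalgebra and an opalgebra for the trivial $\ell$-monad are the same piece of data: a loose-monad morphism out of a common loose-monad, namely $E(j, T) \iso C(\ell, \ell)$. I would first establish this identification of loose-monads. By Theorem \ref{relative-monads-are-loose-monads}, the trivial $\ell$-monad corresponds to the loose-monad $C(\ell, \ell)$ on $A$. The hypothesis that $E(j, T)$ is induced by $\ell$ means (Definition \ref{co-representable-resolution}) that $E(j, T)$ is the loose-monad induced by the loose-adjunction $C(1, \ell) \adj C(\ell, 1)$ via Lemma \ref{loose-adjunction-induces-loose-monad}, and by definition this loose-monad has underlying loose-cell $C(\ell, 1) \odot C(1, \ell) \iso C(\ell, \ell)$; the unit and multiplication match those of $C(\ell, \ell)$ as the loose-monad of the trivial $\ell$-monad.

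Next, I would invoke Remark \ref{opalgebras-are-extraordinary-transformations} to identify $T$-opalgebras with codomain $B$ with pairs $(a \colon A \to B, \oop)$ where $\oop \colon E(j, T) \tto B(a, a)$ is a loose-monad morphism; likewise, trivial-$\ell$-monad-opalgebras are pairs $(a \colon A \to B, \oop)$ with $\oop \colon C(\ell, \ell) \tto B(a, a)$ a loose-monad morphism. The loose-monad isomorphism $E(j, T) \iso C(\ell, \ell)$ then induces an isomorphism of categories $\ell\h\Opalg_B \iso T\h\Opalg_B$, natural in $B$, which is the identity on underlying tight-cells. The compatibility law in Definition \ref{graded-opalgebra-morphism} for a graded opalgebra morphism only involves the opextension operators $\oop, \oop'$, so the same correspondence identifies graded $\ell$-opalgebra morphisms with graded $T$-opalgebra morphisms.

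Because the universal property in Definition \ref{opalgebra-object} is phrased purely in terms of the category of opalgebras with its graded morphisms — data which has now been shown to agree on both sides — the universal objects exist simultaneously, and a choice of opalgebra object for one gives an opalgebra object for the other with the same apex and same $k$, yielding the desired isomorphism $\Opalg(\ell) \iso \Opalg(T)$ compatible with $k_\ell$ and $k_T$. For the final sentence, if $T$ admits a resolution $\ljr$ then Corollary \ref{loose-monad-associated-to-relative-monad-is-kernel-of-left-adjoint} provides the loose-monad isomorphism $C(\ell, \ell) \iso E(j, T)$, verifying the hypothesis.

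The principal obstacle I anticipate is bookkeeping: one must verify that the loose-monad isomorphism $E(j, T) \iso C(\ell, \ell)$ transports not just opalgebras as sets of data, but also the graded-morphism compatibility laws and the universal factorisations of Definition \ref{opalgebra-object}(2), so that a universal $\ell$-opalgebra transports to a universal $T$-opalgebra on the nose. This is essentially formal once one confirms that every piece of structure in the opalgebra object's universal property is expressible through the loose-monad structure $E(j, T)$, but it requires care in tracking the identification through the restriction isomorphisms $B(1, a) \odot B(a, 1) \iso B(a, a)$.
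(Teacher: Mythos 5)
Your proposal is correct and follows essentially the same route as the paper: identify opalgebras for $T$ and for the trivial $\ell$-monad with loose-monad morphisms out of $E(j, T)$ and $C(\ell, \ell)$ respectively (\cref{opalgebras-are-extraordinary-transformations}), use the hypothesised isomorphism of loose-monads to conclude that both opalgebra objects satisfy the same universal property, and invoke \cref{loose-monad-associated-to-relative-monad-is-kernel-of-left-adjoint} for the case of a resolution. The extra bookkeeping you flag (transport of graded morphisms and universal factorisations) is exactly the formal verification the paper leaves implicit, so no gap remains.
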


\begin{proof}
	By \cref{opalgebras-are-loose-monad-morphisms}, $T$-opalgebras are precisely tight-cells $a \colon A \to B$ equipped with loose-monad morphisms $E(j, T) \tto B(a, a)$, while $\ell$-opalgebras are tight-cells $a \colon A \to B$ equipped with loose-monad morphisms $C(\ell, \ell) \tto B(a, a)$. Hence the isomorphism $E(j, T) \iso C(\ell, \ell)$ of loose-monads induces a bijection between them and their graded morphisms. Consequently, the two opalgebra objects satisfy the same universal property, exhibiting them as isomorphic. In particular, when $T$ admits a resolution $\ljr$, \cref{loose-monad-associated-to-relative-monad-is-kernel-of-left-adjoint} implies that $\ell$ is such a tight-cell.
\end{proof}

In practice, this means that demonstrating the existence of opalgebra objects in general may often be reduced to demonstrating the existence of opalgebra objects for trivial relative monads.

\begin{corollary}
	If every loose-monad in $\X$ is induced by a tight-cell, every relative monad admits an opalgebra object if and only if every trivial relative monad admits an opalgebra object.
\end{corollary}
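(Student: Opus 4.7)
The plan is to deduce this corollary directly from \cref{coincidence-of-opalgebra-objects} together with the hypothesis. The forward direction (every relative monad admitting an opalgebra object implies every trivial relative monad does) is immediate, since trivial relative monads are themselves particular relative monads (\cref{relative-monad}).

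For the converse direction, I would proceed as follows. Let $T$ be an arbitrary $j$-relative monad on some root $\jAE$. By \cref{relative-monads-are-loose-monads}, $T$ induces a loose-monad $E(j, T)$ on $A$. By the hypothesis that every loose-monad in $\X$ is induced by a tight-cell (in the sense of \cref{co-representable-resolution}), there exists a tight-cell $\ell \colon A \to C$ such that $E(j, T)$ is isomorphic as a loose-monad to $C(\ell, \ell)$. Now \cref{coincidence-of-opalgebra-objects} applies: $T$ admits an opalgebra object if and only if the trivial $\ell$-monad admits an opalgebra object. By assumption, the latter does, and hence so does $T$.

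There is no substantial obstacle here; the content lies entirely in \cref{coincidence-of-opalgebra-objects} and \cref{relative-monads-are-loose-monads}, both already established. The corollary is essentially a packaging of the preceding result under a global hypothesis sufficient to apply it uniformly to every relative monad.
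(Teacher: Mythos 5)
Your proposal is correct and matches the paper's own argument: the paper likewise observes that the hypothesis guarantees the assumptions of \cref{coincidence-of-opalgebra-objects} hold for every $j$-monad $T$ (via the induced loose-monad $E(j,T)$ being induced by some tight-cell $\ell$), so that the trivial $\ell$-monad's opalgebra object yields one for $T$, with the forward direction being trivial. Your write-up merely spells out the intermediate appeal to \cref{relative-monads-are-loose-monads} and \cref{co-representable-resolution}, which the paper leaves implicit.
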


\begin{proof}
	Assume that every loose-monad in $\X$ is induced by a tight-cell. Then, in particular, for any $j$-monad $T$, the assumptions of \cref{coincidence-of-opalgebra-objects} are satisfied, so that if trivial relative monads admit opalgebra objects, then $T$ admits an opalgebra object. The converse is trivial.
\end{proof}

The assumption that every loose-monad is induced by a tight-cell is verified, for instance, in $\Cat$~\cites[6.22]{justesen1968bikategorien}[Proposition~39]{wood1985proarrows}, and more generally in any equipment that is \emph{exact} in the sense of \textcite[Definition~5.1]{schultz2015regular}.

\subsection{(Op)algebra objects and composition of relative adjunctions}
\label{algebra-and-opalgebra-objects-and-composition}

Suppose we have the following situation, as in \cref{relative-monad-relative-adjunction-composition}.
% https://q.uiver.app/?q=WzAsNCxbMSwwLCJCIl0sWzIsMCwiRCJdLFswLDEsIkEiXSxbMywxLCJFIl0sWzAsMSwiaiJdLFsyLDAsIlxcZWxsJyJdLFsyLDMsImonIiwyXSxbMSwzLCJyJyJdLFsyLDEsIlxcZWxsJyBcXGQgaiIsMV0sWzgsNywiIiwxLHsib2Zmc2V0IjoyLCJsZXZlbCI6MSwic3R5bGUiOnsibmFtZSI6ImFkanVuY3Rpb24ifX1dXQ==
\[\begin{tikzcd}
	& B & D \\
	A &&& E
	\arrow["j", from=1-2, to=1-3]
	\arrow["{\ell'}", from=2-1, to=1-2]
	\arrow["{j'}"', from=2-1, to=2-4]
	\arrow[""{name=0, anchor=center, inner sep=0}, "{r'}", from=1-3, to=2-4]
	\arrow[""{name=1, anchor=center, inner sep=0}, "{\ell' \d j}"{description}, from=2-1, to=1-3]
	\arrow["\dashv"{anchor=center}, shift right=2, draw=none, from=1, to=0]
\end{tikzcd}\]
Let $T$ be a $j$-monad. Suppose that $T$ and $(\ell' \d T \d r')$ admit opalgebra objects. Then $(k_T, \oop_T)$ induces an $(\ell' \d T \d r')$-opalgebra structure on $(\ell' \d k_T)$ by \cref{algebra-and-opalgebra-precomposition,algebra-and-opalgebra-pasting}, and consequently the universal property of $\Opalg(\ell' \d T \d r')$ induces a tight-cell $[]_T \colon \Opalg(\ell' \d T \d r') \to \Opalg(T)$ under $A$. Similarly, suppose that $T$ and $(\ell' \d T \d r')$ admit algebra objects. Then $(u_T, \aop_T)$ induces an $(\ell' \d T \d r')$-algebra structure on $(u_T \d r')$ by \cref{algebra-and-opalgebra-precomposition,algebra-and-opalgebra-pasting}, and consequently the universal property of $\Alg(\ell' \d T \d r')$ induces a tight-cell $\unit_T \colon \Alg(T) \to \Alg(\ell' \d T \d r')$ over $E$. When both opalgebra and algebra objects exist, we have a commutative diagram as follows.
% https://q.uiver.app/?q=WzAsOCxbMiwwLCJcXEFsZyhUKSJdLFsxLDAsIlxcT3BhbGcoVCkiXSxbMSwxLCJCIl0sWzIsMSwiRCJdLFswLDEsIkEiXSxbMywxLCJFIl0sWzAsMCwiXFxPcGFsZyhcXGVsbCcgXFxkIFQgXFxkIHInKSJdLFszLDAsIlxcQWxnKFxcZWxsJyBcXGQgVCBcXGQgcicpIl0sWzEsMCwiaV9UIiwxXSxbMiwxLCJrX1QiLDFdLFswLDMsInVfVCIsMV0sWzIsMywidCIsMl0sWzQsMiwiXFxlbGwnIiwyXSxbMyw1LCJyJyIsMl0sWzQsNiwia197XFxlbGwnIFxcZCBUIFxcZCByJ30iXSxbNyw1LCJ1X3tcXGVsbCcgXFxkIFQgXFxkIHInfSJdLFs2LDEsIltdX1QiLDFdLFswLDcsIlxcdW5pdF9UIiwxXSxbNiw3LCJpX3tcXGVsbCcgXFxkIFQgXFxkIHInfSIsMCx7ImN1cnZlIjotM31dXQ==
\[\begin{tikzcd}
	{\Opalg(\ell' \d T \d r')} & {\Opalg(T)} & {\Alg(T)} & {\Alg(\ell' \d T \d r')} \\
	A & B & D & E
	\arrow["{i_T}"{description}, from=1-2, to=1-3]
	\arrow["{k_T}"{description}, from=2-2, to=1-2]
	\arrow["{u_T}"{description}, from=1-3, to=2-3]
	\arrow["t"', from=2-2, to=2-3]
	\arrow["{\ell'}"', from=2-1, to=2-2]
	\arrow["{r'}"', from=2-3, to=2-4]
	\arrow["{k_{\ell' \d T \d r'}}", from=2-1, to=1-1]
	\arrow["{u_{\ell' \d T \d r'}}", from=1-4, to=2-4]
	\arrow["{[]_T}"{description}, from=1-1, to=1-2]
	\arrow["{\unit_T}"{description}, from=1-3, to=1-4]
	\arrow["{i_{\ell' \d T \d r'}}", curve={height=-18pt}, from=1-1, to=1-4]
\end{tikzcd}\]
Furthermore, in this situation, the opalgebra object and algebra object for $(\ell' \d T \d r')$ satisfy a universal property with respect to the opalgebra object and algebra object for $T$, as follows.

\begin{proposition}
	\label{universal-property-of-lp-t-rp}
	Let $(\ell' \d j) \radj{j'} r'$ be a relative adjunction, and let $T$ be a $j$-monad, as in \cref{relative-monad-relative-adjunction-composition}.
	\begin{enumerate}
		\item Suppose $j'$ is dense. If $T$ and $(\ell' \d T \d r')$ admit opalgebra objects, then, for every $j'$-monad $T'$ admitting an opalgebra object and for every tight-cell $\Opalg(T') \to \Opalg(T)$ under $A$, there is a unique tight-cell $\Opalg(T') \to \Opalg(\ell' \d T \d r')$ rendering the following diagram commutative.
		% https://q.uiver.app/?q=WzAsNCxbMCwwLCJcXE9wYWxnKFQnKSJdLFsxLDAsIlxcT3BhbGcoXFxlbGwnIFxcZCBUIFxcZCByJykiXSxbMiwwLCJcXE9wYWxnKFQpIl0sWzEsMSwiQSJdLFswLDEsIiIsMCx7InN0eWxlIjp7ImJvZHkiOnsibmFtZSI6ImRhc2hlZCJ9fX1dLFsxLDIsIltdX1QiLDFdLFswLDIsIiIsMCx7ImN1cnZlIjotM31dLFszLDAsImtfe1QnfSJdLFszLDIsIlxcZWxsJyBcXGQga19UIiwyXSxbMywxLCJrX3tcXGVsbCcgXFxkIFQgXFxkIHInfSIsMV1d
		\[\begin{tikzcd}
			{\Opalg(T')} & {\Opalg(\ell' \d T \d r')} & {\Opalg(T)} \\
			& A
			\arrow[dashed, from=1-1, to=1-2]
			\arrow["{[]_T}"{description}, from=1-2, to=1-3]
			\arrow[curve={height=-18pt}, from=1-1, to=1-3]
			\arrow["{k_{T'}}", from=2-2, to=1-1]
			\arrow["{\ell' \d k_T}"', from=2-2, to=1-3]
			\arrow["{k_{\ell' \d T \d r'}}"{description}, from=2-2, to=1-2]
		\end{tikzcd}\]
		\item If $T$ and $(\ell' \d T \d r')$ admit algebra objects, then, for every $j'$-monad $T'$ admitting an algebra object and for every tight-cell $\Alg(T) \to \Alg(T')$ over $E$, there is a unique tight-cell $\Alg(\ell' \d T \d r') \to \Alg(T')$ rendering the following diagram commutative.
		% https://q.uiver.app/?q=WzAsNCxbMCwwLCJcXEFsZyhUKSJdLFsxLDAsIlxcQWxnKFxcZWxsJyBcXGQgVCBcXGQgcicpIl0sWzIsMCwiXFxBbGcoVCcpIl0sWzEsMSwiRSJdLFswLDEsIlxcdW5pdF9UIiwxXSxbMSwyLCIiLDAseyJzdHlsZSI6eyJib2R5Ijp7Im5hbWUiOiJkYXNoZWQifX19XSxbMCwyLCIiLDEseyJjdXJ2ZSI6LTN9XSxbMCwzLCJ1X1QgXFxkIHInIiwyXSxbMiwzLCJ1X3tUJ30iXSxbMSwzLCJ1X3tcXGVsbCcgXFxkIFQgXFxkIHInfSIsMV1d
		\[\begin{tikzcd}
			{\Alg(T)} & {\Alg(\ell' \d T \d r')} & {\Alg(T')} \\
			& E
			\arrow["{\unit_T}"{description}, from=1-1, to=1-2]
			\arrow[dashed, from=1-2, to=1-3]
			\arrow[curve={height=-18pt}, from=1-1, to=1-3]
			\arrow["{u_T \d r'}"', from=1-1, to=2-2]
			\arrow["{u_{T'}}", from=1-3, to=2-2]
			\arrow["{u_{\ell' \d T \d r'}}"{description}, from=1-2, to=2-2]
		\end{tikzcd}\]
	\end{enumerate}
\end{proposition}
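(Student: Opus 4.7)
The plan is to apply the adjunctions of \cref{opalgebra-objects-induce-j-opmonadic-resolutions,algebra-objects-induce-j-monadic-resolutions} twice: first to translate the given tight-cell into a $j'$-monad morphism $T' \to \ell' \d T \d r'$, then to translate that morphism back into the desired tight-cell. The key enabling observation is that the opalgebra resolution $k_T \jadj v_T$ and algebra resolution $f_T \jadj u_T$ of $T$ may be composed with the relative adjunction $\ell' \d j \radj{j'} r'$, via \cref{relative-adjunction-composition}, to yield $j'$-adjunctions $\ell' \d k_T \radj{j'} v_T \d r'$ and $\ell' \d f_T \radj{j'} u_T \d r'$. By \cref{relative-monad-on-relative-adjunction}, both are resolutions of the $j'$-monad $\ell' \d T \d r'$, and by \cref{opalgebra-object-is-j-opmonadic,algebra-object-is-j-monadic} the tight-cells $[]_T$ and $\unit_T$ appearing in the statement are precisely the components of the unique strict comparisons between these composite resolutions and the initial (respectively terminal) resolutions of $\ell' \d T \d r'$.

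For part~(1), interpret $h$ as a morphism $k_{T'} \to \ell' \d k_T$ in $A / \tX$. Using density of $j'$, \cref{right-morphisms-to-coslices-is-ff} extends it uniquely to a right-morphism of $j'$-adjunctions $k_{T'} \radj{j'} v_{T'} \to \ell' \d k_T \radj{j'} v_T \d r'$. Then \cref{opalgebra-objects-induce-j-opmonadic-resolutions} translates this into a unique $j'$-monad morphism $\tau \colon T' \to \ell' \d T \d r'$, and the same theorem applied in reverse produces a unique right-morphism $k_{T'} \radj{j'} v_{T'} \to k_{\ell' \d T \d r'} \radj{j'} v_{\ell' \d T \d r'}$; its tight-cell component is the desired $g$. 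Commutativity $k_{T'} \d g = k_{\ell' \d T \d r'}$ is automatic from the definition of right-morphism, and $g \d []_T = h$ follows because both, viewed as right-morphisms into $\ell' \d k_T \radj{j'} v_T \d r'$, correspond under $\oslash_{j'}$ to the same $j'$-monad morphism $\tau$ (using that the strict $[]_T$ is sent by $\oslash_{j'}$ to $\id_{\ell' \d T \d r'}$), and are therefore equal by injectivity of the bijection. Uniqueness of $g$ follows by running the same chain backwards. Part~(2) is entirely analogous: replace $k_T \jadj v_T$ by $f_T \jadj u_T$, right-morphisms by left-morphisms, and invoke \cref{left-morphisms-to-slices-is-ff,algebra-objects-induce-j-monadic-resolutions} in place of \cref{right-morphisms-to-coslices-is-ff,opalgebra-objects-induce-j-opmonadic-resolutions}. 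The density hypothesis on $j'$ is essential for~(1) because it is required by \cref{right-morphisms-to-coslices-is-ff}, but is absent from the analogous \cref{left-morphisms-to-slices-is-ff}, which is why~(2) is free of it.

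The main obstacle is the coherence bookkeeping required to verify that the two composite right-morphisms (respectively left-morphisms) compared in the proof of $g \d []_T = h$ (resp.\ $\unit_T \d g' = h'$) correspond to the same $j'$-monad morphism under the natural bijections of \cref{opalgebra-objects-induce-j-opmonadic-resolutions,algebra-objects-induce-j-monadic-resolutions}. This amounts to tracking (contra)variance and composition direction through $\oslash_{j'}$ and $\obslash_{j'}$; the computation is routine once one observes that the strict morphisms $[]_T$ and $\unit_T$ are sent by these functors to identity $j'$-monad morphisms on $\ell' \d T \d r'$.
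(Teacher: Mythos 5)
Your proposal is correct and follows essentially the same route as the paper: both establish the universal property via the chain of natural bijections coming from \cref{right-morphisms-to-coslices-is-ff}/\cref{left-morphisms-to-slices-is-ff} and \cref{opalgebra-objects-induce-j-opmonadic-resolutions}/\cref{algebra-objects-induce-j-monadic-resolutions}, applied to the composite resolutions $\ell' \d k_T \radj{j'} v_T \d r'$ and $\ell' \d f_T \radj{j'} u_T \d r'$. The only difference is that you make explicit the verification (via the ``strict morphisms transpose to identities'' clause) that the composite bijection is implemented by composition with $[]_T$, respectively $\unit_T$, which the paper leaves implicit in its chain of isomorphisms.
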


\begin{proof}
	The proofs for (1) and (2) proceed similarly.
	\begin{enumerate}
		\item
		\begin{align*}
			A/\X(k_{T'}, (\ell' \d k_T)) & \iso \RAdj_R(j')((k_{T'} \radj{j'} v_{T'}), (\ell' \d k_T \radj{j'} v_T \d r')) \tag{\cref{right-morphisms-to-coslices-is-ff}} \\
				& \iso \RMnd(j')(T', (\ell' \d T \d r')) \tag{\cref{opalgebra-objects-induce-j-opmonadic-resolutions}} \\
				& \iso \RAdj_R(j')((k_{T'} \radj{j'} v_{T'}), (k_{\ell' \d T \d r'} \radj{j'} v_{\ell' \d T \d r'})) \tag{\cref{opalgebra-objects-induce-j-opmonadic-resolutions}} \\
				& \iso A/\X(k_{T'}, k_{\ell' \d T \d r'}) \tag{\cref{right-morphisms-to-coslices-is-ff}}
		\end{align*}
		\item
		\begin{align*}
			\X/E((u_T \d r'), u_{T'}) & \iso \RAdj_L(j')((\ell' \d f_T \radj{j'} u_T \d r'), (f_{T'} \radj{j'} u_{T'})) \tag{\cref{left-morphisms-to-slices-is-ff}} \\
				& \iso \RMnd(j')(T', (\ell' \d T \d r')) \tag{\cref{algebra-objects-induce-j-monadic-resolutions}} \\
				& \iso \RAdj_L(j')((f_{\ell' \d T \d r'} \radj{j'} u_{\ell' \d T \d r'}), (f_{T'} \radj{j'} u_{T'})) \tag{\cref{algebra-objects-induce-j-monadic-resolutions}} \\
				& \iso \X/E(u_{\ell' \d T \d r'}, u_{T'}) \tag{\cref{left-morphisms-to-slices-is-ff}}
		\end{align*}
	\end{enumerate}
\end{proof}

Conceptually, \cref{universal-property-of-lp-t-rp} may be viewed as expressing that $(\ell' \d T \d r')$ is the universal $j'$-monad associated to the $j$-monad $T$. However, we shall not make this intuition precise here, as doing so requires a notion of morphism between relative monads with different roots.

\section{Relative comonads and relative coadjunctions}
\label{duality}

In the formal theory of monads in a 2-category, duality is used to great effect~\cite[\S4]{street1972formal}. Every 2-category $\K$ has three duals: a dual on 1-cells $\K\op$; a dual on 2-cells $\K\co$; and a dual on 1-cells and 2-cells $\K\coop$. We may consider monads and their (op)algebras in each of the four 2-categories $\K$, $\K\op$, $\K\co$, and $\K\coop$. A monad in $\K\op$ is simply a monad in $\K$; while a monad in either of $\K\co$ or $\K\coop$ is precisely a comonad in $\K$. Furthermore, for a monad $T$ in $\K\op$ (equivalently in $\K$), a $T$-algebra in $\K\op$ is precisely a $T$-opalgebra in $\K$. Therefore the theory of opalgebras and opalgebra objects follows formally from the theory of algebras and algebra objects (and conversely).
Orthogonally, for a monad $D$ in $\K\co$ (equivalently a comonad in $\K$), a $D$-algebra in $\K\co$ is a $D$-coalgebra in $\K$. This relationship is summarised in the table below.

\begin{center}
	\begin{tblr}{c|c|c}
		An algebra for a monad in \underline{\hspace{0.5cm}} & is a/an \underline{\hspace{0.5cm}} & for a \underline{\hspace{0.5cm}} in $\K$. \\
		\hline
		$\K$ & algebra & monad \\
		$\K\op$ & opalgebra & monad \\
		$\K\co$ & coalgebra & comonad \\
		$\K\coop$ & co\"opalgebra\footnotemark{} & comonad
	\end{tblr}
    \footnotetext{We prefer the use of a diaeresis to denote a vowel separation, but \emph{co-opalgebra} and \emph{coopalgebra} are also possible.}
\end{center}

For the formal theory of relative monads in a \ve{}, this relationship breaks down. A \vdc{} $\X$ only admits one notion of dual: namely, the dual on loose-cells $\X\co$, which corresponds both to the $\co$ of the tight 2-category, and to the $\op$ of the underlying (virtual) bicategory of loose-cells. Consequently, only one axis of the usual duality theory of monads in a 2-category generalises to relative monads in a \ve{}: while relative monads in $\X\co$ are precisely relative comonads in $\X$, the concept of (co)algebra is formally distinct to that of (co)opalgebra.

While this bifurcation in the relative setting may appear surprising, the reason is clear from the perspective of skew-multiactegories. For a relative monad qua monoid in the skew-multicategory $\X[j]$, we have the notions of action both in a left-$\X[j]$-multiactegory (\cref{left-action}) and in a right-$\X[j]$--multiactegory (\cref{right-action}). When $j = 1$, these notions are formally dual, so that actions in a left-multiactegory may be defined in terms of actions in a right-multiactegory, and conversely. However, in general the two notions are not dual: for instance, the definition of action in a skew-left-multiactegory involves the left-unitor $\lambda$, while the definition of action in a skew-right-multiactegory involves the right-unitor $\rho$.

We shall briefly review the theory of relative comonads and relative coadjunctions. However, since the theory is entirely dual to the theory of relative monads and relative adjunctions, we shall give only definitions, and leave the reader to dualise the theorems as desired. We omit the string diagram presentations of the laws, which are obtained simply by horizontally reflecting those for relative monads and relative adjunctions.

\begin{remark}
	While the study of relative comonads and relative coadjunctions may be reduced to the study of relative monads and relative adjunctions via duality, it appears likely that it is worthwhile to study the interaction between relative adjunctions and relative coadjunctions, and between relative monads and relative comonads, which cannot be thus reduced (\cf{}~\cite[\S2.2 \& \S2.4]{lewicki2020categories}).
\end{remark}

\begin{definition}
    Let $\X$ be a \ve{}. A \emph{relative comonad} in $\X$ is a relative monad in $\X\co$. Explicitly, this comprises
	\begin{enumerate}
        \item a tight-cell $i \colon Z \to V$, the \emph{coroot};
        \item a tight-cell $d \colon Z \to V$, the \emph{underlying tight-cell};
        \item a 2-cell $\flip\dag \colon V(d, i) \tto V(d, d)$, the \emph{coextension operator};
        \item a 2-cell $\varepsilon \colon d \tto i$, the \emph{counit},
    \end{enumerate}
	satisfying the following three equations.
	\[
    % https://q.uiver.app/#q=WzAsNixbMSwwLCJaIl0sWzAsMCwiWiJdLFsxLDEsIloiXSxbMCwxLCJaIl0sWzEsMiwiWiJdLFswLDIsIloiXSxbMCwxLCJWKGQsIGkpIiwyLHsic3R5bGUiOnsiYm9keSI6eyJuYW1lIjoiYmFycmVkIn19fV0sWzIsMywiVihkLCBkKSIsMV0sWzAsMiwiIiwwLHsibGV2ZWwiOjIsInN0eWxlIjp7ImhlYWQiOnsibmFtZSI6Im5vbmUifX19XSxbMSwzLCIiLDEseyJsZXZlbCI6Miwic3R5bGUiOnsiaGVhZCI6eyJuYW1lIjoibm9uZSJ9fX1dLFsyLDQsIiIsMix7ImxldmVsIjoyLCJzdHlsZSI6eyJoZWFkIjp7Im5hbWUiOiJub25lIn19fV0sWzMsNSwiIiwxLHsibGV2ZWwiOjIsInN0eWxlIjp7ImhlYWQiOnsibmFtZSI6Im5vbmUifX19XSxbNCw1LCJWKGQsIGkpIiwwLHsic3R5bGUiOnsiYm9keSI6eyJuYW1lIjoiYmFycmVkIn19fV0sWzgsOSwiXFxmbGlwXFxkYWciLDEseyJzaG9ydGVuIjp7InNvdXJjZSI6MjAsInRhcmdldCI6MjB9LCJzdHlsZSI6eyJib2R5Ijp7Im5hbWUiOiJub25lIn0sImhlYWQiOnsibmFtZSI6Im5vbmUifX19XSxbMTAsMTEsIlYoZCwgXFx2YXJlcHNpbG9uKSIsMSx7InNob3J0ZW4iOnsic291cmNlIjoyMCwidGFyZ2V0IjoyMH0sInN0eWxlIjp7ImJvZHkiOnsibmFtZSI6Im5vbmUifSwiaGVhZCI6eyJuYW1lIjoibm9uZSJ9fX1dXQ==
	\begin{tikzcd}[column sep=large]
		Z & Z \\
		Z & Z \\
		Z & Z
		\arrow["{V(d, i)}"', "\shortmid"{marking}, from=1-2, to=1-1]
		\arrow["{V(d, d)}"{description}, from=2-2, to=2-1]
		\arrow[""{name=0, anchor=center, inner sep=0}, Rightarrow, no head, from=1-2, to=2-2]
		\arrow[""{name=1, anchor=center, inner sep=0}, Rightarrow, no head, from=1-1, to=2-1]
		\arrow[""{name=2, anchor=center, inner sep=0}, Rightarrow, no head, from=2-2, to=3-2]
		\arrow[""{name=3, anchor=center, inner sep=0}, Rightarrow, no head, from=2-1, to=3-1]
		\arrow["{V(d, i)}", "\shortmid"{marking}, from=3-2, to=3-1]
		\arrow["\flip\dag"{description}, draw=none, from=0, to=1]
		\arrow["{V(d, \varepsilon)}"{description}, draw=none, from=2, to=3]
	\end{tikzcd}
    ~=~
    % https://q.uiver.app/#q=WzAsNCxbMSwwLCJaIl0sWzAsMCwiWiJdLFsxLDEsIloiXSxbMCwxLCJaIl0sWzAsMSwiVihkLCBpKSIsMix7InN0eWxlIjp7ImJvZHkiOnsibmFtZSI6ImJhcnJlZCJ9fX1dLFsyLDMsIlYoZCwgaSkiLDAseyJzdHlsZSI6eyJib2R5Ijp7Im5hbWUiOiJiYXJyZWQifX19XSxbMCwyLCIiLDAseyJsZXZlbCI6Miwic3R5bGUiOnsiaGVhZCI6eyJuYW1lIjoibm9uZSJ9fX1dLFsxLDMsIiIsMSx7ImxldmVsIjoyLCJzdHlsZSI6eyJoZWFkIjp7Im5hbWUiOiJub25lIn19fV0sWzYsNywiPSIsMSx7InNob3J0ZW4iOnsic291cmNlIjoyMCwidGFyZ2V0IjoyMH0sImxldmVsIjoxLCJzdHlsZSI6eyJib2R5Ijp7Im5hbWUiOiJub25lIn0sImhlYWQiOnsibmFtZSI6Im5vbmUifX19XV0=
	\begin{tikzcd}
		Z & Z \\
		Z & Z
		\arrow["{V(d, i)}"', "\shortmid"{marking}, from=1-2, to=1-1]
		\arrow["{V(d, i)}", "\shortmid"{marking}, from=2-2, to=2-1]
		\arrow[""{name=0, anchor=center, inner sep=0}, Rightarrow, no head, from=1-2, to=2-2]
		\arrow[""{name=1, anchor=center, inner sep=0}, Rightarrow, no head, from=1-1, to=2-1]
		\arrow["{=}"{description}, draw=none, from=0, to=1]
	\end{tikzcd}
    \hspace{8em}
    % https://q.uiver.app/#q=WzAsOCxbMSwyLCJaIl0sWzAsMiwiWiJdLFsxLDMsIloiXSxbMCwzLCJaIl0sWzEsMSwiWiJdLFswLDEsIloiXSxbMSwwLCJaIl0sWzAsMCwiWiJdLFswLDEsIlYoZCwgaSkiLDFdLFsyLDMsIlYoZCwgZCkiLDAseyJzdHlsZSI6eyJib2R5Ijp7Im5hbWUiOiJiYXJyZWQifX19XSxbMCwyLCIiLDAseyJsZXZlbCI6Miwic3R5bGUiOnsiaGVhZCI6eyJuYW1lIjoibm9uZSJ9fX1dLFsxLDMsIiIsMSx7ImxldmVsIjoyLCJzdHlsZSI6eyJoZWFkIjp7Im5hbWUiOiJub25lIn19fV0sWzQsNSwiVihpLCBpKSIsMV0sWzQsMCwiIiwxLHsibGV2ZWwiOjIsInN0eWxlIjp7ImhlYWQiOnsibmFtZSI6Im5vbmUifX19XSxbNSwxLCIiLDEseyJsZXZlbCI6Miwic3R5bGUiOnsiaGVhZCI6eyJuYW1lIjoibm9uZSJ9fX1dLFs2LDQsIiIsMCx7ImxldmVsIjoyLCJzdHlsZSI6eyJoZWFkIjp7Im5hbWUiOiJub25lIn19fV0sWzYsNywiIiwyLHsibGV2ZWwiOjIsInN0eWxlIjp7ImhlYWQiOnsibmFtZSI6Im5vbmUifX19XSxbNyw1LCIiLDIseyJsZXZlbCI6Miwic3R5bGUiOnsiaGVhZCI6eyJuYW1lIjoibm9uZSJ9fX1dLFsxNSwxNywiXFxwYyBpIiwxLHsic2hvcnRlbiI6eyJzb3VyY2UiOjIwLCJ0YXJnZXQiOjIwfSwic3R5bGUiOnsiYm9keSI6eyJuYW1lIjoibm9uZSJ9LCJoZWFkIjp7Im5hbWUiOiJub25lIn19fV0sWzEzLDE0LCJWKFxcdmFyZXBzaWxvbiwgaSkiLDEseyJzaG9ydGVuIjp7InNvdXJjZSI6MjAsInRhcmdldCI6MjB9LCJzdHlsZSI6eyJib2R5Ijp7Im5hbWUiOiJub25lIn0sImhlYWQiOnsibmFtZSI6Im5vbmUifX19XSxbMTAsMTEsIlxcZmxpcFxcZGFnIiwxLHsic2hvcnRlbiI6eyJzb3VyY2UiOjIwLCJ0YXJnZXQiOjIwfSwic3R5bGUiOnsiYm9keSI6eyJuYW1lIjoibm9uZSJ9LCJoZWFkIjp7Im5hbWUiOiJub25lIn19fV1d
	\begin{tikzcd}[column sep=large]
		Z & Z \\
		Z & Z \\
		Z & Z \\
		Z & Z
		\arrow["{V(d, i)}"{description}, from=3-2, to=3-1]
		\arrow["{V(d, d)}", "\shortmid"{marking}, from=4-2, to=4-1]
		\arrow[""{name=0, anchor=center, inner sep=0}, Rightarrow, no head, from=3-2, to=4-2]
		\arrow[""{name=1, anchor=center, inner sep=0}, Rightarrow, no head, from=3-1, to=4-1]
		\arrow["{V(i, i)}"{description}, from=2-2, to=2-1]
		\arrow[""{name=2, anchor=center, inner sep=0}, Rightarrow, no head, from=2-2, to=3-2]
		\arrow[""{name=3, anchor=center, inner sep=0}, Rightarrow, no head, from=2-1, to=3-1]
		\arrow[""{name=4, anchor=center, inner sep=0}, Rightarrow, no head, from=1-2, to=2-2]
		\arrow[Rightarrow, no head, from=1-2, to=1-1]
		\arrow[""{name=5, anchor=center, inner sep=0}, Rightarrow, no head, from=1-1, to=2-1]
		\arrow["{\pc i}"{description}, draw=none, from=4, to=5]
		\arrow["{V(\varepsilon, i)}"{description}, draw=none, from=2, to=3]
		\arrow["\flip\dag"{description}, draw=none, from=0, to=1]
	\end{tikzcd}
    ~=~
    % https://q.uiver.app/#q=WzAsNCxbMSwxLCJaIl0sWzAsMSwiWiJdLFsxLDAsIloiXSxbMCwwLCJaIl0sWzAsMSwiVihkLCBkKSIsMCx7InN0eWxlIjp7ImJvZHkiOnsibmFtZSI6ImJhcnJlZCJ9fX1dLFsyLDAsIiIsMCx7ImxldmVsIjoyLCJzdHlsZSI6eyJoZWFkIjp7Im5hbWUiOiJub25lIn19fV0sWzIsMywiIiwyLHsibGV2ZWwiOjIsInN0eWxlIjp7ImhlYWQiOnsibmFtZSI6Im5vbmUifX19XSxbMywxLCIiLDIseyJsZXZlbCI6Miwic3R5bGUiOnsiaGVhZCI6eyJuYW1lIjoibm9uZSJ9fX1dLFs1LDcsIlxccGMgZCIsMSx7InNob3J0ZW4iOnsic291cmNlIjoyMCwidGFyZ2V0IjoyMH0sInN0eWxlIjp7ImJvZHkiOnsibmFtZSI6Im5vbmUifSwiaGVhZCI6eyJuYW1lIjoibm9uZSJ9fX1dXQ==
	\begin{tikzcd}
		Z & Z \\
		Z & Z
		\arrow["{V(d, d)}", "\shortmid"{marking}, from=2-2, to=2-1]
		\arrow[""{name=0, anchor=center, inner sep=0}, Rightarrow, no head, from=1-2, to=2-2]
		\arrow[Rightarrow, no head, from=1-2, to=1-1]
		\arrow[""{name=1, anchor=center, inner sep=0}, Rightarrow, no head, from=1-1, to=2-1]
		\arrow["{\pc d}"{description}, draw=none, from=0, to=1]
	\end{tikzcd}
    \]
    \[
    % https://q.uiver.app/#q=WzAsOCxbMiwwLCJaIl0sWzEsMCwiWiJdLFswLDAsIloiXSxbMiwxLCJaIl0sWzEsMSwiWiJdLFswLDEsIloiXSxbMiwyLCJaIl0sWzAsMiwiWiJdLFswLDEsIlYoZCwgaSkiLDIseyJzdHlsZSI6eyJib2R5Ijp7Im5hbWUiOiJiYXJyZWQifX19XSxbMSwyLCJWKGQsIGkpIiwyLHsic3R5bGUiOnsiYm9keSI6eyJuYW1lIjoiYmFycmVkIn19fV0sWzMsNCwiVihkLCBkKSIsMV0sWzQsNSwiVihkLCBkKSIsMV0sWzAsMywiIiwwLHsibGV2ZWwiOjIsInN0eWxlIjp7ImhlYWQiOnsibmFtZSI6Im5vbmUifX19XSxbMSw0LCIiLDEseyJsZXZlbCI6Miwic3R5bGUiOnsiaGVhZCI6eyJuYW1lIjoibm9uZSJ9fX1dLFsyLDUsIiIsMix7ImxldmVsIjoyLCJzdHlsZSI6eyJoZWFkIjp7Im5hbWUiOiJub25lIn19fV0sWzYsNywiVihkLCBkKSIsMCx7InN0eWxlIjp7ImJvZHkiOnsibmFtZSI6ImJhcnJlZCJ9fX1dLFszLDYsIiIsMCx7ImxldmVsIjoyLCJzdHlsZSI6eyJoZWFkIjp7Im5hbWUiOiJub25lIn19fV0sWzUsNywiIiwwLHsibGV2ZWwiOjIsInN0eWxlIjp7ImhlYWQiOnsibmFtZSI6Im5vbmUifX19XSxbMTYsMTcsIlxcY3AgZChkLCBkKSIsMSx7InNob3J0ZW4iOnsic291cmNlIjoyMCwidGFyZ2V0IjoyMH0sInN0eWxlIjp7ImJvZHkiOnsibmFtZSI6Im5vbmUifSwiaGVhZCI6eyJuYW1lIjoibm9uZSJ9fX1dLFsxMiwxMywiXFxmbGlwXFxkYWciLDEseyJzaG9ydGVuIjp7InNvdXJjZSI6MjAsInRhcmdldCI6MjB9LCJzdHlsZSI6eyJib2R5Ijp7Im5hbWUiOiJub25lIn0sImhlYWQiOnsibmFtZSI6Im5vbmUifX19XSxbMTMsMTQsIlxcZmxpcFxcZGFnIiwxLHsic2hvcnRlbiI6eyJzb3VyY2UiOjIwLCJ0YXJnZXQiOjIwfSwic3R5bGUiOnsiYm9keSI6eyJuYW1lIjoibm9uZSJ9LCJoZWFkIjp7Im5hbWUiOiJub25lIn19fV1d
	\begin{tikzcd}[column sep=large]
		Z & Z & Z \\
		Z & Z & Z \\
		Z && Z
		\arrow["{V(d, i)}"', "\shortmid"{marking}, from=1-3, to=1-2]
		\arrow["{V(d, i)}"', "\shortmid"{marking}, from=1-2, to=1-1]
		\arrow["{V(d, d)}"{description}, from=2-3, to=2-2]
		\arrow["{V(d, d)}"{description}, from=2-2, to=2-1]
		\arrow[""{name=0, anchor=center, inner sep=0}, Rightarrow, no head, from=1-3, to=2-3]
		\arrow[""{name=1, anchor=center, inner sep=0}, Rightarrow, no head, from=1-2, to=2-2]
		\arrow[""{name=2, anchor=center, inner sep=0}, Rightarrow, no head, from=1-1, to=2-1]
		\arrow["{V(d, d)}", "\shortmid"{marking}, from=3-3, to=3-1]
		\arrow[""{name=3, anchor=center, inner sep=0}, Rightarrow, no head, from=2-3, to=3-3]
		\arrow[""{name=4, anchor=center, inner sep=0}, Rightarrow, no head, from=2-1, to=3-1]
		\arrow["{\cp d(d, d)}"{description}, draw=none, from=3, to=4]
		\arrow["\flip\dag"{description}, draw=none, from=0, to=1]
		\arrow["\flip\dag"{description}, draw=none, from=1, to=2]
	\end{tikzcd}
    \quad = \quad
	\begin{tikzcd}[column sep=large]
		Z & Z & Z \\
		Z & Z & Z \\
		Z && Z \\
		Z && Z
		\arrow["{V(d, i)}"', "\shortmid"{marking}, from=1-2, to=1-1]
		\arrow["{V(d, i)}"{description}, from=2-3, to=2-2]
		\arrow["{V(d, d)}"{description}, from=2-2, to=2-1]
		\arrow[""{name=0, anchor=center, inner sep=0}, Rightarrow, no head, from=1-2, to=2-2]
		\arrow[""{name=1, anchor=center, inner sep=0}, Rightarrow, no head, from=1-1, to=2-1]
		\arrow["{V(d, i)}"{description}, from=3-3, to=3-1]
		\arrow[""{name=2, anchor=center, inner sep=0}, Rightarrow, no head, from=2-3, to=3-3]
		\arrow[""{name=3, anchor=center, inner sep=0}, Rightarrow, no head, from=2-1, to=3-1]
		\arrow["{V(d, d)}", "\shortmid"{marking}, from=4-3, to=4-1]
		\arrow[""{name=4, anchor=center, inner sep=0}, Rightarrow, no head, from=3-3, to=4-3]
		\arrow[""{name=5, anchor=center, inner sep=0}, Rightarrow, no head, from=3-1, to=4-1]
		\arrow["{V(d, i)}"', "\shortmid"{marking}, from=1-3, to=1-2]
		\arrow[""{name=6, anchor=center, inner sep=0}, Rightarrow, no head, from=1-3, to=2-3]
		\arrow["{\cp d(d, i)}"{description}, draw=none, from=2, to=3]
		\arrow["\flip\dag"{description}, draw=none, from=4, to=5]
		\arrow["\flip\dag"{description}, draw=none, from=0, to=1]
		\arrow["{=}"{description}, draw=none, from=6, to=0]
	\end{tikzcd}
    \]
    An \emph{$i$-relative comonad} (alternatively \emph{comonad on $i$}, \emph{comonad relative to $i$}, or simply \emph{$i$-comonad}) is a relative comonad with coroot $i$. A \emph{morphism} of $i$-comonads from $(d, \flip\dag, \varepsilon)$ to $(d', \flip\dag', \varepsilon')$ is a morphism of the corresponding relative monads in $\X\co$. Explicitly, this is a 2-cell $\delta \colon d' \tto d$ rendering the following diagrams commutative.
	\[
	% https://q.uiver.app/?q=WzAsMyxbMSwxLCJpIl0sWzAsMCwiZCJdLFsyLDAsImQnIl0sWzEsMCwiXFx2YXJlcHNpbG9uIiwyXSxbMiwwLCJcXHZhcmVwc2lsb24nIl0sWzIsMSwiXFxkZWx0YSIsMl1d
	\begin{tikzcd}
		d && {d'} \\
		& i
		\arrow["\varepsilon"', from=1-1, to=2-2]
		\arrow["{\varepsilon'}", from=1-3, to=2-2]
		\arrow["\delta"', from=1-3, to=1-1]
	\end{tikzcd}
	\hspace{4em}
	% https://q.uiver.app/?q=WzAsNSxbMCwwLCJWKGQsIGkpIl0sWzAsMSwiVihkLCBkKSJdLFsxLDIsIlYoZCcsIGQpIl0sWzIsMCwiVihkJywgaSkiXSxbMiwxLCJWKGQnLCBkJykiXSxbMCwxLCJcXGZsaXBcXGRhZyIsMl0sWzEsMiwiVihcXGRlbHRhLCBkKSIsMl0sWzAsMywiVihcXGRlbHRhLCBpKSJdLFszLDQsIlxcZmxpcFxcZGFnJyJdLFs0LDIsIlYoZCcsIFxcZGVsdGEpIl1d
        \begin{tikzcd}[sep=small]
		{V(d, i)} && {V(d', i)} \\
		{V(d, d)} && {V(d', d')} \\
		& {V(d', d)}
		\arrow["\flip\dag"', from=1-1, to=2-1]
		\arrow["{V(\delta, d)}"', from=2-1, to=3-2]
		\arrow["{V(\delta, i)}", from=1-1, to=1-3]
		\arrow["{\flip\dag'}", from=1-3, to=2-3]
		\arrow["{V(d', \delta)}", from=2-3, to=3-2]
	\end{tikzcd}
	\]
    $i$-comonads and their morphisms form a category $\RCmnd(i)$.
\end{definition}

One may view relative comonads as monoids in a \emph{right-}skew-multicategory of tight-cells $Z \to V$ in $\X$, dually to the theory of \cref{skew-multicategorical-hom-categories}. In this case, the multimorphisms are given by 2-cells
% https://q.uiver.app/#q=WzAsMTEsWzgsMCwiViJdLFs3LDAsIloiXSxbNiwwLCJWIl0sWzUsMCwiWiJdLFs0LDAsIlxcY2RvdHMiXSxbMywwLCJWIl0sWzAsMCwiWiJdLFs4LDEsIlYiXSxbMCwxLCJaIl0sWzEsMCwiViJdLFsyLDAsIloiXSxbMCwxLCJwX24iLDIseyJzdHlsZSI6eyJib2R5Ijp7Im5hbWUiOiJiYXJyZWQifX19XSxbMSwyLCJWKDEsIGkpIiwyLHsic3R5bGUiOnsiYm9keSI6eyJuYW1lIjoiYmFycmVkIn19fV0sWzIsMywicF97biAtIDF9IiwyLHsic3R5bGUiOnsiYm9keSI6eyJuYW1lIjoiYmFycmVkIn19fV0sWzMsNCwiVigxLCBpKSIsMix7InN0eWxlIjp7ImJvZHkiOnsibmFtZSI6ImJhcnJlZCJ9fX1dLFs0LDUsIlYoMSwgaSkiLDIseyJzdHlsZSI6eyJib2R5Ijp7Im5hbWUiOiJiYXJyZWQifX19XSxbNyw4LCJxIiwwLHsic3R5bGUiOnsiYm9keSI6eyJuYW1lIjoiYmFycmVkIn19fV0sWzAsNywiIiwwLHsibGV2ZWwiOjIsInN0eWxlIjp7ImhlYWQiOnsibmFtZSI6Im5vbmUifX19XSxbNiw4LCIiLDIseyJsZXZlbCI6Miwic3R5bGUiOnsiaGVhZCI6eyJuYW1lIjoibm9uZSJ9fX1dLFs5LDYsInBfMSIsMix7InN0eWxlIjp7ImJvZHkiOnsibmFtZSI6ImJhcnJlZCJ9fX1dLFs1LDEwLCJwXzIiLDIseyJzdHlsZSI6eyJib2R5Ijp7Im5hbWUiOiJiYXJyZWQifX19XSxbMTAsOSwiVigxLCBpKSIsMix7InN0eWxlIjp7ImJvZHkiOnsibmFtZSI6ImJhcnJlZCJ9fX1dLFsxNywxOCwiXFxwc2kiLDEseyJzaG9ydGVuIjp7InNvdXJjZSI6MjAsInRhcmdldCI6MjB9LCJzdHlsZSI6eyJib2R5Ijp7Im5hbWUiOiJub25lIn0sImhlYWQiOnsibmFtZSI6Im5vbmUifX19XV0=
\[\begin{tikzcd}
	Z & V & Z & V & \cdots & Z & V & Z & V \\
	Z &&&&&&&& V
	\arrow["{p_n}"', "\shortmid"{marking}, from=1-9, to=1-8]
	\arrow["{V(1, i)}"', "\shortmid"{marking}, from=1-8, to=1-7]
	\arrow["{p_{n - 1}}"', "\shortmid"{marking}, from=1-7, to=1-6]
	\arrow["{V(1, i)}"', "\shortmid"{marking}, from=1-6, to=1-5]
	\arrow["{V(1, i)}"', "\shortmid"{marking}, from=1-5, to=1-4]
	\arrow["q", "\shortmid"{marking}, from=2-9, to=2-1]
	\arrow[""{name=0, anchor=center, inner sep=0}, Rightarrow, no head, from=1-9, to=2-9]
	\arrow[""{name=1, anchor=center, inner sep=0}, Rightarrow, no head, from=1-1, to=2-1]
	\arrow["{p_1}"', "\shortmid"{marking}, from=1-2, to=1-1]
	\arrow["{p_2}"', "\shortmid"{marking}, from=1-4, to=1-3]
	\arrow["{V(1, i)}"', "\shortmid"{marking}, from=1-3, to=1-2]
	\arrow["\psi"{description}, draw=none, from=0, to=1]
\end{tikzcd}\]
after which we restrict to the corepresentable loose-cells. Note that relative comonads, like relative monads, are \emph{monoids} in skew-multicategories, rather than comonoids. Consequently, an $i$-comonad $D$ induces a loose-monad $V(D, i)$. If $\X$ admits right extensions of tight-cells $Z \to V$ along the tight-cell $i \colon Z \to V$, then the category $\X[Z, V]$ is equipped with right-skew-monoidal structure in which a comonoid is precisely an $i$-comonad, by the dual of \cref{Xj1-is-skew-monoidal}.

\begin{definition}
    A \emph{relative coadjunction}\footnotemark{} is a relative adjunction in $\X\co$.
	\footnotetext{In older works on category theory, the terms \emph{adjoint} and \emph{coadjoint} are occasionally encountered, typically meaning \emph{left adjoint} and \emph{right adjoint} respectively (\eg{}~\cite{lawvere1963functorial, thiebaud1971relative}). The terms \emph{adjunction} and \emph{coadjunction} are also occasionally found to refer to the unit and counit of an adjunction (\eg{}~\cite{lambek1973localization}). Since this terminology has fallen out of usage, and is particularly convenient in the context of relative adjoints, we feel there is no danger of confusion in repurposing the terminology.}
	Explicitly, this comprises
    % https://q.uiver.app/?q=WzAsMyxbMCwwLCJaIl0sWzEsMSwiWCJdLFsyLDAsIlYiXSxbMCwxLCJyIiwyLHsibGFiZWxfcG9zaXRpb24iOjQwfV0sWzEsMiwiXFxlbGwiLDIseyJsYWJlbF9wb3NpdGlvbiI6NjB9XSxbMCwyLCJpIl0sWzQsMywiIiwwLHsib2Zmc2V0IjoyLCJsZXZlbCI6MSwic3R5bGUiOnsibmFtZSI6ImFkanVuY3Rpb24ifX1dXQ==
	\[\begin{tikzcd}
		Z && V \\
		& X
		\arrow[""{name=0, anchor=center, inner sep=0}, "r"'{pos=0.4}, from=1-1, to=2-2]
		\arrow[""{name=1, anchor=center, inner sep=0}, "\ell"'{pos=0.6}, from=2-2, to=1-3]
		\arrow["i", from=1-1, to=1-3]
		\arrow["\dashv"{anchor=center, rotate=-180}, shift right=2, draw=none, from=1, to=0]
	\end{tikzcd}\]
    \begin{enumerate}
        \item a tight-cell $i \colon Z \to V$, the \emph{coroot};
        \item a tight-cell $\ell \colon X \to V$, the \emph{left (relative) coadjoint};
        \item a tight-cell $r \colon Z \to X$, the \emph{right (relative) coadjoint};
        \item an isomorphism $\sharp \colon V(\ell, i) \iso X(1, r) \cocolon \flat$, the \emph{(left- and right-) transposition operators}.
    \end{enumerate}%
    We denote by $r \rcadj i \ell$ such data\footnotemark{} (by convention leaving the transposition operators implicit), and call $X$ the \emph{nadir}. An \emph{$i$-relative coadjunction} (or simply \emph{$i$-coadjunction}) is a relative coadjunction with coroot $i$.
	\footnotetext{We prefer to place the right coadjoint on the left, reflecting the composition order $(r \d \ell)$. However, the mirrored convention $\ell \mathrel{{}_i\!\!\adj} r$, which is used, for instance, in \cite[Def.~2]{reggiani1983doppie}, is also possible and is compatible with our convention.}
\end{definition}

We leave the reader to dualise the definitions of left- and right-morphisms of relative adjunctions (\cref{left-morphism,right-morphism}).

The distinction between relative adjunctions and relative coadjunctions disappears when the (co)root is the identity: that is, $\ell \radj 1 r$ if and only if $r \rcadj 1 \ell$. For this reason, relative adjunctions and relative coadjunctions have not been adequately distinguished in the literature, and authors have often used the term \emph{relative adjunction} to refer to either concept, disambiguating only via context\footnotemark{}. However, it is helpful to distinguish between the two concepts: for instance, while a relative adjunction induces a relative monad, a relative coadjunction induces a relative comonad.
\footnotetext{The naming convention of \textcite{ulmer1968properties} suggests \emph{$j$-left adjunction} for our $j$-adjunction, and \emph{$i$-right adjunction} for our $i$-coadjunction. In the terminology of \citeauthor{ulmer1968properties}, $\ell$ is \emph{$j$-left adjoint} to $r$ when $\ljr$; and $r$ is \emph{$i$-right adjoint} to $\ell$ when $r \rcadj i \ell$. This has the significant shortcoming that it leaves no convenient terminology for the right $j$-adjoint or left $i$-coadjoint.}

\begin{remark}
	Let $\ell$ and $r$ be antiparallel tight-cells in an equipment $\X$. Then the following are equivalent.
	\begin{enumerate}
		\item $\ell$ is left-adjoint to $r$ in $\X$.
		\item $\ell$ is left-coadjoint to $r$ in $\X$.
		\item $\ell$ is right-adjoint to $r$ in $\X\co$.
		\item $\ell$ is right-coadjoint to $r$ in $\X\co$. \qedhere
	\end{enumerate}
\end{remark}

\section{Enriched relative monads}
\label{relative-monads-in-VCat}

A motivating setting for this paper is that of enriched category theory: while the theory of relative monads in ordinary category theory has been developed to some extent~\cite{altenkirch2010monads,altenkirch2015monads}, the theory of relative monads in enriched category theory remains largely undeveloped. The formal theory we have developed herein allows us to deduce the theorems of interest for enriched categories by specialising to equipments of enriched categories. For simplicity, we work with enrichment in monoidal categories~\cite{benabou1965categories,maranda1965formal,kelly1982basic}, though we shall not need to impose symmetry, closure, or (co)completeness assumptions in general. In future work, we shall show that the results of interest hold for much more general bases of enrichment.

Throughout this section, we assume a fixed monoidal category $(\V, \tensor, \tensorI)$.
To simplify the notation, we work as though $\V$ is strict, but occasionally make explicit the unitors $\lambda_v \colon \tensorI \tensor v \to v$ and $\rho_v \colon v \to v \tensor \tensorI$ for clarity.

\begin{definition}
	\label{VCat}
    The \vdc{} $\VCat$ of \emph{categories enriched in $\V$} (or simply \emph{$\V$-categories}) is defined as follows.
    \begin{enumerate}
      \item An object is a \emph{$\V$-category}~\cite[\S1.2]{kelly1982basic}, comprising a class $\ob C$ of objects, an object $C(x, y)$ of $\V$ for each $x, y \in \ob C$, a morphism $\I_x \colon \tensorI \to C(x, x)$ in $\V$ for each $x \in \ob C$, and a morphism $\circ_{x, y, z} \colon C(x, y) \otimes C(y, z) \to C(x, z)$ in $\V$ for each $x, y, z \in \ob C$, subject to unitality and associativity.
      \item A tight-cell $f \colon C \to D$ is a \emph{$\V$-functor}~\cite[\S1.2]{kelly1982basic}, comprising a function $\ob f \colon \ob C \to \ob D$, together with a morphism $f_{x, y} \colon C(x, y) \to D(\ob f x, \ob f y)$ in $\V$ for each $x, y \in \ob C$, preserving identities and composites.
      \item A loose-cell $p \colon D \lto C$ is a \emph{$\V$-distributor}\footnotemark{}~\cite[\S3.1.c]{benabou1973distributeurs}, comprising an object $p(x, y)$ of $\V$ for each $x \in \ob C$ and $y \in \ob D$, and morphisms $\circ_{x', x, y} \colon C(x', x) \otimes p(x, y) \to p(x', y)$ and $\circ_{x, y, y'} \colon p(x, y) \otimes D(y, y') \to p(x, y')$ in $\V$ compatible with each other, and with composition and identities in $C$ and $D$.
    	\footnotetext{$\V$-distributors are alternatively called \emph{$\V$-profunctors} or \emph{$\V$-(bi)modules}.}
        \item A 2-cell
          % https://q.uiver.app/#q=WzAsNSxbMCwwLCJDXzAiXSxbMiwwLCJDX24iXSxbMiwxLCJEJyJdLFswLDEsIkQiXSxbMSwwLCJcXGNkb3RzIl0sWzIsMywicSIsMCx7InN0eWxlIjp7ImJvZHkiOnsibmFtZSI6ImJhcnJlZCJ9fX1dLFsxLDIsImciXSxbMCwzLCJmIiwyXSxbMSw0LCJwX24iLDIseyJzdHlsZSI6eyJib2R5Ijp7Im5hbWUiOiJiYXJyZWQifX19XSxbNCwwLCJwXzEiLDIseyJzdHlsZSI6eyJib2R5Ijp7Im5hbWUiOiJiYXJyZWQifX19XSxbNiw3LCJcXHBoaSIsMSx7InNob3J0ZW4iOnsic291cmNlIjoyMCwidGFyZ2V0IjoyMH0sInN0eWxlIjp7ImJvZHkiOnsibmFtZSI6Im5vbmUifSwiaGVhZCI6eyJuYW1lIjoibm9uZSJ9fX1dXQ==
		\[\begin{tikzcd}
			{C_0} & \cdots & {C_n} \\
			D && {D'}
			\arrow["q", "\shortmid"{marking}, from=2-3, to=2-1]
			\arrow[""{name=0, anchor=center, inner sep=0}, "g", from=1-3, to=2-3]
			\arrow[""{name=1, anchor=center, inner sep=0}, "f"', from=1-1, to=2-1]
			\arrow["{p_n}"', "\shortmid"{marking}, from=1-3, to=1-2]
			\arrow["{p_1}"', "\shortmid"{marking}, from=1-2, to=1-1]
			\arrow["\phi"{description}, draw=none, from=0, to=1]
		\end{tikzcd}\]
        is a \emph{$\V$-natural transformation}\footnotemark{}, comprising a morphism
        \[\phi_{x_0, \ldots, x_n} \colon p_1(x_0, x_1) \otimes \cdots \otimes p_n(x_{n - 1}, x_n) \to q(\ob f x_0, \ob g x_n)\]
		in $\V$ for each $x_0 \in \ob{C_0}, \dots, x_n \in \ob{C_n}$, rendering the following \emph{$\V$-naturality} diagrams commutative.
		\footnotetext{The notion of $\V$-natural transformation defined here is more general than the usual notion of $\V$-natural transformation between $\V$-functors (\cf{}~\cite[(1.7)]{kelly1982basic}), which is recovered when $n = 0$ and $q = D(1, 1)$ (\cf{}~\cite[Example~6.4]{cruttwell2010unified}). When $f$ and $g$ are identities, this definition recovers the notion of \emph{$\V$-form} introduced by \textcite[134]{day1997monoidal}, though note that the definition \loccit{} is incomplete, as it omits the coherence condition for nullary $\V$-forms.}
\[
% https://q.uiver.app/?q=WzAsNSxbMCwwLCJcXHRlbnNvckkgXFxvdGltZXMgQ18wKHgsIHgnKSJdLFswLDIsInEoXFxvYiBmIHgsIFxcb2IgZyB4KSBcXG90aW1lcyBEJyhcXG9iIGcgeCwgXFxvYiBnIHgnKSJdLFsyLDAsIkNfMCh4LCB4JykgXFxvdGltZXMgXFx0ZW5zb3JJIl0sWzEsMywicShcXG9iIGYgeCwgXFxvYiBnIHgnKSJdLFsyLDIsIkQoXFxvYiBmIHgsIFxcb2IgZiB4JykgXFxvdGltZXMgcShcXG9iIGYgeCcsIFxcb2IgZyB4JykiXSxbMCwxLCJcXHBoaV94IFxcb3RpbWVzIGdfe3gsIHgnfSIsMl0sWzIsNCwiZl97eCwgeCd9IFxcb3RpbWVzIFxccGhpX3t4J30iXSxbMSwzLCJcXGNpcmNecV97XFxvYiBmIHgsIFxcb2IgZyB4LCBcXG9iIGcgeCd9IiwyXSxbNCwzLCJcXGNpcmNecV97XFxvYiBmIHgsIFxcb2IgZiB4JywgXFxvYiBnIHgnfSJdLFswLDIsIlxcbGFtYmRhX3tDXzAoeCwgeCcpfSBcXGQgXFxyaG9fe0NfMCh4LCB4Jyl9Il1d
\begin{tikzcd}[sep=small]
	{\tensorI \otimes C_0(x, x')} && {C_0(x, x') \otimes \tensorI} \\
	\\
	{q(\ob f x, \ob g x) \otimes D'(\ob g x, \ob g x')} && {D(\ob f x, \ob f x') \otimes q(\ob f x', \ob g x')} \\
	& {q(\ob f x, \ob g x')}
	\arrow["{\phi_x \otimes g_{x, x'}}"', from=1-1, to=3-1]
	\arrow["{f_{x, x'} \otimes \phi_{x'}}", from=1-3, to=3-3]
	\arrow["{\circ^q_{\ob f x, \ob g x, \ob g x'}}"', from=3-1, to=4-2]
	\arrow["{\circ^q_{\ob f x, \ob f x', \ob g x'}}", from=3-3, to=4-2]
	\arrow["{\lambda_{C_0(x, x')} \d \rho_{C_0(x, x')}}", from=1-1, to=1-3]
\end{tikzcd}
\tag{$n = 0$}
\]
\[
% https://q.uiver.app/#q=WzAsNSxbMCwwLCJDXzAoYywgeF8wKSBcXG90aW1lcyBwXzEoeF8wLCB4XzEpIFxcb3RpbWVzIFxcY2RvdHMgXFxvdGltZXMgcF9uKHhfe24gLSAxfSwgeF9uKSJdLFswLDIsIkQoXFxvYiBmIGMsIFxcb2IgZiB4XzApIFxcb3RpbWVzIHEoXFxvYiBmIHhfMCwgXFxvYiBnIHhfbikiXSxbMSwwLCJwXzEoYywgeF8xKSBcXG90aW1lcyBcXGNkb3RzIFxcb3RpbWVzIHBfbih4X3tuIC0gMX0sIHhfbikiXSxbMSwyLCJxKFxcb2IgZiBjLCBcXG9iIGcgeF9uKSJdLFswLDEsIkQoXFxvYiBmIGMsIFxcb2IgZiB4XzApIFxcb3RpbWVzIHBfMSh4XzAsIHhfMSkgXFxvdGltZXMgXFxjZG90cyBcXG90aW1lcyBwX24oeF97biAtIDF9LCB4X24pIl0sWzAsNCwiZl97YywgeF8wfSBcXG90aW1lcyBwXzEoeF8wLCB4XzEpIFxcb3RpbWVzIFxcY2RvdHMgXFxvdGltZXMgcF9uKHhfe24gLSAxfSwgeF9uKSIsMV0sWzQsMSwiRChcXG9iIGYgYywgXFxvYiBmIHhfMCkgXFxvdGltZXMgXFxwaGlfe3hfMCwgXFxsZG90cywgeF9ufSIsMV0sWzEsMywiXFxjaXJjXnFfe1xcb2IgZiBjLCBcXG9iIGYgeF8wLCBcXG9iIGcgeF9ufSIsMl0sWzAsMiwiXFxjaXJjXntwXzF9X3tjLCB4XzAsIHhfMX0gXFxvdGltZXMgXFxjZG90cyJdLFsyLDMsIlxccGhpX3tjLCB4XzEsIFxcbGRvdHMsIHhfbn0iXV0=
\begin{tikzcd}
	{C_0(c, x_0) \otimes p_1(x_0, x_1) \otimes \cdots \otimes p_n(x_{n - 1}, x_n)} & {p_1(c, x_1) \otimes \cdots \otimes p_n(x_{n - 1}, x_n)} \\
	{D(\ob f c, \ob f x_0) \otimes p_1(x_0, x_1) \otimes \cdots \otimes p_n(x_{n - 1}, x_n)} \\
	{D(\ob f c, \ob f x_0) \otimes q(\ob f x_0, \ob g x_n)} & {q(\ob f c, \ob g x_n)}
	\arrow["{f_{c, x_0} \otimes p_1(x_0, x_1) \otimes \cdots \otimes p_n(x_{n - 1}, x_n)}"{description}, from=1-1, to=2-1]
	\arrow["{D(\ob f c, \ob f x_0) \otimes \phi_{x_0, \ldots, x_n}}"{description}, from=2-1, to=3-1]
	\arrow["{\circ^q_{\ob f c, \ob f x_0, \ob g x_n}}"', from=3-1, to=3-2]
	\arrow["{\circ^{p_1}_{c, x_0, x_1} \otimes \cdots}", from=1-1, to=1-2]
	\arrow["{\phi_{c, x_1, \ldots, x_n}}", from=1-2, to=3-2]
\end{tikzcd}
\tag{$n \geq 1$}
\]
\[
% https://q.uiver.app/?q=WzAsNSxbMCwwLCJwXzEoeF8wLCB4XzEpIFxcb3RpbWVzIFxcY2RvdHMgXFxvdGltZXMgcF9uKHhfe24gLSAxfSwgeF9uKSBcXG90aW1lcyBDX24oeF9uLCBjKSJdLFswLDIsInEoXFxvYiBmIHhfMCwgXFxvYiBnIHhfbikgXFxvdGltZXMgRCcoXFxvYiBnIHhfbiwgXFxvYiBnIGMpIl0sWzEsMCwicF8xKHhfMCwgeF8xKSBcXG90aW1lcyBcXGNkb3RzIFxcb3RpbWVzIHBfbih4X3tuIC0gMX0sIGMpIl0sWzEsMiwicShcXG9iIGYgeF8wLCBcXG9iIGcgYykiXSxbMCwxLCJwXzEoeF8wLCB4XzEpIFxcb3RpbWVzIFxcY2RvdHMgXFxvdGltZXMgcF9uKHhfe24gLSAxfSwgeF9uKSBcXG90aW1lcyBEJyhcXG9iIGcgeF9uLCBcXG9iIGcgYykiXSxbMCw0LCJwXzEoeF8wLCB4XzEpIFxcb3RpbWVzIFxcY2RvdHMgXFxvdGltZXMgcF9uKHhfe24gLSAxfSwgeF9uKSBcXG90aW1lcyBnX3t4X24sIGN9IiwxXSxbNCwxLCJcXHBoaV97eF8wLCBcXGxkb3RzLCB4X259IFxcb3RpbWVzIEQnKFxcb2IgZyB4X24sIFxcb2IgZyBjKSIsMV0sWzEsMywiXFxjaXJjXnFfe1xcb2IgZiB4XzAsIFxcb2IgZyB4X24sIFxcb2IgZyBjfSIsMl0sWzAsMiwiXFxjZG90cyBcXG90aW1lcyBcXGNpcmNee3Bfbn1fe3hfe24gLSAxfSwgeF9uLCBjfSJdLFsyLDMsIlxccGhpX3t4XzAsIFxcbGRvdHMsIHhfe24gLSAxfSwgY30iXV0=
\begin{tikzcd}
	{p_1(x_0, x_1) \otimes \cdots \otimes p_n(x_{n - 1}, x_n) \otimes C_n(x_n, c)} & {p_1(x_0, x_1) \otimes \cdots \otimes p_n(x_{n - 1}, c)} \\
	{p_1(x_0, x_1) \otimes \cdots \otimes p_n(x_{n - 1}, x_n) \otimes D'(\ob g x_n, \ob g c)} \\
	{q(\ob f x_0, \ob g x_n) \otimes D'(\ob g x_n, \ob g c)} & {q(\ob f x_0, \ob g c)}
	\arrow["{p_1(x_0, x_1) \otimes \cdots \otimes p_n(x_{n - 1}, x_n) \otimes g_{x_n, c}}"{description}, from=1-1, to=2-1]
	\arrow["{\phi_{x_0, \ldots, x_n} \otimes D'(\ob g x_n, \ob g c)}"{description}, from=2-1, to=3-1]
	\arrow["{\circ^q_{\ob f x_0, \ob g x_n, \ob g c}}"', from=3-1, to=3-2]
	\arrow["{\cdots \otimes \circ^{p_n}_{x_{n - 1}, x_n, c}}", from=1-1, to=1-2]
	\arrow["{\phi_{x_0, \ldots, x_{n - 1}, c}}", from=1-2, to=3-2]
\end{tikzcd}
\tag{$n \geq 1$}
\]
\[
% https://q.uiver.app/?q=WzAsNCxbMSwwLCJwXzEoeF8wLCB4XzEpIFxcb3RpbWVzIFxcY2RvdHMgXFxvdGltZXMgcF9pKHhfe2ktMX0sIHhfaSkgXFxvdGltZXMgQ19pKHhfaSwgeCdfaSkgXFxvdGltZXMgcF97aSsxfSh4J19pLCB4X3tpKzF9KSBcXG90aW1lcyBcXGNkb3RzIFxcb3RpbWVzIHBfbih4X3tuLTF9LCB4X24pIl0sWzEsMiwicShcXG9iIGYgeF8wLCBcXG9iIGcgeF9uKSJdLFswLDEsIlxcY2RvdHMgXFxvdGltZXMgcF9pKHhfe2ktMX0sIHgnX2kpIFxcb3RpbWVzIHBfe2krMX0oeCdfaSwgeF97aSsxfSkgXFxvdGltZXMgXFxjZG90cyJdLFsyLDEsIlxcY2RvdHMgXFxvdGltZXMgcF9pKHhfe2ktMX0sIHhfaSkgXFxvdGltZXMgcF97aSsxfSh4X2ksIHhfe2krMX0pIFxcb3RpbWVzIFxcY2RvdHMiXSxbMCwyLCJcXGNkb3RzIFxcb3RpbWVzIFxcY2lyY157cF9pfV97eF97aS0xfSwgeF9pLCB4J19pfSBcXG90aW1lcyBcXGNkb3RzIiwyLHsibGFiZWxfcG9zaXRpb24iOjgwfV0sWzAsMywiXFxjZG90cyBcXG90aW1lcyBcXGNpcmNee3Bfe2kgKyAxfX1fe3hfaSwgeCdfaSwgeF97aSsxfX0gXFxvdGltZXMgXFxjZG90cyIsMCx7ImxhYmVsX3Bvc2l0aW9uIjo4MH1dLFsyLDEsIlxccGhpX3t4XzAsIFxcbGRvdHMsIHhfe2kgLSAxfSwgeF9pJywgeF97aSArIDF9LCBcXGxkb3RzLCB4X259IiwyXSxbMywxLCJcXHBoaV97eF8wLCBcXGxkb3RzLCB4X3tpIC0gMX0sIHhfaSwgeF97aSArIDF9LCBcXGxkb3RzLCB4X259Il1d
\begin{tikzcd}[column sep=-14em]
	& {p_1(x_0, x_1) \otimes \cdots \otimes p_i(x_{i-1}, x_i) \otimes C_i(x_i, x'_i) \otimes p_{i+1}(x'_i, x_{i+1}) \otimes \cdots \otimes p_n(x_{n-1}, x_n)} \\
	{\cdots \otimes p_i(x_{i-1}, x'_i) \otimes p_{i+1}(x'_i, x_{i+1}) \otimes \cdots} && {\cdots \otimes p_i(x_{i-1}, x_i) \otimes p_{i+1}(x_i, x_{i+1}) \otimes \cdots} \\
	& {q(\ob f x_0, \ob g x_n)}
	\arrow["{\cdots \otimes \circ^{p_i}_{x_{i-1}, x_i, x'_i} \otimes \cdots}"'{pos=0.8}, from=1-2, to=2-1]
	\arrow["{\cdots \otimes \circ^{p_{i + 1}}_{x_i, x'_i, x_{i+1}} \otimes \cdots}"{pos=0.8}, from=1-2, to=2-3]
	\arrow["{\phi_{x_0, \ldots, x_{i - 1}, x_i', x_{i + 1}, \ldots, x_n}}"', from=2-1, to=3-2]
	\arrow["{\phi_{x_0, \ldots, x_{i - 1}, x_i, x_{i + 1}, \ldots, x_n}}", from=2-3, to=3-2]
\end{tikzcd}
\tag{$1 \leq i < n$}
\]
    \end{enumerate}
\end{definition}

\begin{remark}
	\label{vdc-naming-convention}
    We use the name $\VCat$ following our decision to name \vdcs{} after their objects
    (\cf{}~\cref{name-after-objects}). In papers following the convention to name \vdcs{} after their loose-cells, the names
    $\V\h\dc{Dist}$, $\V\h\dc{Prof}$, and $\V\h\dc{Mod}$ are common alternatives.
\end{remark}

\begin{remark}
	Under modest assumptions on the monoidal category $\V$, \cref{VCat} may be simplified (\cf{}~\cite[Example~2.9]{cruttwell2010unified}). For instance, when $\V$ is closed (permitting $\V$ to be viewed as a $\V$-category itself) and symmetric (permitting both the construction of opposite $\V$-categories, and of the tensor product $\boxtimes$ of $\V$-categories), a $\V$-distributor $p \colon D \lto C$ is equivalently a $\V$-functor $C\op \boxtimes D \to \V$. When $\V$ is furthermore cocomplete, then the composite $q \odot p \colon D \lto B$ of $\V$-distributors $p \colon D \lto C$ and $q \colon C \lto B$ exists for small $C$, being exhibited by the following coend. In such cases, it suffices to consider only unary $\V$-natural transformations.
	\[(q \odot p)(b, d) \iso \int^{c \in \ob C} q(b, c) \otimes p(c, d) \qedhere\]
\end{remark}

Each $\V$-category $C$ admits a loose-identity, given by the $\V$-distributor $(x, y) \mapsto C(x, y)$ equipped with the composition structure of $C$. Furthermore, each $\V$-distributor $p \colon B \lto C$ admits a restriction along tight-cells $f \colon D \to C$ and $g \colon A \to B$, given by pre- and postcomposition:
\[p(f, g)(x, y) \defeq p(\ob f x, \ob g y)\]
The associated cartesian 2-cell is the identity on each component, so that a $\V$-natural transformation with unary domain is cartesian precisely when all of its components are isomorphisms in $\V$. In this way, the \vdc{} $\VCat$ forms an equipment~\cite[Examples~7.3]{cruttwell2010unified}.
The tight 2-category $\u\VCat$ is the usual 2-category of $\V$-categories, $\V$-functors, and $\V$-natural transformations.

\subsection{Formal category theory}
\label{formal-category-theory-in-VCat}

We describe concretely the various definitions of \cref{formal-category-theory} in the setting of $\VCat$, to show that we recover the usual notions.

\begin{lemma}
	A $\V$-functor $f \colon C \to D$ is \ff{} in the sense of \cref{full-faithfulness} if and only if for each pair of objects $x, y \in \ob C$, the morphism $f_{x, y} \colon C(x, y) \to D(\ob f x, \ob f y)$ is invertible.
\end{lemma}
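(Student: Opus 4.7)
The plan is to unpack the formal definition from \cref{full-faithfulness} concretely in $\VCat$ and identify the canonically induced 2-cell $C(1,1) \tto D(f, f)$ with the family of hom-morphisms underlying the $\V$-functor $f$.

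First I would recall the relevant structure in $\VCat$: the loose-identity $C(1, 1)$ on a $\V$-category $C$ is the hom-$\V$-distributor $(x, y) \mapsto C(x, y)$, and it is exhibited as the opcartesian completion of the empty chain at $C$ by the nullary $\V$-form whose component at each $x \in \ob C$ is $\I_x \colon \tensorI \to C(x, x)$. The restriction $D(f, f)$ is the $\V$-distributor $(x, y) \mapsto D(\ob f x, \ob f y)$, and the nullary 2-cell $\pc f \colon\, \tto D(f, f)$ is the $\V$-form with component $\I_{\ob f x} \colon \tensorI \to D(\ob f x, \ob f x)$ at each $x$.

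Next, I would show that the family of hom-morphisms $(f_{x, y} \colon C(x, y) \to D(\ob f x, \ob f y))$ assembles into a 2-cell $\tilde f \colon C(1, 1) \tto D(f, f)$ in $\VCat$, and that this is the 2-cell induced by the universal property. The two $\V$-naturality conditions required of $\tilde f$ are exactly the left and right halves of the axiom that $f$ preserves composition, while the factorization equation $(\I_x) \d \tilde f_{x, x} = (\pc f)_x = \I_{\ob f x}$ is exactly the axiom that $f$ preserves identities. By uniqueness of the factorization through an opcartesian cell, $\tilde f$ is the induced 2-cell of \cref{full-faithfulness}.

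Finally, I would observe that a 2-cell $\tilde f \colon C(1, 1) \tto D(f, f)$ in $\VCat$ is invertible precisely when each component $\tilde f_{x, y}$ is invertible in $\V$: given pointwise inverses $(g_{x, y})$ in $\V$, the $\V$-naturality squares for the $g_{x, y}$ are obtained from those for the $\tilde f_{x, y}$ by inverting the arrows along which one composes, so the $g_{x, y}$ again assemble into a $\V$-form, necessarily two-sided inverse to $\tilde f$. Combining the steps, $\pc f$ is opcartesian iff $\tilde f$ is invertible iff each $f_{x, y}$ is invertible in $\V$. The identification in the middle step is a direct unfolding of definitions; the only mild care required is in the $\V$-naturality bookkeeping, which presents no genuine obstacle.
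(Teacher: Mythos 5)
Your argument is correct and is essentially the paper's proof: the paper simply declares the statement immediate from the fact that a $\V$-form with unary domain is (op)cartesian, equivalently invertible, precisely when all its components are isomorphisms in $\V$, which is exactly your final step. Your identification of the induced 2-cell $C(1,1) \tto D(f,f)$ with the family $(f_{x,y})$ — with $\V$-naturality corresponding to preservation of composition and the factorisation equation to preservation of identities — is the definitional unfolding the paper leaves implicit, so there is no substantive difference in approach.
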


\begin{proof}
    Immediate from the fact that a $\V$-natural transformation with unary domain is cartesian precisely when every component is invertible.
\end{proof}

The concrete definitions of weighted colimits involve enriched presheaves.
We will also use $\V$-presheaves as part of a sufficient condition for the existence of algebra objects in $\VCat$~(\cref{VCat-admits-algebra-objects-ii}).

\begin{definition}\label{presheaf}
  A \emph{$\V$-presheaf} on a $\V$-category $Z$ comprises an object $p(z)$ of $\V$ for each object $z \in \ob{Z}$, together with a left-action of $Z$ on $p$, \ie{} a morphism $\c_{z', z} \colon Z(z', z) \tensor p(z) \to p(z')$ in $\V$ for each pair of objects $z, z' \in \ob{Z}$, compatible with identities and composition in $Z$ in the sense that the following diagrams commute.
  \[
% https://q.uiver.app/?q=WzAsMyxbMCwwLCJcXHRlbnNvckkgXFx0ZW5zb3IgcCh6KSJdLFsxLDAsIlooeiwgeikgXFx0ZW5zb3IgcCh6KSJdLFsxLDEsInAoeikiXSxbMSwyLCJcXGNpcmNfe3osIHp9Il0sWzAsMSwiXFxJX3ogXFx0ZW5zb3IgcCh6KSJdLFswLDIsIlxcbGFtYmRhX3twKHopfSIsMl1d
\begin{tikzcd}[column sep=large]
	{\tensorI \tensor p(z)} & {Z(z, z) \tensor p(z)} \\
	& {p(z)}
	\arrow["{\circ_{z, z}}", from=1-2, to=2-2]
	\arrow["{\I_z \tensor p(z)}", from=1-1, to=1-2]
	\arrow["{\lambda_{p(z)}}"', from=1-1, to=2-2]
\end{tikzcd}
\quad
% https://q.uiver.app/?q=WzAsNCxbMCwwLCJaKHonJywgeicpIFxcdGVuc29yIFooeicsIHopIFxcdGVuc29yIHAoeikiXSxbMSwwLCJaKHonJywgeikgXFx0ZW5zb3IgcCh6KSJdLFsxLDEsInAoeicnKSJdLFswLDEsIlooeicnLCB6JykgXFx0ZW5zb3IgcCh6JykiXSxbMSwyLCJcXGNpcmNfe3onJywgen0iXSxbMCwxLCJcXGNpcmNfe3onJywgeicsIHp9IFxcdGVuc29yIHAoeikiXSxbMCwzLCJaKHonJywgeicpIFxcdGVuc29yIFxcY2lyY197eicsIHp9IiwyXSxbMywyLCJcXGNpcmNfe3onJywgeid9IiwyXV0=
\begin{tikzcd}[column sep=huge]
	{Z(z'', z') \tensor Z(z', z) \tensor p(z)} & {Z(z'', z) \tensor p(z)} \\
	{Z(z'', z') \tensor p(z')} & {p(z'')}
	\arrow["{\circ_{z'', z}}", from=1-2, to=2-2]
	\arrow["{\circ_{z'', z', z} \tensor p(z)}", from=1-1, to=1-2]
	\arrow["{Z(z'', z') \tensor \circ_{z', z}}"', from=1-1, to=2-1]
	\arrow["{\circ_{z'', z'}}"', from=2-1, to=2-2]
\end{tikzcd}
  \]
  Let $p$ and $q$ be $\V$-presheaves on $Z$, and $v$ be an object of $\V$.
  A family of morphisms
  \[
    \{ \phi_z \colon p(z) \tensor v \to q(z) \}_{z \in \ob Z}
  \]
  is \emph{$\V$-natural} in $z \in \ob Z$ when the following diagram commutes for each pair of objects $z, z' \in \ob Z$.
% https://q.uiver.app/?q=WzAsNCxbMCwxLCJwKHonKSBcXHRlbnNvciB2Il0sWzIsMSwicSh6JykiXSxbMCwwLCJaKHonLCB6KSBcXHRlbnNvciBwKHopIFxcdGVuc29yIHYiXSxbMiwwLCJaKHonLCB6KSBcXHRlbnNvciBxKHopIl0sWzIsMCwiXFxjaXJjXnBfe3onLCB6fSBcXHRlbnNvciB2IiwyXSxbMywxLCJcXGNpcmNecV97eicsIHp9Il0sWzIsMywiWih6JywgeikgXFx0ZW5zb3IgXFxwaGlfeiJdLFswLDEsIlxccGhpX3t6J30iLDJdXQ==
\[\begin{tikzcd}
	{Z(z', z) \tensor p(z) \tensor v} && {Z(z', z) \tensor q(z)} \\
	{p(z') \tensor v} && {q(z')}
	\arrow["{\circ^p_{z', z} \tensor v}"', from=1-1, to=2-1]
	\arrow["{\circ^q_{z', z}}", from=1-3, to=2-3]
	\arrow["{Z(z', z) \tensor \phi_z}", from=1-1, to=1-3]
	\arrow["{\phi_{z'}}"', from=2-1, to=2-3]
\end{tikzcd}\]
  If there exists an object $\rfP{Z}{p}{q}$ of $\V$, equipped with a universal $\V$-natural family
  \[
    \{ \varpi_z \colon p(z) \tensor \rfP{Z}{p}{q} \to q(z) \}_{z \in \ob Z}
  \]
  then we call $\rfP{Z}{p}{q}$ the \emph{object of $\V$-natural transformations from $p$ to $q$}.
\end{definition}

Explicitly, the universal property of $\rfP{Z}{p}{q}$ states that, for every $\V$-natural family $\phi$ as above, there is a unique morphism $\tilde\phi \colon v \to \rfP{Z}{p}{q}$ in $\V$ such that, for each object $z \in \ob{Z}$, the morphism $\phi_z$ is equal to the following composite in $\V$.
\[
  p(z) \tensor v \xto{p(z) \tensor \tilde\phi} p(z) \tensor \rfP{Z}{p}{q} \xto{\varpi_z} q(z)
\]
This universal property of $\rfP{Z}{p}{q}$ is the same as the (second form of the) universal property given in \cite[\S3]{gordon1999gabriel}. When $\V$ is symmetric and closed, a presheaf $p$ on $Z$ is the same as a $\V$-functor $p \colon Z\op \to \V$, in which case $\rfP{Z}{p}{q}$ is exhibited by the hom-object $[Z\op, \V](p, q)$ of the $\V$-functor category $[Z\op, \V]$ when it exists (\cf{}~\cite[\S2]{kelly1982basic}).
In general, when $\rfP{Z}{p}{q}$ exists for all presheaves $p$ and $q$, these objects form the hom-objects of a $\V$-category $\P Z$.
In particular, this is the case for small $\V$-categories $Z$ when $\V$ is complete and (left- and right-) closed~\cite[\S3]{gordon1999gabriel}.
However, as is the usual convention, we use the notation $\rfP{Z}{p}{q}$ even when the $\V$-category $\P Z$ does not exist.

\begin{example}\label{nerve-presheaf}
  The \emph{Yoneda embedding} of an object $x \in \ob{Z}$ is the $\V$-presheaf $\yo_Z x \defeq Z({-}, x)$, with the action of $Z$ given by composition.
  For each presheaf $q$ on $Z$, the object $\rfP{Z}{Z({-}, x)}{q}$ is isomorphic to $q(x)$, since, for a fixed object $v$ of $\V$, the $\V$-natural families
  $
    \{ Z(z, x) \tensor v \to q(z) \}_{z \in \ob Z}
  $
  are in bijection with morphisms $v \to q(x)$:
  this is precisely the Yoneda lemma.
  When the $\V$-category $\P Z$ exists, the Yoneda embedding forms a $\V$-functor $\yo_Z \colon Z \to \P Z$.

  For every $\V$-functor $j \colon A \to E$ and object $x \in \ob{E}$, there is a $\V$-presheaf $n_j x \defeq E(j{-}, x)$ on $A$, the \emph{nerve of $j$ at $x$}.
  The action of $A$ on $n_j x$ is given by the following composite in $\V$.
  \[
    A(z', z) \tensor E(\ob j z, x)
    \xto{j_{z', z} \tensor E(\ob j z, x)}
    E(\ob{j}z', \ob{j}z) \tensor E(\ob{j}z, x)
    \xto{\circ_{\ob{j}z', \ob{j}z, x}}
    E(\ob{j}z', x)
  \]
  When the $\V$-category $\P A$ exists, the nerve of $j$ forms a $\V$-functor $n_j \colon E \to \P A$.
\end{example}

Denote by $\Icat$ the $\V$-category with a single object $\star$, and hom-object $\Icat(\star, \star) \defeq \tensorI$.
An object $z$ of a $\V$-category $Z$ is then equivalently a $\V$-functor $z \colon \Icat \to Z$, while a $\V$-presheaf $p$ on $Z$ is equivalently a $\V$-distributor $p \colon \Icat \lto Z$.
An object $v$ of $\V$ is equivalently a $\V$-distributor $v \colon \Icat \lto \Icat$, and a $\V$-natural transformation
$
  \phi \colon p, v_1, \dots, v_n \tto q
$
is equivalently a $\V$-natural family of morphisms.
It follows that the object $\rfP{Z}{p}{q}$ is then exactly the right lift $q \rf p \colon \Icat \lto \Icat$.
If $r \colon X \lto Y$ is a $\V$-distributor, then the component $r(y, x)$, viewed as a $\V$-distributor $r(y, x) \colon \Icat \lto \Icat$, is the restriction of $r$ along the $\V$-functors $x \colon \Icat \to X$ and $y \colon \Icat \to Y$.

\begin{lemma}
  Let $p \colon Y \lto Z$ and $q \colon X \lto Z$ be $\V$-distributors.
  If the objects $\rfP{Z}{p({-}, y)}{q({-}, x)}$ exist for every $x \in \ob{X}$ and $y \in \ob{Y}$ then they form the right lift $q \rf p \colon X \lto Y$ in $\VCat$.
  \[
    (q \rf p)(y, x) \defeq \rfP{Z}{p({-}, y)}{q({-}, x)}
  \]
  The actions of $X$ and $Y$ on $q \rf p$ are unique such that the universal $\V$-natural families $\varpi_z \colon p(z, y) \tensor \rfP{Z}{p({-}, y)}{q({-}, x)} \to q(z, x)$ constitute a $\V$-natural transformation $p, q \rf p \tto q$.

  The converse also holds: if $q \rf p$ exists, then $(q \rf p)(y, x)$ satisfies the universal property of $\rfP{Z}{p({-}, y)}{q({-}, x)}$.
\end{lemma}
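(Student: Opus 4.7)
The plan is to decode both directions of the statement using the fact that $\V$-distributors $\Icat \lto Z$ correspond to $\V$-presheaves on $Z$, and $\V$-forms between them to $\V$-natural families. For the forward direction, I would set $(q \rf p)(y, x) \defeq \rfP{Z}{p({-}, y)}{q({-}, x)}$, and take the universal $\V$-natural family $\varpi_z \colon p(z, y) \tensor (q \rf p)(y, x) \to q(z, x)$ as the would-be components of $\varpi$ in $z$. The actions of $X$ and $Y$ on $q \rf p$ are then \emph{forced} by requiring that $\varpi$ be $\V$-natural in $y$ and in $x$: for instance, the left action of $Y$ is the unique morphism $Y(y', y) \tensor (q \rf p)(y, x) \to (q \rf p)(y', x)$ induced by applying the universal property of $\rfP{Z}{p({-}, y')}{q({-}, x)}$ to the $\V$-natural family
\[
    p(z, y') \tensor Y(y', y) \tensor (q \rf p)(y, x) \xto{\circ^p \tensor (q \rf p)(y, x)} p(z, y) \tensor (q \rf p)(y, x) \xto{\varpi_z} q(z, x),
\]
and dually for the right action of $X$. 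The $\V$-distributor axioms (unit laws, associativity, compatibility of the two actions) then follow by the uniqueness clause of the universal property of $\rfP{Z}{-}{-}$, since both sides of each axiom become morphisms inducing the same $\V$-natural family.

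For the universal property of $q \rf p$ as a right lift in $\VCat$, I would decode a $\V$-form $\phi \colon r_n, \ldots, r_1, p \tto q$ as a family of morphisms
\[
    \phi_{z, y, c_1, \dots, c_{n-1}, x} \colon p(z, y) \tensor r_1(y, c_1) \tensor \cdots \tensor r_n(c_{n-1}, x) \to q(z, x)
\]
which, for each fixed $y, x, c_1, \dots, c_{n-1}$, is $\V$-natural in $z$. The universal property of $\rfP{Z}{p({-}, y)}{q({-}, x)}$ yields a unique factorisation through $\varpi$ via a morphism $\hat\phi_{y, c_1, \dots, c_{n-1}, x} \colon r_1(y, c_1) \tensor \cdots \tensor r_n(c_{n-1}, x) \to (q \rf p)(y, x)$. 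The $\V$-naturality of this family in $y$, $x$, and the intermediate $c_i$ then follows from the $\V$-naturality of $\phi$ together with the definitions of the actions, via the uniqueness clause applied at each comparison.

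For the converse, assume the right lift $q \rf p$ exists in $\VCat$. By \cref{right-lift-and-restriction}, $(q \rf p)(y, x) \iso q(1, x) \rf p(1, y)$ as $\V$-distributors $\Icat \lto \Icat$, that is, as objects of $\V$. Since a $\V$-distributor $\Icat \lto \Icat$ is just an object of $\V$, and a $\V$-form between such distributors is just a morphism in $\V$, the universal property of $q(1, x) \rf p(1, y)$ in $\VCat$ specialised to an arbitrary intermediate loose-cell $v \colon \Icat \lto \Icat$ becomes precisely the universal property of $\rfP{Z}{p({-}, y)}{q({-}, x)}$: $\V$-forms $v, p(1, y) \tto q(1, x)$ are, by the definition of $\V$-form, in bijection with $\V$-natural families $\{ p(z, y) \tensor v \to q(z, x) \}_{z \in \ob Z}$. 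Hence $(q \rf p)(y, x)$ satisfies the required universal property. The main labour — though not a conceptual obstacle — is the bookkeeping in the forward direction: checking the $\V$-distributor axioms for $q \rf p$ and the $\V$-naturality of $\hat\phi$ requires multiple invocations of the uniqueness clause of $\rfP{Z}{-}{-}$ against $\V$-natural families, each of which is a routine but somewhat lengthy diagram chase.
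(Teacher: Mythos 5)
Your proposal is correct and follows essentially the same route as the paper: defining the actions of $X$ and $Y$ by the universal property of $\rfP{Z}{-}{-}$ so that $\varpi$ is a $\V$-form, verifying the distributor axioms and the componentwise factorisation of arbitrary $\V$-forms via the uniqueness clause, and deducing the converse from \cref{right-lift-and-restriction} together with the identification of $\V$-distributors $\Icat \lto \Icat$ with objects of $\V$.
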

\begin{proof}
  Suppose that the objects $\rfP{Z}{p({-}, y)}{q({-}, x)}$ exist, and define $(q \rf p)(y, x)$ as above.
  We first show that $q \rf p$ canonically forms a $\V$-distributor. The universal property of $\rfP{Z}{p({-}, y)}{q({-}, x)}$ defines unique morphisms for each $x, x' \in \ob X$ and $y, y' \in \ob Y$,
  \begin{align*}
    \circ_{y, x, x'} &\colon (q \rf p)(y, x) \tensor X(x, x') \to (q \rf p)(y, x')
    \\
    \circ_{y', y, x} &\colon Y(y', y) \tensor (q \rf p)(y, x) \to (q \rf p)(y', x)
  \end{align*}
  rendering the following diagrams, natural in $z \in Z$, commutative.
  \begin{gather*}
% https://q.uiver.app/?q=WzAsNCxbMCwwLCJwKHosIHkpIFxcdGVuc29yIChxIFxccmYgcCkoeSwgeCkgXFx0ZW5zb3IgWCh4LCB4JykiXSxbMSwwLCJxKHosIHgpIFxcdGVuc29yIFgoeCwgeCcpIl0sWzEsMSwicSh6LCB4JykiXSxbMCwxLCJwKHosIHkpIFxcdGVuc29yIChxIFxccmYgcCkoeSwgeCcpIl0sWzAsMSwiXFx2YXJwaV96IFxcdGVuc29yIFgoeCwgeCcpIl0sWzEsMiwiXFxjaXJjX3t6LCB4LCB4J30iXSxbMywyLCJcXHZhcnBpX3oiLDJdLFswLDMsInAoeiwgeSkgXFx0ZW5zb3IgXFxjaXJjX3t5LCB4LCB4J30iLDJdXQ==
\begin{tikzcd}[ampersand replacement=\&, column sep=7em]
	{p(z, y) \tensor (q \rf p)(y, x) \tensor X(x, x')} \& {q(z, x) \tensor X(x, x')} \\
	{p(z, y) \tensor (q \rf p)(y, x')} \& {q(z, x')}
	\arrow["{\varpi_z \tensor X(x, x')}", from=1-1, to=1-2]
	\arrow["{\circ_{z, x, x'}}", from=1-2, to=2-2]
	\arrow["{\varpi_z}"', from=2-1, to=2-2]
	\arrow["{p(z, y) \tensor \circ_{y, x, x'}}"', from=1-1, to=2-1]
\end{tikzcd}
\\
% https://q.uiver.app/?q=WzAsNCxbMCwwLCJwKHosIHknKSBcXHRlbnNvciBZKHknLCB5KSBcXHRlbnNvciAocSBcXHJmIHApKHksIHgpIl0sWzEsMCwicCh6LCB5KSBcXHRlbnNvciAocSBcXHJmIHApKHkgLCB4KSJdLFsxLDEsInEoeiwgeCkiXSxbMCwxLCJwKHosIHknKSBcXHRlbnNvciAocSBcXHJmIHApKHknLCB4KSJdLFswLDEsIlxcY2lyY197eiwgeScsIHl9IFxcdGVuc29yIChxIFxccmYgcCkoeSwgeCkiXSxbMSwyLCJcXHZhcnBpX3oiXSxbMywyLCJcXHZhcnBpX3oiLDJdLFswLDMsInAoeiwgeScpIFxcdGVuc29yIFxcY2lyY197eScsIHksIHh9IiwyXV0=
\begin{tikzcd}[ampersand replacement=\&, column sep=7em]
	{p(z, y') \tensor Y(y', y) \tensor (q \rf p)(y, x)} \& {p(z, y) \tensor (q \rf p)(y, x)} \\
	{p(z, y') \tensor (q \rf p)(y', x)} \& {q(z, x)}
	\arrow["{\circ_{z, y', y} \tensor (q \rf p)(y, x)}", from=1-1, to=1-2]
	\arrow["{\varpi_z}", from=1-2, to=2-2]
	\arrow["{\varpi_z}"', from=2-1, to=2-2]
	\arrow["{p(z, y') \tensor \circ_{y', y, x}}"', from=1-1, to=2-1]
\end{tikzcd}
\end{gather*}
  These are compatible with identities and composition because $p$ and $q$ are; that they are compatible with each other is immediate from the definitions.
  Hence $q \rf p$ forms a $\V$-distributor.
  Moreover, commutativity of the diagrams above, together with $\V$-naturality in $z$, are precisely the laws required for $\varpi$ to be a $\V$-natural transformation. Thus $q \rf p$ uniquely forms a $\V$-distributor such that $\varpi$ is a $\V$-natural transformation.

  To show that $q \rf p$ is the right lift, we have to prove that every $\V$-natural transformation
  \[
    \phi \colon p, r_1, \dots, r_n \tto q
  \]
  factors uniquely through $\varpi$ as a $\V$-natural transformation
  \[
    \tilde\phi \colon r_1, \dots, r_n \tto q \rf p
  \]
  The universal property of $\rfP{Z}{p({-}, y)}{q({-}, x)}$ provides morphisms
  \[
    \tilde\phi_{y, w_1, \dots, w_{n-1}, x} \colon r_1(y, w_1) \tensor \cdots \tensor r_n(w_{n-1}, x) \tto (q \rf p)(y, x)
  \]
  that uniquely factor through $\varpi$: that these constitute a $\V$-natural transformation is immediate from the universal properties and the fact that $\phi$ is a $\V$-natural transformation.

  To show the converse, suppose that $q \rf p$ exists.
  Using the observations about the $\V$-category $\Icat$ above, \cref{right-lift-and-restriction} shows that the component $(q \rf p)(y, x)$ is the right lift $q({-}, x) \rf p({-}, y)$, which is precisely $\rfP{Z}{p({-}, y)}{q({-}, x)}$.
\end{proof}

It follows from this characterisation of right lifts in $\VCat$ that our notion of weighted colimit agrees with the usual one~\cite[(3.5)]{kelly1982basic}.
If $p \colon Y \lto Z$ is a $\V$-distributor and $f \colon Z \to X$ is a $\V$-functor, then a $\V$-functor $p \wc f \colon Y \to X$ forms the $p$-weighted colimit of $f$ exactly when there are isomorphisms
\[
  X(\ob{p \wc f}y,x) \iso \rfP{Z}{p({-}, y)}{X(f{-}, x)}
\]
$\V$-natural in $x \in \ob X$ and $y \in \ob Y$.
Since we define left extensions in terms of colimits (\cref{left-extension}), it also follows that our definition of left extension agrees with the usual one for $\X = \VCat$~\cite[(4.20)]{kelly1982basic} and, consequently, that our definition of density coincides with the usual one~\cite[Theorem~5.1(v)]{kelly1982basic}.

We end this subsection by characterising, for a $\V$-functor $\jAE$, the $j$-absolute colimits in $E$ as those colimits that are preserved by the nerve $\V$-functor $n_j \colon E \to \P A$ (\cf{}~\cite[\S5.4]{kelly1982basic}).
To do so, we first prove that colimits in presheaf $\V$-categories are $j$-absolute; we shall make use of the characterisation of colimits above in the proof.

\begin{lemma}
    \label{yoneda-absolute-is-trivial}
    Let $A$ be a $\V$-category such that the presheaf $\V$-category $\P A$ exists.
    Every colimit in $\cl P A$ is $\yo_A$-absolute, where
    $\yo_A \colon A \to \cl P A$ denotes the Yoneda embedding.
    Hence, for a $\V$-distributor $p \colon X \lto Y$ and $\V$-functor $f \colon Y \to \cl P A$, a $p$-cocone $(c, \lambda)$ for $f$ is colimiting exactly when the canonical $\V$-natural transformation
    \[
      \cl P A(\yo_A, f), p \tto \cl P A(\yo_A, c)
    \]
    is left-opcartesian.
\end{lemma}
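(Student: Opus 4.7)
The plan is to apply \cref{absolute-implies-colimit} after establishing (i) the density of $\yo_A$ and (ii) an isomorphism of $\V$-distributors $\cl P A(\yo_A, f) \odotl p \iso \cl P A(\yo_A, p \wc f)$ for each colimit $p \wc f$ in $\cl P A$. Density of the Yoneda embedding is an immediate consequence of the Yoneda lemma in \cref{nerve-presheaf}: the isomorphism $\cl P A(\yo_A a, q) \iso q(a)$ shows that, for any presheaf $q$, the $\V$-distributor $\cl P A(\yo_A, q) \colon \Icat \lto A$ is naturally isomorphic to $q$ itself, whence the bijection of \cref{density-implies-j*-is-ff} follows.

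Given $p \colon X \lto Y$, $f \colon Y \to \cl P A$, and a colimit $p \wc f \colon X \to \cl P A$, I would verify that the canonical 2-cell $\cl P A(\yo_A, f), p \tto \cl P A(\yo_A, p \wc f)$ is left-opcartesian by comparing universal properties. Writing $\tilde g$ for the $\V$-distributor $Z \lto A$ associated to any $\V$-functor $g \colon Z \to \cl P A$ (with $\tilde g(a, z) = g(z)(a)$), the Yoneda isomorphism yields $\cl P A(\yo_A, g) \iso \tilde g$. More generally, the universal property of $\cl P A$ as a presheaf $\V$-category sets up a correspondence between $\V$-functors $Z \to \cl P A$ and $\V$-distributors $Z \lto A$, under which $\V$-natural transformations correspond to 2-cells between associated distributors.

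Under these identifications, a 2-cell $\cl P A(\yo_A, f), p \tto s$ (for a $\V$-distributor $s \colon X \lto A$ corresponding to $\hat s \colon X \to \cl P A$) corresponds to a $p$-cylinder for $f$ with tip $\hat s$, while a 2-cell $\cl P A(\yo_A, p \wc f) \tto s$ corresponds to a $\V$-natural transformation $p \wc f \to \hat s$. The universal property of the colimit $p \wc f$ is precisely that these two sets are in natural bijection via precomposition with the universal cylinder $\lambda$, which is exactly the left-opcartesian universal property of the canonical 2-cell. The second clause of the lemma is then immediate from \cref{absolute-implies-colimit} applied with $j = \yo_A$. The main obstacle is extending this argument to the full generality of the left-opcartesian universal property, which allows outer loose-cells $r_1, \ldots, r_n$ on the left and an outer tight-cell $g$ in the frame; the identifications generalise to the multilinear setting, but one must verify that the factorisations produced by the colimit's universal property are automatically $\V$-natural in all the additional data, which follows from the $\V$-naturality built into the original 2-cell together with the naturality of the Yoneda isomorphism.
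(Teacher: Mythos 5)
Your proposal follows essentially the paper's route: clause one is established by exhibiting the factorisations required for left-opcartesianness via the Yoneda lemma and the defining universal properties of the hom-objects of $\cl P A$ and of the colimit, and clause two then follows from density of $\yo_A$ together with \cref{absolute-implies-colimit}, exactly as in the paper. Your identifications of $2$-cells $\cl P A(\yo_A, f), p \tto s$ with $p$-cylinders with tip $\hat s$, and of $2$-cells $\cl P A(\yo_A, p \wc f) \tto s$ with $\V$-natural transformations $p \wc f \tto \hat s$, are correct under the hypothesis that $\cl P A$ exists.

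The one materially imprecise point is the sentence asserting that the universal property of $p \wc f$ ``is precisely'' the bijection between cylinders with tip $\hat s$ and transformations $p \wc f \tto \hat s$. In this paper a weighted colimit is defined by the stronger, distributor-level right-lift property (\cref{weighted-colimit}), and the cylinder/transformation bijection is only its nullary shadow; it is exactly the stronger form that is needed for the ``multilinear'' case you defer to your last paragraph, and it cannot be recovered from the weak bijection by naturality alone (this is the pointwise/non-pointwise distinction the paper is at pains to maintain). Concretely, the paper closes this step by fixing $x \in \ob{X}$ and the outer objects, tensoring the components of $r_1, \dots, r_n$ into a single object $v$ of $\V$, and chaining bijections -- Yoneda, the universal property of the natural-transformation objects, the isomorphism $\cl P A(\ob{p \wc f}x, \phi) \iso \rfP{Y}{p({-},x)}{\cl P A(f{-},\phi)}$ supplied by the colimit (\cf{} \cref{colimit-and-restriction}), and Yoneda again -- all of which hold against an arbitrary parameter $v$; $\V$-naturality in the outer variables and compatibility with the outer tight-cell then come for free from uniqueness of the factorisations. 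So the obstacle you flag is resolvable exactly along the lines you sketch, but the input must be the colimit's pointwise ($v$-parametrised) universal property rather than the cylinder/transformation bijection you cite.
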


\begin{proof}
    Let $p \colon X \lto Y$ be a $\V$-distributor and $f \colon Y \to \cl P A$ be a $\V$-functor such that the colimit $p \wc f \colon X \to \cl{P}A$ exists.
    To establish $\yo_A$-absoluteness of $p \wc f$, we must show that the canonical $\V$-natural transformation
    $\cl P A(\yo_A, f), p \tto \cl P A(\yo_A, p \wc f)$ is left-opcartesian.
    Explicitly, we show that composition with this $\V$-natural transformation induces a bijection between $\V$-natural families of morphisms
    \[
        \{ \cl P A(\yo_A, p \wc f)(a, x) \otimes v \to \phi(a) \}_{a \in \ob{A}}
    \]
    and $\V$-natural families of morphisms
    \[
        \{ \cl P A(\yo_A, f)(a, y) \otimes p(y, x) \otimes v \to \phi(a) \}_{a \in \ob{A}, y \in \ob{Y}}
    \]
    for each $\phi \in \ob{\cl P A}$, $x \in \ob{X}$, and $v \in \ob{\V}$.
    This is indeed the case, because we have the following bijections, by the Yoneda lemma, the definition of $\cl P A$, the universal property of $p \wc f$, the definitions of $\cl P Y$ and $\cl P A$, and finally the Yoneda lemma again.
    \begin{iffseq}
        \cl P A(\yo_A, p \wc f)(a, x) \otimes v \to \phi(a) & \text{($\V$-natural in $a$)} \\
        (\ob{p \wc f} x)(a) \otimes v \to \phi(a) & \text{($\V$-natural in $a$)} \\
        v \to \cl P A(\ob{p \wc f} x, \phi) \\
        v \to \cl{P}Y(p({-}, x), \cl P A(f{-}, \phi)) \\
        (\ob f y)(a) \otimes p(y, x) \otimes v \to \phi(a) & \text{($\V$-natural in $a, y$)} \\
        \cl P A(\yo_A, f)(a, y) \otimes p(y, x) \otimes v \to \phi(a) & \text{($\V$-natural in $a, y$)}
    \end{iffseq}

    For the second part of the lemma, $\yo_A$ is dense by the Yoneda lemma.
    Hence left-opcartesianness implies that the cocone $(c, \lambda)$ is colimiting by \cref{absolute-implies-colimit}.
\end{proof}

\begin{remark}
    In the above lemma, the universal property of objects of $\V$-natural transformations only permits us to show that the canonical $\V$-natural transformation is left-opcartesian, rather than opcartesian.
    This is why we only require left-opcartesianness in the definition of $j$-absolute (\cref{j-absolute}).
\end{remark}

Our characterisation of $j$-absolute colimits follows.

\begin{lemma}
    \label{j-absolute-is-preservation-by-n_j}
    Let $\jAE$ be a $\V$-functor, and assume that the presheaf $\V$-category $\P A$ exists.
    Given a $\V$-distributor $p \colon X \lto Y$ and $\V$-functor $f \colon Y \to E$, a colimit $p \wc f$ in $E$ is $j$-absolute exactly when it is preserved by the nerve $\V$-functor $n_j \colon E \to \cl P A$.
\end{lemma}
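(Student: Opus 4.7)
The plan is to reduce the statement to \cref{yoneda-absolute-is-trivial} via the Yoneda lemma. The key observation is that the nerve $\V$-presheaves satisfy $n_j x = E(j{-}, x)$ by definition, and consequently the Yoneda lemma supplies a canonical isomorphism of $\V$-distributors
\[
  \cl P A(\yo_A, n_j f) \iso E(j, f)
\]
for any $\V$-functor $f \colon Y \to E$, natural in $Y$ and $f$. Applied to both $f$ and the candidate colimit $p \wc f$, this identifies the canonical $\V$-form $\cl P A(\yo_A, n_j f), p \tto \cl P A(\yo_A, n_j(p \wc f))$ induced by the cylinder $(n_j \d (p \wc f), \lambda \d n_j)$ with the canonical $\V$-form $E(j, f), p \tto E(j, p \wc f)$ appearing in the definition of $j$-absoluteness (\cref{j-absolute}).

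First I would assume $p \wc f$ is $j$-absolute. Then by definition the canonical $\V$-form $E(j, f), p \tto E(j, p \wc f)$ is left-opcartesian, so via the Yoneda identification above, the canonical $\V$-form $\cl P A(\yo_A, n_j f), p \tto \cl P A(\yo_A, n_j (p \wc f))$ is also left-opcartesian. By the second part of \cref{yoneda-absolute-is-trivial}, this is equivalent to the cylinder $(n_j \d (p \wc f), \lambda \d n_j)$ being colimiting for $n_j \d f$, \ie{} to $n_j$ preserving the colimit $p \wc f$.

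Conversely, if $n_j$ preserves $p \wc f$, then by \cref{yoneda-absolute-is-trivial} again the canonical $\V$-form $\cl P A(\yo_A, n_j f), p \tto \cl P A(\yo_A, n_j (p \wc f))$ is left-opcartesian, and transporting along the Yoneda isomorphism recovers left-opcartesianness of $E(j, f), p \tto E(j, p \wc f)$, which is exactly $j$-absoluteness.

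The only subtlety I anticipate is the bookkeeping required to check that the Yoneda isomorphism sends the canonical $\V$-form supplied by the cylinder $(n_j \d (p \wc f), \lambda \d n_j)$ in $\cl P A$ to the canonical $\V$-form in $E$ defining $j$-absoluteness. This reduces to naturality of the Yoneda isomorphism together with the construction of $n_j$ on morphisms, so the main obstacle is notational rather than conceptual; no further ingredients beyond the Yoneda lemma and \cref{yoneda-absolute-is-trivial} should be required.
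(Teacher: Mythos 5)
Your proposal is correct and follows essentially the same route as the paper's proof: $j$-absoluteness is transported along the Yoneda identification $\cl P A(\yo_A, n_j f) \iso E(j, f)$ to left-opcartesianness of the corresponding $\V$-form in $\cl P A$, and then \cref{yoneda-absolute-is-trivial} converts this into the statement that $n_j$ preserves the colimit. The bookkeeping point you flag (that the Yoneda isomorphism matches the two canonical $\V$-forms) is likewise left implicit in the paper, so nothing further is needed.
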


\begin{proof}
    Let $p \wc f \colon X \to E$ be the $p$-colimit of a $\V$-functor $f \colon Y \to E$.
    By definition, $p \wc f$ is $j$-absolute exactly when the canonical $\V$-natural transformation
    \[
      E(j, f), p \tto E(j, p \wc f)
    \]
    is left-opcartesian.
    By the Yoneda lemma, this is equivalent to left-opcartesianness of the canonical $\V$-natural transformation
    \[
      \cl P A(\yo_A, n_j f), p \tto \cl P A(\yo_A, n_j (p \wc f))
    \]
    By \cref{yoneda-absolute-is-trivial}, this holds exactly when the canonical $\V$-natural transformation $p \tto E(n_j f, n_j (p \wc f))$ exhibits $(p \wc f) \d n_j$ as the colimit $p \wc (f \d n_j)$, in other words, when $n_j$ preserves $p \wc f$.
\end{proof}

\subsection{Relative monads and relative adjunctions}
\label{enriched-relative-monads-and-relative-adjunctions}

It is immediate that \cref{relative-adjunction} specialises to the expected definition of enriched relative adjunction, comprising $\V$-functors $\jAE$, $\ell \colon A \to C$, $r \colon C \to E$ together with a family of isomorphisms in $\V$,
\[\sharp_{a, c} \colon C(\ob \ell a, c) \iso E(\ob j a, \ob r c) \cocolon \flat_{a, c}\]
$\V$-natural in $a \in \ob A$ and $c \in \ob C$.

\begin{example}
	Our notion of $\V$-enriched relative adjunction subsumes various prior definitions.
	\begin{enumerate}
		\item A \emph{relative adjunction} in the sense of \textcite[Definition~2.2]{ulmer1968properties} is precisely a $\Set$-enriched relative adjunction. In particular, this captures the notion of \emph{pre-coadjoint} in the sense of \textcite[\S3]{isbell1966structure} (\cf{}~\cite[footnote 10]{ulmer1968properties}).
		\item A \emph{family of universal arrows} in the sense of \textcite[Example~1.1.2]{walters1970categorical} is precisely a $\Set$-enriched relative adjunction whose root is \ff{}.
        \item A \emph{partial adjunction} with respect to a functor, 2-functor, or $\V$-functor (for $\V$ a symmetric closed monoidal category), in the sense of \textcite[\S0.2]{blackwell1976some} is precisely a $\Set$-enriched, $\b{Cat}$-enriched, or $\V$-enriched relative adjunction respectively.
		\item A \emph{$j$-comodel} in the sense of \textcite[\S5.12]{kelly1982basic}, for $\V$ a locally small complete and cocomplete symmetric closed monoidal category and $\jAE$ a dense $\V$-functor with small domain, is precisely a $\V$-enriched $j$-adjunction.
		\item A \emph{relative 2-adjunction} in the sense of \textcite[Definition~3.6]{fiore2018relative} is precisely a $\b{Cat}$-enriched relative adjunction.
		\item An \emph{enriched relative adjunction} in the sense of \textcite[\S2.1]{staton2020classical} is precisely an enriched relative adjunction in our sense.
		\item A \emph{$j$-relative adjunction} in the sense of \textcite[Definition~17]{mcdermott2022flexibly}, for $\V$ a small monoidal category and $j$ a functor between locally $\V$-graded categories, is precisely a $[\V, \Set]$-enriched $j$-adjunction. \qedhere
	\end{enumerate}
\end{example}

To show that \cref{relative-monad} specialises to notions of enriched relative monad in the literature requires a little more work. To do so, we show that the definition of relative monad may be simplified in the setting of $\VCat$. In particular, it is only necessary to specify the action of the underlying $\V$-functor on objects: functoriality then follows automatically from the relative monad laws.

\begin{theorem}
	\label{enriched-relative-monad}
    A relative monad in $\VCat$ is equivalently specified by
    \begin{enumerate}
        \item a $\V$-functor $\jAE$;
        \item a function $\ob t \colon \ob A \to \ob E$;
        \item a morphism $\dag_{x, y} \colon E(\ob j x, \ob t y) \to E(\ob t x, \ob t y)$ in $\V$ for each $x, y \in \ob A$;
        \item a morphism $\eta_x \colon \tensorI \to E(\ob j x, \ob t x)$ in $\V$ for each $x \in \ob A$,
    \end{enumerate}
	satisfying the following equations for each $x, y, z \in \ob A$.
	\[
	% https://q.uiver.app/?q=WzAsMyxbMCwwLCJFKFxcb2IgaiB4LCBcXG9iIHQgeSkiXSxbMSwwLCJFKFxcb2J7dH14LCBcXG9ie3R9eSkiXSxbMSwxLCJFKFxcb2IgaiB4LCBcXG9iIHQgeSkiXSxbMCwxLCJcXGRhZ197eCwgeX0iXSxbMSwyLCJFKFxcZXRhX3gsIFxcb2J7dH15KSJdLFswLDIsIiIsMix7ImxldmVsIjoyLCJzdHlsZSI6eyJoZWFkIjp7Im5hbWUiOiJub25lIn19fV1d
	\begin{tikzcd}
		{E(\ob j x, \ob t y)} & {E(\ob{t}x, \ob{t}y)} \\
		& {E(\ob j x, \ob t y)}
		\arrow["{\dag_{x, y}}", from=1-1, to=1-2]
		\arrow["{E(\eta_x, \ob{t}y)}", from=1-2, to=2-2]
		\arrow[Rightarrow, no head, from=1-1, to=2-2]
	\end{tikzcd}
	\hspace{4em}
	% https://q.uiver.app/?q=WzAsMyxbMCwwLCJcXHRlbnNvckkiXSxbMSwwLCJFKFxcb2IgaiB4LCBcXG9iIHQgeCkiXSxbMSwxLCJFKFxcb2IgdCB4LCBcXG9iIHQgeCkiXSxbMCwxLCJcXGV0YV94Il0sWzEsMiwiXFxkYWdfe3gsIHh9Il0sWzAsMiwiXFxJX3tcXG9iIHQgeH0iLDJdXQ==
	\begin{tikzcd}
		\tensorI & {E(\ob j x, \ob t x)} \\
		& {E(\ob t x, \ob t x)}
		\arrow["{\eta_x}", from=1-1, to=1-2]
		\arrow["{\dag_{x, x}}", from=1-2, to=2-2]
		\arrow["{\I_{\ob t x}}"', from=1-1, to=2-2]
	\end{tikzcd}
	\]
	% https://q.uiver.app/?q=WzAsNSxbMiwxLCJFKFxcb2IgaiB4LCBcXG9iIHQgeikiXSxbMSwyLCJFKFxcb2IgdCB4LCBcXG9iIHQgeikiXSxbMCwxLCJFKFxcb2IgdCB4LCBcXG9iIHQgeSkgXFx0ZW5zb3IgRShcXG9iIHQgeSwgXFxvYiB0IHopIl0sWzAsMCwiRShcXG9iIGogeCwgXFxvYiB0IHkpIFxcdGVuc29yIEUoXFxvYiBqIHksIFxcb2IgdCB6KSJdLFsyLDAsIkUoXFxvYiBqIHgsIFxcb2IgdCB5KSBcXHRlbnNvciBFKFxcb2IgdCB5LCBcXG9iIHQgeikiXSxbMiwxLCJcXGNpcmNfe1xcb2IgdCB4LCBcXG9iIHQgeSwgXFxvYiB0IHp9IiwyXSxbMCwxLCJcXGRhZ197eCwgen0iXSxbMyw0LCJFKFxcb2IgaiB4LCBcXG9iIHQgeSkgXFx0ZW5zb3IgXFxkYWdfe3ksIHp9Il0sWzQsMCwiXFxjaXJjX3tcXG9iIGogeCwgXFxvYiB0IHksIFxcb2IgdCB6fSJdLFszLDIsIlxcZGFnX3t4LCB5fSBcXHRlbnNvciBcXGRhZ197eSwgen0iLDJdXQ==
	\[\begin{tikzcd}
		{E(\ob j x, \ob t y) \tensor E(\ob j y, \ob t z)} && {E(\ob j x, \ob t y) \tensor E(\ob t y, \ob t z)} \\
		{E(\ob t x, \ob t y) \tensor E(\ob t y, \ob t z)} && {E(\ob j x, \ob t z)} \\
		& {E(\ob t x, \ob t z)}
		\arrow["{\circ_{\ob t x, \ob t y, \ob t z}}"', from=2-1, to=3-2]
		\arrow["{\dag_{x, z}}", from=2-3, to=3-2]
		\arrow["{E(\ob j x, \ob t y) \tensor \dag_{y, z}}", from=1-1, to=1-3]
		\arrow["{\circ_{\ob j x, \ob t y, \ob t z}}", from=1-3, to=2-3]
		\arrow["{\dag_{x, y} \tensor \dag_{y, z}}"', from=1-1, to=2-1]
	\end{tikzcd}\]
	A $j$-monad morphism in $\VCat$ is equivalently specified by a morphism $\tau_x \colon \ob t x \to \ob{t'} x$ in $\V$ for each $x \in \ob A$, satisfying the following equations for each $x, y \in \ob A$.
	\[
	% https://q.uiver.app/?q=WzAsMyxbMSwwLCJcXHRlbnNvckkiXSxbMCwxLCJFKFxcb2IgaiB4LCBcXG9iIHQgeCkiXSxbMiwxLCJFKFxcb2IgaiB4LCBcXG9ie3QnfSB4KSJdLFswLDEsIlxcZXRhX3giLDJdLFswLDIsIlxcZXRhJ194Il0sWzEsMiwiRShcXG9iIGogeCwgXFx0YXVfeCkiLDJdXQ==
	\begin{tikzcd}[column sep=small]
		& \tensorI \\
		{E(\ob j x, \ob t x)} && {E(\ob j x, \ob{t'} x)}
		\arrow["{\eta_x}"', from=1-2, to=2-1]
		\arrow["{\eta'_x}", from=1-2, to=2-3]
		\arrow["{E(\ob j x, \tau_x)}"', from=2-1, to=2-3]
	\end{tikzcd}
	\hspace{4em}
	% https://q.uiver.app/?q=WzAsNSxbMCwwLCJFKFxcb2IgaiB4LCBcXG9iIHQgeSkiXSxbMCwxLCJFKFxcb2IgdCB4LCBcXG9iIHQgeSkiXSxbMSwyLCJFKFxcb2IgdCB4LCBcXG9ie3QnfSB5KSJdLFsyLDAsIkUoXFxvYiBqIHgsIFxcb2J7dCd9IHkpIl0sWzIsMSwiRShcXG9ie3QnfSB4LCBcXG9ie3QnfSB5KSJdLFswLDEsIlxcZGFnX3t4LCB5fSIsMl0sWzEsMiwiRShcXG9iIHQgeCwgXFx0YXVfeSkiLDJdLFswLDMsIkUoXFxvYiBqIHgsIFxcdGF1X3kpIl0sWzMsNCwiXFxkYWcnX3t4LCB5fSJdLFs0LDIsIkUoXFx0YXVfeCwgXFxvYnt0J30geSkiXV0=
	\begin{tikzcd}[sep=small]
		{E(\ob j x, \ob t y)} && {E(\ob j x, \ob{t'} y)} \\
		{E(\ob t x, \ob t y)} && {E(\ob{t'} x, \ob{t'} y)} \\
		& {E(\ob t x, \ob{t'} y)}
		\arrow["{\dag_{x, y}}"', from=1-1, to=2-1]
		\arrow["{E(\ob t x, \tau_y)}"', from=2-1, to=3-2]
		\arrow["{E(\ob j x, \tau_y)}", from=1-1, to=1-3]
		\arrow["{\dag'_{x, y}}", from=1-3, to=2-3]
		\arrow["{E(\tau_x, \ob{t'} y)}", from=2-3, to=3-2]
	\end{tikzcd}
	\]
\end{theorem}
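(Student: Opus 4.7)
The plan is to establish the equivalence in both directions. The forward direction is mechanical: from a relative monad $(t, \dag, \eta)$ in $\VCat$, take $\ob t$ to be the object-action of the $\V$-functor $t$, let $\eta_x$ and $\dag_{x,y}$ denote the components of the $\V$-forms $\eta$ and $\dag$ at objects, and observe that the three simplified axioms are exactly the pointwise evaluations of the relative monad laws of \cref{relative-monad}.

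The substantive direction is the converse. Given the simplified data, I define the hom-action of the underlying $\V$-functor by the formula
\[
t_{x,y} \defeq \bigl( A(x,y) \xto{\rho} A(x,y) \tensor \tensorI \xto{j_{x,y} \tensor \eta_y} E(\ob j x, \ob j y) \tensor E(\ob j y, \ob t y) \xto{\circ} E(\ob j x, \ob t y) \xto{\dag_{x,y}} E(\ob t x, \ob t y) \bigr),
\]
which mirrors the classical Kleisli-style formula $t(f) = (j(f) \d \eta)^\dag$. Preservation of identities by $t_{x,y}$ then follows from $\V$-functoriality of $j$ together with the axiom $\dag_{x,x} \c \eta_x = \I_{\ob t x}$, while preservation of composition is the main calculation: it follows from the associativity axiom combined with the axiom $\dag_{x,y} \c E(\eta_x, \ob t y) = 1$, used to fuse an intermediate pair of extensions into a single extension. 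The $\V$-naturality of $\eta$ (as a $\V$-form $j \tto t$) is exactly this latter axiom unfolded, and the $\V$-naturality of $\dag$ (as a $\V$-form $E(j, t) \tto E(t, t)$) follows from the associativity axiom together with bifunctoriality of composition in $E$. The three full relative monad axioms, being equalities of $\V$-forms, reduce to the simplified ones since equalities of $\V$-forms hold componentwise.

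The two processes are mutually inverse: starting from a relative monad, the formula above recovers the original $t_{x,y}$ by the associativity axiom and the unit laws; starting from the simplified data, restriction to objects is manifestly inverse to the construction. For morphisms, the same strategy applies but is considerably lighter: a pointwise family $\tau_x$ (viewed as a morphism $\tensorI \to E(\ob t x, \ob{t'} x)$ in $\V$) subject to the two simplified axioms automatically assembles into a $\V$-natural transformation $t \tto t'$, with naturality forced by the extension-compatibility axiom. The principal obstacle throughout is bookkeeping rather than conceptual — the main technical calculation is preservation of composition by $t_{x,y}$, and once this is in hand every other verification reduces to an unfolding of the simplified axioms together with the structural properties of $j$ and $E$.
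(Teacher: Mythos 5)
Your proposal is correct and follows essentially the same route as the paper: the same Kleisli-style formula $t_{x,y} = j_{x,y} \d E(\ob j x, \eta_y) \d \dag_{x,y}$, the same verification plan (identities via the second unit law, composition via the first unit law plus associativity, $\V$-naturality of $\eta$ and $\dag$ from the same axioms, uniqueness of the extension, and the pointwise assembly of morphisms). The only minor imprecision is in attribution: the paper's uniqueness/recovery argument rests on the $\V$-naturality of $\dag$ in its second variable together with the second unit law (rather than associativity), and the left $\V$-naturality of $\dag$ also invokes the first unit law — but these are bookkeeping details, not gaps.
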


Before proceeding with the proof, we first note that $\V$-naturality of $\eta$ expresses commutativity of the following diagram in $\V$ for each $x, y \in \ob A$,
% https://q.uiver.app/?q=WzAsNCxbMCwwLCJBKHgsIHkpIl0sWzEsMCwiRShcXG9iIGogeCwgXFxvYiBqIHkpIl0sWzEsMSwiRShcXG9iIGogeCwgXFxvYiB0IHkpIl0sWzAsMSwiRShcXG9iIHQgeCwgXFxvYiB0IHkpIl0sWzAsMSwial97eCwgeX0iXSxbMSwyLCJFKFxcb2IgaiB4LCBcXGV0YV95KSJdLFszLDIsIkUoXFxldGFfeCwgXFxvYiB0IHkpIiwyXSxbMCwzLCJ0X3t4LCB5fSIsMl1d
\[\begin{tikzcd}[column sep=large]
	{A(x, y)} & {E(\ob j x, \ob j y)} \\
	{E(\ob t x, \ob t y)} & {E(\ob j x, \ob t y)}
	\arrow["{j_{x, y}}", from=1-1, to=1-2]
	\arrow["{E(\ob j x, \eta_y)}", from=1-2, to=2-2]
	\arrow["{E(\eta_x, \ob t y)}"', from=2-1, to=2-2]
	\arrow["{t_{x, y}}"', from=1-1, to=2-1]
\end{tikzcd}\]
while $\V$-naturality of $\dag$ expresses commutativity of the following two diagrams in $\V$ for each $x, y, z \in \ob A$.
% https://q.uiver.app/?q=WzAsNixbMCwwLCJBKHgsIHkpIFxcb3RpbWVzIEUoXFxvYiBqIHksIFxcb2IgdCB6KSJdLFsxLDAsIkEoeCwgeSkgXFxvdGltZXMgRShcXG9iIHQgeSwgXFxvYiB0IHopIl0sWzEsMiwiRShcXG9iIHQgeCwgXFxvYiB0IHopIl0sWzAsMiwiRShcXG9iIGogeCwgXFxvYiB0IHopIl0sWzAsMSwiRShcXG9iIGogeCwgXFxvYiBqIHkpIFxcb3RpbWVzIEUoXFxvYiBqIHksIFxcb2IgdCB6KSJdLFsxLDEsIkUoXFxvYiB0IHgsIFxcb2IgdCB5KSBcXG90aW1lcyBFKFxcb2IgdCB5LCBcXG9iIHQgeikiXSxbMCwxLCJBKHgsIHkpIFxcb3RpbWVzIFxcZGFnX3t5LCB6fSJdLFszLDIsIlxcZGFnX3t4LCB6fSIsMl0sWzAsNCwial97eCwgeX0gXFxvdGltZXMgRShcXG9iIGogeSwgXFxvYiB0IHopIiwyXSxbNCwzLCJcXGNpcmNfe1xcb2IgaiB4LCBcXG9iIGogeSwgXFxvYiB0IHp9IiwyXSxbMSw1LCJ0X3t4LCB5fSBcXG90aW1lcyBFKFxcb2IgdCB5LCBcXG9iIHQgeikiXSxbNSwyLCJcXGNpcmNfe1xcb2IgdCB4LCBcXG9iIHQgeSwgXFxvYiB0IHp9Il1d
\[\begin{tikzcd}
	{A(x, y) \otimes E(\ob j y, \ob t z)} & {A(x, y) \otimes E(\ob t y, \ob t z)} \\
	{E(\ob j x, \ob j y) \otimes E(\ob j y, \ob t z)} & {E(\ob t x, \ob t y) \otimes E(\ob t y, \ob t z)} \\
	{E(\ob j x, \ob t z)} & {E(\ob t x, \ob t z)}
	\arrow["{A(x, y) \otimes \dag_{y, z}}", from=1-1, to=1-2]
	\arrow["{\dag_{x, z}}"', from=3-1, to=3-2]
	\arrow["{j_{x, y} \otimes E(\ob j y, \ob t z)}"', from=1-1, to=2-1]
	\arrow["{\circ_{\ob j x, \ob j y, \ob t z}}"', from=2-1, to=3-1]
	\arrow["{t_{x, y} \otimes E(\ob t y, \ob t z)}", from=1-2, to=2-2]
	\arrow["{\circ_{\ob t x, \ob t y, \ob t z}}", from=2-2, to=3-2]
\end{tikzcd}\]
% https://q.uiver.app/?q=WzAsNixbMCwwLCJFKFxcb2IgaiB4LCBcXG9iIHQgeSkgXFxvdGltZXMgQSh5LCB6KSJdLFsxLDAsIkUoXFxvYiB0IHgsIFxcb2IgdCB5KSBcXG90aW1lcyBBKHksIHopIl0sWzEsMiwiRShcXG9iIHQgeCwgXFxvYiB0IHopIl0sWzAsMiwiRShcXG9iIGogeCwgXFxvYiB0IHopIl0sWzAsMSwiRShcXG9iIGogeCwgXFxvYiB0IHkpIFxcb3RpbWVzIEUoXFxvYiB0IHksIFxcb2IgdCB6KSJdLFsxLDEsIkUoXFxvYiB0IHgsIFxcb2IgdCB5KSBcXG90aW1lcyBFKFxcb2IgdCB5LCBcXG9iIHQgeikiXSxbMCwxLCJcXGRhZ197eCwgeX0gXFxvdGltZXMgQSh5LCB6KSJdLFszLDIsIlxcZGFnX3t4LCB6fSIsMl0sWzAsNCwiRShcXG9iIGogeCwgXFxvYiB0IHkpIFxcb3RpbWVzIHRfe3ksIHp9IiwyXSxbNCwzLCJcXGNpcmNfe1xcb2IgaiB4LCBcXG9iIHQgeSwgXFxvYiB0IHp9IiwyXSxbMSw1LCJFKFxcb2IgdCB4LCBcXG9iIHQgeSkgXFxvdGltZXMgdF97eSwgen0iXSxbNSwyLCJcXGNpcmNfe1xcb2IgdCB4LCBcXG9iIHQgeSwgXFxvYiB0IHp9Il1d
\[\begin{tikzcd}
	{E(\ob j x, \ob t y) \otimes A(y, z)} & {E(\ob t x, \ob t y) \otimes A(y, z)} \\
	{E(\ob j x, \ob t y) \otimes E(\ob t y, \ob t z)} & {E(\ob t x, \ob t y) \otimes E(\ob t y, \ob t z)} \\
	{E(\ob j x, \ob t z)} & {E(\ob t x, \ob t z)}
	\arrow["{\dag_{x, y} \otimes A(y, z)}", from=1-1, to=1-2]
	\arrow["{\dag_{x, z}}"', from=3-1, to=3-2]
	\arrow["{E(\ob j x, \ob t y) \otimes t_{y, z}}"', from=1-1, to=2-1]
	\arrow["{\circ_{\ob j x, \ob t y, \ob t z}}"', from=2-1, to=3-1]
	\arrow["{E(\ob t x, \ob t y) \otimes t_{y, z}}", from=1-2, to=2-2]
	\arrow["{\circ_{\ob t x, \ob t y, \ob t z}}", from=2-2, to=3-2]
\end{tikzcd}\]

\begin{proof}
    Since it is trivial that every relative monad in $\VCat$ specifies the given data, it is enough to show that, given the specified data, $\ob t \colon \ob A \to \ob E$ extends to a $\V$-functor $t \colon A \to E$, for which $\{ \eta_x \}_{x \in \ob A}$ and $\{ \dag_{x, y} \}_{x, y \in \ob A}$ are $\V$-natural transformations, and furthermore that this extension is the unique such that the relative monad laws are satisfied.

	Supposing $\ob t$ thus extends, the following diagram must commute in $\V$ for each $x, y \in \ob A$, using $\V$-naturality of $\dag$, the second unit law, and the right unit law of composition in $E$.
	% https://q.uiver.app/?q=WzAsOCxbMCwzLCJFKFxcb2IgaiB4LCBcXG9iIHQgeSkiXSxbMSwzLCJFKFxcb2IgdCB4LCBcXG9iIHQgeSkiXSxbMCwwLCJBKHgsIHkpIFxcdGVuc29yIFxcdGVuc29ySSJdLFsxLDIsIkUoXFxvYnt0fXgsIFxcb2J7dH15KSBcXHRlbnNvciBFKFxcb2J7dH15LCBcXG9ie3R9eSkiXSxbMCwxLCJBKHgsIHkpIFxcdGVuc29yIEUoXFxvYntqfXksIFxcb2J7dH15KSJdLFsxLDEsIkEoeCwgeSkgXFx0ZW5zb3IgRShcXG9ie3R9eSwgXFxvYnt0fXkpIl0sWzAsMiwiRShcXG9ie2p9eCwgXFxvYntqfXkpIFxcdGVuc29yIEUoXFxvYntqfXksIFxcb2J7dH15KSJdLFsyLDAsIkEoeCwgeSkiXSxbMCwxLCJcXGRhZ197eCwgeX0iLDJdLFsyLDQsIkEoeCwgeSkgXFx0ZW5zb3IgXFxldGFfeSIsMl0sWzMsMSwiXFxjaXJjX3tcXG9ie3R9eCwgXFxvYnt0fXksIFxcb2J7dH15fSIsMV0sWzQsNSwiQSh4LCB5KSBcXHRlbnNvciBcXGRhZ197eSwgeX0iLDFdLFsyLDUsIkEoeCwgeSkgXFx0ZW5zb3IgXFxJX3tcXG9ie3R9eX0iLDFdLFs1LDMsInRfe3gsIHl9IFxcdGVuc29yIEUoXFxvYnt0fXksIFxcb2J7dH15KSIsMV0sWzQsNiwial97eCwgeX0gXFx0ZW5zb3IgRShcXG9ie2p9eSwgXFxvYnt0fXkpIiwyXSxbNiwwLCJcXGNpcmNfe1xcb2J7an14LCBcXG9ie2p9eSwgXFxvYnt0fXl9IiwyXSxbNywyLCJcXHJob197QSh4LCB5KX0iLDJdLFs3LDEsInRfe3gsIHl9IiwwLHsiY3VydmUiOi01fV1d
	\[\begin{tikzcd}[column sep=huge]
		{A(x, y) \tensor \tensorI} && {A(x, y)} \\
		{A(x, y) \tensor E(\ob{j}y, \ob{t}y)} & {A(x, y) \tensor E(\ob{t}y, \ob{t}y)} \\
		{E(\ob{j}x, \ob{j}y) \tensor E(\ob{j}y, \ob{t}y)} & {E(\ob{t}x, \ob{t}y) \tensor E(\ob{t}y, \ob{t}y)} \\
		{E(\ob j x, \ob t y)} & {E(\ob t x, \ob t y)}
		\arrow["{\dag_{x, y}}"', from=4-1, to=4-2]
		\arrow["{A(x, y) \tensor \eta_y}"', from=1-1, to=2-1]
		\arrow["{\circ_{\ob{t}x, \ob{t}y, \ob{t}y}}"{description}, from=3-2, to=4-2]
		\arrow["{A(x, y) \tensor \dag_{y, y}}"{description}, from=2-1, to=2-2]
		\arrow["{A(x, y) \tensor \I_{\ob{t}y}}"{description}, from=1-1, to=2-2]
		\arrow["{t_{x, y} \tensor E(\ob{t}y, \ob{t}y)}"{description}, from=2-2, to=3-2]
		\arrow["{j_{x, y} \tensor E(\ob{j}y, \ob{t}y)}"', from=2-1, to=3-1]
		\arrow["{\circ_{\ob{j}x, \ob{j}y, \ob{t}y}}"', from=3-1, to=4-1]
		\arrow["{\rho_{A(x, y)}}"', from=1-3, to=1-1]
		\arrow["{t_{x, y}}", curve={height=-30pt}, from=1-3, to=4-2]
	\end{tikzcd}\]
	Thus any such extension is necessarily unique. Conversely, given the specified data, define
	\[t_{x, y} \defeq j_{x, y} \d E(\ob j x, \eta_y) \d \dag_{x, y}\]
	for each $x, y \in \ob A$.
	(Observe that this is precisely the outside composite in the diagram above, by commutativity of the following diagram.)
	% https://q.uiver.app/?q=WzAsNyxbMiwyLCJFKFxcb2IgaiB4LCBcXG9iIHQgeSkiXSxbMCwxLCJBKHgsIHkpIFxcdGVuc29yIFxcdGVuc29ySSJdLFswLDIsIkEoeCwgeSkgXFx0ZW5zb3IgRShcXG9ie2p9eSwgXFxvYnt0fXkpIl0sWzEsMiwiRShcXG9ie2p9eCwgXFxvYntqfXkpIFxcdGVuc29yIEUoXFxvYntqfXksIFxcb2J7dH15KSJdLFswLDAsIkEoeCwgeSkiXSxbMSwwLCJFKFxcb2J7an14LCBcXG9ie2p9eSkiXSxbMSwxLCJFKFxcb2J7an14LCBcXG9ie2p9eSkgXFx0ZW5zb3IgXFx0ZW5zb3JJIl0sWzEsMiwiQSh4LCB5KSBcXHRlbnNvciBcXGV0YV95IiwyXSxbMiwzLCJqX3t4LCB5fSBcXHRlbnNvciBFKFxcb2J7an15LCBcXG9ie3R9eSkiLDJdLFszLDAsIlxcY2lyY197XFxvYntqfXgsIFxcb2J7an15LCBcXG9ie3R9eX0iLDJdLFs0LDEsIlxccmhvX3tBKHgsIHkpfSIsMl0sWzQsNSwial97eCwgeX0iXSxbNSw2LCJcXHJob197RShcXG9ie2p9eCwgXFxvYntqfXkpfSIsMV0sWzYsMywiRShcXG9ie2p9eCwgXFxvYntqfXkpIFxcdGVuc29yIFxcZXRhX3kiLDFdLFs1LDAsIkUoXFxvYntqfXgsIFxcZXRhX3kpIiwwLHsiY3VydmUiOi0yfV0sWzEsNiwial97eCwgeX0gXFx0ZW5zb3IgXFx0ZW5zb3JJIiwxXV0=
	\[\begin{tikzcd}[column sep=5.5em]
		{A(x, y)} & {E(\ob{j}x, \ob{j}y)} \\
		{A(x, y) \tensor \tensorI} & {E(\ob{j}x, \ob{j}y) \tensor \tensorI} \\
		{A(x, y) \tensor E(\ob{j}y, \ob{t}y)} & {E(\ob{j}x, \ob{j}y) \tensor E(\ob{j}y, \ob{t}y)} & {E(\ob j x, \ob t y)}
		\arrow["{A(x, y) \tensor \eta_y}"', from=2-1, to=3-1]
		\arrow["{j_{x, y} \tensor E(\ob{j}y, \ob{t}y)}"', from=3-1, to=3-2]
		\arrow["{\circ_{\ob{j}x, \ob{j}y, \ob{t}y}}"', from=3-2, to=3-3]
		\arrow["{\rho_{A(x, y)}}"', from=1-1, to=2-1]
		\arrow["{j_{x, y}}", from=1-1, to=1-2]
		\arrow["{\rho_{E(\ob{j}x, \ob{j}y)}}"{description}, from=1-2, to=2-2]
		\arrow["{E(\ob{j}x, \ob{j}y) \tensor \eta_y}"{description}, from=2-2, to=3-2]
		\arrow["{E(\ob{j}x, \eta_y)}", curve={height=-12pt}, from=1-2, to=3-3]
		\arrow["{j_{x, y} \tensor \tensorI}"{description}, from=2-1, to=2-2]
	\end{tikzcd}\]
	This is a $\V$-functor: preservation of identities follows by commutativity of
        \[
% https://q.uiver.app/?q=WzAsNSxbMSwwLCJBKHgsIHgpIl0sWzEsMywiRShcXG9iIHQgeCwgXFxvYiB0IHgpIl0sWzEsMiwiRShcXG9iIGogeCwgXFxvYiB0IHgpIl0sWzEsMSwiRShcXG9iIGogeCwgXFxvYiBqIHgpIl0sWzAsMCwiXFx0ZW5zb3JJIl0sWzIsMSwiXFxkYWdfe3gsIHh9Il0sWzAsMywial97eCwgeH0iXSxbMywyLCJFKFxcb2IgaiB4LCBcXGV0YV94KSJdLFs0LDAsIlxcSV94Il0sWzQsMywiXFxJX3tcXG9iIGogeH0iLDFdLFs0LDEsIlxcSV97XFxvYiB0IHh9IiwyLHsiY3VydmUiOjN9XSxbNCwyLCJcXGV0YV94IiwxXV0=
\begin{tikzcd}[column sep=huge]
	\tensorI & {A(x, x)} \\
	& {E(\ob j x, \ob j x)} \\
	& {E(\ob j x, \ob t x)} \\
	& {E(\ob t x, \ob t x)}
	\arrow["{\dag_{x, x}}", from=3-2, to=4-2]
	\arrow["{j_{x, x}}", from=1-2, to=2-2]
	\arrow["{E(\ob j x, \eta_x)}", from=2-2, to=3-2]
	\arrow["{\I_x}", from=1-1, to=1-2]
	\arrow["{\I_{\ob j x}}"{description}, from=1-1, to=2-2]
	\arrow["{\I_{\ob t x}}"', curve={height=18pt}, from=1-1, to=4-2]
	\arrow["{\eta_x}"{description}, from=1-1, to=3-2]
\end{tikzcd}
  \]
  using preservation of identities of $j$, and the second unit law; while preservation of composites follows by commutativity of
	% https://q.uiver.app/#q=WzAsOSxbMCwwLCJBKHgsIHkpIFxcb3RpbWVzIEEoeSwgeikiXSxbMiwwLCJBKHgsIHopIl0sWzAsMywiRShcXG9iIHQgeCwgXFxvYiB0IHkpIFxcb3RpbWVzIEUoXFxvYiB0IHksIFxcb2IgdCB6KSJdLFsyLDMsIkUoXFxvYiB0IHgsIFxcb2IgdCB6KSJdLFsyLDEsIkUoXFxvYiBqIHgsIFxcb2IgaiB6KSJdLFswLDEsIkUoXFxvYiBqIHgsIFxcb2IgaiB5KSBcXG90aW1lcyBFKFxcb2IgaiB5LCBcXG9iIGogeikiXSxbMiwyLCJFKFxcb2IgaiB4LCBcXG9iIHQgeikiXSxbMCwyLCJFKFxcb2IgaiB4LCBcXG9iIHQgeSkgXFxvdGltZXMgRShcXG9iIGogeSwgXFxvYiB0IHopIl0sWzEsMiwiRShcXG9iIGogeCwgXFxvYiB0IHkpIFxcb3RpbWVzIEUoXFxvYiB0IHksIFxcb2IgdCB6KSJdLFsyLDMsIlxcY2lyY197XFxvYiB0IHgsIFxcb2IgdCB5LCBcXG9iIHQgen0iLDJdLFswLDEsIlxcY2lyY197eCwgeSwgen0iXSxbMSw0LCJqX3t4LCB6fSJdLFswLDUsImpfe3gsIHl9IFxcb3RpbWVzIGpfe3ksIHp9IiwyXSxbNSw0LCJcXGNpcmNfe1xcb2IgaiB4LCBcXG9iIGogeSwgXFxvYiBqIHp9IiwxXSxbNSw3LCJFKFxcb2IgaiB4LCBcXGV0YV95KSBcXG90aW1lcyBFKFxcb2IgaiB5LCBcXGV0YV96KSIsMV0sWzQsNiwiRShcXG9iIGogeCwgXFxldGFfeikiXSxbNywyLCJcXGRhZ197eCwgeX0gXFxvdGltZXMgXFxkYWdfe3ksIHp9IiwyXSxbNiwzLCJcXGRhZ197eCwgen0iXSxbNyw4LCJFKFxcb2IgaiB4LCBcXG9iIHQgeSkgXFxvdGltZXMgXFxkYWdfe3ksIHp9Il0sWzgsNiwiXFxjaXJjX3tcXG9iIGogeCwgXFxvYiB0IHksIFxcb2IgdCB6fSIsMV1d
	\[\begin{tikzcd}[column sep=5.5em]
		{A(x, y) \otimes A(y, z)} && {A(x, z)} \\
		{E(\ob j x, \ob j y) \otimes E(\ob j y, \ob j z)} && {E(\ob j x, \ob j z)} \\
		{E(\ob j x, \ob t y) \otimes E(\ob j y, \ob t z)} & {E(\ob j x, \ob t y) \otimes E(\ob t y, \ob t z)} & {E(\ob j x, \ob t z)} \\
		{E(\ob t x, \ob t y) \otimes E(\ob t y, \ob t z)} && {E(\ob t x, \ob t z)}
		\arrow["{\circ_{\ob t x, \ob t y, \ob t z}}"', from=4-1, to=4-3]
		\arrow["{\circ_{x, y, z}}", from=1-1, to=1-3]
		\arrow["{j_{x, z}}", from=1-3, to=2-3]
		\arrow["{j_{x, y} \otimes j_{y, z}}"', from=1-1, to=2-1]
		\arrow["{\circ_{\ob j x, \ob j y, \ob j z}}"{description}, from=2-1, to=2-3]
		\arrow["{E(\ob j x, \eta_y) \otimes E(\ob j y, \eta_z)}"{description}, from=2-1, to=3-1]
		\arrow["{E(\ob j x, \eta_z)}", from=2-3, to=3-3]
		\arrow["{\dag_{x, y} \otimes \dag_{y, z}}"', from=3-1, to=4-1]
		\arrow["{\dag_{x, z}}", from=3-3, to=4-3]
		\arrow["{E(\ob j x, \ob t y) \otimes \dag_{y, z}}", from=3-1, to=3-2]
		\arrow["{\circ_{\ob j x, \ob t y, \ob t z}}"{description}, from=3-2, to=3-3]
	\end{tikzcd}\]
	using preservation of composites of $j$, the first unit law, and the associativity law. $\eta$ is then $\V$-natural, the following diagram commuting by the first unit law.
% https://q.uiver.app/?q=WzAsNixbMCwwLCJBKHgsIHkpIl0sWzEsMCwiRShcXG9iIGogeCwgXFxvYiBqIHkpIl0sWzEsMywiRShcXG9iIGogeCwgXFxvYiB0IHkpIl0sWzAsMiwiRShcXG9iIGogeCwgXFxvYiB0IHkpIl0sWzAsMywiRShcXG9iIHQgeCwgXFxvYiB0IHkpIl0sWzAsMSwiRShcXG9ie2p9eCwgXFxvYntqfXkpIl0sWzAsMSwial97eCwgeX0iXSxbMSwyLCJFKFxcb2IgaiB4LCBcXGV0YV95KSJdLFs0LDIsIkUoXFxldGFfeCwgXFxvYiB0IHkpIiwyXSxbMyw0LCJcXGRhZ197eCwgeX0iLDJdLFszLDIsIiIsMSx7ImxldmVsIjoyLCJzdHlsZSI6eyJoZWFkIjp7Im5hbWUiOiJub25lIn19fV0sWzAsNSwial97eCwgeX0iLDJdLFs1LDMsIkUoXFxvYiBqIHgsIFxcZXRhX3kpIiwyXV0=
\[\begin{tikzcd}[column sep=large]
	{A(x, y)} & {E(\ob j x, \ob j y)} \\
	{E(\ob{j}x, \ob{j}y)} \\
	{E(\ob j x, \ob t y)} \\
	{E(\ob t x, \ob t y)} & {E(\ob j x, \ob t y)}
	\arrow["{j_{x, y}}", from=1-1, to=1-2]
	\arrow["{E(\ob j x, \eta_y)}", from=1-2, to=4-2]
	\arrow["{E(\eta_x, \ob t y)}"', from=4-1, to=4-2]
	\arrow["{\dag_{x, y}}"', from=3-1, to=4-1]
	\arrow[Rightarrow, no head, from=3-1, to=4-2]
	\arrow["{j_{x, y}}"', from=1-1, to=2-1]
	\arrow["{E(\ob j x, \eta_y)}"', from=2-1, to=3-1]
\end{tikzcd}\]
	$\dag$ is also $\V$-natural, the left-compatibility law following from commutativity of
\[\begin{tikzcd}[column sep=-3.2em]
	{A(x, y) \otimes E(\ob j y, \ob t z)} && {\hspace{11em}} & {A(x, y) \otimes E(\ob t y, \ob t z)} \\
	{E(\ob j x, \ob j y) \otimes E(\ob j y, \ob t z)} &&& {E(\ob j x, \ob j y) \otimes E(\ob t y, \ob t z)} \\
	& {E(\ob j x, \ob t y) \otimes E(\ob j y, \ob t z)} && {E(\ob j x, \ob t y) \otimes E(\ob t y, \ob t z)} \\
	& {E(\ob j x, \ob t y) \otimes E(\ob t y, \ob t z)} && {E(\ob t x, \ob t y) \otimes E(\ob t y, \ob t z)} \\
	{E(\ob j x, \ob t z)} &&& {E(\ob t x, \ob t z)}
	\arrow["{A(x, y) \otimes \dag_{y, z}}", from=1-1, to=1-4]
	\arrow["{\dag_{x, z}}"', from=5-1, to=5-4]
	\arrow["{j_{x, y} \otimes E(\ob j y, \ob t z)}"', from=1-1, to=2-1]
	\arrow["{\circ_{\ob j x, \ob j y, \ob t z}}"', from=2-1, to=5-1]
	\arrow["{j_{x, y} \otimes E(\ob t y, \ob t z)}", from=1-4, to=2-4]
	\arrow["{E(\ob j x, \ob j y) \otimes \dag_{y, z}}"{description}, from=2-1, to=2-4]
	\arrow["{\circ_{\ob t x, \ob t y, \ob t z}}", from=4-4, to=5-4]
	\arrow["{E(\ob j x, \eta_y) \otimes E(\ob t y, \ob t z)}", from=2-4, to=3-4]
	\arrow["{\dag_{x, y} \otimes E(\ob t y, \ob t z)}", from=3-4, to=4-4]
	\arrow["{E(\ob j x, \eta_y) \otimes E(\ob j y, \ob t z)}"{description}, from=2-1, to=3-2]
	\arrow["{E(\ob j x, \ob t y) \otimes \dag_{y, z}}", from=3-2, to=3-4]
	\arrow["{E(\ob j x, \ob t y) \otimes \dag_{y, z}}"{description}, from=3-2, to=4-2]
	\arrow["{\circ_{\ob j x, \ob t y, \ob t z}}"{description}, from=4-2, to=5-1]
\end{tikzcd}\]
	using the first unit law, and the associativity law; and the right-compatibility law following from commutativity of
% https://q.uiver.app/#q=WzAsMTAsWzAsMCwiRShcXG9iIGogeCwgXFxvYiB0IHkpIFxcb3RpbWVzIEEoeSwgeikiXSxbMSwwLCJFKFxcb2IgdCB4LCBcXG9iIHQgeSkgXFxvdGltZXMgQSh5LCB6KSJdLFsxLDQsIkUoXFxvYiB0IHgsIFxcb2IgdCB6KSJdLFswLDQsIkUoXFxvYiBqIHgsIFxcb2IgdCB6KSJdLFswLDEsIkUoXFxvYiBqIHgsIFxcb2IgdCB5KSBcXG90aW1lcyBFKFxcb2IgaiB5LCBcXG9iIGogeikiXSxbMSwxLCJFKFxcb2IgdCB4LCBcXG9iIHQgeSkgXFxvdGltZXMgRShcXG9iIGogeSwgXFxvYiBqIHopIl0sWzAsMywiRShcXG9iIGogeCwgXFxvYiB0IHkpIFxcb3RpbWVzIEUoXFxvYiB0IHksIFxcb2IgdCB6KSJdLFsxLDMsIkUoXFxvYiB0IHgsIFxcb2IgdCB5KSBcXG90aW1lcyBFKFxcb2IgdCB5LCBcXG9iIHQgeikiXSxbMCwyLCJFKFxcb2IgaiB4LCBcXG9iIHQgeSkgXFxvdGltZXMgRShcXG9iIGogeSwgXFxvYiB0IHopIl0sWzEsMiwiRShcXG9iIHQgeCwgXFxvYiB0IHkpIFxcb3RpbWVzIEUoXFxvYiBqIHksIFxcb2IgdCB6KSJdLFswLDEsIlxcZGFnX3t4LCB5fSBcXG90aW1lcyBBKHksIHopIl0sWzMsMiwiXFxkYWdfe3gsIHp9IiwyXSxbMSw1LCJFKFxcb2IgdCB4LCBcXG9iIHQgeSkgXFxvdGltZXMgal97eSwgen0iXSxbMCw0LCJFKFxcb2IgaiB4LCBcXG9iIHQgeSkgXFxvdGltZXMgal97eSwgen0iLDJdLFs2LDMsIlxcY2lyY197XFxvYiBqIHgsIFxcb2IgdCB5LCBcXG9iIHQgen0iLDJdLFs3LDIsIlxcY2lyY197XFxvYiB0IHgsIFxcb2IgdCB5LCBcXG9iIHQgen0iXSxbNCw4LCJFKFxcb2IgaiB4LCBcXG9iIHQgeSkgXFxvdGltZXMgRShcXG9iIGogeSwgXFxldGFfeikiLDJdLFs1LDksIkUoXFxvYiB0IHgsIFxcb2IgdCB5KSBcXG90aW1lcyBFKFxcb2IgaiB5LCBcXGV0YV96KSJdLFs5LDcsIkUoXFxvYiB0IHgsIFxcb2IgdCB5KSBcXG90aW1lcyBcXGRhZ197eSwgen0iXSxbOCw2LCJFKFxcb2IgaiB4LCBcXG9iIHQgeSkgXFxvdGltZXMgXFxkYWdfe3ksIHp9IiwyXSxbNCw1LCJcXGRhZ197eCwgeX0gXFxvdGltZXMgRShcXG9iIGogeSwgXFxvYiBqIHopIiwxXSxbOCw5LCJcXGRhZ197eCwgeX0gXFxvdGltZXMgRShcXG9iIGogeSwgXFxvYiB0IHopIiwxXV0=
\[\begin{tikzcd}[column sep=8em]
	{E(\ob j x, \ob t y) \otimes A(y, z)} & {E(\ob t x, \ob t y) \otimes A(y, z)} \\
	{E(\ob j x, \ob t y) \otimes E(\ob j y, \ob j z)} & {E(\ob t x, \ob t y) \otimes E(\ob j y, \ob j z)} \\
	{E(\ob j x, \ob t y) \otimes E(\ob j y, \ob t z)} & {E(\ob t x, \ob t y) \otimes E(\ob j y, \ob t z)} \\
	{E(\ob j x, \ob t y) \otimes E(\ob t y, \ob t z)} & {E(\ob t x, \ob t y) \otimes E(\ob t y, \ob t z)} \\
	{E(\ob j x, \ob t z)} & {E(\ob t x, \ob t z)}
	\arrow["{\dag_{x, y} \otimes A(y, z)}", from=1-1, to=1-2]
	\arrow["{\dag_{x, z}}"', from=5-1, to=5-2]
	\arrow["{E(\ob t x, \ob t y) \otimes j_{y, z}}", from=1-2, to=2-2]
	\arrow["{E(\ob j x, \ob t y) \otimes j_{y, z}}"', from=1-1, to=2-1]
	\arrow["{\circ_{\ob j x, \ob t y, \ob t z}}"', from=4-1, to=5-1]
	\arrow["{\circ_{\ob t x, \ob t y, \ob t z}}", from=4-2, to=5-2]
	\arrow["{E(\ob j x, \ob t y) \otimes E(\ob j y, \eta_z)}"', from=2-1, to=3-1]
	\arrow["{E(\ob t x, \ob t y) \otimes E(\ob j y, \eta_z)}", from=2-2, to=3-2]
	\arrow["{E(\ob t x, \ob t y) \otimes \dag_{y, z}}", from=3-2, to=4-2]
	\arrow["{E(\ob j x, \ob t y) \otimes \dag_{y, z}}"', from=3-1, to=4-1]
	\arrow["{\dag_{x, y} \otimes E(\ob j y, \ob j z)}"{description}, from=2-1, to=2-2]
	\arrow["{\dag_{x, y} \otimes E(\ob j y, \ob t z)}"{description}, from=3-1, to=3-2]
\end{tikzcd}\]
  using the associativity law.

	Given $j$-monads $T = (t, \dag, \eta)$ and $T' = (t', \dag', \eta')$ in $\VCat$, it is trivial that every relative monad morphism $T \to T'$ specifies the given data. Thus it is enough to show that, given the specified data, $\{ \tau_x \}_{x \in \ob A}$ forms a $\V$-natural transformation. This follows by commutativity of the following diagram,
	% https://q.uiver.app/?q=WzAsOCxbMCwwLCJBKHgsIHkpIl0sWzEsMCwiRShcXG9iIGogeCwgXFxvYiBqIHkpIl0sWzIsMCwiRShcXG9iIGogeCwgXFxvYiB0IHkpIl0sWzMsMCwiRShcXG9iIHQgeCwgXFxvYiB0IHkpIl0sWzAsMSwiRShcXG9iIGogeCwgXFxvYiBqIHkpIl0sWzAsMiwiRShcXG9iIGogeCwgXFxvYnt0J30geSkiXSxbMCwzLCJFKFxcb2J7dCd9IHgsIFxcb2J7dCd9IHkpIl0sWzMsMywiRShcXG9iIHQgeCwgXFxvYnt0J30geSkiXSxbMCwxLCJqX3t4LCB5fSJdLFsxLDIsIkUoXFxvYiBqIHgsIFxcZXRhX3kpIl0sWzIsMywiXFxkYWdfe3gsIHl9Il0sWzAsNCwial97eCwgeX0iLDJdLFsxLDQsIiIsMCx7ImxldmVsIjoyLCJzdHlsZSI6eyJoZWFkIjp7Im5hbWUiOiJub25lIn19fV0sWzQsNSwiRShcXG9iIGogeCwgXFxldGEnX3kpIiwyXSxbNSw2LCJcXGRhZydfe3gsIHl9IiwyXSxbMyw3LCJFKFxcb2IgdCB4LCBcXHRhdV95KSJdLFs2LDcsIkUoXFx0YXVfeCwgXFxvYnt0J30geSkiLDJdLFsyLDUsIkUoXFxvYiBqIHgsIFxcdGF1X3kpIiwxXV0=
	\[\begin{tikzcd}[column sep=large]
		{A(x, y)} & {E(\ob j x, \ob j y)} & {E(\ob j x, \ob t y)} & {E(\ob t x, \ob t y)} \\
		{E(\ob j x, \ob j y)} \\
		{E(\ob j x, \ob{t'} y)} \\
		{E(\ob{t'} x, \ob{t'} y)} &&& {E(\ob t x, \ob{t'} y)}
		\arrow["{j_{x, y}}", from=1-1, to=1-2]
		\arrow["{E(\ob j x, \eta_y)}", from=1-2, to=1-3]
		\arrow["{\dag_{x, y}}", from=1-3, to=1-4]
		\arrow["{j_{x, y}}"', from=1-1, to=2-1]
		\arrow[Rightarrow, no head, from=1-2, to=2-1]
		\arrow["{E(\ob j x, \eta'_y)}"', from=2-1, to=3-1]
		\arrow["{\dag'_{x, y}}"', from=3-1, to=4-1]
		\arrow["{E(\ob t x, \tau_y)}", from=1-4, to=4-4]
		\arrow["{E(\tau_x, \ob{t'} y)}"', from=4-1, to=4-4]
		\arrow["{E(\ob j x, \tau_y)}"{description}, from=1-3, to=3-1]
	\end{tikzcd}\]
	using the preservation of the unit and extension operators by $\tau$.
\end{proof}

\begin{remark}
	\Cref{enriched-relative-monad} is asserted without proof for $\Set$-enriched relative monads in \cites[300]{altenkirch2010monads}[4]{altenkirch2015monads}, and for $\V$-enriched relative monads with \ff{} roots in \cite[Remark~8.2]{lucyshyn2022diagrammatic}. \textcite[Theorems~1.4.1 \& 1.5.2]{walters1970categorical} gives a proof for $\Set$-enriched monads relative to the identity (there called \emph{full devices}~\cite[Definition~1.1.1]{walters1970categorical}).
\end{remark}

\begin{example}
	\label{examples-of-enriched-relative-monads}
    The explicit definition of $\V$-enriched relative monad stated in \cref{enriched-relative-monad} does not appear to have been given in complete generality in the literature, and subsumes various prior definitions.
    \begin{enumerate}
        \item \label{device} A \emph{device} in the sense of \textcite[\S1]{walters1969alternative} is precisely a $\Set$-enriched relative monad whose root is injective-on-objects and \ff{}. A \emph{device} in the sense of \cite[Definition~1.1.1]{walters1970categorical}, which is equivalent to a \emph{Kleisli structure} in the sense of \textcite[Definition~4]{altenkirch1999monadic}, is precisely a $\Set$-enriched relative monad whose root is \ff{}.
        \item An \emph{algebraic theory in extension form} in the sense of \textcite[Exercise~1.3.12]{manes1976algebraic} (called a \emph{full device} in \cite[Definition~1.1.1]{walters1970categorical}, a \emph{Kleisli triple} in \cite[Definition~1.2]{moggi1991notions}, and a \emph{monad in extension form} in \cite[Definition~2.13]{manes2003monads}) is precisely a $\Set$-enriched relative monad whose root is the identity.
        \item A \emph{finitary Kleisli triple} in the sense of \cite[Definition~1.1]{adamek2003some} is precisely a $\Set$-enriched relative monad whose root is the inclusion of the full subcategory of the finitely presentable objects in a \lfp{} category. \Cref{Xj1-is-skew-monoidal} therefore recovers the correspondence of \cite[Theorem~3.2]{adamek2003some} between finitary Kleisli triples and monoids.
        \item \label{ACU-relative-monad} A \emph{(Manes-style) relative monad} in the sense of \citeauthor{altenkirch2010monads}~\cites[Definition~1]{altenkirch2010monads}[Definition~2.1]{altenkirch2015monads} is precisely a $\Set$-enriched relative monad.
        \item A \emph{$\V$-enriched clone} in the sense of \textcite[Definition~4]{staton2013algebraic} is precisely a $\V$-enriched relative monad whose root is \ff{}.
        \item A \emph{$j$-abstract $\V$-clone} in the sense of \textcite[Definition~1.1]{fiore2017concrete}, for $j$ having codomain $\V$ a monoidal category with powers of objects in the image of $j$, is equivalent when $\V$ is left-closed to a $\V$-enriched $j$-monad whose root is \ff{} (\cf{}~\cite[Remark~1.2]{fiore2017concrete}).
        \item An \emph{enriched relative monad} in the sense of \textcite[\S2.1]{staton2020classical} is (the underlying functor of) an enriched relative monad in our sense (technically, the definition \ibid{} requires the relative monad to admit a resolution, but this follows from \cref{VCat-admits-opalgebra-objects}).
        \item A \emph{relative 2-monad} in the sense of \textcite[Definition~3.1]{fiore2018relative} is precisely a $\b{Cat}$-enriched relative monad.
        \item \label{LP-relative-monad} An \emph{$A$-relative $\V$-monad (on $E$)} in the sense of \textcite[Definition~8.1]{lucyshyn2022diagrammatic} is precisely a $\V$-enriched relative monad whose root is \ff{}.
        \item \label{MU-relative-monad} A \emph{$j$-relative monad} in the sense of \textcite[Definition~14]{mcdermott2022flexibly}, for $\V$ a small monoidal category and $j$ a functor between locally $\V$-graded categories, is precisely a $[\V, \Set]$-enriched $j$-monad.
    \end{enumerate}
	\Cref{relative-monads-are-loose-monads} also recovers several independent definitions of relative monad in the literature.
	\begin{enumerate}[resume]
		\item A \emph{$j$-monad} in the sense of \textcite[D\'efinitions~1.0]{diers1975jmonades} is precisely a $\Set$-enriched relative monad whose root is dense and \ff{}.
		\item Since loose-monads in $\Cat$ are equivalently cocontinuous monads on presheaf categories (as the bicategory of distributors is the Kleisli bicategory for the presheaf construction~\cite{fiore2018relative}), this characterisation also justifies the approach of \citeauthor{lee1977relative}, who represents relative monads by cocontinuous monads on presheaf categories~\cite[Chapter~2]{lee1977relative}. Precisely, a \emph{monad associated to a relative adjointness situation} in the sense of \cite[Chapter~2]{lee1977relative} is a $\Set$-enriched relative monad whose root is dense and \ff{}.
		\item A \emph{copresheaf-representable monad} in the sense of \textcite[Theorem~10.5]{lucyshyn2016enriched} is precisely a $\V$-enriched relative monad whose root is a cocompletion (\emph{copresheaf-representable} $\V$-distributors~\cite[Definition~9.2]{lucyshyn2016enriched} are precisely $j$-representable $\V$-distributors in the sense of \cref{representable-and-corepresentable}). \qedhere
	\end{enumerate}
\end{example}

\begin{example}
	The more general notion of relative monad enriched in a bicategory was defined in \cite[88]{arkor2022monadic}. While we shall not treat this case in detail, we note that \cref{VCat} may be generalised to a \vdc{} of categories enriched in a bicategory, relative monads in which coincide with those \loccit. By virtue of the virtual double categorical setting in which we work, the admissibility condition on the roots required \ibid{} may be dropped in our setting.
\end{example}

The presentation of an enriched relative monad given in \cref{enriched-relative-monad} is generally the most convenient form for applications: when $\V = \Set$, a relative monad in this form assigns to each morphism $f \colon \ob j x \to \ob t y$ a morphism $f^\dag \colon \ob t x \to \ob t y$. In contrast, a relative monad in monoid form (\cref{relative-monads-are-tight-monoids}) assigns to a pair of morphisms $f \colon x \to \ob t y$ and $g \colon \ob j y \to \ob t z$ a morphism $\mu(f, g) \colon x \to \ob t z$.

\subsection{Existence of algebra objects}

We show that enriched relative monads admit algebra objects, assuming the existence of enough structure in $\V$.
As preparation for this, we define a notion of \emph{\EM{} algebra} (after \textcite{eilenberg1965adjoint}) for a relative monad $T$ in $\VCat$.
These will be the objects of the \emph{\EM{} $\V$-category} $\EM(T)$, which forms the algebra object for $T$.

\begin{definition}\label{EM-algebra}
	Let $\jAE$ be a $\V$-functor, and let $T = (t, \dag, \eta)$ be a $j$-monad in $\VCat$.
	An \emph{\EM{} $T$-algebra} comprises
	\begin{enumerate}
		\item an object $e \in \ob E$, the \emph{carrier};
		\item a family of morphisms in $\V$, the \emph{extension operator}
		\[\{ \eaop_x \colon E(\ob{j}x, e) \to E(\ob{t}x, e) \}_{x \in \ob A}\]
	\end{enumerate}
	rendering the following diagrams in $\V$ commutative for all $x, y \in \ob A$.
	\[
	% https://q.uiver.app/?q=WzAsMyxbMCwwLCJFKFxcb2IgaiB4LCBlKSJdLFsxLDAsIkUoXFxvYnt0fXgsIGUpIl0sWzEsMSwiRShcXG9iIGogeCwgZSkiXSxbMCwxLCJcXGRkYWdfe3h9Il0sWzEsMiwiRShcXGV0YV94LCBlKSJdLFswLDIsIiIsMix7ImxldmVsIjoyLCJzdHlsZSI6eyJoZWFkIjp7Im5hbWUiOiJub25lIn19fV1d
	\begin{tikzcd}[column sep=small, row sep=3.6em]
		{E(\ob j x, e)} & {E(\ob{t}x, e)} \\
		& {E(\ob j x, e)}
		\arrow["{\eaop_{x}}", from=1-1, to=1-2]
		\arrow["{E(\eta_x, e)}", from=1-2, to=2-2]
		\arrow[Rightarrow, no head, from=1-1, to=2-2]
	\end{tikzcd}
	\quad
	% https://q.uiver.app/?q=WzAsNSxbMiwxLCJFKFxcb2IgaiB4LCBlKSJdLFsxLDIsIkUoXFxvYiB0IHgsIGUpIl0sWzAsMSwiRShcXG9iIHQgeCwgXFxvYiB0IHkpIFxcdGVuc29yIEUoXFxvYiB0IHksIGUpIl0sWzAsMCwiRShcXG9iIGogeCwgXFxvYiB0IHkpIFxcdGVuc29yIEUoXFxvYiBqIHksIGUpIl0sWzIsMCwiRShcXG9iIGogeCwgXFxvYiB0IHkpIFxcdGVuc29yIEUoXFxvYiB0IHksIGUpIl0sWzIsMSwiXFxjaXJjX3tcXG9iIHQgeCwgXFxvYiB0IHksIGV9IiwyXSxbMCwxLCJcXGRkYWdfe3h9Il0sWzMsNCwiRShcXG9iIGogeCwgXFxvYiB0IHkpIFxcdGVuc29yIFxcZGRhZ197eX0iXSxbNCwwLCJcXGNpcmNfe1xcb2IgaiB4LCBcXG9iIHQgeSwgZX0iXSxbMywyLCJcXGRhZ197eCwgeX0gXFx0ZW5zb3IgXFxkZGFnX3t5fSIsMl1d
	\begin{tikzcd}[column sep=small, row sep=small]
		{E(\ob j x, \ob t y) \tensor E(\ob j y, e)} && {E(\ob j x, \ob t y) \tensor E(\ob t y, e)} \\
		{E(\ob t x, \ob t y) \tensor E(\ob t y, e)} && {E(\ob j x, e)} \\
		& {E(\ob t x, e)}
		\arrow["{\circ_{\ob t x, \ob t y, e}}"', from=2-1, to=3-2]
		\arrow["{\eaop_{x}}", from=2-3, to=3-2]
		\arrow["{E(\ob j x, \ob t y) \tensor \eaop_{y}}", from=1-1, to=1-3]
		\arrow["{\circ_{\ob j x, \ob t y, e}}", from=1-3, to=2-3]
		\arrow["{\dag_{x, y} \tensor \eaop_{y}}"', from=1-1, to=2-1]
	\end{tikzcd}
	\]
	Let $(e, \eaop)$ and $(e', \eaop')$ be \EM{} $T$-algebras, and let $v$ be an object of $\V$. A \emph{$v$-graded homomorphism} from $(e, \eaop)$ to $(e', \eaop')$ is a morphism $h \colon v \to E(e, e')$ in $\V$ rendering the following diagram in $\V$ commutative for each $x \in \ob A$.
	% https://q.uiver.app/#q=WzAsNixbMCwwLCJFKFxcb2IgaiB4LCBlKSBcXG90aW1lcyB2Il0sWzEsMCwiRShcXG9iIHQgeCwgZSkgXFxvdGltZXMgdiJdLFsxLDIsIkUoXFxvYiB0IHgsIGUnKSJdLFswLDIsIkUoXFxvYiBqIHgsIGUnKSJdLFswLDEsIkUoXFxvYiBqIHgsIGUpIFxcb3RpbWVzIEUoZSwgZScpIl0sWzEsMSwiRShcXG9iIHQgeCwgZSkgXFxvdGltZXMgRShlLCBlJykiXSxbMCwxLCJcXGVhb3BfeCBcXG90aW1lcyB2Il0sWzMsMiwiXFxlYW9wJ194IiwyXSxbMCw0LCJFKFxcb2IgaiB4LCBlKSBcXG90aW1lcyBoIiwyXSxbNCwzLCJcXGNpcmNfe1xcb2IgaiB4LCBlLCBlJ30iLDJdLFsxLDUsIkUoXFxvYiB0IHgsIGUpIFxcb3RpbWVzIGgiXSxbNSwyLCJcXGNpcmNfe1xcb2IgdCB4LCBlLCBlJ30iXV0=
	\[\begin{tikzcd}
		{E(\ob j x, e) \otimes v} & {E(\ob t x, e) \otimes v} \\
		{E(\ob j x, e) \otimes E(e, e')} & {E(\ob t x, e) \otimes E(e, e')} \\
		{E(\ob j x, e')} & {E(\ob t x, e')}
		\arrow["{\eaop_x \otimes v}", from=1-1, to=1-2]
		\arrow["{\eaop'_x}"', from=3-1, to=3-2]
		\arrow["{E(\ob j x, e) \otimes h}"', from=1-1, to=2-1]
		\arrow["{\circ_{\ob j x, e, e'}}"', from=2-1, to=3-1]
		\arrow["{E(\ob t x, e) \otimes h}", from=1-2, to=2-2]
		\arrow["{\circ_{\ob t x, e, e'}}", from=2-2, to=3-2]
	\end{tikzcd}\]
\end{definition}

The morphism $\I_e \colon \tensorI \to E(e, e)$ is an $\tensorI$-graded algebra homomorphism as it is the identity for composition.
Two graded algebra homomorphisms $h \colon v \to E(e, e')$ and $h' \colon v' \to E(e', e'')$ compose to give a graded algebra homomorphism
\[
  v \tensor v' \xto{h \tensor h'} E(e, e') \tensor E(e', e'')
  \xto{{\circ_{e, e', e''}}} E(e, e'')
\]
using associativity of composition in $E$. \EM{} $T$-algebras hence form a locally $\V$-graded category.

\begin{lemma}\label{algebras-in-VCat}
	Let $j \colon A \to E$ be a $\V$-functor, and let $T$ be a $j$-monad.
	A $T$-algebra $(e, \aop)$ is equivalently specified by a
	$\V$-functor $e \colon D \to E$, together with a family of morphisms
	\[
	\{ \aop_{x, z} \colon E(\ob{j}x, \ob{e}z) \to E(\ob{t}x, \ob{e}z) \}_{x \in A, z \in D}
	\]
	such that
	\begin{enumerate}
		\item $(\ob{e}z, \aop_{{-}, z})$ is an \EM{} $T$-algebra for all $z \in \ob{D}$;
		\item $e_{y, z}$ is a $D(y, z)$-graded homomorphism from $(\ob{e}y, \aop_{{-}, y})$ to $(\ob{e}z, \aop_{{-}, z})$ for all $y, z \in \ob{D}$.
	\end{enumerate}
	Moreover, a $(p_1, \dots, p_n)$-graded $T$-algebra morphism from $(e, \aop)$ to $(e', \aop')$ is equivalently a $\V$-natural transformation
	$
	\epsilon \colon p_1, \dots, p_n \tto E(e, e')
	$
	such that each morphism
	\[
	\phi_{z_0, \dots, z_n} \colon p_1(z_0, z_1) \tensor \cdots \tensor p_n(z_{n-1}, z_n) \to E(\ob{e}z_0, \ob{e'}z_n)
	\]
	is a graded homomorphism from $(\ob{e}z_0, \aop_{{-}, z_0})$ to $(\ob{e'}z_n, \aop'_{{-}, z_n})$.
\end{lemma}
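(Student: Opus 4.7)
The plan is to unfold the definition of a $T$-algebra in the equipment $\VCat$ componentwise, paralleling the strategy used in the proof of \cref{enriched-relative-monad}. The extension operator of a $T$-algebra $(e, \aop)$ is a $\V$-form $\aop \colon E(j, e) \tto E(t, e)$, which is precisely a family of morphisms $\aop_{x, z} \colon E(\ob{j}x, \ob{e}z) \to E(\ob{t}x, \ob{e}z)$ subject to two $\V$-naturality axes: one coming from the left $A$-action (via $j$ and $t$) and one from the right $D$-action (via $e$ on both sides). The unit law and extension law for $(e, \aop)$ as a $T$-algebra are diagrams of $\V$-forms in $\VCat$ whose components are exactly the unit and extension axioms for $(\ob{e}z, \aop_{{-}, z})$ to form an \EM{} $T$-algebra in the sense of \cref{EM-algebra}, so condition (1) is immediate. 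The $\V$-naturality axis in $z$ says that, for all $y, z \in \ob D$, the square involving the right action $D(y, z) \tensor E(\ob j x, \ob e z) \to E(\ob j x, \ob e y)$ (via $e_{y, z}$) and its $t$-counterpart commutes; this is exactly condition (2) that $e_{y, z}$ is a $D(y, z)$-graded homomorphism.

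The main obstacle is showing that, given data satisfying (1) and (2), the \emph{other} $\V$-naturality axis (compatibility with the left $A$-action via $j$ and $t$) is automatic. I would prove this by a diagram chase analogous to the derivation of $\V$-functoriality of $t$ in \cref{enriched-relative-monad}: using the explicit formula $t_{x, x'} = j_{x, x'} \d E(\ob{j}x, \eta_{x'}) \d \dag_{x, x'}$, the required compatibility of $\aop$ with $j_{x, x'}$ and $t_{x, x'}$ unfolds into an equation between two composites whose equality follows by pasting the \EM{}-algebra extension law for $(\ob{e}z, \aop_{{-}, z})$ (at the pair $(x, x')$) against the first unit law of $T$ and the $\V$-naturality of $\eta$ already established in \cref{enriched-relative-monad}. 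Once both $\V$-naturality axes are verified, the resulting $\V$-form $\aop$ is the unique one agreeing with the given data on components, and the unit and extension laws for the algebra hold by hypothesis, yielding the claimed bijection on objects.

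For the morphism statement, I would apply \cref{algebra-morphisms-equivalent} to rewrite a $(p_1, \dots, p_n)$-graded $T$-algebra morphism as a $\V$-form $\epsilon \colon p_1, \dots, p_n \tto E(e, e')$ satisfying a single compatibility square involving $\aop$, $\aop'$, and the canonical 2-cell $\cp{e}$. Evaluated on each tuple $(z_0, \dots, z_n)$ and each $x \in \ob A$, this compatibility square is precisely the commuting diagram from \cref{EM-algebra} witnessing that $\phi_{z_0, \dots, z_n}$ is a graded homomorphism from $(\ob{e}z_0, \aop_{{-}, z_0})$ to $(\ob{e'}z_n, \aop'_{{-}, z_n})$, so the two notions coincide. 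No further $\V$-naturality conditions are imposed on $\epsilon$ beyond those already built into being a $\V$-form, so the characterisation of morphisms is a straightforward componentwise translation once the object correspondence is established.
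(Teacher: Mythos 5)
Your proposal is correct and follows essentially the same route as the paper: identify the two $T$-algebra laws with condition (1) and $\V$-naturality in $z$ with condition (2), establish the remaining $\V$-naturality in $x$ by a diagram chase analogous to the first-component $\V$-naturality of $\dag$ in the proof of \cref{enriched-relative-monad}, and reduce the morphism claim to \cref{algebra-morphisms-equivalent} read componentwise. The only small imprecision is that the cancellation in that chase uses the unit law of the \EM{} algebra $(\ob{e}z, \aop_{{-},z})$ supplied by (1), rather than the first unit law of $T$ itself, but this is exactly the analogous ingredient and does not affect the argument.
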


\begin{proof}
  The two $T$-algebra laws are precisely the two laws required for (1), while one of the two laws required for $\aop$ to be a $\V$-natural transformation, namely $\V$-naturality in $z$, is (2).
  Hence, for the characterisation of $T$-algebras, it remains to show that (1) and (2) together imply the other $\V$-natural transformation law, namely $\V$-naturality of $\aop$ in $x$.
  This proof is analogous to that of $\V$-naturality of the extension operator $\dag$ of $T$ in its first component (\cref{enriched-relative-monad}).

  The characterisation of graded $T$-algebra morphisms is trivial from \cref{algebra-morphisms-equivalent}.
\end{proof}

\begin{remark}
	Consequently, \EM{} $T$-algebras and their $I$-graded morphisms subsume several notions in the literature.
	\begin{enumerate}
		\item A \emph{$T$-algebra} in the sense of \citeauthor{walters1969alternative}~\cites[\S1]{walters1969alternative}[Definitions~1.1.3]{walters1970categorical} is precisely an \EM{} $T$-algebra, for $T$ as in \cref{device}.
		\item An \emph{EM-algebra of $T$} in the sense of \citeauthor{altenkirch2010monads}~\cites[Definition~3]{altenkirch2010monads}[Definition~2.11]{altenkirch2015monads} is precisely an \EM{} $T$-algebra, for $T$ as in \cref{ACU-relative-monad}.
		\item A \emph{$T$-algebra} in the sense of \textcite[Definition~8.4]{lucyshyn2022diagrammatic} is precisely an \EM{} $T$-algebra, for $T$ as in \cref{LP-relative-monad}.
		\item A \emph{$T$-algebra} in the sense of \textcite[Definition~16]{mcdermott2022flexibly} is precisely an \EM{} $T$-algebra, for $T$ as in \cref{MU-relative-monad}. \qedhere
	\end{enumerate}
\end{remark}

\begin{theorem}\label{VCat-admits-algebra-objects}
  Let $\jAE$ be a $\V$-functor, and let $T$ be a $j$-monad.
  $T$ admits an algebra object exactly when, for all \EM{} $T$-algebras $(e, \eaop)$ and $(e', \eaop')$, there is a graded homomorphism
  \[
    (u_T)_{(e, \eaop), (e', \eaop')} \colon \EM(T)((e, \eaop), (e', \eaop')) \to E(e, e')
  \]
  universal in the sense that every graded homomorphism $v \to E(e, e')$ factors uniquely through $(u_T)_{(e, \eaop), (e', \eaop')}$ as a morphism $v \to \EM(T)((e, \eaop), (e', \eaop'))$.
\end{theorem}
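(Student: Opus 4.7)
The plan is to identify \EM{} $T$-algebras with $T$-algebras whose domain is the unit $\V$-category $\Icat$ (having a single object $\star$ and hom-object $\tensorI$), and to leverage the universal property of the algebra object restricted to this class. By applying \cref{algebras-in-VCat} with $D = \Icat$, the data of such a $T$-algebra reduces to an object $e \in \ob E$ together with a family $\aop_{x, \star}$ satisfying the two laws of an \EM{} $T$-algebra; condition (2) of that lemma, that $e_{\star, \star} = \I_e$ be an $\tensorI$-graded homomorphism, holds automatically since identities are units for composition. Similarly, a $(v)$-graded $T$-algebra morphism between two such $T$-algebras, with $v \in \ob\V$ viewed as a $\V$-distributor $\Icat \lto \Icat$, unwraps by the Yoneda lemma to a graded homomorphism $v \to E(e, e')$ in the sense of \cref{EM-algebra}. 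For the reverse direction, the existence of an algebra object $(u_T, \aop_T)$ then yields the universal graded homomorphisms immediately by applying conditions (1) and (2) of \cref{algebra-object} to such $T$-algebras and their $(v)$-graded morphisms, exhibiting $\EM(T)((e, \eaop), (e', \eaop')) \defeq \Alg(T)(\unit_{(e, \eaop)}, \unit_{(e', \eaop')})$.

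For the forward direction, we construct the algebra object by hand. Take the objects of $\EM(T)$ to be \EM{} $T$-algebras, with hom-objects given by the assumed universal data. Composition and identities are induced by those of $E$ via the universal property, using that identities and composites of universal graded homomorphisms are themselves graded homomorphisms (the former by the unit laws of composition in $E$, the latter by a direct diagram chase); unitality and associativity in $\EM(T)$ then follow by uniqueness of factorisations. Define the $\V$-functor $u_T \colon \EM(T) \to E$ to send each \EM{} $T$-algebra to its carrier, with action on hom-objects given by the universal graded homomorphisms; this is a $\V$-functor by construction. The family of extension operators $\eaop$ assembles into a $\V$-form $\aop_T \colon E(j, u_T) \tto E(t, u_T)$: $\V$-naturality in the $A$-variable is already recorded by \cref{algebras-in-VCat}, while $\V$-naturality in the $\EM(T)$-variable is precisely the graded homomorphism law defining each $(u_T)_{(e, \eaop), (e', \eaop')}$. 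Hence $(u_T, \aop_T)$ is a $T$-algebra by \cref{algebras-in-VCat}.

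To verify the universal property of $(u_T, \aop_T)$, given any $T$-algebra $(e \colon D \to E, \aop)$, condition (1) of \cref{algebras-in-VCat} produces an \EM{} $T$-algebra for each $z \in \ob D$, defining the object-assignment $\ob{\unit_\aop} \colon \ob D \to \ob{\EM(T)}$; the action of $\unit_\aop$ on hom-objects is obtained by factoring each $e_{y, z}$ through the universal graded homomorphism, using condition (2) of the same lemma. The conditions $\unit_\aop \d u_T = e$ and $\unit_\aop \d \aop_T = \aop$, together with $\V$-functoriality of $\unit_\aop$, follow by uniqueness. The universal property on graded $T$-algebra morphisms is handled analogously, applying the universal factorisation componentwise to the $\V$-form reformulation of such morphisms in \cref{algebras-in-VCat}. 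The main obstacle will be the bookkeeping required for the construction of the $\V$-category $\EM(T)$ and of the $\V$-form $\aop_T$: in particular, checking that composition in $\EM(T)$ is well-defined amounts to verifying that composites of universal graded homomorphisms are themselves graded homomorphisms, and checking that $\aop_T$ is indeed a $\V$-form amounts to unpacking the universal property of each hom-object. Both reduce to routine diagram chases.
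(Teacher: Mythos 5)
Your proposal is correct and follows essentially the same route as the paper's proof: the backward direction identifies \EM{} $T$-algebras with $T$-algebras on the unit $\V$-category and graded homomorphisms with $(v)$-graded algebra morphisms (the appeal to Yoneda here is unnecessary but harmless, since this is just unwinding \cref{algebras-in-VCat}), and the forward direction builds $\EM(T)$ from the universal graded homomorphisms and verifies via \cref{algebras-in-VCat} and unique factorisation, exactly as in the paper. No gaps.
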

\begin{proof}
  We first show that the algebra object exists assuming that $(u_T)_{(e, \eaop), (e', \eaop')}$ does.
  The \emph{\EM{} $\V$-category} $\EM(T)$ of $T$ has as objects \EM{} $T$-algebras, and as hom-objects the domains $\EM(T)((e, \eaop), (e', \eaop'))$ of the universal homomorphisms.
  Since graded homomorphisms compose, identities and composition in $\EM(T)$ are inherited from identities and composition in $E$ via the universal property of the hom-objects.
% https://q.uiver.app/?q=WzAsMyxbMCwwLCJcXHRlbnNvckkiXSxbMSwwLCJcXEVNKFQpKChlLCBcXGRkYWcpLCAoZSwgXFxkZGFnKSkiXSxbMSwxLCJFKGUsIGUpIl0sWzAsMSwiXFxJX3soZSwgXFxkZGFnKX0iXSxbMSwyLCIodV9UKV97KGUsIFxcZGRhZyksIChlLCBcXGRkYWcpfSJdLFswLDIsIlxcSV9lIiwyXV0=
\[\begin{tikzcd}[column sep=large]
	\tensorI & {\EM(T)((e, \eaop), (e, \eaop))} \\
	& {E(e, e)}
	\arrow["{\I_{(e, \eaop)}}", from=1-1, to=1-2]
	\arrow["{(u_T)_{(e, \eaop), (e, \eaop)}}", from=1-2, to=2-2]
	\arrow["{\I_e}"', from=1-1, to=2-2]
\end{tikzcd}\]
% https://q.uiver.app/#q=WzAsNCxbMCwwLCJcXEVNKFQpKChlLCBcXGFvcCksIChlJywgXFxhb3AnKSkgXFx0ZW5zb3IgXFxFTShUKSgoZScsIFxcYW9wJyksIChlJycsIFxcYW9wJycpKSJdLFswLDEsIkUoZSwgZScpIFxcdGVuc29yIEUoZScsIGUnJykiXSxbMSwxLCJFKGUsIGUnJykiXSxbMSwwLCJcXEVNKFQpKChlLCBcXGFvcCksIChlJycsIFxcYW9wJycpKSJdLFswLDMsIlxcY2lyY197KGUsIFxcYW9wKSwgKGUnLCBcXGFvcCcpLCAoZScnLCBcXGFvcCcnKX0iXSxbMSwyLCJcXGNpcmNfe2UsIGUnLCBlJyd9IiwyXSxbMywyLCIodV9UKV97KGUsIFxcYW9wKSwgKGUnJywgXFxhb3AnJyl9Il0sWzAsMSwiKHVfVClfeyhlLCBcXGFvcCksIChlJywgXFxhb3AnKX0gXFx0ZW5zb3IgKHVfVClfeyhlJywgXFxhb3AnKSwgKGUnJywgXFxhb3AnJyl9IiwxXV0=
\[\begin{tikzcd}[column sep=7em]
	{\EM(T)((e, \aop), (e', \aop')) \tensor \EM(T)((e', \aop'), (e'', \aop''))} & {\EM(T)((e, \aop), (e'', \aop''))} \\
	{E(e, e') \tensor E(e', e'')} & {E(e, e'')}
	\arrow["{\circ_{(e, \aop), (e', \aop'), (e'', \aop'')}}", from=1-1, to=1-2]
	\arrow["{\circ_{e, e', e''}}"', from=2-1, to=2-2]
	\arrow["{(u_T)_{(e, \aop), (e'', \aop'')}}", from=1-2, to=2-2]
	\arrow["{(u_T)_{(e, \aop), (e', \aop')} \tensor (u_T)_{(e', \aop'), (e'', \aop'')}}"{description}, from=1-1, to=2-1]
\end{tikzcd}\]
  Unitality and associativity of composition in $E$ clearly imply the corresponding properties for $\EM(T)$, so that $\EM(T)$ is a $\V$-category.
  The morphisms $(u_T)_{(e, \eaop), (e', \eaop')}$ form a $\V$-functor $u_T \colon \EM(T) \to E$, given on objects by $\ob{u_T}(e, \eaop) = e$.
  The extension operators of \EM{} $T$-algebras make $u_T$ into a $T$-algebra by \cref{algebras-in-VCat}.

  To show that this $T$-algebra satisfies the universal property of the algebra object, consider an arbitrary $T$-algebra $(e \colon D \to E, \aop)$.
  Using the characterisation of $T$-algebras in \cref{algebras-in-VCat}, we obtain a $\V$-functor $\unit_{(e, \aop)} \colon D \to \EM(T)$.
  This is given on objects by $\ob{\unit_{(e, \aop)}}z = (\ob{e}z, \aop_{{-}, z})$, and
  \[
    (\unit_{(e, \aop)})_{y, z} \colon D(y, z) \to \EM(T)(\ob{\unit_{(e, \aop)}}y, \ob{\unit_{(e, \aop)}}z)
  \]
  is given by the unique factorisation of $e_{y, z} \colon D(y, z) \to E(\ob{e}y, \ob{e}z)$ through $(u_T)_{\ob{\unit_{(e, \aop)}}y, \ob{\unit_{(e, \aop)}}z}$.
  This preserves identities and composition because $e$ does.
  Moreover, $\unit_{(e, \aop)}$ is clearly unique such that composing with $u_T$ recovers the $T$-algebra $(e, \aop)$.

  It remains to show that $T$-algebra morphisms factor uniquely though $u_T$.
  By \cref{algebras-in-VCat}, such a morphism is equivalently a $\V$-natural transformation
  \[
    \epsilon \colon p_1, \dots, p_n \tto E(e, e')
  \]
  each component of which is a graded homomorphism.
  Since $u_T$ consists of the universal graded homomorphisms, it is immediate that $\epsilon$ factors uniquely though $u_T$ as a $\V$-natural transformation
  \[
    p_1, \dots, p_n \tto \EM(T)(\unit_{(e, \aop)}, \unit_{(e', \aop')})
  \]

  For the converse, assume that the algebra object $(u_T \colon \Alg(T) \to E, \aop_T)$ exists.
  Each \EM{} $T$-algebra $(e, \eaop)$ can be viewed as a $T$-algebra $(e \colon \Icat \to E, \eaop)$, which, by the universal property of the algebra object, induces an object $\unit_{(e, \eaop)}$ of $\Alg(T)$.
  We show that
  \[
    (u_T)_{\unit_{(e, \eaop)}, \unit_{(e', \eaop')}}
    \colon \Alg(T)(\unit_{(e, \eaop)}, \unit_{(e', \eaop')})
    \to E(e, e')
  \]
  is the universal graded homomorphism.
  Each graded homomorphism $h \colon v \to E(e, e')$ can be viewed as a $(v)$-graded $T$-algebra morphism from $(e, \eaop)$ to $(e', \eaop')$, by viewing $v$ as a $\V$-distributor $v \colon \Icat \to \Icat$.
  The universal property of the algebra object implies that $h$ then factors uniquely through $(u_T)_{\unit_{(e, \eaop)}, \unit_{(e', \eaop')}}$, as required.
\end{proof}

\begin{corollary}\label{VCat-admits-algebra-objects-ii}
  Let $\jAE$ be a $\V$-functor.
  If $\V$ admits equalisers, and the object $\rfP{A}{E(\ob{j}{-}, e)}{q}$ of $\V$-natural transformations exists for each $e \in \ob{E}$ and $\V$-presheaf $q$ on $A$, then every $j$-relative monad admits an algebra object.
\end{corollary}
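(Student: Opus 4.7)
The plan is to invoke \cref{VCat-admits-algebra-objects}, which reduces the problem to constructing, for every pair of \EM{} $T$-algebras $(e, \eaop)$ and $(e', \eaop')$, a universal graded homomorphism into $E(e, e')$. I would build this as an equaliser in $\V$ of two morphisms into an object of $\V$-natural transformations.

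First, since $t \colon A \to E$ is a $\V$-functor (\cref{enriched-relative-monad}), the assignment $x \mapsto E(\ob t x, e')$ forms a $\V$-presheaf on $A$, with action given by composing the $A$-action through $t$. By hypothesis, the object $R \defeq \rfP{A}{E(\ob j{-}, e)}{E(\ob t{-}, e')}$ exists, with universal $\V$-natural family $\varpi_x \colon E(\ob j x, e) \tensor R \to E(\ob t x, e')$. I would then define two morphisms $\alpha, \beta \colon E(e, e') \rightrightarrows R$ using this universal property: $\alpha$ corresponds to the family
\[
E(\ob j x, e) \tensor E(e, e') \xto{\circ} E(\ob j x, e') \xto{\eaop'_x} E(\ob t x, e')
\]
and $\beta$ to the family
\[
E(\ob j x, e) \tensor E(e, e') \xto{\eaop_x \tensor E(e, e')} E(\ob t x, e) \tensor E(e, e') \xto{\circ} E(\ob t x, e')
\]
For this to be well defined, one must verify that $\eaop$ and $\eaop'$ are $\V$-natural in $x \in A$, which follows from the extension operator axioms of \cref{EM-algebra} by an argument analogous to the derivation of $\V$-naturality of $\dag$ in its first variable in the proof of \cref{enriched-relative-monad}; $\V$-naturality of the two families in $x$ is then immediate from $\V$-naturality of the composition in $E$.

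Finally, I would take the equaliser $u \colon U \to E(e, e')$ of $\alpha$ and $\beta$ in $\V$. A morphism $h \colon v \to E(e, e')$ factors through $u$ if and only if $h \d \alpha = h \d \beta$, which by the universal property of $R$ is equivalent to the equality, for every $x \in \ob A$, of the two composites above after tensoring with $h$ on the right. Inspecting these composites, this equality is precisely the commutativity of the graded homomorphism square of \cref{EM-algebra}; hence $u$ is the universal graded homomorphism from $U$ to $E(e, e')$, and \cref{VCat-admits-algebra-objects} delivers the algebra object. The main obstacle is not conceptual but bookkeeping: verifying the $\V$-presheaf structure on $E(\ob t{-}, e')$ and the $\V$-naturality of both candidate families in $x$, so that the universal properties of $R$ and of the equaliser combine to characterise graded homomorphisms exactly.
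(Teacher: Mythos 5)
Your proposal is correct and follows essentially the same route as the paper: reduce via \cref{VCat-admits-algebra-objects} to the existence of universal graded homomorphisms, encode the graded-homomorphism condition as the equality of two morphisms $E(e, e') \rightrightarrows \rfP{A}{E(\ob j{-}, e)}{E(\ob t{-}, e')}$ (using $\V$-naturality of the extension operators, as in \cref{algebras-in-VCat}), and take their equaliser. The only differences are cosmetic (your explicit check that $E(\ob t{-}, e')$ is a $\V$-presheaf, which the paper leaves implicit).
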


\begin{proof}
  By \cref{VCat-admits-algebra-objects}, it suffices to show that the universal graded homomorphisms
  \[
    (u_T)_{(e, \eaop), (e', \eaop')} \colon \EM(T)((e, \eaop), (e', \eaop')) \to E(e, e')
  \]
  exist.
  Observe that both of the following families of morphisms are $\V$-natural in $x \in \ob A$ because $\eaop$ and $\eaop'$ are (\cf{} \cref{algebras-in-VCat}).
  \begin{gather*}
    E(\ob{j}x, e) \tensor E(e, e')
    \xto{\eaop_x \tensor E(e, e')}
    E(\ob{t}x, e) \tensor E(e, e')
    \xto{\circ_{\ob{t}x, e, e'}}
    E(\ob{t}x, e')
    \\
    E(\ob{j}x, e) \tensor E(e, e')
    \xto{\circ_{\ob{j}x, e, e'}}
    E(\ob{j}x, e')
    \xto{\eaop'_x}
    E(\ob{t}x, e')
  \end{gather*}
  Hence there are corresponding morphisms,
  \[
    \zeta_1, \zeta_2 \colon E(e, e') \to \rfP{A}{E(\ob{j}{-}, e)}{E(\ob{t}{-}, e')}
  \]
  and a morphism $h \colon v \to E(e, e')$ is a graded homomorphism exactly when $h \d \zeta_1 = h \d \zeta_2$.
  It follows that the equaliser of $\zeta_1$ and $\zeta_2$ is the universal graded homomorphism.
\end{proof}

In particular, the assumptions of \cref{VCat-admits-algebra-objects-ii} hold for $\V$-functors $\jAE$ with small domain when $\V$ is complete and closed; and hold for the identity $\V$-functor $j = 1_E$ when $\V$ has equalisers.

\subsection{Existence of opalgebra objects}

The existence of opalgebra objects in $\VCat$ is simpler than that of algebra objects, and in particular requires no conditions on $\V$.

\begin{theorem}
	\label{VCat-admits-opalgebra-objects}
    Every relative monad in $\VCat$ admits an opalgebra object.
\end{theorem}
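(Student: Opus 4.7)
The plan is to construct $\Opalg(T)$ explicitly as a Kleisli-type $\V$-category. For a $j$-monad $T = (t, \dag, \eta)$, I will take $\Opalg(T)$ to have object-class $\ob A$ and hom-objects
\[
\Opalg(T)(x, y) \defeq E(\ob j x, \ob t y),
\]
with identities given by the units $\eta_x$ and composition given by
\[
E(\ob j x, \ob t y) \tensor E(\ob j y, \ob t z) \xto{E(\ob j x, \ob t y) \tensor \dag_{y, z}} E(\ob j x, \ob t y) \tensor E(\ob t y, \ob t z) \xto{\circ_{\ob j x, \ob t y, \ob t z}} E(\ob j x, \ob t z).
\]
The two unit laws and the associativity law for this composition reduce directly to the corresponding laws in the explicit description of \cref{enriched-relative-monad}. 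I will then define an identity-on-objects $\V$-functor $k_T \colon A \to \Opalg(T)$ with hom-action $j_{x, y} \d E(\ob j x, \eta_y)$, and a fully faithful $\V$-functor $v_T \colon \Opalg(T) \to E$ with $\ob{v_T} = \ob t$ and hom-action $\dag_{x, y}$; that these are $\V$-functors, and that $k_T \d v_T = t$, follow from the identities established in the proof of \cref{enriched-relative-monad}.

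The opalgebra structure $\oop_T$ on $k_T$ is taken to be the identity $\V$-form, which is well-defined since $\Opalg(T)(\ob{k_T}x, \ob{k_T}y) = E(\ob j x, \ob t y)$ by construction; the two opalgebra axioms in \cref{opalgebra} then collapse to the unit and associativity laws for $T$. For the one-categorical part of the universal property, given a $T$-opalgebra $(a \colon A \to B, \oop)$, the mediating $\V$-functor $[]_\oop \colon \Opalg(T) \to B$ is forced on objects to send $x \mapsto \ob a x$ and on hom-objects to act by the components $\oop_{x, y} \colon E(\ob j x, \ob t y) \to B(\ob a x, \ob a y)$; the $\V$-functor axioms for $[]_\oop$ are precisely the opalgebra axioms for $(a, \oop)$, and uniqueness is immediate because $k_T$ is identity-on-objects and $\oop_T$ is the identity.

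The main obstacle will be the universal property with respect to graded $T$-opalgebra morphisms $\alpha \colon p_1, \ldots, p_n, B(1, a) \tto B'(1, a')$. Here I will exploit that $k_T$ is identity-on-objects: the components of the required factorisation $[]_\alpha$ through $k_T$ are forced to agree on objects with those of $\alpha$, so existence and uniqueness reduce to checking that the $\V$-naturality conditions for $[]_\alpha$ (which involve hom-objects of $\Opalg(T)$ rather than those of $B$) are implied by the combination of the $\V$-naturality of $\alpha$ and the graded opalgebra compatibility equation satisfied by $\alpha$. An alternative, more conceptual route would be to invoke \cref{coincidence-of-opalgebra-objects} after observing that every loose-monad in $\VCat$ is induced by a tight-cell (immediate from the Kleisli construction applied to loose-monads), thereby reducing to the case of trivial relative monads, whose opalgebra objects are given by the full-image factorisations of their roots; but I expect the direct route above to be more transparent in this setting.
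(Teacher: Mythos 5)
Your construction is exactly the paper's: the Kleisli $\V$-category with objects $\ob A$ and hom-objects $E(\ob j x, \ob t y)$, identities $\eta_x$, composition via $\dag$, the identity-on-objects $k_T$ with hom-action $j_{x,y} \d E(\ob j x, \eta_y)$, the identity $\V$-form as $\oop_T$, and the reduction of both parts of the universal property (including the graded one) to the opalgebra laws and $\V$-naturality via the fact that $k_T$ is identity-on-objects. One caveat: your parenthetical claim that $v_T$ (with hom-action $\dag_{x,y}$) is fully faithful is false in general --- $\dag \colon E(\ob j x, \ob t y) \to E(\ob t x, \ob t y)$ need not be invertible (already for the free-monoid monad on $\Set$) --- but since this claim plays no role in the argument, the proof is unaffected.
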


\begin{proof}
	Let $\jAE$ be a $\V$-functor and let $T = (t, \eta, \dag)$ be a $j$-monad. We define a $\V$-category $\Kl(T)$, the \emph{Kleisli $\V$-category of $T$} (after \cite{kleisli1965every}), as follows.
	\begin{align*}
		\ob{\Kl(T)} & \defeq \ob A &
		\Kl(T)(x, y) & \defeq E(\ob j x, \ob t y) \\
		\I_x & \defeq \eta_x &
		\circ_{x, y, z} & \defeq
	\end{align*}
	\[E(\ob j x, \ob t y) \otimes E(\ob j y, \ob t z) \xto{E(\ob j x, \ob t y) \otimes \dag_{y, z}} E(\ob j x, \ob t y) \otimes E(\ob t y, \ob t z) \xto{\circ_{\ob j x, \ob t y, \ob t z}} E(\ob j x, \ob t z)\]
	Unitality and associativity of composition follows from the unitality and associativity laws for $T$.

	We define an \ioo{} $\V$-functor $k_T \colon A \to \Kl(T)$, the \emph{Kleisli inclusion of $T$}, whose action on hom-objects is given by
        \[
        (k_T)_{x, y} \defeq~
	% https://q.uiver.app/?q=WzAsMyxbMCwwLCJBKHgsIHkpIl0sWzEsMCwiRShcXG9iIGogeCwgXFxvYiBqIHkpIl0sWzIsMCwiRShcXG9iIGogeCwgXFxvYiB0IHkpIl0sWzAsMSwial97eCwgeX0iXSxbMSwyLCJFKFxcb2IgaiB4LCBcXGV0YV95KSJdXQ==
        \begin{tikzcd}[column sep=large]
		{A(x, y)} & {E(\ob j x, \ob j y)} & {E(\ob j x, \ob t y)}
		\arrow["{j_{x, y}}", from=1-1, to=1-2]
		\arrow["{E(\ob j x, \eta_y)}", from=1-2, to=1-3]
	\end{tikzcd}
        \]
	with preservation of identities following from commutativity of the following diagram,
	% https://q.uiver.app/?q=WzAsNCxbMSwwLCJBKHgsIHgpIl0sWzEsMSwiRShcXG9iIGogeCwgXFxvYiBqIHgpIl0sWzEsMiwiRShcXG9iIGogeCwgXFxvYiB0IHgpIl0sWzAsMCwiXFx0ZW5zb3JJIl0sWzAsMSwial97eCwgeH0iXSxbMSwyLCJFKFxcb2IgaiB4LCBcXGV0YV94KSJdLFszLDAsIlxcSV94Il0sWzMsMSwiXFxJX3tcXG9iIGp4fSIsMV0sWzMsMiwiXFxldGFfeCIsMix7ImN1cnZlIjozfV1d
	\[\begin{tikzcd}
		\tensorI & {A(x, x)} \\
		& {E(\ob j x, \ob j x)} \\
		& {E(\ob j x, \ob t x)}
		\arrow["{j_{x, x}}", from=1-2, to=2-2]
		\arrow["{E(\ob j x, \eta_x)}", from=2-2, to=3-2]
		\arrow["{\I_x}", from=1-1, to=1-2]
		\arrow["{\I_{\ob jx}}"{description}, from=1-1, to=2-2]
		\arrow["{\eta_x}"', curve={height=18pt}, from=1-1, to=3-2]
	\end{tikzcd}\]
	and preservation of composites following from commutativity of the following diagram,
	% https://q.uiver.app/#q=WzAsOCxbMCwwLCJBKHgsIHkpIFxcb3RpbWVzIEEoeSwgeikiXSxbMiwwLCJBKHgsIHopIl0sWzIsMSwiRShcXG9iIGogeCwgXFxvYiBqIHopIl0sWzIsMywiRShcXG9iIGogeCwgXFxvYiB0IHopIl0sWzAsMSwiRShcXG9iIGogeCwgXFxvYiBqIHkpIFxcb3RpbWVzIEUoXFxvYiBqIHksIFxcb2IgaiB6KSJdLFswLDMsIkUoXFxvYiBqIHgsIFxcb2IgdCB5KSBcXG90aW1lcyBFKFxcb2IgaiB5LCBcXG9iIHQgeikiXSxbMSwzLCJFKFxcb2IgaiB4LCBcXG9iIHQgeSkgXFxvdGltZXMgRShcXG9iIHQgeSwgXFxvYiB0IHopIl0sWzEsMiwiRShcXG9iIGogeCwgXFxvYiBqIHkpIFxcb3RpbWVzIEUoXFxvYiBqIHksIFxcb2IgdCB6KSJdLFs1LDYsIkUoXFxvYiBqIHgsIFxcb2IgdCB5KSBcXG90aW1lcyBcXGRhZ197eSwgen0iLDJdLFswLDEsIlxcY2lyY197eCwgeSwgen0iXSxbNiwzLCJcXGNpcmNfe1xcb2IgaiB4LCBcXG9iIHQgeSwgXFxvYiB0IHp9IiwyXSxbMSwyLCJqX3t4LCB6fSJdLFswLDQsImpfe3gsIHl9IFxcb3RpbWVzIGpfe3ksIHp9IiwyXSxbNCw1LCJFKFxcb2IgaiB4LCBcXGV0YV95KSBcXG90aW1lcyBFKFxcb2IgaiB5LCBcXGV0YV96KSIsMV0sWzIsMywiRShcXG9iIGogeCwgXFxldGFfeikiXSxbNCwyLCJcXGNpcmNfe1xcb2IgaiB4LCBcXG9iIGogeSwgXFxvYiBqIHp9IiwxXSxbNCw3LCJFKFxcb2IgaiB4LCBcXG9iIGogeSkgXFxvdGltZXMgRShcXG9iIGogeSwgXFxldGFfeikiLDFdLFs3LDMsIlxcY2lyY197XFxvYiBqIHgsIFxcb2IgaiB5LCBcXG9iIHQgen0iLDFdLFs3LDUsIkUoXFxvYiBqIHgsIFxcZXRhX3kpIFxcb3RpbWVzIEUoXFxvYiBqIHksIFxcb2IgdCB6KSIsMV1d
	\[\begin{tikzcd}[column sep=huge]
		{A(x, y) \otimes A(y, z)} && {A(x, z)} \\
		{E(\ob j x, \ob j y) \otimes E(\ob j y, \ob j z)} && {E(\ob j x, \ob j z)} \\
		& {E(\ob j x, \ob j y) \otimes E(\ob j y, \ob t z)} \\
		{E(\ob j x, \ob t y) \otimes E(\ob j y, \ob t z)} & {E(\ob j x, \ob t y) \otimes E(\ob t y, \ob t z)} & {E(\ob j x, \ob t z)}
		\arrow["{E(\ob j x, \ob t y) \otimes \dag_{y, z}}"', from=4-1, to=4-2]
		\arrow["{\circ_{x, y, z}}", from=1-1, to=1-3]
		\arrow["{\circ_{\ob j x, \ob t y, \ob t z}}"', from=4-2, to=4-3]
		\arrow["{j_{x, z}}", from=1-3, to=2-3]
		\arrow["{j_{x, y} \otimes j_{y, z}}"', from=1-1, to=2-1]
		\arrow["{E(\ob j x, \eta_y) \otimes E(\ob j y, \eta_z)}"{description}, from=2-1, to=4-1]
		\arrow["{E(\ob j x, \eta_z)}", from=2-3, to=4-3]
		\arrow["{\circ_{\ob j x, \ob j y, \ob j z}}"{description}, from=2-1, to=2-3]
		\arrow["{E(\ob j x, \ob j y) \otimes E(\ob j y, \eta_z)}"{description}, from=2-1, to=3-2]
		\arrow["{\circ_{\ob j x, \ob j y, \ob t z}}"{description}, from=3-2, to=4-3]
		\arrow["{E(\ob j x, \eta_y) \otimes E(\ob j y, \ob t z)}"{description}, from=3-2, to=4-1]
	\end{tikzcd}\]
	using the first unit law for $T$.

	We shall show that $k_T \colon A \to \Kl(T)$ together with the identity $\V$-natural transformation $E(j, t) = \Kl(T)(k_T, k_T)$ forms an opalgebra object for $T$. That $(k_T, 1)$ forms a $T$-opalgebra follows from the definition of identities and composition in $\Kl(T)$. Let $(a, \oop)$ be a $T$-opalgebra. The $\V$-natural transformation $\oop \colon E(j, t) \tto B(a, a)$ defines the action on hom-objects of a $\V$-functor $[]_{(a, \oop)} \colon \Kl(T) \to B$ with object-function $\ob{[]_{(a, \oop)}} \defeq \ob a$, preservation of identities and composites following from the unitality and extension laws of the opalgebra. We trivially have commutativity of
	% https://q.uiver.app/#q=WzAsMyxbMSwxLCJBIl0sWzAsMCwiXFxLbChUKSJdLFsyLDAsIkIiXSxbMCwxLCJrX1QiXSxbMSwyLCJbXV97KGEsIFxcb29wKX0iXSxbMCwyLCJhIiwyXV0=
	\[\begin{tikzcd}
		{\Kl(T)} && B \\
		& A
		\arrow["{k_T}", from=2-2, to=1-1]
		\arrow["{[]_{(a, \oop)}}", from=1-1, to=1-3]
		\arrow["a"', from=2-2, to=1-3]
	\end{tikzcd}\]
	since $k_T$ is identity-on-objects. Let $\alpha$ be a $(p_1, \ldots, p_n)$-graded $T$-opalgebra morphism from $(a, \oop)$ to $(a', \oop')$, hence a family of morphisms
	\[\{ \alpha_{x_0, \ldots, x_n, y} \colon p_1(x_0, x_1) \otimes \cdots \otimes p_n(x_{n - 1}, x_n) \otimes B(x_n, \ob{a} y) \to B'(x_0, \ob{a'} y) \}_{x_0, \ldots, x_n, y}\]
	in $\V$. Since $k_T$ is identity-on-objects, this is equivalently a family of morphisms:
	\[\{ {[]_\alpha}_{x_0, \ldots, x_n, y} \colon p_1(x_0, x_1) \otimes \cdots \otimes p_n(x_{n - 1}, x_n) \otimes B(x_n, \ob{[]_{(a, \oop)}} y) \to B'(x_0, \ob{[]_{(a', \oop')}} y) \}_{x_0, \ldots, x_n, y}\]
	That this family forms a $\V$-natural transformation follows from the fact that $\alpha$ is a $\V$-natural transformation, together with the $T$-opalgebra morphism compatibility law. Hence $\alpha$ factors uniquely through $[]_\alpha$.
\end{proof}

\subsection{Existence of coalgebra and co\"opalgebra objects}

Given a monoidal category $\V$, we denote by $\V\rev$ the monoidal category with the same objects and unit as $\V$ and whose tensor product is defined by $x \otimes_{\V\rev} y \defeq y \otimes_{\V} x$.

To deduce sufficient conditions for the existence of co\"opalgebra and coalgebra objects for relative comonads, the following observation is useful.

\begin{proposition}
	\label{duality-of-VCat}
	There is an isomorphism of \vdcs{}:
	\[\VCat\co \iso \V\rev\h\Cat\]
\end{proposition}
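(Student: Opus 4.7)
My plan is to construct an explicit isomorphism $\Phi \colon \VCat\co \to \V\rev\h\Cat$ of \vdcs{} by applying the ``opposite'' construction at each level of structure---objects, tight-cells, loose-cells, and 2-cells---and then verify functoriality and invertibility.

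On objects, $\Phi$ sends each $\V$-category $C$ to the $\V\rev$-category $C\op$ defined by $\ob{C\op} \defeq \ob{C}$, $C\op(x, y) \defeq C(y, x)$, $\I^{\op}_x \defeq \I_x$, and composition $C\op(x, y) \otimes_{\V\rev} C\op(y, z) \to C\op(x, z)$ given by the composition $C(z, y) \otimes C(y, x) \to C(z, x)$ of $C$ (which has the correct signature in $\V$ because $a \otimes_{\V\rev} b = b \otimes a$); unitality and associativity transfer directly from $C$. On tight-cells, $\Phi(f) \defeq f\op$ with $\ob{f\op} \defeq \ob{f}$ and $(f\op)_{x, y} \defeq f_{y, x}$. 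On loose-cells, a loose-cell $p \colon A \lto B$ in $\VCat\co$---equivalently, a $\V$-distributor $p \colon B \lto A$ in $\VCat$ with components $p(x, y)$ for $x \in \ob{A}$, $y \in \ob{B}$, left $A$-action, and right $B$-action---is sent to $p\op \colon A\op \lto B\op$ in $\V\rev\h\Cat$ with components $p\op(y, x) \defeq p(x, y)$. Unfolding $\otimes_{\V\rev}$ reveals that the left $B\op$-action on $p\op$ is precisely the right $B$-action on $p$, and the right $A\op$-action on $p\op$ is the left $A$-action on $p$, so the $\V\rev$-distributor axioms transfer from those for $p$ in $\V$.

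The key verification is at the 2-cell level. A 2-cell in $\VCat\co$ from $(p_1, \dots, p_n)$ to $q$ is, by definition of $\co$, a 2-cell in $\VCat$ with the reversed frame, so each $p_i \colon A_{i-1} \lto A_i$ in $\VCat$ has components $p_i(x_i, x_{i-1})$; following the paper's nondiagrammatic tensor order (matching the composite $p_n \odot \cdots \odot p_1 \colon A_0 \lto A_n$), the source of this $\V$-form is $p_n(x_n, x_{n-1}) \otimes \cdots \otimes p_1(x_1, x_0)$ and the target is $q(x_n, x_0)$ (absorbing the tight-cells into the indices). On the other side, the $\V\rev$-form on $(p_1\op, \dots, p_n\op) \to q\op$ in $\V\rev\h\Cat$ has source $p_1\op(x_0, x_1) \otimes_{\V\rev} \cdots \otimes_{\V\rev} p_n\op(x_{n-1}, x_n)$; using the reversal of $\otimes_{\V\rev}$ together with $p_i\op(x_{i-1}, x_i) = p_i(x_i, x_{i-1})$ and $q\op(x_0, x_n) = q(x_n, x_0)$, this unfolds in $\V$ to exactly the same underlying $\V$-morphism type. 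Hence $\Phi$ sends each 2-cell $\phi$ to the $\V\rev$-form with the same underlying $\V$-morphism components, and each of the three $\V$-naturality diagrams for $\phi$ translates term-by-term to the corresponding $\V\rev$-naturality diagram for $\Phi(\phi)$ under the left/right action swap established above. Preservation of 2-cell identities, vertical composition, and whiskering by tight-cells follows immediately by inspection.

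Finally, $\Phi$ is invertible: since $(\V\rev)\rev = \V$ and the opposite construction is involutive on the underlying data at every level, the analogous construction applied to $\V\rev\h\Cat$ yields a two-sided inverse. The principal obstacle is the bookkeeping at the 2-cell level---specifically, verifying that the reversed-frame interpretation of $\X\co$ interacts correctly with the tensor-order reversal in $\V\rev$ so as to yield the same $\V$-morphisms, and tracking the correspondence between left-actions in one setting and right-actions in the other. Once this alignment is laid out, every axiom check reduces to a routine unpacking of definitions.
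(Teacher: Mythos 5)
Your proposal is correct and follows essentially the same route as the paper: dualise at every level (objects, tight-cells, loose-cells, 2-cells), observe that the frame reversal of $({-})\co$ matches the tensor reversal of $\V\rev$ so that $\V$-forms reindex to $\V\rev$-forms with naturality transferring directly, and conclude invertibility from the involutivity of the opposite construction. The only difference is that you spell out the 2-cell bookkeeping slightly more explicitly than the paper does, which is harmless.
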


\begin{proof}
	For each $\V$-category $C$, we may define a $\V\rev$-category $C\op$, its dual, by
	\begin{align*}
		\ob{C\op} & \defeq \ob C &
		C\op(x, y) & \defeq C(y, x) &
		{\I^{C\op}}_x & \defeq {\I^C}_x &
		{\circ^{C\op}}_{x, y, z} & \defeq {\circ^C}_{z, y, x}
	\end{align*}
	Unitality and associativity of composition in $C\op$ follows from that of $C$. For each $\V$-functor $f \colon C \to D$, we may define a $\V\rev$-functor $f\op \colon C\op \to D\op$ given by
	\begin{align*}
		\ob{f\op} & = \ob f &
		{f\op}_{x, y} & = f_{y, x}
	\end{align*}
	Preservation of identities and composites follows from that of $f$. For each $\V$-distributor $p \colon C \lto C'$, we may define a $\V\rev$-distributor $p\op \colon C'\op \lto C\op$ given by
	\begin{align*}
		p\op(x, y) & \defeq p(y, x) &
		{\circ^{p\op}}_{x, y, z} & \defeq {\circ^p}_{z, y, x}
	\end{align*}
	The compatibility laws for identities and composition follow from those of $p\op$. For each $\V$-natural transformation on the left below, we may define a $\V\rev$-natural transformation on the right below, given by
	\[\{ {\phi\op}_{x_0, \ldots, x_n} \defeq  \phi_{x_n, \ldots, x_0} \}_{x_0, \ldots, x_n}\]
	\[
	% https://q.uiver.app/#q=WzAsNSxbMiwxLCJCX24iXSxbMiwwLCJBX24iXSxbMCwxLCJCXzAiXSxbMSwwLCJcXGNkb3RzIl0sWzAsMCwiQV8wIl0sWzAsMiwicSIsMCx7InN0eWxlIjp7ImJvZHkiOnsibmFtZSI6ImJhcnJlZCJ9fX1dLFsxLDMsInBfbiIsMix7InN0eWxlIjp7ImJvZHkiOnsibmFtZSI6ImJhcnJlZCJ9fX1dLFsxLDAsImZfbiJdLFs0LDIsImZfMCIsMl0sWzMsNCwicF8xIiwyLHsic3R5bGUiOnsiYm9keSI6eyJuYW1lIjoiYmFycmVkIn19fV0sWzcsOCwiXFxwaGkiLDEseyJzaG9ydGVuIjp7InNvdXJjZSI6MjAsInRhcmdldCI6MjB9LCJzdHlsZSI6eyJib2R5Ijp7Im5hbWUiOiJub25lIn0sImhlYWQiOnsibmFtZSI6Im5vbmUifX19XV0=
	\begin{tikzcd}
		{A_0} & \cdots & {A_n} \\
		{B_0} && {B_n}
		\arrow["q", "\shortmid"{marking}, from=2-3, to=2-1]
		\arrow["{p_n}"', "\shortmid"{marking}, from=1-3, to=1-2]
		\arrow[""{name=0, anchor=center, inner sep=0}, "{f_n}", from=1-3, to=2-3]
		\arrow[""{name=1, anchor=center, inner sep=0}, "{f_0}"', from=1-1, to=2-1]
		\arrow["{p_1}"', "\shortmid"{marking}, from=1-2, to=1-1]
		\arrow["\phi"{description}, draw=none, from=0, to=1]
	\end{tikzcd}
	\hspace{4em}
	% https://q.uiver.app/#q=WzAsNSxbMiwxLCJCXzBcXG9wIl0sWzIsMCwiQV8wXFxvcCJdLFswLDEsIkJfblxcb3AiXSxbMSwwLCJcXGNkb3RzIl0sWzAsMCwiQV9uXFxvcCJdLFswLDIsInFcXG9wIiwwLHsic3R5bGUiOnsiYm9keSI6eyJuYW1lIjoiYmFycmVkIn19fV0sWzEsMywicF8xXFxvcCIsMix7InN0eWxlIjp7ImJvZHkiOnsibmFtZSI6ImJhcnJlZCJ9fX1dLFsxLDAsImZfMFxcb3AiXSxbNCwyLCJmX25cXG9wIiwyXSxbMyw0LCJwX25cXG9wIiwyLHsic3R5bGUiOnsiYm9keSI6eyJuYW1lIjoiYmFycmVkIn19fV0sWzcsOCwiXFxwaGlcXG9wIiwxLHsic2hvcnRlbiI6eyJzb3VyY2UiOjIwLCJ0YXJnZXQiOjIwfSwic3R5bGUiOnsiYm9keSI6eyJuYW1lIjoibm9uZSJ9LCJoZWFkIjp7Im5hbWUiOiJub25lIn19fV1d
	\begin{tikzcd}
		{A_n\op} & \cdots & {A_0\op} \\
		{B_n\op} && {B_0\op}
		\arrow["q\op", "\shortmid"{marking}, from=2-3, to=2-1]
		\arrow["{p_1\op}"', "\shortmid"{marking}, from=1-3, to=1-2]
		\arrow[""{name=0, anchor=center, inner sep=0}, "{f_0\op}", from=1-3, to=2-3]
		\arrow[""{name=1, anchor=center, inner sep=0}, "{f_n\op}"', from=1-1, to=2-1]
		\arrow["{p_n\op}"', "\shortmid"{marking}, from=1-2, to=1-1]
		\arrow["\phi\op"{description}, draw=none, from=0, to=1]
	\end{tikzcd}
	\]
	$\V\rev$-naturality of $\phi\op$ follows from $\V$-naturality of $\phi$.
	Dualisation is evidently self-inverse, and hence defines an isomorphism $\ph\op \colon \VCat\co \to \V\rev\h\Cat$.
\end{proof}

Consequently, since a relative comonad in $\VCat$ is precisely a relative monad in $\VCat\co$, \cref{duality-of-VCat} provides a convenient method to work with relative $\V$-comonads as relative $\V\rev$-monads. In particular, we can derive existence theorems for coalgebra and co\"opalgebra objects as follows.

By dualising \cref{presheaf} above, we obtain a notion of
\emph{$\V$-copresheaf} on a $\V$-category $Z$, comprising an object
$p(z)$ of $\V$ for each $z \in \ob{Z}$, and a family of morphisms
\[\{ \circ_{z, z'} \colon p(z) \tensor Z(z, z') \to p(z') \}_{z, z' \in Z}\] compatible with
identities and composition in $Z$.
We also have a notion of $\V$-natural family of morphisms between
copresheaves, and we can speak of the universal $\V$-natural family
$\{ \epsilon_z \colon \reP{Z}{p}{q} \tensor p(z) \to q(z) \}_{z \in \ob Z}$.
By dualising \cref{EM-algebra} above, given a $\V$-functor $i \colon Z \to U$, we obtain a notion of
\emph{\EM{} $D$-coalgebra} for an $i$-comonad $D$: such a coalgebra comprises an object $u \in \ob{U}$ and family of morphisms
$\{ \eaop_z \colon U(u, \ob{i}z) \to U(u, \ob{d}z) \}_{z \in \ob Z}$ satisfying counitality and compatibility laws.
A \emph{$v$-graded homomorphism} from a coalgebra $(u, \eaop)$
to a coalgebra $(u', \eaop')$ is then a morphism $h \colon v \to U(u, u')$ preserving the coextension operators.

\begin{theorem}
	\label{VCat-admits-coalgebra-objects}
	Let $i \colon Z \to U$ be a $\V$-functor, and let $D$ be an $i$-comonad. $D$ admits a coalgebra object exactly when there
	is a universal graded homomorphism between any two \EM{}
	$D$-coalgebras.
	In particular, every $i$-comonad admits a coalgebra object when
	$\V$ admits equalisers and $\reP{Z}{U(u, \ob{i}{-})}{q}$ exists for all
	objects $u \in \ob{U}$ and copresheaves $q$.
\end{theorem}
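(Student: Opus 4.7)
The plan is to reduce the statement to \cref{VCat-admits-algebra-objects} and \cref{VCat-admits-algebra-objects-ii} via the duality isomorphism $\VCat\co \iso \V\rev\h\Cat$ of \cref{duality-of-VCat}. A relative comonad $D$ on $i \colon Z \to U$ in $\VCat$ is, by definition, a relative monad in $\VCat\co$, which under the duality corresponds to a $\V\rev$-enriched relative monad $D\op$ on $i\op \colon Z\op \to U\op$. A coalgebra object for $D$ in $\VCat$ is, again by definition, an algebra object for $D$ viewed as a relative monad in $\VCat\co$, so via the duality it transports to an algebra object for $D\op$ in $\V\rev\h\Cat$.

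The main bookkeeping step is to match up the notions introduced immediately before the theorem statement with those of \cref{EM-algebra,algebra,graded-algebra-morphism} transported to $\V\rev\h\Cat$. Concretely, an \EM{} $D$-coalgebra with carrier $u \in \ob U$ and coextension operator $\{\eaop_x \colon U(u, \ob i x) \to U(u, \ob d x)\}_{x \in \ob Z}$ is the same data as an \EM{} $D\op$-algebra with carrier $u \in \ob{U\op}$ and extension operator $\{\eaop_x \colon U\op(\ob{i\op} x, u) \to U\op(\ob{d\op} x, u)\}_{x \in \ob{Z\op}}$; the counitality and compatibility axioms for the former are exactly the unitality and compatibility axioms for the latter, read in $\V\rev$. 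Graded coalgebra homomorphisms of weight $v$ correspond in the same way to graded algebra homomorphisms of weight $v$ in $\V\rev\h\Cat$.

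Given these identifications, the first assertion is \cref{VCat-admits-algebra-objects} applied to $D\op$ in $\V\rev\h\Cat$. For the second, I would apply \cref{VCat-admits-algebra-objects-ii} to $i\op \colon Z\op \to U\op$ in $\V\rev\h\Cat$: since $\V$ has equalisers iff $\V\rev$ does, and a $\V\rev$-presheaf $q'$ on $Z\op$ is exactly a $\V$-copresheaf $q$ on $Z$, the $\V\rev$-object $\rfP{Z\op}{U\op(\ob{i\op}{-}, u)}{q'}$ of $\V\rev$-natural transformations is the $\V$-object $\reP{Z}{U(u, \ob{i}{-})}{q}$ of $\V$-natural transformations. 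There is no substantive obstacle beyond these translations, which are routine---each defining diagram transposes automatically under $\VCat\co \iso \V\rev\h\Cat$.
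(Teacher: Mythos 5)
Your proposal is correct and follows exactly the paper's route: reduce via the duality isomorphism $\VCat\co \iso \V\rev\h\Cat$ of \cref{duality-of-VCat} and then invoke \cref{VCat-admits-algebra-objects,VCat-admits-algebra-objects-ii} for the dual relative monad. The paper states this reduction tersely, while you spell out the (routine) translation of \EM{} coalgebras, graded homomorphisms, and copresheaves, but the argument is the same.
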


\begin{proof}
  By \cref{duality-of-VCat}, $D$ admits a coalgebra object if and only if the corresponding relative monad in $\VcoCat$ admits an algebra object, so the result follows from \cref{VCat-admits-algebra-objects} and \cref{VCat-admits-algebra-objects-ii}.
\end{proof}

In particular, the assumptions of \cref{VCat-admits-coalgebra-objects} hold for $\V$-functors $i \colon Z \to U$ with small domain when $\V$ is complete and closed.

\begin{theorem}
	\label{VCat-admits-coopalgebra-objects}
	Every relative comonad in $\VCat$ admits a co\"opalgebra object.
\end{theorem}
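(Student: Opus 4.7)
The plan is to invoke duality, reducing the claim to \cref{VCat-admits-opalgebra-objects} applied to a different base of enrichment. By definition, a relative comonad in $\VCat$ is a relative monad in $\VCat\co$, and a coopalgebra object for such a comonad in $\VCat$ is precisely an opalgebra object for the corresponding relative monad in $\VCat\co$. Under the isomorphism of \vdcs{} $\VCat\co \iso \V\rev\h\Cat$ established in \cref{duality-of-VCat}, the claim therefore becomes the assertion that every relative monad in $\V\rev\h\Cat$ admits an opalgebra object.

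Now, $\V\rev$ is itself a monoidal category, and the statement of \cref{VCat-admits-opalgebra-objects} holds for the \vdc{} of categories enriched in an arbitrary monoidal category; in particular it applies with $\V$ replaced throughout by $\V\rev$. This immediately yields the required opalgebra object. Transporting the Kleisli construction of \cref{VCat-admits-opalgebra-objects} back across the isomorphism of \cref{duality-of-VCat}, the coopalgebra object of an $i$-comonad $D = (d, \flip\dag, \varepsilon)$ may be described concretely as the \emph{coKleisli $\V$-category} $\Kl(D)$, having objects $\ob Z$, hom-objects $\Kl(D)(x, y) \defeq U(\ob d x, \ob i y)$, identities given by the counit $\varepsilon$, and composition using the coextension operator $\flip\dag$; the mediating tight-cell $k_D \colon Z \to \Kl(D)$ is identity-on-objects, and the universal $\V$-form $U(d, i) \tto \Kl(D)(k_D, k_D)$ is the identity.

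There is no genuine obstacle: once the dualisation is made precise, the result is immediate. The only bookkeeping is to confirm that the identification of a coopalgebra object in $\VCat$ with an opalgebra object in $\VCat\co$ is compatible with the swap of tensor in passing to $\V\rev$, which is exactly what \cref{duality-of-VCat} provides. Consequently, no further cocompleteness or closedness hypotheses on $\V$ are needed, matching the corresponding absence of hypotheses in \cref{VCat-admits-opalgebra-objects}.
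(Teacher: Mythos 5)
Your proof is correct and follows essentially the same route as the paper: invoke the isomorphism $\VCat\co \iso \V\rev\h\Cat$ of \cref{duality-of-VCat} to identify the co\"opalgebra object with an opalgebra object for the corresponding relative monad, and then apply \cref{VCat-admits-opalgebra-objects} with $\V$ replaced by $\V\rev$. The explicit coKleisli description you add is a correct (if optional) unwinding of the construction transported across the duality.
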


\begin{proof}
	By \cref{duality-of-VCat}, a relative comonad admits a co\"opalgebra objects if and only if the corresponding relative monad in $\VcoCat$ admits an opalgebra object, so the result follows from \cref{VCat-admits-opalgebra-objects}.
\end{proof}

\printbibliography

\end{document}